\numberwithin{equation}{section}
\DeclareMathOperator{\Sign}{Sign}
\DeclareMathOperator{\Id}{Id} 
\DeclareMathOperator{\b|}{\boldsymbol{|}}
\DeclareMathOperator{\Cov}{Cov}
\DeclareMathOperator{\Ai}{Ai}
\DeclareMathOperator{\Var}{Var}
\DeclareMathOperator{\BR}{BR}
\title{Current Fluctuations of the Stationary ASEP and Six-Vertex Model} 
\author{Amol Aggarwal}
\begin{document}

\newtheorem{thm}{Theorem}[section]
\newtheorem{prop}[thm]{Proposition}
\newtheorem{lem}[thm]{Lemma}
\newtheorem{cor}[thm]{Corollary}
\newtheorem{conj}[thm]{Conjecture}
\newtheorem{que}[thm]{Question}
\theoremstyle{remark}
\newtheorem{rem}[thm]{Remark}
\theoremstyle{definition}
\newtheorem{definition}[thm]{Definition}
\newtheorem{exa}[thm]{Example}

\begin{abstract}

Our results in this paper are two-fold. First, we consider current fluctuations of the stationary asymmetric simple exclusion process (ASEP), run for some long time $T$, and show that they are of order $T^{1 / 3}$ along a characteristic line. Upon scaling by $T^{1 / 3}$, we establish that these fluctuations converge to the long-time height fluctuations of the stationary KPZ equation, that is, to the Baik-Rains distribution. This result has long been predicted under the context of KPZ universality and in particular extends upon a number of results in the field, including the work of Ferrari and Spohn in 2005 (who established the same result for the TASEP), and the work of Bal\'{a}zs and Sepp\"{a}l\"{a}inen in 2010 (who established the $T^{1 / 3}$ scaling for the general ASEP). 

Second, we introduce a class of translation-invariant Gibbs measures that characterizes a one-parameter family of slopes for an arbitrary ferroelectric, symmetric six-vertex model. This family of slopes corresponds to what is known as the \emph{conical singularity} (or \emph{tricritical point}) in the free energy profile for the ferroelectric six-vertex model. We consider fluctuations of the height function of this model on a large grid of size $T$ and show that they too are of order $T^{1 / 3}$ along a certain characteristic line; this confirms a prediction of Bukman and Shore from 1995 stating that the ferroelectric six-vertex model should exhibit KPZ growth at the conical singularity. 

Upon scaling the height fluctuations by $T^{1 / 3}$, we again recover the Baik-Rains distribution in the large $T$ limit. Recasting this statement in terms of the (asymmetric) stochastic six-vertex model confirms a prediction of Gwa and Spohn from 1992.

\end{abstract}

\maketitle

\tableofcontents

\section{Introduction}

\label{Introduction}

Over the past several decades, significant effort has been devoted towards the study of statistical mechanical models at steady (stationary) state. In this paper we address questions of this type for two models, namely, the asymmetric simple exclusion process and the (both symmetric and stochastic) six-vertex model. 

We begin in Section \ref{ProcessModel} by defining these two models and their associated observables. In Section \ref{EquationBoundary}, we provide some context for our results, which will be more carefully stated in Section \ref{StationaryAsymptotics}.

\subsection{The ASEP and Stochastic Six-Vertex Model}

\label{ProcessModel}

Here, we define the asymmetric simple exclusion process and stochastic six-vertex model. Although our results also apply to the standard (symmetric) six-vertex model, we do not define it in this section; its detailed description, as well as a way of mapping it to the stochastic six-vertex model, is given in Appendix \ref{SixVertex} below.

\subsubsection{The Asymmetric Simple Exclusion Process}

\label{AsymmetricExclusions}

Introduced to the mathematics community by Spitzer \cite{IMP} in 1970 (and also appearing two years earlier in the biology work of MacDonald, Gibbs, and Pipkin \cite{KBNAT}), the \emph{asymmetric simple exclusion process} (ASEP) is a continuous time Markov process that can be described as follows. Particles are initially (at time $0$) placed on $\mathbb{Z}$ such that at most one particle occupies any site. Associated with each particle are two exponential clocks, one of rate $L$ and one of rate $R$; we assume that $R > L \ge 0$ and that all clocks are mutually independent. When some particle's left clock rings, the particle attempts to jump one space to the left; similarly, when its right clock rings, it attempts to jump one space to the right. If the destination of the jump is unoccupied, the jump is performed; otherwise it is not. This is sometimes referred to as the \emph{exclusion restriction}. 

Associated with the ASEP is an observable called the \emph{current}. To define this quantity, we \emph{tag} the particles of the ASEP, meaning that we track their evolution over time by indexing them based on initial position. Specifically, let the initial positions of the particles be $\cdots < X_{-1} (0) < X_0 (0) < X_1 (0) < \cdots$, where $X_{-1} (0) \le 0 < X_0 (0)$. The particle initially at site $X_k (0)$ will be referred to as \emph{particle $k$}. For each $k \in \mathbb{Z}$ and $t > 0$, let $X_k (t)$ denote the position of particle $k$ at time $t$. Since all jumps are nearest-neighbor, we have that $\cdots < X_{-1} (t) < X_0 (t) < X_1 (t) < \cdots $ for all $t \ge 0$.

Now, consider the ASEP after running for some time $t$. For any $x \in \mathbb{R}$, define the \emph{current} $J_t (x)$ to be the almost surely finite sum 
\begin{flalign} 
\label{jtx}
J_t (x) = \displaystyle\sum_{i = - \infty}^{\infty} \big( \textbf{1}_{X_i (0) \le 0} \textbf{1}_{X_i (t) > x} - \textbf{1}_{X_i (0) > 0} \textbf{1}_{X_i (t) \le x} \big). 
\end{flalign}

Observe in particular that at most one of the two summands on the right side of \eqref{jtx} is nonzero. Further observe that $J_t (x)$ has the following combinatorial interpretation. Color all particles initially to the right of $0$ red, and all particles initially at or to the left of $0$ blue. Then, $J_t (x)$ denotes the number of red particles at or to the left of $x$ at time $t$ subtracted from the number of blue particles to the right of $x$ at time $t$.  

One of the purposes of this paper is to analyze the long-time fluctuations for the current of the ASEP under a certain type of \emph{double-sided $(b_1, b_2)$-Bernoulli initial data}, for fixed $b_1, b_2 \in (0, 1)$. This means that one initially places a particle at each site $i \in \mathbb{Z}_{\le 0}$ with probability $b_1$ and at each site $i \in \mathbb{Z}_{> 0}$ with probability $b_2$; all placements are independent. 

We will in fact be interested in the \emph{stationary} case of this initial data, when $b_1 = b_2$, but we postpone further discussion about this to Section \ref{EquationBoundary} and Section \ref{StationaryProcess}.

\begin{figure}[t]

\begin{center}

\begin{tikzpicture}[
      >=stealth,
			]

			\draw[->, black	] (0, 0) -- (0, 4.5);
			\draw[->, black] (0, 0) -- (4.5, 0);
			\draw[->,black, thick] (0, .5) -- (.45, .5);
			\draw[->,black, thick] (0, 1) -- (.45, 1);
			\draw[->,black, thick] (0, 1.5) -- (.45, 1.5);
			\draw[->,black, thick] (0, 2) -- (.45, 2);
			\draw[->,black, thick] (0, 2.5) -- (.45, 2.5);
			\draw[->,black, thick] (0, 3) -- (.45, 3);
			\draw[->,black, thick] (0, 3.5) -- (.45, 3.5);

			\draw[->,black, thick] (.55, .5) -- (.95, .5);
			\draw[->,black, thick] (.55, 1) -- (.95, 1);
			\draw[->,black, thick] (.55, 1.5) -- (.95, 1.5);
			\draw[->,black, thick] (.55, 2.5) -- (.95, 2.5);
			\draw[->,black, thick] (.55, 3) -- (.95, 3);
			\draw[->,black, thick] (.55, 3.5) -- (.95, 3.5);
			\draw[->,black, thick] (.5, 2.05) -- (.5, 2.45);
			\draw[->,black, thick] (.5, 2.55) -- (.5, 2.95);
			\draw[->,black, thick] (.5, 3.05) -- (.5, 3.45);
			\draw[->,black, thick] (.5, 3.55) -- (.5, 3.95);
			
			\draw[->,black, thick] (1.05, .5) -- (1.45, .5);
			\draw[->,black, thick] (1.05, 1) -- (1.45, 1);
			\draw[->,black, thick] (1.05, 2) -- (1.45, 2);
			\draw[->,black, thick] (1.05, 2.5) -- (1.45, 2.5);
			\draw[->,black, thick] (1.05, 3.5) -- (1.45, 3.5);
			\draw[->,black, thick] (1, 1.55) -- (1, 1.95);
			\draw[->,black, thick] (1, 3.05) -- (1, 3.45);
			\draw[->,black, thick] (1, 3.55) -- (1, 3.95);
			
			\draw[->,black, thick] (1.5, .55) -- (1.5, .95);
			\draw[->,black, thick] (1.5, 1.05) -- (1.5, 1.45);
			\draw[->,black, thick] (1.5, 1.55) -- (1.5, 1.95);
			\draw[->,black, thick] (1.5, 2.05) -- (1.5, 2.45);
			\draw[->,black, thick] (1.5, 2.55) -- (1.5, 2.95);
			\draw[->,black, thick] (1.5, 3.55) -- (1.5, 3.95);
			\draw[->,black, thick] (1.55, 1) -- (1.95, 1);
			\draw[->,black, thick] (1.55, 2) -- (1.95, 2);
			\draw[->,black, thick] (1.55, 2.5) -- (1.95, 2.5);
			\draw[->,black, thick] (1.55, 3) -- (1.95, 3);

			\draw[->,black, thick] (2, 1.05) -- (2, 1.45);
			\draw[->,black, thick] (2, 3.05) -- (2, 3.45);
			\draw[->,black, thick] (2, 3.55) -- (2, 3.95);
			\draw[->,black, thick] (2.05, 1.5) -- (2.45, 1.5);
			\draw[->,black, thick] (2.05, 2) -- (2.45, 2);
			\draw[->,black, thick] (2.05, 2.5) -- (2.45, 2.5);

			\draw[->,black, thick] (2.5, 1.55) -- (2.5, 1.95);
			\draw[->,black, thick] (2.5, 2.05) -- (2.5, 2.45);
			\draw[->,black, thick] (2.5, 2.55) -- (2.5, 2.95);
			\draw[->,black, thick] (2.5, 3.05) -- (2.5, 3.45);
			\draw[->,black, thick] (2.5, 3.55) -- (2.5, 3.95);
			\draw[->,black, thick] (2.55, 2) -- (2.95, 2);
			\draw[->,black, thick] (2.55, 2.5) -- (2.95, 2.5);

			\draw[->,black, thick] (3, 2.55) -- (3, 2.95);
			\draw[->,black, thick] (3.05, 2) -- (3.45, 2);
			\draw[->,black, thick] (3.05, 3) -- (3.45, 3);

			\draw[->,black, thick] (3.5, 2.05) -- (3.5, 2.45);
			\draw[->,black, thick] (3.5, 2.55) -- (3.5, 2.95);
			\draw[->,black, thick] (3.5, 3.05) -- (3.5, 3.45);
			\draw[->,black, thick] (3.5, 3.55) -- (3.5, 3.95);
			\draw[->,black, thick] (3.55, 3) -- (3.95, 3);
		
			\draw[->,black, thick] (4, 3.05) -- (4, 3.45);
			\draw[->,black, thick] (4, 3.55) -- (4, 3.95);

			\filldraw[fill=gray!50!white, draw=black] (.5, .5) circle [radius=.05];
			\filldraw[fill=gray!50!white, draw=black] (.5, 1) circle [radius=.05];
			\filldraw[fill=gray!50!white, draw=black] (.5, 1.5) circle [radius=.05];
			\filldraw[fill=gray!50!white, draw=black] (.5, 2) circle [radius=.05];
			\filldraw[fill=gray!50!white, draw=black] (.5, 2.5) circle [radius=.05];
			\filldraw[fill=gray!50!white, draw=black] (.5, 3) circle [radius=.05];
			\filldraw[fill=gray!50!white, draw=black] (.5, 3.5) circle [radius=.05];

			\filldraw[fill=gray!50!white, draw=black] (1, .5) circle [radius=.05];
			\filldraw[fill=gray!50!white, draw=black] (1, 1) circle [radius=.05];
			\filldraw[fill=gray!50!white, draw=black] (1, 1.5) circle [radius=.05];
			\filldraw[fill=gray!50!white, draw=black] (1, 2) circle [radius=.05];
			\filldraw[fill=gray!50!white, draw=black] (1, 2.5) circle [radius=.05];
			\filldraw[fill=gray!50!white, draw=black] (1, 3) circle [radius=.05];
			\filldraw[fill=gray!50!white, draw=black] (1, 3.5) circle [radius=.05];
			
			\filldraw[fill=gray!50!white, draw=black] (1.5, .5) circle [radius=.05];
			\filldraw[fill=gray!50!white, draw=black] (1.5, 1) circle [radius=.05];
			\filldraw[fill=gray!50!white, draw=black] (1.5, 1.5) circle [radius=.05];
			\filldraw[fill=gray!50!white, draw=black] (1.5, 2) circle [radius=.05];
			\filldraw[fill=gray!50!white, draw=black] (1.5, 2.5) circle [radius=.05];
			\filldraw[fill=gray!50!white, draw=black] (1.5, 3) circle [radius=.05];
			\filldraw[fill=gray!50!white, draw=black] (1.5, 3.5) circle [radius=.05];
			
			\filldraw[fill=gray!50!white, draw=black] (2, .5) circle [radius=.05];
			\filldraw[fill=gray!50!white, draw=black] (2, 1) circle [radius=.05];
			\filldraw[fill=gray!50!white, draw=black] (2, 1.5) circle [radius=.05];
			\filldraw[fill=gray!50!white, draw=black] (2, 2) circle [radius=.05];
			\filldraw[fill=gray!50!white, draw=black] (2, 2.5) circle [radius=.05];
			\filldraw[fill=gray!50!white, draw=black] (2, 3) circle [radius=.05];
			\filldraw[fill=gray!50!white, draw=black] (2, 3.5) circle [radius=.05];
			
			\filldraw[fill=gray!50!white, draw=black] (2.5, .5) circle [radius=.05];
			\filldraw[fill=gray!50!white, draw=black] (2.5, 1) circle [radius=.05];
			\filldraw[fill=gray!50!white, draw=black] (2.5, 1.5) circle [radius=.05];
			\filldraw[fill=gray!50!white, draw=black] (2.5, 2) circle [radius=.05];
			\filldraw[fill=gray!50!white, draw=black] (2.5, 2.5) circle [radius=.05];
			\filldraw[fill=gray!50!white, draw=black] (2.5, 3) circle [radius=.05];
			\filldraw[fill=gray!50!white, draw=black] (2.5, 3.5) circle [radius=.05];
			
			\filldraw[fill=gray!50!white, draw=black] (3, .5) circle [radius=.05];
			\filldraw[fill=gray!50!white, draw=black] (3, 1) circle [radius=.05];
			\filldraw[fill=gray!50!white, draw=black] (3, 1.5) circle [radius=.05];
			\filldraw[fill=gray!50!white, draw=black] (3, 2) circle [radius=.05];
			\filldraw[fill=gray!50!white, draw=black] (3, 2.5) circle [radius=.05];
			\filldraw[fill=gray!50!white, draw=black] (3, 3) circle [radius=.05];
			\filldraw[fill=gray!50!white, draw=black] (3, 3.5) circle [radius=.05];
			
			\filldraw[fill=gray!50!white, draw=black] (3.5, .5) circle [radius=.05];
			\filldraw[fill=gray!50!white, draw=black] (3.5, 1) circle [radius=.05];
			\filldraw[fill=gray!50!white, draw=black] (3.5, 1.5) circle [radius=.05];
			\filldraw[fill=gray!50!white, draw=black] (3.5, 2) circle [radius=.05];
			\filldraw[fill=gray!50!white, draw=black] (3.5, 2.5) circle [radius=.05];
			\filldraw[fill=gray!50!white, draw=black] (3.5, 3) circle [radius=.05];
			\filldraw[fill=gray!50!white, draw=black] (3.5, 3.5) circle [radius=.05];
			
			\filldraw[fill=gray!50!white, draw=black] (4, .5) circle [radius=.05];
			\filldraw[fill=gray!50!white, draw=black] (4, 1) circle [radius=.05];
			\filldraw[fill=gray!50!white, draw=black] (4, 1.5) circle [radius=.05];
			\filldraw[fill=gray!50!white, draw=black] (4, 2) circle [radius=.05];
			\filldraw[fill=gray!50!white, draw=black] (4, 2.5) circle [radius=.05];
			\filldraw[fill=gray!50!white, draw=black] (4, 3) circle [radius=.05];
			\filldraw[fill=gray!50!white, draw=black] (4, 3.5) circle [radius=.05];

\end{tikzpicture}

\end{center}	

\caption{\label{figurevertexwedge} A sample of the stochastic six-vertex model with step initial data is depicted above.} 
\end{figure}
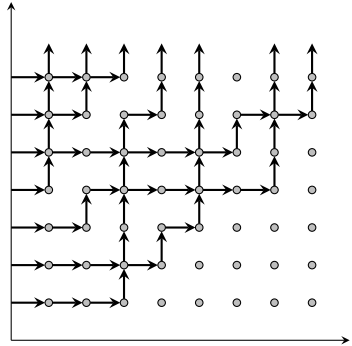

\subsubsection{The Stochastic Six-Vertex Model}

\label{StochasticVertex}

The stochastic six-vertex model was first introduced to the mathematical physics community by Gwa and Spohn \cite{SVMRSASH} in 1992 as a stochastic version of the older \cite{TEIOCSRAA, TT} six-vertex model studied by Lieb \cite{RESI}, Baxter \cite{ESMSM}, and Sutherland-Yang-Yang \cite{ESMTDFAEEF}; it was also studied more recently in \cite{PTAEPSSVM, SSVM, HSVMRSF, IPSVMSF, SVMCP, LSSSVM}. This model can be defined in several equivalent ways, including as a Gibbs measure for a ferroelectric, symmetric six-vertex model (see Section 2.1 of \cite{SSVM}); as an interacting particle system with push dynamics and an exclusion restriction (see \cite{DSENE, SVMRSASH} or Section 2.2 of \cite{SSVM}); or as a probability measure on directed-path ensembles (see Section 2 of \cite{SSVM} or Section 1 of \cite{HSVMRSF}). In this section we will define the model through path ensembles, although we will also explain the interpretation as a Gibbs measure for the six-vertex model in Appendix \ref{SixVertex}.

A \emph{six-vertex directed-path ensemble} is a family of up-right directed paths in the non-negative quadrant $\mathbb{Z}_{> 0}^2$, such that each path emanates from either the $x$-axis or $y$-axis, and such that no two paths share an edge (although they may share vertices); see Figure \ref{figurevertexwedge}. In particular, each vertex has six possible \emph{arrow configurations}, which are listed in the top row of Figure \ref{sixvertexfigure}. We view the second arrow configuration in Figure \ref{sixvertexfigure} as two paths reflecting off of one another as opposed to crossing each other.

\emph{Initial data}, or \emph{boundary data}, for such an ensemble is prescribed by dictating which vertices on the positive $x$-axis and positive $y$-axis are entrance sites for a directed path. One example of initial data is \emph{step initial data}, in which paths only enter through the $y$-axis, and all vertices on the $y$-axis are entrance sites for paths; see Figure \ref{figurevertexwedge}. A more general example is \emph{double-sided $(b_1, b_2)$-Bernoulli initial data}, in which sites on the $y$-axis are independently entrance sites with probability $b_1$, and sites on the $x$-axis are independently entrance sites with probability $b_2$. 

Now, fix parameters $\delta_1, \delta_2 \in [0, 1]$ and some initial data. The \emph{stochastic six-vertex model} $\mathcal{P} = \mathcal{P} (\delta_1, \delta_2)$ will be the infinite-volume limit of a family of probability measures $\mathcal{P}_n = \mathcal{P}_n (\delta_1, \delta_2)$ defined on the set of six-vertex directed-path ensembles whose vertices are all contained in triangles of the form $\mathbb{T}_n = \{ (x, y) \in \mathbb{Z}_{\ge 0}^2: x + y \le n \}$. The first such probability measure $\mathcal{P}_1$ is supported by the empty ensemble.

For each positive integer $n$, we define $\mathcal{P}_{n + 1}$ from $\mathcal{P}_n$ through the following Markovian update rules. Use $\mathcal{P}_n$ to sample a directed-path ensemble $\mathcal{E}_n$ on $\mathbb{T}_n$. This gives arrow configurations (of the type shown in Figure \ref{sixvertexfigure}) to all vertices in the positive quadrant strictly below the diagonal $\mathbb{D}_n = \{ (x, y) \in \mathbb{Z}_{> 0}^2: x + y = n \}$. Each vertex on $\mathbb{D}_n$ is also given ``half'' of an arrow configuration, in the sense that it is given the directions of all entering paths but no direction of any exiting path. 

To extend $\mathcal{E}_n$ to a path ensemble on $\mathbb{T}_{n + 1}$, we must ``complete'' the configurations (specify the exiting paths) at all vertices $(x, y) \in \mathbb{D}_n$. Any half-configuration can be completed in at most two ways; selecting between these completions is done randomly, according to the probabilities given in the second row of Figure \ref{sixvertexfigure}. All choices are mutually independent. 

In this way, we obtain a random ensemble $\mathcal{E}_{n + 1}$ on $\mathbb{T}_{n + 1}$; the resulting probability measure on path ensembles with vertices in $\mathbb{T}_{n + 1}$ is denoted $\mathcal{P}_{n + 1}$. Now, set $\mathcal{P} = \lim_{n \rightarrow \infty} \mathcal{P}_n$. 

As in the ASEP, there exists an observable of interest for stochastic six-vertex model called the \emph{height function} (although we will sometimes also use the term \emph{current}). To define it, we color a path red if it emanates from the $x$-axis, and we color a path blue if it emanates from the $y$-axis. Let $(X, Y) \in \mathbb{R}_{> 0}^2$. The \emph{current} (or \emph{height function}) $\mathfrak{H} (X, Y)$ of the stochastic six-vertex model at $(X, Y)$ is the number of red paths that intersect the line $y = Y$ at or to the left of $(X, Y)$ subtracted from the number of blue paths that intersect the line $y = Y$ to the right of $(X, Y)$; at most one of these numbers is nonzero, as on the right side of \eqref{jtx}. 

We will again be interested in understanding the current fluctuations for the stochastic six-vertex model under a certain type of \emph{translation-invariant} double-sided Bernoulli initial data; we will discuss this further in the next two sections.

\begin{figure}[t]

\begin{center}

\begin{tikzpicture}[
      >=stealth,
			scale = .7
			]

			\draw[-, black] (-7.5, -.8) -- (7.5, -.8);
			\draw[-, black] (-7.5, 0) -- (7.5, 0);
			\draw[-, black] (-7.5, 2) -- (7.5, 2);
			\draw[-, black] (-7.5, -.8) -- (-7.5, 2);
			\draw[-, black] (7.5, -.8) -- (7.5, 2);
			\draw[-, black] (-5, -.8) -- (-5, 2);
			\draw[-, black] (5, -.8) -- (5, 2);
			\draw[-, black] (-2.5, -.8) -- (-2.5, 2);
			\draw[-, black] (2.5, -.8) -- (2.5, 2);
			\draw[-, black] (0, -.8) -- (0, 2);

			\draw[->, black,  thick] (3.85, 1) -- (4.65, 1);
			\draw[->, black,  thick] (3.75, .1) -- (3.75, .9);

			\draw[->, black,  thick] (-1.25, .1) -- (-1.25, .9);
			\draw[->, black,  thick] (-1.25, 1.1) -- (-1.25, 1.9);

			\draw[->, black,  thick] (1.35, 1) -- (2.15, 1);
			\draw[->, black,  thick] (.35, 1) -- (1.15, 1);
			
			\draw[->, black,  thick] (6.25, 1.1) -- (6.25, 1.9);
			\draw[->, black,  thick] (5.35, 1) -- (6.15, 1);
			
			\draw[->, black,  thick] (-3.75, 1.1) -- (-3.75, 1.9);
			\draw[->, black,  thick] (-3.75, .1) -- (-3.75, .9);
			\draw[->, black,  thick] (-3.65, 1) -- (-2.85, 1);
			\draw[->, black,  thick] (-4.65, 1) -- (-3.85, 1);
				
			\filldraw[fill=gray!50!white, draw=black] (-6.25, 1) circle [radius=.1] node [black,below=21] {$1$};
			\filldraw[fill=gray!50!white, draw=black] (-3.75, 1) circle [radius=.1] node [black,below=21] {$1$};
			\filldraw[fill=gray!50!white, draw=black] (-1.25, 1) circle [radius=.1] node [black,below=21] {$\delta_1$};
			\filldraw[fill=gray!50!white, draw=black] (1.25, 1) circle [radius=.1] node [black,below=21] {$\delta_2$};
			\filldraw[fill=gray!50!white, draw=black] (3.75, 1) circle [radius=.1] node [black,below=21] {$1 - \delta_1$};
			\filldraw[fill=gray!50!white, draw=black] (6.25, 1) circle [radius=.1] node [black,below=21] {$1 - \delta_2$};

\end{tikzpicture}

\end{center}

\caption{\label{sixvertexfigure} The top row in the chart shows the six possible arrow configurations at vertices in the stochastic six-vertex model; the bottom row shows the corresponding probabilities. }
\end{figure}

\subsection{Context and Background}

\label{EquationBoundary}

The phenomenon that guides our results is commonly termed \emph{KPZ universality}. About thirty years ago in their seminal paper \cite{DSGI}, Kardar, Parisi, and Zhang considered a family of random growth models that exhibit ostensibly unusual (although now known to be quite ubiquitous) scaling behavior. As part of this work, they predicted the scaling exponents for all one-dimensional models in this family; specifically, after running such a model for some large time $T$, they predicted fluctuations of order $T^{1 / 3}$ and non-trivial spacial correlation on scales $T^{2 / 3}$. This family of random growth models is now called the \emph{Kardar-Parisi-Zhang (KPZ) universality class}, and consists of many more models (including the ASEP and stochastic six-vertex model) than those originally considered in \cite{DSGI}; we refer to the surveys \cite{EUC, PQISUC, IUE} for more information.

In addition to predicting these exponents, Kardar, Parisi, and Zhang proposed a stochastic differential equation that in a sense embodies all of the models in their class; this equation, now known as the \emph{KPZ equation}, is 
\begin{flalign}
\label{stochasticequation} 
\partial_t \mathscr{H} = \partial_x^2 \mathscr{H} + \displaystyle\frac{1}{2} \left( \partial_x \mathscr{H} \right)^2 + \dot{\mathscr{W}},
\end{flalign}

\noindent where $\dot{\mathscr{W}}$ refers to space-time white noise. 

Granting the well-posedness \cite{SEPS, ATR, SE} of \eqref{stochasticequation} (which by no means immediate, due to the non-linearity $\frac{1}{2} \big( \partial_x \mathscr{H} \big)^2$), it is widely believed that the long-time statistics of a stochastic model in the KPZ universality class should coincide with the long-time statistics of \eqref{stochasticequation}, whose initial data should be somehow chosen to ``match'' the initial data of the model, in a suitable way. 

From a probabilistic standpoint, perhaps the most interesting type of initial data on which to understand this conjecture is \emph{stationary initial data}. For the KPZ equation, 	this is equivalent to what is known as \emph{Brownian initial data}, in which $\mathscr{H}$ is a two-sided Brownian motion. This was studied by Bal\'{a}zs-Quastel-Sepp\"{a}l\"{a}inen \cite{SES}, Corwin-Quastel \cite{CDERF}, Imamura-Sasamoto \cite{SCE}, and Borodin-Corwin-Ferrari-Vet\H{o} \cite{HFSE}. In particular, it was shown in \cite{HFSE} that the height fluctuations of the stationary KPZ equation, after running for some large time $T$, are of order $T^{1 / 3}$ and scale to the \emph{Baik-Rains distribution} (see Definition \ref{stationarydistribution}); this is a distribution that was introduced in 2000 by Baik and Rains \cite{LDPGMES} in the context of a polynuclear growth (PNG) model with critical boundary conditions. In view of the KPZ universality conjecture, one expects to see similar statistics in the stationary ASEP and translation-invariant stochastic six-vertex model. 

Understanding this conjecture for the \emph{stationary ASEP} has been the topic of intense study since the 1980s \cite{LPSCL, CTPS, FEP, OCVDA, ENDDS, SEPS, CFA, SLSCST, SAEPD, DFAEP, SFRAEP, LSDIP}. This refers to the ASEP with double-sided $(b, b)$-Bernoulli (also called \emph{$b$-stationary}) initial data, for some fixed $b \in (0, 1)$; recall that this means that each site is initially occupied with probability $b \in (0, 1)$, and all occupations are independent. 

The first work to predict KPZ growth in the stationary ASEP was due to van Beijern, Kutner, and Spohn \cite{ENDDS} in 1985, in fact one year before the article \cite{DSGI} of Kardar-Parisi-Zhang. In that paper \cite{ENDDS}, the three authors consider the \emph{two-point function} $S_t (x)$ of the stationary ASEP, defined by $S_t (x) = \Cov \big( \eta_0 (0), \eta_t (x) \big) = \mathbb{E} \big[ \eta_0 (0) \eta_t (x) \big] - b^2$, for any integer $x$ and non-negative real number $t$; here, $\eta_t (x)$ denotes the indicator that site $x$ is occupied at time $t$. 

In \cite{ENDDS}, van Beijern, Kutner, and Spohn expected $S_T ( vT )$ to decay exponentially in $T$ for all fixed $v \in \mathbb{R}$, except for one value of $v = (1 - 2b) (R - L)$. At this $v$ they predicted that $S_T (vT)$ should decay as $T^{-2 / 3}$ and also that the same polynomial decay holds for $S_T (vT + c T^{2 / 3})$, for any fixed $c > 0$. The value $v = (1 - 2b) (R - L)$ is often called the \emph{characteristic velocity} of the stationary ASEP, and the line $x = v t$ is often called the \emph{characteristic line}. 

The two-point function $S_t$ and current $J_t$ (recall its definition from Section \ref{AsymmetricExclusions}) of the stationary ASEP are in fact related by the identity 
\begin{flalign}
\label{variancefunction}
2 S_t (x) = \Var J_t (x + 1) - 2\Var J_t (x) + \Var J_t (x - 1);
\end{flalign}

\noindent we refer to Proposition 4.1 of \cite{CFTEP} or Proposition 2.1 of \cite{FEP} for a proof of \eqref{variancefunction}. Thus, the current is in a sense a more ``general quantity'' than the two point function; in order to obtain the scaling limit of the latter, it suffices\footnote{This is up to a certain tightness condition needed to access the underlying cancellations on the right side of \eqref{variancefunction}. For the totally asymmetric ($L = 0$) case of the ASEP, this was obtained in \cite{CTPS}; it is plausible that, using the methods introduced in this paper, the same tightness condition might now be accessible for the general $L \ne 0$ ASEP.} to understand the scaling limit of the former. The prediction of van Beijern, Kutner, and Spohn states that the right side of \eqref{variancefunction} is of order $T^{-2 / 3}$ for all $x$ in a $T^{2 / 3}$-neighborhood of $vT$ and decays exponentially in $T$ elsewhere. Thus, since the right side of \eqref{variancefunction} can be viewed as a discrete Laplacian of $\Var J_t (x)$, this translates to the statement that $\Var J_T (vT)$ should be of order $T^{2 / 3}$, meaning that the fluctuations of the current $J_T (vT)$ should be of order $T^{1 / 3}$. 

This is consistent with the KPZ universality conjecture. In fact, the KPZ universality conjecture states more; it suggests that the fluctuations of $J_T (vT)$ (after rescaling by $T^{1 / 3}$) should converge to the long-time statistics of the stationary KPZ equation or, equivalently, to the Baik-Rains distribution. 

This prediction has attracted the attention of many probabilists and mathematical physicists over the past thirty years. The first mathematical work in this direction was by Ferrari and Fontes in 1994 \cite{CFA}, who showed that $\Var J_T (vT) = o(T)$, thereby verifying that the fluctuations of $J_T (vT)$ are of lower order than $T^{1 / 2}$. 

In 2006, Ferrari and Spohn \cite{SLSCST} considered the stationary TASEP ($L = 0$ case of the ASEP) and showed that the fluctuations of $J_T (vT)$ are of order $T^{1 / 3}$ and converge to the Baik-Rains distribution after $T^{1 / 3}$ scaling, thereby establishing the KPZ universality conjecture for the stationary TASEP. The proof of this result strongly relied on the \emph{free-fermionicity} (complete determinantal structure) possessed by the TASEP. This property is not believed to hold for the more general ASEP, which had until now posed trouble for extending Ferrari-Spohn's result to the ASEP with $L \ne 0$. 

In fact, even showing that the current fluctuations are of order $T^{1 / 3}$ remained open for several years. This was resolved 2010, by Bal\'{a}zs and Sepp\"{a}l\"{a}inen \cite{OCVDA}, who showed that $\Var J_T (vT)$ is of order $T^{2 / 3}$ but were not able to identify the Baik-Rains distribution as the scaling limit. 

One of the purposes of this paper will be to resolve the KPZ universality conjecture for the stationary ASEP on the level of exact statistics by showing that, upon $T^{1 / 3}$ scaling, the fluctuations of $J_T \big( v T \big)$ converge to the Baik-Rains distribution as $T$ tends to $\infty$; this improves upon all of the results mentioned above. We will state this more precisely in Section \ref{StationaryProcess} but, before doing so, let us explain some of the predictions for the (stochastic and symmetric) six-vertex model. 

Due to its two-dimensional nature and also its more complex Markov update rule, the stochastic six-vertex model has been less amenable to analysis than the ASEP. Few predictions and results have been made or established about this model. One, however, was proposed by Gwa and Spohn in \cite{DSENE}. In that work, the two authors predicted that the stochastic six-vertex model $\mathcal{P} (\delta_1, \delta_2)$ with double-sided $(b_1, b_2)$-Bernoulli initial data (see Section \ref{StochasticVertex} for the definitions) should be translation-invariant if $b_1$ and $b_2$ satisfy the relation $\beta_1 = \kappa \beta_2$, where $\kappa = (1 - \delta_1) / (1 - \delta_2)$ and $\beta_i = b_i / (1 - b_i)$ for each $i \in \{1 , 2 \}$. 

Under this initial data, Gwa and Spohn \cite{DSENE} predicted that the two-point function for the stochastic six-vertex model should share the same scaling limit as the two-point function of the stationary ASEP. Using \eqref{variancefunction} to translate this into a statement about currents, this prediction becomes that the fluctuations of the current $\mathfrak{H} (xT, yT)$ of the stochastic six-vertex model should be of order $T^{1 / 3}$ and scale to the Baik-Rains distribution when $x / y$ is of a certain characteristic value. 

This conjecture is closely related with a prediction of Bukman and Shore \cite{TCPFSVM} on the standard, symmetric six-vertex model; we refer to Section \ref{SixVertex} for definitions and notation on the latter model. In particular, these two authors considered the free energy profile of an arbitrary ferroelectric symmetric six-vertex model and made the following observation that was apparently missed in the original analysis of the six-vertex model by Sutherland-Yang-Yang \cite{ESMTDFAEEF}. Denoting the free energy of the ferroelectric six-vertex model with weights $(a, a, b, b, c, c)$ and slope $(h, v)$ by $F(h, v) = F_{a, b, c} (h, v)$, the free energy profile $F(h, v)$ exhibits a second-order singularity along a one-parameter family of slopes $(h, v) = (b_1, b_2)$ satisfying $\beta_1 = \kappa \beta_2$; here, $\beta_i = b_i / (1 - b_i)$ for each $i \in \{ 1, 2 \}$ and $\kappa = (1 - \delta_1) / (1 - \delta_2)$, where $\delta_1$ and $\delta_2$ are related to $a$, $b$, and $c$ by the identity \eqref{transformparameters} below. 

This family of slopes is typically referred to as the \emph{conical singularity} or \emph{tricritical point} of the ferroelectric six-vertex model and has been studied extensively in the physics literature, particularly within the study of facet corners \cite{TECS, CSFODDGC, ASVM, CPSVMCSFM} in the body-centered solid-on-solid (BCSOS) model for crystal growth. Upon further analysis of this singularity, they found that the second derivative of the free energy diverged near this family of slopes as $\gamma^{-1 / 3}$, where $\gamma$ denotes the distance from $(h, v)$ to the curve of $(b_1, b_2)$ satisfying $\beta_1 = \kappa \beta_2$. From this, they predicted that one should expect some form of KPZ growth in Gibbs measures for the ferroelectric six-vertex model with slopes $(b_1, b_2)$ satisfying $\beta_1 = \kappa \beta_2$ . 

The second purpose of this paper will be to establish both this prediction and the previously mentioned prediction of Gwa and Spohn. This will be stated more carefully in Section \ref{StationaryModel}.

\subsection{Results}

\label{StationaryAsymptotics}

In this section, we state our results, which concern the stationary ASEP and six-vertex model. Our results on the ASEP are discussed in Section \ref{StationaryProcess}, and our results on the six-vertex model are discussed in Section \ref{StationaryModel}.

\subsubsection{Asymptotics for the ASEP}

\label{StationaryProcess}

To state our results, we must define the Baik-Rains distribution; this is given by Definition \ref{stationarydistribution}. However, we first require some preliminary functions given by Definition \ref{stationarykernel1} and Definition \ref{stationarykernel}. 

\begin{definition}
\label{stationarykernel1}

Denoting by $\Ai (x)$ the Airy function, the \emph{Airy kernel} $K_{\Ai} (x, y)$ is defined by 
\begin{flalign*}
K_{\Ai} (x, y) & = \displaystyle\int_0^{\infty} \Ai (x + \lambda) \Ai (y + \lambda) d \lambda = \displaystyle\frac{1}{(2 \pi \textbf{i})^2} \displaystyle\oint \displaystyle\oint \exp \left( \displaystyle\frac{w^3}{3} - \displaystyle\frac{v^3}{3} - xv + yw \right) \displaystyle\frac{dw dv}{w - v}, 
\end{flalign*}

\noindent where on the right side the $w$-contour is piecewise linear from $\infty e^{- \pi \textbf{i} / 3}$ to $\infty e^{\pi \textbf{i} / 3}$, the $v$-contour is piecewise linear from $\infty e^{- 2 \pi \textbf{i} / 3}$ to $\infty e^{2 \pi \textbf{i} / 3}$, and the $w$-contour is to the right of the $v$-contour everywhere; see the right side of Figure \ref{l1l2l3} in Section \ref{DeterminantNew} below. 

Also define the \emph{shifted Airy kernel} $K_{\Ai; a} (x, y) = K_{\Ai} (x + a, y + a)$, for any $a \in \mathbb{R}$. 
	
\end{definition}
	
\begin{definition}
\label{stationarykernel}

For each $x, y, c, s \in \mathbb{R}$, define the functions $\mathcal{R} = \mathcal{R}_{c, s}$, $\Phi (x) = \Phi_{c, s} (x)$, and $\Psi (y) = \Psi_{c, s} (y)$ through 
\begin{flalign*}
\mathcal{R} & = s + e^{-2 c^3 / 3} \displaystyle\int_s^{\infty} \displaystyle\int_0^{\infty} e^{-c (x + y)} \Ai (x + y + c^2) dy dx; \\
\Phi (x) & = e^{\frac{-2 c^3}{3}} \displaystyle\int_0^{\infty} \displaystyle\int_s^{\infty} e^{-cy} \Ai (x + c^2 + \lambda) \Ai (y + c^2 + \lambda) dy d \lambda - \displaystyle\int_0^{\infty} e^{cy} \Ai (x + y + c^2) dy; \\
\Psi (y) & = e^{2 c^3 / 3 + cy} - \displaystyle\int_0^{\infty} e^{-cx} \Ai (x + y + c^2) dx. 
\end{flalign*}

\end{definition}

\begin{definition}
\label{stationarydistribution} 

Let $c, s \in \mathbb{R}$, and define the operator $\textbf{P}_s = \textbf{1}_{x \ge s}$ on $L^2 (\mathbb{R})$; here, $\textbf{1}_E$ denotes the indicator of an event $E$. Set 
\begin{flalign}
\label{gstationary} 
g(c, s) = \mathcal{R} - \big\langle \big( \Id - K_{\Ai, c^2 + s} \big)^{-1} \textbf{P}_s \Phi, \textbf{P}_s \Psi \big\rangle, 
\end{flalign} 

\noindent where we take the $L^2 (\mathbb{R})$ inner product on the right side of \eqref{gstationary}. 

Define the \emph{Baik-Rains distribution} $F_{\BR; c} (s)$ by 
\begin{flalign*}
F_{\BR; c} (s) = \displaystyle\frac{\partial}{\partial s} \left( g(c, s) \det \big( \Id - K_{\Ai, c^2 + s} \big)_{L^2 (\mathbb{R}_{> 0})} \right). 
\end{flalign*}

\end{definition}

Having defined the Baik-Rains distribution,\footnote{It is not immediate from Definition \ref{stationarydistribution} that $F_{\BR; c}$ is indeed a probability distribution. In fact, the simplest derivation of this fact we are aware of proceeds by showing that it is a limit of probability distributions arising from some family of discrete models (such as the PNG model \cite{LDPGMES} or TASEP \cite{SLSCST}).} we can now state the following theorem, which confirms the KPZ universality conjecture for the stationary ASEP. 

\begin{thm}
\label{currentfluctuations}

Consider the stationary ASEP with left jump rate $L$, right jump rate $R$, and Bernoulli parameter $b \in (0, 1)$. Assume that $R > L$, and set $\delta = R - L$ and $\chi = b (1 - b)$. Then, for any real numbers $c, s \in \mathbb{R}$, we have that 
\begin{flalign}
\label{probabilitycurrentfluctuations} 
\displaystyle\lim_{T \rightarrow \infty} \mathbb{P} \left[ J_{\delta^{-1} T} \big( (1 - 2b) T  + 2 c \chi^{1 / 3} T^{2 / 3} \big) \ge b^2 T - 2bc \chi^{1 / 3} T^{2 / 3} - \chi^{2 / 3} s T^{1 / 3} \right] = F_{\BR; c} (s). 
\end{flalign}

\end{thm}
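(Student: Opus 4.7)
My plan is to reduce Theorem~\ref{currentfluctuations} to an analogous statement about the translation-invariant stochastic six-vertex model, which I would establish in parallel and then specialize via the standard $\delta_1, \delta_2 \to 0$ degeneration that recovers the ASEP. Within the six-vertex (or ASEP) framework directly, the core strategy is the Baik--Rains--Ferrari--Spohn trick: work first with non-stationary double-sided $(b_1, b_2)$-Bernoulli initial data, derive exact Fredholm-determinant formulas for $\mathbb{P}\bigl[J_T(x) \ge N\bigr]$ (or the corresponding height-function probability for the six-vertex model), and then let $b_1, b_2 \to b$ along a scaling window of size $T^{-1/3}$, with the scaled difference encoding the parameter $c$. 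The non-stationary formulas are accessible through Cauchy identities for spin Hall--Littlewood measures or duality / moment formulas for ASEP, and I would quote or derive such a formula as a technical input.

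The algebraic mechanism producing the ``derivative of a product'' structure $F_{\BR;c}(s) = \partial_s\bigl(g(c,s) \det(\Id - K_{\Ai;c^2 + s})\bigr)$ in Definition~\ref{stationarydistribution} appears organically in the pre-limit: the Fredholm-type formula for the $(b_1, b_2)$-Bernoulli current naturally factors as a prefactor rational in $b_1, b_2$ multiplying a Fredholm determinant. A coupling identity expresses the stationary current distribution as a derivative in the boundary densities of the non-stationary one (this is how a density perturbation at the boundary propagates into a single tagged marker whose location is the stationary observable of interest); in the limit this discrete derivative becomes $\partial_s$, producing the two-factor structure. The spatial shift $2c\chi^{1/3}T^{2/3}$ and centering $b^2 T - 2bc\chi^{1/3} T^{2/3}$ reflect the characteristic velocity $(1-2b)$ together with the Galilean correction for a $T^{-1/3}$-order perturbation of $b$, while the KPZ scale $\chi^{2/3} T^{1/3}$ is set by the curvature of the hydrodynamic flux $b(1-b) = \chi$.

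The asymptotic step is a steepest-descent analysis of the $(b_1, b_2)$-Bernoulli Fredholm kernel. Its single critical point lies precisely on the characteristic line, and a standard Airy cubic expansion about this point yields the kernel $K_{\Ai; c^2+s}$ from Definition~\ref{stationarykernel1}. The additional exponential terms $e^{\pm c x}$ in the definitions of $\Phi$, $\Psi$, and $\mathcal{R}$ arise from simple poles at $w = b_1$ and $v = b_2$ in the integrand: these poles contribute Airy residues carrying $c$-dependent exponential factors precisely because $b_i - b$ is of order $T^{-1/3}$. Organizing these contributions recovers the functions in Definition~\ref{stationarykernel}, and assembling them into the coupling-derivative identity recovers $F_{\BR;c}$ from Definition~\ref{stationarydistribution}.

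The main obstacle I anticipate is justifying the exchange of $\partial_s$ (equivalently, the boundary-density derivative from the coupling step) with the $T \to \infty$ steepest-descent limit. This requires uniform control on the Fredholm series and on its derivatives in $(b_1, b_2)$ across a neighborhood of the stationary parameter, including uniform tail decay for the large-eigenvalue contribution. The Bal\'{a}zs--Sepp\"{a}l\"{a}inen $T^{2/3}$ variance bound for $J_T$ under stationary initial data supplies exactly the tightness input needed to justify this exchange and to rule out mass escaping to infinity in the limit. A secondary difficulty is the steepest-descent deformation itself: the contours must simultaneously encircle or avoid the singularities at $b_1, b_2$ while still passing through the critical point in the direction dictated by the Airy cubic, so an explicit contour decomposition separating the $T^{-1/3}$-scale and order-one features is required.
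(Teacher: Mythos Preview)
Your broad strokes match the paper: begin from a Fredholm-type identity for double-sided $(b_1,b_2)$-Bernoulli initial data, perform steepest descent about the single critical point $q\beta$, extract the rank-one contributions from the simple poles at $q\beta_1$ and $q\varkappa\beta_2$ (these produce precisely the $e^{\pm c\cdot}$ factors in $\Phi,\Psi,\mathcal{R}$), and then pass to the stationary limit. However, two structural choices in your plan diverge from the paper in ways that matter.

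First, you propose $b_1-b_2\sim T^{-1/3}$ with the rescaled difference \emph{encoding} $c$, and a ``coupling identity'' that expresses the stationary current distribution as a derivative in the boundary densities. The paper does the opposite: it takes $\omega=\beta_2/\beta_1=1-T^{-10}$, so $b_1-b_2$ is far below the KPZ window, and $c$ enters \emph{only} through the spatial location $x=(1-2b)T+2c\chi^{1/3}T^{2/3}$. The exact identity (Theorem~\ref{determinant}) is not for a probability but for a $q$-Laplace generating series $\sum_M \omega^M (q;q)_M^{-1}\,\mathbb{E}[(\zeta q^{J-M};q)_\infty^{-1}]$, which after dividing by $(1-\omega)T^{1/3}$ and taking a \emph{finite difference in $s$} recovers $\mathbb{P}[J\ge\cdots]$ (Proposition~\ref{linearsum}). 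So the $\partial_s$ in $F_{\BR;c}$ arises from differentiating the identity in $s$, not from a density-derivative coupling. Your ``coupling identity expressing the stationary current distribution as a derivative in the boundary densities'' is the second-class-particle mechanism behind variance bounds, but for ASEP it is not known to give access to the full one-point distribution; that is precisely the gap the paper's $q$-Laplace route circumvents.

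Second, you plan to invoke the Bal\'{a}zs--Sepp\"{a}l\"{a}inen $T^{2/3}$ variance bound as a tightness input. The paper does not use it at all; the required uniform control on the Fredholm series comes from direct kernel estimates (Lemmas~\ref{ktildekstationarynear}--\ref{uniformstationaryk}, Lemma~\ref{lkqkbounded}) after a contour-truncation step, and the passage from near-stationary ($1-\omega=T^{-10}$) to exactly stationary is handled by a short basic-coupling argument. In fact the paper notes that its error bounds \emph{recover} the Bal\'{a}zs--Sepp\"{a}l\"{a}inen result, so importing it as an input would be circular relative to the paper's claims. Your description of the steepest-descent step and the role of the $b_1,b_2$ poles is accurate, but the surrounding architecture---where $c$ lives, how $\partial_s$ arises, and what replaces tightness---needs to be revised to match a workable ASEP argument.
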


\begin{rem}

If one directly tracks the error in all estimates involved in the proof of Theorem \ref{currentfluctuations}, one will find that the error in \eqref{probabilitycurrentfluctuations} is $\mathcal{O} \big( T^{- 1 / 3} \big)$. One may use this to deduce that $\Var J_{T} \big( (1 - 2b) (R - L) T \big)$ is of order $T^{2 / 3}$, thereby yielding an alternative proof of Corollary 2.3 of \cite{OCVDA}. 

\end{rem}

\subsubsection{Asymptotics for the Six-Vertex Model}

\label{StationaryModel}

In this section we state our results for the stochastic six-vertex model with a certain type of \emph{translation-invariant} initial data, and outline the consequences of these results for the standard (symmetric) six-vertex model; a more detailed explanation on the link between the stochastic six-vertex model and the symmetric six-vertex model will be provided in Appendix \ref{SixVertex}. 

Fix $b_1, b_2 \in (0, 1)$ and $0 < \delta_1 < \delta_2 < 1$, and consider the stochastic six-vertex model $\mathcal{P} (\delta_1, \delta_2)$ with double-sided $(b_1, b_2)$-Bernoulli initial data (as defined in Section \ref{StochasticVertex}); we denote the resulting six-vertex measure by $\mathcal{P} (\delta_1, \delta_2; b_1, b_2)$. We call this initial data (and also the measure $\mathcal{P} (\delta_1, \delta_2; b_1, b_2)$) \emph{translation-invariant} if $\beta_1 = \kappa \beta_2$, where we have denoted $\kappa = (1 - \delta_1) / (1 - \delta_2)$ and $\beta_i = b_i / (1 - b_i)$ for each $i \in \{ 1, 2 \}$.

This notation is justified more precisely in Lemma \ref{stationarystochastic} below, which states that any translation-invariant measure $\mathcal{P} (\delta_1, \delta_2; b_1, b_2)$ possesses the following property. For each $(x, y) \in \mathbb{Z}_{> 0}^2$, let $\ell_{(x, y)}^{(v)}$ denote the vertical, up-pointing ray emanating from $(x, y)$, and let $\ell_{(x, y)}^{(h)}$ denote the horizontal, right-pointing ray emanating from $(x, y)$. Then, each vertex on $\ell_{(x, y)}^{(v)}$ is an entrance site for a path with probability $b_1$, and each vertex on $\ell_{(x, y)}^{(h)}$ is an entrance site for a path with probability $b_2$; all of these events are independent. As to be explained in more detail in Section \ref{Translation}, this translation-invariance property allows one to extend the measure $\mathcal{P} (\delta_1, \delta_2; b_1, b_2)$ to the entire lattice $\mathbb{Z}^2$. Thus, the stochastic six-vertex model described above can be viewed as the restriction to the positive quadrant of a translation-invariant stochastic six-vertex model that resides on $\mathbb{Z}^2$. 

In view of this translation-invariance, the KPZ universality conjecture (and also Gwa-Spohn \cite{DSENE}) hypothesizes that, under this translation-invariant initial data, there exists a characteristic line along which the current fluctuations of the stochastic six-vertex model are of order $T^{1 / 3}$ and scale to the Baik-Rains distribution. 

The following theorem confirms this hypothesis.

\begin{thm}
\label{processfluctuations}

Let $\delta_1, \delta_2, b_1, b_2 \in (0, 1)$ be positive real numbers. Assume that $\delta_1 < \delta_2$, and denote $\kappa = (1 - \delta_1) / (1 - \delta_2)$, $\beta_1 = b_1 / (1 - b_1)$, and $\beta_2 = b_2 / (1 - b_2)$. Assume that $\beta_1 = \kappa \beta_2$. 

Consider the translation-invariant stochastic six-vertex model $\mathcal{P} (\delta_1, \delta_2; b_1, b_2)$. Define 
\begin{flalign*}
\Lambda_1 = b_1 + \kappa (1 - b_1); \qquad \Lambda_2 = b_2 + \kappa^{-1} (1 - b_2); \qquad \chi_i = b_i (1 - b_i),
\end{flalign*}

\noindent for each index $i \in \{ 1, 2 \}$. Also denote $x = \Lambda_1 (1 - \delta_2)$, $y = \Lambda_2 (1 - \delta_1)$, and 
\begin{flalign*}
\varsigma = \displaystyle\frac{2 (\delta_2 - \delta_1)^{2 / 3} \chi_1^{1 / 6} \chi_2^{1 / 6}}{(1 - \delta_1)^{1 / 2} (1 - \delta_2)^{1 / 2}} ; \qquad \mathcal{F} = (\delta_1 - \delta_2)^{1 / 3} \chi_1^{1 / 3} \chi_2^{1 / 3}. 
\end{flalign*}

\noindent Then, for any real numbers $c, s \in \mathbb{R}$, we have that 
\begin{flalign*}
\displaystyle\lim_{T \rightarrow \infty} \mathbb{P} \Big[ \mathfrak{H} \big( x (T + \varsigma c T^{2 / 3}), y T \big) \ge b_1 b_2 (\delta_2 - \delta_1) T - b_1 (1 - \delta_2) \varsigma c T^{2 / 3} - \mathcal{F} s T^{1 / 3} \Big] = F_{\BR; c} (s). 
\end{flalign*}
\end{thm}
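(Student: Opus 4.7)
The plan is to adapt the Baik-Rains trick, developed for the stationary TASEP by Ferrari-Spohn and for the stationary KPZ equation by Borodin-Corwin-Ferrari-Vet\H{o}, to the discrete two-dimensional setting of the translation-invariant stochastic six-vertex model. The stationary measure $\mathcal{P}(\delta_1,\delta_2;b_1,b_2)$ with $\beta_1 = \kappa\beta_2$ will be accessed as the critical slice of a one-parameter family of \emph{non-stationary} measures with mismatched boundary Bernoulli densities; the Baik-Rains distribution then appears as a derivative in the mismatch parameter of Fredholm determinants each of whose individual large-$T$ limit is only of Tracy-Widom type.

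First I would establish, or invoke from prior work on the stochastic six-vertex model with step-Bernoulli boundary data (such as that of Borodin-Corwin-Gorin or Aggarwal-Borodin), a contour integral Fredholm determinant formula for a $q$-Laplace transform $\mathbb{E}\bigl[1/\bigl(\zeta q^{\mathfrak{H}(X,Y)};q\bigr)_\infty\bigr]$ under double-sided $(b_1,b_2)$-Bernoulli initial data, treating $b_1$ and $b_2$ as free parameters not constrained by $\beta_1 = \kappa\beta_2$. When stationarity is broken, the density on one side of the characteristic dominates and the current fluctuations at $(xT,yT)$ are controlled, after steepest descent, by a shifted Airy-kernel Fredholm determinant of Tracy-Widom type. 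If a direct derivation proves intractable, this step can instead be performed by coupling or second-class-particle arguments in the spirit of Bal\'azs-Sepp\"al\"ainen \cite{OCVDA}, reducing to step-Bernoulli initial data where the required formula is cleanly available.

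Next, I would implement the Baik-Rains trick. Observe that Definition \ref{stationarydistribution} writes $F_{\BR;c}(s)$ as $\partial_s$ applied to the product of $g(c,s)$ with $\det\bigl(\Id - K_{\Ai,c^2+s}\bigr)$, which is precisely the structure that arises when one differentiates a step-Bernoulli Fredholm determinant with respect to its boundary density at the critical matching with the bulk Bernoulli parameter. Concretely, after shifting the observation point by an amount of order $T^{2/3}$ in the characteristic transverse direction (producing the parameter $c$) and perturbing the boundary density by a small parameter $\eta$, I would use a Burke-type coupling to express the probability on the left-hand side of the theorem as the $\eta$-derivative at $\eta=0$ of a tractable quantity, and then identify this derivative with the Baik-Rains functional in the $T\to\infty$ limit.

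Finally, I would perform the saddle point analysis on the double contour integrals in the resulting Fredholm determinant, choosing the critical point so that the characteristic direction $X/Y = x/y$ emerges naturally and the cubic coefficient produces the factor $\mathcal{F}$ together with the spatial dilation $\varsigma$. The Airy-type integrals yield $K_{\Ai,c^2+s}$, the boundary factors yield $\Phi_{c,s}$ and $\Psi_{c,s}$, and the scalar piece yields $\mathcal{R}_{c,s}$, exactly matching Definitions \ref{stationarykernel1} and \ref{stationarykernel}. The main obstacles I foresee are: (i) producing a tractable exact formula under genuinely two-sided Bernoulli initial data, since two-sided Bethe ansatz for the stochastic six-vertex model is substantially more delicate than the one-sided case; and (ii) showing that the Baik-Rains $\eta$-differentiation commutes with the $T\to\infty$ limit, which requires tail and tightness estimates uniform in $\eta$. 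The $e^{-cx}$-factors in Definition \ref{stationarykernel}, which only decay along a half-line, indicate that the contour integrals must be estimated very carefully, and I expect this is where the bulk of the technical work will lie.
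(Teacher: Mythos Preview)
Your high-level architecture---access the stationary measure as a limit of near-stationary two-sided Bernoulli models and extract Baik--Rains via a finite-rank perturbation in the saddle-point analysis---matches the paper's route, and you correctly identify obstacle (i) as the crux. But the mechanism you describe differs from the paper's in two linked respects, and the gap is real.

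First, the paper does not obtain, and apparently cannot obtain, a Fredholm determinant for the bare $q$-Laplace transform $\mathbb{E}\bigl[1/(\zeta q^{\mathfrak{H}};q)_\infty\bigr]$ under two-sided Bernoulli data. What it derives instead (Theorem~\ref{determinantw}) is
\[
(\omega;q)_\infty \sum_{M\ge 0}\frac{\omega^M}{(q;q)_M}\,\mathbb{E}\Bigl[\tfrac{1}{(\zeta q^{\mathfrak{H}(x,t)-M};q)_\infty}\Bigr]=\det(\Id+V_\zeta)_{L^2(\mathcal{C}_V)},\qquad \omega=\kappa\beta_2/\beta_1<1.
\]
The derivation is the paper's main technical contribution: it specializes the Borodin--Petrov inhomogeneous higher-spin vertex model, uses \emph{row fusion} (geometric-progression spectral parameters $v,qv,\dots,q^{J-1}v$) to create a packet of $J$ paths entering from the $x$-axis, then analytically continues in the fusion parameter $\vartheta=q^{-J}$ to $\vartheta=0$ via a formal-power-series argument in $b_2$ (Sections~\ref{ObservablesDoubleSided}--\ref{DeterminantsDoubleSided}). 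Neither Bethe ansatz nor prior step-Bernoulli results suffice; the student's hoped-for formula for the bare $q$-Laplace transform does not appear in the literature and the paper's indirect route through the $M$-sum seems essential.

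Second, the Baik--Rains extraction uses this $M$-sum structure directly rather than an $\eta$-derivative or Burke-type coupling. Both sides of the identity vanish at $\omega=1$ (the prefactor $(\omega;q)_\infty$ kills the left side; the determinant kernel becomes singular). The paper sets $\omega=1-T^{-10}$, divides both sides by $(1-\omega)\mathcal{F}T^{1/3}$, and then: (a) shows by saddle-point analysis (Sections~\ref{DeterminantNew}--\ref{RightStationary}, following \cite{HFSE}) that the right side converges to $g(c,s)\det(\Id-K_{\Ai,c^2+s})$; (b) shows by an elementary computation (Proposition~\ref{linearsum}) that the \emph{$s$-difference quotient} of the left side converges to the indicator $\mathbf{1}_{\mathfrak{H}\ge\cdots}$. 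Matching $s$-derivatives recovers $F_{\BR;c}(s)$ by its very definition. The only coupling argument in the paper is the short final step (Section~\ref{NearInitial}) passing from $\omega=1-T^{-10}$ to $\omega=1$; no Burke or second-class-particle machinery enters the main line.
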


Let us discuss the relationship between the stochastic six-vertex model and the symmetric six-vertex model, as well as the consequences of Theorem \ref{processfluctuations} for the latter model. We refer to Appendix \ref{SixVertex} for a more detailed explanation of all terms we use about the six-vertex model. 

To that end, consider a symmetric, ferroelectric six-vertex model with weights $(a, a, b, b, c, c)$, and assume without loss of generality that $a > b$. Then, as to be explained in Appendix \ref{PropertyMapping}, there exist $\delta_1 = \delta_1 (a, b, c)$ and $\delta_2 = \delta_2 (a, b, c)$ (explicitly given by \eqref{transformparameters}) such that the stochastic six-vertex model $\mathcal{P} (\delta_1, \delta_2)$ prescribes a Gibbs measure for this ferroelectric six-vertex model. The translation-invariance of the stochastic six-vertex model $\mathcal{P} (\delta_1, \delta_2; b_1, b_2)$ implies that the corresponding Gibbs measure is translation-invariant. As discussed in Section \ref{Translation}, the slope corresponding to the latter measure can be quickly seen to be $(b_1, b_2)$. Therefore, $\mathcal{P} (\delta_1, \delta_2; b_1, b_2)$ becomes an unusually explicit class of translation-invariant Gibbs measures that characterizes a one-parameter family of slopes for this symmetric six-vertex model; in fact, the free energy per site of this model can be evaluated exactly and is given by Proposition \ref{energymeasure} in Appendix \ref{Translation}. 

Thus, Theorem \ref{processfluctuations} above also holds for the symmetric six-vertex model with weights $(a, a, b, b, c, c)$, where $(a, b, c)$ are defined in terms of $\delta_1$ and $\delta_2$ through \eqref{transformparameters}. This provides a very precise sense in which the height function of the ferroelectric six-vertex model (defined in the same way as for the stochastic six-vertex model) exhibits KPZ growth at the conical singularity, thereby confirming the prediction of Bukman and Shore from \cite{TCPFSVM}. 

Let us make two additional comments. First, Theorem \ref{processfluctuations} (when recast in terms of the ferroelectric, symmetric six-vertex model) states that KPZ growth holds for the height function of the six-vertex model only along a single characteristic direction. Although this was predicted for the stochastic six-vertex model by Gwa and Spohn \cite{DSENE}, this reinterpretation in terms of the symmetric six-vertex model was not predicted by Bukman-Shore \cite{TCPFSVM} and appears to be new. In particular, it suggests that the two-point function for this six-vertex model should decay exponentially, except along a certain characteristic direction, along which it decays as a power law in the distance (with exponent $2 / 3$). This is a highly unusual phenomenon in both critical (in which the two-point function should decay as a power law in every direction) and non-critical (in which the two-point function should decay exponentially in every direction) spin systems, and we are not aware of any recording of it in the physics or mathematics literature. 

Second, Theorem \ref{processfluctuations} provides exact limiting statistics for translation-invariant, generic ferroelectric six-vertex models along or near the characteristic line. Such detailed limiting statistics for the height fluctuations of translation-invariant six-vertex models are very rare. To the best of our knowledge, they have only been proven for dimer-type models \cite{IPRTP, OD, DA, CPLGR}. These can typically be mapped \cite{DTSVMFFP, SVLTMIC} to the free-fermionic point of the six-vertex model, which can then be accessed through determinantal methods (such as through analysis of the Kasteleyn matrix \cite{GTCP} and determinantal point processes \cite{DPP, IP}). 

For the generic ferroelectric six-vertex model, such free-fermionic methods are not available. Our result is the first we know of to mathematically establish the exact limiting behavior of a translation-invariant generic, ferroelectric six-vertex model.

\subsection{Outline}

\label{Sections}

What enables the proofs of Theorem \ref{currentfluctuations} and Theorem \ref{processfluctuations} are Fredholm determinant identites for the ASEP and stochastic six-vertex model with double-sided Bernoulli initial data (given by Theorem \ref{determinant} and Theorem \ref{determinantw}, respectively). 

The integrability of the ASEP with double-sided initial data is not entirely new. Indeed, in \cite{ATBIC}, Tracy and Widom implemented a coordinate Bethe ansatz to provide a Fredholm determinant identity for the ASEP with this initial data. However, these identities involve certain infinite sums that make them unsuitable for asymptotic analysis, except in the case of step-Bernoulli initial data \cite{ASBIC}. For this reason, a KPZ-type current fluctuation result for the stationary ASEP had, until now, remained unproven, even after several years of Tracy-Widom's work. 

It seems that one of the obstructions towards producing tractable Fredholm determinant identities for the ASEP with double-sided Bernoulli initial data is that most known proofs of such identities either rely on the coordinate Bethe ansatz \cite{WAEP, FDRA, IASEP} or duality \cite{DDQA, WAEP}. Both of these methods often require some degree of guesswork, which can pose issues, especially if the complexity of the underlying identity makes it troublesome to guess. 

However, a new proof of these identities in the case of step initial data was very recently proposed by Borodin and Petrov \cite{HSVMRSF}. In their work, these authors introduce a family of rational symmetric functions that can be viewed as partition functions of higher spin vertex models. Analysis of these symmetric functions, based on the Yang-Baxter relation, and then degenerating to the six-vertex or ASEP setting yields Fredholm determinant identities that are suitable for asymptotic analysis.

In this work we extend on their framework to produce Fredholm determinant identities (that are also amenable to asymptotic analysis) for the ASEP and stochastic six-vertex model under (a slightly restricted class of) double-sided Bernoulli initial data. This will proceed as follows. 

After providing a brief exposition of the work \cite{HSVMRSF} of Borodin and Petrov in Section \ref{GeneralVertexModels}, we explain how to directly degenerate the stochastic higher spin vertex model to a stochastic six-vertex model with a certain type of double-sided initial data in Section \ref{SpecializationM}. Analogous degenerations have been performed \cite{PTAEPSSVM} in the case of single-sided initial data (in which paths enter only through the $y$-axis); this involved deforming spins on the columns of the inhomogeneous vertex model. 

However, the development that produces a double-sided model (in which paths simultaneously enter through both the $x$-axis and $y$-axis) appears to be new. It is based on the idea of \emph{row fusion} in vertex models that dates back to Kulish-Reshetikhin-Sklyanin \cite{ERT} in a representation theoretic context, but was more recently developed in a probabilistic context by Corwin-Petrov \cite{SHSVML} and Borodin-Petrov \cite{HSVMRSF, IPSVMSF}. The double-sided initial data produced by this row fusion will in fact not coincide with double-sided Bernoulli initial data; in particular, only a finite number $J$ of particles will enter through the $x$-axis. However, it will resemble double-sided Bernoulli initial data in a way to be clarified at the end of Section \ref{PrincipalDoubleSided}. 

Next, we degenerate the results of \cite{HSVMRSF} to produce contour integral identities for certain observables ($q$-moments) of the stochastic six-vertex model with this type of double-sided initial data; this will be the topic of Section \ref{ContoursCurrent}. It is known \cite{MP, SSVM, DDQA} how to use such identities to obtain Fredholm determinant identities, which we will do in Section \ref{DeterminantsDoubleSided}. Remarkably, it will be possible to analytically continue these identities in the variable $J$ (which denotes the number of paths entering through the $x$-axis); this analytic continuation will give rise to Fredholm determinant identities for the stochastic six-vertex model with genuine double-sided $(b_1, b_2)$-Bernoulli initial data. Using the result of \cite{CSSVMEP}, which establishes a limit degeneration from the stochastic six-vertex model to the ASEP, we can degenerate these identities for the stochastic six-vertex model to produce analogous identities for the ASEP with double-sided $(b_1, b_2)$-Bernoulli initial data. Interestingly, we do not know of any other way to access these determinant identities for the double-sided ASEP other than to first establish them for the double-sided stochastic six-vertex model and then degenerate. 

The set of Bernoulli parameters $b_1$ and $b_2$ for which the Fredholm determinant identities Theorem \ref{determinantw} and Theorem \ref{determinant} apply is restricted; for instance, in the ASEP case, Theorem \ref{determinant} applies only when $b_1 > b_2$. This restriction is believed to correspond to the existence of a \emph{rarefaction fan} in the scaling limit, which is a cone (or interval) in which current fluctuations are of order less than $T^{1 / 2}$; outside this cone, fluctuations are expected to either have exponent $1 / 2$ or be exponentially small. We beleive that it should be possible to asymptotically analyze the Fredholm determinant identities Theorem \ref{determinantw} and Theorem \ref{determinant}, in order to verify the existence of rarefaction fans and produce central limit theorems for the current; this would provide a proof of part of what is known as the \emph{Pr\"{a}hofer-Spohn conjecture} for the ASEP, which was established for the TASEP in \cite{CFTP} through other (more probabilistic) methods. 

However, we do not pursue this in this paper. Instead, we analyze the stationary or translation-invariant cases, corresponding to $b_1 = b_2$ (for the ASEP) and $\beta_1 = \kappa \beta_2$ (for the stochastic six-vertex model). In Section \ref{StationaryReduction}, we will begin the asymptotic analysis of these Fredholm determinant identities and explain how they can be used to establish Theorem \ref{currentfluctuations} and Theorem \ref{processfluctuations}. Asymptotics for determinants of this type have been evaluated in \cite{LPSCL, HFSE, SLSCST}, but some new issues arise in our setting that complicate the analysis; these will be resolved in the Section \ref{DeterminantNew}.  We will complete the asymptotic analysis (and thus the proofs of Theorem \ref{currentfluctuations} and Theorem \ref{processfluctuations}) in Section \ref{RightStationary}, which will mainly follow Section 6, Section 7, and Section 8 of \cite{HFSE}.

\subsection*{Acknowledgements}

The author heartily thanks Alexei Borodin and Herbert Spohn for many valuable conversations. The author is also grateful to Ivan Corwin for some helpful remarks on the Baik-Rains distribution, to Leonid Petrov for some useful advice on $q$-hypergeometric series, and to the anonymous referees for numerous detailed comments and suggestions. This work was funded by the Eric Cooper and Naomi Siegel Graduate Student Fellowship I and the NSF Graduate Research Fellowship under grant number DGE1144152.

\section{Stochastic Higher Spin Vertex Models}

\label{GeneralVertexModels}

Both the ASEP and stochastic six-vertex model are degenerations of a larger class of vertex models called the \emph{inhomogeneous stochastic higher spin vertex models}, which were recently introduced by Borodin and Petrov in \cite{HSVMRSF}. These models are in a sense the original source of integrability for the ASEP, the stochastic six-vertex model, and in fact most models proven to be in the Kardar-Parisi-Zhang (KPZ) universality class. 

For that reason, we begin our discussion by defining these vertex models. Similar to the stochastic six-vertex model, these models take place on \emph{directed-path ensembles}. We first carefully define what we mean by a directed-path ensemble in Section \ref{PathEnsembles}, and then we define the stochastic higher spin vertex model in Section \ref{ProbabilityMeasures}. In Section \ref{InteractingParticles}, we re-interpret these models as interacting particle systems. This will be useful in Section \ref{RationalFunctions}, where we define the family of inhomogeneous symmetric functions introduced by Borodin and Petrov \cite{HSVMRSF} that lead to the integrability of those models.

\subsection{Directed Path Ensembles}

\label{PathEnsembles}

For the purpose of this paper, a \emph{directed path} is a collection of \emph{vertices}, which are lattice points in the non-negative quadrant $\mathbb{Z}_{\ge 0}^2$, connected by \emph{directed edges} (which we may also refer to as \emph{arrows}). A directed edge can connect a vertex $(i, j)$ to either $(i + 1, j)$ or $(i, j + 1)$, if $(i, j) \in \mathbb{Z}_{> 0}^2$; we also allow directed edges to connect $(k, 0)$ to $(k, 1)$ or $(0, k)$ to $(1, k)$, for any positive integer $k$. Thus, directed edges connect adjacent points, always point either up or to the right, and do not lie on the $x$-axis or $y$-axis. 

A \emph{directed-path ensemble} is a collection of paths satisfying the following two properties.

\begin{itemize}
\item{ Each path must contain an edge connecting $(0, k)$ to $(1, k)$ or $(k, 0)$ to $(k, 1)$ for some $k > 0$; stated alternatively, every path ``emanates'' from either the $x$-axis or the $y$-axis.}

\item{ No two distinct paths can share a horizontal edge; however, in contrast with the six-vertex case, they may share vertical edges.}

\end{itemize}

\noindent  An example of a directed-path ensemble was previously shown in Figure \ref{figurevertexwedge}. See also Figure \ref{shift2generalizedvertex} for more examples. 

Associated with each $(x, y) \in \mathbb{Z}_{> 0 }^2$ in a path ensemble is an \emph{arrow configuration}, which is a quadruple $(i_1, j_1; i_2, j_2) = (i_1, j_1; i_2, j_2)_{(x, y)}$ of non-negative integers. Here, $i_1$ denotes the number of directed edges from $(x, y - 1)$ to $(x, y)$; equivalently, $i_1$ denotes the number of vertical incoming arrows at $(x, y)$. Similarly, $j_1$ denotes the number of horizontal incoming arrows; $i_2$ denotes the number of vertical outgoing arrows; and $j_2$ denotes the number of horizontal outgoing arrows. Thus $j_1, j_2 \in \{ 0, 1 \}$ at every vertex in a path ensemble, since no two paths share a horizontal edge. An example of an arrow configuration (which cannot be a vertex in a directed-path ensemble, since $j_1, j_2 \notin \{ 0, 1 \}$) is depicted in Figure \ref{arrows}. 

Assigning values $j_1$ to vertices on the line $(1, y)$ and values $i_1$ to vertices on the line $(x, 1)$ can be viewed as imposing boundary conditions on the vertex model. If $j_1 = 1$ at $(1, k)$ and $i_1 = 0$ at $(k, 1)$ for each $k > 0$, then all paths enter through the $y$-axis, and every vertex on the positive $y$-axis is an entrance site for some path. This particular assignment is called \emph{step initial data}; it was depicted in Figure \ref{figurevertexwedge}, and it is also depicted on the left side of Figure \ref{shift2generalizedvertex}. In general, we will refer to any assignment of $i_1$ to $\mathbb{Z}_{> 0} \times \{ 1 \}$ and $j_1$ to $\{ 1 \} \times \mathbb{Z}_{> 0}$ as \emph{initial data}, which can be deterministic (like step) or random. 
	
Observe that, at any vertex in the positive quadrant, the total number of incoming arrows is equal to the total number of outgoing arrows; that is, $i_1 + j_1 = i_2 + j_2$. This is sometimes referred to as \emph{arrow conservation} (or \emph{spin conservation}). Any arrow configuration to all vertices of $\mathbb{Z}_{> 0}^2$ that satisfies arrow conservation corresponds to a unique directed-path ensemble, where paths can share both vertical and horizontal edges. 

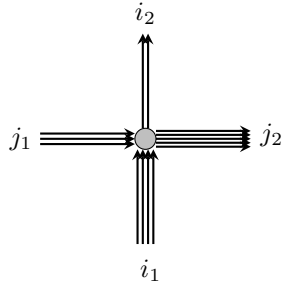
\begin{figure}

\begin{center} 

\begin{tikzpicture}[
      >=stealth,
      scale = .7
			]
			
			\draw[->,black, thick] (-.05, .2) -- (-.05, 2) node[right = 1, above]{$i_2$};
			\draw[->,black, thick] (.05, .2) -- (.05, 2);
			\draw[->,black, thick] (-.15,-2) -- (-.15, -.2);
			\draw[->,black, thick] (-.05,-2) -- (-.05, -.2) node[right = 3, below = 38]{$i_1$};
			\draw[->,black, thick] (.05,-2) -- (.05, -.2);
			\draw[->,black, thick] (.15,-2) -- (.15, -.2);
			\draw[->,black, thick] (-2, -.1) -- (-.2, -.1) node[above = 2, left = 35]{$j_1$};
			\draw[->,black, thick] (-2, 0) -- (-.2, 0);
			\draw[->,black, thick] (-2,.1) -- (-.2, .1);
			\draw[->,black, thick] (.2, -.15) -- (2, -.15); 
			\draw[->,black, thick] (.2, -.075) -- (2, -.075); 
			\draw[->,black, thick] (.2, 0) -- (2, 0); 
			\draw[->,black, thick] (.2, .075) -- (2, .075) node[right]{$j_2$}; 
			\draw[->,black, thick] (.2, .15) -- (2, .15);

			\filldraw[fill=gray!50!white, draw=black] (0, 0) circle [radius=.2];

\end{tikzpicture}

\end{center}

\caption{\label{arrows} Above is a vertex at which $(i_1, j_1; i_2, j_2) = (4, 3; 2, 5)$.} 
\end{figure}

\subsection{Probability Measures on Path Ensembles}

\label{ProbabilityMeasures}

The definition of the stochastic higher spin vertex models will closely resemble the definition of the stochastic six-vertex model given in Section \ref{StochasticVertex}. Specifically, we will first define probability measures $\mathbb{P}_n$ on the set of directed-path ensembles whose vertices are all contained in triangles of the form $\mathbb{T}_n = \{ (x, y) \in \mathbb{Z}_{\ge 0}^2: x + y \le n \}$, and then we will take a limit as $n$ tends to infinity to obtain the vertex models in infinite volume. The first two measures $\mathbb{P}_0$ and $\mathbb{P}_1$ are both supported by the empty ensembles. 

For each positive integer $n \ge 1$, we will define $\mathbb{P}_{n + 1}$ from $\mathbb{P}_n$ through the following Markovian update rules. Use $\mathbb{P}_n$ to sample a directed-path ensemble $\mathcal{E}_n$ on $\mathbb{T}_n$. This yields arrow configurations for all vertices in the triangle $\mathbb{T}_{n - 1}$. To extend this to a path ensemble on $\mathbb{T}_{n + 1}$, we must prescribe arrow configurations to all vertices $(x, y)$ on the complement $\mathbb{T}_n \setminus\mathbb{T}_{n - 1}$, which is the diagonal $\mathbb{D}_n = \{ (x, y) \in \mathbb{Z}_{> 0}^2: x + y = n \}$. Since any incoming arrow to $\mathbb{D}_n$ is an outgoing arrow from $\mathbb{D}_{n - 1}$, $\mathcal{E}_n$ and the initial data prescribe the first two coordinates, $i_1$ and $j_1$, of the arrow configuration to each $(x, y) \in \mathbb{D}_n$. Thus, it remains to explain how to assign the second two coordinates ($i_2$ and $j_2$) to any vertex on $\mathbb{D}_n$, given the first two coordinates. 

This is done by producing $(i_2, j_2)_{(x, y)}$ from $(i_1, j_1)_{(x, y)}$ according to the transition probabilities 
\begin{flalign}
\begin{aligned}
\label{configurationprobabilities} 
& \mathbb{P}_n \big[ (i_2, j_2) = (k, 0) \big| (i_1, j_1) = (k, 0) \big] = \displaystyle\frac{1 - q^k s_x \xi_x u_y}{1 - s_x \xi_x u_y}, \\
& \mathbb{P}_n \big[ (i_2, j_2) = (k - 1, 1) \big| (i_1, j_1) = (k, 0) \big] = \displaystyle\frac{(q^k - 1) s_x \xi_x u_y}{1 - s_x \xi_x u_y}, \\
& \mathbb{P}_n \big[ (i_2, j_2) = (k + 1, 0) \big| (i_1, j_1) = (k, 1) \big] = \displaystyle\frac{1 - q^k s_x^2}{1 - s_x \xi_x u_y},  \\
& \mathbb{P}_n \big[ (i_2, j_2) = (k, 1) \big| (i_1, j_1) = (k, 1) \big] = \displaystyle\frac{q^k s_x^2 - s_x \xi_x u_y}{1 - s_x \xi_x u_y},
\end{aligned}   
\end{flalign}

\noindent for any non-negative integer $k$. We also set $\mathbb{P}_n [(i_2, j_2) | (i_1, j_1)] = 0$ for all $(i_1, j_1; i_2, j_2)$ not of the above form. In the above, $q \in \mathbb{C}$ is a complex number and $U = (u_1, u_2, \ldots ) \subset \mathbb{C}$, $\Xi = (\xi_1, \xi_2, \ldots ) \subset \mathbb{C}$, and $S = (s_1, s_2, \ldots ) \subset \mathbb{C}$ are infinite sets of complex numbers, chosen to ensure that all of the above probabilities are non-negative. This can be arranged for instance when $q \in (0, 1)$, $U \subset (-\infty, 0]$, and $S, \Xi \subset [0, 1]$, although there are other suitable choices as well. 

Choosing $(i_2, j_2)$ according to the above transition probabilities yields a random directed-path ensemble $\mathcal{E}_{n + 1}$, now defined on $\mathbb{T}_{n + 1}$; the probability distribution of $\mathcal{E}_{n + 1}$ is then denoted by $\mathbb{P}_{n + 1}$. We define $\mathbb{P} = \lim_{n \rightarrow \infty} \mathbb{P}_n$. Then, $\mathbb{P}$ is a probability measure on the set of directed-path ensembles that is dependent on the complex parameters $q$, $U$, $\Xi$, and $S$. The variables $U = (u_1, u_2, \ldots )$ are occasionally referred to as \emph{spectral parameters}, the variables $\Xi = (\xi_1, \xi_2, \ldots )$ as \emph{spacial inhomogeneity parameters}, and the variables $S = (s_1, s_2, \ldots )$ as \emph{spin parameters}.

\subsection{Vertex Models and Interacting Particle Systems}

\label{InteractingParticles}

Let $\mathbb{P}^{(T)}$ denote the restriction of the random path ensemble with step initial data (given by the measure $\mathbb{P}$ from the previous section) to the strip $\mathbb{Z}_{> 0} \times [0, T]$. Assume that all $T$ paths in this restriction almost surely exit the strip $\mathbb{Z}_{> 0} \times [0, T]$ through its top boundary. This will be the case, for instance, if $\mathbb{P}_n \big[ (i_2, j_2) = (0, k) \big| (i_1, j_1) = (0, k) \big] < 1$ uniformly in $n$ and $k$. 
	
We will use the probability measure $\mathbb{P}^{(T)}$ to produce a discrete-time interacting particle system on $\mathbb{Z}_{> 0}$, defined up to time $T - 1$, as follows. Sample a line ensemble $\mathcal{E}$ randomly under $\mathbb{P}^{(T)}$, and consider the arrow configuration it associates with some vertex $(p, t) \in \mathbb{Z}_{> 0} \times [1, T]$. We will place $k$ particles at position $p$ and time $t - 1$ if and only if $i_1 = k $ at the vertex $(p, t)$. Therefore, the paths in the path ensemble $\mathcal{E}$ correspond to space-time trajectories of the particles. 

Let us introduce notation for particle positions. A \emph{non-negative signature} $\lambda = (\lambda_1, \lambda_2, \ldots , \lambda_n)$ of \emph{length} $n$ (also denoted $\ell (\lambda) = n$) is a non-increasing sequence of $n$ integers $\lambda_1 \ge \lambda_2 \ge \cdots \ge \lambda_n \ge 0$. We denote the set of non-negative signatures of length $n$ by $\Sign_n^+$, and the set of all non-negative signatures by $\Sign^+ = \bigcup_{N = 0}^{\infty} \Sign_n^+$. For any signature $\lambda$ and integer $j$, let $m_j (\lambda)$ denote the number of indices $i$ for which $\lambda_i = j$; that is, $m_j (\lambda)$ is the multiplicity of $j$ in $\lambda$. 

We can associate a configuration of $n$ particles in $\mathbb{Z}_{\ge 0}$ with a signature of length $n$ as follows. A signature $\lambda = (\lambda_1, \lambda_2, \ldots , \lambda_n)$ is associated with the particle configuration that has $m_j (\lambda)$ particles at position $j$, for each non-negative integer $j$. Stated alternatively, $\lambda$ is the ordered set of positions in the particle configuration. If $\mathcal{E}$ is a directed line ensemble on $\mathbb{Z}_{\ge 0} \times [0, n]$, let $p_n (\mathcal{E}) \in \Sign_n^+$ denote the signature associated with the particle configuration produced from $\mathcal{E}$ at time $n$. 

Then, $\mathbb{P}^{(n)}$ induces a probability measure on $\Sign_n^+$,  defined as follows. 

\begin{definition}

\label{measuresignaturesvertexmodel} 

For any positive integer $n$, let $\textbf{M}_n$ denote the measure on $\Sign_n^+$ (dependent on the parameters $q$, $U$, $S$, and $\Xi$) defined by setting $\textbf{M}_n (\lambda) = \mathbb{P}^{(n)} [p_n (\mathcal{E}) = \lambda]$, for each $\lambda \in \Sign_n^+$. 
\end{definition}

\subsection{The Inhomogeneous Symmetric Functions}

\label{RationalFunctions}

In this section we define the inhomogeneous rational symmetric functions introduced by Borodin and Petrov in \cite{HSVMRSF} and explain their connection to the higher spin vertex models. 

The functions we introduce, which will be denoted by $F_{\lambda / \mu}$ and $G_{\lambda / \mu}$ (for signatures $\lambda$ and $\mu$), will be partition functions of path ensembles that are weighted in a specific way. Each path ensemble consists of a collection of vertices and arrow configurations assigned to each vertex. To each vertex will be associated a \emph{vertex weight}, which will depend on the position of the vertex and its arrow configuration. The weight of the directed-path ensemble will then be the product of the weights of all vertices in the ensemble. 

Let us explain this in more detail. For this section only, we ``shift'' all directed-path ensembles to the left by one in order to make our presentation consistent with that of Borodin and Petrov in Section 4 of \cite{HSVMRSF}. That is, any directed path in a path ensemble either contains an edge from $(-1, k + 1)$ to $(0, k + 1)$ or from $(k, 0)$ to $(k, 1)$, for some non-negative integer $k$. Arrow configurations are now defined on $\mathbb{Z}_{\ge 0} \times \mathbb{Z}_{> 0}$ instead of on $\mathbb{Z}_{> 0}^2$; see Figure \ref{fgpaths}. 

Fix $q \in \mathbb{C}$; $U = (u_1, u_2, \ldots ) \subset \mathbb{C}$; $\Xi = (\xi_0, \xi_1, \xi_2, \ldots) \subset \mathbb{C}$, and $S = (s_0, s_1, s_2, \ldots ) \subset \mathbb{C}$. To each $(x, y) \in \mathbb{Z}_{\ge 0} \times \mathbb{Z}_{> 0}$, define the vertex weight $w_{(x, y)} = w_{u_y; \xi_x, s_x}$ by 
\begin{flalign*}
& w_{(x, y)} (k, 0; k, 0) = \displaystyle\frac{1 - q^k s_x \xi_x u_y}{1 - s_x \xi_x u_y}; \qquad \quad w_{(x, y)} (k + 1, 0; k, 1) = \displaystyle\frac{(1 - q^k s_x^2) \xi_x u_y}{1 - s_x \xi_x u_y}, \\
& w_{(x, y)} (k, 1; k + 1, 0) = \displaystyle\frac{1 - q^{k + 1}}{1 - s_x \xi_x u_y}; \qquad w_{(x, y)} (k, 1; k, 1) = \displaystyle\frac{\xi_x u_y - q^k s_x}{1 - s_x \xi_x u_y},
\end{flalign*} 

\noindent for any non-negative integer $k$, and $w_{(x, y)} (i_1, j_1; i_2, j_2) = 0$ for any $(i_1, j_1; i_2, j_2)$ not of the above form. These functions assign weight $w_{(x, y)} (i_1, j_1; i_2, j_2) $ to vertex $(x, y)$ if it has arrow configuration $(i_1, j_1; i_2, j_2)$. 

Let $\mathcal{E}$ be a directed-path ensemble, all of whose vertices are contained in some region $[-1, \infty) \times [0, N + 1]$, for some positive integer $N$. Define the \emph{weight} of $\mathcal{E}$ to be the product of the weights of the \emph{interior vertices} of $\mathcal{E}$, that is, the vertices of $\mathcal{E}$ contained in the strip $[0, \infty) \times [1, N] \subset \mathbb{Z}_{\ge 0} \times \mathbb{Z}_{> 0}$; these are the vertices of $\mathcal{E}$ satisfying arrow conservation. Assuming that all paths of $\mathcal{E}$ are finite, this product is well defined since all but finitely many vertices in $[0, \infty) \times [0, N]$ have vertex type $(0, 0; 0, 0)$, and $w_{(x, y)} (0, 0; 0, 0) = 1$ for any $(x, y) \in \mathbb{Z}_{\ge 0} \times \mathbb{Z}_{> 0}$. 

Now, using these weights we can define the rational symmetric functions $F_{\lambda / \mu}$ and $G_{\lambda / \mu}$. These were originally given by Borodin and Petrov in \cite{HSVMRSF} as Definition 4.4 and Definition 4.3, respectively, as generalizations of the homogeneous functions introduced by Povolotsky \cite{IZRCMFSS} and Borodin \cite{FRSF}. Let us also mentioned that some of their precursors were known to mathematical physicists far earlier. For instance, see the original work of Bethe \cite{TMLAC} from 1931 and also the works of Babbitt-Thomas \cite{GSRIF}, Kirillov-Reshetikhin \cite{ASG}, Babbitt-Gutkin \cite{ISC}, Jing \cite{VOSF}, Felder-Tarasov-Varchenko \cite{EEA}, and van Deijen \cite{CRBCW}; in fact, Proposition 4 of the work \cite{EEA} introduced elliptic generalizations of the $F_{\lambda / \mu}$ functions defined below.

\begin{definition}[{\cite[Definition 4.4]{HSVMRSF}}]
\label{definitionf}
Let $M, N \ge 0$ be integers and $\lambda \in \Sign_{M + N}^+$ and $\mu \in \Sign_M^+$ be signatures. Then, the rational function $F_{\lambda / \mu} \big( u_1, u_2, \ldots , u_N \b| \Xi, S \big)$ denotes the sum of the weights of all directed-path ensembles, consisting of $M + N$ paths whose interior vertices are contained in the rectangle $[0, \lambda_1] \times [1, N]$, satisfying the following two properties. 

\begin{itemize}
\item{Every path contains one edge that either connects $(-1, k)$ to $(0, k)$ for some $k \in [1, N]$ or connects $(\mu_j, 0)$ to $(\mu_j, 1)$ for some $j \in [1, M]$; the latter part of this statement holds with multiplicity, meaning that $m_i (\mu)$ paths connect $(i, 0)$ to $(i, 1)$ for each $i \in \mu$.}
\item{ Every path contains an edge connecting $(\lambda_k, N)$ to $(\lambda_k, N + 1)$, for some $k \in [1, M + N]$; again, this holds with multiplicity.}

\end{itemize} 
\end{definition}

\begin{definition}[{\cite[Definition 4.3]{HSVMRSF}}] 
\label{definitiong} 
Let $M, N \ge 0$ be integers and $\lambda, \mu \in \Sign_M^+$ be signatures. Then, the rational function $G_{\lambda / \mu} \big( u_1, u_2, \ldots , u_N \b| \Xi, S \big)$ denotes the sum of the weights of all directed-path ensembles, consisting of $M$ paths whose interior vertices are contained in the rectangle $[0, \lambda_1] \times [1, N]$, satisfying the following two properties (both with multiplicity). 

\begin{itemize}
\item{Each path contains an edge connecting $(\mu_k, 0)$ to $(\mu_k, 1)$ for some $k \in [1, M]$.}

\item{Each path contains an edge connecting $(\lambda_k, N)$ to $(\lambda_k, N + 1)$ for some $k \in [1, M]$.}
\end{itemize}

\end{definition}

Examples of the types of paths counted by $F_{\lambda / \mu}$ and $G_{\lambda / \mu}$ are depicted in Figure \ref{fgpaths}. 

We define $F_{\lambda}$ to be the function $F_{\lambda / \mu}$ when $\mu$ is empty; similarly, we define $G_{\lambda} = G_{\lambda / \mu}$, where $\mu = (0, 0, \ldots , 0)$ is the signature with $\ell (\lambda)$ zeroes. 

\begin{figure}

\begin{center}

\begin{tikzpicture}[
      >=stealth,
			scale = 1.2
			]

			\draw[->, black, dashed	] (.5, 0) -- (.5, 3);
			\draw[->, black, dashed] (.5, 0) -- (3, 0);
			\draw[->,black, thick] (0, .5) -- (.45, .5);
			\draw[->,black, thick] (0, 1) -- (.45, 1);
			\draw[->,black, thick] (0, 1.5) -- (.45, 1.5);
			\draw[->,black, thick] (0, 2) -- (.45, 2);
			
			\draw[->, black, thick	] (.5, 0) -- (.5, .45);
			\draw[->,black, thick] (.53, .55) -- (.53, .95);
			\draw[->,black, thick] (.47, .55) -- (.47, .95);
			\draw[->,black, thick] (.55, 1) -- (.95, 1);
			\draw[->,black, thick] (.55, 1.5) -- (.95, 1.5);
			\draw[->,black, thick] (.53, 1.05) -- (.53, 1.45);
			\draw[->,black, thick] (.47, 1.05) -- (.47, 1.45);
			\draw[->,black, thick] (.53, 1.55) -- (.53, 1.95);
			\draw[->,black, thick] (.47, 1.55) -- (.47, 1.95);
			\draw[->,black, thick] (.55, 2.05) -- (.55, 2.45);
			\draw[->,black, thick] (.5, 2.05) -- (.5, 2.45);
			\draw[->,black, thick] (.45, 2.05) -- (.45, 2.45);

			\draw[->,black, thick] (1.05, 1.5) -- (1.45, 1.5);
			\draw[->,black, thick] (1, 1.55) -- (1, 1.95);
			\draw[->,black, thick] (1, 1.05) -- (1, 1.45);
			\draw[->,black, thick] (1, 2.05) -- (1, 2.45);
			\draw[->, black, thick	] (1, 0) -- (1, .45);
			\draw[->,black, thick] (1.05, .5) -- (1.45, .5);

			\draw[->,black, thick] (1.5, .55) -- (1.5, .95);
			\draw[->,black, thick] (1.5, 1.05) -- (1.5, 1.45);
			\draw[->,black, thick] (1.47, 1.55) -- (1.47, 1.95);
			\draw[->,black, thick] (1.53, 1.55) -- (1.53, 1.95);
			\draw[->,black, thick] (1.5, 2.05) -- (1.5, 2.45);
			\draw[->,black, thick] (1.55, 2) -- (1.95, 2);

			\draw[->,black, thick] (2.05, .5) -- (2.45, .5);
			\draw[->,black, thick] (2.05, 2) -- (2.45, 2);
			\draw[->, black, thick] (2, 0) -- (2, .45);
			
			\draw[->, black, thick] (2.5, .55) -- (2.5, .95);
			\draw[->, black, thick] (2.5, 1.05) -- (2.5, 1.45);
			\draw[->, black, thick] (2.5, 1.55) -- (2.5, 1.95);
			\draw[->, black, thick] (2.47, 2.05) -- (2.47, 2.5);
			\draw[->, black, thick] (2.53, 2.05) -- (2.53, 2.5);

			\filldraw[fill=gray!50!white, draw=black] (.5, .5) circle [radius=.05];
			\filldraw[fill=gray!50!white, draw=black] (.5, 1) circle [radius=.05];
			\filldraw[fill=gray!50!white, draw=black] (.5, 1.5) circle [radius=.05];
			\filldraw[fill=gray!50!white, draw=black] (.5, 2) circle [radius=.05];

			\filldraw[fill=gray!50!white, draw=black] (1, .5) circle [radius=.05];
			\filldraw[fill=gray!50!white, draw=black] (1, 1) circle [radius=.05];
			\filldraw[fill=gray!50!white, draw=black] (1, 1.5) circle [radius=.05];
			\filldraw[fill=gray!50!white, draw=black] (1, 2) circle [radius=.05];
			
			\filldraw[fill=gray!50!white, draw=black] (1.5, .5) circle [radius=.05];
			\filldraw[fill=gray!50!white, draw=black] (1.5, 1) circle [radius=.05];
			\filldraw[fill=gray!50!white, draw=black] (1.5, 1.5) circle [radius=.05];
			\filldraw[fill=gray!50!white, draw=black] (1.5, 2) circle [radius=.05];

			\filldraw[fill=gray!50!white, draw=black] (2, .5) circle [radius=.05];
			\filldraw[fill=gray!50!white, draw=black] (2, 1) circle [radius=.05];
			\filldraw[fill=gray!50!white, draw=black] (2, 1.5) circle [radius=.05];
			\filldraw[fill=gray!50!white, draw=black] (2, 2) circle [radius=.05];
			
			\filldraw[fill=gray!50!white, draw=black] (2.5, .5) circle [radius=.05];
			\filldraw[fill=gray!50!white, draw=black] (2.5, 1) circle [radius=.05];
			\filldraw[fill=gray!50!white, draw=black] (2.5, 1.5) circle [radius=.05];
			\filldraw[fill=gray!50!white, draw=black] (2.5, 2) circle [radius=.05];

			\draw[->, black, dashed] (7.5, 0) -- (7.5, 3);
			\draw[->, black, dashed] (7.5, 0) -- (10, 0);
			
			\draw[->, black, thick	] (7.5, 0) -- (7.5, .45);
			\draw[->,black, thick] (7.5, .55) -- (7.5, .95);
			\draw[->,black, thick] (7.55, 1) -- (7.95, 1);

			\draw[->,black, thick] (8.05, 0) -- (8.05, .45);
			\draw[->,black, thick] (8, 0) -- (8, .45);
			\draw[->,black, thick] (7.95, 0) -- (7.95, .45);
			\draw[->,black, thick] (8.03, .55) -- (8.03, .95);
			\draw[->,black, thick] (7.97, .55) -- (7.97, .95);
			\draw[->,black, thick] (8, 1.55) -- (8, 1.95);
			
			\draw[->,black, thick] (8.05, 1.5) -- (8.45, 1.5);
			\draw[->,black, thick] (8.03, 1.05) -- (8.03, 1.45);
			\draw[->,black, thick] (7.97, 1.05) -- (7.97, 1.45);
			\draw[->,black, thick] (8.05, 1) -- (8.45, 1);
			\draw[->,black, thick] (8.05, .5) -- (8.45, .5);
			\draw[->,black, thick] (8.05, 2) -- (8.45, 2);

			\draw[->,black, thick] (8.5, .55) -- (8.5, .95);
			\draw[->,black, thick] (8.5, 1.05) -- (8.5, 1.45);
			\draw[->,black, thick] (8.47, 1.55) -- (8.47, 1.95);
			\draw[->,black, thick] (8.53, 1.55) -- (8.53, 1.95);
			\draw[->,black, thick] (8.47, 2.05) -- (8.47, 2.45);
			\draw[->,black, thick] (8.53, 2.05) -- (8.53, 2.45);
			\draw[->,black, thick] (8.55, 2) -- (8.95, 2);
			\draw[->,black, thick] (8.55, 1) -- (8.95, 1);

			\draw[->,black, thick] (9.05, .5) -- (9.45, .5);
			\draw[->,black, thick] (9.05, 2) -- (9.45, 2);
			\draw[->, black, thick] (9, 0) -- (9, .45);
			\draw[->, black, thick] (9, 1) -- (9, 1.45);
			\draw[->, black, thick] (9, 1.55) -- (9, 1.95);
			\draw[->, black, thick] (9, 2.05) -- (9, 2.45);
			
			\draw[->, black, thick] (9.5, .55) -- (9.5, .95);
			\draw[->, black, thick] (9.5, 1.05) -- (9.5, 1.45);
			\draw[->, black, thick] (9.5, 1.55) -- (9.5, 1.95);
			\draw[->, black, thick] (9.47, 2.05) -- (9.47, 2.5);
			\draw[->, black, thick] (9.53, 2.05) -- (9.53, 2.5);

			\filldraw[fill=gray!50!white, draw=black] (7.5, .5) circle [radius=.05];
			\filldraw[fill=gray!50!white, draw=black] (7.5, 1) circle [radius=.05];
			\filldraw[fill=gray!50!white, draw=black] (7.5, 1.5) circle [radius=.05];
			\filldraw[fill=gray!50!white, draw=black] (7.5, 2) circle [radius=.05];
			
			\filldraw[fill=gray!50!white, draw=black] (8, .5) circle [radius=.05];
			\filldraw[fill=gray!50!white, draw=black] (8, 1) circle [radius=.05];
			\filldraw[fill=gray!50!white, draw=black] (8, 1.5) circle [radius=.05];
			\filldraw[fill=gray!50!white, draw=black] (8, 2) circle [radius=.05];

			\filldraw[fill=gray!50!white, draw=black] (8.5, .5) circle [radius=.05];
			\filldraw[fill=gray!50!white, draw=black] (8.5, 1) circle [radius=.05];
			\filldraw[fill=gray!50!white, draw=black] (8.5, 1.5) circle [radius=.05];
			\filldraw[fill=gray!50!white, draw=black] (8.5, 2) circle [radius=.05];
			
			\filldraw[fill=gray!50!white, draw=black] (9, .5) circle [radius=.05];
			\filldraw[fill=gray!50!white, draw=black] (9, 1) circle [radius=.05];
			\filldraw[fill=gray!50!white, draw=black] (9, 1.5) circle [radius=.05];
			\filldraw[fill=gray!50!white, draw=black] (9, 2) circle [radius=.05];

			\filldraw[fill=gray!50!white, draw=black] (9.5, .5) circle [radius=.05];
			\filldraw[fill=gray!50!white, draw=black] (9.5, 1) circle [radius=.05];
			\filldraw[fill=gray!50!white, draw=black] (9.5, 1.5) circle [radius=.05];
			\filldraw[fill=gray!50!white, draw=black] (9.5, 2) circle [radius=.05];

\end{tikzpicture}

\end{center}

\caption{\label{fgpaths} To the left is a path that would be counted by $F_{\lambda / \mu}$, with $\lambda = (4, 4, 2, 1, 0, 0, 0)$ and $\mu = (3, 1, 0)$; to the right is a path that would be counted by $G_{\lambda / \mu}$, with $\lambda = (4, 4, 3, 2, 2)$ and $\mu = (3, 1, 1, 1, 0)$.  } 

\end{figure}
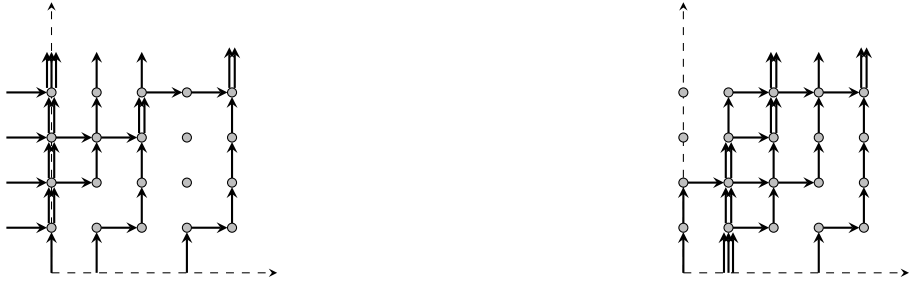

Before explaining the relationship between these symmetric functions and the stochastic higher spin vertex models, we define the \emph{conjugation} $G^c$ of $G$ through
\begin{flalign}
\label{gc}
G_{\lambda / \mu}^c \big( U \b| \Xi, S \big) = \left( \displaystyle\frac{c_S (\lambda) }{c_S (\mu)} \right) G_{\lambda / \mu} \big( U \b| \Xi, S \big), \qquad \text{where} \quad c_S (\nu) = \displaystyle\prod_{i = 1}^{\infty} \displaystyle\frac{(s_i^2; q)_{m_i (\nu)}}{(q; q)_{m_i (\nu)}},
\end{flalign}

\noindent for any $\nu \in \Sign^+$. For any $v \in \mathbb{C}$, also define 
\begin{flalign}
\label{grhoq}
G_{\lambda} \big( \rho (v, q^J) \b| \Xi, S \big) = G_{\lambda} \big( v, qv, \ldots , q^{J - 1} v \b| \Xi, S \big). 	
\end{flalign}

\noindent As shown in the proof of Proposition 6.7 of \cite{HSVMRSF}, this is a rational function in $q^J$; hence, by analytic continuation, $G_{\lambda} \big( \rho (v, w) \b| \Xi, S \big)$ is well-defined for any $w \in \mathbb{C}$. In particular, we can denote
\begin{flalign}
\label{grho}
G_{\lambda}^c \big( \rho \b| \Xi, S \big) = \displaystyle\lim_{v \rightarrow 0} G_{\lambda}^c \big( \rho (v, \xi_0^{-1} s_0^{-1} v^{-1}) \b| \Xi, S \big). 
\end{flalign}

\noindent The following lemma, which is Proposition 6.7 of \cite{HSVMRSF}, ensures that the limit $G_{\lambda} \big( \rho \b| \Xi, S \big)$ exists and also evaluates it explicitly. In what follows, $e_i (\lambda)$ denotes the number of indices $j$ for which $\lambda_j > i$. 

\begin{prop}[{\cite[Proposition 6.7]{HSVMRSF}}]
\label{productg}

Let $n \ge 0$ be an integer, and let $\lambda \in \Sign_n^+$. If $\lambda_n = 0$, then $G_{\lambda} (\rho \b| \Xi^{-1}, S) = 0$. Otherwise, 
\begin{flalign*} 
G_{\lambda} (\rho \b| \Xi^{-1}, S) = \displaystyle\frac{(s_0^2; q)_n}{(- s_0)^n} \displaystyle\prod_{i = 1}^n (-s_i)^{e_i (\lambda)}. 
\end{flalign*} 
\end{prop}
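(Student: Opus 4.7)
The plan is to combine row fusion with an explicit evaluation of the fused vertex weights in the principal specialization limit. By definitions \eqref{grhoq} and \eqref{grho}, it suffices to compute
$G_\lambda(v, qv, \ldots, q^{J-1}v \b| \Xi^{-1}, S)$
as a rational function of $q^J$, then substitute $q^J = \xi_0^{-1} s_0^{-1} v^{-1}$ and send $v \to 0$. Apply the row-fusion identity of Borodin--Petrov (cf.\ Section 3 of \cite{HSVMRSF}): the $J$ horizontal rows, carrying the geometric progression of spectral parameters $v, qv, \ldots, q^{J-1}v$, collapse into a single ``fused row'' whose vertex weights at each column $x$ are closed-form rational functions of $v$, $q^J$, $\xi_x^{-1}$, and $s_x$. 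This reduces $G_\lambda(\rho(v, q^J) \b| \Xi^{-1}, S)$ to a partition function of configurations on a single row, indexed by the vertical arrow counts crossing the horizontal slab at each column.

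Substituting $q^J = \xi_0^{-1} s_0^{-1} v^{-1}$ and sending $v \to 0$, I would examine the limiting fused weights column by column. For $x \ge 1$ the fused weights simplify so that only two configurations survive: the trivial ``no paths pass through'' weight, which tends to $1$, and the ``single path enters vertically, exits vertically with a shift of one column'' weight, which tends to $-s_x$. All other weights either vanish with $v$ or are excluded by arrow conservation together with the right boundary conditions imposed by $\lambda$. At column $x = 0$ the analysis is substantially more delicate: all $n$ paths from the bottom boundary enter at once, and the fused weight is a $q$-hypergeometric expression that, precisely under the critical specialization $q^J v \xi_0 s_0 = 1$, collapses by the $q$-Chu--Vandermonde summation (equivalently, a telescoping product of $q$-Pochhammer symbols) to $(s_0^2; q)_n / (-s_0)^n$ whenever all $n$ paths exit rightward, and to $0$ if even one path is forced to exit vertically out of column $0$, that is, when $\lambda_n = 0$.

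Assembling the surviving configuration, when $\lambda_n > 0$ every path exits column $0$ rightward and proceeds through columns $1, 2, \ldots, \lambda_k - 1$ before departing vertically at column $\lambda_k$. The number of paths traversing column $i$ is precisely $e_i(\lambda)$, so the contribution of columns $i \ge 1$ aggregates to $\prod_{i \ge 1} (-s_i)^{e_i(\lambda)}$; combined with the column-$0$ factor $(s_0^2; q)_n / (-s_0)^n$, this produces the claimed formula (with the upper limit $n$ reflecting the fact that columns beyond the support of $\lambda$ contribute trivially under the specialization).

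The principal obstacle is the column-$0$ evaluation. The fused weight at column $0$ is itself a sum over all ways of distributing the $n$ incoming paths' rightward exits across the fused row, each carrying a nontrivial rational factor in $v$ and $q^J$. Identifying the correct $q$-series identity that collapses this sum to the clean prefactor $(s_0^2; q)_n / (-s_0)^n$ at the critical point $q^J v \xi_0 s_0 = 1$, and verifying that the vanishing condition $\lambda_n = 0$ corresponds to a genuinely obstructed configuration rather than a cancellation, is where the crux of the argument lies; once this is done, the rest is a direct column-by-column assembly.
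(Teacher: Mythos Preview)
This proposition is quoted from \cite[Proposition 6.7]{HSVMRSF} and is not proved in the present paper, so there is no in-paper argument to compare against directly. I can only assess your sketch on its own terms and against what the Borodin--Petrov proof actually does.

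Your overall strategy---fuse the $J$ rows at the geometric specialization, analytically continue in $q^J$, set $q^J v \xi_0 s_0 = 1$, and send $v\to 0$---is the right framework and is indeed how \cite{HSVMRSF} arrives at this formula. The column-by-column product structure you anticipate also matches the target.

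However, your description of the surviving weights at columns $x\ge 1$ is garbled in a way that matters. For $G_\lambda$ (with $\mu=0^n$) every path enters vertically at column $0$; at any column $x\ge 1$ one has $i_1=0$, $j_1=e_{x-1}(\lambda)$ horizontal arrows in, $i_2=m_x(\lambda)$ vertical arrows out, and $j_2=e_x(\lambda)$ horizontal arrows out. Your phrase ``single path enters vertically, exits vertically with a shift of one column'' does not describe any configuration that actually occurs at $x\ge 1$, and the dichotomy ``no paths pass through'' versus ``one path'' is not what is happening either: in general many paths pass through simultaneously. What you need to show is that the fused weight for the configuration $(0,j;m,j-m)$, after the specialization and limit, equals $(-s_x)^{\,j-m}$ (so that column $x$ contributes $(-s_x)^{e_x(\lambda)}$), and that every other $(0,j;\,\cdot\,,\,\cdot\,)$ configuration has vanishing limit. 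As written you have not identified which factor each path contributes nor why vertical exits cost nothing, so the ``direct column-by-column assembly'' step is not yet justified. Your parenthetical about the upper limit $n$ is also off: $n=\ell(\lambda)$, not the number of columns, and the product is effectively over $i\ge 1$ since $e_i(\lambda)=0$ once $i\ge\lambda_1$.

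The column-$0$ reduction you flag as the crux is handled in \cite{HSVMRSF} not by a bare $q$-Chu--Vandermonde applied to a sum of fused weights, but by exploiting the explicit single-variable factorization of $G_\lambda$ at the principal specialization together with the built-in $(s_0^2;q)_n$ coming from the $n$ vertical arrows at column $0$; the $\lambda_n=0$ vanishing is then immediate from the vertex weight rather than a cancellation. Your outline is recoverable, but the weight bookkeeping at $x\ge 1$ needs to be redone correctly before the argument stands.
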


As the next proposition (which is Corollary 4.16 of \cite{HSVMRSF}) indicates, it is also possible to explicitly evaluate $F_{\lambda}$ at a principal specialization.

\begin{prop}[{\cite[Corollary 4.16]{HSVMRSF}}]
\label{productf}

Let $J$ be a positive integer. For any complex number $u$ and signature $\lambda \in \Sign_J^+$, we have that 
\begin{flalign*}
F_{\lambda} \big( u, q u, \ldots , q^{J - 1} u \b| \Xi, S \big) = (q; q)_J \displaystyle\prod_{k = 1}^J \displaystyle\frac{1}{1 - s_{\lambda_k} \xi_{\lambda_k} q^{k - 1} u} \displaystyle\prod_{h = 0}^{\lambda_k - 1} \displaystyle\frac{\xi_h q^{k - 1} u - s_h}{1 - s_h \xi_h q^{k - 1} u}. 
\end{flalign*}
\end{prop}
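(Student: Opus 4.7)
The plan is to prove the formula by induction on $J$, using the branching structure of $F_\lambda$ that comes directly from its path-ensemble definition.

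For the base case $J = 1$, the ensemble is forced: the unique path enters at $(-1, 1)$, traverses the vertices $(0, 1), (1, 1), \ldots, (\lambda_1 - 1, 1)$ with arrow configurations $(0, 1; 0, 1)$ of weight $\frac{\xi_h u - s_h}{1 - s_h \xi_h u}$, and turns upward at $(\lambda_1, 1)$ with configuration $(0, 1; 1, 0)$ of weight $\frac{1 - q}{1 - s_{\lambda_1} \xi_{\lambda_1} u}$. The product of these weights matches the stated formula with $(q; q)_1 = 1 - q$; the edge case $\lambda_1 = 0$ is handled by the convention that empty products equal $1$.

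For the inductive step, I would establish the branching relation
\begin{flalign*}
F_\lambda(u_1, \ldots, u_N \b| \Xi, S) = \sum_\nu F_{\lambda / \nu}(u_N \b| \Xi, S)\, F_\nu(u_1, \ldots, u_{N-1} \b| \Xi, S),
\end{flalign*}
obtained by slicing each path ensemble along the horizontal line $y = N - \tfrac{1}{2}$; here $\nu \in \Sign^+$ records the columns at which paths cross the cut, with multiplicity. The single-row skew factor $F_{\lambda / \nu}(u_N \b| \Xi, S)$ admits an explicit combinatorial evaluation analogous to the base case, summing over the columns at which the paths in row $N$ turn up; the entering arrows are prescribed by $\nu$ and the exiting arrows by $\lambda$, so only the turning positions in row $N$ are free. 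Applying the inductive hypothesis to $F_\nu(u, qu, \ldots, q^{N-2} u \b| \Xi, S)$ and substituting $u_N = q^{N-1} u$, the resulting sum over $\nu$ becomes a $q$-hypergeometric sum that collapses to the claimed product formula via the $q$-Chu-Vandermonde identity. The factor $(q; q)_{N-1}$ from the inductive hypothesis combines with the factor $(1 - q^N)$ emerging from the identity to upgrade the normalization to $(q; q)_N$.

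The main obstacle I anticipate is the bookkeeping of multiplicities $m_j(\lambda)$. When $\lambda$ has repeated entries, several paths share a column, and the $q^k$-dependence of the vertex weights in the definition of $F_\lambda$ (via the numerators $1 - q^k s_x^2$ and $1 - q^{k+1}$) makes the skew evaluation and the subsequent $q$-hypergeometric identity delicate, requiring careful indexing by the functions $m_j$. The conceptually clean framework is the row-fusion construction of Kulish-Reshetikhin-Sklyanin \cite{ERT}: the principal specialization $(u, qu, \ldots, q^{J-1} u)$ is precisely the specialization under which $J$ consecutive rows merge into a single fused row carrying a higher-dimensional representation, and in this fused row the paths decouple into independent single-path contributions with a global $(q; q)_J$ normalization. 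Adopting this viewpoint, either directly in place of the induction or as the organizing principle that guarantees the $q$-Chu-Vandermonde collapse, explains the structural origin of the product formula and manages the multiplicity bookkeeping uniformly.
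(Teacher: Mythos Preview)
The paper under review does not prove this proposition; it is imported verbatim as Corollary 4.16 of \cite{HSVMRSF}, so there is no in-paper argument to compare against. What remains is to assess your outline on its own merits and against the original source.

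Your base case $J=1$ is correct, and the branching relation you write down is the standard one-row decomposition of $F_\lambda$ (it appears as Proposition 4.5 in \cite{HSVMRSF}). The gap is the sentence ``the resulting sum over $\nu$ becomes a $q$-hypergeometric sum that collapses to the claimed product formula via the $q$-Chu--Vandermonde identity.'' The sum over $\nu$ ranges over $\nu \in \Sign_{J-1}^+$ interlacing with $\lambda$, so it is a $(J-1)$-dimensional sum, and it is not automatic that it decouples into a product of one-variable $q$-Chu--Vandermonde sums. You correctly flag this as the main obstacle, but you do not resolve it: as written, the inductive step is a plan rather than an argument.

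The actual proof in \cite{HSVMRSF} avoids this multi-dimensional sum entirely. It proceeds by induction on $J$, but adds the new row with spectral parameter $u$ at the \emph{bottom} rather than $q^{J-1}u$ at the top (using the symmetry of $F_\lambda$ in its spectral variables). The point is that when the bottom row carries parameter $u$ and the row above it carries $qu$, the vertex weight $w_{u}(k,1;k+1,0) = (1-q^{k+1})/(1 - s_x \xi_x u)$ vanishes at $k=0$ after interaction with the second row in a specific way; more precisely, the fusion mechanism forces the bottom-row path to exit at the smallest available column $\lambda_J$, so the sum over $\nu$ degenerates to a single term. This is the ``decoupling into independent single-path contributions'' you allude to in your final paragraph, and it is the substantive content of the proof. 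Your instinct to invoke fusion is correct, but you should make it the argument rather than a remark about what the argument ought to be.
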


Now let us explain how the $F$ and $G$ symmetric functions are related to vertex models. The following definition is a special case of Definition 6.1 of \cite{HSVMRSF}; its relevance (and its relationship to the stochastic higher spin vertex models) will be explained by Proposition \ref{measuresignatures}. 

\begin{definition}[{\cite[Definition 6.1]{HSVMRSF}}]

\label{signaturemeasures}

For any $\lambda \in \Sign_n^+$, define 
\begin{flalign*}
\mathscr{M} (\lambda) = \mathscr{M}_{U; \Xi, S} (\lambda) = Z_{\rho, U}^{-1} F_{\lambda} \big( u_1, u_2, \ldots , u_n \b| \Xi, S \big) G_{\lambda}^c \big( \rho \b| \Xi^{-1}, S \big), 
\end{flalign*} 

\noindent where $Z_{\rho, U} = s_0^{-N} (q; q)_N \prod_{i = 1}^N (s_0 - \xi_0 u_i) (1 - s_0 \xi_0 u_i)^{-1}$ is a normalization constant.
\end{definition}

\begin{prop}[{\cite[Section 6]{HSVMRSF}}]

\label{measuresignatures} 

For any $n \ge 1$, signature $\lambda \in \Sign_n^+$, and sets of complex variables $U$, $S$, and $\Xi$ such that the probabilities \eqref{configurationprobabilities} are positive, we have that $\mathscr{M} (\lambda) = \textbf{\emph{M}}_n (\lambda)$ (given by Definition \ref{measuresignaturesvertexmodel}). 
\end{prop}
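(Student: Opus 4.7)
I would prove this by induction on $n$, identifying the ratio $\mathscr{M}_n(\lambda) / \mathscr{M}_{n-1}(\mu)$ with the single-row stochastic transition kernel from \eqref{configurationprobabilities}. First, I would establish a branching identity for $F_\lambda$: unfolding Definition \ref{definitionf} (with empty skew parameter) and decomposing each $w$-weighted path ensemble in $[0, \lambda_1] \times [1, n]$ into its bottom $n-1$ rows plus its top row yields
\begin{flalign*}
F_\lambda(u_1, \ldots, u_n \b| \Xi, S) = \sum_{\mu \in \Sign_{n-1}^+} F_\mu(u_1, \ldots, u_{n-1} \b| \Xi, S) \, F_{\lambda/\mu}(u_n \b| \Xi, S),
\end{flalign*}
where the single-variable $F_{\lambda/\mu}(u_n \b| \Xi, S)$ is a $w$-weighted sum over arrow configurations for the new row only. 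Combined with \eqref{gc}, \eqref{grho}, and Proposition \ref{productg}, this also yields a clean closed form for the ratio $G_\lambda^c(\rho \b| \Xi^{-1}, S) / G_\mu^c(\rho \b| \Xi^{-1}, S)$; it depends on $\lambda$ and $\mu$ only through the combinatorial increments $m_i(\lambda) - m_i(\mu)$ and $e_i(\lambda) - e_i(\mu)$, with the latter equal to the number of vertical arrows exiting column $i$ in the new row.

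Next, I would substitute these branching and closed-form expressions into the definition of $\mathscr{M}_n(\lambda)$ and divide by $\mathscr{M}_{n-1}(\mu)$. Since $Z_{\rho, U}$ factors multiplicatively across rows, the ratio reduces to a product over vertices in the new row. At each such vertex, I would match the $w$-weight (multiplied by the local contribution of the $G^c$-ratio at that column) with the stochastic transition probability $\mathbb{P}_n[(i_2, j_2) \b| (i_1, j_1)]$ from \eqref{configurationprobabilities}, case-by-case for the four possible arrow configurations. This identifies the kernel
\begin{flalign*}
\mathcal{T}_n(\mu \to \lambda) := \frac{Z_{\rho, U, n-1}}{Z_{\rho, U, n}} \cdot F_{\lambda/\mu}(u_n \b| \Xi, S) \cdot \frac{G_\lambda^c(\rho \b| \Xi^{-1}, S)}{G_\mu^c(\rho \b| \Xi^{-1}, S)}
\end{flalign*}
with the single-row update of the stochastic higher spin vertex model from Section \ref{ProbabilityMeasures}.

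Finally, the base case $n = 0$ is immediate. Assuming inductively that $\mathscr{M}_{n-1} = \textbf{M}_{n-1}$, the kernel identification above and the branching rule combine to give
\begin{flalign*}
\mathscr{M}_n(\lambda) = \sum_\mu \mathscr{M}_{n-1}(\mu) \, \mathcal{T}_n(\mu \to \lambda) = \sum_\mu \textbf{M}_{n-1}(\mu) \, \mathcal{T}_n(\mu \to \lambda) = \textbf{M}_n(\lambda),
\end{flalign*}
closing the induction. The main technical obstacle is the vertex-by-vertex check in the middle step: one must verify, after telescoping the $(-s_i)^{e_i(\lambda) - e_i(\mu)}$ factors across columns and absorbing the $(s_i^2; q)_{m_i}/(q; q)_{m_i}$ contributions from $c_S$ in \eqref{gc}, that the four $w$-weight formulas from Section \ref{RationalFunctions} match the four stochastic probabilities in \eqref{configurationprobabilities}. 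Careful bookkeeping of the combinatorial meaning of $e_i$ and $m_i$ in terms of the vertical arrow multiplicities at the top of the new row is what makes this local identification work.
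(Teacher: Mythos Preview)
The paper does not supply its own proof of this proposition; it is quoted directly from \cite[Section 6]{HSVMRSF}. Your inductive strategy---branch $F_\lambda$ along the top row, use the explicit product form of $G_\lambda^c(\rho \b| \Xi^{-1}, S)$ from Proposition~\ref{productg} together with \eqref{gc} to compute the ratio, and then verify vertex-by-vertex that the conjugated $w$-weights of Section~\ref{RationalFunctions} become the stochastic probabilities \eqref{configurationprobabilities}---is precisely the argument carried out in that reference. The normalization $Z_{\rho,U}$, the $(-s_i)^{e_i}$ factors, and the $c_S$ contributions indeed combine to provide exactly the conjugation needed. One small point to keep track of: the symmetric functions in Section~\ref{RationalFunctions} are defined with the lattice shifted one unit to the left, so column~$0$ there corresponds to column~$1$ in the model of Section~\ref{ProbabilityMeasures}; this accounts for the appearance of $s_0, \xi_0$ in $Z_{\rho,U}$ and in Proposition~\ref{productg}, and also explains why the $\lambda_n = 0$ case of Proposition~\ref{productg} (where $G_\lambda(\rho) = 0$) is consistent with particles in the stochastic model never occupying position~$0$.
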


\section{Observables for Models With Double-Sided Initial Data}

\label{ObservablesDoubleSided}

In this section we define and analyze a specialization of the measure $\mathscr{M}$ (from Definition \ref{signaturemeasures}), which we call $\mathscr{M}^{\mathfrak{S}}$. In Section \ref{SpecializationM}, we explain how $\mathscr{M}^{\mathfrak{S}}$ prescribes a stochastic six-vertex model with a certain type of double-sided initial condition, in which paths enter through each vertex on the $y$-axis independently and with probability $b_1$, but only finitely many paths enter through the $x$-axis. Using the results of \cite{HSVMRSF}, we produce observables for this model in Section \ref{ContoursCurrent}.

\subsection{A Double-Sided Specialization of \texorpdfstring{$\mathscr{M}$}{}} 

\label{SpecializationM}

\subsubsection{The Double-Sided Specialization of \texorpdfstring{$\mathscr{M}$}{}} 

\label{DoubleSidedSpecialization}

We begin with the following definition, which introduces the measure $\mathscr{M}^{\mathfrak{S}}$ mentioned above.	

\begin{definition}

\label{ms} 

Fix $n, J \in \mathbb{Z}_{> 0}$; $\delta_1, \delta_2, b_1 \in (0, 1)$; $b_2 \in \mathbb{C}$; and $a \in (-1, 0)$. For each $i \in \{ 1, 2 \}$, define  
\begin{flalign}
\label{stochasticparameters}
\beta_i = \displaystyle\frac{b_i}{1 - b_i}; \quad q = \displaystyle\frac{\delta_1}{\delta_2} < 1; \quad \kappa = \displaystyle\frac{1 - \delta_1}{1 - \delta_2} > 1; \quad s = q^{-1 / 2}; \quad u = \kappa s; \quad v = - \displaystyle\frac{1}{\beta_2 s}; \quad \xi = - \displaystyle\frac{\beta_1}{a u}. 
\end{flalign}

Define the sets $\varpi = (v, qv, \ldots , q^{J - 1} v)$; $\overline{U} = (u, u, \ldots , u)$, where $u$ appears with multiplicity $n$; and $U = \varpi \cup \overline{U}$. Moreover, define the parameter sets $\Xi = (\xi_0, \xi_1, \ldots )$ and $S = (s_0, s_1, \ldots )$ by setting $\xi_i = 1$ if $i \ne 1$; $\xi_1 = \xi$; $s_i = s$ if $i \ne 1$; and $s_1 = a$. 

Define the formal measure $\mathscr{M}^{\mathfrak{S}} = \mathscr{M}_{U; \Xi, S}$ on $\Sign_{n + J}^+$, where $\mathscr{M}_{U; \Xi, S}$ is from Definition \ref{signaturemeasures}. 
\end{definition}

If the probabilities \eqref{configurationprobabilities} are all positive under the above specialization, then Proposition \ref{measuresignatures} states that $\mathscr{M}^{\mathfrak{S}}$ prescribes a stochastic higher spin vertex model (with step initial data) whose first $J$ spectral parameters are $v, qv, \ldots , q^{J - 1} v$; whose other spectral parameters all equal $u$; whose spin parameter in the first column is $a$; and whose other spins all equal $s$. See Figure \ref{shift2generalizedvertex} for an example.

Let us analyze how the transition probabilities \eqref{configurationprobabilities} degenerate under $\mathscr{M}^{\mathfrak{S}}$ when $y > J$ (that is, when $u_y = u$). There are two cases to consider, when $x = 1$ and when $x > 1$. 

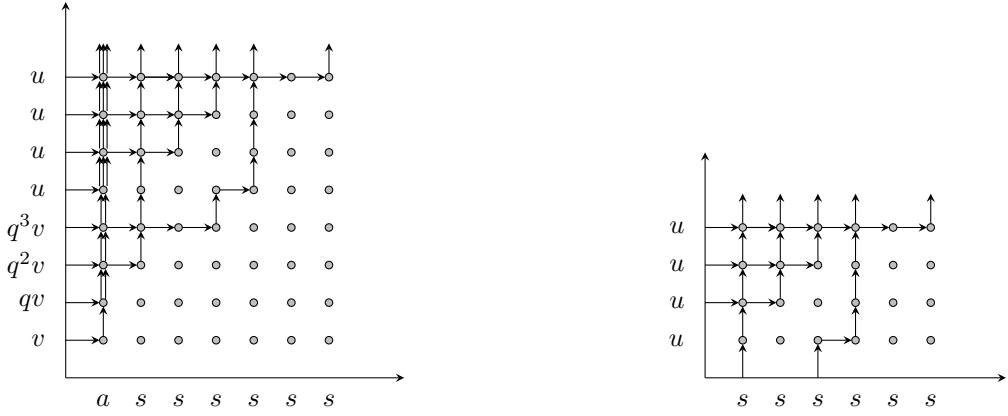
\begin{figure} 

\begin{center}

\begin{tikzpicture}[
      >=stealth,
			scale = 1
			]

			\draw[->, black	] (0, 0) -- (4.5, 0);
			\draw[->, black] (0, 0) -- (0, 5);
			
			\draw[->, black] (0, .5) -- (.45, .5) node[black, left=17] {$v$};
			\draw[->, black] (0, 1) -- (.45, 1) node[black, left=17] {$qv$};
			\draw[->, black] (0, 1.5) -- (.45, 1.5) node[black, left=17] {$q^2 v$};
			\draw[->, black] (0, 2) -- (.45, 2) node[black, left=17] {$q^3 v$};
			\draw[->, black] (0, 2.5) -- (.45, 2.5) node[black, left=17] {$u$};
			\draw[->, black] (0, 3) -- (.45, 3) node[black, left=17] {$u$};
			\draw[->, black] (0, 3.5) -- (.45, 3.5) node[black, left=17] {$u$};
			\draw[->, black] (0, 4) -- (.45, 4) node[black, left=17] {$u$};

			\draw[->, black] (.55, 1.5) -- (.95, 1.5);
			\draw[->, black] (.55, 2) -- (.95, 2);
			\draw[->, black] (.55, 3) -- (.95, 3);
			\draw[->, black] (.55, 3.5) -- (.95, 3.5);
			\draw[->, black] (.55, 4) -- (.95, 4);

			\draw[->, black] (1.05, 2) -- (1.45, 2);
			\draw[->, black] (1.05, 3) -- (1.45, 3);
			\draw[->, black] (1.05, 4) -- (1.45, 4);
			
			\draw[->, black] (1.55, 2) -- (1.95, 2);
			\draw[->, black] (1.55, 3.5) -- (1.95, 3.5);
			
			\draw[->, black] (2.05, 2.5) -- (2.45, 2.5);
			
			\draw[->, black] (2.55, 4) -- (2.95, 4);

			\draw[->, black] (.5, .55) -- (.5, .95);
			\draw[->, black] (.47, 1.05) -- (.47, 1.45);
			\draw[->, black] (.53, 1.05) -- (.53, 1.45);
			\draw[->, black] (.47, 1.55) -- (.47, 1.95);
			\draw[->, black] (.53, 1.55) -- (.53, 1.95);
			\draw[->, black] (.47, 2.05) -- (.47, 2.45);
			\draw[->, black] (.53, 2.05) -- (.53, 2.45);
			\draw[->, black] (.55, 2.55) -- (.55, 2.95);
			\draw[->, black] (.45, 2.55) -- (.45, 2.95);
			\draw[->, black] (.5, 2.55) -- (.5, 2.95);
			\draw[->, black] (.55, 3.05 ) -- (.55, 3.45);
			\draw[->, black] (.45, 3.05) -- (.45, 3.45);
			\draw[->, black] (.5, 3.05) -- (.5, 3.45);
			\draw[->, black] (.55, 3.55) -- (.55, 3.95);
			\draw[->, black] (.45, 3.55) -- (.45, 3.95);
			\draw[->, black] (.5, 3.55) -- (.5, 3.95);
			\draw[->, black] (.55, 4.05 ) -- (.55, 4.45);
			\draw[->, black] (.45, 4.05) -- (.45, 4.45);
			\draw[->, black] (.5, 4.05) -- (.5, 4.45);

			\draw[->, black] (1, 1.55) -- (1, 1.95);
			\draw[->, black] (1, 2.05) -- (1, 2.45);
			\draw[->, black] (1, 2.55) -- (1, 2.95);
			\draw[->, black] (1, 3.05 ) -- (1, 3.45);
			\draw[->, black] (1, 3.55) -- (1, 3.95);
			\draw[->, black] (1, 4.05) -- (1, 4.45);
			\draw[->, black] (1.05, 4) -- (1.45, 4); 
			\draw[->, black] (1.05, 3.5) -- (1.45, 3.5); 
			
			\draw[->, black] (1.5, 3.05 ) -- (1.5, 3.45);
			\draw[->, black] (1.5, 4.05) -- (1.5, 4.45);  
			\draw[->, black] (1.5, 3.55) -- (1.5, 3.95);
			\draw[->, black] (1.55, 4) -- (1.95, 4); 
			
			\draw[->, black] (2, 2.05 ) -- (2, 2.45);
			\draw[->, black] (2, 3.55) -- (2, 3.95);
			\draw[->, black] (2, 4.05) -- (2, 4.45); 
			\draw[->, black] (2.05, 4) -- (2.45, 4);

			\draw[->, black] (2.5, 2.55 ) -- (2.5, 2.95);
			\draw[->, black] (2.5, 3.05) -- (2.5, 3.45);
			\draw[->, black] (2.5, 3.55) -- (2.5, 3.95);
	
			\draw[->, black] (3.05, 4) -- (3.45, 4);
			
			\draw[->, black] (2.5, 4.05) -- (2.5, 4.45);
			\draw[->, black] (3.5, 4.05) -- (3.5, 4.45);

			\filldraw[fill=gray!50!white, draw=black] (.5, .5) circle [radius=.05] node[black, below = 17] {$a$};
			\filldraw[fill=gray!50!white, draw=black] (.5, 1) circle [radius=.05];
			\filldraw[fill=gray!50!white, draw=black] (.5, 1.5) circle [radius=.05];
			\filldraw[fill=gray!50!white, draw=black] (.5, 2) circle [radius=.05];
			\filldraw[fill=gray!50!white, draw=black] (.5, 2.5) circle [radius=.05];
			\filldraw[fill=gray!50!white, draw=black] (.5, 3) circle [radius=.05];
			\filldraw[fill=gray!50!white, draw=black] (.5, 3.5) circle [radius=.05];
			\filldraw[fill=gray!50!white, draw=black] (.5, 4) circle [radius=.05];

			\filldraw[fill=gray!50!white, draw=black] (1, .5) circle [radius=.05] node[black, below = 17] {$s$};
			\filldraw[fill=gray!50!white, draw=black] (1, 1) circle [radius=.05];
			\filldraw[fill=gray!50!white, draw=black] (1, 1.5) circle [radius=.05];
			\filldraw[fill=gray!50!white, draw=black] (1, 2) circle [radius=.05];
			\filldraw[fill=gray!50!white, draw=black] (1, 2.5) circle [radius=.05];
			\filldraw[fill=gray!50!white, draw=black] (1, 3) circle [radius=.05];
			\filldraw[fill=gray!50!white, draw=black] (1, 3.5) circle [radius=.05];
			\filldraw[fill=gray!50!white, draw=black] (1, 4) circle [radius=.05];
			
			\filldraw[fill=gray!50!white, draw=black] (1.5, .5) circle [radius=.05] node[black, below = 17] {$s$};
			\filldraw[fill=gray!50!white, draw=black] (1.5, 1) circle [radius=.05];
			\filldraw[fill=gray!50!white, draw=black] (1.5, 1.5) circle [radius=.05];
			\filldraw[fill=gray!50!white, draw=black] (1.5, 2) circle [radius=.05];
			\filldraw[fill=gray!50!white, draw=black] (1.5, 2.5) circle [radius=.05];
			\filldraw[fill=gray!50!white, draw=black] (1.5, 3) circle [radius=.05];
			\filldraw[fill=gray!50!white, draw=black] (1.5, 3.5) circle [radius=.05];
			\filldraw[fill=gray!50!white, draw=black] (1.5, 4) circle [radius=.05];

			\filldraw[fill=gray!50!white, draw=black] (2, .5) circle [radius=.05] node[black, below = 17] {$s$};
			\filldraw[fill=gray!50!white, draw=black] (2, 1) circle [radius=.05];
			\filldraw[fill=gray!50!white, draw=black] (2, 1.5) circle [radius=.05];
			\filldraw[fill=gray!50!white, draw=black] (2, 2) circle [radius=.05];
			\filldraw[fill=gray!50!white, draw=black] (2, 2.5) circle [radius=.05];
			\filldraw[fill=gray!50!white, draw=black] (2, 3) circle [radius=.05];
			\filldraw[fill=gray!50!white, draw=black] (2, 3.5) circle [radius=.05];
			\filldraw[fill=gray!50!white, draw=black] (2, 4) circle [radius=.05];
			
			\filldraw[fill=gray!50!white, draw=black] (2.5, .5) circle [radius=.05] node[black, below = 17] {$s$};
			\filldraw[fill=gray!50!white, draw=black] (2.5, 1) circle [radius=.05];
			\filldraw[fill=gray!50!white, draw=black] (2.5, 1.5) circle [radius=.05];
			\filldraw[fill=gray!50!white, draw=black] (2.5, 2) circle [radius=.05];
			\filldraw[fill=gray!50!white, draw=black] (2.5, 2.5) circle [radius=.05];
			\filldraw[fill=gray!50!white, draw=black] (2.5, 3) circle [radius=.05];
			\filldraw[fill=gray!50!white, draw=black] (2.5, 3.5) circle [radius=.05];
			\filldraw[fill=gray!50!white, draw=black] (2.5, 4) circle [radius=.05];

			\filldraw[fill=gray!50!white, draw=black] (3, .5) circle [radius=.05] node[black, below = 17] {$s$};
			\filldraw[fill=gray!50!white, draw=black] (3, 1) circle [radius=.05];
			\filldraw[fill=gray!50!white, draw=black] (3, 1.5) circle [radius=.05];
			\filldraw[fill=gray!50!white, draw=black] (3, 2) circle [radius=.05];
			\filldraw[fill=gray!50!white, draw=black] (3, 2.5) circle [radius=.05];
			\filldraw[fill=gray!50!white, draw=black] (3, 3) circle [radius=.05];
			\filldraw[fill=gray!50!white, draw=black] (3, 3.5) circle [radius=.05];
			\filldraw[fill=gray!50!white, draw=black] (3, 4) circle [radius=.05];
			
			\filldraw[fill=gray!50!white, draw=black] (3.5, .5) circle [radius=.05] node[black, below = 17] {$s$};
			\filldraw[fill=gray!50!white, draw=black] (3.5, 1) circle [radius=.05];
			\filldraw[fill=gray!50!white, draw=black] (3.5, 1.5) circle [radius=.05];
			\filldraw[fill=gray!50!white, draw=black] (3.5, 2) circle [radius=.05];
			\filldraw[fill=gray!50!white, draw=black] (3.5, 2.5) circle [radius=.05];
			\filldraw[fill=gray!50!white, draw=black] (3.5, 3) circle [radius=.05];
			\filldraw[fill=gray!50!white, draw=black] (3.5, 3.5) circle [radius=.05];
			\filldraw[fill=gray!50!white, draw=black] (3.5, 4) circle [radius=.05];

	\draw[->, black	] (8.5, 0) -- (12.5, 0);
			\draw[->, black] (8.5, 0) -- (8.5, 3);

			\draw[->, black] (8.5, 1) -- (8.95, 1) node[black, left=17] {$u$};
			\draw[->, black] (8.5, 1.5) -- (8.95, 1.5) node[black, left=17] {$u$};
			\draw[->, black] (8.5, 2) -- (8.95, 2) node[black, left=17] {$u$};

			\draw[->, black] (9.05, 1) -- (9.45, 1);
			\draw[->, black] (9.05, 2) -- (9.45, 2);
			
			\draw[->, black] (9.55, 1.5) -- (9.95, 1.5);
			
			\draw[->, black] (10.05, .5) -- (10.45, .5) node[black, left=60] {$u$};
			
			\draw[->, black] (10.55, 2) -- (10.95, 2);
		
			\draw[->, black] (9, 0) -- (9, .45);
			\draw[->, black] (9, .55) -- (9, .95);
			\draw[->, black] (9, 1.05) -- (9, 1.45);
			\draw[->, black] (9, 1.55) -- (9, 1.95);
			\draw[->, black] (9, 2.05) -- (9, 2.45);
			\draw[->, black] (9.05, 2) -- (9.45, 2); 
			\draw[->, black] (9.05, 1.5) -- (9.45, 1.5); 
			
			\draw[->, black] (9.5, 1.05) -- (9.5, 1.45);
			\draw[->, black] (9.5, 2.05) -- (9.5, 2.45);  
			\draw[->, black] (9.5, 1.55) -- (9.5, 1.95);
			\draw[->, black] (9.55, 2) -- (9.95, 2); 
			
			\draw[->, black] (10, 0) -- (10, .45);
			\draw[->, black] (10, 1.55) -- (10, 1.95);
			\draw[->, black] (10, 2.05) -- (10, 2.45); 
			\draw[->, black] (10.05, 2) -- (10.45, 2);

			\draw[->, black] (10.5, .55 ) -- (10.5, .95);
			\draw[->, black] (10.5, 1.05) -- (10.5, 1.45);
			\draw[->, black] (10.5, 1.55) -- (10.5, 1.95);
	
			\draw[->, black] (11.05, 2) -- (11.45, 2);
			
			\draw[->, black] (10.5, 2.05) -- (10.5, 2.45);
			\draw[->, black] (11.5, 2.05) -- (11.5, 2.45);

			\filldraw[fill=gray!50!white, draw=black] (9, .5) circle [radius=.05] node[black, below = 17] {$s$};
			\filldraw[fill=gray!50!white, draw=black] (9, 1) circle [radius=.05];
			\filldraw[fill=gray!50!white, draw=black] (9, 1.5) circle [radius=.05];
			\filldraw[fill=gray!50!white, draw=black] (9, 2) circle [radius=.05];
			
			\filldraw[fill=gray!50!white, draw=black] (9.5, .5) circle [radius=.05] node[black, below = 17] {$s$};
			\filldraw[fill=gray!50!white, draw=black] (9.5, 1) circle [radius=.05];
			\filldraw[fill=gray!50!white, draw=black] (9.5, 1.5) circle [radius=.05];
			\filldraw[fill=gray!50!white, draw=black] (9.5, 2) circle [radius=.05];
			
			\filldraw[fill=gray!50!white, draw=black] (10, .5) circle [radius=.05] node[black, below = 17] {$s$};
			\filldraw[fill=gray!50!white, draw=black] (10, 1) circle [radius=.05];
			\filldraw[fill=gray!50!white, draw=black] (10, 1.5) circle [radius=.05];
			\filldraw[fill=gray!50!white, draw=black] (10, 2) circle [radius=.05];
			
			\filldraw[fill=gray!50!white, draw=black] (10.5, .5) circle [radius=.05] node[black, below = 17] {$s$};
			\filldraw[fill=gray!50!white, draw=black] (10.5, 1) circle [radius=.05];
			\filldraw[fill=gray!50!white, draw=black] (10.5, 1.5) circle [radius=.05];
			\filldraw[fill=gray!50!white, draw=black] (10.5, 2) circle [radius=.05];
			
			\filldraw[fill=gray!50!white, draw=black] (11, .5) circle [radius=.05] node[black, below = 17] {$s$};
			\filldraw[fill=gray!50!white, draw=black] (11, 1) circle [radius=.05];
			\filldraw[fill=gray!50!white, draw=black] (11, 1.5) circle [radius=.05];
			\filldraw[fill=gray!50!white, draw=black] (11, 2) circle [radius=.05];
			
			\filldraw[fill=gray!50!white, draw=black] (11.5, .5) circle [radius=.05] node[black, below = 17] {$s$};
			\filldraw[fill=gray!50!white, draw=black] (11.5, 1) circle [radius=.05];
			\filldraw[fill=gray!50!white, draw=black] (11.5, 1.5) circle [radius=.05];
			\filldraw[fill=gray!50!white, draw=black] (11.5, 2) circle [radius=.05];

\end{tikzpicture}

\end{center}

\caption{\label{shift2generalizedvertex} To the left is the stochastic higher spin vertex model specialized as in Definition \ref{ms}, with $n = 4 = J$. To the right is this model shifted to the left by $1$ and down by $J = 4$. }

\end{figure}

First assume that $x > 1$. We will show later (see Proposition \ref{fg}) that $i_2 \in \{ 0, 1 \}$ at $(x, J)$, almost surely for each $x > 1$. Since $s_x = s = q^{-1 / 2}$, we have that $\mathbb{P}_n [(i_2, j_2) = (2, 0) | (i_1, j_1) = (1, 1)] = (1 - q s^2) / (1 - su) = 0$, meaning that $i_1, i_2 \in \{ 0, 1 \}$ at each $(x, y)$ with $x > 1$ and $y > J$. 

Substituting our specialization into the other probabilities in \eqref{configurationprobabilities} yields	
\begin{flalign}
\begin{aligned} 
\label{sixvertexprobabilities} 
& \mathbb{P}_n \big[ (i_2, j_2) = (0, 0) \big| (i_1, j_1) = (0, 0) \big] = 1 =  \mathbb{P}_n \big[ (i_2, j_2) = (1, 1) \big| (i_1, j_1) = (1, 1) \big],  \\
& \mathbb{P}_n \big[ (i_2, j_2) = (1, 0) \big| (i_1, j_1) = (1, 0) \big] = \delta_1; \quad \mathbb{P}_n \big[ (i_2, j_2) = (0, 1) \big| (i_1, j_1) = (1, 0) \big] = 1 - \delta_1,  \\
& \mathbb{P}_n \big[ (i_2, j_2) = (0, 1) \big| (i_1, j_1) = (0, 1) \big] = \delta_2; \quad \mathbb{P}_n \big[ (i_2, j_2) = (1, 0) \big| (i_1, j_1) = (0, 1) \big] = 1 - \delta_2. 
\end{aligned}
\end{flalign}

Since the probabilities \eqref{sixvertexprobabilities} coincide with the probabilities depicted in Figure \ref{sixvertexfigure}, it follows that the stochastic higher spin vertex model parametrized by $\mathscr{M}^{\mathfrak{S}}$ evolves as the stochastic six-vertex model $\mathcal{P} (\delta_1, \delta_2)$ defined in Section \ref{StochasticVertex}, in the sector $x > 1$, $y > J$. 

When $x = 1$, we instead have that $\xi_x = - \beta_1 / au$ and $s_x = a$. Inserting these parameters into \eqref{configurationprobabilities}, we obtain
\begin{flalign}
\label{verticalconfigurationprobabilities3}
\begin{aligned} 
& \mathbb{P}_n \big[ (i_2, j_2) = (k, 0) \big| (i_1, j_1) = (k, 0) \big] = 1 - b_1 + q^k b_1,  \\
& \mathbb{P}_n \big[ (i_2, j_2) = (k - 1, 1) \big| (i_1, j_1) = (k, 0) \big] = (1 - q^k) b_1,  \\
& \mathbb{P}_n \big[ (i_2, j_2) = (k + 1, 0) \big| (i_1, j_1) = (k, 1) \big] = (1 - q^k a^2)(1 - b_1),  \\
& \mathbb{P}_n \big[ (i_2, j_2) = (k, 1) \big| (i_1, j_1) = (k, 1) \big] = q^k a^2 (1 - b_1) + b_1.  
\end{aligned}
\end{flalign}

Due to the step initial data of our model, there is one incoming horizontal arrow at each vertex $(1, y)$, so we must have $j_1 = 1$; thus, we can ignore the first two probabilities in \eqref{verticalconfigurationprobabilities3}. Letting $a$ tend to $0$, the third and fourth probabilities in \eqref{verticalconfigurationprobabilities3} become 
\begin{flalign}
\label{verticalconfigurationprobabilities2} 
& \mathbb{P}_n \big[ (i_2, j_2) = (k + 1, 0) \big| (i_1, j_1) = (k, 1) \big] = 1 - b_1; \quad \mathbb{P}_n \big[ (i_2, j_2) = (k, 1) \big| (i_1, j_1) = (k, 1) \big] = b_1.  
\end{flalign}

Hence, if we ``shift'' the model to the left one coordinate and down $J$ coordinates (and ignore all arrows outside of the new positive quadrant; see Figure \ref{shift2generalizedvertex}), we obtain a stochastic six-vertex model in which paths enter through each vertex on the positive $y$-axis independently and with probability $b_1$, and finitely many (at most $J$) paths enter through the $x$-axis. Our next goal is to understand the entrance law of these $J$ paths.

\subsubsection{The Value of \texorpdfstring{$\mathscr{M}^{\mathfrak{S}}$}{} on Partitions of Length \texorpdfstring{$J$}{J}} 

\label{LengthJ}

In view of the discussion in the previous section, the measure $\mathscr{M}^{\mathfrak{S}}$ can be viewed as follows. First one takes $J$ steps of a stochastic higher spin vertex model with spectral parameters $v, qv, q^2 v, \ldots , q^{J - 1} v$. After that, one evolves according to the stochastic six-vertex model, but with a deformed spin in the first column; this deformation gives rise to a $b_1$-Bernoulli initial condition through the vertical line $x = 1$. The first $J$ steps of the stochastic higher spin vertex model also produces some random configuration of $J$ particles on the horizontal line $y = J$. 

The goal of this section is to explicitly determine the probability distribution of this configuration, which we do by evaluating $\mathscr{M}^{\mathfrak{S}}$ on signatures $\lambda$ of length $J$; this corresponds to the $n = 0$ (or equivalently $\overline{U}$ empty) case of Definition \ref{ms}. When $\lambda \in \Sign_J^+$, we have that $\mathscr{M}^{\mathfrak{S}} (\lambda) = \mathscr{M}_{\varpi; S, \Xi} (\lambda)$, where $\varpi$, $S$, and $\Xi$ are given by Definition \ref{ms}. Thus, we would like to evaluate $\mathscr{M}_{\varpi; S, \Xi} (\lambda) = Z_{\rho, \varpi}^{-1} F_{\lambda} \big( \varpi\b| \Xi, S \big) G_{\lambda}^c \big( \rho \b| \Xi^{-1}, S \big) $ (recall Definition \ref{signaturemeasures}) for each $\lambda \in \Sign_J^+$. To that end, we have the following proposition. 

\begin{prop}
\label{fg} 

Recall the notation from Definition \ref{ms}, and set $\vartheta = q^{-J}$. Let $\lambda \in \Sign_J^+$. If $\lambda_J = 0$ or $m_i (\lambda) > 1$ for some $i > 1$, then $F_{\lambda} \big( \varpi \b| \Xi, S \big) G_{\lambda}^c \big( \rho \b| \Xi^{-1}, S \big) = 0$. Otherwise, 
\begin{flalign*}
F_{\lambda} \big( \varpi\b| \Xi, S \big) G_{\lambda}^c \big( \rho \b| \Xi^{-1}, S \big) & = C \left( \displaystyle\frac{\kappa \beta_2}{\beta_1} \right)^m \displaystyle\prod_{j = 1}^m \displaystyle\frac{(1 - q^{j - 1} \vartheta) (1 - a^2 q^{j - 1}) }{(1 - q^j) (1 - a^2 \kappa \beta_2 \beta_1^{-1} q^{j - 1} \vartheta) } \displaystyle\prod_{k = 0}^{J - m - 1} (b_2 - q^{k + m} \vartheta b_2) \\
& \quad \times  \displaystyle\prod_{k = 0}^{J - m - 1} \big( 1 - b_2 + q^{k + m} \vartheta b_2 \big)^{\lambda_{J - m - k} - \max \{ \lambda_{J - m - k + 1}, 1 \} - 1}, 
\end{flalign*}

\noindent where $m = m_1 (\lambda)$, and $C = C (q, v, J, \Xi, S)$ is some constant independent of $\lambda$. 
\end{prop}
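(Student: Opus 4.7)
My plan is to apply Propositions \ref{productf} and \ref{productg} directly to the two factors on the left-hand side, include the conjugation factor $c_S(\lambda)/c_S(\mu_0)$ from equation \eqref{gc} (where $\mu_0 = (0, \ldots, 0)$ has $\ell(\lambda)$ zeros, so $c_S(\mu_0) = 1$), and then carry out the algebraic simplification dictated by the homogeneous-plus-one-defect structure of the specialized parameter sets in Definition \ref{ms}: all columns $i \ne 1$ carry $s_i = s$, $\xi_i = 1$, while column $i = 1$ carries $s_1 = a$, $\xi_1 = \xi = -\beta_1/(au)$.

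The vanishing cases are immediate from this setup. If $\lambda_J = 0$, Proposition \ref{productg} gives $G_\lambda(\rho \b| \Xi^{-1}, S) = 0$ directly. If $m_i(\lambda) > 1$ for some $i > 1$, then the factor $(s_i^2; q)_{m_i(\lambda)} = (q^{-1}; q)_{m_i(\lambda)}$ in the definition of $c_S(\lambda)$ contains $1 - q^{-1}\cdot q = 0$, so $G_\lambda^c$ vanishes and thus so does $F_\lambda G_\lambda^c$.

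In the nonvanishing case, $\lambda$ consists of $J-m$ distinct parts $\lambda_1 > \cdots > \lambda_{J-m} \geq 2$ followed by $m = m_1(\lambda)$ ones. I would swap the order of the double product in Proposition \ref{productf} to obtain
\[
\prod_{k=1}^{J}\prod_{h=0}^{\lambda_k-1} \frac{\xi_h q^{k-1}v - s_h}{1 - s_h\xi_h q^{k-1}v} = \prod_{h \geq 0}\prod_{k=1}^{e_h(\lambda)} \frac{\xi_h q^{k-1}v - s_h}{1 - s_h\xi_h q^{k-1}v},
\]
and split the outer product according to $h$: the $h=0$ layer has $e_0(\lambda) = J$ and is $\lambda$-independent (absorbed into $C$); the $h=1$ layer has $e_1(\lambda) = J-m$ and, after substituting $\xi, v, u, s$, contributes $(-a)^{J-m}\prod_{k=1}^{J-m}\frac{1 - q^k\beta_1/(a^2\kappa\beta_2)}{1 - q^k\beta_1/(\kappa\beta_2)}$; the $h \geq 2$ layers combine column-by-column into the shape-dependent product $\prod_{k=0}^{J-m-1}(1 - b_2 + q^{k+m}\vartheta b_2)^{\lambda_{J-m-k} - \max\{\lambda_{J-m-k+1},1\} - 1}$ via the identity $b_2 + (1-b_2)q^r = (1-b_2)(q^r + \beta_2)$. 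Merging the blocking factors $(1 - s_{\lambda_k}\xi_{\lambda_k}q^{k-1}v)^{-1}$ at the columns with $\lambda_k=1$ into the $h=1$ layer collapses those denominators into the $\lambda$-independent product $\prod_{k=1}^{J}(1 - q^k\beta_1/(\kappa\beta_2))^{-1}$, while the $G_\lambda$-formula from Proposition \ref{productg}, together with the part $(a^2;q)_m / (q;q)_m$ of $c_S(\lambda)$, supplies the $(1 - a^2 q^{j-1})$ and $(1-q^j)^{-1}$ factors in the $j$-product. The power $(\kappa\beta_2/\beta_1)^m$ emerges by reversing the factors $1 - q^k\beta_1/(a^2\kappa\beta_2) = -(q^k\beta_1/(a^2\kappa\beta_2))(1 - a^2\kappa\beta_2\beta_1^{-1}q^{-k})$ and reindexing $j = J+1-k$, so that the $\lambda$-dependent piece becomes precisely $\prod_{j=1}^{m}(1 - a^2\kappa\beta_2\beta_1^{-1}q^{j-1}\vartheta)^{-1}$ times $(\kappa\beta_2/\beta_1)^m$, with all remaining $\lambda$-independent scalars (powers of $a$, $q$, $(-1)^J$, the $(s_0^2;q)_J / (-s_0)^J$ prefactor from Proposition \ref{productg}, the $(q;q)_J$ from Proposition \ref{productf}, and the normalization $Z_{\rho,\varpi}^{-1}$) collected into $C = C(q, v, J, \Xi, S)$.

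The main obstacle is the bookkeeping: the $q$-Pochhammer $\prod_{j=1}^{m}(1 - q^{j-1}\vartheta)$ in the numerator does not come from a single source but is assembled from contributions of three different pieces (the blocking factors, the $h=1$ part of $F_\lambda$, and the conjugation), and correctly identifying which factors are $\lambda$-independent (and hence absorbable into $C$) versus $\lambda$-dependent requires careful tracking of the exponents $m$ and $J-m$. A related subtlety is the appearance of $\max\{\lambda_{J-m-k+1}, 1\}$ rather than $\lambda_{J-m-k+1}$ in the exponent: this correctly handles the $k=0$ boundary term, where the ``next part'' after $\lambda_{J-m}$ is formally $1$ rather than the actual value $\lambda_{J-m+1}=1$ of the next entry in the signature.
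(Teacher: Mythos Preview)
Your approach is essentially the paper's: both proofs apply Proposition~\ref{productg} (together with the conjugation~\eqref{gc}) to handle the vanishing cases and evaluate $G_\lambda^c$, apply Proposition~\ref{productf} to evaluate $F_\lambda$, and then carry out the algebraic simplification dictated by the specialization in Definition~\ref{ms}. The paper packages the $G_\lambda^c$ step as Lemma~\ref{1deformationg}, keeps the abstract parameters $s,v,\xi,a$ through the manipulation, and isolates the $\lambda$-shape dependence in the product $P=\prod_{k=1}^{J-m}\bigl(\tfrac{s^2-sq^{k-1}v}{1-sq^{k-1}v}\bigr)^{\lambda_k-1}$, which it then rewrites via the telescoping identity~\eqref{product2} before substituting at the very end; you substitute earlier and reorganize the double product in Proposition~\ref{productf} by the column index $h$ instead of the row index $k$. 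These are two bookkeeping schemes for the same computation.

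Two small points. First, the normalization $Z_{\rho,\varpi}^{-1}$ should not appear: the statement concerns $F_\lambda G_\lambda^c$, not $\mathscr{M}^{\mathfrak{S}}(\lambda)$. Second, your attribution of the numerator factors $\prod_{j=1}^{m}(1-q^{j-1}\vartheta)$ to ``the blocking factors, the $h=1$ part of $F_\lambda$, and the conjugation'' is not quite accurate; in either organization these factors ultimately come from the reindexed tail of the $h\ge 2$ (equivalently, the $P$) contribution, written as a ratio of a full $\lambda$-independent product over $j=1,\dots,J$ against the $\lambda$-dependent piece over $j=1,\dots,m$. Neither point affects the soundness of the argument.
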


To facilitate the proof of Proposition \ref{fg}, we use the following lemma. 

\begin{lem}

\label{1deformationg} 

Let $\lambda \in \Sign_J^+$, and specialize $\Xi$ and $S$ as in Definition \ref{ms}. If $\lambda_J = 0$ or $m_i (\lambda) > 1$ for any integer $i \ge 2$, then $G_{\lambda}^c (\rho \b| \Xi^{-1}, S) = 0$. Otherwise, 
\begin{flalign*} 
G_{\lambda}^c (\rho \b| \Xi^{-1}, S) = (q; q)_J (-s)^{|\lambda| - m_1 (\lambda) - J} a^{J - m_1 (\lambda)} \displaystyle\prod_{k = 1}^{m_1 (\lambda)} \displaystyle\frac{1 - a^2 q^{i - 1}}{1 - q^i}. 
\end{flalign*} 
\end{lem}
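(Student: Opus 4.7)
The plan is to combine the definition \eqref{gc} with Proposition \ref{productg}: write $G_\lambda^c(\rho \b| \Xi^{-1}, S) = \frac{c_S(\lambda)}{c_S((0)^J)} G_\lambda(\rho \b| \Xi^{-1}, S)$ (with $(0)^J$ the length-$J$ zero signature), then evaluate both pieces explicitly under the specialization in Definition \ref{ms}. Since $s_i = s = q^{-1/2}$ for every $i \neq 1$, every Pochhammer $(s_i^2;q)_k$ collapses to $(q^{-1};q)_k$, so in particular each factor $(s_i^2;q)_k/(q;q)_k$ in $c_S$ equals $-q^{-1}$ when $k = 1$ and vanishes when $k \geq 2$; this controls all of the sign conventions and vanishing phenomena in the lemma.

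Both vanishing claims are then immediate: if $\lambda_J = 0$, Proposition \ref{productg} directly gives $G_\lambda(\rho \b| \Xi^{-1}, S) = 0$; and if $m_i(\lambda) \geq 2$ for some $i \geq 2$, the $i$-th factor of $c_S(\lambda)$ vanishes, forcing $G_\lambda^c = 0$. In the remaining generic case, I would compute $c_S(\lambda) = \frac{(a^2;q)_{m_1(\lambda)}}{(q;q)_{m_1(\lambda)}} \cdot (-q^{-1})^{J - m_1(\lambda)}$, where the second factor accounts for the $J - m_1(\lambda)$ indices $i \geq 2$ with $m_i(\lambda) = 1$, and apply Proposition \ref{productg} together with the identities $e_0(\lambda) = J$, $e_1(\lambda) = J - m_1(\lambda)$, and $\sum_{i \geq 0} e_i(\lambda) = |\lambda|$ to express $G_\lambda(\rho \b| \Xi^{-1}, S)$ as a product of $(q^{-1};q)_J (-s)^{-J}$, $(-a)^{J - m_1(\lambda)}$, and $(-s)^{|\lambda| - 2J + m_1(\lambda)}$.

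The main obstacle is that at $s_0^2 = q^{-1}$ the factor $(s_0^2;q)_J = (q^{-1};q)_J$ appearing in Proposition \ref{productg} vanishes for $J \geq 2$, so a literal application would give zero and the wrong answer. This is a $0/0$ indeterminacy resolved by the limit definition \eqref{grho} of $G_\lambda^c(\rho)$: the same vanishing factor appears in $c_S((0)^J)$ in the denominator, and the two zeros cancel. After this cancellation, the $(q;q)_J$ surfaces from $c_S((0)^J)^{-1}$, and what remains is an elementary rearrangement using $s^2 = q^{-1}$ to combine the powers of $-a$, $-s$, and $-q^{-1}$ into the claimed formula $(q;q)_J (-s)^{|\lambda| - m_1(\lambda) - J} a^{J - m_1(\lambda)} \prod_{k=1}^{m_1(\lambda)} (1 - a^2 q^{k-1})/(1 - q^k)$. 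Every other step is routine algebraic bookkeeping.
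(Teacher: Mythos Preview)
Your approach is essentially identical to the paper's: apply Proposition~\ref{productg} to $G_\lambda(\rho\mid\Xi^{-1},S)$, multiply by $c_S(\lambda)/c_S(0^J)$, observe that $(s^2;q)_{m_i}$ forces the vanishing when $m_i(\lambda)\ge 2$ for some $i\ge 2$, and simplify the generic case using $s^2=q^{-1}$ together with $e_1(\lambda)=J-m_1(\lambda)$ and $\sum_{i\ge 2}m_i(\lambda)=e_1(\lambda)$. The one point worth noting is that the $0/0$ you flag is not resolved by the $v\to 0$ limit in~\eqref{grho} (that limit is in $v$, not $s_0$), but rather by treating Proposition~\ref{productg} and the conjugation factor as identities of rational functions in $s_0$ and cancelling the common $(s_0^2;q)_J$ \emph{before} specializing $s_0=q^{-1/2}$; this is exactly what the paper does when it writes $G_\lambda^c=\frac{(q;q)_J}{(s^2;q)_J}\,c_S(\lambda)\,G_\lambda$ and lets the $(s^2;q)_J$ factors cancel.
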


\begin{proof}
If $\lambda_J = 0$, then the result follows from Proposition \ref{productg}. Otherwise, Proposition \ref{productg} yields 
\begin{flalign*}
G_{\lambda} (\rho \b| \Xi^{-1}, S) & = (s^2; q)_J (-s)^{|\lambda| - 2 J - e_1 (\lambda)} (-a)^{e_1 (\lambda)}, 
\end{flalign*}

\noindent so 
\begin{flalign}
\label{grhoa}
\begin{aligned}
G_{\lambda}^c (\rho \b| \Xi^{-1}, S) &= \displaystyle\frac{(q; q)_J}{(s^2; q)_J} c_{S} (\lambda) G_{\lambda} (\rho \b| \Xi^{-1}, S)  \\
&= (q; q)_J (-s)^{|\lambda| - 2 J - e_1 (\lambda)} (-a)^{e_1 (\lambda)} \displaystyle\frac{(a^2; q)_{m_1 (\lambda)}}{(q; q)_{m_1 (\lambda)}} \displaystyle\prod_{i = 2}^{\infty} \displaystyle\frac{(s^2; q)_{m_i (\lambda)}}{(q; q)_{m_i (\lambda)}},
\end{aligned}
\end{flalign}

\noindent where we used \eqref{gc} and the fact that $c(0^J) = (s^2; q)_J / (q; q)_J$ in the first equality. Now, if $m_i (\lambda) > 1$ for some integer $i > 1$, then $(s^2; q)_{m_i (\lambda)} = 0$ since $s^2 = q^{-1}$; this gives $G_{\lambda}^c \big( \rho \b| \Xi^{-1}, S \big) = 0$. 

Thus, assume that $m_i (\lambda) \le 1$ for each $i > 1$. Then, $(s^2; q)_{m_i (\lambda)} (q; q)_{m_i (\lambda)}^{-1} = (-s^2)^{m_i (\lambda)}$ for each $i > 1$. The lemma follows from inserting this into \eqref{grhoa} and using the facts that $e_1 (\lambda) + m_1 (\lambda) = J$ and $\sum_{i = 2}^{\infty} m_i (\lambda) = e_1 (\lambda)$. 
\end{proof}

\begin{proof}[Proof of Proposition \ref{fg}]

If $\lambda_J = 0$ or $m_i (\lambda) > 1$ for some $i > 1$, then the result follows from Proposition \ref{1deformationg}. 

Thus, assume that $\lambda_J > 0$. Then, Proposition \ref{productf} implies that $F_{\lambda} \big( \varpi \b| \Xi, S \big)$ is equal to
\begin{flalign*}
 (q; q)_J & \displaystyle\prod_{k = 1}^{J - m} \displaystyle\frac{\xi q^{k - 1} v - a}{(1 - s q^{k - 1} v) (1 - a \xi q^{k - 1} v)} \left( \displaystyle\frac{q^{k - 1} v - s}{1 - s q^{k - 1} v} \right)^{\lambda_k - 1} \displaystyle\prod_{h = J - m + 1}^J \displaystyle\frac{q^{h - 1} v - s}{(1 - a \xi q^{h - 1} v) (1 - s q^{h - 1} v) }.
\end{flalign*}

\noindent Applying Corollary \ref{1deformationg} therefore yields 
\begin{flalign}
\label{product1}
\begin{aligned}
F_{\lambda} \big( \varpi \b| \Xi, S \big) G_{\lambda}^c \big( \rho \b| \Xi, S \big) &= s^{- J} (q; q)_J^2 \displaystyle\prod_{i = 1}^J \displaystyle\frac{a^2 s - as \xi q^{i - 1} v}{(1 - s q^{i - 1} v) (1 - a \xi q^{i - 1} v)} \displaystyle\prod_{j = 1}^m \displaystyle\frac{(1 - a^2 q^{j - 1}) (q^{J - j} v - s)}{(1 - q^j) (a^2 s - a s \xi q^{J - j	} v) } \\
& \qquad \times \displaystyle\prod_{k = 1}^{J - m} \left( \displaystyle\frac{s^2 - s q^{k - 1} v}{1 - s q^{k - 1} v} \right)^{\lambda_k  - 1},  
\end{aligned}
\end{flalign}

\noindent where we have used the fact that $s^{|\lambda| - J} = s^{\sum_{j = 1}^J (\lambda_j - 1)}$. 

Now, using the facts that $q = s^{-2}$ and $\lambda_j = 1$ for all $j > J - m$, one can quickly verify that the last product $P = \prod_{k = 1}^{J - m} \big( (s^2 - s q^{k - 1} v) / (1 - s q^{k - 1} v) \big)^{\lambda_k - 1}$ in \eqref{product1} is equal to 
\begin{flalign}
\label{product2}
\begin{aligned}
P & = q^J \left( \displaystyle\frac{1 - s v}{1 - q^J sv} \right) \displaystyle\prod_{i = 1}^J \left( \displaystyle\frac{q^{-i} - sv}{1 - sv} \right)^{\lambda_i - \lambda_{i + 1}} \\
& = \displaystyle\prod_{i = 1}^J \displaystyle\frac{q^{-i} - sv}{1 - q^{-i}} \displaystyle\prod_{h = J - m + 1}^J  \displaystyle\frac{1 - q^{-h}}{q^{-h} - sv} \displaystyle\prod_{k = 1}^{J - m} \left( \displaystyle\frac{q^{-k} - sv}{1 - sv} \right)^{\lambda_k - \max \{ \lambda_{k + 1}, 1 \} - 1} \displaystyle\frac{1 - q^{-k}}{1 - sv}, 
\end{aligned}
\end{flalign} 

\noindent where we have set $\lambda_{J + 1} = 0$. 

Changing variables $j = J - h + 1$ and $g = J - m - k$ in \eqref{product2} and inserting into \eqref{product1} yields that $F_{\lambda} \big( \varpi \b| \Xi, S \big) G_{\lambda}^c \big( \rho \b| \Xi, S \big)$ is equal to 
\begin{flalign*}
C  \displaystyle\prod_{g = 0}^{J - m - 1} \left( \displaystyle\frac{q^{m + g - J} - sv}{1 - sv} \right)^{\lambda_{J - m - g} - \max \{ \lambda_{J - m - g + 1}, 1 \} - 1} \displaystyle\frac{1 - q^{m + g - J}}{1 - sv} \displaystyle\prod_{j = 1}^m \displaystyle\frac{(1 - a^2 q^{j - 1}) (1 - q^{j - J - 1})}{(1 - q^j) (a s^2 \xi v - a^2 q^{j - J - 1})}, 
\end{flalign*}

\noindent where $C = C(q, v, J, \Xi, S) = (q; q)_J (a \xi v; q)_J^{-1} \prod_{i = 1}^J (a \xi q^{i - 1} v - a^2) $ is independent of $\lambda$. 

Now, the proposition follows from the parametrization \eqref{stochasticparameters} and the fact that $\vartheta = q^{-J}$.  
\end{proof}

\subsubsection{Interpretation of Proposition \ref{fg} Through Vertex Weights} 

\label{PrincipalDoubleSided}

The goal of this section is to interpret Proposition \ref{fg} as a horizontal initial condition for the stochastic six-vertex model. As in Section \ref{DoubleSidedSpecialization}, we take the limit as $a$ tends to $0$ in that proposition. 

Then, for any $\lambda \in \Sign_J^+$, $\mathscr{M}^{\mathfrak{S}} (\lambda) = 0$ if either $\lambda_J = 0$ or $m_i (\lambda) > 1$ for some $i > 1$. Otherwise, denoting $m = m_1(\lambda)$, Proposition \ref{fg} implies that $\mathscr{M}^{\mathfrak{S}} (\lambda)$ is proportional to 
\begin{flalign}
\label{msa0}
\left( \displaystyle\frac{\kappa \beta_2}{\beta_1} \right)^m \displaystyle\frac{(\vartheta; q)_m}{(q; q)_m} \displaystyle\prod_{k = 0}^{J - m - 1} \big( b_2 - q^{k + m} \vartheta b_2 \big) \big( 1 - b_2 + q^{k + m} \vartheta b_2\big)^{\lambda_{J - m - k} - \max \{ \lambda_{J - m - k + 1}, 1 \} - 1}. 
\end{flalign}

As indicated at the end of Section \ref{DoubleSidedSpecialization}, \eqref{msa0} can be viewed as the probability distribution of a stochastic higher spin vertex model with $J$ rows and spectral parameters equal to $v, qv, \ldots , q^{J - 1} v$. However, let us explain how \eqref{msa0} also has a different interpretation as a single-row vertex model, in which $J$ directed paths enter through the $y$-axis from the point $(0, 1)$ and then turn right and up; see Figure \ref{singlerowdouble}. This is an example of what is known as \emph{row fusion} in vertex models; see Section 5 of \cite{HSVMRSF} for more information. 

As in Section \ref{PathEnsembles}, each vertex $(x, 1)$ on the line $x = 1$ is associated with four integers $i_1$, $i_2$, $j_1$, and $j_2$, which count the number of incoming and outgoing vertical and horizontal paths through $(x, 1)$. In the model we consider, no paths will emanate from the $x$-axis, so $i_1$ is always $0$. Thus, we abbreviate $i = i_2$; we also denote $j = J - j_1$. Arrow conservation implies that $j_2 = j_1 - i = J - j - i$, so the value of $j_2$ (and hence the entire arrow configuration) at any vertex $(x, 1)$ is determined by the values of $i$ and $j$ at $(x, 1)$. Furthermore, we must always have $j = 0$ at the vertex $(1, 1)$; thus, the arrow configuration at $(1, 1)$ is determined by only the integer $i$, which we relabel $m$. 

Now, to each vertex $(x, 1)$, with $x > 1$, we will associate a vertex weight $W (j; i)$. The case $x = 1$ will be distinguished, so at $(1, 1)$ we will associate a different weight $\widehat{W} (m)$. These weights are given explicitly by the following definition. 

\begin{definition}

\label{xvertexweights}

Fix $\vartheta, q, b_1, b_2 \in \mathbb{C}$ such that $b_1, b_2 \notin \{ 0, 1 \} $. For each $j \in \mathbb{Z}_{\ge 0}$, set  $W_{\vartheta, q, b_2} (j; i) = 0$ if $i \notin \{ 0, 1 \}$. Otherwise, define 
\begin{flalign*}
W_{\vartheta, q, b_2} (j; 0) = 1 - b_2 + q^j \vartheta b_2; \qquad W_{\vartheta, q, b_2} (j; 1) = b_2 - q^j \vartheta b_2.
\end{flalign*}

\noindent Denoting $\kappa$, $\beta_1$, and $\beta_2$ as in \eqref{stochasticparameters}, also define
\begin{flalign*}
\widehat{W}_{q, \vartheta, \kappa, b_1, b_2} (m) = \displaystyle\frac{(\kappa \beta_2 \beta_1^{-1}; q)_{\infty} }{(\vartheta \kappa \beta_2 \beta_1^{-1}; q)_{\infty}}\left( \displaystyle\frac{\kappa \beta_2}{\beta_1} \right)^m \displaystyle\frac{(\vartheta; q)_m}{(q; q)_m}. 
\end{flalign*}
\end{definition}

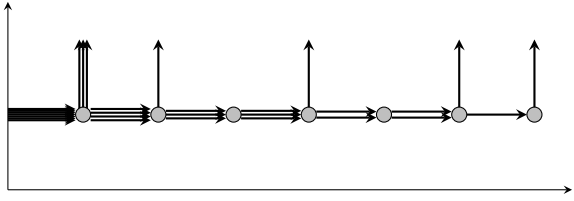
\begin{figure}

\begin{center}

\begin{tikzpicture}[
      >=stealth,
        scale=.5
			]
			
			\draw[->,black] (0, 0) --(15, 0);
			\draw[->,black] (0, 0) --(0, 5);
			
			\draw[->,black, thick] (0, 1.85) -- (1.8, 1.85);
			\draw[->,black, thick] (0, 1.9) -- (1.8, 1.9);
			\draw[->,black, thick] (0, 1.95) -- (1.8, 1.95);
			\draw[->,black, thick] (0, 2) -- (1.8, 2);
			\draw[->,black, thick] (0, 2.05) -- (1.8, 2.05);
			\draw[->,black, thick] (0, 2.1) -- (1.8, 2.1);
			\draw[->,black, thick] (0, 2.15) -- (1.8, 2.15);
			
			\draw[->,black, thick] (2.2, 1.85) -- (3.8, 1.85);
			\draw[->,black, thick] (2.2, 1.95) -- (3.8, 1.95);
			\draw[->,black, thick] (2.2, 2.05) -- (3.8, 2.05);
			\draw[->,black, thick] (2.2, 2.15) -- (3.8, 2.15);
			\draw[->,black, thick] (1.9, 2) -- (1.9, 4);
			\draw[->,black, thick] (2, 2) -- (2, 4);
			\draw[->,black, thick] (2.1, 2) -- (2.1, 4);
			
			\draw[->,black, thick] (4.2, 1.9) -- (5.8, 1.9);
			\draw[->,black, thick] (4.2, 2) -- (5.8, 2);
			\draw[->,black, thick] (4.2, 2.1) -- (5.8, 2.1);
			\draw[->,black, thick] (4, 2) -- (4, 4);
			
			\draw[->,black, thick] (6.2, 1.9) -- (7.8, 1.9);
			\draw[->,black, thick] (6.2, 2) -- (7.8, 2);
			\draw[->,black, thick] (6.2, 2.1) -- (7.8, 2.1);
			
			\draw[->,black, thick] (8.2, 1.92) -- (9.8, 1.92);
			\draw[->,black, thick] (8.2, 2.08) -- (9.8, 2.08);
			\draw[->,black, thick] (8, 2) -- (8, 4);
			
			\draw[->,black, thick] (10.2, 1.92) -- (11.8, 1.92);
			\draw[->,black, thick] (10.2, 2.08) -- (11.8, 2.08);
			\draw[->,black, thick] (12, 2) -- (12, 4);
			
			\draw[->,black, thick] (12.2, 2) -- (13.8, 2);
			\draw[->,black, thick] (14, 2) -- (14, 4);

			\filldraw[fill=gray!50!white, draw=black] (2, 2) circle [radius=.2];
			\filldraw[fill=gray!50!white, draw=black] (4, 2) circle [radius=.2];
			\filldraw[fill=gray!50!white, draw=black] (6, 2) circle [radius=.2];
			\filldraw[fill=gray!50!white, draw=black] (8, 2) circle [radius=.2];
			\filldraw[fill=gray!50!white, draw=black] (10, 2) circle [radius=.2];
			\filldraw[fill=gray!50!white, draw=black] (12, 2) circle [radius=.2];
			\filldraw[fill=gray!50!white, draw=black] (14, 2) circle [radius=.2];

\end{tikzpicture}

\end{center}

\caption{\label{singlerowdouble} Depicted above is a path ensemble to which $\mathcal{W}$ might give non-zero weight. The signature corresponding to the above path ensemble is $(7, 6, 4, 2, 1, 1, 1)$} 
\end{figure}

\begin{rem}

\label{w1} 

We have that $\sum_{i = 0}^{\infty} W_{\vartheta, q, b_2} (j; i) = 1$. Furthermore, the $q$-binomial identity implies that $\sum_{m = 0}^{\infty} \widehat{W}_{q, \vartheta, \kappa, b_1, b_2} (m) = 1$ if $|\kappa \beta_2| < |\beta_1|$. 
\end{rem}

\begin{rem}

\label{wj}

If $\vartheta = q^{-J}$, then $W_{\vartheta, q, b_2} (J; 1) = 0$. 

\end{rem}

Since our model is only on one row, there is a correspondence between path ensembles and signatures; see Figure \ref{singlerowdouble}. Specifically, a path ensemble corresponds to a signature $\lambda \in \Sign^+$ if and only if the number of outgoing vertical arrows through $(x, 1)$ is equal to $m_x (\lambda)$, for each $x \ge 1$. Thus, we can assign weights to signatures by taking an appropriate product of the vertex weights $W$ and $\widehat{W}$; this yields $\mathcal{W}$ defined below. It will also be useful for later to define the weight $\overline{\mathcal{W}}^{(M)} (\lambda)$ that ignores the weight $\widehat{W}$ at $(1, 1)$ and increments all values of $j$ by $M$. 

\begin{definition}

\label{wlambda}

Fix $x \in \mathbb{Z}_{> 0}$ and $\vartheta, q, \kappa, b_1, b_2 \in \mathbb{C}$. Let $\lambda \in \Sign^+$. Define $\mathcal{W} (\lambda) = \mathcal{W}_x (\lambda) = \mathcal{W}_{x; \vartheta, q, \kappa, b_1, b_2} (\lambda)$ by 
\begin{flalign}
\label{w} 
\mathcal{W} (\lambda) = \textbf{1}_{0 \notin \lambda} \textbf{1}_{\lambda_1 \le x} \widehat{W}_{q, \vartheta, \kappa, b_1, b_2} \big( m_1 (\lambda) \big) \displaystyle\prod_{j = 2}^x W_{\vartheta, q, b_2} \left( \displaystyle\sum_{i = 1}^{j - 1} m_i (\lambda); m_j (\lambda) \right). 
\end{flalign}

\noindent Similarly, for each positive integer $M \ge 0$, define $\overline{\mathcal{W}}^{(M)} (\lambda) = \overline{\mathcal{W}_x}^{(M)} (\lambda) = \overline{\mathcal{W}_{x; \vartheta, q, \kappa, b_1, b_2}}^{(M)} (\lambda)$ by
\begin{flalign}
\label{wm}
\overline{\mathcal{W}}^{(M)} (\lambda) = \textbf{1}_{0 \notin \lambda} \textbf{1}_{\lambda_1 \le x} \displaystyle\prod_{j = 2}^x W_{\vartheta, q, b_2} \left( M + \displaystyle\sum_{i = 2}^{j - 1} m_i (\lambda); m_j (\lambda) \right). 
\end{flalign}
\end{definition}

\begin{rem}

\label{ww}

\noindent Observe in particular that $\mathcal{W} (\lambda) = \widehat{W}_{q, \vartheta, \kappa, b_1, b_2} \big( m_1 (\lambda) \big) \overline{\mathcal{W}}^{(m_1 (\lambda))} (\lambda)$. 

\end{rem}

\begin{rem}

\label{w12}

If $| \kappa \beta_2 | < | \beta_1 |$, then Remark \ref{w1} implies that $\sum_{\lambda \in \Sign^+} \mathcal{W}_x (\lambda) = 1$. 

\end{rem}

The following proposition indicates that the weight $\mathcal{W}$ is related to the formal measure $\mathscr{M}^{\mathfrak{S}}$. 

\begin{prop}

\label{wm2}

Fix $\delta_1, \delta_2, b_1 \in (0, 1)$ and $b_2 \in \mathbb{C} \setminus \{ 0, 1 \}$. Define $q$, $u$, $\kappa$, $s$, $\beta_1$, and $\beta_2$ as in \eqref{stochasticparameters}. Let $J \in \mathbb{Z}_{> 0}$, denote $\vartheta = q^{-J}$, and let $\lambda \in \bigcup_{k = J}^{\infty} \Sign_k^+$ be a signature with $\ell (\lambda) \ge J$. Let $x \in \mathbb{Z}$ satisfy $x > \lambda_1$. 

If $\ell (\lambda) > J$, then $\mathcal{W} (\lambda) = 0$. Otherwise, $\mathcal{W}_x (\lambda) = \mathscr{M}^{\mathfrak{S}} (\lambda)$, where $\mathscr{M}^{\mathfrak{S}}$ is given by Definition \ref{ms}, with the parameter $a$ set to $0$. 

\end{prop}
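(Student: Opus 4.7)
The plan is to compute both $\mathcal{W}_x(\lambda)$ and $\mathscr{M}^{\mathfrak{S}}(\lambda)$ explicitly and verify they coincide signature by signature. The argument splits into disposing of the vanishing cases, matching the $\lambda$-dependence, and reconciling the overall normalization.

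\textbf{Step 1: Vanishing when $\ell(\lambda) > J$.} The indicator $\textbf{1}_{0 \notin \lambda}$ and the fact that $W_{\vartheta,q,b_2}(j; i) = 0$ for $i \notin \{0,1\}$ force $0 \notin \lambda$ and $m_i(\lambda) \in \{0, 1\}$ for $i \ge 2$ whenever $\mathcal{W}(\lambda) \ne 0$. Set $m = m_1(\lambda)$. If $m > J$, the factor $(\vartheta; q)_m = \prod_{i=0}^{m-1} (1 - q^{i-J})$ in $\widehat{W}_{q,\vartheta,\kappa,b_1,b_2}(m)$ vanishes at $i = J$. Otherwise $m \le J$, and then $r := \ell(\lambda) - m > J - m$; listing the distinct parts exceeding $1$ as $j_1 < \cdots < j_r$, the partial sum $\sum_{i=1}^{j_{J-m+1}-1} m_i(\lambda)$ equals $m + (J-m) = J$ while $m_{j_{J-m+1}}(\lambda) = 1$, so Remark \ref{wj} gives $W_{\vartheta,q,b_2}(J; 1) = 0$, annihilating the product. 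This proves $\mathcal{W}(\lambda) = 0$.

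\textbf{Step 2: Matching the $\lambda$-dependence for $\ell(\lambda) = J$.} Assume $\ell(\lambda) = J$ with $m = m_1(\lambda)$, $0 \notin \lambda$, and $m_i(\lambda) \in \{0,1\}$ for $i \ge 2$ (otherwise both sides vanish by Proposition \ref{fg} and Step 1). Enumerate the distinct parts exceeding $1$ as $j_1 < \cdots < j_{J-m}$, so $j_k = \lambda_{J-m-k+1}$. Expanding \eqref{w} vertex by vertex: at each $j_k$ the partial sum is $m + k - 1$ and $m_{j_k}(\lambda) = 1$ contributes $W_{\vartheta,q,b_2}(m+k-1; 1) = b_2 - q^{m+k-1} \vartheta b_2$; at the $j_k - j_{k-1} - 1$ intervening vertices (with the convention $j_0 := 1$) the local weight is $W_{\vartheta,q,b_2}(m+k-1; 0) = 1 - b_2 + q^{m+k-1} \vartheta b_2$; and all vertices with $j_{J-m} < j \le x$ contribute $W_{\vartheta,q,b_2}(J; 0) = 1$, which uses the hypothesis $x > \lambda_1 = j_{J-m}$. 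On the $\mathscr{M}^{\mathfrak{S}}$ side, sending $a \to 0$ in Proposition \ref{fg} collapses $\prod_{j=1}^m (1 - a^2 q^{j-1})(1 - a^2 \kappa\beta_2 \beta_1^{-1} \vartheta q^{j-1})^{-1}$ to $1$ and leaves $\prod_{j=1}^m (1 - \vartheta q^{j-1})(1 - q^j)^{-1} = (\vartheta; q)_m/(q; q)_m$. Reindexing $k \mapsto k - 1$ in the outer products of \eqref{msa0} then matches the $(b_2 - q^{m+k-1} \vartheta b_2)$ factors immediately, and the exponent identity $j_k - j_{k-1} - 1 = \lambda_{J-m-k+1} - \max\{\lambda_{J-m-k+2}, 1\} - 1$ is automatic for $k \ge 2$ (since $\lambda_{J-m-k+2} = j_{k-1} \ge 2$) and reduces to $j_1 - 2$ on both sides for $k = 1$ via the conventions $j_0 = 1$ and $\max\{\lambda_{J-m+1}, 1\} = 1$ (using $\lambda_{J+1} = 0$ to cover the boundary case $m = 0$).

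\textbf{Step 3: Matching the normalization.} The $(\kappa\beta_2/\beta_1)^m (\vartheta; q)_m/(q;q)_m$ portion of $\widehat{W}$ already agrees with the corresponding part of \eqref{msa0}, so it remains to verify that the $\lambda$-independent prefactor $(\kappa\beta_2\beta_1^{-1}; q)_\infty / (\vartheta\kappa\beta_2\beta_1^{-1}; q)_\infty$ in $\widehat{W}$ matches $Z_{\rho,\varpi}^{-1} \lim_{a\to 0} C$ from Proposition \ref{fg} under the specialization \eqref{stochasticparameters}. While this can be verified by direct substitution (using $a\xi v = \beta_1 q/(\beta_2 \kappa)$ and $s^2 = q^{-1}$ to evaluate $C$), the cleanest route is to invoke sum-to-one: by Remark \ref{w12}, $\sum_\lambda \mathcal{W}_x(\lambda) = 1$ when $|\kappa\beta_2| < |\beta_1|$, and by Proposition \ref{measuresignatures} $\sum_\lambda \mathscr{M}^{\mathfrak{S}}(\lambda) = 1$ in the positive-probability regime, which extends by analytic continuation to the $a \to 0$ limit. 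Since the signature-dependent factors agree up to a $\lambda$-independent constant, the constants must coincide.

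\textbf{Main obstacle.} The primary challenge is the combinatorial bookkeeping of Step 2, namely the careful alignment of gap-exponents $j_k - j_{k-1} - 1$ with the exponents $\lambda_{J-m-k+1} - \max\{\lambda_{J-m-k+2}, 1\} - 1$ from \eqref{msa0}, together with the cancellation $W_{\vartheta,q,b_2}(J; 0) = 1$ that makes the partition function insensitive to $x$ beyond $\lambda_1$. The boundary conventions ($j_0 = 1$, $\lambda_{J+1} = 0$, the distinguished role of the first column carrying $\widehat{W}$ rather than $W$) are where small errors are easiest to make; once these are correctly handled, the rest of the verification is routine.
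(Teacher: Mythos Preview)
Your overall strategy---establish proportionality via explicit comparison with Proposition \ref{fg}, then pin down the constant by a sum-to-one argument---is exactly the paper's approach, and Steps 1 and 2 are correct and in fact more explicit than the paper's treatment. However, Step 3 has a genuine gap.

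The sum in Remark \ref{w12} is over \emph{all} of $\Sign^+$, whereas $\mathscr{M}^{\mathfrak{S}}$ is supported on $\Sign_J^+$. For finite $x$, signatures with $\ell(\lambda) < J$ contribute nonzero mass to $\sum_{\lambda \in \Sign^+} \mathcal{W}_x(\lambda)$: the vertex weights $W_{\vartheta,q,b_2}(j;0)$ for $j < J$ are not equal to $1$, so a path ensemble that has produced fewer than $J$ vertical arrows by column $x$ has positive weight. Thus $\sum_{\lambda \in \Sign_J^+} \mathcal{W}_x(\lambda) \neq 1$, and your two ``sum-to-one'' statements are over different index sets, so they do not by themselves force the proportionality constant to be $1$.

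The paper repairs this as follows. First restrict to a parameter range (specifically $b_2 \in (-q^{2J}, 0)$) in which all local weights lie in $[0,1)$ and the sum over $\Sign_J^+$ converges absolutely; there the Cauchy identity gives $\sum_{\nu \in \Sign_J^+} \mathscr{M}^{\mathfrak{S}}(\nu) = 1$. Next, observe (as you did in Step 2) that $\mathcal{W}_x(\lambda)$ is independent of $x$ once $x > \lambda_1$, so $\mathcal{W}_\infty(\lambda) := \lim_{x\to\infty} \mathcal{W}_x(\lambda)$ exists. The key additional point is that $\mathcal{W}_\infty(\lambda) = 0$ for $\ell(\lambda) < J$, because the product then contains arbitrarily many factors $W_{\vartheta,q,b_2}(j;0)$ with $j < J$, each bounded away from $1$. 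This forces $\sum_{\nu \in \Sign_J^+} \mathcal{W}_\infty(\nu) = \sum_{\nu \in \Sign^+} \mathcal{W}_\infty(\nu) = 1$, and now the constants match. Finally, analytic continuation in $b_2$ extends the identity to general $b_2 \in \mathbb{C} \setminus \{0,1\}$. Your direct-substitution alternative would also work but requires actually computing $Z_{\rho,\varpi}^{-1} \lim_{a\to 0} C$, which you have not done.
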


\begin{proof}
If $\ell (\lambda) > J$, then at least one of several possibilities must occur. Either $0 \in \lambda$; $m_1 (\lambda) > J$; there exists some integer $j > 1$ such that $m_j (\lambda) > 1$; or the weight $W_{\vartheta, q, b_2} (J; 1)$ appears in the product on the right side of \eqref{w}. In each case, we find that $\mathcal{W} (\lambda) = 0$; this establishes the first part of the proposition.

Now suppose $\lambda \in \Sign_J^+$; we would like to show that $\mathscr{M}^{\mathfrak{S}} (\lambda) = \mathcal{W} (\lambda)$. The result holds if $0 \in \lambda$ since then $\mathscr{M}^{\mathfrak{S}} (\lambda) = 0 = \mathcal{W} (\lambda)$. 

So, assume $0 \notin \lambda$. Then $\mathscr{M}^{\mathfrak{S}} (\lambda)$ is proportional to \eqref{msa0}. The product $\big( \kappa \beta_2 \beta_1^{-1} \big)^m (\vartheta; q)_m (q; q)_m^{-1}$ in \eqref{msa0} is proportional to the weight $\widehat{W}_{q, \vartheta, \kappa, b_1, b_2} (m)$. Since we furthermore have that 
\begin{flalign*}
\displaystyle\prod_{j = \lambda_{J - m - k} + 1}^{\lambda_{J - m - k - 1}} W_{\vartheta, q, b_2} & \left( \displaystyle\sum_{i = 1}^j m_i (\lambda); m_j (\lambda) \right) \\
& = (b_2 - q^{k + m} \vartheta b_2) (1 - b_2 + q^{k + m} \vartheta b_2)^{\lambda_{J - m + k - 1} - \max \{ \lambda_{J - m + k}, 1 \} - 1}, 
\end{flalign*}

\noindent it follows that the right side of \eqref{w} is proportional to \eqref{msa0}. Hence, $\mathcal{W}_x (\lambda)$ is proportional to $\mathscr{M}^{\mathfrak{S}} (\lambda)$. We would like to show that they are in fact equal. 

To that end, first assume that $b_2$ is real and sufficiently small so that $-q^{2J} < b_2 < 0$. Then, $b_2 - q^{k + m} \vartheta b_2, 1 + q^{k + m} \vartheta b_2 - b_2 \in (0, 1)$ for any integer $k \in [0, J - m - 1]$. This, the fact that $(\vartheta; q)_k = 0$ for $k > J$, and Proposition \ref{fg} (or the identity \eqref{msa0}) together imply that $\sum_{\nu \in \Sign_J^+} \big| F_{\nu} \big( \varpi \b| \Xi, S \big) G_{\nu}^c \big( \rho \b| \Xi^{-1}, S \big) \big| < \infty$. Hence, we deduce from the Cauchy identity for the $F$ and $G$ symmetric functions (see Corollary 4.13 of \cite{HSVMRSF}) that $\sum_{\nu \in \Sign_J^+} \mathscr{M}^{\mathfrak{S}} (\nu) = 1$. 

Now, observe that $\mathcal{W}_x (\lambda) = \mathcal{W}_y (\lambda)$ for any $y > x$, since $x > \lambda_1$, $J > \ell (\lambda)$, and $W_{\vartheta, q, b_2} (J; 0) = 1$. Thus, the limit $\mathcal{W}_{\infty} (\nu) = \lim_{y \rightarrow \infty} \mathcal{W}_y (\nu)$ exists, for each $\nu \in \Sign_J^+$. Furthermore, since $W_{\vartheta, q, b_2} (a; b) \in [0, 1)$ for any $a < J$ (and is uniformly bounded away from $1$), we find that $\mathcal{W}_{\infty} (\nu) = \lim_{y \rightarrow \infty} \mathcal{W}_y (\nu) = 0$ for any $\nu$ of length less than $J$. 

By Remark \ref{w12}, we have that $\sum_{\nu \in \Sign^+} \mathcal{W}_x (\nu) = 1$. Taking the limit as $x$ tends to $\infty$ (justified by the exponential decay of $\mathcal{W}_x (\nu)$ in $|\nu|$, which holds since $b_2 - q^{k + m} \vartheta b_2, 1 + q^{k + m} \vartheta b_2 - b_2 \in (0, 1)$), we deduce that $\sum_{\nu \in \Sign_J^+} \mathcal{W}_{\infty} (\nu) = \sum_{\nu \in \Sign^+} \mathcal{W}_{\infty} (\nu) = 1$. 

Now, $\mathcal{W}_{\infty} (\nu)$ is proportional to $\mathscr{M}^{\mathfrak{S}} (\nu)$ for each $\nu \in \Sign_J^+$ since $\mathcal{W}_x (\nu)$ is proportional to $\mathscr{M}^{\mathfrak{S}} (\nu)$ if $x \ge \nu_1$. Thus $\sum_{\nu \in \Sign_J^+} \mathscr{M}^{\mathfrak{S}} (\nu) = 1 = \sum_{\nu \in \Sign_J^+} \mathcal{W}_{\infty} (\nu)$ implies that $\mathcal{W}_x (\lambda) = \mathcal{W}_{\infty} (\lambda) = \mathscr{M}^{\mathfrak{S}} (\lambda)$. 

This establishes the proposition when $b_2 \in (-q^{2J}, 0)$; hence, it also holds for any $b_2 \in \mathbb{C}$ by uniqueness of analytic continuation. 
\end{proof} 

In general, the weights $\mathcal{W}_{x; \vartheta, q, \kappa, b_1, b_2} (\lambda)$ are complex. However, for certain choices of the parameters $q$, $\vartheta$, $\kappa$, $b_1$, and $b_2$, they can be made to be real and positive; this occurs, for example, if $b_1, b_2, q, \vartheta \in (0, 1)$, $\kappa > 1$, and $\beta_1 > \kappa \beta_2$. In this case, $\mathcal{W}_x (\lambda)$ is a probability measure on $\Sign_J^+$ that has the following probabilistic interpretation. 

Set $m_1 (\lambda) = m$ with probability $\widehat{W}_{q, \vartheta, \kappa, b_1, b_2} (m)$, for each non-negative integer $m$; by Remark \ref{w1}, these probabilities sum to $1$. Now, suppose that $j \in [2, x]$ is a positive integer. Let $\sum_{i = 2}^{j - 1} m_i (\lambda) = k$, meaning that there are $k + m$ elements of $\lambda$ less than $j$. Append $j$ to $\lambda$ (with multiplicity $1$) with probability $W_{\vartheta, q, b_2} (k + m; 1)$, and do not append $j$ to $\lambda$ with probability $W_{\vartheta, q, b_2} (k + m; 0)$; again, these probabilities sum to $1$. No integer $j > x$ is appended to $\lambda$. 

This procedure samples a random signature $\lambda$. For example, if $\vartheta = 0$, then $W_{\vartheta, q, b_2} (k + m; 1) = b_2$, and $W_{\vartheta, q, b_2} (k + m; 0) = 1 - b_2$, meaning that integers in the interval $[2, x]$ are randomly and independently appended to $\lambda$, each with probability $b_2$. Recall that one can associate a particle configuration with $\lambda$ by placing $m_j (\lambda)$ particles at position $j \ge 0$, for each non-negative integer $j$. In the case $\vartheta = 0$, this configuration randomly and independently places a particle at positions between $2$ and $x$, inclusive, each with probability $b_2$; it also places $m$ particles at position $1$ under some type of $q$-binomial distribution given explicitly by the weight $\widehat{W}$. 

Taking the limit as $x$ tends to $\infty$ and ignoring the particles at position $1$ (as depicted by the shifted model on the right side of Figure \ref{shift2generalizedvertex}) gives rise to horizontal Bernoulli initial data with parameter $b_2$. Thus if we could set $\vartheta = 0$, then the measure $\mathscr{M}^{\mathfrak{S}} (\lambda)$ would define Bernoulli-type initial data on the half line $\mathbb{Z}_{\ge 1}$. 

Unfortunately, the fact that $\vartheta = q^{-J}$ implies that $\vartheta$ is non-zero. Still, at least from a heuristic standpoint, it will be useful for us to view $\vartheta$ as an arbitrary complex number. In fact, we will later (in Section \ref{mucomplex}) justify this heuristic through an analytic continuation.

\subsection{Identities for the Current of the Stochastic Six-Vertex Model} 

\label{ContoursCurrent}

The goal of this section is to establish Corollary \ref{heightgeneral}, which is a multi-fold contour integral identity for the current of the stochastic six-vertex model with $b_1$-Bernoulli initial data on the $y$-axis and whose initial data on the $x$-axis is weighted by $\mathcal{W}$ (from Definition \ref{wlambda}). This will follow from combining the discussion in the previous section with the results of \cite{HSVMRSF}, which provide contour integral identities for $q$-moments of the height function of $\mathscr{M}$.

\subsubsection{Observables for the Stochastic Six-Vertex Model}

\label{ContourMoments}

Our goal in this section is to state Corollary \ref{secondheightgeneral}, which is an identity for $q$-moments of the height function of the stochastic higher spin vertex model in a reasonably general setting. However, we first require some notation. 

In what follows, we will have $t, k \in \mathbb{Z}_{> 0}$; $q \in (0, 1)$; $v \in \mathbb{C}$; $U = (u_1, u_2, \ldots , u_n) \subset \mathbb{C}$; $\Xi = (\xi_1, \xi_2, \ldots ) \subset \mathbb{C}$; and $S = (s_1, s_2, \ldots ) \subset \mathbb{C}$. The following provides a restriction on these parameters so that the contours we consider exist. 

\begin{definition}

\label{spacedparameters}

We call the quadruple $(v, U; \Xi, S)$ \emph{suitably spaced} if the following three conditions are satisfied. 

\begin{itemize}

\item{The complex number $v$ is sufficiently close to $0$ so that $|q^{-k - 1} v| < \min_{i \ge 1} \{ |u_i|^{-1}, |s_i \xi_i| \}$.} 

\item{The elements of $U$ are real and sufficiently close together so that they all have the same sign and $q \max_{1 \le i \le t} |u_i| < \min_{1 \le i \le t} |u_i|$.}

\item{No number of the form $s_i \xi_i$ is contained in the interval $\big( \min_{1 \le i \le t} u_i^{-1}, \max_{1 \le i \le t} u_i^{-1} \big)$.}

\end{itemize}

\end{definition}

The following result is Corollary 9.9 of \cite{HSVMRSF}. Here, for any $\lambda \in \Sign^+$, $\mathfrak{h}_{\lambda} (x) = e_{x - 1} (\lambda)$ denotes the number of indices $i$ for which $\lambda_i \ge x$. Furthermore, for any $r \in \mathbb{R}_{> 0}$ and contour $\mathcal{C} \subset \mathbb{C}$, let $r \mathcal{C}$ denote the image of $\mathcal{C}$ upon multiplication by $r$.

\begin{prop}[{\cite[Corollary 9.9]{HSVMRSF}}]
\label{firstheightgeneral} 

Fix parameters $k, x, t, J \in \mathbb{Z}_{> 0}$; $q \in (0, 1)$; $u_0 > 0$; $\overline{U}  = (u_1, u_2, \ldots , u_t) \subset \mathbb{R}_{> 0}$; $S = (s_1, s_2, \ldots ) \subset (-1, 0)$; and $\Xi = (\xi_0, \xi_1, \ldots ) \subset \mathbb{R}_{> 0}$. Assume that $(0, \{ u_0 \} \cup U; \Xi, S)$ is suitably spaced, and moreover that $\min_{i \ge 1} \xi_i^{-1} |s_i| > q \max_{i \ge 1} \xi_i^{-1} |s_i|$ and $\min_{i \ge 1} \xi_i^{-1} |s_i|^{-1} > q \max_{i \ge 1} \xi_i^{-1} |s_i|$. 

Under the measure $\mathscr{M}$ (from Definition \ref{signaturemeasures}) whose parameter sets are $\Xi = (\xi_1, \xi_2, \ldots )$, $S = (s_1, s_2, \ldots )$, and $U = \{ u_0, q u_0, \ldots , q^{J - 1} u_0 \} \cup \overline{U}$, we have that 
\begin{flalign}
\label{firstheightgeneralequation}
\begin{aligned}
\mathbb{E}_{\mathscr{M}} [q^{k \mathfrak{h}_{\lambda} (x)}] & = \displaystyle\frac{q^{\binom{k}{2}}}{(2 \pi \textbf{\emph{i}})^k} \displaystyle\oint \cdots \displaystyle\oint \displaystyle\prod_{i = 1}^k \displaystyle\frac{1 - q^J u_0 w_i}{1 - u_0 w_i} \left( \displaystyle\prod_{j = 1}^{x - 1} \displaystyle\frac{s_j \xi_j - s_j^2 w_i}{s_j \xi_j - w_i} \displaystyle\prod_{j = 1}^t \displaystyle\frac{1 - q u_j w_i}{1 - u_j w_i} \right)  \\
& \qquad \qquad \qquad  \times \displaystyle\prod_{1 \le i < j \le k} \displaystyle\frac{w_i - w_j}{w_i - q w_j} \displaystyle\prod_{i = 1}^k \displaystyle\frac{d w_i}{w_i}, 
\end{aligned}
\end{flalign}

\noindent where each $w_i$ is integrated along a contour $\Gamma_i (U; \Xi, S) \subset \mathbb{C}$, which is the union of two positively oriented circles, $C_i^{(1)}$ and $C_i^{(2)}$, satisfying the following three properties. First, for each $i \ge 1$, $C_i^{(2)}$ is a positively oriented circle centered at $0$ that contains $q^{-1} C_{i - 1}^{(2)}$ (where we set $C_0^{(2)} = \{ 0 \}$), but leaves outside each $q u_j^{-1}$ and $s_j \xi_j$. Second, for each $i \ge 1$, $C_i^{(1)}$ is a circle that contains $\bigcup_{j = 0}^k \{ u_j^{-1}, q^{-1} u_j^{-1}, \ldots , q^{1 - i} u_j^{-1} \}$, but leaves outside each $s_j \xi_j$ and each $q u_j^{-1}$. Third, no $C_i^{(1)}$ intersects the interior of $q^{-1} C_k^{(2)}$. 

\end{prop}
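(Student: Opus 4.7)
The plan is to recognize this identity as a specialization of a more general nested-contour $q$-moment formula for the stochastic higher spin vertex model established earlier in \cite{HSVMRSF}. Since Proposition \ref{measuresignatures} identifies $\mathscr{M}$ with the vertex-model distribution and $\mathfrak{h}_{\lambda}(x) = e_{x-1}(\lambda)$ is the vertex height function at $(x, t + J)$, the expectation on the left of \eqref{firstheightgeneralequation} is a $q$-moment of that height function.

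The starting point would be the general formula (Theorem 9.8 of \cite{HSVMRSF}), proved by acting with a $k$-fold $q$-difference operator on the Cauchy identity for the symmetric functions $F$ and $G$: for an arbitrary spectral set $U$ entering the measure $\mathscr{M}_{U;\Xi,S}$, one has
\[
\mathbb{E}_{\mathscr{M}}\bigl[q^{k \mathfrak{h}_{\lambda}(x)}\bigr] = \frac{q^{\binom{k}{2}}}{(2\pi \mathbf{i})^k}\oint\!\cdots\!\oint \prod_{i=1}^k \prod_{u \in U} \frac{1 - qu w_i}{1 - u w_i}\prod_{j=1}^{x-1}\frac{s_j\xi_j - s_j^2 w_i}{s_j\xi_j - w_i}\prod_{1 \le i < j \le k}\frac{w_i - w_j}{w_i - qw_j}\prod_{i=1}^k\frac{dw_i}{w_i},
\]
with each $w_i$ traversing a small positively oriented contour enclosing the poles $u^{-1}$ for $u \in U$ but not the points $s_j\xi_j$, with the radii nested so that $(w_i - qw_j)^{-1}$ remains regular on the product of contours. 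Specializing to $U = \{u_0, qu_0, \ldots, q^{J-1}u_0\} \cup \overline{U}$ and using the telescoping
\[
\prod_{j=0}^{J-1} \frac{1 - q^{j+1} u_0 w_i}{1 - q^j u_0 w_i} = \frac{1 - q^J u_0 w_i}{1 - u_0 w_i}
\]
yields the precise integrand appearing in the statement; the first ``suitably spaced'' hypothesis of Definition \ref{spacedparameters} ensures that the new poles $q^{-j}u_0^{-1}$ for $0 \le j \le J-1$ can be simultaneously surrounded alongside each $u_\ell^{-1}$ without intersecting any $s_j\xi_j$.

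The remaining step is to reshape the nested small contours into the two-circle union $\Gamma_i = C_i^{(1)} \cup C_i^{(2)}$ stated in the proposition. The large component $C_i^{(2)}$ is inserted to collect residues at infinity arising from the $(s_j\xi_j - s_j^2 w_i)$ factors in the numerator, while $C_i^{(1)}$ retains the $u^{-1}$-type poles. The nesting $C_i^{(2)} \supset q^{-1} C_{i-1}^{(2)}$, together with $C_i^{(1)}$ lying outside the interior of $q^{-1}C_k^{(2)}$, confines every cross-kernel pole $w_i = q w_j$ to the annular region between the two circles, so no such pole is crossed. The two quantitative bounds $\min_i \xi_i^{-1}|s_i| > q \max_i \xi_i^{-1}|s_i|$ and $\min_i \xi_i^{-1}|s_i|^{-1} > q \max_i \xi_i^{-1}|s_i|$ guarantee that a large circle with the required nesting can indeed be chosen while keeping every $s_j\xi_j$ strictly outside both $C_i^{(1)}$ and $C_i^{(2)}$. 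The principal obstacle is precisely this contour bookkeeping: one must verify that the deformation from the standard nested small contours to the two-circle union crosses none of the poles at $s_j\xi_j$, $u_\ell^{-1}$, or $qw_j$, and the three ``suitably spaced'' conditions are calibrated exactly so that every such pole ends up on the correct side of each component of $\Gamma_i$.
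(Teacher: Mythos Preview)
The paper does not give a proof of this proposition: it is quoted verbatim as Corollary 9.9 of \cite{HSVMRSF}, and the text moves directly to the next definition. So there is no ``paper's own proof'' to compare against; the result is imported wholesale from Borodin--Petrov.

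Your overall plan---specialize a general nested-contour $q$-moment formula from \cite{HSVMRSF} to the spectral set $U=\{u_0,qu_0,\ldots,q^{J-1}u_0\}\cup\overline{U}$ and telescope $\prod_{j=0}^{J-1}(1-q^{j+1}u_0w_i)/(1-q^ju_0w_i)$---is indeed the route taken in \cite{HSVMRSF} to obtain their Corollary 9.9 from their Theorem 9.8, so at that level the sketch is accurate. However, your description of the contour geometry contains a real error. You write that ``the large component $C_i^{(2)}$ is inserted to collect residues at infinity arising from the $(s_j\xi_j-s_j^2w_i)$ factors in the numerator.'' This is backwards: by the statement, $C_i^{(2)}$ is a \emph{small} circle centered at $0$ (it must leave outside every $qu_j^{-1}$ and every $s_j\xi_j$), and its role is to enclose the simple pole at $w_i=0$ coming from $dw_i/w_i$. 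The two-circle structure $\Gamma_i=C_i^{(1)}\cup C_i^{(2)}$ arises because the integrand has poles at $0$ and at the $u_j^{-1}$, and these two clusters can be separated by the points $s_j\xi_j$, which must remain outside both components. If you attempted the deformation you describe---pushing $C_i^{(2)}$ out to infinity---you would cross every $s_j\xi_j$ pole and the argument would collapse. The spacing hypotheses are there to guarantee that small $q$-nested circles around $0$ and growing circles around the $u_j^{-1}$ can be drawn simultaneously without either family touching the $s_j\xi_j$, not to enable a large-radius contour.
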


See the right side of Figure \ref{firstcontoursfigure} for examples of the contours $\Gamma$. 

The conditions on $U$, $\Xi$, and $S$ given in Proposition \ref{firstheightgeneral} do not quite fit our requirements; as we will see later, it will be convenient to take $u_0$ 	very large, while keeping $u_1, u_2, \ldots , u_t$ fixed. It is possible to modify Proposition \ref{firstheightgeneral} to also hold in that setting. However, to that end, we require different contours, given by the following definition; see the left side of Figure \ref{firstcontoursfigure} for examples. 

\begin{definition}

\label{firstcontours}

Suppose that $(v, U; \Xi, S)$ is suitably spaced. We define the set of $k$ positively oriented contours $\gamma_1 (U; \Xi, S), \gamma_2 (U; \Xi, S), \ldots , \gamma_k (U; \Xi, S)$ as follows. Each contour $\gamma_i = \gamma_i (U; \Xi, S)$ will be the disjoint union of two positively oriented circles $c_i^{(1)}$ and $c_i^{(2)}$. 

Here, $c^{(1)} = c_1^{(1)} = c_2^{(1)} = \cdots = c_k^{(1)}$ are all the same circle that contain each of the $u_i^{-1}$ and leave outside $\xi_1 s_1, \xi_2 s_2, \ldots $. We also assume that $c^{(1)}$ is sufficiently small so that its interior is disjoint with the image of its interior under multiplication by $q$. 

The circles $c_1^{(2)}, c_2^{(2)}, \ldots , c_k^{(2)}$ will be centered at $0$ and sufficiently small such that the following two properties hold. 

\begin{itemize}

\item{The circles are \emph{$q$-nested}, meaning that, for each $i \in [1, k - 1]$, the circle $c_{i + 1}^{(2)}$ strictly contains the circle $q^{-1} c_i^{(2)}$ in its interior.} 

\item{All circles contain $v$ but are sufficiently small so that the interior of $c_k^{(2)}$ is disjoint with the interior of $q c^{(1)}$ and so that the interior of $c_k^{(2)}$ does not contain $s_1 \xi_1, s_2 \xi_2, \ldots $. }

\end{itemize}

The fact that $(v, U; \Xi, S)$ is suitably spaced guarantees the existence of these contours. 

\end{definition}

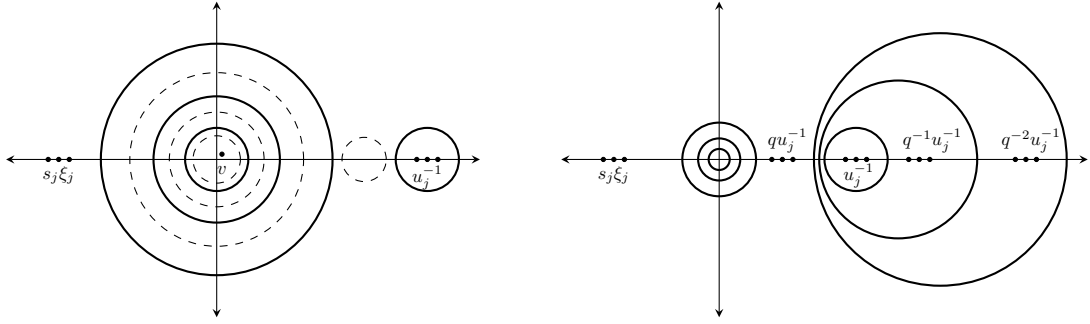
\begin{figure}

\begin{minipage}{0.45\linewidth}

\centering 

\begin{tikzpicture}[
      >=stealth,
			scale = .7
			]

			\draw[<->, black] (-4, 0) -- (5, 0); 
			\draw[<->, black] (0, -3) -- (0, 3);

			\draw[black, fill] (.1, .1) circle [radius=.04] node [black, below=2, scale = .7] {$v$};
			\draw[black, fill] (4, 0) circle [radius=.04] node [black, below=0, scale = .7] {$u_j^{-1}$};
			\draw[black, fill] (4.2, 0) circle [radius=.04];
			\draw[black, fill] (3.8, 0) circle [radius=.04];
			\draw[black, fill] (-3, 0) circle [radius=.04] node [black, below=0, scale = .7] {$s_j \xi_j$};
			\draw[black, fill] (-3.2, 0) circle [radius=.04];
			\draw[black, fill] (-2.8, 0) circle [radius=.04];

			\draw[black, thick] (0, 0) circle [radius=.6]; 
			\draw[black, thick] (0, 0) circle [radius=1.2]; 
			\draw[black, thick] (0, 0) circle [radius=2.2];
			
			\draw[black, dashed] (0, 0) circle [radius=.45]; 
			\draw[black, dashed] (0, 0) circle [radius=.9];
			\draw[black, dashed] (0, 0) circle [radius=1.65]; 
			 
			\draw[black, thick] (4, 0) circle [radius=.6];
			\draw[black, dashed] (2.8, 0) circle [radius=.42];

\end{tikzpicture}

\end{minipage}
\qquad
\begin{minipage}{0.45\linewidth}

\centering

\begin{tikzpicture}[
      >=stealth,
			scale = .7	
			]

			\draw[<->, black] (-3, 0) -- (7	, 0); 
			\draw[<->, black] (0, -3) -- (0, 3);

			\draw[black, fill] (2.6, 0) circle [radius=.04];
			\draw[black, fill] (2.8, 0) circle [radius=.04] node [black, left = 3, below=-1, scale = .7] {$u_j^{-1}$};
			\draw[black, fill] (2.4, 0) circle [radius=.04];
			
			\draw[black, fill] (1, 0) circle [radius=.04];
			\draw[black, fill] (1.2, 0) circle [radius=.04] node [black, right = 2, above = 0, scale = .7] {$q u_j^{-1}$};
			\draw[black, fill] (1.4, 0) circle [radius=.04];
			
			\draw[black, fill] (-2, 0) circle [radius=.04] node [black, below=0, scale = .7] {$s_j \xi_j$};
			\draw[black, fill] (-2.2, 0) circle [radius=.04];
			\draw[black, fill] (-1.8, 0) circle [radius=.04]; 
			\draw[black, fill] (3.8, 0) circle [radius=.04];
			\draw[black, fill] (4, 0) circle [radius=.04];
			\draw[black, fill] (3.6, 0) circle [radius=.04] node [black, right = 9, above=0, scale = .7] {$q^{-1} u_j^{-1}$};
			\draw[black, fill] (6.025, 0) circle [radius=.04] node [black, left = 2, above=0, scale = .7] {$q^{-2} u_j^{-1}$};
			\draw[black, fill] (5.825, 0) circle [radius=.04];
			\draw[black, fill] (5.625, 0) circle [radius=.04];

			\draw[black, thick] (0, 0) circle [radius=.2]; 
			\draw[black, thick] (0, 0) circle [radius=.4]; 
			\draw[black, thick] (0, 0) circle [radius=.7];
			 
			\draw[black, thick] (2.6, 0) circle [radius=.6];
			\draw[black, thick] (3.4, 0) circle [radius=1.5];
			\draw[black, thick] (4.2, 0) circle [radius=2.4];

\end{tikzpicture}

\end{minipage}

\caption{\label{firstcontoursfigure} Shown above and to the left in solid are possible contours $\gamma_1 (v, U; \Xi, S)$, $\gamma_2 (v, U; \Xi, S)$, and $\gamma_3 (v, U; \Xi, S)$; the dashed curves are the images of the solid curves under multiplication by $q$. Shown above and to the right are possible contours $\Gamma_1 (U; \Xi, S)$, $\Gamma_2 (U; \Xi, S)$, and $\Gamma_3 (U; \Xi, S)$.}
\end{figure}

The following corollary is an adaptation of Proposition \ref{firstheightgeneral} that imposes different restrictions on $U$, $\Xi$, and $S$. 	

\begin{cor}
\label{secondheightgeneral} 

Fix $k, x, t, J \in \mathbb{Z}_{> 0}$; $q \in (0, 1)$, $u_0 \in \mathbb{C}$ non-zero; $\overline{U} = (u_1, u_2, \ldots , u_t) \subset \mathbb{R}_{> 0}$, $S = (s_0, s_1, \ldots ) \subset \mathbb{R}$; and $\Xi = (\xi_0, \xi_1, \ldots ) \subset \mathbb{R}$. Suppose that $(u_0^{-1}, U; \Xi, S)$ is suitably spaced and moreover that 
\begin{flalign}
\label{convergenceprobabilities}
\sup_{i, j \ge 1} \left| \displaystyle\frac{s_j \xi_j u_i - s_j^2}{1 - s_j \xi_j u_i} \right| < 1; \qquad \sup_{j \ge 1} \left| \displaystyle\frac{s_j \xi_j u_0 - s_j^2}{1 - s_j \xi_j u_0} \right| < 1. 
\end{flalign}

\noindent Under the measure $\mathscr{M}$ (from Definition \ref{signaturemeasures}) whose parameter sets are $\Xi = (\xi_1, \xi_2, \ldots )$, $S = (s_1, s_2, \ldots )$, and $U = \{ u_0, qu_0, q^2 u_0, \ldots , q^{J - 1} u_0 \} \cup \overline{U}$, we have that $\mathbb{E}_{\mathscr{M}} \big[ q^{k \mathfrak{h}_{\lambda} (x)} \big]$ is equal to the right side of \eqref{firstheightgeneralequation}, where now each $w_i$ is integrated along the contour $\gamma_i (u_0^{-1}, U; \Xi, S)$. 
\end{cor}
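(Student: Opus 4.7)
The plan is to derive Corollary~\ref{secondheightgeneral} from Proposition~\ref{firstheightgeneral} through a contour deformation combined with analytic continuation in the parameters. I first observe that both sides of the claimed identity are meromorphic functions of the parameter vector $(u_0, S, \Xi)$, with $q, J, k, x, \overline{U}$ held fixed. On the left, $\mathbb{E}_{\mathscr{M}}[q^{k \mathfrak{h}_{\lambda}(x)}] = \sum_{\lambda} \mathscr{M}(\lambda) q^{k \mathfrak{h}_{\lambda}(x)}$ depends rationally on these parameters in each term, since by Proposition~\ref{measuresignatures} and Definition~\ref{signaturemeasures} the measure $\mathscr{M}(\lambda)$ is rational in $u_0, S, \Xi$ through the factor $F_{\lambda}(u_0, q u_0, \ldots, q^{J-1} u_0, \overline{U} \b| \Xi, S)$ and the normalization $Z_{\rho, U}$; the hypothesis \eqref{convergenceprobabilities} provides uniform bounds on the transition probabilities \eqref{configurationprobabilities}, ensuring the sum converges uniformly to an analytic function of the parameters in a neighborhood of the relevant region. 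The right-hand side, a contour integral along $\gamma_i(u_0^{-1}, U; \Xi, S)$, is meromorphic in the same parameters so long as the pole $w_i = u_0^{-1}$ remains strictly inside $c_i^{(2)}$, which is guaranteed by the hypothesis that $(u_0^{-1}, U; \Xi, S)$ is suitably spaced.

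Next, in the parameter regime where $u_0 > 0$ is real and sufficiently large (so that $u_0^{-1}$ is small and positive) and the remaining constraints of Proposition~\ref{firstheightgeneral} are met, the identity \eqref{firstheightgeneralequation} holds with the contours $\Gamma_i(U; \Xi, S)$. The main step is then to verify that in this regime the contour integral along $\Gamma_i$ equals the one along $\gamma_i$. The poles of the integrand in each variable $w_i$ lie at $u_j^{-1}$ for $j = 0, 1, \ldots, t$ (from the factors $(1 - u_j w_i)^{-1}$), at $s_j \xi_j$ (from the factors $(s_j \xi_j - w_i)^{-1}$), and at $q w_l$ for $l > i$ (from the Vandermonde denominator $(w_i - q w_l)^{-1}$). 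Both $\Gamma_i$ and $\gamma_i$ enclose exactly the set $\{u_j^{-1} : 0 \le j \le t\}$, leave all $s_j \xi_j$ outside, and respect the $q$-nesting needed to avoid the Vandermonde poles. By Cauchy's theorem one may therefore continuously deform $\Gamma_i$ into $\gamma_i$ without crossing any pole: the outer circle $C_i^{(1)}$ shrinks to $c^{(1)}$ (which no longer encloses $u_0^{-1}$), while simultaneously $C_i^{(2)}$ is enlarged to $c_i^{(2)}$ to recapture $u_0^{-1}$ in its interior with the same positive orientation.

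With the identity along the $\gamma$-contours established in the restricted regime, analytic continuation in $(u_0, S, \Xi)$ extends it to the full range of parameters allowed by Corollary~\ref{secondheightgeneral}. Both sides are analytic on the connected domain carved out by the suitably spaced condition together with \eqref{convergenceprobabilities}, and they have just been shown to agree on a nonempty open real subset, so they agree throughout by uniqueness of analytic continuation.

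The principal obstacle is the explicit contour homotopy from $\Gamma_i$ to $\gamma_i$. One must arrange the homotopy so that $q$-nesting among the $w_i$-contours is preserved at every intermediate stage (so that no pole $w_i = q w_l$ is crossed) and every $s_j \xi_j$ remains outside throughout. This is feasible because the suitably spaced hypothesis places $u_0^{-1}$ in a region well-separated from the other singularities, so $u_0^{-1}$ can be continuously transported from the annulus between $C_i^{(2)}$ and $C_i^{(1)}$ inward without colliding with any forbidden point. The careful bookkeeping of which contour encloses which pole, together with verifying the uniform convergence needed for the analytic continuation in several variables, constitutes the bulk of the technical content.
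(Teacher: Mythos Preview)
Your overall architecture (establish equality of the two contour integrals in a restricted regime, then analytically continue) matches the paper's, and your remarks about analyticity of both sides are correct. The gap is in the step where you claim that $\Gamma_i$ can be continuously deformed to $\gamma_i$ without crossing any pole.

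The assertion ``both $\Gamma_i$ and $\gamma_i$ enclose exactly the set $\{u_j^{-1}:0\le j\le t\}$'' is not right. First, both families also enclose $0$ (there is a pole from $dw_i/w_i$). More importantly, by their very definition the circles $C_i^{(1)}$ in $\Gamma_i$ enclose $u_j^{-1},q^{-1}u_j^{-1},\ldots,q^{1-i}u_j^{-1}$ for \emph{every} $j$, whereas the fixed circle $c^{(1)}$ in $\gamma_i$ encloses only $u_j^{-1}$ for $j\ge 1$ (the $q$-nested circles $c_i^{(2)}$ pick up the points $q^{-r}u_0^{-1}$, but not $q^{-r}u_j^{-1}$ for $j\ge 1$). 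These extra enclosed points matter through the cross factors $\prod_{i<j}(w_i-qw_j)^{-1}$: once one integrates $w_1$ and picks up the residue at $w_1=u_j^{-1}$, the remaining integrand acquires a pole at $w_2=q^{-1}u_j^{-1}$, which lies inside $\Gamma_2$ but outside $\gamma_2$. Consequently the iterated residue expansions of the two integrals contain genuinely different terms for $k\ge 2$, and no pole-free homotopy between the two contour systems exists.

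The paper handles exactly this point by expanding both integrals as sums over residue assignments $\mathfrak{A}\subsetneq\mathfrak{A}'$ and then invoking a nontrivial vanishing lemma (the proof of the second part of Theorem~8.13 in \cite{HSVMRSF}) to show that every residue indexed by $\mathfrak{A}'\setminus\mathfrak{A}$ is zero. That algebraic cancellation is the substantive content you are missing; without it, your ``careful bookkeeping'' in the last paragraph cannot be completed.
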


\begin{proof}[Proof (Outline)]

This corollary is essentially an analytic continuation of Proposition \ref{firstheightgeneral}, so we only outline its proof. 

Denote the right side of \eqref{firstheightgeneralequation} by $I_1 (u_0) = I_1 (u_0; q, \overline{U}, S, \Xi)$ when all $w_i$ are integrated along $\Gamma_i$, and denote it by $I_2 (u_0) = I_2 (u_0; q, \overline{U}, S, \Xi)$ when all $w_i$ are integrated along $\gamma_i$. By integrating the right side of \eqref{firstheightgeneralequation} over $w_1, w_2, \ldots , w_k$ (in that order), we deduce that both $I_1$ and $I_2$ are rational functions in all parameters $q$, $u_0$, $\overline{U}$, $S$, and $\Xi$, since they can be written as finite sums of residues (each of which is a rational function). 

In particular, $I_2$ can be evaluated as a sum of residues corresponding to the poles $w_i = u_j^{-1}$, $w_i = 0$, or $w_i = q^{-r_i} u_0^{-1}$ for some integers $r_i \in [0, k - 1]$. If $\mathcal{A}$ is an assignment of $w_1, w_2, \ldots , w_k$ to the poles $u_j^{-1}$, $0$, and $u_0^{-1}, q^{-1} u_0^{-1}, \ldots , q^{1 - k} u_0$, then let $R_{\mathcal{A}} (u_0) = R_{\mathcal{A}} (u_0; q, \overline{U}, \Xi, S)$ denote the corresponding residue. Letting $\mathfrak{A}$ denote the set of all possible such assignments, we deduce that $I_2 (u_0) = \sum_{\mathcal{A} \in \mathfrak{A}} R_{\mathcal{A}} (u_0)$. 

Similarly, $I_1$ can be evaluated as a sum of residues corresponding to the poles where $w_i = 0$ or $w_i = q^{-r_i} u_j^{-1}$ for some $j \in [0, t]$ and $r_i \in [0, k - 1]$. If $\mathcal{A}$ is an assignment of $w_1, w_2, \ldots , w_k$ to the poles $0$ and $u_j^{-1}, q^{-1} u_j^{-1}, \ldots , q^{1 - k} u_j^{-1}$, then let $\overline{R}_{\mathcal{A}} (u) = \overline{R}_{\mathcal{A}} (u_0; q, \overline{U}, \Xi, S)$ denote the corresponding residue. Letting $\mathfrak{A}'$ denote the set of all possible such assignments, we deduce that $I_1 (u_0) = \sum_{\mathcal{A} \in \mathfrak{A}'} \overline{R}_{\mathcal{A}} (u_0)$. 

Observe that $\mathfrak{A} \subseteq \mathfrak{A}'$, but that $\mathfrak{A}'$ is larger if $k \ge 2$ (for instance, an element of $\mathfrak{A}'$ can assign $w_2$ to $q^{-1} u_1^{-1}$, which would not be permitted by any element of $\mathfrak{A}$). However, it was shown in the proof of the second part of Theorem 8.13 of \cite{HSVMRSF} that $R_{\mathcal{A}} (u_0) = 0$ if $\mathcal{A} \in \mathfrak{A}' \setminus \mathfrak{A}$. Hence, $I_1 (u_0) = \sum_{\mathcal{A} \in \mathfrak{A}} \overline{R}_{\mathcal{A}} (u_0)$. Since $I_1$ and $I_2$ are integrals of the same integrand, it follows that $R_{\mathcal{A}} (z) = \overline{R}_{\mathcal{A}} (z)$ as rational functions in $z$. Therefore, $\sum_{\mathcal{A} \in \mathfrak{A}} R_{\mathcal{A}} (u_0) = \sum_{\mathcal{A} \in \mathfrak{A}'} \overline{R}_{\mathcal{A}} (u_0)$, for each $u_0 \in \mathbb{C}$. 

Now the corollary can be quickly deduced from combining a fairly standard (see, for instance, Section 10 of \cite{HSVMRSF}) analytic continuation argument with Lemma 9.1 of \cite{HSVMRSF}, which states that, under the assumption \eqref{convergenceprobabilities}, $\mathbb{E}_{\mathscr{M}} \big[ q^{k \mathfrak{h}_{\lambda} (x)} \big]$ is a rational function in the parameters $q$, $U$, $S$, and $\Xi$. We omit further details. 
\end{proof}

\subsubsection{Contour Integral Identities for \texorpdfstring{$q$}{}-Moments of the Current}

\label{jpositive}

The goal of this section is to establish Corollary \ref{heightgeneral}. We will do this by first applying Corollary \ref{secondheightgeneral} to obtain multi-fold contour integral identities for $q$-moments of the measure $\mathscr{M}^{\mathfrak{S}}$. By Proposition \ref{wm2}, this yields contour integral identities for the $q$-moments for the height function of the stochastic six-vertex model whose initial data is weighted according to the formal measure $\mathcal{W}$, with parameter $\vartheta$ of the form $q^{-J}$ for some $J \in \mathbb{Z}_{> 0}$; this is given by Proposition \ref{heightsixvertex}. We will then reinterpret the $q$-moments of the height as $q$-moments of the current, so that Corollary \ref{heightgeneral} will follow from Proposition \ref{heightsixvertex}. 

Throughout this section, we have $k, x, t, J \in \mathbb{Z}_{> 0}$; $\delta_1, \delta_2, b_1 \in (0, 1)$ will satisfy $\delta_1 < \delta_2$; and $b_2 \in \mathbb{C} \setminus \{ 0, 1 \}$. Define $q$, $\kappa$, $s$, $u$, $\beta_1$, and $\beta_2$ as in \eqref{stochasticparameters}, and denote $\vartheta = q^{-J}$. 

The following proposition is a direct degeneration of Corollary \ref{secondheightgeneral} to the case $\mathscr{M} = \mathscr{M}^{\mathfrak{S}}$, under certain analytic constraints on the parameters. 

\begin{prop}

\label{qms}

Assume that $b_2 \in (-1, 0]$; $- \kappa^{-1} q^{k + 2} < \beta_2 s < 0$; and $|\kappa \beta_2| < q^{k + 2} \beta_1$. Then,
\begin{flalign}
\label{qmsequation}
\begin{aligned}
\mathbb{E}_{\mathscr{M}^{\mathfrak{S}}} \big[ q^{k (\mathfrak{h}_{\lambda} (x) - J)} \big] = \displaystyle\frac{q^{\binom{k}{2}}}{(2 \pi \textbf{\emph{i}})^k} \displaystyle\oint \cdots & \displaystyle\oint \displaystyle\prod_{i = 1}^k \left( \displaystyle\frac{1 + y_i}{1 + q^{-1} y_i} \right)^t \left( \displaystyle\frac{1 + q^{-1} \kappa^{-1} y_i}{1 + \kappa^{-1} y_i}  \right)^{x - 1} \displaystyle\prod_{1 \le i < j \le k} \displaystyle\frac{y_i - y_j}{y_i - q y_j}   \\ 
& \times \displaystyle\prod_{i = 1}^k \displaystyle\frac{1}{1 - q^{-1} \beta_1^{-1} y_i} \left( \displaystyle\frac{1 - \vartheta \kappa \beta_2 y_i^{-1}}{1 - \kappa \beta_2 y_i^{-1}} \right) \displaystyle\frac{d y_i}{y_i}, 
\end{aligned}
\end{flalign}

\noindent where the expectation on the left side is with respect to the measure $\mathscr{M}^{\mathfrak{S}}$, with the parameter $a$ set to $0$. In the above, each $y_i$ is integrated along the contour $\gamma_i (\kappa \beta_2, \widehat{U}; \widehat{\Xi}, \widehat{S})$, where the contours $\gamma_i$ are given by Definition \ref{firstcontours}. Here, $\widehat{U} = (q^{-1}, q^{-1}, \ldots , q^{-1})$, $\widehat{\Xi} = (1, 1, 1, \ldots )$, and $\widehat{S} = (q \beta_1, - \kappa, - \kappa, \ldots )$. 
\end{prop}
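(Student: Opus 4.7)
The plan is to deduce this from Corollary \ref{secondheightgeneral} applied to the measure $\mathscr{M}^{\mathfrak{S}}$ of Definition \ref{ms}, followed by the limit $a \to 0$ and a change of integration variables $y_i = -q u w_i$. Under the specialization, one takes $u_0 = v$ and $\overline{U} = (u, u, \ldots, u)$ of length $t$, together with the inhomogeneous spins $\xi_1 = \xi$, $s_1 = a$, and $\xi_j = 1$, $s_j = s$ for $j \geq 2$; the corollary then produces a $k$-fold contour integral formula for $\mathbb{E}_{\mathscr{M}^{\mathfrak{S}}}\bigl[q^{k \mathfrak{h}_\lambda(x)}\bigr]$.

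The first step is to verify the hypotheses of Corollary \ref{secondheightgeneral}: namely, that $(v^{-1}, U; \Xi, S)$ is suitably spaced and that the two suprema in \eqref{convergenceprobabilities} are strictly less than $1$. Using the identities $s = q^{-1/2}$, $u = \kappa s$, and $v = -1/(\beta_2 s)$, these reduce to a collection of inequalities among the quantities $\beta_2 s$, $\kappa \beta_2 / \beta_1$, and $b_2$; each follows from the three analytic hypotheses $b_2 \in (-1, 0]$, $-\kappa^{-1} q^{k+2} < \beta_2 s < 0$, and $|\kappa \beta_2| < q^{k+2} \beta_1$. I expect this verification to be the principal obstacle, since it requires tracking a number of competing estimates simultaneously.

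Next, taking $a \to 0$ in the integrand simplifies the $j = 1$ factor of the product $\prod_{j=1}^{x-1} (s_j \xi_j - s_j^2 w_i) / (s_j \xi_j - w_i)$ to $1 / (1 + u w_i / \beta_1)$, using $a \xi_1 = -\beta_1 / u$; all remaining $j \geq 2$ factors equal the bulk expression $(1 - s w_i) / (1 - s^{-1} w_i)$, since $s_j \xi_j = s$ and $s_j^2 = q^{-1}$. Multiplying both sides by $q^{-k J}$ then converts the $q^{k \mathfrak{h}_\lambda(x)}$ on the left into $q^{k (\mathfrak{h}_\lambda(x) - J)}$.

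Finally, performing the substitution $y_i = -q u w_i$ and using $q s^2 = 1$ and $\vartheta = q^{-J}$, each factor of the integrand transforms into its counterpart in the statement: the $u_0$ factor becomes $\vartheta^{-1}(1 - \vartheta \kappa \beta_2 / y_i)/(1 - \kappa \beta_2 / y_i)$, whose $k$-fold product of $\vartheta^{-1}$ cancels the $q^{-k J}$ prefactor exactly; the $t$ copies of the $u_j$ factor become $(1 + y_i)/(1 + q^{-1} y_i)$; each bulk spin factor becomes $(1 + q^{-1} \kappa^{-1} y_i)/(1 + \kappa^{-1} y_i)$; and the $j = 1$ factor becomes $1/(1 - q^{-1} \beta_1^{-1} y_i)$, while $dw_i/w_i \mapsto dy_i/y_i$ and $(w_i - w_j)/(w_i - q w_j) \mapsto (y_i - y_j)/(y_i - q y_j)$. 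The contour component $c_i^{(2)}$, which surrounds $v^{-1}$, maps to a $q$-nested circle around $0$ containing $\kappa \beta_2$; the component $c^{(1)}$, which surrounds $u^{-1}$ and excludes $s_j \xi_j$, maps to a circle whose exterior contains both $q \beta_1$ (the image of $s_1 \xi_1$) and $-\kappa$ (the image of $s$). This identifies the new contours as $\gamma_i(\kappa \beta_2, \widehat{U}; \widehat{\Xi}, \widehat{S})$ with the stated parameter sets, completing the proof.
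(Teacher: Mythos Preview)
Your proposal is correct and follows essentially the same route as the paper: apply Corollary \ref{secondheightgeneral} under the specialization of Definition \ref{ms}, verify the suitably-spaced and bound conditions from the stated analytic constraints, set $a = 0$, and then change variables $y_i = -qu w_i$ (using $\vartheta = q^{-J}$). Your more detailed factor-by-factor bookkeeping and contour identification are accurate and simply spell out what the paper compresses into a single sentence.
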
 

\begin{proof}
We apply Corollary \ref{secondheightgeneral} with $v = -\beta_2^{-1} s^{-1}$, $u_1 = u_2 = \cdots = u_t = u$, $S = (s, a, s, s, s, \ldots )$, and $\Xi = (1, \xi, 1, 1, 1, \ldots )$, where $\xi = - \beta_1 a^{-1} u^{-1}$, and $a \in (-1, 0)$. The conditions of the corollary (that $(v^{-1}, U; \Xi, S)$ be suitably spaced and that the estimates \eqref{convergenceprobabilities} hold) can be quickly verified from our constraints $b_2 \in (-1, 0]$, $- \kappa^{-1} q^{k + 2} < \beta_2 s < 0$, and $|\kappa \beta_2| < q^{k + 2} \beta_1$, and from the definitions the parameters given by \eqref{stochasticparameters}. 

Now, we apply Corollary \ref{secondheightgeneral} to obtain that 
\begin{flalign}
\label{msintegral}
\begin{aligned}
\mathbb{E}_{\mathscr{M}^{\mathfrak{S}}} [q^{k \mathfrak{h}_{\lambda} (x)}] = \displaystyle\frac{q^{\binom{k}{2}}}{(2 \pi \textbf{i})^k} \displaystyle\oint \cdots & \displaystyle\oint \displaystyle\prod_{i = 1}^k \left( \displaystyle\frac{1 - q u w_i}{1 - u w_i} \right)^t \left( \displaystyle\frac{s - s^2 w_i}{s - w_i}  \right)^{x - 1} \displaystyle\prod_{1 \le i < j \le k} \displaystyle\frac{w_i - w_j}{w_i - q w_j}	 \\ 
& \times \displaystyle\prod_{i = 1}^k \displaystyle\frac{\beta_1 u^{-1} + a^2 w_i }{\beta_1 u^{-1} + w_i} \left( \displaystyle\frac{1 + \vartheta^{-1}  s^{-1} \beta_2^{-1} w_i}{1 + s^{-1} \beta_2^{-1} w_i} \right) \displaystyle\frac{d w_i}{w_i}, 
\end{aligned}
\end{flalign}

\noindent where each $w_i$ is integrated along the contour $\gamma_i (v^{-1}, U; \Xi, S)$. Thus, the proposition follows from setting $a = 0$ in \eqref{msintegral}, changing variables $y_i = - q u w_i$, and recalling that $\vartheta = q^{-J}$. 
\end{proof}

Using Proposition \ref{wm2} and Proposition \ref{qms}, we can establish the following proposition that gives $q$-moments of the height function for the stochastic six-vertex model whose initial data is weighted by $\mathcal{W}$ (recall Definition \ref{wlambda}). 

In what follows, let $\nu \in \Sign^+$ be a signature such that $m_i (\nu) \le 1$ for each $i > 1$. Let $\mathbb{E}^{(\nu)}$ denote the expectation under the stochastic six-vertex model $\mathcal{P} (\delta_1, \delta_2)$ (see Section \ref{StochasticVertex}) with the following initial data. Each vertex on the positive $y$-axis is an entrance site for a path independently and with probability $b_1$; furthermore, a vertex $(i, 0)$ on the positive $x$-axis is an entrance site for a path if and only if $m_{i + 1} (\nu) = 1$. Observe that the initial data for this model does not depend on $m_1 (\nu)$. 

Furthermore, if $\nu \in \Sign^+$ is a signature, then we define the signature $\nu (x, J) \in \bigcup_{n = J}^{\infty} \Sign_n^+$ as follows. If the length of $\ell (\nu) \ge J$ or if $\nu_1 > x$, then set $\nu (x, J) = \nu$. If $\nu_1 \le x$ and $\ell (\nu) < J$, then set $\nu (x, J) \in \Sign_J^+$ to be the signature that appends $x + 1, x + 2, \ldots , x + J - \ell (\nu)$ to $\nu$. 

In the following proposition, $\mathfrak{h}_t (x) = \mathfrak{h}_{\lambda} (x)$, where $\lambda \in \Sign_{J + t}^+$ denotes the configuration of particles (see Section \ref{InteractingParticles}) of the stochastic six-vertex model on the line $y = t$. 

\begin{prop}
\label{heightsixvertex}

Suppose that $|\kappa \beta_2 | < q^{k + 1} \min \{ \beta_1, 1 \} $. Then, the right side of \eqref{qmsequation} equals
\begin{flalign}
\label{heightsixvertexequation}
\displaystyle\sum_{\nu \in \Sign^+} \mathcal{W}_x (\nu) \mathbb{E}^{(\nu (x, J))} [q^{k (\mathfrak{h}_t (x) - J)} \big].
\end{flalign}

\end{prop}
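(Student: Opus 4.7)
The plan is to first establish the identity under the stronger hypothesis $b_2 \in (-q^{2J}, 0)$ (together with the hypotheses of Proposition~\ref{qms}), where the transition probabilities underlying $\mathscr{M}^{\mathfrak{S}}$ (with $a = 0$) are genuinely nonnegative and $\mathcal{W}_x$ is a probability distribution on $\bigcup_{\ell = 0}^J \Sign_\ell^+$; the full range of $b_2$ specified in the proposition will then be reached by analytic continuation. Under these restricted hypotheses, Proposition~\ref{qms} identifies the right side of~\eqref{qmsequation} with $\mathbb{E}_{\mathscr{M}^{\mathfrak{S}}}[q^{k(\mathfrak{h}_\lambda(x) - J)}]$, and I will recover \eqref{heightsixvertexequation} by decomposing this expectation according to the particle configuration at height $J$.

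The Markovian construction of $\mathscr{M}^{\mathfrak{S}}$ in Section~\ref{ProbabilityMeasures}, together with Proposition~\ref{measuresignatures}, implies that the marginal distribution of the particle configuration $\mu \in \Sign_J^+$ at height $J$ is the measure $\mathscr{M}_{\varpi; \Xi, S}$ associated to only the first $J$ spectral parameters, and that conditional on $\mu$ the remaining $n$ rows evolve via the transition probabilities~\eqref{sixvertexprobabilities} and~\eqref{verticalconfigurationprobabilities2}. Shifting left by one and down by $J$, this conditional evolution is exactly $\mathcal{P}(\delta_1, \delta_2)$ with $b_1$-Bernoulli entrances on the $y$-axis and $x$-axis initial data determined by $\mu$.

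The key step is a locality observation. For each $\mu \in \Sign_J^+$, write $\nu(\mu) \in \Sign^+$ for its sub-signature of entries $\le x$; I claim $\mathbb{E}[q^{k(\mathfrak{h}_\lambda(x) - J)} \mid \mu]$ depends on $\mu$ only through $\nu(\mu)$, and equals $\mathbb{E}^{(\nu(\mu)(x, J))}[q^{k(\mathfrak{h}_t(x) - J)}]$. Since paths in $\mathcal{P}(\delta_1, \delta_2)$ move only up and to the right, the arrow configurations in columns $\le x - 1$ are measurable with respect to the $x$-axis data at sites $\le x - 1$ (which is determined by $\nu(\mu)$), the $y$-axis Bernoulli randomness, and the six-vertex coin flips. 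Each particle initially at a site $\ge x$ stays in column $\ge x$ forever and contributes deterministically $+1$ to $\mathfrak{h}_t(x)$; the remaining contribution equals the number of horizontal crossings from column $x-1$ into column $x$ during $[1, t]$, which depends only on columns $\le x - 1$. The same decomposition applies to $\mathbb{E}^{(\nu(\mu)(x, J))}$, whose appended entries $x+1, \ldots, x + J - \ell(\nu)$ (all $> x$) contribute only to the deterministic term. Grouping $\sum_\mu$ by $\nu = \nu(\mu)$ then yields
\begin{equation*}
\mathbb{E}_{\mathscr{M}^{\mathfrak{S}}}\bigl[q^{k(\mathfrak{h}_\lambda(x) - J)}\bigr] = \sum_{\nu \,:\, \nu_1 \le x} \mathbb{E}^{(\nu(x, J))}\bigl[q^{k(\mathfrak{h}_t(x) - J)}\bigr] \sum_{\mu \in \Sign_J^+ \,:\, \nu(\mu) = \nu} \mathscr{M}^{\mathfrak{S}}(\mu).
\end{equation*}

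To finish, the inner sum is identified with $\mathcal{W}_x(\nu)$ by a telescoping argument. For $y > x$, Definition~\ref{wlambda} factors $\mathcal{W}_y(\mu) = \mathcal{W}_x(\nu(\mu)) \prod_{j = x+1}^y W_{\vartheta, q, b_2}(\,\cdot\,;\, m_j(\mu))$, and summing over $\mu$ with $\nu(\mu) = \nu$ and $\mu_1 \le y$ collapses the product via Remark~\ref{w1}, giving $\sum_\mu \mathcal{W}_y(\mu) = \mathcal{W}_x(\nu)$. Proposition~\ref{wm2} identifies $\mathcal{W}_y(\mu) = \mathscr{M}^{\mathfrak{S}}(\mu)$ for $\mu \in \Sign_J^+$ with $\mu_1 \le y$; in the restricted regime, each step-$j$ weight at a $\mu$ with $\ell(\mu) < J$ is strictly less than $1$, so $\mathcal{W}_y(\mu) \to 0$ as $y \to \infty$ for such $\mu$. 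Letting $y \to \infty$ yields $\mathcal{W}_x(\nu) = \sum_{\mu \in \Sign_J^+,\, \nu(\mu) = \nu} \mathscr{M}^{\mathfrak{S}}(\mu)$, as required. Both sides of the resulting identity are rational in $b_2$ (the integral by a residue expansion of the contours; the sum because $\mathcal{W}_x(\nu) = 0$ unless $\nu_1 \le x$ and $\ell(\nu) \le J$, leaving only finitely many nonzero terms), so analytic continuation in $b_2$ extends the identity to the full range of parameters. I expect the delicate step to be the locality observation, as it requires carefully exploiting the up-right structure of $\mathcal{P}(\delta_1, \delta_2)$ to decouple columns $\le x - 1$ from $x$-axis initial data at sites $\ge x$.
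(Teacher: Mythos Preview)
Your proposal is correct and follows essentially the same approach as the paper: restrict to small negative $b_2$ so that the weights are genuine probabilities, decompose $\mathbb{E}_{\mathscr{M}^{\mathfrak{S}}}[q^{k(\mathfrak{h}_\lambda(x)-J)}]$ according to the configuration $\mu$ at height $J$ (using Proposition~\ref{wm2} to identify $\mathscr{M}^{\mathfrak{S}}(\mu)$ with $\mathcal{W}_\infty(\mu)$), invoke the up-right locality to replace $\mu$ by $\nu(\mu)(x,J)$, collapse the sum over $\mu$ with $\nu(\mu)=\nu$ to $\mathcal{W}_x(\nu)$ via Remark~\ref{w1}, and then analytically continue in $\beta_2$. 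The paper's proof is organized identically (see equations~\eqref{qh1}--\eqref{qh3}), with the only cosmetic difference being that it phrases the final step as analyticity rather than rationality.
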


\begin{proof}

We first prove this proposition under some analytic constraints on the parameters, and then we establish the result in general through analytic continuation. 

To that end, first assume that $b_2 \in (-1, 0]$, $- \kappa^{-1} q^{k + J + 2} < \beta_2 s < 0$, and $|\kappa \beta_2| < q^{k + 2} \beta_1$.  Then, Proposition \ref{qms} applies, so \eqref{qmsequation} holds. Furthermore, one can check under this restriction that $W_{\vartheta, q, b_2} (j; i) \in [0, 1]$ for each $i \in \{ 0, 1 \}$ and $j \in \{ 0, 1, \ldots , J \}$, that $W_{\vartheta, q, b_2} (J; 0) = 1$, and that $W_{\vartheta, q, b_2} (J; 1) = 0$ (recall Definition \ref{xvertexweights}). It follows that the the weights $\mathcal{W} (\lambda)$ are in the interval $[0, 1]$ and decay exponentially in $|\lambda|$; by Proposition \ref{wm2}, the same statement holds for $\mathscr{M}^{\mathfrak{S}} (\lambda)$. 

Now, let us recall the dynamics of the vertex model prescribed by $\mathscr{M}^{\mathfrak{S}}$. In view of the discussion at the end of Section \ref{DoubleSidedSpecialization}, $J$ particles are randomly configured on the line $y = J$, which then evolve according to a stochastic six-vertex model whose spin in the first column is deformed; this deformation gives rise to a vertical $b_1$-Bernoulli initial condition on the line $x = 1$. Furthermore, the probability that the particle configuration on the line $y = J$ is $\nu \in \Sign_J^+$ equals $\mathscr{M}^{\mathfrak{S}} (\nu) \in [0, 1]$. 

Hence,  
\begin{flalign}
\label{qh1}
\mathbb{E}_{\mathscr{M}^{\mathfrak{S}}} \big[ q^{k (\mathfrak{h}_{\lambda} (x) - J)} \big] = \displaystyle\sum_{\nu \in \Sign_J^+} \mathscr{M}^{\mathfrak{S}} (\nu) \mathbb{E}^{(\nu)} \big[ q^{k (\mathfrak{h}_t (x - 1) - J)} \big] = \displaystyle\sum_{\nu \in \Sign_J^+} \mathcal{W}_{\infty} (\nu) \mathbb{E}^{(\nu)} \big[ q^{k (\mathfrak{h}_t (x) - J)} \big], 
\end{flalign}

\noindent where in the last identity we recalled from the proof of Proposition \ref{wm2} that the limit $\mathcal{W}_{\infty} (\nu) = \lim_{y \rightarrow \infty} \mathcal{W}_y (\nu)$ exists, under the assumed restrictions on the parameters, and that $\mathcal{W}_{\infty} (\nu) = \mathcal{W}_y (\nu) = \mathscr{M}^{\mathfrak{S}} (\nu)$ if $y > \nu_1$. 

Observe that 
\begin{flalign}
\label{qh2}
\displaystyle\sum_{\nu \in \Sign_J^+} \mathcal{W}_{\infty} (\nu) \mathbb{E}^{(\nu)} \big[ q^{k (\mathfrak{h}_t (x) - J)} \big] = \displaystyle\sum_{\substack{\overline{\nu} \in \Sign^+ \\ \overline{\nu}_1 \le x}} \displaystyle\sum_{\substack{\nu \in \Sign_J^+ \\ \nu^{(x)} = \overline{\nu}}} \mathcal{W}_{\infty} (\nu) \mathbb{E}^{(\nu)} \big[ q^{k (\mathfrak{h}_t (x) - J)} \big], 
\end{flalign}

\noindent where, for any signature $\nu \in \Sign_J^+$, $\nu^{(x)}$ denotes the signature obtained from $\nu$ by removing all elements greater than $x$. 

Moreover, observe that $\mathbb{E}^{(\nu)} \big[ q^{k (\mathfrak{h}_t (x) - J)} \big] = \mathbb{E}^{(\nu^{(x)} (x, J))} \big[ q^{k (\mathfrak{h}_t (x) - J)} \big]$ for any $\nu \in \Sign_J^+$, since particles of the stochastic six-vertex model initially to the right of $x$ remain to the right of $x$ for all time. Thus, 
\begin{flalign}
\label{qh3}
\begin{aligned}
\displaystyle\sum_{\substack{\overline{\nu} \in \Sign^+ \\ \overline{\nu}_1 \le x}} \displaystyle\sum_{\substack{\nu \in \Sign_J^+ \\ \nu^{(x)} = \overline{\nu}}} \mathcal{W}_{\infty} (\nu) \mathbb{E}^{(\nu)} \big[ q^{k (\mathfrak{h}_t (x) - J)} \big] & = \displaystyle\sum_{\substack{\overline{\nu} \in \Sign^+ \\ \overline{\nu}_1 \le x}} \displaystyle\sum_{\substack{\nu \in \Sign_J^+ \\ \nu^{(x)} = \overline{\nu}}} \mathcal{W}_{\infty} (\nu) \mathbb{E}^{(\overline{\nu} (x, J))} \big[ q^{k (\mathfrak{h}_t (x) - J)} \big] \\
& = \displaystyle\sum_{\overline{\nu} \in \Sign^+}  \mathcal{W}_x (\overline{\nu}) \mathbb{E}^{(\overline{\nu} (x, J))} \big[ q^{k (\mathfrak{h}_t (x) - J)} \big]. 
\end{aligned}
\end{flalign}

\noindent In the second equality above, we used the facts that $\sum_{\nu \in \Sign_J^+, \nu^{(x)} = \overline{\nu}} \mathcal{W}_{\infty} (\nu) = \mathcal{W}_x (\overline{\nu})$ (which follows from Remark \ref{w1} and the exponential decay of $\mathcal{W}_x (\nu)$ in $|\nu|$) and that $\mathcal{W}_x (\overline{\nu}) = 0$ if $\overline{\nu}_1 > x$. Now, inserting \eqref{qh1}, \eqref{qh2}, and \eqref{qh3} into Proposition \ref{qms}, we deduce that the proposition holds under the assumptions $b_2 \in (-1, 0]$, $- \kappa^{-1} q^{k + 2} < \beta_2 s < 0$, and $|\kappa \beta_2| < q^{k + 2} \beta_1$. 

To establish the proposition in general, we use uniqueness of analytic continuation. By expanding as a sum of residues, we find that the right side of \eqref{qmsequation} is analytic in $\beta_2$ on the domain $|\kappa \beta_2 | < q^{k + 1} \min \{ \beta_1, 1 \} $; the latter restriction is required to guarantee the existence of the contours $\gamma_h (\kappa \beta_2, \widehat{U}; \widehat{\Xi}, \widehat{S})$. Since the quantity $\mathbb{E}^{(\nu)} [q^{k (\mathfrak{h}_{\lambda} (x) - J)}]$ is independent of $\beta_2$ for any signature $\nu$, and 	since each $\mathcal{W}_x (\lambda)$ is analytic in $\beta_2$ (when $|\kappa \beta_2| < |\beta_1|$) and only non-zero for finitely many $\nu \in \Sign_J^+$, we deduce that \eqref{heightsixvertexequation} is also analytic in $\beta_2$ on the same domain.

Thus the proposition follows from analytic continuation. 
\end{proof}

The following corollary restates Proposition \ref{heightsixvertex} in terms of the current $\mathfrak{H} (X, Y)$ of the stochastic six-vertex model (recall its definition from Section \ref{StochasticVertex}). 

\begin{cor}
\label{heightgeneral} 

Fix $k, x, t, J \in \mathbb{Z}_{> 0}$; $\delta_1, \delta_2, b_1 \in (0, 1)$ satisfying $\delta_1 < \delta_2$; and $b_2 \in \mathbb{C}$. Define $q$, $\kappa$, $s$, $u$, $\beta_1$, and $\beta_2$ as in \eqref{stochasticparameters}, and let $\vartheta = q^{-J}$. 

If $u > q^{-k - 1} |\beta_2 s|$ and $|\kappa \beta_2| < q^{k + 1} \beta_1$, then the right side of \eqref{qmsequation} equals 
\begin{flalign}
\label{current1}
\displaystyle\frac{(\kappa \beta_2 \beta_1^{-1}; q)_{\infty}}{(\vartheta \kappa \beta_2 \beta_1^{-1}; q)_{\infty}} & \displaystyle\sum_{M = 0}^{\infty} \left( \displaystyle\frac{\kappa \beta_2}{\beta_1} \right)^M \displaystyle\frac{(\vartheta; q)_M}{(q; q)_M}  \displaystyle\sum_{\nu \in \Sign^+} \overline{\mathcal{W}_x}^{(M)} (\nu) \mathbb{E}^{(\nu)} \big[ q^{k (\mathfrak{H} (x - 1, t) - M)} \big]. 
\end{flalign}
 
\end{cor}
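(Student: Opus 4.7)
I would first apply Proposition \ref{heightsixvertex} to identify the right side of \eqref{qmsequation} with $\sum_{\nu \in \Sign^+} \mathcal{W}_x(\nu) \mathbb{E}^{(\nu(x, J))}[q^{k(\mathfrak{h}_t(x) - J)}]$. Using Remark \ref{ww}, one factors $\mathcal{W}_x(\nu) = \widehat{W}_{q, \vartheta, \kappa, b_1, b_2}(m_1(\nu)) \overline{\mathcal{W}_x}^{(m_1(\nu))}(\nu)$, and I would reorganize the sum by setting $M = m_1(\nu)$ and letting $\overline{\nu}$ denote the subsignature of $\nu$ with its $M$ parts equal to $1$ removed. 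The definition of $\widehat{W}_{q, \vartheta, \kappa, b_1, b_2}(M)$ produces exactly the prefactor $(\kappa \beta_2 \beta_1^{-1}; q)_\infty (\vartheta \kappa \beta_2 \beta_1^{-1}; q)_\infty^{-1} (\kappa \beta_2 / \beta_1)^M (\vartheta; q)_M (q; q)_M^{-1}$ that appears in \eqref{current1}. Moreover, $\overline{\mathcal{W}_x}^{(M)}(\nu)$ is a product over $j \in [2, x]$ of factors depending only on $m_j(\nu)$, so $\overline{\mathcal{W}_x}^{(M)}(\nu) = \overline{\mathcal{W}_x}^{(M)}(\overline{\nu})$, and the inner $\nu$-sum in \eqref{current1} is naturally understood as ranging over signatures with no parts equal to $1$.

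The proof then reduces to the termwise identity
\begin{equation*}
\mathbb{E}^{((\overline{\nu} \cup 1^M)(x, J))}\bigl[q^{k(\mathfrak{h}_t(x) - J)}\bigr] = \mathbb{E}^{(\overline{\nu})}\bigl[q^{k(\mathfrak{H}(x-1, t) - M)}\bigr],
\end{equation*}
valid whenever $\overline{\mathcal{W}_x}^{(M)}(\overline{\nu}) \neq 0$. From the definition of $\mathfrak{H}$ in Section \ref{StochasticVertex}, and the fact that each red path crosses the line $y = t$ at exactly one column, one obtains the pointwise identity $\mathfrak{H}(x-1, t) = \mathfrak{h}_t(x) - R^{\mathrm{tot}}$, where $R^{\mathrm{tot}}$ denotes the total number of red ($x$-axis) paths in the initial data. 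For $(\overline{\nu} \cup 1^M)(x, J)$ I would check $R^{\mathrm{tot}} = J - M$: if $M + \ell(\overline{\nu}) < J$, then the appended entries $x+1, \ldots, x + J - M - \ell(\overline{\nu})$ contribute $J - M - \ell(\overline{\nu})$ additional red paths on top of the $\ell(\overline{\nu})$ coming from $\overline{\nu}$; if instead $M + \ell(\overline{\nu}) \geq J$, then the identity $W_{\vartheta, q, b_2}(J; 1) = 0$ (forced by $\vartheta = q^{-J}$) combined with non-vanishing of $\overline{\mathcal{W}_x}^{(M)}(\overline{\nu})$ forces $M + \ell(\overline{\nu}) = J$, so again $R^{\mathrm{tot}} = \ell(\overline{\nu}) = J - M$. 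Consequently $\mathfrak{h}_t(x) - J = \mathfrak{H}(x-1, t) - M$ pointwise under the $(\overline{\nu} \cup 1^M)(x, J)$ initial data.

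The remaining step, which is the main conceptual point, is to show $\mathbb{E}^{((\overline{\nu} \cup 1^M)(x, J))}[q^{k \mathfrak{H}(x-1, t)}] = \mathbb{E}^{(\overline{\nu})}[q^{k \mathfrak{H}(x-1, t)}]$: adjoining red paths at positions $\geq x$ to the $x$-axis initial data must not alter the distribution of $\mathfrak{H}(x-1, t)$. The plan here is to observe that $\mathfrak{H}(x-1, t)$ decomposes as the number of blue paths intersecting $y = t$ strictly to the right of position $x-1$ minus the number of red paths intersecting $y = t$ at or to the left of position $x-1$, and that both of these quantities depend only on the random dynamics of the stochastic six-vertex model inside the rectangle $[0, x-1] \times [0, t]$: the red term counts reds starting in $[1, x-1] \times \{0\}$ that remain in the rectangle through height $t$, while the blue term counts blues that enter on the left boundary and exit through the right boundary by height $t$. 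Since paths in the stochastic six-vertex model only move upward and rightward, red paths placed at positions $\geq x$ never enter this rectangle, so they cannot influence the random updates inside it. This distributional invariance is the essential content of the corollary and will be the main obstacle in making the argument rigorous; once it is established, the identities of the previous two paragraphs combine to give \eqref{current1}.
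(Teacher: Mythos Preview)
Your proposal is correct and follows essentially the same route as the paper: factor $\mathcal{W}_x(\nu)$ via Remark~\ref{ww}, reindex by $M=m_1(\nu)$, relate $\mathfrak{h}_t(x)-J$ to $\mathfrak{H}(x-1,t)-M$, and pass between $\nu$ and $\nu(x,J)$ using that red paths placed at columns $\geq x$ cannot affect $\mathfrak{H}(x-1,t)$. The paper runs the argument in the opposite direction (starting from \eqref{current1} and reducing to \eqref{heightsixvertexequation}) and handles your ``main obstacle'' in a single sentence---``particles in the stochastic six-vertex model that start to the right of $x$ remain to the right of $x$ for all time''---but your rectangle argument is already a complete justification of this fact, since the Markovian update propagates from lower-left to upper-right, so nothing placed in columns $\geq x$ can influence the arrow configuration in $[1,x-1]\times[1,t]$; there is no remaining obstacle.
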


\begin{rem}
	\label{b20heightvertex}
	
	Let us examine what happens in Corollary \ref{heightgeneral} when we set $b_2 = 0$. Then, all summands in \eqref{current1} are equal to zero, except for the one corresponding to $M = 0$. When $M = 0$, Definition \ref{xvertexweights} and \eqref{wm} imply that $\overline{W_x}^{(M)} (\nu) = 1$ if $\nu$ is empty and is zero otherwise. 
	
	Thus, when $b_2 = 0$, \eqref{current1} is equal to $\mathbb{E} \big[ q^{k \mathfrak{H} (x - 1, t)} \big]$; so, Corollary \ref{heightgeneral} becomes a contour integral identity for $q$-moments of the height function of the stochastic six-vertex model with $(b_1, 0)$-Bernoulli initial data; it can be quickly verified that this identity coincides with the one given by the $m = 1$ case of Proposition 4.4 of \cite{PTAEPSSVM}. 
\end{rem}

\begin{proof}[Proof of Corollary \ref{heightgeneral}]
Definition \ref{xvertexweights}, Definition \ref{wlambda}, and Remark \ref{ww} together imply that \eqref{current1} is equal to 
\begin{flalign}
\label{wwm} 
\displaystyle\sum_{M = 0}^{\infty} \widehat{W}_{q, \vartheta, \kappa, b_1, b_2} (M) \displaystyle\sum_{\nu \in \Sign^+} \overline{\mathcal{W}}_x^{(M)} (\nu) \mathbb{E}^{(\nu)} \big[ q^{k (\mathfrak{H} (x, t) - M)} \big] = \displaystyle\sum_{\nu \in \Sign^+} \mathcal{W}_x (\nu) \mathbb{E}^{(\nu)} \big[ q^{k (\mathfrak{H} (x - 1, t) - m_1 (\nu))} \big]. 
\end{flalign} 

Now, since particles in the stochastic six-vertex model that start to the right of $x$ remain to the right of $x$ for all time, we have that $\mathbb{E}^{(\nu)} \big[ q^{k (\mathfrak{H} (x - 1, t) - m_1 (\nu))} \big] = \mathbb{E}^{(\nu (x, J))} \big[ q^{k (\mathfrak{H} (x - 1, t) - m_1 (\nu))} \big]$. Furthermore, we can also see that if a finite number $J$ of paths enter through the $x$-axis, then $\mathfrak{H} (x - 1, t) = \mathfrak{h}_{\lambda} (x) - J$. On the right side of \eqref{wwm}, the initial data is given by $\nu \in \Sign_J^+$; thus we are considering a stochastic six-vertex model with $J - m_1 (\nu)$ particles initially on the $x$-axis (since we ignore all parts of $\nu$ equal to one when using $\nu$ to define initial data). Thus, we have that $\mathfrak{H} (x - 1, t) - m_1 (\nu) = \mathfrak{h}_{\lambda} (x) - J$, if $\ell (\nu) = J$. 

In particular, the right side of \eqref{wwm} (and hence \eqref{current1}) is equal to 
\begin{flalign*}
\displaystyle\sum_{\nu \in \Sign^+} \mathcal{W}_x (\nu) \mathbb{E}^{(\nu (x, J))} \big[ q^{k (\mathfrak{H} (x - 1, t) - m_1 (\nu))} \big] = \displaystyle\sum_{\nu \in \Sign^+} \mathcal{W}_x (\nu) \mathbb{E}^{(\nu (x, J))} \big[ q^{k (\mathfrak{h}_{\lambda} (x) - J)} \big],
\end{flalign*}

\noindent which is \eqref{heightsixvertex}. Here we have used the fact that $\mathcal{W}_x (\nu) = 0$ if $\ell \big( \nu (x, J) \big) \ne J$, which is a consequence of Proposition \ref{wm2}. Now, the corollary follows from Proposition \ref{heightsixvertex}. 
\end{proof}

\section{Fredholm Determinant Identities} 

\label{DeterminantsDoubleSided}

The goal of this section is to produce Fredholm determinant identities (see Appendix \ref{Determinants1} for our conventions on Fredholm determinants) for the stochastic six-vertex model and ASEP with certain types of double-sided Bernoulli initial data. These are given by Theorem \ref{determinantw} and Theorem \ref{determinant}, respectively. 

The proofs of these two theorems will proceed as follows. Corollary \ref{heightgeneral} provides explicit identities for $q$-moments of the current of the stochastic six-vertex model with vertical $b_1$-Bernoulli initial data and horizontal Bernoulli initial data weighted by $\mathcal{W}$. Of interest to us is the case $\vartheta = 0$ and $b_2 \in (0, 1)$, which corresponds to a Bernoulli weighting. 

However, there are two issues that prevent us from immediately accessing this. The first is that Corollary \ref{heightgeneral} only applies when $\vartheta$ is a negative power of $q$; in particular, it does not apply when $\vartheta = 0$. The second is that Corollary \ref{heightgeneral} only yields the $k$-th moment for the current if $|q^{-k} \beta_2|$ is sufficiently small; this is insufficient to completely characterize the distribution of $\mathfrak{H} (x, t)$. 

To remedy these issues, we first consider the case $\vartheta \in \{ q^{-1}, q^{-2}, \ldots \}$. Then, it will be possible to analytically continue Corollary \ref{heightgeneral} so that it holds for all $k \ge 1$, which will suffice to establish Theorem \ref{determinantw} when $\vartheta$ is a negative power of $q$.  

To extend this result to $\vartheta = 0$ (or in general, to all $\vartheta \in \mathbb{C}$), we use a (modification of an) idea presented in Section 3.4 of \cite{HFSE}. In particular, we will take $b_2$ to be a formal, instead of complex, variable and view Theorem \ref{determinantw} as an identity of formal power series in $b_2$. Then, we will compare the coefficients of $b_2$ on both sides of this identity; through a polynomiality argument, we will show that if they are equal for $\vartheta \in \{ q^{-1}, q^{-2}, \ldots \}$, then they must be equal for all $\vartheta \in \mathbb{C}$. 

This section is organized as follows. In Section \ref{FormalDeterminant}, we explain and state the formal Fredholm determinant identity (as an identity of formal power series), given by Proposition \ref{determinantgeneralformal}. In Section \ref{DeterminantSpin12} we specialize this identity to the case when $b_2 \in \mathbb{C}$, thereby yielding Theorem \ref{determinantw}, which can be degenerated to Theorem \ref{determinant} (the analogous identity for the ASEP). In Section \ref{ProofFormalDeterminant} we provide the proof of Proposition \ref{determinantgeneralformal}.

\subsection{A Formal Fredholm Determinant Identity}

\label{FormalDeterminant}

 In what follows, fix $b_1, \delta_1, \delta_2 \in (0, 1)$ satisfying $\delta_1 < \delta_2$; define $q$, $\kappa$, $s$, $u$, and $\beta_1$ as in \eqref{stochasticparameters}.

Now, let $b_2$ be a formal variable, and let $\mathbb{C} \llbracket b_2 \rrbracket$ denote the ring of formal power series in $b_2$. Its topology is the $b_2$-adic topology on the inverse limit $\varprojlim \mathbb{C} [b_2] / (b_2^k)$; in particular, a sequence $\{ a_i \}_{i \ge 1} = \sum_{j = 0}^{\infty} a_{i, j} b_2^j \in \mathbb{C} \llbracket b_2 \rrbracket$ converges to $a = \sum_{j = 1}^{\infty} a_j b_2^j \in \mathbb{C} \llbracket b_2 \rrbracket$ if and only if $\{ a_{i, j} \}_{i \ge 1}$ converges to $a_i$ for each $i \ge 0$. Let $\beta_2 = b_2 (1 - b_2)^{-1}$. 

The weights $W_{\vartheta, q, b_2} (j; i)$ and $\widehat{W}_{q, \vartheta, \kappa, b_1, b_2} (m)$ from Definition \ref{xvertexweights} still exist, but are now elements of the formal power series ring $\mathbb{C} \llbracket b_2 \rrbracket$; the same holds for the weights $\mathcal{W}_x (\lambda), \overline{\mathcal{W}}_x^{(M)} (\lambda) \in \mathbb{C} \llbracket b_2 \rrbracket$ (recall Definition \ref{wlambda}), for any signature $\lambda$. 

The following result states the formal Fredholm determinant identity for the stochastic six-vertex model; its proof is postponed to Section \ref{ProofFormalDeterminant}. Observe here that $\vartheta \in \mathbb{C}$ is arbitrary. 

\begin{prop}
\label{determinantgeneralformal}

Fix $x, t \in \mathbb{Z}_{> 0}$; $0 < \delta_1 < \delta_2 < 1$; $b_1 \in (0, 1)$; and $\vartheta \in \mathbb{C}$. Let $b_2$ be a formal variable. Denote $q$, $u$, $\kappa$, $s$, and $\beta_1$ as in \eqref{stochasticparameters}, and let $\beta_2 = b_2 (1 - b_2)^{-1} \in \mathbb{C} \llbracket b_2 \rrbracket$. For any $z \in \mathbb{C}$, let 
\begin{flalign}
\label{gvformal}
g_V (z; x, t) = \displaystyle\frac{(\kappa^{-1} z + q)^x (\beta_2 q \kappa z^{-1}; q)_{\infty}}{(z + q)^t (q^{-1} \beta_1^{-1} z; q)_{\infty} (\vartheta \beta_2 q \kappa z^{-1}; q)_{\infty}}, 
\end{flalign}

\noindent which is an element of $\mathbb{C} \llbracket b_2 \rrbracket$. Let $\mathscr{C}_V$ be a positively oriented circle in the complex plane, centered at some non-positive real number, containing $-q$ and $0$, but leaving outside $- \kappa$, $-1$, and $\beta_1 q$. 

Fix a sufficiently large real number $R > 1$, a real number $\delta \in (0, 1)$ sufficiently close to $1$, and a sufficiently small real number $d \in (0, 1)$ such that 
\begin{flalign}
\label{doublesidedinequalities}
q^R  < \displaystyle\inf_{w, w' \in \mathscr{C}_V} | w^{-1} w'|; \quad q^{1 - \delta} > \displaystyle\sup_{w \in \mathscr{C}_V} \max \{ |  \kappa^{-1} w |, | qw | \} ; \quad \big| \Im q^{id} \big| < \displaystyle\sup_{\substack{w, w' \in \mathscr{C}_V \\ \Re w^{-1} w' \ge 0 \\ |w^{-1} w' | \le q^{\delta}}} \left| \Im w^{-1} w' \right|. 
\end{flalign}

\noindent For any $\zeta \in \mathbb{C} \setminus\mathbb{R}_{\ge 0}$, we have that 
\begin{flalign}
\label{determinantformalvertex}
\begin{aligned} 
\displaystyle\frac{(\kappa \beta_2 \beta_1^{-1}; q)_{\infty}}{(\vartheta \kappa \beta_2 \beta_1^{-1}; q)_{\infty}} \displaystyle\sum_{M = 0}^{\infty} \left( \displaystyle\frac{\kappa \beta_2}{\beta_1} \right)^M \displaystyle\frac{(\vartheta; q)_M }{(q; q)_M}  \displaystyle\sum_{\nu \in \Sign^+} \overline{\mathcal{W}_x}^{(M)} (\nu) \mathbb{E}^{(\nu)} & \left[ \displaystyle\frac{1}{\big( \zeta q^{\mathfrak{H} (x, t) - M	} ; q \big)_{\infty}} \right] \\
& = \det \big( \Id + K_{\zeta} \big)_{L^2 (\mathscr{C}_V)}, 
\end{aligned}
\end{flalign}

\noindent where the kernel $K_{\zeta} \in \mathbb{C} \llbracket b_2 \rrbracket$ is defined by 

\begin{flalign}
\label{formalkernel}
K_{\zeta} (w, w') = \displaystyle\frac{1}{2 \textbf{\emph{i}}} \displaystyle\int_{D_{R, d, \delta}} \displaystyle\frac{g_V (w; x, t)}{g_V (q^r w; x, t)}  \displaystyle\frac{(-\zeta)^r dr}{\sin (\pi r) (q^r w - w')}, 
\end{flalign}

\noindent where the contour $D_{R, d, \delta}$ is given by Definition \ref{contourslong}. In \eqref{determinantformalvertex}, $\overline{\mathcal{W}_x}^{(M)}$ is given by \eqref{wm} of Definition \ref{wlambda}, and $\mathbb{E}^{(\nu)}$ is defined above Proposition \ref{heightsixvertex}. 	
\end{prop}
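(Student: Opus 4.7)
The plan is to first establish \eqref{determinantformalvertex} under the additional hypothesis $\vartheta = q^{-J}$ with $J \in \mathbb{Z}_{>0}$, where Corollary \ref{heightgeneral} directly applies, and then to upgrade to arbitrary $\vartheta \in \mathbb{C}$ by extracting the coefficient of $b_2^j$ from each side of \eqref{determinantformalvertex} and observing that both are polynomials of bounded degree in $\vartheta$.

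Specializing $\vartheta = q^{-J}$, I would first use the $q$-exponential identity
\begin{equation*}
\frac{1}{(\zeta q^{\mathfrak{H}(x,t) - M}; q)_\infty} = \sum_{k=0}^\infty \frac{\zeta^k q^{k(\mathfrak{H}(x,t) - M)}}{(q;q)_k}
\end{equation*}
to rewrite the left side of \eqref{determinantformalvertex} as a sum over $k$ of weighted $q^k$-moments of $\mathfrak{H}(x,t)$. Corollary \ref{heightgeneral} then converts each such $q^k$-moment into the $k$-fold nested contour integral appearing on the right of \eqref{qmsequation}. After the change of variables $w_i = -q^{-1}\kappa y_i$, which aligns the integrand with the function $g_V$ in \eqref{gvformal}, the left side of \eqref{determinantformalvertex} becomes a series of $k$-fold integrals over the nested contours $\gamma_i$ of Definition \ref{firstcontours}.

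Next, I would convert this series of nested $k$-fold integrals into the Fredholm determinant $\det(\Id + K_\zeta)_{L^2(\mathscr{C}_V)}$ via the Mellin-Barnes procedure developed in \cite{MP, SSVM, DDQA}. The key steps are: rewrite each prefactor $\zeta^k/(q;q)_k$ using Mellin-Barnes integrals along $D_{R,d,\delta}$ whose residues at $r \in \mathbb{Z}_{\ge 0}$ reproduce the $q$-exponential series; deform the nested contours $\gamma_i$ simultaneously to copies of $\mathscr{C}_V$; recognize the Cauchy-type factor $\prod_{i<j}(y_i - y_j)(y_i - q y_j)^{-1}$ as a determinant of the form $\det[(q^{r_i}w_i - w_j)^{-1}]$; and apply Andr\'eief's identity to identify the result as the $k$-th term of the Fredholm expansion of $\det(\Id + K_\zeta)$ with the kernel \eqref{formalkernel}. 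The conditions \eqref{doublesidedinequalities} are precisely what is needed to ensure that $D_{R,d,\delta}$ and the shifted contours $q^r\mathscr{C}_V$ do not intersect during these deformations and that no spurious residues are collected.

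To remove the hypothesis $\vartheta = q^{-J}$, I would work coefficient-wise in the $b_2$-adic topology on $\mathbb{C}\llbracket b_2 \rrbracket$. Since $\beta_2 = b_2(1-b_2)^{-1}$ has $b_2$-order exactly $1$, the factor $(\kappa\beta_2/\beta_1)^M$ on the left side of \eqref{determinantformalvertex} has $b_2$-order $\ge M$, so the sum over $M$ truncates at $M \le j$ when extracting the coefficient of $b_2^j$; the explicit form of $\overline{\mathcal{W}_x}^{(M)}(\nu)$ from Definition \ref{wlambda} forces only boundedly many parts of $\nu$ to contribute at this order, so the surviving sum over $\nu$ is finite, and each summand is polynomial in $\vartheta$ of degree bounded by a function of $j$ (via the factor $(\vartheta;q)_M$ and the $W_{\vartheta,q,b_2}$ weights). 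Analogously, the Fredholm expansion on the right side reduces at each $b_2$-order to a polynomial in $\vartheta$ of bounded degree, since the $\vartheta$-dependence of $g_V$ enters only through $(\vartheta \beta_2 q\kappa w^{-1}; q)_\infty^{-1}$, which expands polynomially in $\vartheta$ at each order in $b_2$. The two polynomials agree on the infinite set $\{q^{-J} : J \in \mathbb{Z}_{>0}\}$ and thus agree identically, yielding \eqref{determinantformalvertex} for all $\vartheta \in \mathbb{C}$.

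The main technical obstacle will be the Mellin-Barnes conversion above: arranging all contours so that the Mellin-Barnes integrals pick up exactly the residues at the non-negative integers without crossing any of the additional poles arising from $g_V$, from the interaction factors $(y_i - q y_j)^{-1}$, and from $(1 - q^{-1}\beta_1^{-1} y_i)^{-1}$, while simultaneously collapsing the nested $\gamma_i$ to a common $\mathscr{C}_V$; this is where the delicate trio of numerical constraints \eqref{doublesidedinequalities} on $R$, $d$, and $\delta$ plays its essential role, and where a careful case analysis of pole configurations is required.
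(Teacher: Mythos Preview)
Your proposal follows the same two-step architecture as the paper: establish \eqref{determinantformalvertex} first for $\vartheta = q^{-J}$ via moments and Mellin--Barnes, then extend to all $\vartheta$ by showing that each $b_2$-coefficient on either side is polynomial in $\vartheta$ and that these polynomials agree on the infinite set $\{q^{-J}\}$. Your polynomiality argument matches the paper's Lemma \ref{coefficientbetapolynomial} essentially verbatim and is correct.

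There is, however, one genuine missing ingredient in your first step. Corollary \ref{heightgeneral} supplies the $k$-th $q$-moment only under the $k$-dependent constraint $|\kappa\beta_2| < q^{k+1}\min\{\beta_1,1\}$; summing your $q$-exponential expansion over all $k \ge 0$ would therefore force $\beta_2 = 0$ if used naively, and interpreting the nested integrals directly as formal series in $b_2$ is not immediate either. The paper resolves this (Lemma \ref{momentsumformal}) by first passing to the single-contour form of Lemma \ref{deformcontoursmoments} and then exploiting a special feature of $\vartheta = q^{-J}$: the numerator factor $(1 - \vartheta\kappa\beta_2 z^{-1})$ in \eqref{fdoublesided} cancels the would-be poles of $\prod_{j=0}^{\lambda_i - 1} f(q^j z)$ at $z = q^{-r}\kappa\beta_2$ for $r \ge J$, permitting analytic continuation in $\beta_2$ to the $k$-independent domain $|\kappa\vartheta\beta_2| < \inf_{z \in \mathscr{C}_V}|z|$. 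Only after this does the Mellin--Barnes machinery (packaged as Lemma \ref{momentdeterminant}) apply to produce the Fredholm determinant for complex $b_2$ in this range (Proposition \ref{determinant1formal}), followed by a further analytic continuation in $\zeta$ and then a passage to formal $b_2$ via the coefficient bounds of Lemma \ref{coefficientb2} (Corollary \ref{formaldeterminantqmuj}). Your proposal does not mention this $\beta_2$-continuation, and without it the moment-summation step does not go through; this, rather than the contour bookkeeping you flag, is where the hypothesis $\vartheta = q^{-J}$ is actually used in an essential way.
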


\begin{rem}

\label{determinantformalvertexseries}

Let us briefly explain the meaning of \eqref{determinantformalvertex}. The left side of \eqref{determinantformalvertex} is an infinite sum; its convergence (with respect to the topology of $\mathbb{C} \llbracket b_2 \rrbracket$) will follow from Lemma \ref{coefficientb2} below. 

The right side of \eqref{determinantformalvertex} is dependent on the kernel $K_{\zeta}$, given by \eqref{formalkernel}. As we will show in the proof of Lemma \ref{coefficientb2}, the integrand on the right side of \eqref{formalkernel} is an element of $\mathbb{C} \llbracket b_2 \rrbracket$, which means that it is of the form $\sum_{j = 0}^{\infty} a_j (r) b_2^j$. The kernel $K_{\zeta}$ is then defined to be the sum $\sum_{j = 0}^{\infty} I_j b_2^j$, where $I_j = \displaystyle\int_{D_{R, d, \delta}} a_j (r) dr$; we will verify convergence of these integrals in the proof of Lemma \ref{coefficientb2}. 

Then, the Fredholm determinant on the right side of \eqref{determinantformalvertex} is obtained by inserting $K_{\zeta} \in \mathbb{C} \llbracket b_2 \rrbracket$ into the definition \eqref{determinantsum} of the Fredholm determinant. This yields an infinite sum of elements in $\mathbb{C} \llbracket b_2 \rrbracket$; we will again verify the convergence of this sum in the proof of Lemma \ref{coefficientb2}. Hence, the right side of \eqref{determinantformalvertex} is also an element of $\mathbb{C} \llbracket b_2 \rrbracket$. 

Proposition \ref{determinantgeneralformal} states that the left and right sides of \eqref{determinantformalvertex} are equal as elements of the formal power series ring $\mathbb{C} \llbracket b_2 \rrbracket$. 

\end{rem}

\noindent Let us establish that both sides of \eqref{determinantformalvertex} indeed converge in $\mathbb{C} \llbracket b_2 \rrbracket$; in fact, the following lemma estimates the coefficient of $\beta_2^j$ in both sides of \eqref{determinantformalvertex}. 

\begin{lem}

\label{coefficientb2} 
	
Adopt the notation of Proposition \ref{determinantgeneralformal}, and fix the parameters $\delta_1$, $\delta_2$, $b_1$, $\vartheta$, $x$, $t$, $R$, $d$, $\delta$, $\zeta$, and also the contour $\mathscr{C}_V$. Then, both sides of \eqref{determinantformalvertex} are convergent power series in the ring $\mathbb{C} \llbracket b_2 \rrbracket$. Equivalently, there exist complex numbers $E_1, E_2, \ldots $ and $H_1, H_2, \ldots $ such that the left side of \eqref{determinantformalvertex} is equal to $\sum_{j = 0}^{\infty} E_j \beta_2^j$ and the right side of \eqref{determinantformalvertex} is equal to $\sum_{j = 0}^{\infty} H_j \beta_2^j$. 

Furthermore, there exists a constant $C$ such that $|E_j|, |H_j| \le C^j$ for each $j \ge 1$. 
 
\end{lem}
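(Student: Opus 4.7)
The plan is to bound the coefficients $E_j$ and $H_j$ of $\beta_2^j$ on each side of \eqref{determinantformalvertex} separately and directly.

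\textbf{Left side.} I decompose the left side into four pieces whose $\beta_2$-dependence I control explicitly. The prefactor $(\kappa\beta_2\beta_1^{-1};q)_\infty/(\vartheta\kappa\beta_2\beta_1^{-1};q)_\infty$ is analytic at $\beta_2=0$, so its Taylor coefficients decay at most geometrically in $j$. The factor $(\kappa\beta_2/\beta_1)^M (\vartheta;q)_M/(q;q)_M$ starts at order $\beta_2^M$, truncating the $M$-sum to $M\le j$ with geometrically controlled coefficients. By \eqref{wm} and Definition~\ref{xvertexweights}, $\overline{\mathcal{W}_x}^{(M)}(\nu)$ is a polynomial in $b_2$ (hence in $\beta_2$) of degree at most $x-1$ with uniformly bounded coefficients. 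The expectation $\mathbb{E}^{(\nu)}[1/(\zeta q^{\mathfrak{H}(x,t)-M};q)_\infty]$ is uniformly bounded in $\nu$ and $M$: since $\zeta\notin\mathbb{R}_{\ge 0}$, the factors $|1-\zeta q^{\mathfrak{H}-M+k}|$ are bounded below (and in fact grow) as $M-\mathfrak{H}$ grows. Finally, the sum over $\nu$ is effectively finite, only $\nu$ with $\nu_1\le x$, $m_i(\nu)\le 1$ for $i>1$, and (by compatibility with Corollary~\ref{heightgeneral}) $m_1(\nu)=0$ contributing; this leaves at most $2^{x-1}$ terms. Assembling these bounds produces $|E_j|\le C^j$.

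\textbf{Right side.} The key step is to establish a uniform bound $\|K^{(k)}\|_{L^\infty(\mathscr{C}_V^2)}\le C_4^k$ for the kernel coefficients $K^{(k)}(w,w'):=[\beta_2^k]K_\zeta(w,w')$. Writing $g_V(w)=\tilde g(w)\,\Pi(w;\beta_2)$ with $\tilde g$ the $\beta_2$-free part of \eqref{gvformal} and $\Pi(w;\beta_2)=(\beta_2 q\kappa w^{-1};q)_\infty/(\vartheta\beta_2 q\kappa w^{-1};q)_\infty$, the integrand of \eqref{formalkernel} factors as $\tilde g(w)/\tilde g(q^r w)$ times $\Pi(w;\beta_2)/\Pi(q^r w;\beta_2)$. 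The first factor is controlled uniformly in $r\in D_{R,d,\delta}$ and $w,w'\in\mathscr{C}_V$ by the first two inequalities of \eqref{doublesidedinequalities}; expanding the second as a $\beta_2$-power series and integrating coefficient-by-coefficient against $(-\zeta)^r/\sin(\pi r)$ along $D_{R,d,\delta}$ then produces the desired geometric bound on $K^{(k)}$. With this in hand, I expand
\begin{equation*}
[\beta_2^j]\det\bigl[K_\zeta(w_i,w_j)\bigr]_{i,j=1}^n \;=\; \sum_{\substack{k_1,\ldots,k_n\ge 0 \\ k_1+\cdots+k_n=j}} \det\bigl[K^{(k_i)}(w_i,w_j)\bigr]_{i,j=1}^n,
\end{equation*}
apply Hadamard's inequality to each summand (giving $n^{n/2}\prod_i\|K^{(k_i)}\|_\infty \le n^{n/2}C_4^j$), multiply by the number $\binom{j+n-1}{n-1}\le 2^{j+n-1}$ of compositions, and integrate over $\mathscr{C}_V^n$. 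Summing the resulting bound $\frac{L^n}{n!}\binom{j+n-1}{n-1}n^{n/2}C_4^j$ over $n$ — convergent by Stirling, since $n^{n/2}/n!$ decays superexponentially — yields $|H_j|\le C^j$ after enlarging $C$.

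\textbf{Main obstacle.} The principal difficulty is the uniform kernel estimate $\|K^{(k)}\|_\infty\le C_4^k$. Expanding $\Pi(w;\beta_2)/\Pi(q^r w;\beta_2)$ in $\beta_2$ produces coefficients containing factors $q^{-rm}$ for various $m$, which grow on the portions of $D_{R,d,\delta}$ where $\Re r>0$. These must be tamed simultaneously by the exponential decay of $1/\sin(\pi r)$ on the vertical rays of $D_{R,d,\delta}$ and by choosing $D_{R,d,\delta}$ so that $|(-\zeta)^r q^{-rm}|$ remains integrable; this is precisely the role of the hypotheses \eqref{doublesidedinequalities} on $R$, $d$, and $\delta$. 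That the geometric rate $C_4$ can be taken independent of $k$ reflects the uniform boundedness of $|\kappa w^{-1}|$ on $\mathscr{C}_V$, so that the rescaling $\beta_2 \mapsto q\kappa w^{-1}\beta_2$ only changes the base of the geometric bound and not its exponent.
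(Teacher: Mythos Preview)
Your proposal is correct and follows essentially the same route as the paper: bound the left side by controlling the prefactor, the $M$-sum, the finitely many nonzero weights $\overline{\mathcal{W}_x}^{(M)}(\nu)$, and the uniformly bounded expectation; bound the right side by establishing $\|K^{(k)}\|_{L^\infty(\mathscr{C}_V^2)}\le C_4^k$, then applying Hadamard's inequality to the $n\times n$ determinants, summing over compositions, and summing over $n$.

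One point of clarification: your ``Main obstacle'' section overstates the difficulty. On $D_{R,d,\delta}$ one has $\Re r\le R$ everywhere, so $|q^{1-r}|\le |q|^{1-R}$ is \emph{uniformly} bounded independent of $\Im r$. Hence the coefficient of $\beta_2^j$ in $\Pi(w;\beta_2)/\Pi(q^r w;\beta_2)$ is already bounded by $C_4^j$ uniformly in $r\in D_{R,d,\delta}$, with no help needed from $1/\sin(\pi r)$; the exponential decay of $1/\sin(\pi r)$ along the vertical rays is used only to guarantee that the $r$-integral of the ($\beta_2$-independent) remaining integrand converges. The hypotheses \eqref{doublesidedinequalities} serve to keep $q^r w-w'$ bounded away from zero and to avoid the poles of $\tilde g_V$, not to balance $q^{-rm}$ against $1/\sin(\pi r)$. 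With this simplification the uniform kernel bound is immediate, and the rest of your argument goes through exactly as written.
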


\begin{proof}

Let us first consider the left side. Since $|q| < 1$ and $\zeta \notin \mathbb{R}_{\ge 0}$, we have that $|(\zeta q^{\mathfrak{H} (x, t) - M}; q)_{\infty}^{-1}|$ is bounded, independent of $M \ge 0$. Furthermore, since there only finitely many signatures $\nu$ such that $\overline{\mathcal{W}_x}^{(M)} (\nu) \ne 0$, we deduce the existence of a constant $C_1 > 0$ such that the coefficient of $\beta_2^j$ in the first sum on the left side of \eqref{determinantformalvertex} is bounded by $C_1^j$.

Furthermore, the fact that $|q| < 1$ implies that there exists a constant $C_2 > 0$ such that the coefficient of $\beta_2^j$ in both $(\kappa \beta_2 \beta_1^{-1}; q)_{\infty}$ and $(\vartheta \kappa \beta_2 \beta_1^{-1}; q)_{\infty}^{-1}$ are bounded by $C_2^j$. Hence, there exists a constant $C_3 > 0$ such that the coefficient of $\beta_2^j$ on the left side of \eqref{determinantformalvertex} is bounded by $C_3^j$. Setting $C \ge C_3$, it follows that $|E_j| \le C^j$. 

Next, we analyze the right side of \eqref{determinantformalvertex}. In particular, let us consider the kernel $K_{\zeta}$ given by \eqref{formalkernel}. The part of the integrand dependent on $\beta_2$ is the quotient $g_V (w; x, t) g_V (q^r w; x, t)^{-1}$. Let $\widetilde{g}_V (w; x, t)$ be equal to $g_V (w; x, t)$, with $\beta_2$ set to $0$. Then, we have that 
\begin{flalign*}
\displaystyle\frac{g_V (w; x, t)}{g_V (q^r w; x, t)} = \left( \displaystyle\frac{\widetilde{g}_V (w; x, t)}{\widetilde{g}_V (q^r w; x, t)} \right) \displaystyle\frac{(\beta_2 q \kappa w^{-1}; q)_{\infty} (\vartheta \beta_2 q^{1 - r} \kappa w^{-1}; q)_{\infty}}{(\beta_2 q^{1 - r} \kappa w^{-1}; q)_{\infty} (\vartheta\beta_2 q \kappa w^{-1}; q)_{\infty}}. 
\end{flalign*}

Using the definition and compactness of the contour $\mathscr{C}_V$, it is quickly verified that the quotient $\widetilde{g}_V (w; x, t) \widetilde{g}_V (q^r w; x, t)^{-1}$ remains uniformly bounded over all $r \in D_{R, d, \delta}$ and $w \in \mathscr{C}_V$. Now, since $|q| < 1$ and $\Re r \le R$, there exists a constant $C_4 > 0$ so that the coefficient of $\beta_2^j$ in $(\beta_2 q \kappa w^{-1}; q)_{\infty} (\vartheta \beta_2 q^{1 - r} \kappa w^{-1}; q)_{\infty} (\vartheta \beta_2 q^{1 - r} \kappa w^{-1}; q)_{\infty}^{-1} (\beta_2 q^{1 - r} \kappa w^{-1}; q)_{\infty}^{-1}$ is bounded by $C_4^j$. Thus, there exists $C_5 > 0$ such that the coefficient of $\beta_2^j$ in $g_V (w; x, t) g_V (q^r w; x, t)^{-1}$ is bounded by $C_5^j$.

Since $q^r w - w' \ne 0$ for all $r \in D_{R, d, \delta}$ (and also since $\mathscr{C}_V$ is compact, and since the image of $q^r$ as $r$ ranges over $D_{R, d, \delta}$ is compact), there exists a constant $C_6 > 0$, independent of $r$, so that the coefficient of $\beta_2^j$ in the integrand of the right side of \eqref{formalkernel} is at most $C_6^j / |\sin (\pi r)|$. Due to the exponential decay of $\big| \sin (\pi r) \big|^{-1}$ as $r$ ranges over $D_{R, d, \delta}$, we conclude that there exists a constant $C_7 > 0$ so that the coefficient of $\beta_2^j$ in $K_{\zeta}$ is bounded by $C_7^j$. 			

Now, let us estimate the coefficient of $\beta_2^j$ in $\det \big[ K_{\zeta} (w_i, w_h) \big]_{i, h = 1}^k$. By the above, we may express $K_{\zeta} (w_i, w_h) = \sum_{n = 0}^{\infty} a_{n; i, h} \beta_2^n$, for some complex numbers $a_{n; i, h}$ satisfying $\big| a_{n; i, h} \big| < C_7^n$. For any ordered set of non-negative integers $\textbf{n} = \{ n_1, n_2, \ldots , n_k \} $, let $\textbf{A}_{\textbf{n}}$ denote the $k \times k$ matrix whose $(i, h)$-entry is $a_{n_i; i, h}$. Expanding $\det \big[ K_{\zeta} (w_i, w_h) \big]_{i, h = 1}^k$, we deduce that the coefficient of $\beta_2^j$ in $\det \big[ K_{\zeta} (w_i, w_h) \big]_{i, h = 1}^k$ is equal to $\sum_{|\textbf{n}| = j} \det \textbf{A}_{\textbf{n}}$, where $\textbf{n} = (n_1, n_2, \ldots , n_k)$ is summed over all $k$-tuples satisfying $|\textbf{n}| = \sum_{i = 1}^k n_i = j$; there are $\binom{k + j - 1}{j}$ such $k$-tuples $\textbf{n}$. 

Furthermore, by Hadamard's inequality, we have that 
\begin{flalign*}
\big| \det \textbf{A}_{\textbf{n}} \big| & \le \displaystyle\prod_{h = 1}^k \left( \displaystyle\sum_{i = 1}^k |a_{n_i; i, h}|^2 \right)^{1 / 2} 	\le C_7^j k^{k / 2}.
\end{flalign*}

\noindent Thus, the coefficient of $\beta_2^j$ in $\det \big[ K_{\zeta} (w_i, w_h) \big]_{i, h = 1}^k$ is bounded by $C_7^j k^{k / 2} \binom{k + j}{j}$.

Inserting this into \eqref{determinantsum}, we deduce that the coefficient of $\beta_2^j$ in the right side of \eqref{determinantformalvertex} is at most equal to $\sum_{k = 0}^{\infty} C_8^j k^{k / 2} \binom{k + j}{j} k!^{-1}$, for some $C_8 > 0$ (dependent on the length of $\mathscr{C}_V$). Thus, this coefficient is bounded by 
\begin{flalign*}
C_8^j \displaystyle\sum_{k = 1}^{\infty} \left( \displaystyle\frac{C_9}{k} \right)^{k / 2} \displaystyle\binom{k + j}{j} & = C_8^j \displaystyle\sum_{k = 1}^{3j - 1} \left( \displaystyle\frac{C_9}{k} \right)^{k / 2} \displaystyle\binom{k + j}{j}  + C_8^j \displaystyle\sum_{k = 3j}^{\infty} \left( \displaystyle\frac{C_9}{k} \right)^{k / 2} \displaystyle\binom{k + j}{j} < C_{10}^j, 
\end{flalign*}

\noindent for some $C_9, C_{10} > 0$. Setting $C \ge C_{10}$ yields $|H_j| \le C^j$, which implies the second part of the lemma. 
\end{proof}

\subsection{Applications of Proposition \ref{determinantgeneralformal}} 

\label{DeterminantSpin12}

In this section we establish Theorem \ref{determinantw} and Theorem \ref{determinant}, which are Fredholm determinant identities for the stochastic six-vertex model and ASEP with double-sided Bernoulli initial data, respectively. The stochastic six-vertex model is addressed in Section \ref{DeterminantVertex} and the ASEP in Section \ref{DegenerateExclusion}. 

\subsubsection{An Identity for the Stochastic Six-Vertex Model} 

\label{DeterminantVertex}

Our goal is to establish Theorem \ref{determinantw}. First, however, we will prove an analog of Proposition \ref{determinantgeneralformal} for $b_2 \in \mathbb{C}$ sufficiently small instead of formal. 

\begin{prop}
\label{determinantgeneral}

Fix $x$, $t$, $\delta_1$, $\delta_2$, $b_1$, $q$, $u$, $\kappa$, $s$, $\beta_1$, $\vartheta$, $R$, $d$, $\delta$, $\zeta$, and $\mathscr{C}_V$ as in Proposition \ref{determinantgeneralformal}. Let $b_2 \in \mathbb{C}$ be a complex number, and denote $\beta_2 = b_2 / (1 - b_2)$. For any $z \in \mathbb{C}$, define $g_V (z; x, t)$ as in \eqref{gvformal} and $K_{\zeta} (w, w')$ as in \eqref{formalkernel}; both are now elements of $\mathbb{C}$. 

There exists a constant $c > 0$ such that, if $|b_2| < c$, then \eqref{determinantformalvertex} holds as an identity over $\mathbb{C}$.

\end{prop}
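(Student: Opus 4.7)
The plan is to deduce Proposition \ref{determinantgeneral} from Proposition \ref{determinantgeneralformal} together with the coefficient bounds of Lemma \ref{coefficientb2}, by showing that both sides of \eqref{determinantformalvertex} define holomorphic functions of $b_2$ in some common disk about the origin whose Taylor expansions at $b_2 = 0$ coincide, respectively, with the two formal power series that Proposition \ref{determinantgeneralformal} declares equal.

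First, with all other parameters fixed, I would view the left side of \eqref{determinantformalvertex} as a function of complex $b_2$. The inner sum $\sum_{\nu \in \Sign^+} \overline{\mathcal{W}_x}^{(M)}(\nu) \mathbb{E}^{(\nu)}[\cdots]$ is in fact a finite sum: $\overline{\mathcal{W}_x}^{(M)}(\nu)$ vanishes unless $\nu_1 \le x$ and $0 \notin \nu$, and since $\zeta \notin \mathbb{R}_{\ge 0}$ the factor $(\zeta q^{\mathfrak{H}(x,t)-M};q)_\infty^{-1}$ is uniformly bounded in $M$. Combined with the geometric factor $(\kappa \beta_2/\beta_1)^M (\vartheta;q)_M/(q;q)_M$ and the $q$-Pochhammer prefactor $(\kappa\beta_2\beta_1^{-1};q)_\infty/(\vartheta\kappa\beta_2\beta_1^{-1};q)_\infty$, the whole expression defines an absolutely convergent double series for $|b_2|$ sufficiently small, hence a holomorphic function on some disk. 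Writing out its Taylor expansion amounts to the same coefficient-collection procedure that produced $\sum_j E_j \beta_2^j$ in Lemma \ref{coefficientb2}, so the Taylor series of this holomorphic function is exactly $\sum_j E_j \beta_2^j$.

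Next, I would treat the right side analogously. For complex $b_2$, the integrand in \eqref{formalkernel} is jointly continuous in $r \in D_{R,d,\delta}$ and $(w,w') \in \mathscr{C}_V \times \mathscr{C}_V$, and the estimates \eqref{doublesidedinequalities} guarantee that $q^r w - w'$ stays away from zero; together with the exponential decay of $|\sin(\pi r)|^{-1}$ on $D_{R,d,\delta}$ this makes $K_\zeta$ a well-defined continuous function of $(w,w')$ which is moreover holomorphic in $b_2$. The Hadamard-type estimate carried out in the proof of Lemma \ref{coefficientb2} shows that the Fredholm series \eqref{determinantsum} converges absolutely and uniformly in $b_2$ on a disk, so $\det(\Id + K_\zeta)_{L^2(\mathscr{C}_V)}$ is holomorphic there. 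Differentiating termwise (justified by the uniform convergence) and invoking Lemma \ref{coefficientb2} once more, its Taylor series at $b_2 = 0$ is $\sum_j H_j \beta_2^j$.

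Since Proposition \ref{determinantgeneralformal} asserts $\sum_j E_j \beta_2^j = \sum_j H_j \beta_2^j$ in $\mathbb{C}\llbracket b_2\rrbracket$, the two holomorphic functions have identical Taylor series at the origin, so they agree on the intersection of their disks of analyticity; choosing $c$ smaller than both radii yields the claim. The main obstacle is purely technical: one must justify the interchange of the Fredholm sum, the contour integral defining $K_\zeta$, and the Taylor expansion in $b_2$, and likewise the interchange of $\sum_M$ with the power series expansion on the left. Both interchanges are dominated-convergence arguments that lean directly on the majorants $C^j$ provided by Lemma \ref{coefficientb2}, so once those uniform bounds are in hand the remainder of the argument is routine.
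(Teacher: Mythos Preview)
Your proposal is correct and follows essentially the same approach as the paper: invoke the coefficient bounds $|E_j|,|H_j|\le C^j$ from Lemma \ref{coefficientb2} to conclude that both formal series converge absolutely for $|\beta_2|<C^{-1}$, and then the formal identity of Proposition \ref{determinantgeneralformal} becomes a numerical one. The paper states this in two sentences rather than unpacking the holomorphy argument, but the content is the same.
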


\begin{proof}

In view of Proposition \ref{determinantgeneralformal}, it suffices to show that both sides of \eqref{determinantformalvertex} converge absolutely as power series in $b_2$ (or equivalently in $\beta_2$) when $b_2$ is a complex number satisfying $|b_2| < c$. If we take $c$ to be sufficiently small so that $|b_2| < c$ implies that $|\beta_2| < C^{-1}$ (where $C$ is given in Lemma \ref{coefficientb2}), then this is a consequence of Lemma \ref{coefficientb2}. 
\end{proof}

Using Proposition \ref{determinantgeneral}, we can now establish the following theorem, which is a Fredholm determinant identity for the current of the stochastic six-vertex model. In what follows, we recall that a contour $\gamma \subset \mathbb{C}$ is called \emph{star-shaped} (with respect to the origin) if, for each real number $a \in \mathbb{R}$, there exists exactly one complex number $z_a \in \gamma$ such that $z_a / |z_a| = e^{ \textbf{i} a}$. 

\begin{thm}
\label{determinantw}

Fix $x, t \in \mathbb{Z}_{> 0}$ and $\delta_1, \delta_2, b_1, b_2 \in (0, 1)$. Denote $q = \delta_1 / \delta_2$, $\beta_1 = b_1 / (1 - b_1)$, $\beta_2 = b_2 / (1 - b_2)$, and $\kappa = (1 - \delta_1) / (1 - \delta_2)$. Assume that $\kappa \beta_2 < \beta_1$. For any $z \in \mathbb{C}$, let 
\begin{flalign*}
g_V (z; x, t) = \displaystyle\frac{(\kappa^{-1} z + q)^x (q \beta_2 \kappa z^{-1}; q)_{\infty}}{(z + q)^t (q^{-1} \beta_1^{-1} z; q)_{\infty}}. 
\end{flalign*}

\noindent Let $\Gamma_V \subset \mathbb{C}$ be a positively oriented, star-shaped contour in the complex plane containing $0$ and $q \kappa \beta_2$, but leaving outside $-q \kappa$ and $q \beta_1$. Let $\mathcal{C}_V \subset \mathbb{C}$ be a positively oriented, star-shaped contour contained inside $q^{-1} \Gamma_V$; that contains $0$, $-q$, $q \kappa \beta_2$, and $\Gamma_V$; but that leaves outside $q \beta_1$. 

Let $\zeta = - q^p < 0$ for some real number $p \in \mathbb{R}$. Then, 
\begin{flalign}
\label{determinantvertexw}
( \kappa \beta_2 \beta_1^{-1} ; q )_{\infty} \displaystyle\sum_{M = 0}^{\infty} \displaystyle\frac{\big( \kappa \beta_2 \beta_1^{-1} \big)^M}{(q; q)_M} \mathbb{E} \left[ \displaystyle\frac{1}{(\zeta q^{\mathfrak{H} (x, t) - M}; q)_{\infty}} \right] = \det \big( \Id + V_{\zeta} \big)_{L^2 (\mathcal{C}_V)}, 
\end{flalign}

\noindent where 

\begin{flalign}
\label{t}
V_{\zeta} (w, w') = \displaystyle\frac{1}{2 \textbf{\emph{i}} \log q} \displaystyle\sum_{j = - \infty}^{\infty} \displaystyle\int_{\Gamma_V} \displaystyle\frac{v^{p - 1} w^{-p}}{\sin \big( \frac{\pi}{\log q} (\log v - \log w + 2 \pi \textbf{\emph{i}}j ) \big) (w' - v)} \displaystyle\frac{g_V (w; x, t)}{g_V (v; x, t)} dv. 
\end{flalign}

\noindent The expectation in \eqref{determinantvertexw} is with respect to the stochastic six-vertex model with double-sided $(b_1, b_2)$-Bernoulli initial data, with left jump probability $\delta_1$ and right jump probability $\delta_2$. 
\end{thm}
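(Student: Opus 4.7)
The approach is to obtain Theorem \ref{determinantw} as the $\vartheta = 0$ specialization of the formal identity of Proposition \ref{determinantgeneralformal} (equivalently, of its complex counterpart Proposition \ref{determinantgeneral}), followed by a change-of-variable manipulation that rewrites the kernel $K_\zeta$ of \eqref{formalkernel} in the form $V_\zeta$ of \eqref{t}, and a final analytic continuation in $b_2$ that upgrades the small-disk identity of Proposition \ref{determinantgeneral} to all admissible $b_2 \in (0, 1)$.

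The first task is to specialize at $\vartheta = 0$. Reading Definition \ref{xvertexweights} at this value gives $W_{0, q, b_2}(j; 0) = 1 - b_2$ and $W_{0, q, b_2}(j; 1) = b_2$, both independent of $j$, so $\overline{\mathcal{W}_x}^{(M)}(\nu)$ is independent of $M$ and prescribes genuine independent $b_2$-Bernoulli entries at horizontal positions $2, \ldots, x$. Paired with the $b_1$-Bernoulli vertical law encoded in $\mathbb{E}^{(\nu)}$, this assembles the double-sided $(b_1, b_2)$-Bernoulli initial condition. Meanwhile the prefactor $(\vartheta \kappa \beta_2 \beta_1^{-1}; q)_\infty$ and each Pochhammer $(\vartheta; q)_M$ collapse to $1$, so the left side of \eqref{determinantformalvertex} at $\vartheta = 0$ coincides exactly with the left side of \eqref{determinantvertexw}.

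The principal step is to rewrite the kernel $K_\zeta$ of \eqref{formalkernel} in the form $V_\zeta$ of \eqref{t}. Setting $\zeta = -q^p$ gives $(-\zeta)^r = q^{pr}$, and the change of variables $v = q^r w$, with $r = (\log v - \log w)/\log q$ and $dr = dv/(v \log q)$, turns the integrand of \eqref{formalkernel} into $\frac{g_V(w)}{g_V(v)} \cdot \frac{v^{p-1} w^{-p}}{\sin(\pi(\log v - \log w)/\log q)(v - w')\log q}$, with an overall sign absorbed by the passage $1/(v - w') \mapsto -1/(w' - v)$. Because $\log$ is multi-valued, the image of the vertical contour $D_{R, d, \delta}$ under $r \mapsto q^r w$ decomposes into countably many copies of a single contour for $v$, indexed by the branch $j \in \mathbb{Z}$ via $\log v \mapsto \log v + 2\pi\mathbf{i}j$, which produces the sum in \eqref{t}. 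Deforming each branch's image onto the prescribed star-shaped $\Gamma_V$ crosses no poles of $g_V(w)/g_V(v)$ or $(v - w')^{-1}$ precisely because of the hypotheses that $\Gamma_V$ contains $0$ and $q\kappa\beta_2$ while excluding $-q\kappa$ and $q\beta_1$, combined with $\Gamma_V \subset \mathcal{C}_V \subset q^{-1}\Gamma_V$; the exponential decay of $|\sin(\pi r)|^{-1}$ along $D_{R, d, \delta}$ justifies the termwise summation in $j$ and its absolute convergence.

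Finally, Proposition \ref{determinantgeneral} furnishes \eqref{determinantformalvertex} at $\vartheta = 0$ as a complex-number identity for $|b_2|$ sufficiently small, and the preceding kernel conversion then gives \eqref{determinantvertexw} on that disk. Both sides of \eqref{determinantvertexw} are analytic in $b_2$ on $\{b_2 \in \mathbb{C} : |b_2| < 1,\ \kappa|\beta_2| < \beta_1\}$: the left side because every $q$-Pochhammer factor is analytic and the $M$-series converges geometrically via Lemma \ref{coefficientb2}-style bounds together with boundedness of $(\zeta q^{\mathfrak{H}(x, t) - M}; q)_\infty^{-1}$ for $\zeta \notin \mathbb{R}_{\ge 0}$, and the right side because $V_\zeta$ depends holomorphically on $b_2$ and defines a trace-class operator with bounds uniform on compacts. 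Analytic continuation extends \eqref{determinantvertexw} to all $b_2 \in (0, 1)$ with $\kappa \beta_2 < \beta_1$. The main obstacle is the kernel conversion: the multi-valuedness of $r \mapsto q^r w$, the reconciliation of the branch sum with the $r$-integral on $D_{R, d, \delta}$, and the bookkeeping of contour deformations needed to replace each branch's image by $\Gamma_V$ without crossing poles of the integrand all demand careful execution.
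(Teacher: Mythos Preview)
Your proposal is correct and follows essentially the same approach as the paper: specialize Proposition~\ref{determinantgeneral} at $\vartheta = 0$, convert $K_\zeta$ to $V_\zeta$ via the substitution $v = q^r w$, and analytically continue in $b_2$. One refinement to your description of the contour conversion: the image of $D_{R,d,\delta}$ under $r \mapsto q^r w$ is not literally countably many copies of a single contour, because the rectangular indentation near $r = 0$ maps to a keyhole-shaped contour $Q_{R,d,\delta;w}$ while each vertical segment of period $2\pi/|\log q|$ maps to a full circle $Q^{(1)}_{R,d,\delta;w}$; the paper reconciles these by showing that the difference contour $Q^{(2)} = Q^{(1)} - Q$ encloses no poles of the integrand for $j \ne 0$, so the $j \ne 0$ circle contributions can be replaced by keyhole contributions, yielding the uniform sum over $j$ that is then deformed to $\Gamma_V$.
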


See Figure \ref{contoursdeterminantw} for examples of the contours $\mathcal{C}_V$ and $\Gamma_V$. 

\begin{figure}

\begin{center}

\begin{tikzpicture}[
      >=stealth,
			scale = .6
			]

			\draw[<->, black] (-4.5, 0) -- (4, 0); 
			\draw[<->, black] (0, -3.5) -- (0, 3.5);

			\draw[black, fill] (3, 0) circle [radius=.04] node [black, below = 0, scale = .8] {$q \beta_1$};
			\draw[black, fill] (1.7, 0) circle [radius=.04] node [black, left = 5, below = 0, scale = .8] {$q \kappa \beta_2$};
			\draw[black, fill] (-.6, 0) circle [radius=.04] node [black, below = 0, scale = .8] {$q$};
			\draw[black, fill] (-3, 0) circle [radius=.04] node [black, below=0, scale = .8] {$q \kappa $};

			\draw[black, thick] (-.2, 0) circle [radius=2.4] node [black, left = 15, below = 22, scale = .8] {$\Gamma_V$};
			\draw[black, thick] (-.5, 0) circle [radius=3] node [black, right = 45, below = 30, scale = .8] {$\mathcal{C}_V$};
			\draw[black, dashed] (-.1667, 0) circle [radius=1];

\end{tikzpicture}

\end{center}

\caption{\label{contoursdeterminantw} Shown above (in solid) are examples of the contours $\mathcal{C}_V$ and $\Gamma_V$; the dashed curve is the image of $\mathcal{C}_V$ under multiplication by $q$. }

\end{figure}
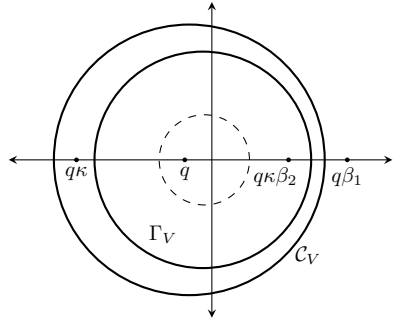

\begin{rem}
	\label{b20vertex} 
	
	If we set $b_2 = 0$ in Theorem \ref{determinantw}, then all summands on the left side of \eqref{determinantvertexw} are zero except for the one corresponding to $M = 0$. Thus, \eqref{determinantvertexw} becomes a Fredholm determinant identity for the $q$-Laplace transform of the height function of the stochastic six-vertex model with $(b_1, 0)$-Bernoulli initial data, which can be seen to match with the $m = 1$ and $K = V$ case of Proposition 5.1 in \cite{PTAEPSSVM}.
\end{rem}

\begin{proof}[Proof of Theorem \ref{determinantw}] 

For small values of $b_2$, this theorem will follow from changing variables in Proposition \ref{determinantgeneral} (and setting $\vartheta = 0$). We will then establish the theorem for larger $b_2$ through analytic continuation. In what follows, we adopt the notation of Proposition \ref{determinantgeneral}. 

Recall that $c > 0$ was defined in Corollary \ref{determinantw}; we may reduce the size of $c$ if necessary to assume that $\kappa c / (1 - c) < q^R \inf_{z \in \mathscr{C}_V} |z|$. Let $\widetilde{b}_2 \in \mathbb{C}$ be a complex number satisfying $|\widetilde{b}_2| < c$. By Proposition \ref{determinantgeneral}, \eqref{determinantformalvertex} holds with $b_2$ replaced with $\widetilde{b}_2$; in what follows, denote $\widetilde{\beta}_2 = \widetilde{b}_2 / (1 - \widetilde{b}_2)$.

Let us change variables $v = q^p w$ in the definition \eqref{formalkernel} of the kernel $K_{\zeta}$. To understand how this affects the identity \eqref{formalkernel} for $K_{\zeta}$, we must analyze the contour for $v$. 

To that end, first observe that if $d$ is sufficiently small (which we will always assume to be the case), the contour $D_{R, d, \delta}$ can be written as the union $D_{R, d, \delta} = I' \cup \bigcup_{k \in \mathbb{Z} \setminus\{ -1 \}} I_k$. Here, $I_k$ is defined to be the interval
\begin{flalign*}
I_k = \Big[ R + \textbf{i} \big(d + 2 \pi |\log q|^{-1} k \big), R + \textbf{i} \big(d + 2 \pi |\log q|^{-1} (k + 1) \big) \Big],
\end{flalign*} 

\noindent for each integer $k$, and $I'$ is defined to be the piecewise linear curve
\begin{flalign*}
I' = \Big[ R + \textbf{i} \big(d - 2 \pi |\log q|^{-1} \big), R - \textbf{i} d \Big] \cup \big[ R -  \textbf{i} d, \delta - \textbf{i} d \big] \cup \big[ \delta - \textbf{i} d, \delta + \textbf{i} d \big] \cup [\delta + \textbf{i} d, R + \textbf{i} d]. 
\end{flalign*}

Now, for each integer $k$, the map from $r$ to $q^r w$ is a bijection from $I_k$ to $-Q_{R, d, \delta; w}^{(1)}$, where the negative refers to reversal of orientation and the contour $Q_{R, d, \delta; w}^{(1)}$ is a positively oriented circle of radius $q^R$, centered at $0$; here, the negative orientation is due to the fact that $\log q < 0$. 

Furthermore, the map from $r$ to $q^r w$ is a bijection from $I'$ to the contour $- Q_{R, d, \delta; w}$, where $Q_{R, d, \delta; w}$ is defined to be the union of four curves $\mathcal{J}_1 \cup \mathcal{J}_2 \cup \mathcal{J}_3 \cup \mathcal{J}_4$, which are defined are as follows. The curve $\mathcal{J}_1$ is the major arc of the circle centered at $0$, with radius $q^R |w|$, connecting $q^{R - id} w$ to $q^{R + id } w$. The curve $\mathcal{J}_2$ is the line segment in the complex plane connecting $q^{R + id} w$ to $q^{\delta + id} w$; the curve $\mathcal{J}_3$ is the minor arc of the circle centered at $0$, with radius $q^{\delta} |w|$, connecting $q^{\delta + id} w$ to $q^{\delta - id} w$; and the curve $\mathcal{J}_4$ is the line segment in the complex plane connecting $q^{\delta - id} w$ to $q^{R - id} w$. We refer to Figure \ref{contoursgammaq} for an example of the contour $Q_{R, d, \delta; w}$. 

Thus, since $dv = v \log q dr$ and $(-\zeta)^r = v^p w^{-p}$, we obtain that
\begin{flalign*}
K_{\zeta} (w, w') & = - \displaystyle\frac{1}{2 \textbf{i} \log q} \displaystyle\sum_{j \ne 0} \displaystyle\oint_{Q_{R, d, \delta; w}^{(1)}} \displaystyle\frac{g_V (w; x, T)}{g_V (v; x, t)}  \displaystyle\frac{v^{p - 1} w^{-p} }{\sin \big( \frac{\pi}{\log q} ( \log v - \log w + 2 \pi \textbf{i} j ) \big)} \displaystyle\frac{dv}{v - w'}  \\
& \qquad - \displaystyle\frac{1}{2 \textbf{i} \log q} \displaystyle\oint_{Q_{R, d, \delta; w}} \displaystyle\frac{g_V (w; x, T)}{g_V (v; x, T)}  \displaystyle\frac{v^{p - 1} w^{-p} 	}{\sin \big( \frac{\pi}{\log q} ( \log v - \log w ) \big) } \displaystyle\frac{dv}{v - w'} , 
\end{flalign*}

\noindent where we have taken the principal branch of the logarithm. 

We can rewrite $K_{\zeta} = K_{\zeta} (w, w')$ as 
\begin{flalign}
\label{kvw}
\begin{aligned}
K_{\zeta} & = - \displaystyle\frac{1}{2 \textbf{i} \log q} \displaystyle\sum_{j = -\infty}^{\infty} \displaystyle\oint_{Q_{R, d, \delta; w}} \displaystyle\frac{g_V (w; x, T)}{g_V (v; x, t)}  \displaystyle\frac{v^{p - 1} w^{-p}}{\sin \big( \frac{\pi}{\log q} ( \log v - \log w + 2 \pi \textbf{i} j ) \big)}\displaystyle\frac{dv}{ v - w'}  \\
& \quad - \displaystyle\frac{1}{2 \textbf{i} \log q} \displaystyle\sum_{j \ne 0} \displaystyle\oint_{Q_{R, d, \delta; w}^{(2)} } \displaystyle\frac{g_V (w; x, T)}{g_V (v; x, T)}  \displaystyle\frac{v^{p - 1} w^{-p}} {\sin \big( \frac{\pi}{\log q} ( \log v - \log w + 2 \pi \textbf{i} j) \big)} \displaystyle\frac{dv}{ v - w'} , 
\end{aligned}
\end{flalign}

\noindent where the contour $Q_{R, d, \delta; w}^{(2)} = Q_{R, d, \delta; w}^{(1)} - Q_{R, d, \delta; w}$ is the union of four curves $\mathcal{C}_1 \cup \mathcal{C}_2 \cup \mathcal{C}_3 \cup \mathcal{C}_4$, which are defined as follows. The curve $\mathcal{C}_1$ is the line segment in the complex plane connecting $q^{R - id} w$ to $q^{\delta - id} w$. The curve $\mathcal{C}_2$ is the minor arc of the circle centered at $0$, with radius $q^{\delta} |w|$, connecting $q^{\delta - id} w$ to $q^{\delta + id} w$. The curve $\mathcal{C}_3$ is the line segment connecting $q^{\delta + id} w$ to $q^{R + id} w$. The curve $\mathcal{C}_4$ is the minor arc of the circle centered at $0$, with radius $q^R |w|$, connecting $q^{R + id} w$ to $q^{R - id} w$. We refer to Figure \ref{contoursgammaq} for an example of the contour $Q_{R, d, \delta; w}^{(2)}$. 

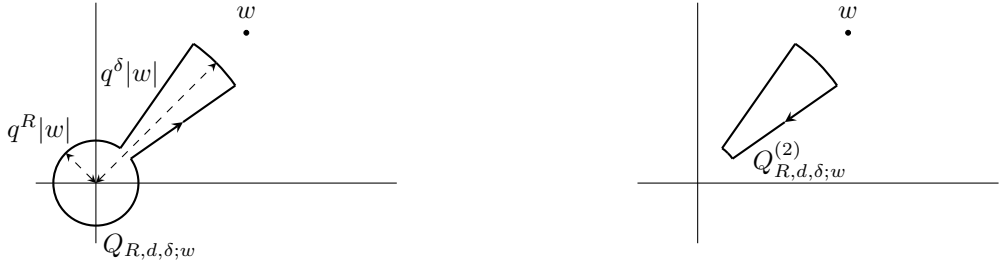
\begin{figure}

\begin{center}

\begin{tikzpicture}[
      >=stealth,
			scale=.8
			]

			\draw[-, black] (5, 0) -- (-1, 0); 
			\draw[-, black] (0, -1) -- (0, 3);

			\draw[->, black,  thick] (.57923,.40558) -- (1.44808, 1.01395);
			\draw[-, black,  thick] (1.44808, 1.01395) -- (2.31692, 1.62232);
			\draw[-, black,  thick] (.40558, .57923) -- (1.62232, 2.31692);
			
			\draw[black, thick] (2, 2) arc (45:55:2.8284);
			\draw[black, thick] (2, 2) arc (45:35:2.8284);
			\draw[black, thick] (.40558, .57923) arc (55:180:.7071);
			\draw[black, thick] (.57923, .40558) arc (35:-180:.7071) node [black, below = 24, right = 15] {$Q_{R, d, \delta; w}$};

			\draw[<->, black, dashed] (0, 0) -- (-.5, .5) node[black, left = 10, above = 0] {$q^R |w|$};
			\draw[<->, black, dashed] (0, 0) -- (2, 2) node[black, below = 4, left = 18] {$q^{\delta} |w|$};

			\draw[black, fill] (2.5, 2.5) circle [radius=.04] node [black, above = 2] {$w$};

			\draw[-, black] (15, 0) -- (9, 0); 
			\draw[-, black] (10, -1) -- (10, 3);

			\draw[-, black,  thick] (10.57923,.40558) -- (11.44808, 1.01395);
			\draw[->, black,  thick] (12.31692, 1.62232) -- (11.44808, 1.01395);
			\draw[-, black,  thick] (10.40558, .57923) -- (11.62232, 2.31692);
			
			\draw[black, thick] (12, 2) arc (45:55:2.8284);
			\draw[black, thick] (12, 2) arc (45:35:2.8284);
			\draw[black, thick] (10.5, .5) arc (45:55:.7071);
			\draw[black, thick] (10.5, .5) arc (45:35:.7071) node [black, below = 1, right = 4] {$Q_{R, d, \delta; w}^{(2)}$};

			\draw[black, fill] (12.5, 2.5) circle [radius=.04] node [black, above = 2] {$w$};

\end{tikzpicture}

\end{center}

\caption{\label{contoursgammaq} Shown to the left is the contour $Q_{R, d, \delta; w}$. Shown to the right is the contour $Q_{R, d, \delta; w}^{(2)}$. } 
\end{figure}

We claim that each summand of the second sum on the right side of \eqref{kvw} is equal to $0$. To verify this, we analyze the poles of the corresponding integrands, which are at $v \in \{0, w', -q \kappa \} \cup \bigsqcup_{k = 1}^{\infty} \{ q^k \kappa \widetilde{\beta}_2 \} $. Observe that there is no pole at $v = w$ (or at $v \in w q^{\mathbb{Z}}$) since $j$ is non-zero in each summand. 

We claim that none of these poles are contained in the interior of $Q_{R, d, \delta; w}^{(2)}$. For instance, $q^{k + 1} \kappa \widetilde{\beta}_2$ is contained in the interior of $Q_{R, d, \delta; w}^{(1)}$ (since $|q \kappa \widetilde{\beta}_2| < \kappa c / (c - 1) < q^R \inf_{z \in \mathscr{C}_V} |z|$) and thus cannot be contained inside $Q_{R, d, \delta; w}^{(2)}$. Similarly, one can check that the other poles at $0$, $w'$, and $-q \kappa$ are outside of $Q_{R, d, \delta; w}^{(2)}$. Hence, the second term on the right side of \eqref{kvw} is equal to $0$. 

Now, observe that the poles for $v$ of the integrands in the first sum on the right side of \eqref{kvw} are at $v \in \{ 0, w', -q \kappa \} \cup \{ q^j w \}_{j \in \mathbb{Z}} \cup \{ q \kappa \beta_2, q^2 \kappa \beta_2, \ldots \} $; of these, the poles at $v \in \{ 0, q w, q^2 w, \ldots \} \cup \{ q \kappa \beta_2, q^2 \kappa \beta_2, \ldots \}$ are contained inside the contour $Q_{R, d, \delta; w}$ and the others are left outside. The poles for $w$ of the same integrands are at $w \in \{ -q, 0, q \beta_1, \beta_1, q^{-1} \beta_1, \ldots \}$; of these, the poles at $w \in \{ 0, -q \}$ are contained inside the contour $\mathscr{C}_V$ and the others are left outside. 

Recall that $\mathcal{C}_V$ contains $0$, $-q$, $q \kappa \beta_2$, and $\Gamma_V$, but leaves outside $q \beta_1$; similarly, $\Gamma_V$ contains $0$, $q \kappa \beta_2$, $q \mathcal{C}_V$, but leaves outside $-q \kappa$. Thus, due to the fact that all contours are star-shaped, $\mathcal{C}_V$ and $\mathscr{C}_V$ contain and leave outside the same poles of $K_{\zeta}$; similarly, $Q_{R, d, \delta; w}$ and $\Gamma_V$ contain and leave outside the same poles of the integrand on the right side of \eqref{kvw}. Therefore, again since $\mathcal{C}_V$ and $\Gamma_V$ are star-shaped, it follows by moving along rays through the origin that $Q_{R, d, \delta; w}$ and $\mathscr{C}_V$ can be continuously and simultaneously deformed to $\Gamma_V$ and $\mathcal{C}_V$, respectively, without crossing any poles of any integrand on the right side of \eqref{kvw}. 

In particular, the contour $Q_{R, d, \delta; w}$ for $v$ can be replaced by $\Gamma_V$, and the contour $\mathscr{C}_V$ for $w$ can be replaced by $\mathcal{C}_V$; the latter statement holds by Remark \ref{determinantcontourdeform}. Thus, Proposition \ref{determinantgeneral} (after setting $b_2 = \widetilde{b}_2$ and $\vartheta = 0$) becomes  
\begin{flalign}
\label{t11w}
\big( \kappa \widetilde{\beta}_2 \beta_1^{-1}; q \big)_{\infty} \displaystyle\sum_{M = 0}^{\infty} \displaystyle\frac{(\kappa \widetilde{\beta}_2 \beta_1^{-1})^M }{(q; q)_M}  \displaystyle\sum_{\nu \in \Sign^+} \overline{\mathcal{W}_x}^{(M)} (\nu) \mathbb{E}^{(\nu)} & \left[ \displaystyle\frac{1}{\big( \zeta q^{\mathfrak{H} (x, t) - M	} ; q \big)_{\infty}} \right]  = \det \big( \Id + V_{\zeta} \big)_{L^2 (\mathcal{C}_V)}, 
\end{flalign}

\noindent for $|\widetilde{b_2}| < c$ sufficiently small. We claim that both sides of \eqref{t11w} are analytic in $\widetilde{\beta}_2$ over all $\widetilde{\beta}_2 \in \mathbb{C}$ satisfying $|\kappa \widetilde{\beta}_2| < \beta_1$ and $\Re \widetilde{\beta}_2 > 0$. 

Let us first consider the left side of \eqref{t11w}. First, the prefactor $\big( \kappa \widetilde{\beta}_2 \beta_1^{-1}; q \big)_{\infty}$ is analytic in $\widetilde{\beta}_2$ on this domain. Next, observe that $\mathbb{E}^{(\nu)} \big[ ( \zeta q^{\mathfrak{H} (x, t) - M	} ; q )_{\infty}^{-1} \big] $ is independent of $b_2$ and is bounded uniformly in $M \in \mathbb{Z}_{\ge 0}$. Furthermore, each weight $\overline{\mathcal{W}_x}^{(M)} (\nu)$ is a polynomial in $b_2$, independent of $M$ (since $\vartheta = 0$), and non-zero for only finitely many signatures $\nu$. Hence the second sum on the left side of \eqref{t11w} is a polynomial in $\widetilde{b_2}$ whose coefficients are uniformly bounded in $M \ge 0$. Therefore, due to the factor of $(\kappa \widetilde{\beta}_2 \beta_1^{-1})^M$, the first sum on the left side of \eqref{t11w} is a convergent power series (and thus analytic) in $\widetilde{\beta}_2$ on the domain $|\kappa \widetilde{\beta}_2| < \beta_1$. 

Next we consider the right side of \eqref{t11w}. Fix an arbitrary open subset $U \subset \mathbb{C}$ whose closure $\overline{U}$ lies entirely to the right of the imaginary axis and is contained in the interior of the circle centered at $0$ with radius $\kappa^{-1} \beta_1$. It suffices to show that the right side of \eqref{t11w} is analytic in $\widetilde{\beta}_2 \in U$. To that end, observe that there exist star-shaped contours $\mathcal{C}_V^{(U)}$ and $\Gamma_V^{(U)}$, both satisfying the restrictions of the proposition, such that $U$ is contained in the interior of both $\mathcal{C}_V^{(U)}$ and $\Gamma_V^{(U)}$. 

Since the Fredholm determinant $\det \big( \Id + V_{\zeta} \big)_{L^2 (\mathcal{C}_V)}$ does not depend on the choice of contours $\mathcal{C}_V$ and $\Gamma_V$ satisfying the restrictions of the proposition (due to Remark \ref{determinantcontourdeform}), we can use $\mathcal{C}_V^{(U)}$ and $\Gamma_V^{(U)}$ in place of $\mathcal{C}_V$ and $\Gamma_V$, respectively. Using these contours in \eqref{determinantvertexw} and \eqref{t}, we first find that $V_{\zeta} (w, w')$ is analytic in $\widetilde{\beta}_2 \in U$, since the integrand on the right side of \eqref{t} is analytic in $\widetilde{\beta}_2$ for all $v$ on the contour $\Gamma_V^{(U)}$. Thus, due to the absolute convergence of the sum in the definition \eqref{determinantsum} (which holds by compactness of $\mathcal{C}_V$; see the first part of Lemma \ref{determinantclosekernels}), we deduce that $\det \big( \Id + V_{\zeta} \big)_{L^2 (\mathcal{C}_V^{(U)})}$ is analytic in $\widetilde{\beta}_2 \in U$. In particular, $\det \big( \Id + V_{\zeta} \big)_{L^2 (\mathcal{C}_V)}$ is analytic in $\widetilde{\beta_2} \in U$. Since $U$ was arbitrary, this verifies that the right side of \eqref{t11w} is analytic.

Hence, \eqref{t11w} follows for all $\widetilde{\beta}_2 \in \mathbb{C}$ satisfying $|\kappa \widetilde{\beta}_2| < \beta_1$ and $\Re \widetilde{\beta}_2 > 0$, by uniqueness of analytic continuation.  

In particular, we may let $\widetilde{b}_2 = b_2 \in (0, 1)$, as in the statement of the theorem. In that case (since we already set $\vartheta = 0$), $\overline{\mathcal{W}_x}^{(M)} (\nu)$ is equal to the probability of selecting a signature $\nu$ chosen by appending each integer in the set $\{ 1, 2, \ldots , x \}$ independently with probability $b_2$, we deduce that 
\begin{flalign}
\label{expectationnu1}
\displaystyle\sum_{\nu \in \Sign^+} \overline{\mathcal{W}_x}^{(M)} (\nu) \mathbb{E}^{(\nu)} \left[ \displaystyle\frac{1}{\big( \zeta q^{\mathfrak{H} (x, t) - M	} ; q \big)_{\infty}} \right] &= \mathbb{E} \left[ \displaystyle\frac{1}{\big( \zeta q^{\mathfrak{H} (x, t) - M	} ; q \big)_{\infty}} \right], 
\end{flalign}

\noindent where $\mathbb{E}$ denotes the expectation with respect to the stochastic six-vertex model with double-sided $(b_1, b_2)$-Bernoulli initial data. Here, we have used the fact that the current $\mathfrak{H} (x, t)$ of the stochastic six-vertex model does not depend on any of the initial positions of any particles to the right of $x$. Now the theorem follows from inserting \eqref{expectationnu1} into \eqref{t11w}. 
\end{proof}

\subsubsection{Degeneration to the ASEP} 

\label{DegenerateExclusion}

As observed and used in a number of other works (see, for instance, \cite{CSSVMEP, PTAEPSSVM, SSVM, HSVMRSF, IPSVMSF}), the stochastic six-vertex model exhibits a limit degeneration to the ASEP. In this section, we apply this degeneration to Theorem \ref{determinantw} to obtain a Fredholm determinant identity for the ASEP with double-sided Bernoulli initial data.

The following proposition, which appears as Corollary 4 of \cite{CSSVMEP}, states this degeneration precisely in terms of convergence of currents. In what follows, we recall the definitions of the currents $J$ of the ASEP and $\mathfrak{H}$ of the stochastic six-vertex model from Section \ref{AsymmetricExclusions} and Section \ref{StochasticVertex}. 

\begin{prop}[{\cite[Corollary 4]{CSSVMEP}}]

\label{currentmodelprocess}
Fix real numbers $R, L \ge 0$ and $b_1, b_2 \in [0, 1]$. Let $\varepsilon > 0$ be a real number, and denote $\delta_1 = \delta_{1; \varepsilon} = \varepsilon L$ and $\delta_2 = \delta_{2; \varepsilon} = \varepsilon R$; assume that $\delta_1, \delta_2 \in (0, 1)$. Also fix $r \in \mathbb{R}$, $t \in \mathbb{R}_{> 0}$, and $x \in \mathbb{Z}$.

Let $p_{\varepsilon} (x; r) = \mathbb{P}_V \big[ \mathfrak{H} \big( x + \lfloor \varepsilon^{-1} t \rfloor, \lfloor \varepsilon^{-1} t \rfloor \big) \ge r \big]$, where the probability $\mathbb{P}_V$ is under the stochastic six-vertex model $\mathcal{P} (\delta_1, \delta_2)$ with $(b_1, b_2)$ double-sided Bernoulli initial data.\footnote{This result was actually originally stated for arbitrary initial data, but for simplicity we restrict to the case of double-sided Bernoulli initial data.} Furthermore, let $p (x; r) = \mathbb{P}_A \big[ J_t \big( x \big) \ge r \big]$, where the probability $\mathbb{P}_A$ is under the ASEP with left jump rate $L$, right jump rate $R$, and with $(b_1, b_2)$ double-sided Bernoulli initial data. 

Then, $\lim_{\varepsilon \rightarrow 0} p_{\varepsilon} (x; r) = p (x; r)$. 

\end{prop}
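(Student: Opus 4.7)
The plan is to reinterpret both models as particle systems and identify one time step of the stochastic six-vertex model, viewed in an appropriate moving frame, with an infinitesimal time increment $\varepsilon$ of the ASEP. Define the 6V particle configuration $\eta^{(\varepsilon)}_t(x) = \mathbf{1}\{\text{there is a vertical arrow from } (x,t) \text{ to } (x,t+1)\}$ and introduce its shift $\hat\eta^{(\varepsilon)}_t(x) = \eta^{(\varepsilon)}_t(x+t)$. The shift absorbs the deterministic drift of an isolated particle, which moves one step to the right per time step in the limit $\delta_1, \delta_2 \to 0$; the same drift is encoded in the index shift $x + \lfloor \varepsilon^{-1} t\rfloor$ appearing in the statement. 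Double-sided $(b_1,b_2)$-Bernoulli initial data for the 6V model translates directly to double-sided $(b_1,b_2)$-Bernoulli initial data for the shifted particle system, matching the ASEP.

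Next I would compute the one-step transitions in the shifted frame directly from Figure \ref{sixvertexfigure}, tracking how each horizontal arrow propagates and is absorbed. For a particle at shifted position $y$ whose immediate neighbors are empty, one finds a shift-frame left jump ($y \mapsto y-1$) with probability $\delta_1 = \varepsilon L$, no change with probability $(1-\delta_1)(1-\delta_2) = 1 - \varepsilon(L+R) + O(\varepsilon^2)$, a right jump ($y \mapsto y+1$) with probability $(1-\delta_1)\delta_2(1-\delta_2) = \varepsilon R + O(\varepsilon^2)$, and longer jumps or multi-particle pushing cascades with total probability $O(\varepsilon^2)$. An occupied neighbor at $y+1$ blocks the right jump up to a joint event of probability $O(\varepsilon^2)$, and symmetrically on the left, reproducing the ASEP exclusion rule. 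Summed over all sites, this shows that $\varepsilon^{-1}$ times the one-step generator converges pointwise on local (cylinder) functions to the ASEP generator with left rate $L$ and right rate $R$. Since the ASEP semigroup is well-posed in the Liggett sense, a Trotter--Kurtz-type convergence theorem then yields convergence in distribution of $\bigl(\hat\eta^{(\varepsilon)}_{\lfloor \varepsilon^{-1} s\rfloor}\bigr)_{s \ge 0}$ to the ASEP.

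Finally I would identify $\mathfrak{H}\bigl(x + \lfloor \varepsilon^{-1} t\rfloor,\, \lfloor \varepsilon^{-1} t\rfloor\bigr)$ with the ASEP current $J_t(x)$ applied to the shifted particle system: both count blue paths (emanating from the $y$-axis, equivalently from sites $i \le 0$) crossing to the right of $x$ minus red paths (emanating from the $x$-axis, equivalently from sites $i > 0$) lying at or to the left of $x$ at time $t$. The convergence $p_\varepsilon(x;r) \to p(x;r)$ then follows provided the event $\{J_t(x) \ge r\}$ can be approximated by a cylinder event. This localization is the main obstacle: because \eqref{jtx} is an infinite signed sum over particles throughout $\mathbb{Z}$, one must show that for each finite $t$ and each $\delta > 0$ there is an $\varepsilon$-independent window $[x - R', x + R']$ outside of which particles contribute to $J_t(x)$ with probability less than $\delta$. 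The cleanest route is to couple to an ``exclusion-free'' dynamics in which particles evolve independently and to apply exponential tail bounds on single-particle displacements, with extra care to control the discrete-time pushing cascades of the 6V model. Once the localization is established, the convergence of currents becomes a direct corollary of the generator convergence above.
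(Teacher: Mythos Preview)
The paper does not prove this proposition: it is stated with the citation \cite[Corollary 4]{CSSVMEP} and used as a black box in the proof of Theorem~\ref{determinant}. So there is no ``paper's own proof'' to compare against; your proposal is an outline of how one might establish the cited result from scratch.

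That said, your outline is the right one and essentially matches the strategy of the cited reference \cite{CSSVMEP}. The shift $\hat\eta^{(\varepsilon)}_t(x) = \eta^{(\varepsilon)}_t(x+t)$, the one-step transition computation (left jump with probability $\varepsilon L + O(\varepsilon^2)$, right jump with probability $\varepsilon R + O(\varepsilon^2)$, pushing cascades $O(\varepsilon^2)$), and the generator/semigroup convergence argument are all correct in spirit and are precisely what \cite{CSSVMEP} carries out. Your identification of the localization step as the main technical hurdle is also accurate: controlling the contribution to the current from particles far from $x$ uniformly in $\varepsilon$ is exactly what requires work, and your suggested route via coupling to non-interacting dynamics with exponential displacement bounds is viable. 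One point to tighten: the ``pushing cascades'' in the six-vertex model are not merely a nuisance of total probability $O(\varepsilon^2)$ per site but can in principle propagate arbitrarily far in a single time step, so the localization argument must also bound the probability that a cascade reaches a given distance; this is where the geometric decay of horizontal arrow runs (each continuation occurring with probability $\delta_2 = \varepsilon R$) enters.
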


 Applying the degeneration explained above, we deduce the following Fredholm determinant identity for the current of the ASEP. 

\begin{thm}
\label{determinant}

Fix $R > L > 0$; $0 < b_2 < b_1 < 1$; $x \in \mathbb{Z}$; and $t \in \mathbb{R}_{> 0}$. Consider the ASEP with left jump rate $L$, right jump rate $R$, and double-sided $(b_1, b_2)$-Bernoulli initial data. Let $q = L / R < 1$, $\beta_i = b_i / (1 - b_i)$, for each $i \in \{ 1 , 2 \}$. Denote 
\begin{flalign*}
g_A (z; x, t) = (z + q)^x \exp \left( \displaystyle\frac{tq (R - L)}{z + q} \right) \displaystyle\frac{(q \beta_2 z^{-1}; q)_{\infty}}{(q^{-1} \beta_1^{-1} z; q)_{\infty}}. 
\end{flalign*}

\noindent Let $\Gamma_A \subset \mathbb{C}$ be a positively oriented, star-shaped contour in the complex plane containing $0$ and $q \beta_2$, but leaving outside $-q$ and $q \beta_1$. Let $\mathcal{C}_A \subset \mathbb{C}$ be a positively oriented, star-shaped contour contained inside $q^{-1} \Gamma_A$; that contains $0$, $-q$, $q \beta_2$, and $\Gamma_A$; but that leaves outside $q \beta_1$. 

Let $\zeta = -q^p < 0$, for some real number $p \in \mathbb{R}$. Then, 
\begin{flalign}
\label{determinantprocess}
( \beta_2 \beta_1^{-1}; q )_{\infty} \displaystyle\sum_{M = 0}^{\infty} \displaystyle\frac{\big( \beta_2 \beta_1^{-1} \big)^M}{(q; q)_M} \mathbb{E} \left[ \displaystyle\frac{1}{(\zeta q^{J_T (x) - M}; q)_{\infty}} \right] = \det \big( \Id + A_{\zeta} \big)_{L^2 (\mathcal{C}_A)}, 
\end{flalign}

\noindent where
\begin{flalign}
\label{kernelp} 
A_{\zeta} (w, w') = \displaystyle\frac{1}{2 \textbf{\emph{i}} \log q} \displaystyle\sum_{j = - \infty}^{\infty} \displaystyle\int_{\Gamma_A} \displaystyle\frac{v^{p - 1} w^{-p} dv }{\sin \big( \frac{\pi}{\log q} (\log v - \log w + 2 \pi \textbf{\emph{i}}j ) \big) (w' - v)} \displaystyle\frac{g_A (w; x, T)}{g_A (v; x, T)}. 
\end{flalign}

\end{thm}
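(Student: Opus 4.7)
The plan is to obtain Theorem \ref{determinant} as the $\varepsilon \to 0$ degeneration of Theorem \ref{determinantw} under the scaling $\delta_1 = \varepsilon L$, $\delta_2 = \varepsilon R$ and $(x, t) \mapsto (\tilde{x}, \tilde{t}) = (x + \lfloor \varepsilon^{-1} t\rfloor, \lfloor \varepsilon^{-1} t\rfloor)$. Under this choice, $q = L/R$ is independent of $\varepsilon$ while $\kappa = (1-\varepsilon L)/(1-\varepsilon R) \to 1$; Proposition \ref{currentmodelprocess} supplies the weak convergence of $\mathfrak{H}(\tilde{x}, \tilde{t})$ to $J_t(x)$ needed to convert the six-vertex identity into its ASEP counterpart.

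First I would fix star-shaped contours $\Gamma_A, \mathcal{C}_A$ satisfying the hypotheses of Theorem \ref{determinant} and verify that, for all $\varepsilon$ sufficiently small, they also satisfy the hypotheses of Theorem \ref{determinantw} with $\Gamma_V = \Gamma_A$ and $\mathcal{C}_V = \mathcal{C}_A$: the six-vertex conditions differ from the ASEP ones only by replacing $-q$ and $q\beta_2$ with $-q\kappa$ and $q\kappa\beta_2$, perturbations that preserve the relevant connected components of the contour complements once $\kappa$ is close to $1$, and the inequality $\kappa\beta_2 < \beta_1$ follows from $\beta_2 < \beta_1$ (i.e.\ $b_2 < b_1$). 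Applying Theorem \ref{determinantw} at $(\tilde{x}, \tilde{t})$ with these contours produces an identity to which I would take the $\varepsilon \to 0$ limit.

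The core computation is the limit of the kernel. Writing
\begin{align*}
g_V(z; \tilde{x}, \tilde{t}) = (\kappa^{-1}z + q)^x \cdot \frac{(q\beta_2\kappa z^{-1}; q)_\infty}{(q^{-1}\beta_1^{-1}z; q)_\infty} \cdot \left(\frac{\kappa^{-1}z + q}{z + q}\right)^{\tilde{t}},
\end{align*}
the first two factors converge pointwise in $z$ to $(z+q)^x$ and $(q\beta_2 z^{-1};q)_\infty / (q^{-1}\beta_1^{-1}z;q)_\infty$. Since $\kappa^{-1} - 1 = -\varepsilon(R-L) + O(\varepsilon^2)$, a Taylor expansion yields $\log[(\kappa^{-1}z+q)/(z+q)] = -\varepsilon(R-L)z/(z+q) + O(\varepsilon^2)$, so
\begin{align*}
\left(\frac{\kappa^{-1}z + q}{z + q}\right)^{\tilde{t}} \longrightarrow \exp\!\left(\frac{-t(R-L)z}{z+q}\right) = e^{-t(R-L)} \exp\!\left(\frac{tq(R-L)}{z+q}\right),
\end{align*}
where the second equality uses $-z/(z+q) = -1 + q/(z+q)$. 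Thus $g_V(z; \tilde{x}, \tilde{t}) \to e^{-t(R-L)} g_A(z; x, t)$ pointwise, and the constant prefactor $e^{-t(R-L)}$ cancels in the ratio $g_V(w)/g_V(v)$ appearing in \eqref{t}. Making this convergence uniform on $\mathcal{C}_A \times \Gamma_A$ (straightforward, since both contours are compact and bounded away from the singularities of $g_V$ for small $\varepsilon$), and combining with the $\varepsilon$-independent exponential decay of $|\sin(\pi(\log v - \log w + 2\pi\textbf{i}j)/\log q)|^{-1}$ in $|j|$, yields $V_\zeta(w, w') \to A_\zeta(w, w')$ uniformly in $(w, w') \in \mathcal{C}_A \times \mathcal{C}_A$.

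To conclude, I would pass the limit through both sides. On the left, since $\zeta = -q^p < 0$ and $q \in (0, 1)$, the map $h \mapsto 1/(\zeta q^{h-M}; q)_\infty$ is continuous and uniformly bounded in $(h, M) \in \mathbb{Z} \times \mathbb{Z}_{\ge 0}$; the Portmanteau theorem applied to Proposition \ref{currentmodelprocess} gives convergence of each expectation, and dominated convergence against the geometric weight $(\kappa\beta_2\beta_1^{-1})^M/(q;q)_M$ handles the $M$-sum while the prefactor $(\kappa\beta_2\beta_1^{-1};q)_\infty$ tends to $(\beta_2\beta_1^{-1};q)_\infty$. On the right, the uniform convergence of the kernel together with Hadamard's inequality $|\det[V_\zeta(w_i, w_j)]_{i,j=1}^k| \le C^k k^{k/2}$ permits term-by-term passage to the limit in the Fredholm series \eqref{determinantsum}, so that $\det(\Id + V_\zeta)_{L^2(\mathcal{C}_V)} \to \det(\Id + A_\zeta)_{L^2(\mathcal{C}_A)}$. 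I expect the chief obstacle to be establishing the $\varepsilon$-uniform bound on $V_\zeta$ over $\mathcal{C}_A \times \mathcal{C}_A$: one must control $g_V(w)/g_V(v)$ uniformly on the compact contours (routine once $\kappa$ stays bounded away from its critical values) and dominate the $j$-series by an $\varepsilon$-independent summable majorant, which is possible because the sine factor's decay rate is governed by $\log q$ alone.
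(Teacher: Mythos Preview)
Your proposal is correct and follows essentially the same approach as the paper's proof: both take the $\varepsilon \to 0$ degeneration of Theorem~\ref{determinantw} under $\delta_1 = \varepsilon L$, $\delta_2 = \varepsilon R$, use Proposition~\ref{currentmodelprocess} for the left side, compute $\lim_{\varepsilon \to 0} g_V(z; x+\tilde t, \tilde t) = e^{t(L-R)} g_A(z; x, t)$ for the kernel, and observe that $\kappa \to 1$ lets one use the ASEP contours as six-vertex contours for small $\varepsilon$. Your write-up is in fact more detailed than the paper's (which simply cites Lemma~\ref{determinantlimitkernels} for the Fredholm convergence, whereas you spell out the Hadamard-based argument), and you correctly anticipate the only point requiring care, namely the $\varepsilon$-uniform bound on $V_\zeta$.
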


\begin{rem}
	
	\label{b20particles} 
	
	Similar to what was mentioned in Remark \ref{b20vertex}, the $b_2 = 0$ specialization of Theorem \ref{determinant} can be seen to coincide with the $m = 1$ and $K = A$ case of Proposition 5.1 in \cite{PTAEPSSVM}. 
	
\end{rem}

\begin{proof}[Proof of Theorem \ref{determinant}]

This theorem will follow from taking the ASEP degeneration in Theorem \ref{determinantw}. In what follows, let $\varepsilon > 0$ be some positive real number, $\delta_1 = \varepsilon L$, $\delta_2 = \varepsilon R$, and $t = \lfloor \varepsilon^{-1} T \rfloor$. 

As $\varepsilon$ tends to $0$, Proposition \ref{currentmodelprocess} shows that the distribution of the current $\mathfrak{H} (x + t, t)$ of the stochastic six-vertex model $\mathcal{P} (\delta_1, \delta_2)$ converges to the distribution of the current $J_T (x)$ of the ASEP. Therefore, since $\big( \zeta q^{J_T (x)}; q \big)_{\infty}^{-1}$ is uniformly bounded, $\big( \zeta q^{J_T (x)}; q^{-1} \big)_M^{-1}$ is uniformly bounded, and $\beta_2 \beta_1^{-1} < 1$, we deduce that the left side of \eqref{determinantvertexw} converges to the left side of \eqref{determinantprocess}, as $\varepsilon$ tends to $0$. 

Moreover, it is quickly verified that 
\begin{flalign*}
\displaystyle\lim_{\varepsilon \rightarrow 0} g_V (z; x + t, t) & = e^{T (L - R)} g_A (z; x, T). 
\end{flalign*}

\noindent Hence, $V_{\zeta} (w, w')$ tends to $A_{\zeta} (w, w')$ as $\varepsilon$ tends to $0$, for all $w, w' \in \mathcal{C}_A$. 

Now, as $\varepsilon$ tends to $0$, $\kappa$ tends to $1$. Therefore, if $\varepsilon$ is sufficiently small (in comparison to $R$, $\delta$, and $d$), we may take the contour $\mathcal{C}_V$ from Theorem \ref{determinantprocess} to be the contour $\mathcal{C}_A$. 

Since $\mathcal{C}_A$ is compact, the fact that $\lim_{\varepsilon \rightarrow 0} V_{\zeta} (w, w') = A_{\zeta} (w, w')$ implies that the right side of \eqref{determinantvertexw} converges to the right side of \eqref{determinantprocess} as $\varepsilon$ tends to $0$ (for instance, by Lemma \ref{determinantlimitkernels}). 

Thus, \eqref{determinantprocess} is a limit degeneration of \eqref{determinantvertexw}, and the proof is complete. 
\end{proof}

\begin{rem}

\label{smallasymmetry}

Taking the \emph{weakly asymmetric limit} (see \cite{SEPS}) of Theorem \ref{determinant} should lead to a Fredholm determinant identity for the solution to the KPZ equation with two-sided Brownian initial data; this might yield a new proof of Theorem 2.9 of \cite{HFSE}. 

\end{rem}

\subsection{Proof of Proposition \ref{determinantgeneralformal}}

\label{ProofFormalDeterminant} 

In this section we establish the Fredholm determinant identity Proposition \ref{determinantgeneralformal}. This will mainly follow what was done in Section 3 and Section 5 of \cite{DDQA} (which was based on earlier work from Section 3.2.2 of \cite{MP}), but there are two main obstructions that prevent us from applying that work directly. First, we have a parameter $b_2$ that is formal instead of complex. Second, our $q$-moment identities given by Corollary \ref{heightgeneral} are only stated for $\vartheta = q^{-J}$ for some positive integer $J$; recall that $\vartheta \in \mathbb{C}$ could be arbitrary in Proposition \ref{determinantgeneralformal}. 

Thus, we will first establish Proposition \ref{determinantgeneralformal} when $\vartheta = q^{-J}$ for some positive integer $J$; this will essentially follow the works of \cite{MP, DDQA}, adapted to the formal setting, which we will explain in Section \ref{munegativej}. Then, in Section \ref{mucomplex}, we will show how to analytically continue this result from $\vartheta \in \{ q^{-1}, q^{-2}, \ldots \}$ to arbitrary $\vartheta \in \mathbb{C}$.

\subsubsection{The Case \texorpdfstring{$\vartheta = q^{-J}$}{}}

\label{munegativej}

The goal of this section is to establish Corollary \ref{formaldeterminantqmuj}, which is Proposition \ref{determinantgeneralformal} restricted to the case when $\vartheta = q^{-J}$, for some positive integer $J$. 

We will do this by first considering the case when $b_2$ is complex, instead of formal, but small (in a sense dependent on $J$); this is given by Proposition \ref{determinant1formal}. We will then deduce Corollary \ref{formaldeterminantqmuj} from Proposition \ref{determinant1formal} through contour integration. 

To establish Proposition \ref{determinant1formal}, we require the following lemma that rewrites (and analytically continues) the right side of \eqref{qmsequation}. 

\begin{lem}

\label{momentsumformal}

Adopt the notation of Proposition \ref{determinantgeneral}, assume that $\vartheta = q^{-J}$ for some positive integer $J$, and assume that	 $b_2 \in \mathbb{C}$ is sufficiently small so that $|\kappa \vartheta \beta_2| < \inf_{z \in \mathscr{C}_V} |z| $. Let
\begin{flalign}
\label{fdoublesided}
f (z) = \left( \displaystyle\frac{1 + z}{1 + q^{-1} z} \right)^t \left( \displaystyle\frac{1 + q^{-1} \kappa^{-1} z}{1 + \kappa^{-1} z} \right)^x \displaystyle\frac{1}{1 - q^{-1} \beta_1^{-1} z} \displaystyle\frac{1 - \vartheta \kappa \beta_2 z^{-1}}{1 - \kappa \beta_2 z^{-1}},
\end{flalign}

\noindent and let $\textbf{\emph{m}}_k$ be equal to \eqref{current1}. If $\mathscr{C}_V$ is as in Proposition \ref{determinantgeneralformal}, then 
\begin{flalign}
\label{mkcontourformal}
\textbf{\emph{m}}_k = (q; q)_k \displaystyle\sum_{|\lambda| = k} \displaystyle\frac{1}{(2 \pi \textbf{\emph{i}})^{\ell (\lambda)} \prod_{j = 1}^{\infty} m_j!} \displaystyle\oint \cdots \displaystyle\oint \det \left[ \displaystyle\frac{1}{w_i - q^{\lambda_j} w_j} \right]_{i, j = 1}^{\ell (\lambda)} \displaystyle\prod_{i = 1}^{\ell (\lambda)} d w_i \displaystyle\prod_{j = 0}^{\lambda_i - 1} f(q^j w_i),
\end{flalign} 

\noindent where each $w_i$ is integrated along the contour $\mathscr{C}_V$. Here, $\lambda = (\lambda_1, \lambda_2, \ldots )$ is summed over all partitions of size $k$, and $\ell (\lambda)$ denotes the length of $\lambda$. 

\end{lem}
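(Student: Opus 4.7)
The plan is to convert the nested contour integral representation of $\textbf{m}_k$ (the right side of \eqref{qmsequation}) into a partition sum by iteratively deforming the $q$-nested contours $\gamma_1, \ldots, \gamma_k$ onto the single contour $\mathscr{C}_V$ and carefully collecting residues. This is precisely the type of ``tree/partition'' expansion developed by Borodin--Corwin for Macdonald processes and adapted to the KPZ setting in Section 3.2 of \cite{MP} and Section 5 of \cite{DDQA}; the main task here is to apply those techniques to our specific integrand.

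First I would verify that $\mathscr{C}_V$ is a valid common target contour. Under the hypothesis $|\kappa \vartheta \beta_2| < \inf_{z \in \mathscr{C}_V} |z|$ together with the other restrictions imposed on $\mathscr{C}_V$ in Proposition \ref{determinantgeneralformal}, every single-variable pole of the integrand of \eqref{qmsequation} that sits inside $\gamma_i$ --- namely $y_i = 0$, $y_i = -q$, and $y_i = q^j \kappa \beta_2$ for $0 \le j \le J - 1$ (the latter coming from the factor $(1 - \vartheta \kappa \beta_2 y_i^{-1})/(1 - \kappa \beta_2 y_i^{-1})$ with $\vartheta = q^{-J}$) --- also sits inside $\mathscr{C}_V$, while the remaining single-variable poles $y_i \in \{ -\kappa, -1, q \beta_1 \}$ remain outside. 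An analytic continuation in $b_2$, parallel to the argument in Proposition \ref{determinantgeneral}, extends the applicability of \eqref{qmsequation} to all sufficiently small complex $b_2$.

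Next I would deform the contours $\gamma_k, \gamma_{k-1}, \ldots, \gamma_1$ outward to $\mathscr{C}_V$ one variable at a time. During each such deformation, the Cauchy-type factors $\prod_{i<j}(y_i - y_j)/(y_i - q y_j)$ contribute poles at $y_i = q y_j$ that are crossed, producing residues which ``link'' pairs of variables into strings. Iterating across all $k$ variables, the residues organize themselves along partitions $\lambda = (\lambda_1, \lambda_2, \ldots) \vdash k$: a cluster of size $\lambda_i$ is a base variable $w_i \in \mathscr{C}_V$ together with its $q$-shifted companions $q w_i, q^2 w_i, \ldots, q^{\lambda_i - 1} w_i$. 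The single-variable factors of the integrand, evaluated along such a string, consolidate into $\prod_{j=0}^{\lambda_i - 1} f(q^j w_i)$, while the cross factors between distinct clusters simplify, via a $q$-Cauchy-type determinant identity, into $\det \big[ 1/(w_i - q^{\lambda_j} w_j) \big]_{i, j = 1}^{\ell(\lambda)}$. The overall combinatorial factor $(q; q)_k / \prod_j m_j!$ assembles from the internal residue weights $(1 - q^j)$ produced at each linking together with the number of ways to distribute the $k$ labeled variables among the strings prescribed by $\lambda$.

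The main obstacle is the residue bookkeeping in the second step: one must track exactly which pairing poles are crossed during each successive contour deformation, verify that residue contributions factor cleanly along the clusters, and confirm that all combinatorial prefactors assemble into $(q; q)_k / \prod_j m_j!$. This calculation is combinatorially intricate but structurally identical to the arguments in \cite{MP, DDQA, HSVMRSF}; the only genuine modifications required here are to accommodate the extra pole at $y_i = \kappa \beta_2$ and the numerator factor $(1 - \vartheta \kappa \beta_2 y_i^{-1})$, both of which are benign because they contribute only single-variable poles that lie inside $\mathscr{C}_V$ and do not generate new $q$-strings of poles.
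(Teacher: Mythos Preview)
Your outline coincides with the paper's approach --- invoke the nested-to-single-contour partition expansion (which the paper has already packaged as Lemma~\ref{deformcontoursmoments}) and then analytically continue in $\beta_2$ --- but your emphasis and order of operations create a gap. You devote nearly all of the proposal to re-deriving the partition expansion, which the paper dispatches in one sentence by citing Lemma~\ref{deformcontoursmoments}; the actual content of the paper's proof is the analytic continuation step, which you mention only parenthetically.

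The concrete issue is this: the nested contours $\gamma_i(\kappa\beta_2,\widehat U;\widehat\Xi,\widehat S)$ exist only under a $k$-dependent constraint (roughly $|\kappa\beta_2| < q^{k+1}\min\{1,\beta_1\}$), whereas the lemma's hypothesis $|\kappa\vartheta\beta_2| < \inf_{z\in\mathscr{C}_V}|z|$ is $k$-independent. For $k$ larger than about $J$ the latter does not imply the former, so under the stated hypothesis there may be no nested contour integral to deform from, and ``extending the applicability of \eqref{qmsequation}'' is not well-posed. The correct route is the paper's: first establish \eqref{mkcontourformal} under the stronger $k$-dependent smallness assumption via Corollary~\ref{heightgeneral} and Lemma~\ref{deformcontoursmoments}, and then analytically continue \emph{both sides of \eqref{mkcontourformal}} to the larger domain. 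The work lies in verifying analyticity of both sides there, and this is where $\vartheta = q^{-J}$ enters essentially: on the right side, the zeros of $f(q^j z)$ at $z = q^{-J-j}\kappa\beta_2$ telescope against poles so that $\prod_{j=0}^{\lambda_i-1} f(q^j z)$ has no pole at $z = q^{-r}\kappa\beta_2$ for any $r \ge J$ (hence none on $\mathscr{C}_V$); on the left side, $(\vartheta;q)_M = 0$ for $M > J$ makes the sums in \eqref{current1} finite. (A small correction: the integrand in \eqref{qmsequation} has a \emph{single} $\beta_2$-pole at $y_i = \kappa\beta_2$, not a string at $q^j\kappa\beta_2$ for $0\le j\le J-1$; the $J$-fold structure only emerges in the products $\prod_j f(q^j w_i)$ after deformation.)
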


\begin{proof}

In the case when $|\beta_2|$ is sufficiently small so that $|\kappa \beta_2| < q^{k + 1}$ and $|\kappa \beta_2| < q^{k + 1} \beta_1$, this lemma is a direct consequence of Corollary \ref{heightgeneral} and Lemma \ref{deformcontoursmoments}; we apply the latter lemma with choice of parameters $v^{-1} = \kappa \beta_2$; $U = ( - q^{-1}, - q^{-1}, \ldots )$; $\Xi = ( 1, 1, \ldots ) $; $S = ( q \beta_1, - \kappa, - \kappa, \ldots )$; $\mathscr{C} = \mathscr{C}_V$; and $f$ given by \eqref{fdoublesided}. 

Thus, it suffices to establish the lemma under the hypothesis $|\kappa \vartheta \beta_2| < \inf_{z \in \mathscr{C}_V} |z|$, which is independent of $k$. We will do this by showing both sides of \eqref{mkcontourformal} are analytic in $\beta_2$ on the domain in which $|\kappa \vartheta \beta_2| < \inf_{z \in \mathscr{C}_V} |z|$. 

Let us first examine the right side, which is a finite sum of integrals along the compact contour $\mathscr{C}_V$. Thus, it suffices to show that each integrand of this integral is nonsingular, for all $w_1, w_2, \ldots , w_{\ell (\lambda)} \in \mathscr{C}$ and for all $\beta_2$ satisfying $|\kappa \vartheta \beta_2| < q^{-1} \inf_{z \in \mathscr{C}_V} |z|$. Since $\mathscr{C}_V$ is a circle containing $0$, it contains $q \mathscr{C}_V$ in its interior. Hence, each factor $w_i - q^{\lambda_j} w_j$ is non-zero. 

Furthermore, since $\vartheta = q^{-J}$, we find from the definition \eqref{fdoublesided} of $f$ that the product $\prod_{j = 0}^{\lambda_i - 1} f(q^j z)$ does not have a pole at $z = q^{-r} \kappa \beta_2$ for any integer $r \ge J$. Thus, $\prod_{j = 0}^{\lambda_i - 1} f(q^j w_i)$ remains nonsingular for each $w_i \in \mathscr{C}_V$ and $\beta_2$ satisfying $|\kappa \vartheta \beta_2| < \inf_{z \in \mathscr{C}_V} |z|$. This establishes that the right side of \eqref{mkcontourformal} is analytic. 

Next, we examine the left side of \eqref{mkcontourformal}, which is \eqref{current1}. First, observe that there are only finitely may signatures $\nu$ for which $\overline{\mathcal{W}_x}^{(M)} (\nu)$ is non-zero; furthermore, since $\vartheta = q^{-J}$, there are only finitely many $M$ for which $(\vartheta; q)_M$ is non-zero. Thus, both sums in \eqref{current1} are finite. Moreover, since each summand in \eqref{current1} is analytic in $\beta_2$, and since the quotient $(\kappa \beta_2 \beta_1^{-1}; q)_{\infty} (\vartheta \kappa \beta_2 \beta_1^{-1}; q)_{\infty}^{-1}$ is analytic in $\beta_2$ in the domain in which $|\kappa \vartheta \beta_2| < \inf_{z \in \mathscr{C}_V} |z| < |\beta_1|$, we deduce that \eqref{current1}, and thus the left side of \eqref{mkcontourformal}, is analytic in $\beta_2$. 

Thus, the lemma follows from analytic continuation. 
\end{proof}

\noindent The following proposition uses Lemma \ref{momentsumformal} to produce a Fredholm determinant identity in the case $b_2 \in \mathbb{C}$ and $\vartheta \in \{ q^{-1}, q^{-2}, \ldots \}$. 

\begin{prop}

\label{determinant1formal}

Adopt the notation of Proposition \ref{determinantgeneral}, assume that $\vartheta = q^{-J}$ for some positive integer $J$, and that $b_2 \in \mathbb{C}$ satisfies $|\kappa \vartheta \beta_2| < q^R \inf_{z \in \mathscr{C}} |z|$. Then, \eqref{determinantformalvertex} holds for any $\zeta \in \mathbb{C} \setminus\mathbb{R}_{\ge 0}$. 

\end{prop}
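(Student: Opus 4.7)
The proof follows the scheme of Section 3 of \cite{DDQA} (itself based on Section 3.2.2 of \cite{MP}), which assembles a Fredholm determinant from a family of contour-integral $q$-moment formulas via Mellin--Barnes summation. The restriction $\vartheta = q^{-J}$ renders the $M$-sum on the left of \eqref{determinantformalvertex} finite, so the identity becomes a bona fide analytic identity in $b_2$ rather than the formal one of Proposition \ref{determinantgeneralformal}; the assumption $|\kappa \vartheta \beta_2| < q^R \inf_{z \in \mathscr{C}_V}|z|$ provides the room to place the Mellin--Barnes contour $D_{R,d,\delta}$ in a region avoiding the $\vartheta$-induced poles of $1/g_V(q^r w)$.

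First, I would identify the left-hand side of \eqref{determinantformalvertex} as the generating sum $\sum_{k \geq 0} \zeta^k \textbf{\emph{m}}_k/(q;q)_k$ of the $q$-moments $\textbf{\emph{m}}_k$ from \eqref{current1}, by expanding $1/(\zeta q^{\mathfrak{H}(x,t) - M}; q)_\infty$ via the $q$-binomial theorem (valid initially in a small enough polydisc in $(\zeta, \beta_2)$, then extended to all $\zeta \in \mathbb{C} \setminus \mathbb{R}_{\geq 0}$ by analytic continuation, using that both sides of \eqref{determinantformalvertex} are analytic in $\zeta$ on this domain). Lemma \ref{momentsumformal} rewrites $\textbf{\emph{m}}_k$ as a sum over partitions $\lambda \vdash k$ of multi-fold contour integrals over $\mathscr{C}_V$.

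The next step is to unfold this partition sum into an ordered sum over compositions $(\lambda_1, \ldots, \lambda_n) \in \mathbb{Z}_{\geq 1}^n$, converting $1/\prod_j m_j!$ into $1/n!$ and absorbing the constraint $|\lambda| = k$ into the outermost sum over $n$. The telescoping identity $\prod_{j=0}^{\lambda - 1} f(q^j w) = g_V(w)/g_V(q^\lambda w)$, which follows from the direct check $f(z) = g_V(z)/g_V(qz)$ (comparing \eqref{fdoublesided} with \eqref{gvformal}), allows the $\lambda_j$-dependent factors $\zeta^{\lambda_j} g_V(w_j)/g_V(q^{\lambda_j} w_j)$ to be absorbed into column $j$ of the determinant of $[1/(w_i - q^{\lambda_j}w_j)]_{i,j=1}^n$. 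Applying column-multilinearity to swap the composition sum with the determinant, then applying the Mellin--Barnes summation $\sum_{\lambda \geq 1} \zeta^\lambda G(\lambda) = \frac{1}{2\mathbf{i}} \int_{D_{R,d,\delta}} (-\zeta)^r G(r)/\sin(\pi r)\,dr$ to each matrix entry, transforms each $(i,j)$ entry into $K_\zeta(w_j, w_i)$ up to sign. Taking the transpose inside the determinant then identifies the resulting series with the Fredholm expansion \eqref{determinantsum} for $\det(\Id + K_\zeta)_{L^2(\mathscr{C}_V)}$.

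The main technical obstacle is verifying that the contour $D_{R,d,\delta}$ picks up exactly the residues of $1/\sin(\pi r)$ at positive integers, with no contribution from other poles of the integrand. The three inequalities of \eqref{doublesidedinequalities} ensure respectively that $q^r w \neq w'$ for $w, w' \in \mathscr{C}_V$ and $r \in D_{R,d,\delta}$, that the $r$-poles of $1/g_V(q^r w)$ arising from the factor $(q^{-1}\beta_1^{-1} q^r w; q)_\infty^{-1}$ stay outside the deformation region, and that the small vertical segment $\Re r = d$ of $D_{R,d,\delta}$ avoids the imaginary-axis zeros of $\sin(\pi r)$. The hypothesis $|\kappa \vartheta \beta_2| < q^R \inf_{z \in \mathscr{C}_V}|z|$ is precisely what places the extra $\vartheta$-induced poles of $1/g_V(q^r w)$, located where $q^r w \in q^{k+1}\vartheta \kappa \beta_2$, to the right of $\Re r = R$ and hence outside $D_{R,d,\delta}$; this is the adaptation needed beyond the analogous argument in \cite{MP, DDQA}. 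Absolute convergence of all sums and integrals, and hence the Fubini-type interchanges performed throughout, then follow from the exponential decay of $1/\sin(\pi r)$ along $D_{R,d,\delta}$.
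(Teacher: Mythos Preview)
Your proposal is correct and follows essentially the same route as the paper. The paper packages your steps 2--4 (partition sum, telescoping via $f = g_V/g_V(q\,\cdot)$, Mellin--Barnes) into a single citation of Lemma~\ref{momentdeterminant}, then verifies that lemma's three hypotheses and finishes exactly as you do with the $q$-binomial expansion (for $|\zeta| < q^{t+J}$, using the finiteness of the $M$-sum) followed by analytic continuation in $\zeta$. One small correction to your final paragraph: the $\vartheta$-induced singularities you refer to are poles of $g_V$ (equivalently zeros of $1/g_V(q^r w)$), not poles of $1/g_V(q^r w)$; the hypothesis $|\kappa\vartheta\beta_2| < q^R \inf_{z \in \mathscr{C}_V}|z|$ is exactly what the paper uses to dispatch them when checking the third condition of Lemma~\ref{momentdeterminant}, though the geometric picture you give (``to the right of $\Re r = R$ and hence outside'') inverts which side of $D_{R,d,\delta}$ is the relevant one.
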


\begin{proof}

In what follows, we adopt the notation of Lemma \ref{momentsumformal}. To establish this proposition, we would like to apply Lemma \ref{momentdeterminant} in which $f(z)$ is given by \eqref{fdoublesided} and $g_V$ satisfies $f(z) = g_V (z) / g_V (qz)$ and is explicitly given by
\begin{flalign}
\label{gvqj}
g_V (z; x, t) = \displaystyle\frac{(\kappa^{-1} z + q)^x}{(z + q)^t (q^{-1} \beta_1^{-1} z; q)_{\infty}} \displaystyle\prod_{h = 0}^{J - 1} \displaystyle\frac{1}{1 - q^{-h} \beta_2 \kappa z^{-1}}. 
\end{flalign}

\noindent To apply this lemma, we must verify its three conditions. 

First, we require the existence of the constant $B > 0$ satisfying $\big| f (q^n w) \big| < B$ and $|q^n w - w'| > B^{-1}$ for all $w, w' \in \mathscr{C}_V$ and integers $n \ge 0$. This follows from the compactness and convexity of $\mathscr{C}_V$, as well as from the fact that $q^n w$ is not a pole of $f$, for any $w \in \mathscr{C}_V$ and non-negative integer $n$ (the latter statement can be checked directly from the definitions \eqref{fdoublesided} of $f$ and the contour $\mathscr{C}_V$). 

Second, we require that $R, d, \delta$ satisfy the estimates \eqref{convergencedeterminant2}; this can be quickly deduced from the definition \eqref{gvqj} of $g_V$, the definition and compactness of the contour $\mathscr{C}_V$, and the inequalities \eqref{doublesidedinequalities}. 

Third, we require that $z = q^s w$ is not a pole of $g_V (z)$, for any $w \in \mathscr{C}_V$ and $s \in \mathbb{C}$ to the right of $D_{R, d, \delta}$. In view of \eqref{gvqj}, the poles of $g_V (z)$ are at $z \in \{ -q, \kappa \beta_2, q^{-1} \kappa \beta_2, \ldots , q^{1 - J} \kappa \beta_2 \} \cup \bigsqcup_{j = 0}^{\infty} \{ q^{1 - j} \beta_1 \}$. The fact that $q^s w \notin \{ -q \} \cup \bigsqcup_{j = 0}^{\infty} \{ q^{1 - j} \beta_1 \}$ follows quickly from the estimates \eqref{doublesidedinequalities} and the definition of the contour $\mathscr{C}_V$. The fact that $q^s w \notin  \{ -q, \kappa \beta_2, q^{-1} \kappa \beta_2, \ldots , q^{1 - J} \kappa \beta_2 \}$ follows from \eqref{doublesidedinequalities} and the additional assumption $|\vartheta \kappa \beta_2| < q^R \inf_{z \in \mathscr{C}_V} |z|$. This verifies the third condition of Lemma \ref{momentdeterminant}. 	

Thus, Lemma \ref{momentdeterminant} applies and implies that $\det \big( \Id + K_{\zeta} \big)_{L^2 (\mathscr{C}_V)}$ is analytic in $\zeta \in \mathbb{C} \setminus\mathbb{R}_{\ge 0}$. Furthermore, it yields 
\begin{flalign}
\label{determinantkzetaw}
\det \big( \Id + K_{\zeta} \big)_{L^2 (\mathscr{C}_V)} & = \displaystyle\sum_{k = 0}^{\infty} \displaystyle\frac{\zeta^k \textbf{m}_k}{(q; q)_k},  
\end{flalign}

\noindent for any $\zeta \in \mathbb{C} \setminus\mathbb{R}_{\ge 0}$ such that $|\zeta| < B^{-1} (1 - q)^{-1}$. 

Since $\textbf{m}_k$ is equal to \eqref{current1}, we deduce from \eqref{determinantkzetaw} that 
\begin{flalign}
\label{determinantkzetaw1} 
\begin{aligned}
\det \big( \Id + K_{\zeta} \big)_{L^2 (\mathscr{C}_V)} & = \displaystyle\frac{(\kappa \beta_2 \beta_1^{-1}; q)_{\infty}}{(\vartheta \kappa \beta_2 \beta_1^{-1}; q)_{\infty}}\displaystyle\sum_{M = 0}^{\infty} \left( \displaystyle\frac{\kappa \beta_2}{\beta_1} \right)^M \displaystyle\frac{(\vartheta; q)_M}{(q; q)_M} \\
& \qquad \qquad \qquad \qquad \times \displaystyle\sum_{\nu \in \Sign^+} \overline{\mathcal{W}_x}^{(M)} (\nu) \displaystyle\sum_{k = 0}^{\infty} \mathbb{E}^{(\nu)} \left[ \displaystyle\frac{(\zeta q^{\mathfrak{H} (x, t) - M})^k}{(q; q)_k}  \right].
\end{aligned}
\end{flalign}

Now, since $\vartheta = q^{-J}$, we have that $(\vartheta; q)_m = 0$ for $m > J$. Therefore, the first sum on the right side of \eqref{determinantkzetaw} can instead range over $M \in [0, J]$. Thus, if we moreover assume that $|\zeta| < q^{t + J}$, then $|\zeta q^{\mathfrak{H} (x, t) - M}| < 1$ (here, we are using the fact that $\mathfrak{H} (x, t) \ge -t$), meaning that we can commute the expectation with the sum over $k$. Then, the $q$-binomial theorem yields 
\begin{flalign}
\label{determinantkzetaw2}
\begin{aligned}
\det \big( \Id + K_{\zeta} \big)_{L^2 (\mathscr{C}_V)} & = \displaystyle\frac{(\kappa \beta_2 \beta_1^{-1}; q)_{\infty}}{(\vartheta \kappa \beta_2 \beta_1^{-1}; q)_{\infty}}\displaystyle\sum_{M = 0}^{\infty} \left( \displaystyle\frac{\kappa \beta_2}{\beta_1} \right)^M \displaystyle\frac{(\vartheta; q)_M}{(q; q)_M} \\
& \qquad \qquad \qquad \qquad \times \displaystyle\sum_{\nu \in \Sign^+} \overline{\mathcal{W}_x}^{(M)} (\nu) \mathbb{E}^{(\nu)} \left[ \displaystyle\frac{1}{(\zeta q^{\mathfrak{H} (x, t) - M}; q)_{\infty}} \right].
\end{aligned}
\end{flalign}

\noindent This implies the proposition when $|\zeta| < \min \{ q^{t + J}, B^{-1} (1 - q)^{-1} \}$. 

To establish the lemma for general $\zeta \in \mathbb{C} \setminus\mathbb{R}_{\ge 0}$, observe that both sides of \eqref{determinantkzetaw2} are analytic in $\zeta \in \mathbb{C} \setminus\mathbb{R}_{\ge 0}$. Indeed, this is true for the left side by the second statement of Lemma \ref{momentdeterminant}. This is also true for the right side since $(\zeta q^m; q)_{\infty}$ is analytic in $\zeta \in \mathbb{C} \setminus\mathbb{R}_{\ge 0}$, for any integer $m$, and since both sums of the right side of \eqref{determinantkzetaw2} are finite. Thus, the lemma follows for all $\zeta \in \mathbb{C} \setminus\mathbb{R}_{\ge 0}$ by uniqueness of analytic continuation. 
\end{proof}

\noindent Now, we can show that Proposition \ref{determinantgeneralformal} holds when $\vartheta = q^{-J}$ for some positive integer $J$. 

\begin{cor}

\label{formaldeterminantqmuj}

Adopt the notation of Proposition \ref{determinantgeneralformal}. If $\vartheta = q^{-J}$ for some positive integer $J$, then \eqref{determinantformalvertex} holds. 

\end{cor}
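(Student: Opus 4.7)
The plan is to bootstrap Proposition \ref{determinant1formal}, which already establishes \eqref{determinantformalvertex} as an equality of complex numbers under the extra hypothesis $|\kappa \vartheta \beta_2| < q^R \inf_{z \in \mathscr{C}_V} |z|$, to the desired identity of formal power series. All of the genuine analytic content is contained in that proposition; the role of this corollary is simply to convert an analytic identity on a neighborhood of $\beta_2 = 0$ into an identity in $\mathbb{C}\llbracket b_2 \rrbracket$.

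First I would invoke Lemma \ref{coefficientb2} to expand the two sides of \eqref{determinantformalvertex} as formal power series
\begin{equation*}
\text{LHS} = \sum_{j \geq 0} E_j \beta_2^{j}, \qquad \text{RHS} = \sum_{j \geq 0} H_j \beta_2^{j},
\end{equation*}
whose coefficients satisfy $|E_j|, |H_j| \leq C^j$ for a constant $C > 0$ that depends only on the fixed data $(\delta_1, \delta_2, b_1, \vartheta, x, t, R, d, \delta, \zeta, \mathscr{C}_V)$ and in particular is independent of $b_2$. Consequently both series converge absolutely on the disk $\{\beta_2 \in \mathbb{C} : |\beta_2| < C^{-1}\}$ and define genuine holomorphic functions of $\beta_2$ there.

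Next I would restrict $b_2$ to lie in the (nonempty) intersection of $|\beta_2| < C^{-1}$ with the open neighborhood of the origin singled out by $|\kappa \vartheta \beta_2| < q^R \inf_{z \in \mathscr{C}_V} |z|$. On this open set, Proposition \ref{determinant1formal} applies and gives $\text{LHS}(b_2) = \text{RHS}(b_2)$ as complex numbers. Since both sides are honest analytic functions on a neighborhood of $0$ that agree on an open neighborhood of $0$, their Taylor coefficients at $0$ must coincide, so $E_j = H_j$ for every $j \geq 0$. This is exactly the claim that \eqref{determinantformalvertex} holds in $\mathbb{C}\llbracket b_2 \rrbracket$.

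There is no real obstacle here; Lemma \ref{coefficientb2} has been tailored precisely so that the analytic-to-formal transfer is immediate, and the substantive work has already been done in Proposition \ref{determinant1formal}. The only thing to be careful about is verifying that the analytic neighborhood produced by Proposition \ref{determinant1formal} actually sits inside the region of convergence guaranteed by Lemma \ref{coefficientb2}, which is handled by shrinking the neighborhood if necessary.
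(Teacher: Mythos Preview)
Your proposal is correct and follows essentially the same route as the paper: invoke Lemma \ref{coefficientb2} for the exponential bounds on $E_j, H_j$, apply Proposition \ref{determinant1formal} on a small disk in $\beta_2$, and then extract coefficients. The only cosmetic difference is that the paper writes out the coefficient extraction via an explicit Cauchy integral around a small circle, whereas you invoke uniqueness of Taylor coefficients directly; these are the same argument.
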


\begin{proof}

Adopt the notation of Lemma \ref{coefficientb2}. Then, left side of \eqref{determinantformalvertex} is equal to $\sum_{j = 0}^{\infty} E_j \beta_2^j$, and the right side is equal to $\sum_{j = 0}^{\infty} H_j \beta_2^j$, where $|E_j|, |H_j| < C^j$, for each integer $j \ge 1$. We would like to show that $E_k = H_k$, for each non-negative integer $k$, under the assumption that $\vartheta = q^{-J}$. 

Now, let $\overline{b}_2 \in \mathbb{C}$ be a complex number, and let $\overline{\beta}_2 = \overline{b}_2 / (1 - \overline{b}_2)$. Assume that $\overline{b}_2$ is sufficiently small so that $|\overline{\beta}_2| < \min \{ C^{-1}, \kappa^{-1} \vartheta^{-1} \beta_1, \kappa^{-1} \vartheta^{-1} q^{R + 1}\}$. 

Then, Proposition \ref{determinant1formal} applies, and \eqref{determinantformalvertex} holds with $b_2$ and $\beta_2$ replaced with $\overline{b}_2$ and $\overline{\beta}_2$, respectively. By Lemma \ref{coefficientb2}, both sides of \eqref{determinantformalvertex} admit a power series expansion in $\overline{\beta}_2$; since $|\overline{\beta}_2| < C^{-1}$, this power series expansion coincides with the power series expansion given by Lemma \ref{coefficientb2}. Thus, we deduce that $\sum_{k = 0}^{\infty} E_k \overline{\beta}_2^k = \sum_{k = 0}^{\infty} H_k \overline{\beta}_2^k$. 

Since this holds for all complex numbers $\overline{b}_2$ satisfying $|\overline{\beta}_2| < \min \{ C^{-1}, \kappa^{-1} \vartheta^{-1} \beta_1, \kappa^{-1} \vartheta^{-1} q^{R + 1} \}$, we can integrate $\overline{\beta}_2$ along a small circle centered at the origin. Specifically, we find that 
\begin{flalign}
\label{ekhkmuj}
E_k = \displaystyle\frac{1}{2 \pi \textbf{i}} \displaystyle\oint \overline{\beta}_2^{-k - 1} \left( \displaystyle\sum_{j = 0}^{\infty} E_j \overline{\beta}_2^j \right) d \overline{\beta}_2 = \displaystyle\frac{1}{2 \pi \textbf{i}} \displaystyle\oint \overline{\beta}_2^{-k - 1} \left( \displaystyle\sum_{j = 0}^{\infty} F_j \overline{\beta}_2^j \right) d \overline{\beta}_2 = H_k, 
\end{flalign}

\noindent where the $\overline{\beta}_2$ is integrated along a positively oriented circle centered at the origin with radius less than $\min \{ C^{-1}, \kappa^{-1} \vartheta^{-1} \beta_1, \kappa^{-1} \vartheta^{-1} q^{R + 1} \}$. The corollary now follows from \eqref{ekhkmuj}. 
\end{proof}

\subsubsection{Analytic Continuation in \texorpdfstring{$\vartheta$}{}} 

\label{mucomplex}

In the previous section, we verified Proposition \ref{determinantgeneralformal} when $\vartheta$ is a negative integer power of $q$. Our goal in this section is to establish Proposition \ref{determinantgeneralformal} for all $\vartheta \in \mathbb{C}$. To do this, it suffices to show that $E_k = H_k$, for each integer $k \ge 0$, where $E_k$ and $H_k$ are defined by Lemma \ref{coefficientb2}. 

Recall that $E_k$ and $H_k$ are dependent on the parameters $\vartheta$, $\delta_1$, $\delta_2$, $b_1$, $\zeta$, $t$, and $x$. We will emphasize their dependence on $\vartheta$ by denoting $E_k = E_k (\vartheta)$ and $H_k = H_k (\vartheta)$. 

By Corollary \ref{formaldeterminantqmuj}, we have that $E_k (\vartheta) = H_k (\vartheta)$ when $\vartheta$ is a negative integer power of $q$. We would like to analytically extend this result to all $\vartheta \in \mathbb{C}$. To accomplish this, we will show that $E_k (\vartheta)$ and $H_k (\vartheta)$ are polynomials in $\vartheta$ (fixing all other parameters $\zeta$, $\delta_1$, $\delta_2$, $b_1$, $t$, and $x$) through the following lemma. 

\begin{lem}

\label{coefficientbetapolynomial}

Adopt the notation of Proposition \ref{determinantgeneralformal} and Corollary \ref{coefficientb2}. Fix the positive integers $x$ and $t$, positive real numbers $\delta_1, \delta_2, b_1 \in (0, 1)$, and complex number $\zeta \in \mathbb{C} \setminus\mathbb{R}_{\ge 0}$. Then, $E_k (\vartheta)$ and $H_k (\vartheta)$ are polynomials in $\vartheta$.  
\end{lem}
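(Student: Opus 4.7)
The plan is to analyze each side of \eqref{determinantformalvertex} separately as a formal power series in $\beta_2$, tracking the $\vartheta$-dependence of each coefficient. On both sides, every occurrence of $\vartheta$ sits in a place that is polynomial in $\vartheta$ after expansion, and the principal obstacle will be ensuring that the infinite sums on each side preserve this polynomial structure.

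For the left side (giving $E_k(\vartheta)$), I would isolate three sources of $\vartheta$: the prefactor $(\vartheta\kappa\beta_2\beta_1^{-1};q)_{\infty}^{-1}$, the Pochhammer symbol $(\vartheta;q)_M$, and the weights $\overline{\mathcal{W}_x}^{(M)}(\nu)$. By the $q$-binomial theorem, $(\vartheta\kappa\beta_2\beta_1^{-1};q)_{\infty}^{-1} = \sum_{n \geq 0} \frac{(\vartheta\kappa\beta_1^{-1})^n}{(q;q)_n}\beta_2^n$, whose $\beta_2^n$-coefficient equals $\vartheta^n$ times a constant, while $(\kappa\beta_2\beta_1^{-1};q)_\infty$ is independent of $\vartheta$. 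The Pochhammer $(\vartheta;q)_M$ is polynomial in $\vartheta$ of degree $M$. Each factor $W_{\vartheta,q,b_2}(\cdot;i)$ from Definition \ref{xvertexweights} is linear in $\vartheta$, so $\overline{\mathcal{W}_x}^{(M)}(\nu)$, a product of $x-1$ such factors, is polynomial in $\vartheta$ of degree at most $x-1$. Since only $M \leq k$ contributes to the $\beta_2^k$-coefficient, and Lemma \ref{coefficientb2} guarantees convergence of the $\nu$-sum in $\mathbb{C}\llbracket\beta_2\rrbracket$, the $\beta_2^k$-coefficient on the left decomposes as a finite sum of polynomials in $\vartheta$ of uniformly bounded degree, hence is polynomial.

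For the right side (giving $H_k(\vartheta)$), I would first note that $\vartheta$ enters $g_V$ in \eqref{gvformal} only through the factor $(\vartheta\beta_2 q\kappa z^{-1};q)_{\infty}^{-1}$, so in the ratio $g_V(w;x,t)/g_V(q^rw;x,t)$ inside the kernel \eqref{formalkernel}, the $\vartheta$-dependence is confined to $(\vartheta\beta_2 q^{1-r}\kappa w^{-1};q)_{\infty}/(\vartheta\beta_2 q\kappa w^{-1};q)_{\infty}$. Two applications of the $q$-binomial theorem show this quotient to be a power series in $\beta_2$ whose $\beta_2^n$-coefficient is a polynomial in $\vartheta$ of degree at most $n$. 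Termwise integration against the contour $D_{R,d,\delta}$ (justified by the absolute estimates already used in the proof of Lemma \ref{coefficientb2}) preserves this structure, so each $\beta_2^n$-coefficient of the kernel $K_\zeta(w,w')$ is polynomial in $\vartheta$ of degree at most $n$. Expanding the Fredholm determinant via \eqref{determinantsum} and reading off the $\beta_2^n$-coefficient of each $k$-th summand, one finds it polynomial in $\vartheta$ of degree at most $n$, since the total $\beta_2$-degree across the factors in the expansion of $\det[K_\zeta(w_i,w_j)]$ must sum to $n$ and the $\vartheta$-degree is controlled factor by factor by the $\beta_2$-degree.

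The main obstacle will be the infinite sum over $k$ in the Fredholm expansion: each partial sum gives a polynomial in $\vartheta$ of degree at most $n$ for the $\beta_2^n$-coefficient, but the full sum a priori need not. I plan to resolve this by appealing to the finite-dimensionality of the vector space of polynomials in $\vartheta$ of degree at most $n$: the partial sums form a sequence in this finite-dimensional space, and Lemma \ref{coefficientb2} guarantees pointwise absolute convergence to $H_n(\vartheta)$ at every $\vartheta \in \mathbb{C}$. A pointwise limit of polynomials of uniformly bounded degree remains a polynomial of the same bounded degree; this yields polynomiality of $H_k(\vartheta)$ in $\vartheta$ and completes the plan.
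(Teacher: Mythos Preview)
Your proposal is correct and follows essentially the same approach as the paper's proof. Both arguments identify the same sources of $\vartheta$-dependence on each side and establish polynomiality of each $\beta_2$-coefficient by tracking $\vartheta$-degree through the expansions; you are somewhat more explicit than the paper in bounding the $\vartheta$-degree of the $\beta_2^n$-coefficient by $n$ and in justifying the passage through the infinite Fredholm sum via the finite-dimensionality of polynomials of bounded degree, whereas the paper handles that last step more tersely by remarking that the degree is ``independent of the $w_i$ and sufficiently large $k$'' and appealing to the absolute convergence from Lemma~\ref{coefficientb2}.
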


\begin{proof}
Let us begin with $E_k (\vartheta)$, so consider the left side of \eqref{determinantformalvertex}. Let 
\begin{flalign*}
S_2^{(M)} = \displaystyle\frac{\kappa^M \beta_1^{-M} }{(q; q)_M} \displaystyle\sum_{\nu \in \Sign^+} \overline{\mathcal{W}_x}^{(M)} (\nu) \mathbb{E} \left[ \displaystyle\frac{1}{\big( \zeta q^{\mathfrak{H} (x, t) - M}; q \big)_{\infty}} \right]; \qquad S_1 = \displaystyle\sum_{M = 0}^{\infty} \beta_2^M (\vartheta; q)_M S_2^{(M)}. 
\end{flalign*}

\noindent In particular, the left side of \eqref{determinantformalvertex} is equal to $(\kappa \beta_2 \beta_1^{-1}; q)_{\infty} (\vartheta \kappa \beta_2 \beta_1^{-1}; q)_{\infty}^{-1} S_1$. 

Now, the expectation $\mathbb{E}^{(\nu)} \big[ (\zeta q^{\mathfrak{H} (x, t) - M}; q)_{\infty}^{-1} \big]$ does not depend on $\vartheta$. Furthermore, the weight $\overline{\mathcal{W}_x}^{(M)} (\nu)$ is a polynomial in $\vartheta$ and non-zero for only finitely many signatures $\nu$. Thus, $S_2^{(M)}$ is a polynomial in $\vartheta$, for each non-negative integer $M$. Since $(\vartheta; q)_M$ is a polynomial in $\vartheta$, it follows that the coefficient of $\beta_2^M$ in $S_1$ is a polynomial in $\vartheta$. 

Furthermore, expanding $(\kappa \beta_2 \beta_1^{-1}; q)_{\infty} (\vartheta \kappa \beta_2 \beta_1^{-1}; q)_{\infty}^{-1}$ as a power series in $\beta_2$, we deduce that its coefficient of $\beta_2^j$ is a polynomial in $\vartheta$, for each $j \ge 0$. Therefore, the coefficient of $\beta_2^j$ in $(\kappa \beta_2 \beta_1^{-1}; q)_{\infty} (\vartheta \kappa \beta_2 \beta_1^{-1}; q)_{\infty}^{-1} S_1$ is also a polynomial in $\vartheta$, meaning that $E_j (\vartheta)$ is a polynomial in $\vartheta$. 

Let us verify the same statement for $H_j (\vartheta)$, so consider the right side of \eqref{determinantformalvertex}. We will first show that the coefficient of $\beta_2^j$ in $K_{\zeta}$ is a polynomial in $\vartheta$, for each $j \ge 0$. The integral defining $K_{\zeta}$ only exhibits dependence on $\beta_2$ through the quotient $g_V (w; x, t) g_V (q^r w; x, t)^{-1}$, which is equal to 
\begin{flalign*}
\displaystyle\frac{g_V (w; x, t)}{g_V (q^r w; x, t)} = \left( \displaystyle\frac{\widetilde{g}_V (w; x, t)}{\widetilde{g}_V (q^r w; x, t)} \right) \displaystyle\frac{(\beta_2 q \kappa w^{-1}; q)_{\infty} (\vartheta \beta_2 q^{1 - r} \kappa w^{-1}; q)_{\infty}}{(\beta_2 q^{1 - r} \kappa w^{-1}; q)_{\infty} (\vartheta\beta_2 q \kappa w^{-1}; q)_{\infty}},
\end{flalign*}

\noindent where $\widetilde{g_V} (z; x, t)$ is $g_V (z; x, t)$ with $\beta_2$ set to $0$. 

In particular, $g_V (w; x, t) g_V (q^r w; x, t)^{-1}$ is dependent on $\beta_2$ only through the term 
\begin{flalign}
\label{qproduct}
\displaystyle\frac{( q^{1 - r}\kappa \beta_2 w^{-1}; q)_{\infty} ( q \vartheta \kappa \beta_2 w^{-1}; q)_{\infty}}{( q \kappa \beta_2 w^{-1}; q)_{\infty}( q^{1 - r}\vartheta \kappa \beta_2 w^{-1}; q)_{\infty}}. 
\end{flalign}

Expanding \eqref{qproduct} as a power series in $\beta_2$, we find that its coefficient of $\beta_2^j$ is a polynomial in $\vartheta$ with coefficients in $\mathbb{C} [\kappa, q, q^{-r}, w^{-1}]$. Since $\mathscr{C}_V$ is compact and also since $\widetilde{g_V} (w; x, t) \widetilde{g_V} (q^r w; x, t)^{-1}$ is nonsingular for $w \in \mathscr{C}_V$ and $r \in D_{R, d, \delta}$, the coefficient $\beta_2^j$ in $g_V (w; x, t) g_V (q^r w; x, t)^{-1}$ is a polynomial in $\vartheta$ with uniformly bounded coefficients over $w \in \mathscr{C}_V$ and $r \in D_{R, d, \delta}$. Furthermore, since setting $\beta_2 = 0$ removes the dependence of the integrand on $\vartheta$, we also observe that this polynomial is constant (independent of $\vartheta$) when $j = 0$. 

Due to the exponential decay of $\sin (\pi r)$ as $r$ ranges across the contour $D_{R, d, \delta}$, this integrand can be integrated, so the coefficient of $\beta_2^j$ in $K_{\zeta} (w, w')$ is a polynomial in $\vartheta$ (and again constant when $j = 0$). The degree of this polynomial is independent of $w, w' \in \mathscr{C}_V$. Therefore, for any positive integer $k$ and complex numbers $w_1, w_2, \ldots , w_k \in \mathscr{C}_V$, the coefficient of $\beta_2^j$ in $\det \big[ K (w_h, w_i)\big]_{h, i = 1}^k$ is a polynomial in $\vartheta$, whose degree is independent of the $w_i$ and sufficiently large $k$. 

Hence, from the definition \eqref{determinantsum} that expresses a Fredholm determinant as an infinite sum (which converges absolutely by Lemma \ref{coefficientb2}), we deduce that the coefficient of $\beta_2^k$ in $\det \big( \Id + K_{\zeta} \big)_{L^2 (\mathscr{C}_V)}$ is a polynomial in $\vartheta$. Hence, $H_k (\vartheta)$ is a polynomial in $\vartheta$. 
\end{proof}

\noindent Using Corollary \ref{formaldeterminantqmuj} and Lemma \ref{coefficientbetapolynomial}, we can establish Proposition \ref{determinantgeneralformal}. 

\begin{proof}[Proof of Proposition \ref{determinantgeneralformal}]

Adopt the notation of Lemma \ref{coefficientb2}, so the left and right sides of \eqref{determinantformalvertex} are equal to $\sum_{j = 0}^{\infty} E_j (\vartheta) \beta^j$ and $\sum_{j = 0}^{\infty} H_j (\vartheta) \beta^j$, respectively. By Lemma \ref{coefficientbetapolynomial}, $E_j (\vartheta)$ and $H_j (\vartheta)$ are polynomials in $\vartheta$. Since Corollary \ref{formaldeterminantqmuj} shows that $E_j (\vartheta) = H_j (\vartheta)$ whenever $\vartheta$ is in the infinite set $\{ q^{-1}, q^{-2}, \ldots \}$), we deduce that $E_j (\vartheta) = H_j (\vartheta)$ for all $\vartheta \in \mathbb{C}$; this implies the proposition. 
\end{proof}

\section{Preliminaries on Asymptotic Analysis} 

\label{StationaryReduction}

Theorem \ref{determinant} and Theorem \ref{determinantw} provide Fredholm determinant identities for the ASEP and stochastic six-vertex model with double-sided $(b_1, b_2)$-Bernoulli initial data, respectively. Our goal for the remainder of this paper will be to use these identities to produce asymptotics. 

Observe that the set of $b_1$ and $b_2$ for which these theorems apply is restricted; for the ASEP it holds when $b_1 > b_2$, and for the stochastic six-vertex model it holds when $\beta_1 > \kappa \beta_2$. The stationary (or translation-invariant) cases of interest to us are when $b_1 = b_2$ (for the ASEP) or when $\beta_1 = \kappa \beta_2$ (for the stochastic six-vertex model). This is outside the domain of applicability of \eqref{determinantprocess} and \eqref{determinantvertexw}. 

Thus, we instead analyze these identities near the boundary of their domains. For instance, for the ASEP, we take $b_1 > b_2$ but $b_1 - b_2$ very small, and then study the asymptotics for both sides of \eqref{determinantprocess}.  

In Section \ref{LeftStationary} we establish the asymptotics for the left sides of \eqref{determinantprocess} and \eqref{determinantvertexw} in these regimes. In Section \ref{ProofCurrent} we state the asymptotics (to be proven later) for the right sides of these identities. In Section \ref{NearInitial} we explain how these results can be used to establish Theorem \ref{currentfluctuations} and Theorem \ref{processfluctuations}.

\subsection{Asymptotics of the Left Side} 

\label{LeftStationary}

 Observe that the left sides of \eqref{determinantprocess} and \eqref{determinantvertexw} are sums of the similar form, 
\begin{flalign}
\label{leftsum1}
(\omega; q)_{\infty} \displaystyle\sum_{M = 0}^{\infty} \displaystyle\frac{\omega^M}{(q; q)_M} \mathbb{E} \left[ \displaystyle\frac{1}{ ( \zeta q^{J - M}; q )_{\infty}} \right]. 
\end{flalign} 

\noindent For the ASEP, we set $\omega = \beta_2 \beta_1^{-1}$ and $J = J_T (x)$ in \eqref{leftsum1}; similarly, for the stochastic six-vertex model, we set $\omega = \kappa \beta_2 \beta_1^{-1}$ and $J = \mathfrak{H} (x, t)$. Let us moreover set $\zeta = \zeta_{m, s; T} = -q^{ s T^{1 / 3} - mT}$; here, $m$ will be related to the hydrodynamical limit for the current. 

It will be useful to consider a normalization of \eqref{leftsum1}. Thus, we define 
\begin{flalign}
\label{homegas}
H_{\omega} (s) = H_{\omega; T} (m, s; J) = \displaystyle\frac{(q \omega ; q)_{\infty} }{T^{1 / 3}} \displaystyle\sum_{M = 0}^{\infty} \displaystyle\frac{\omega^M}{(q; q)_M } \left( \displaystyle\frac{1}{( - q^{s T^{1 / 3} - m T + J - M}; q )_{\infty}} \right), 
\end{flalign}

\noindent and we denote $H (s) =  H_1 (s)$. 

The following proposition states that the derivative $H' (s)$ should approximate $\textbf{1}_{J > m T - s T^{1 / 3}}$ as $T$ tends to $\infty$.

\begin{prop}
\label{linearsum}

Let $J \in \mathbb{Z}$; $T \in \mathbb{Z}_{> 1}$; $\omega = 1 - T^{-10}$; and $m, s \in \mathbb{R}$. We have that $\big| H_{\omega} (s) \big| < T^{10}$.

Furthermore, let $\varepsilon \in (0, 1)$, and denote $X_T = J - m T + s T^{1 / 3}$. If $|X_T| < T^5$, then  
\begin{flalign}
\label{homegalarget}
\left| \displaystyle\frac{H_{\omega} (s + \varepsilon) - H_{\omega} (s) }{\varepsilon} - \textbf{\emph{1}}_{X_T \ge 0} - \left( 1 + \displaystyle\frac{X_T}{\varepsilon T^{1 / 3}} \right) \textbf{\emph{1}}_{X_T \in (- \varepsilon T^{1 / 3}, 0)} \right| \le \displaystyle\frac{C}{\varepsilon T^{1 / 3}}, 
\end{flalign}

\noindent for some constant $C$ independent of $J$, $\varepsilon$, and $T$. 

\end{prop}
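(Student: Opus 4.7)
The plan is to interpret $H_\omega(s)$ as a smoothed step function on the scale $T^{1/3}$, isolate a linear main term, and then extract the indicator from its difference quotient. Let $\phi(z) = 1/(-q^z;q)_\infty$, a smooth increasing function on $\mathbb{R}$ running from $0$ at $-\infty$ to $1$ at $+\infty$, and set
\[
G_\omega(y) = (q\omega;q)_\infty \sum_{M=0}^\infty \frac{\omega^M \phi(y-M)}{(q;q)_M},
\]
so that $H_\omega(s) = G_\omega(X_T)/T^{1/3}$. The first bound is then immediate: since $0 \le \phi \le 1$ and $\sum_{M \ge 0} \omega^M/(q;q)_M = 1/(\omega;q)_\infty$, the prefactor cancellation $(q\omega;q)_\infty/(\omega;q)_\infty = 1/(1-\omega)$ yields $|H_\omega(s)| \le 1/((1-\omega)T^{1/3}) = T^{10-1/3} < T^{10}$.

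The heart of the argument is the estimate
\[
G_\omega(y) = \max(y,0) + \mathcal{O}(1) \qquad \text{uniformly for } |y| \le 2T^5,\ \omega = 1 - T^{-10}, \tag{$\star$}
\]
where the implicit constant is independent of $y$, $\omega$, and $T$. Granting $(\star)$, setting $Y = X_T + \varepsilon T^{1/3}$ (which still satisfies $|Y| \le 2T^5$ since $\varepsilon < 1$), the left side of the target inequality equals
\[
\frac{\max(Y,0) - \max(X_T,0)}{\varepsilon T^{1/3}} + \mathcal{O}\!\left(\frac{1}{\varepsilon T^{1/3}}\right).
\]
An elementary case analysis on whether $X_T \ge 0$, $X_T \in (-\varepsilon T^{1/3}, 0)$, or $X_T \le -\varepsilon T^{1/3}$ converts the first quotient exactly into $\mathbf{1}_{X_T \ge 0}$, $1 + X_T/(\varepsilon T^{1/3})$, or $0$, matching \eqref{homegalarget}.

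To establish $(\star)$, the two sign regimes of $y$ should be handled separately. For $y \le 0$, the standard bound $\phi(z) \le q^{|z|}$ for $z \le 0$ gives $\phi(y-M) \le q^{|y|+M}$, whence
\[
G_\omega(y) \le (q\omega;q)_\infty q^{|y|} \sum_{M \ge 0} \frac{(\omega q)^M}{(q;q)_M} = \frac{(q\omega;q)_\infty\, q^{|y|}}{(\omega q;q)_\infty} = q^{|y|} \le 1.
\]
For $y > 0$, decompose $G_\omega(y) = S_1 - S_2 + S_3$, where
\[
S_1 = (q\omega;q)_\infty \!\!\sum_{M=0}^{\lfloor y \rfloor}\!\! \frac{\omega^M}{(q;q)_M}, \quad S_2 = (q\omega;q)_\infty \!\!\sum_{M=0}^{\lfloor y \rfloor}\!\! \frac{\omega^M (1-\phi(y-M))}{(q;q)_M}, \quad S_3 = (q\omega;q)_\infty \!\!\sum_{M > \lfloor y \rfloor}\!\! \frac{\omega^M \phi(y-M)}{(q;q)_M}.
\]
The exponential convergence $(q;q)_M = (q;q)_\infty(1 - \mathcal{O}(q^{M+1}))$ yields $\sum_{M=0}^{\lfloor y \rfloor} 1/(q;q)_M = y/(q;q)_\infty + \mathcal{O}(1)$, and replacing $\omega^M$ by $1$ inside $S_1$ costs at most $(1-\omega) \sum_{M \le y} M/(q;q)_M = \mathcal{O}(T^{-10} y^2) = \mathcal{O}(1)$ under $y \le 2T^5$; combined with $(q\omega;q)_\infty = (q;q)_\infty + \mathcal{O}(T^{-10})$, this gives $S_1 = y + \mathcal{O}(1)$. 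The decays $1-\phi(z) = \mathcal{O}(q^z)$ for $z \ge 0$ and $\phi(z) \le q^{|z|}$ for $z \le 0$ yield $S_2 = \mathcal{O}(1)$ and $S_3 = \mathcal{O}(1)$ by the same $q$-exponential summations as above.

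The central obstacle is the slow convergence of the defining sum: because $1-\omega = T^{-10}$ is so small, the weights $\omega^M/(q;q)_M$ have their natural mass near $M \sim T^{10}$, far outside the relevant range $|y| \le T^5$. The algebraic cancellation $(q\omega;q)_\infty/(\omega;q)_\infty = 1/(1-\omega)$ is what absorbs this, and the polynomial-in-$T$ bound on $y$ is exactly what is needed to make the $\omega^M \rightsquigarrow 1$ substitution inside $S_1$ a bounded perturbation. Once $(\star)$ is in hand, the rest is bookkeeping.
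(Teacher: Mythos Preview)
Your proof is correct and uses the same basic ingredients as the paper --- the $q$-binomial identity for the first bound, the exponential approximations $\phi(z)\approx\mathbf 1_{z\ge 0}$, $(q;q)_M\approx(q;q)_\infty$, and $\omega^M\approx 1$ on the relevant range --- but the organization is genuinely different. The paper works directly with the difference quotient: it writes $\big(H_\omega(s+\varepsilon)-H_\omega(s)\big)/\varepsilon = S_1 + S_2 - S_3$, where $S_1$ collects the first $\lfloor\varepsilon T^{1/3}\rfloor$ terms of the $(s+\varepsilon)$-sum and $S_2-S_3$ is a shifted-versus-unshifted tail; it then shows $S_1\approx$ the piecewise-linear indicator and $|S_2-S_3|=\mathcal O\big((\varepsilon T^{1/3})^{-1}\big)$ by termwise cancellation. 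You instead prove the stronger pointwise statement $(\star)$, $G_\omega(y)=\max(y,0)+\mathcal O(1)$, and then simply difference it. Your route is cleaner and more conceptual: once $(\star)$ is established, the indicator falls out trivially, and the intermediate estimate $(\star)$ explains transparently why $H_\omega'(s)$ approximates $\mathbf 1_{X_T\ge 0}$ (it is literally the derivative of a smoothed $\max(y,0)/T^{1/3}$). The paper's direct splitting, on the other hand, avoids proving anything as strong as $(\star)$ and is perhaps marginally more robust, since it never needs the slightly delicate $\mathcal O(T^{-10}y^2)=\mathcal O(1)$ step at the edge $|y|\sim T^5$.
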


\begin{proof}

The definition \eqref{homegas} of $H_{\omega} (s)$ and the fact that $(-q^{s T^{1 / 3} - m T + J - M}; q)_{\infty}^{-1} < 1$ together yield
\begin{flalign}
\label{homegasbounded}
\big| H_{\omega} (s) \big| & \le \displaystyle\frac{(q \omega ; q)_{\infty}}{T^{1 / 3}} \displaystyle\sum_{M = 0}^{\infty} \displaystyle\frac{\omega^M}{(q; q)_M} = \displaystyle\frac{1}{T^{1 / 3} (1 - \omega)} < T^{10}, 
\end{flalign}

\noindent where we applied the $q$-binomial identity to deduce the equality. This establishes the first statement of the proposition.

Next we consider the second statement \eqref{homegalarget} of the proposition. Again applying \eqref{homegas} yields  
\begin{flalign}
\label{homegasvarepsilons}
\displaystyle\frac{H_{\omega} (s + \varepsilon) - H_{\omega} (s) }{\varepsilon} = S_1 + S_2 - S_3,
\end{flalign}

\noindent where 
\begin{flalign}
\label{s1s2s3} 
\begin{aligned}
 S_1 & = \displaystyle\frac{(q \omega ; q)_{\infty}}{\varepsilon T^{1 / 3}} \displaystyle\sum_{M = 0}^{\lfloor \varepsilon T^{1 / 3} \rfloor - 1 } \displaystyle\frac{\omega^M}{(q; q)_M \big( -q^{X_T + \varepsilon T^{1 / 3} - M}; q \big)_{\infty}}; \\
S_2 & = \displaystyle\frac{(q \omega ; q)_{\infty}}{\varepsilon T^{1 / 3}} \displaystyle\sum_{M = 0}^{\infty} \displaystyle\frac{\omega^{M + \lfloor \varepsilon T^{1 / 3} \rfloor } }{(q; q)_{M + \lfloor \varepsilon T^{1 / 3} \rfloor} \big( -q^{X_T + \varepsilon T^{1 / 3} - M - \lfloor \varepsilon T^{1 / 3} \rfloor }; q \big)_{\infty}};    \\
S_3 & = \displaystyle\frac{(q \omega ; q)_{\infty}}{\varepsilon T^{1 / 3}} \displaystyle\sum_{M = 0}^{\infty} \displaystyle\frac{\omega^M }{(q; q)_M \big( -q^{X_T - M}; q \big)_{\infty}}. 
\end{aligned}
\end{flalign}

We first analyze $S_1$. Since $1 - \omega = T^{-10}$, we have that $\big| (q \omega; q)_{\infty} - (q; q)_{\infty} \big| = \mathcal{O} \big( T^{-10} \big)$  and $\big| \omega^M - 1 \big| = \mathcal{O} \big( T^{- 29 / 3} \big)$, for each $M \in [0, \varepsilon T^{1 / 3} ]$. Inserting these estimates into the definition \eqref{s1s2s3} of $S_1$ and using the fact that $\big( -q^{X_T + \varepsilon T^{1 / 3} - M}; q \big)_{\infty}^{-1} \in (0, 1)$, one quickly verifies that 
\begin{flalign}
\label{firstsumomega12}
\Bigg| S_1  - \widetilde{S_1} \Bigg| = \mathcal{O} \left( \displaystyle\frac{1}{\varepsilon T^{10}} \right), \qquad \text{where} \quad \widetilde{S_1} =  \displaystyle\frac{( q; q)_{\infty}}{\varepsilon T^{1 / 3}} \displaystyle\sum_{M = 0}^{\lfloor \varepsilon T^{1 / 3} \rfloor - 1 } \displaystyle\frac{1}{(q; q)_M \big( -q^{X_T + \varepsilon T^{1 / 3} - M}; q \big)_{\infty}},
\end{flalign}

\noindent is formed by replacing each $\omega$ with $1$ in the definition \eqref{s1s2s3} of $S_1$. To analyze $\widetilde{S_1}$, observe that
\begin{flalign*}
\left| \displaystyle\frac{(q; q)_{\infty}}{(q; q)_M} \right| = \mathcal{O} (q^M); \qquad \left| \displaystyle\frac{1}{\big( -q^{X_T + \varepsilon T^{1 / 3} - M}; q \big)_{\infty}} - \textbf{1}_{X_T + \varepsilon T^{1 / 3} \ge M} \right| = \mathcal{O} \big( q^{|X_T + \varepsilon T^{1 / 3} - M|} \big), 
\end{flalign*}

\noindent for any $M \ge 0$. Inserting these estimates into the definition \eqref{firstsumomega12} of $\widetilde{S_1}$ yields 
\begin{flalign}
\label{firstsumomega13}	
\begin{aligned}
\left| \widetilde{S}_1 - \displaystyle\frac{1}{\varepsilon T^{1 / 3}} \displaystyle\sum_{M = 0}^{\lfloor \varepsilon T^{1 / 3} \rfloor - 1} \textbf{1}_{X_T + \varepsilon T^{1 / 3} \ge M} \right| & = \displaystyle\frac{1}{\varepsilon T^{1 / 3}} \displaystyle\sum_{M = 0}^{\lfloor \varepsilon T^{1 / 3} \rfloor - 1} \Big( \mathcal{O} (q^M) + \mathcal{O} \big( q^{|X_T + \varepsilon T^{1 / 3} - M|} \big) \Big) \\
& =  \mathcal{O} \left( \displaystyle\frac{1}{\varepsilon T^{1 / 3}} \right). 
\end{aligned}
\end{flalign}

\noindent Inserting \eqref{firstsumomega13} into \eqref{firstsumomega12} yields
\begin{flalign}
\label{firstsumomega16}
\left| S_1 - \textbf{1}_{X_T \ge 0} - \left( 1 + \displaystyle\frac{X_T}{\varepsilon T^{1 / 3}} \right) \textbf{1}_{X_T \in (-\varepsilon T^{1 / 3}, 0)} \right|  = \mathcal{O} \left( \displaystyle\frac{1}{\varepsilon T^{1 / 3}} \right). 
\end{flalign}

\noindent This estimates the first sum $S_1$ on the right side of \eqref{homegasvarepsilons}. Next, we would like to bound $S_2 - S_3$. To that end, observe that 
\begin{flalign}
\label{s2s3}
|S_2 - S_3| \le \displaystyle\frac{(q \omega ; q)_{\infty}}{\varepsilon T^{1 / 3}} \displaystyle\sum_{M = 0}^{\infty} D_M, 
\end{flalign}

\noindent where for each $M \ge 0$,  
\begin{flalign}
\label{termd}
D_M = \left| \displaystyle\frac{\omega^{M + \lfloor \varepsilon T^{1 / 3} \rfloor } }{(q; q)_{M + \lfloor \varepsilon T^{1 / 3} \rfloor} \big( -q^{X_T + \varepsilon T^{1 / 3} - M - \lfloor \varepsilon T^{1 / 3} \rfloor }; q \big)_{\infty}} - \displaystyle\frac{\omega^M }{(q; q)_M \big( -q^{X_T - M}; q \big)_{\infty}} \right|. 
\end{flalign}

\noindent Since $\omega = 1 - T^{-10}$, we have that $\big| \omega^{M + \lfloor \varepsilon T^{1 / 3} \rfloor } - \omega^M	\big| = \mathcal{O} \big( T^{-29 / 3} \big)$. Moreover, we have that $\big| (q; q)_{M + \lfloor \varepsilon T^{1 / 3} \rfloor}^{-1} - (q; q)_M^{-1} \big| = \mathcal{O} (q^M)$. 

Furthermore, due to the fact that $(-q^j; q)_{\infty}^{-1}$ decays exponentially in $j$ as $-j$ tends to $\infty$ and converges to $1$ exponentially in $j$ as $j$ tends to $\infty$, we deduce that 
\begin{flalign}
\label{qomega2small}
\begin{aligned}
& \left| \displaystyle\frac{1}{\big( -q^{X_T + \varepsilon T^{1 / 3} - M - \lfloor \varepsilon T^{1 / 3} \rfloor }; q \big)_{\infty}} - \displaystyle\frac{1}{\big( -q^{X_T - M}; q \big)_{\infty}} \right| = \mathcal{O} \big( q^{| X_T - M| } \big); \\
& \displaystyle\frac{1}{\big( -q^{X_T + \varepsilon T^{1 / 3} - M - \lfloor \varepsilon T^{1 / 3} \rfloor }; q \big)_{\infty}} + \displaystyle\frac{1}{\big( -q^{X_T - M}; q \big)_{\infty}} = \mathcal{O} \big( q^{M - X_T} \big), 
\end{aligned}
\end{flalign}
	
\noindent Inserting the estimates \eqref{qomega2small} into \eqref{termd} yields 
\begin{flalign}
\label{qomega3small} 
D_M = \mathcal{O} \big( T^{-9} \big) + \mathcal{O} (q^M) + \mathcal{O} \big( q^{| X_T - M| } \big) \qquad \text{and} \qquad D_M = \mathcal{O} \big( q^{M - X_T} \big), 
\end{flalign}

\noindent for any $M \ge 0$. Inserting the first estimate in \eqref{qomega3small} into \eqref{s2s3} when $M \le T^5$, inserting the second estimate in \eqref{qomega3small} into \eqref{s2s3} when $M > T^5$, and using the fact that $|X_T| < T^5$ yields  
\begin{flalign}
\label{qomega4small} 
|S_2 - S_3| = \mathcal{O} \left( \displaystyle\frac{1}{\varepsilon T^5} \right) + \mathcal{O} \left( \displaystyle\frac{1}{\varepsilon T^{1 / 3}} \right) = \mathcal{O} \left( \displaystyle\frac{1}{\varepsilon T^{1 / 3}} \right). 
\end{flalign}

\noindent Now \eqref{homegalarget} follows from \eqref{homegasvarepsilons}, \eqref{firstsumomega16}, and \eqref{qomega4small}. This verifies the second part of the proposition. 
\end{proof}

\subsection{Application to the Probability Distribution of \texorpdfstring{$J_T (pT)$}{}}

\label{ProofCurrent}

We would now like to show how Theorem \ref{currentfluctuations} and Theorem \ref{processfluctuations} follow from Theorem \ref{determinant} and Theorem \ref{determinantw}, respectively. To that end, we must state the asymptotics for the right sides of \eqref{determinantprocess} and \eqref{determinantvertexw}. 

The first proposition below does this for the ASEP, and the second proposition below does this for the stochastic six-vertex model. The proofs of these propositions will be the topics of Section \ref{DeterminantNew} and Section \ref{RightStationary}. 

\begin{prop} 
\label{convergencedeterminant}

Adopt the notation of Theorem \ref{determinant}, and assume that $R = L + 1$. Fix $b \in (0, 1)$, and denote $\chi = b (1 - b)$ and $\beta = b / (1 - b)$. Further denote 
\begin{flalign*}
x = x (T) = \lfloor (1 - 2 b) T + 2 c \chi^{1 / 3} T^{2 / 3} \rfloor; \qquad \zeta = \zeta_T = -q^{2b c \chi^{1 / 3} T^{2 / 3} + \chi^{2 / 3} s T^{1 / 3} - b^2 T},  
\end{flalign*}

Let $\{ b_1 = b_{1, T} \}_{T \ge 0} \subset (0, 1)$ and $\{ b_2 = b_{2, T} \} \subset (0, 1)$ be sequences that converge to $b$ as $T$ tends to $\infty$, in such a way that $\beta = (\beta_1 + \beta_2) / 2 + \mathcal{O} \big( T^{-10} \big)$ and $\omega = \omega_T = \beta_2 \beta_1^{-1} = 1 - T^{-10}$, for all $T > 10$; here, $\beta_i = b_i / (1 - b_i)$ for each $i \in \{ 1, 2 \}$. 

Then, recalling $K_{\Ai, c^2}$ from Definition \ref{stationarykernel1} and $g(c, s)$ from Definition \ref{stationarydistribution}, we have that
\begin{flalign*}
\displaystyle\lim_{T \rightarrow \infty} \displaystyle\frac{\det \big( \Id + A_{\zeta} \big)_{L^2 (\mathcal{C}_A)}}{\chi^{2 / 3} T^{1 / 3} (1 - \omega)}  = g(c, s) \det \big( \Id - K_{\Ai, c^2} \big)_{L^2 (s, \infty)}. 
\end{flalign*}

\end{prop}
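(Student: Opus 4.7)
The proof will proceed by steepest-descent analysis of the Fredholm determinant $\det(\Id + A_\zeta)_{L^2(\mathcal{C}_A)}$, adapting the framework developed by Ferrari--Spohn \cite{SLSCST} for the stationary TASEP and by Borodin--Corwin--Ferrari--Vet\H{o} \cite{HFSE} for the stationary KPZ equation. The dominant exponent in the integrand of $A_\zeta$, coming from $g_A(z)$ together with the factor $v^{p-1}w^{-p}$ with $\zeta = -q^p$, is
\begin{equation*}
F(z) = x\log(z+q) + \frac{Tq(R-L)}{z+q} - p\log z.
\end{equation*}
Under the scaling $x = (1-2b)T + 2c\chi^{1/3}T^{2/3}$, $R - L = 1$, and $p = -b^2 T + \chi^{2/3}sT^{1/3} + O(T^{2/3})$, there is a real critical point $z_c = z_c(b)$ at which $F'(z_c) = F''(z_c) = 0$ and $F'''(z_c) \ne 0$; the $T^{2/3}$-order corrections in $x$ and $p$ determine the shift $c^2 + s$ that appears inside the limiting Airy kernel.

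The plan is to first deform $\mathcal{C}_A$ and $\Gamma_A$ into steep-descent contours through $z_c$: namely, $\mathcal{C}_A$ along which $\Re F$ attains a strict maximum at $z_c$ with cubic decay $\Re(F(z) - F(z_c)) \lesssim -|z-z_c|^3$, and $\Gamma_A$ along which $\Re F$ attains a strict minimum at $z_c$ with the opposite decay. Rescaling $w = z_c + \chi^{1/3} T^{-1/3}\tilde w$ (and analogously for $v$ and $w'$) and truncating to a small neighborhood of $z_c$, the cubic Taylor expansion of $F$ combined with the $j=0$ summand of the kernel \eqref{kernelp} produces, in the limit, the standard double-contour representation of the Airy kernel $K_{\Ai,c^2+s}$; the remaining $j \ne 0$ contributions decay exponentially due to the factor $\sin(\pi r + 2\pi^2 j / \log q)^{-1}$. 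This step alone would yield $\det(\Id - K_{\Ai, c^2+s})_{L^2(\mathbb{R}_{>0})} = \det(\Id - K_{\Ai, c^2})_{L^2(s,\infty)}$.

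The principal new feature, absent in the step-Bernoulli analysis of \cite{PTAEPSSVM}, is the ratio $h(z) = (q\beta_2 z^{-1}; q)_\infty / (q^{-1}\beta_1^{-1} z; q)_\infty$ in $g_A$. Under the near-stationary scaling $\omega = \beta_2/\beta_1 = 1 - T^{-10}$, the zero of the numerator at $z = q\beta_2$ and the zero of the denominator at $z = q\beta_1$ nearly coincide and both lie near $z_c$, on the ``wrong'' side of the natural $\mathcal{C}_A$--$\Gamma_A$ geometry prescribed in Theorem \ref{determinant}. Deforming to the steep-descent paths therefore crosses these singularities, picking up an explicit residue of order $(1-\omega)$, which accounts exactly for the $(1-\omega)$ factor in the normalization on the left-hand side. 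After rescaling, this residue decomposes the kernel as $A_\zeta = A_\zeta^{\mathrm{Ai}} + (1-\omega)\chi^{2/3}T^{1/3}(\Psi \otimes \Phi)$ plus an independent additive scalar $(1-\omega)\chi^{2/3}T^{1/3}\mathcal{R}$, where $A_\zeta^{\mathrm{Ai}} \to K_{\Ai,c^2+s}$ and where $\Psi$, $\Phi$, $\mathcal{R}$ converge, respectively, to the functions and constant of Definition \ref{stationarykernel}. Applying the rank-one identity
\begin{equation*}
\det(\Id + A + \phi \otimes \psi) = \det(\Id + A)\bigl(1 + \langle (\Id + A)^{-1}\phi, \psi\rangle\bigr)
\end{equation*}
and collecting the scalar contribution reproduces precisely $g(c,s) \det(\Id - K_{\Ai, c^2})_{L^2(s,\infty)}$ in the form of Definition \ref{stationarydistribution}.

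The main technical obstacle, which is the source of the ``new issues'' alluded to in Section \ref{Sections} and handled in Section \ref{DeterminantNew}, is establishing control on the Fredholm expansion uniformly in $T$ under the simultaneous limits $\omega \to 1$ and $T \to \infty$. The near-coincident pole-zero pair produces integrands that are not pointwise bounded as $\omega \to 1$, so the standard dominated-convergence arguments of \cite{HFSE, SLSCST} cannot be applied directly; they must be supplemented by an explicit residue calculation, combined with trace-norm estimates for $A_\zeta^{\mathrm{Ai}}$ on the rescaled steep-descent contour that are uniform in $T$. Once these uniform estimates are in place, the final identification of the limit with the Baik--Rains formula proceeds via calculations parallel to Sections~6--8 of \cite{HFSE}.
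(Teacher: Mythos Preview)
Your outline has the right global shape---steepest descent around the double critical point $q\beta$, residues from the nearly coincident singularities at $q\beta_1$ and $q\beta_2$, and a finite-rank perturbation formula matching Definition \ref{stationarydistribution}---but two of the steps do not go through as you have written them.

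First, you propose to deform $\mathcal{C}_A$ and $\Gamma_A$ to steep-descent contours directly and collect the residues in the kernel $A_\zeta$. Deforming $\Gamma_A$ is unproblematic, since it is an internal integration variable inside $A_\zeta(w,w')$. But $\mathcal{C}_A$ is the contour over which the Fredholm determinant is taken, and the kernel $A_\zeta(w,w')$ has a genuine pole at $w=q\beta_1$ (from the factor $(q^{-1}\beta_1^{-1}w;q)_\infty^{-1}$ inside $g_A(w)$) while being regular in $w'$ there. Pushing $\mathcal{C}_A$ through this pole does \emph{not} produce a rank-one correction to the kernel; it produces residue contributions in every summand of the Fredholm expansion \eqref{determinantsum}, one for each subset of the variables $w_1,\dots,w_k$ that crosses $q\beta_1$. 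The paper flags exactly this issue at the start of Section \ref{DeterminantNew} and circumvents it by a two-step maneuver: (i) truncate $\mathcal{C}_A,\Gamma_A$ to short linear pieces $\mathcal{C}^{(1)},\Gamma^{(1)}$ near $q\beta$ on which $\Re v<\Re w$ (Section \ref{DeterminantDifferentContour}, with exponentially small error), and (ii) use the identity $\det(\Id-DB)=\det(\Id-BD)$ to rewrite the determinant over $L^2(\mathbb{R}_{>0})$ with a kernel $L(y,y')$ in which \emph{both} $w$ and $v$ are internal integration variables (Section \ref{ckr}). Only after this reformulation can one deform through the poles at the kernel level. Your proposal is missing this reformulation, and without it the ``pick up a residue and apply the rank-one identity'' step is not justified.

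Second, once the deformation is carried out (on the $L^2(\mathbb{R}_{>0})$ kernel), the perturbation is not rank one. Moving the $w$-contour past $\Delta/\sigma$ and the $v$-contour past $-\Delta/\sigma$ produces three separate residue terms (in $v$ alone, in $w$ alone, and in both simultaneously), giving $\overline{L}=\widetilde{L}+\sum_{i=1}^3 f_i\otimes g_i$; see \eqref{llfiniterankperturbation}. One must then compute the $3\times 3$ determinant $\det\big(\Id-\langle R f_i,g_j\rangle\big)_{1\le i,j\le 3}$, and it is only after expanding this (Section \ref{DeterminantFiniteRankPerturbation}) and using that several entries are $-1+o(1)$ that the answer collapses to the form $\mathcal{R}-\langle(\Id-K_{\Ai})^{-1}\Phi,\Psi\rangle$ of Definition \ref{stationarydistribution}. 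Your single rank-one identity does not account for the terms coming from the $w$-residue and the double residue, both of which contribute at the leading order $(1-\omega)/\sigma$.
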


\begin{prop}
\label{convergencedeterminantmodel}

Adopt the notation of Theorem \ref{determinantw}, and fix $b \in (0, 1)$; denote $\Lambda = b + \kappa (1 - b)$, $\chi = b (1 - b)$, and $\beta = b / (1 - b)$. Further denote  
\begin{flalign*}
\theta = \kappa^{-1} \Lambda^2; \qquad \rho = \big( 1 - \kappa^{-1} \big)^{2 / 3} \kappa^{-1 / 3} \chi^{1 / 3} \Lambda^{1 / 3}; \qquad f = \Lambda^{-1 / 3} (\kappa - 1)^{1 / 3} \chi^{2 / 3}, 
\end{flalign*}

\noindent and set 
\begin{flalign*}
\quad x = x (T) = \lfloor \theta T + 2 \rho \Lambda c T^{2 / 3} \rfloor ; \qquad \zeta = \zeta_T = -q^{2 \rho b c T^{2 / 3} + f s T^{1 / 3} - b^2 (1 - \kappa^{-1}) T}. 
\end{flalign*}

Let $\{ b_1 = b_{1, T} \}_{T \ge 0} \subset (0, 1)$ and $\{ b_2 = b_{2, T} \} \subset (0, 1)$ be sequences that converge to $b$ and $\Lambda^{-1} b$, respectively, as $T$ tends to $\infty$, in such a way that $\beta = (\beta_1 + \kappa \beta_2) / 2 + \mathcal{O} \big( T^{-10} \big)$ and $\omega = \omega_T = \kappa \beta_2 \beta_1^{-1} = 1 - T^{-10}$, for all $T > 10$; here, $\beta_i = b_i / (1 - b_i)$ for each $i \in \{ 1, 2 \}$. 

Then, recalling $K_{\Ai, c^2}$ from Definition \ref{stationarykernel1} and $g(c, s)$ from Definition \ref{stationarydistribution}, we have that
\begin{flalign*}
\displaystyle\lim_{T \rightarrow \infty} \displaystyle\frac{\det \big( \Id + V_{\zeta} \big)_{L^2 (\mathcal{C}_V)}}{f T^{1 / 3} (1 - \omega)} = g(c, s) \det \big( \Id - K_{\Ai, c^2} \big)_{L^2 (s, \infty)}. 
\end{flalign*}

\end{prop}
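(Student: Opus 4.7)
The plan is to perform steepest descent analysis on the double contour integral defining $V_\zeta$, following the strategy of Borodin-Corwin-Ferrari-Vet\H{o} \cite{HFSE} for the stationary KPZ equation but adapted to the discrete structure of the stochastic six-vertex model. The first step is to locate the critical point $w_0 \in \mathbb{C}$ of the exponent $G(w) := \log g_V(w; x, t)$ once the scalings $x = \theta T + 2\rho \Lambda c T^{2/3}$ are substituted in. The choice $\theta = \kappa^{-1}\Lambda^2$ should be precisely what forces $G'(w_0) = G''(w_0) = 0$, so that the Taylor expansion of $T \cdot G\bigl(w_0(1+T^{-1/3}\tilde{w})\bigr)$ produces at leading order a cubic in $\tilde{w}$ together with a linear term proportional to $c$, the hallmark of KPZ scaling. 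The constant $\rho$ is chosen precisely so that this linear coefficient matches the $c^2+s$ shift appearing in $K_{\Ai, c^2+s}$.

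Next I would deform $\mathcal{C}_V$ and $\Gamma_V$ to steep descent contours for $\Re G$ passing through $w_0$ and rescale $w = w_0(1+T^{-1/3}\tilde{w})$ and $v = w_0(1+T^{-1/3}\tilde{v})$. Ignoring for the moment the discrete sine sum, the double integral defining $V_\zeta$ restricted to a neighborhood of $w_0$ should converge on rescaling to a double integral kernel that one recognizes as the shifted Airy kernel $-K_{\Ai, c^2+s}$ restricted to $L^2(s,\infty)$. The factor $f T^{1/3}$ in the proposition's normalization will then be absorbed by the Jacobian of this rescaling together with the cubic coefficient $G'''(w_0)$.

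The crucial new feature responsible for the appearance of $g(c,s)$ rather than a pure Tracy-Widom factor is the stationary pole. The factor $(q\beta_2\kappa w^{-1}; q)_\infty$ in $g_V$ contributes a zero of $g_V(v)$ at $v = q\kappa\beta_2$, and under the parametrization $\kappa\beta_2\beta_1^{-1} = 1 - T^{-10}$ this zero sits at distance $O(1-\omega)$ from a corresponding pole of $1/g_V(v)$ at $v = q\beta_1$. Their near-collision, combined with their proximity to the saddle $w_0$, forces a decomposition $V_\zeta = V_\zeta^{\mathrm{sd}} + V_\zeta^{\mathrm{pole}}$ in which $V_\zeta^{\mathrm{sd}}$ is the regular saddle-point piece converging to $-K_{\Ai, c^2+s}$ and $V_\zeta^{\mathrm{pole}}$ is a rank-one kernel whose coefficient carries the compensating factor $(1-\omega)^{-1}$. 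The determinantal identity $\det(\Id+U+\phi\otimes\psi) = \det(\Id+U)\bigl(1+\langle(\Id+U)^{-1}\phi,\psi\rangle\bigr)$ will then assemble these pieces into the product $g(c,s)\det(\Id-K_{\Ai,c^2+s})_{L^2(s,\infty)}$, with the functions $\mathcal{R}$, $\Phi$, $\Psi$ of Definition \ref{stationarykernel} emerging from the explicit integral representations of the rescaled residue together with a standard Fredholm rearrangement, precisely as in Sections 6 and 7 of \cite{HFSE}.

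The main obstacle will be handling the discrete geometric sum $\sum_{j \in \mathbb{Z}} 1/\sin\bigl(\tfrac{\pi}{\log q}(\log v - \log w + 2\pi\textbf{i} j)\bigr)$ in $V_\zeta$, which has no counterpart in the continuous KPZ-equation analysis of \cite{HFSE}. Showing that under the scaling $w, v = w_0(1 + O(T^{-1/3}))$ this sum reduces, modulo negligible tails, to the single $j=0$ term approximating $1/(v-w)$, and producing uniform estimates both along the noncompact portions of the deformed contours and in the discrete index $j$, is the novel technical content that Section \ref{DeterminantNew} is set up to address. Once these estimates are in hand, the remaining identification of limits and the final assembly of $g(c,s)$ and $\det(\Id-K_{\Ai,c^2})$ should follow Sections 7 and 8 of \cite{HFSE} with only cosmetic modifications to accommodate the six-vertex parameters $\kappa$, $\Lambda$, and $\chi$.
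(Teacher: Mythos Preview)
Your overall strategy---saddle-point analysis near a double critical point together with extraction of a stationary-pole contribution, in the spirit of \cite{HFSE}---is indeed the paper's, and your identification of the discrete sine sum as the genuinely new technical obstacle is accurate (the paper shows only the $j=0$ term survives). But the decomposition $V_\zeta = V_\zeta^{\mathrm{sd}} + V_\zeta^{\mathrm{pole}}$ with $V_\zeta^{\mathrm{pole}}$ rank-\emph{one} is where the proposal breaks down. The integrand $g_V(w)/g_V(v)$ carries \emph{two} nearly-coincident singularities, not one: a pole in $v$ at $v=q\kappa\beta_2$ and a pole in $w$ at $w=q\beta_1$, each within $O(1-\omega)$ of the saddle $q\beta$. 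Taking only the $v$-residue leaves the $w$-singularity unresolved---your ``regular'' piece $V_\zeta^{\mathrm{sd}}$ would still have its $L^2(\mathcal{C}_V)$ contour pinched against $q\beta_1$ and would not converge to an Airy kernel. And you cannot simply push $\mathcal{C}_V$ past $q\beta_1$, since that alters the $L^2$ space rather than perturbing the kernel by finite rank.

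The paper's resolution (Sections~\ref{DeterminantNew}--\ref{RightStationary}) is to first truncate $\mathcal{C}_V,\Gamma_V$ to short segments near $q\beta$, then rewrite the determinant over $L^2(\mathbb{R}_{>0})$ via the commutation $\widehat{K}=-DB$, $L=BD$ (Lemma~\ref{determinantreals}). In the resulting kernel $L(y,y')$, both $w$ and $v$ are internal integration variables, so both contours can be deformed past their poles. This yields a rank-\emph{three} perturbation $\overline{L}=\widetilde{L}+\sum_{i=1}^3 f_i\otimes g_i$ (one term from each single residue and one from the double residue; see \eqref{lksum}--\eqref{vwb}). The determinant then factors through a $3\times 3$ matrix \eqref{matrixm}, and $g(c,s)$ emerges only after cancellations among entries that are individually $O(1)$ (Proposition~\ref{determinantlimitbeta}). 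Your rank-one identity is not the right algebraic vehicle. A minor side remark: $\theta$ forces $G'(q\beta)=0$ but not $G''(q\beta)=0$; rather $(q\beta)^2G''(q\beta)=2\mathcal{F}^2 c\,T^{-1/3}$ (equation~\eqref{gkderivatives}), and this small second derivative is exactly how $c$ enters the expansion.
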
 

\begin{rem}

Both Proposition \ref{convergencedeterminant} and Proposition \ref{convergencedeterminantmodel} should be valid as long as $1 - \omega_T$ decays sufficiently quickly. However, our proof no longer seems to apply if $1 - \omega_T$ decays too quickly, for instance if $1 - \omega_T = \mathcal{O} (e^{-T})$. 

\end{rem}

We would like to establish Theorem \ref{currentfluctuations} (or Theorem \ref{processfluctuations}) using Theorem \ref{determinant} (or Theorem \ref{determinantw}), Proposition \ref{linearsum}, and Proposition \ref{convergencedeterminant} (or Proposition \ref{convergencedeterminantmodel}). However, there are two obstructions that prevent us from immediately doing this. 

The first is that all of our results only hold when $1 - \omega = T^{-10}$, while we instead are interested in the case $\omega = 1$. The second is that Proposition \ref{linearsum} is only stated for $|X_T| < T^5$; for sufficiently large $T$, this holds deterministically for the stochastic six-vertex model, but not for the ASEP. 

The first issue will be addressed in the next section. To remedy the second, we have the following lemma that states that the probability that the current is very large decays exponentially.

\begin{lem}
\label{jlarge}

Fix $R > L \ge 0$ and $x \in \mathbb{R}$. Consider the ASEP with left jump rate $L$, right jump rate $R$, and arbitrary initial data. There exists a constant $C > 0$ such that $\mathbb{P} \big[ |J_T (xT)| > y T^2 \big] < C e^{-y T}$, for each $y, T \in \mathbb{R}_{> 1}$. 
\end{lem}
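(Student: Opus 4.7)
The plan is to reduce the lemma to an exponential tail bound on $|J_T(0)|$ via a deterministic identity, and then to dominate $|J_T(0)|$ by a Poisson random variable. The starting observation is that the definition \eqref{jtx} yields the telescoping identity
\begin{flalign*}
J_T(y) - J_T(y-1) = -\eta_T(y) \in \{-1, 0\}
\end{flalign*}
for every integer $y$, where $\eta_T(y)$ denotes the occupation variable at site $y$ and time $T$. Indeed, for integer $X_i(T)$ one has $\textbf{1}_{X_i(T) > y} - \textbf{1}_{X_i(T) > y-1} = -\textbf{1}_{X_i(T) = y}$ and $\textbf{1}_{X_i(T) \le y} - \textbf{1}_{X_i(T) \le y-1} = \textbf{1}_{X_i(T) = y}$, so the two terms on the right side of \eqref{jtx} both contribute $-\textbf{1}_{X_i(T) = y}$ to $J_T(y) - J_T(y-1)$ irrespective of the sign of $X_i(0)$. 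Since $y \mapsto J_T(y)$ is piecewise constant between integers, telescoping gives $|J_T(xT) - J_T(0)| \le |x|T + 1$ for every $x \in \mathbb{R}$, so it suffices to produce an exponential tail bound on $|J_T(0)|$.

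Next, I would bound $|J_T(0)|$ by the total number $C_T$ of jumps across bond $(0, 1)$ during $[0, T]$. Writing $J_T(0) = |A| - |B|$ with $A = \{i : X_i(0) \le 0,\, X_i(T) > 0\}$ and $B = \{i : X_i(0) > 0,\, X_i(T) \le 0\}$, each element of $A$ (respectively $B$) requires at least one right-crossing (respectively left-crossing) of bond $(0, 1)$ by a distinct particle, so $|J_T(0)| \le |A| + |B| \le C_T$. The exclusion restriction permits at most one particle at each of sites $0$ and $1$ at any given time, so the instantaneous rate at which bond-$(0,1)$ crossings occur is at most $R + L$, uniformly in the configuration and the initial data. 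A standard comparison argument (via the compensator, or equivalently a coupling with the bond-based graphical construction of ASEP) then shows that $C_T$ is stochastically dominated by a Poisson random variable $N_T$ of mean $(R+L)T$.

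The conclusion follows from an exponential-moment estimate. Since $|J_T(xT)|$ is stochastically dominated by $N_T + |x|T + 1$ and $\mathbb{E}[e^{N_T}] = \exp\big((R+L)T(e-1)\big)$, Markov's inequality applied to $e^{|J_T(xT)|}$ yields
\begin{flalign*}
\mathbb{P}\big[|J_T(xT)| > yT^2\big] \le e^{-yT^2 + KT + 1}, \qquad K := |x| + (R+L)(e-1).
\end{flalign*}
For $y, T > 1$, the exponent satisfies $-yT^2 + KT + 1 + yT = T\big(K - y(T-1)\big) + 1 \le K(K+1) + 1$: either $y(T-1) \ge K$, making $T(K - y(T-1))$ non-positive, or else $T < K + 1$, giving $T(K - y(T-1)) \le KT \le K(K+1)$. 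Choosing $C = \exp\big(1 + K(K+1)\big)$ therefore yields $\mathbb{P}\big[|J_T(xT)| > yT^2\big] \le C e^{-yT}$, as required.

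The only step requiring any real care is the algebraic reduction to $|J_T(0)|$: the definition \eqref{jtx} fixes the reference point at $0$ rather than tracking net flux across an arbitrary bond, so one must verify that in the difference $J_T(y) - J_T(y-1)$ the two indicator sums in \eqref{jtx} cooperate to produce exactly $-\eta_T(y)$ regardless of the initial configuration. Beyond this, the Poisson tail estimate is routine and holds uniformly over all initial data, so no further obstacle is expected.
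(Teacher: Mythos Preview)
Your proof is correct and takes a genuinely different route from the paper's. The paper argues via tagged particles: if $J_T(xT) > yT^2$ then, by the preserved ordering of particles, particle $-1$ (the rightmost one initially at a nonpositive site) must satisfy $X_{-1}(T) \ge yT^2 + xT$; it then dominates $X_{-1}(T)$ by a rate-$R$ Poisson counting process and invokes the Poisson tail. You instead use the $1$-Lipschitz property of $y \mapsto J_T(y)$ to reduce to $J_T(0)$, and then bound $|J_T(0)|$ by the total number of crossings of a single bond, which is dominated by a Poisson$\big((R+L)T\big)$ variable. Your argument is somewhat more local (it looks only at one bond rather than tracking a tagged particle through space) and does not rely on the existence or labeling of particles $-1$ and $0$, which makes the ``arbitrary initial data'' hypothesis slightly cleaner to handle; the paper's approach has the minor advantage of yielding a one-sided bound directly without passing through $|A|+|B|$. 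Both are equally elementary and reduce to the same Poisson moment estimate.
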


\begin{proof}

In this proof, we recall the notation associated with tagged particles introduced in Section \ref{AsymmetricExclusions}. Now observe that, if the current is abnormally large, then particle $-1$ must have moved very far to the left or particle $0$ must have moved very far to the right. Specifically, one quickly verifies that $\textbf{1}_{J_T (xT) > y T^2} \le \textbf{1}_{X_{-1} (T) \ge y T^2 + xT}$ and $\textbf{1}_{J_T (x T) < - y T^2} \le \textbf{1}_{X_0 (T) \le xT - y T^2}$. Thus, it suffices to estimate $\mathbb{P} \big[ X_{-1} (T) \ge y T^2 + xT \big]$ and $\mathbb{P} \big[ X_0 (T) \le xT - y T^2 \big]$. 

To that end, observe that $X_{-1} (T)$ is stochastically dominated by a simple, asymmetric random walk that jumps one space to the right according to the ringing times of an exponential clock of rate $R$. The probability that such a random walk attempts to jump right at least $y T^2 + x T$ times in the time interval $[0, T]$ is bounded by $C' e^{- y T}$, for some constant $C' > 0$. Hence, $\mathbb{P} \big[ X_{-1} (T) \ge y T^2 + xT \big] < C' e^{-y T}$. Through similar reasoning (after enlarging $C'$ if necessary), we find that $\mathbb{P} \big[ X_0 (T) \le xT - y T^2 \big] < C' e^{-y T}$.

Thus, $\mathbb{P} \big[ \big| J_T (xT) \big| > y T^2 \big] \le \mathbb{P} \big[ X_{-1} (T) \ge y T^2 + xT \big] + \mathbb{P} \big[ X_0 (T) \le xT - y T^2 \big] \le C e^{- y T}$, where $C \ge 2 C'$; this establishes the lemma. 
\end{proof}

Now we can establish the following two corollaries, which concern the asymptotics for the ASEP and stochastic six-vertex model with near-stationary (or near-translation-invariant) initial data.

\begin{cor}
\label{currentfluctuationsnear}

Adopt the notation of Proposition \ref{convergencedeterminant}, and consider the ASEP with left jump rate $L$, right jump rate $R$, and double-sided $(b_{1, T}, b_{2, T})$-Bernoulli initial data. Then, for any real numbers $c, s \in \mathbb{R}$, we have that 
\begin{flalign*}
\displaystyle\lim_{T \rightarrow \infty} \mathbb{P} \left[ J_{T} \big( (1 - 2b) T  + 2 c \chi^{1 / 3} T^{2 / 3} \big) \ge b^2 T - 2bc \chi^{1 / 3} T^{2 / 3} - \chi^{2 / 3} s T^{1 / 3} \right] = F_{\BR; c} (s). 
\end{flalign*}

\end{cor}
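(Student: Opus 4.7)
The plan is to combine the Fredholm determinant identity of Theorem \ref{determinant} with the discrete-derivative estimate of Proposition \ref{linearsum}, the large-$T$ asymptotics of Proposition \ref{convergencedeterminant}, and the tail bound of Lemma \ref{jlarge}, through a two-scale limiting procedure: first send $T\to\infty$ with an auxiliary shift parameter $\varepsilon>0$ held fixed, then send $\varepsilon\to 0$ to recover the target probability as a derivative of the Fredholm determinant limit.

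First I would apply Theorem \ref{determinant} with $b_1 = b_{1,T}$, $b_2 = b_{2,T}$, and $\zeta = \zeta_T$ as prescribed. Writing $\omega = \beta_2/\beta_1 = 1 - T^{-10}$ and using $(\omega;q)_\infty = (1-\omega)(q\omega;q)_\infty$, the left side of \eqref{determinantprocess} is exactly $(1-\omega)T^{1/3}\mathbb{E}[H_\omega(\chi^{2/3}s)]$, where $H_\omega$ is the normalized sum \eqref{homegas} with $J = J_T(x)$ and the parameter $m = b^2 - 2bc\chi^{1/3}T^{-1/3}$ chosen so that $\zeta_T = -q^{\chi^{2/3}sT^{1/3} - mT}$. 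Setting $\phi_T(s) := \chi^{-2/3}\mathbb{E}[H_\omega(\chi^{2/3}s)]$, dividing the identity of Theorem \ref{determinant} by $\chi^{2/3}T^{1/3}(1-\omega)$ and invoking Proposition \ref{convergencedeterminant} gives
\begin{equation*}
\lim_{T\to\infty}\phi_T(s) = G(s) := g(c,s)\det\!\big(\Id - K_{\Ai, c^2}\big)_{L^2(s,\infty)}.
\end{equation*}

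Next, fix $\varepsilon>0$ small and apply Proposition \ref{linearsum} with its ``$\varepsilon$'' set to $\delta := \chi^{2/3}\varepsilon$. Denoting $X_T = J_T(x) - mT + \chi^{2/3}sT^{1/3}$, Proposition \ref{linearsum} yields deterministically on $\{|X_T|<T^5\}$
\begin{equation*}
\mathbf{1}_{X_T\geq 0} \;\leq\; \frac{H_\omega(\chi^{2/3}(s+\varepsilon)) - H_\omega(\chi^{2/3}s)}{\chi^{2/3}\varepsilon} + O\!\left(\frac{1}{\chi^{2/3}\varepsilon T^{1/3}}\right) \;\leq\; \mathbf{1}_{X_T > -\chi^{2/3}\varepsilon T^{1/3}},
\end{equation*}
since the linear-interpolation term in \eqref{homegalarget} lies in $[0,1]$ and is supported on $\{X_T\in(-\chi^{2/3}\varepsilon T^{1/3},0)\}$. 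The uniform bound $|H_\omega|\leq T^{10}$ combined with Lemma \ref{jlarge} handles the contribution of $\{|X_T|\geq T^5\}$. Taking expectations and recognizing $\mathbb{P}[X_T\geq 0] = P_T(s)$ and $\mathbb{P}[X_T>-\chi^{2/3}\varepsilon T^{1/3}] = P_T(s+\varepsilon)$, where $P_T(s)$ denotes the probability on the left side of the corollary, yields the sandwich
\begin{equation*}
P_T(s) - o_T(1) \;\leq\; \frac{\phi_T(s+\varepsilon)-\phi_T(s)}{\varepsilon} \;\leq\; P_T(s+\varepsilon) + o_T(1),
\end{equation*}
where $o_T(1)\to 0$ as $T\to\infty$ for fixed $\varepsilon$. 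Letting $T\to\infty$ and using the convergence of $\phi_T$ gives $\limsup_T P_T(s) \leq [G(s+\varepsilon)-G(s)]/\varepsilon$ and, after shifting $s\mapsto s-\varepsilon$, $\liminf_T P_T(s) \geq [G(s)-G(s-\varepsilon)]/\varepsilon$. Sending $\varepsilon\to 0^+$ and using smoothness of $G$ (both $g(c,\cdot)$ and the Fredholm determinant depend smoothly on $s$ through the integral formulas of Definition \ref{stationarykernel}) yields $\lim_T P_T(s) = G'(s)$. The translation identity $\det(\Id - K_{\Ai, c^2})_{L^2(s,\infty)} = \det(\Id - K_{\Ai, c^2+s})_{L^2(\mathbb{R}_{>0})}$ together with Definition \ref{stationarydistribution} then gives $G'(s) = F_{\BR;c}(s)$, as desired.

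The main obstacle is the careful calibration of the two limits: the error in Proposition \ref{linearsum} degrades as $1/(\varepsilon T^{1/3})$, forcing $\varepsilon T^{1/3}\to\infty$, while identifying the difference quotient with $G'(s)$ forces $\varepsilon\to 0$. Ordering the limits correctly (first $T\to\infty$, then $\varepsilon\to 0$) and exploiting the monotonicity of $P_T$ in $s$ to close the sandwich is the crux of the argument; Lemma \ref{jlarge} enters only to control the rare event that $J_T$ falls outside the range where Proposition \ref{linearsum} applies.
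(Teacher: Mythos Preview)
Your proposal is correct and follows essentially the same route as the paper: both combine Theorem \ref{determinant}, Proposition \ref{linearsum}, Proposition \ref{convergencedeterminant}, and Lemma \ref{jlarge} through a sandwich inequality on $\mathbb{P}[X_T\ge 0]$ obtained from the difference quotient $\varepsilon^{-1}(\phi_T(s+\varepsilon)-\phi_T(s))$. The only noteworthy difference is the handling of the two-scale limit. The paper keeps track of an explicit rate function $h(T)$ coming from Proposition \ref{convergencedeterminant}, writes the sandwich with all error terms present, and then chooses a sequence $\varepsilon=\varepsilon_T\to 0$ slowly enough that $\varepsilon_T^{-1}T^{-1/3}$ and $\varepsilon_T^{-1}h(T)$ both vanish. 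You instead fix $\varepsilon$, send $T\to\infty$ first (so the $O(\varepsilon^{-1}T^{-1/3})$ and tail terms disappear and $\phi_T\to G$ pointwise), and only then let $\varepsilon\to 0^+$ using differentiability of $G$. Your ordering is slightly cleaner because it avoids naming $h(T)$ and choosing $\varepsilon_T$; the paper's ordering is marginally more quantitative since it in principle tracks the joint rate. Either way the argument is the same sandwich closed by the same monotonicity shift $s\mapsto s-\varepsilon$.
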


\begin{cor}
\label{processfluctuationsnear}

Adopt the notation of Proposition \ref{convergencedeterminantmodel}, and consider the stochastic six-vertex model with left jump rate $\delta_1$, right jump rate $\delta_2$, and double-sided $(b_{1, T}, b_{2, T})$-Bernoulli initial data. Recall the definitions of $x$, $y$, $\varsigma$, and $\mathcal{F}$ from Theorem \ref{processfluctuations}. 

Then, for any real numbers $c, s \in \mathbb{R}$, we have that 
\begin{flalign*}
\displaystyle\lim_{T \rightarrow \infty} \mathbb{P} \Big[ \mathfrak{H} \big( x (T + \varsigma c T^{2 / 3}), y T \big) \ge b_1 b_2 (\delta_2 - \delta_1) T - b_1 (1 - \delta_2) \varsigma c T^{2 / 3} - \mathcal{F} s T^{1 / 3} \Big] = F_{\BR; c} (s). 
\end{flalign*}
\end{cor}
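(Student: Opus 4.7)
The plan is to run the same three-step argument that proves Corollary \ref{currentfluctuationsnear}, combining the Fredholm determinant identity Theorem \ref{determinantw}, the determinant asymptotics Proposition \ref{convergencedeterminantmodel}, and the Tauberian-type Proposition \ref{linearsum}. The first step is to identify the parameters of Proposition \ref{convergencedeterminantmodel} with those of the corollary. Under the time rescaling $T \mapsto T' = yT$ (with $y = \Lambda_2 (1 - \delta_1)$), the height evaluation $\mathfrak{H} ( x(T + \varsigma c T^{2 / 3}), yT )$ in the corollary becomes $\mathfrak{H} ( \theta T' + 2 \rho \Lambda c (T')^{2 / 3}, T' )$ (up to $\mathcal{O}(1)$ error absorbed into the floor), matching the geometry of Proposition \ref{convergencedeterminantmodel}. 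An explicit computation using $\chi_1 \chi_2 \to \kappa \chi^2 / \Lambda^2$ in the limit $b_{1, T} \to b$, $b_{2, T} \to b / \Lambda$ (enforced by $\omega_T = 1 - T^{-10}$) shows that in these rescaled units, the corollary's coefficients $b_1 b_2 (\delta_2 - \delta_1)$, $b_1 (1 - \delta_2) \varsigma$, and $\mathcal{F}$ translate (up to the usual cube-root sign conventions) to the proposition's $b^2 (1 - \kappa^{-1})$, $2 \rho b$, and $f$.

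The second step is to recognize the left side of \eqref{determinantvertexw}, divided by $T^{1 / 3} (1 - \omega)$, as the quantity $H_{\omega} (s)$ of \eqref{homegas}, with the choice $J = \mathfrak{H} + \lfloor 2 \rho b c (T')^{2 / 3} \rfloor$ absorbing the $T^{2 / 3}$ piece of the exponent of $\zeta$. Applying Theorem \ref{determinantw} and Proposition \ref{convergencedeterminantmodel}, together with the elementary identity $\det ( \Id - K_{\Ai, c^2})_{L^2 (s, \infty)} = \det ( \Id - K_{\Ai, c^2 + s})_{L^2 (\mathbb{R}_{> 0})}$, yields
\[
H_{\omega_T} (\mathcal{F} s) \; \longrightarrow \; f \cdot g (c, s) \det \big( \Id - K_{\Ai, c^2 + s} \big)_{L^2 (\mathbb{R}_{> 0})} \qquad \text{as } T \to \infty.
\]

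The third step is to extract the probability by differentiation. Proposition \ref{linearsum} converts the finite difference $\varepsilon^{-1} ( H_{\omega_T} ( \mathcal{F} s + \varepsilon ) - H_{\omega_T} ( \mathcal{F} s ) )$ into the probability $\mathbb{P} [ \mathfrak{H} \geq \cdots - \mathcal{F} s T^{1 / 3} ]$, with error $\mathcal{O} ( ( \varepsilon T^{1 / 3})^{-1})$, subject to the constraint $|X_T| < T^5$. Unlike the ASEP setting, which required the tail estimate Lemma \ref{jlarge}, here the deterministic bound $| \mathfrak{H} (a, b) | \leq a + b$ (which holds since the height function counts paths crossing a segment of length $a + b$) makes the constraint pointwise for all sufficiently large $T$. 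Sending $T \to \infty$ first and then $\varepsilon \to 0$, smoothness of the limit in $s$ together with Definition \ref{stationarydistribution} give the difference-quotient limit as $f \cdot F_{\BR; c} (s)$; the chain-rule factor arising from the variable substitution $s_H = \mathcal{F} s$ cancels the $f$, yielding $F_{\BR; c} (s)$. The main obstacle will be the bookkeeping for the several parameter identifications---verifying that the spatial rescaling $T \mapsto yT$, the absorption of the $T^{2/3}$-level correction into $J$, and the cube-root sign conventions in $f$ and $\mathcal{F}$ combine consistently---but once this is done, the argument is a mechanical application of the three input results.
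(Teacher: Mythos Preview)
Your proposal is correct and follows essentially the same approach as the paper: both combine Theorem \ref{determinantw}, Proposition \ref{convergencedeterminantmodel}, and Proposition \ref{linearsum} exactly as in the proof of Corollary \ref{currentfluctuationsnear}, replace the probabilistic tail bound Lemma \ref{jlarge} by the deterministic estimate $|\mathfrak{H}(x,t)| \le x + t$, and then perform the time rescaling $\overline{T} = yT$ together with the parameter identifications $b_1 \Lambda_2 = b_2$, $2\rho y^{2/3} = (1 - \delta_2)\varsigma$, $f y^{1/3} = \mathcal{F}$. The only cosmetic differences are that the paper carries out the argument first in the variable $\overline{T}$ and substitutes at the end, and that it sends $T \to \infty$ along a coupled sequence $\varepsilon = \varepsilon_T$ rather than in two separate limits; your ordering of limits is equally valid.
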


\begin{proof}[Proof of Corollary \ref{currentfluctuationsnear} Assuming Proposition \ref{convergencedeterminant}]

Denote $m = b^2 - 2 bc \chi^{1 /3 } T^{-1 / 3}$, and apply Theorem \ref{determinant} with $x = x(T) = \lfloor (1 - 2b) T + 2c \chi^{1 / 3} T^{2 / 3} \rfloor$ and $\zeta = -q^{\chi^{2 / 3}s T^{1/ 3} - m T}$. We obtain that
\begin{flalign}
\label{homegaasymmetric1}
T^{1 / 3} (1 - \omega) \mathbb{E} \Big[ H_{\omega; T} \big( m, \chi^{2 / 3} s; J_T (x) \big) \Big] = \det \big( \Id + A_{\zeta; \chi^{2 / 3} s} \big)_{L^2 (\mathcal{C}_A)}, 
\end{flalign}

\noindent where we have recalled the definition of $H_{\omega; T} (m, s; J)$ from \eqref{homegas}, and we have denoted $A_{\zeta; \chi^{2 / 3} s} = A_{\zeta}$ to emphasize the dependence on $s$. 

Let $\varepsilon > 0$. Apply Theorem \ref{determinant} again, now with $\zeta = -q^{\chi^{2 / 3} (s + \varepsilon) T^{1 / 3} - m T}$. Subtracting from the result \eqref{homegaasymmetric1}, and dividing by $\chi^{2 / 3} \varepsilon T^{1 / 3} (1 - \omega)$, yields 
\begin{flalign}
\label{homegaasymmetric2}
\begin{aligned}
\displaystyle\frac{1}{\chi^{2 / 3} \varepsilon} \mathbb{E} & \Big[ H_{\omega; T} \big( m, \chi^{2 / 3} ( s + \varepsilon); J_T (x) \big) - H_{\omega; T} \big(  m, \chi^{2 / 3} s; J_T (x) \big)  \Big]  \\
& = \displaystyle\frac{1}{\chi^{2 / 3} \varepsilon T^{1 / 3} (1 - \omega) } \Big( \det \big( \Id + A_{\zeta; \chi^{2 / 3} (s + \varepsilon) } \big)_{L^2 (\mathcal{C}_A)} - \det \big( \Id + A_{\zeta; \chi^{2 / 3} s} \big)_{L^2 (\mathcal{C}_A)}\Big). 
\end{aligned} 
\end{flalign}	

We would like to apply Lemma \ref{jlarge} to deduce that the left side of \eqref{homegaasymmetric2} is small when restricting to the event that $|J_T (x)| > T^4$, and then apply Proposition \ref{linearsum} to more precisely estimate the left side of \eqref{homegaasymmetric2} when $|J_T (x)| \le T^4$. 

To that end, denote 
\begin{flalign*}
\mathcal{H} = \mathcal{H} (\varepsilon, s) = \mathcal{H} \big( \varepsilon, s; J_T (x) \big) = H_{\omega; T} \big( m, \chi^{2 / 3} ( s + \varepsilon); J_T (x) \big) - H_{\omega; T} \big(  m, \chi^{2 / 3} s; J_T (x) \big), 
\end{flalign*} 

\noindent and observe that
\begin{flalign}
\label{homegaasymmetric3}
& \displaystyle\frac{1}{\chi^{2 / 3} \varepsilon} \mathbb{E} [ \mathcal{H}  ] = \displaystyle\frac{1}{\chi^{2 / 3} \varepsilon} \mathbb{E} \big[  \textbf{1}_{|J_T (x)| > T^4} \mathcal{H} \big] + \displaystyle\frac{1}{\chi^{2 / 3} \varepsilon} \mathbb{E} \big[ \textbf{1}_{|J_T (x)| \le T^4} \mathcal{H} \big].
\end{flalign}

\noindent By the Lemma \ref{jlarge} and the first statement of Proposition \ref{linearsum}, 
\begin{flalign}
\label{homegaasymmetric4}
\displaystyle\frac{1}{\chi^{2 / 3} \varepsilon} \mathbb{E} \big[  \textbf{1}_{|J_T (x)| > T^4} | \mathcal{H}  |\big] \le \displaystyle\frac{T^{10} \mathbb{P} \big[ \big| J_T (x) \big| \ge T^4 \big]}{\chi^{2 / 3} \varepsilon} = \mathcal{O} \left( \displaystyle\frac{T^{10}}{\varepsilon e^T}  \right). 
\end{flalign}

\noindent By the second statement of Proposition \ref{linearsum} and Lemma \ref{jlarge} (and also the fact that $|J_T (x)| < T^4$ implies that $|J_T (x) - mT + \chi^{2 / 3} s T^{1 / 3}| < T^5$, for sufficiently large $T$), we estimate
\begin{flalign}
\label{homegaasymmetric5}
\displaystyle\frac{1}{\chi^{2 / 3} \varepsilon} \mathbb{E} & \big[ \textbf{1}_{|J_T (x)| < T^4} \mathcal{H} \big] = \mathbb{E} \big[ \mathcal{Z} (s, \varepsilon) \big] + \mathcal{O} \left( \displaystyle\frac{1}{\varepsilon T^{1 / 3} } \right) ,
\end{flalign}

\noindent where 
\begin{flalign}
\mathcal{Z} (s, \varepsilon) =  \textbf{1}_{X_{T, s} \ge 0} + \left( 1 + \displaystyle\frac{X_{T, s}}{\varepsilon \chi^{2 / 3} T^{1 / 3}}\right) \textbf{1}_{X_{T, s} \in (-\varepsilon \chi^{2 / 3} T^{- 1 / 3}, 0)}, 
\end{flalign}

\noindent where $X_{T, s} = J_T (x) - mT + \chi^{2 / 3} s T^{1 / 3}$. Combining \eqref{homegaasymmetric3}, \eqref{homegaasymmetric4}, and \eqref{homegaasymmetric5}, and inserting into \eqref{homegaasymmetric2}, we obtain that 
\begin{flalign}
\label{homegaasymmetric6}
& \displaystyle\frac{ \det \big( \Id + A_{\zeta; \chi^{2 / 3} (s + \varepsilon)} \big)_{L^2 (\mathcal{C}_A)} - \det \big( \Id + A_{\zeta; \chi^{2 / 3} s} \big)_{L^2 (\mathcal{C}_A)}}{\chi^{2 / 3} \varepsilon (1 - \omega) T^{1 / 3}} = \mathbb{E} \big[ \mathcal{Z} (s, \varepsilon) \big] + \mathcal{O} \left( \displaystyle\frac{1}{\varepsilon T^{1 / 3} } \right). 
\end{flalign}	

\noindent Now, we apply Proposition \ref{convergencedeterminant} to the left side of \eqref{homegaasymmetric6} to deduce that 
\begin{flalign}
\label{homegaasymmetric7}
\begin{aligned}
& \displaystyle\frac{1}{\varepsilon} \bigg( g(c, s + \varepsilon) \det \big( \Id - K_{\Ai, c^2} \big)_{L^2 (s + \varepsilon, \infty)} -	g(c, s) \det \big( \Id - K_{\Ai, c^2} \big)_{L^2 (s, \infty)} \bigg) \\ 
& \quad = \mathbb{E} \big[ \mathcal{Z} (s, \varepsilon) \big] + \mathcal{O} \left( \displaystyle\frac{1}{\varepsilon T^{1 / 3} } \right) + \mathcal{O} \big( \varepsilon^{-1} h(T) \big), 
\end{aligned}
\end{flalign}

\noindent where $h(T)$ is some function determined by Proposition \ref{convergencedeterminant} that tends to $0$ as $T$ tends to $\infty$.\footnote{In fact, it is possible to take $h(T) = T^{-1 / 3}$, although we will not verify this.} Applying the definition of $F_{\BR; c} (s)$ (see Definition \ref{stationarydistribution}), we deduce from \eqref{homegaasymmetric7} that 
\begin{flalign}
\label{homegaasymmetric8}
 \mathbb{E} \big[ \mathcal{Z} (s, \varepsilon) \big] = \displaystyle\frac{1}{\varepsilon} \displaystyle\int_s^{s + \varepsilon} F_{\BR; c} (r) dr + \mathcal{O} \big( \varepsilon^{-1} T^{-1 / 3} \big) + \mathcal{O} \big( \varepsilon^{-1} h(T) \big), 
\end{flalign}

\noindent for any real number $s \in \mathbb{R}$. Applying \eqref{homegaasymmetric8} again with $s$ replaced with $s - \varepsilon$ and observing that $\mathbb{E} \big[ \mathcal{Z} (s - \varepsilon, \varepsilon) \big] \le \mathbb{E} \big[ \textbf{1}_{X_{T, s} \ge 0} \big] \le \mathbb{E} \big[ \mathcal{Z} (s, \varepsilon) \big]$, we deduce that there exists a constant $C > 0$ such that 
\begin{flalign}
\label{homegaasymmetric9}
\begin{aligned}
\displaystyle\frac{1}{\varepsilon} \displaystyle\int_{s - \varepsilon}^s F_{\BR; c} (r) dr & \le \mathbb{E} \big[ \textbf{1}_{X_{T, s} \ge 0} \big] + \displaystyle\frac{C}{\varepsilon T^{1 / 3}} + \displaystyle\frac{C h(T)}{\varepsilon} \\
& \le \displaystyle\frac{1}{\varepsilon} \displaystyle\int_{s}^{s + \varepsilon} F_{\BR; c} (r) dr + \displaystyle\frac{2 C}{\varepsilon T^{1 / 3}} + \displaystyle\frac{2 C h(T)}{\varepsilon}. 
\end{aligned}
\end{flalign}

 Now, set $\varepsilon = \varepsilon_T$ to depend on $T$ in such a way that $\varepsilon_T$, $\varepsilon_T^{-1} h(T)$, and $\varepsilon_T^{-1} T^{-1 / 3}$ all tend to $0$ as $T$ tends to $\infty$. Then, \eqref{homegaasymmetric9} and the continuity of $F_{\BR; c} (s)$ imply that
\begin{flalign}
\label{homegaasymmetric10}
\displaystyle\lim_{T \rightarrow \infty} \mathbb{P} [J_T (x) \ge mT - \chi^{2 / 3} s T] = \displaystyle\lim_{T \rightarrow \infty} \mathbb{P} [X_{T, s} \ge 0] = \displaystyle\lim_{T \rightarrow \infty} \mathbb{E} [\textbf{1}_{X_{T, s} \ge 0}] = F_{\BR; c} (s), 
\end{flalign}

\noindent from which we deduce the theorem. 
\end{proof}

\begin{proof}[Proof of Corollary \ref{processfluctuationsnear} Assuming Proposition \ref{convergencedeterminantmodel}]

Let $\overline{T}$ be a large positive integer, define $\Lambda$, $\rho$, and $f$ as in Proposition \ref{convergencedeterminantmodel}, set $m = 2 \rho b c \overline{T}^{2 / 3} - b^2 (1 - \kappa^{-1}) \overline{T}$, and set $x = x (\overline{T}) = \lfloor \theta \overline{T} + 2 \rho \Lambda c \overline{T}^{2 / 3} \rfloor$. 

Following the proof of Corollary \ref{currentfluctuations}, and using the fact that we now deterministically have that $\mathfrak{H} (x, T) \le x + T < T^4$ deterministically (for all sufficiently large $T$), we obtain that 
\begin{flalign}
\label{modelprobability1}
\displaystyle\lim_{\overline{T} \rightarrow \infty} \mathbb{P} \big[ \mathfrak{H} (\theta \overline{T} + 2 \rho \Lambda_1 c \overline{T}^{2 / 3}, \overline{T}) \ge b^2 (1 - \kappa^{-1}) \overline{T} - 2 \rho b c \overline{T}^{2 / 3} - s f \overline{T}^{1 / 3} \big] = F_{\BR; c} (s), 
\end{flalign}

\noindent which is the analog of \eqref{homegaasymmetric10}. 

Now set $\overline{T} = y T = \Lambda_2 (1 - \delta_1) T$. Then, \eqref{modelprobability1} becomes 
\begin{flalign*}
\displaystyle\lim_{T \rightarrow \infty} \mathbb{P} & \big[ \mathfrak{H}(x T + 2 \rho \Lambda_1 y^{2 / 3} c T^{2 / 3}, y T) \ge b_1^2 \Lambda_2 (\delta_2 - \delta_1) T - 2 \rho b_1 y^{2 / 3} T^{2 / 3} - s f y^{1 / 3} T^{1 / 3} \big] = F_{\BR; c} (s). 
\end{flalign*}

\noindent Now the corollary follows from the above identity and the facts that $b_1 \Lambda_2 = b_2$, $2 \rho y^{2 / 3} = (1 - \delta_2) \varsigma$, and $f y^{1 / 3} = \mathcal{F}$. 
\end{proof}

\subsection{Proofs of Theorem \ref{currentfluctuations} and Theorem \ref{processfluctuations}}

\label{NearInitial} 

Corollary \ref{currentfluctuationsnear} and Corollary \ref{processfluctuationsnear} provide fluctuations for the current of the ASEP and stochastic six-vertex model under near-stationary (or near-translation-invariant) initial data. We would like to modify these results so that they hold under exactly stationary (or exactly translation-invariant) initial data. As explained below, we will do this through probabilistic coupling arguments. 

This will be slightly quicker for the stochastic six-vertex model, so we begin with the proof of Theorem \ref{processfluctuations}. 	

\begin{proof}[Proof of Theorem \ref{processfluctuations}]

Let $\{ b_{1, T} \}_{T \in \mathbb{Z}_{> 0}} \in (0, 1)$ be a sequence that converges to $b_1$ such that the following holds. If we denote $\beta_{1, T} = b_{1, T} / (1 - b_{1, T})$, then $\omega_T = \kappa \beta_2 \beta_{1, T}^{-1}$ satisfies $\omega_T = 1 - T^{-10}$, if $T > 10$ (which we assume throughout). 

Now, consider two stochastic six-vertex models, one called $\textbf{S}$ with double-sided $(b_1, b_2)$-Bernoulli initial data, and one called $\textbf{S}_T$ with double-sided $(b_{1, T}, b_2)$-Bernoulli initial data. For each $i \in \mathbb{Z}_{> 0}$, let $\varphi^{(x)} (i)$ denote the indicator that $(i, 1)$ is an entrance site for a path in $\textbf{S}$, and let $\varphi^{(y)} (i)$ denote the indicator that $(1, i)$ is an entrance site for a path in $\textbf{S}$. Define $\varphi_T^{(x)} (i)$ and $\varphi_T^{(y)} (i)$ similarly, but for the model $\textbf{S}_T$. 

Couple $\varphi$ and $\varphi_{T}$ in such a way that $\varphi^{(x)} (i) = \varphi_T^{(x)} (i)$ and $\mathbb{P} \big[ \varphi_{i; T}^{(y)} \ne \varphi_i^{(y)} \big] = b_{1, T} - b_1 = \mathcal{O} \big( T^{-10} \big)$, for each $i \in \mathbb{Z}_{> 0}$. Moreover, couple the models $\textbf{S}$ and $\textbf{S}_T$ so that they coincide in the strip $\mathbb{Z}_{> 0} \times [0, k]$, where $k \in \mathbb{Z}_{\ge 0}$ is the smallest integer such that $\varphi^{(y)} (k + 1) \ne \varphi_T^{(y)} (k + 1)$, and evolve independently outside this strip. 

For each $Y \in \mathbb{R}_{> 0}$, let $E_Y$ denote the event that $k \le Y$. Since $\mathbb{P} \big[ \varphi_{i; T}^{(y)} \ne \varphi_i^{(y)} \big] = \mathcal{O} \big( T^{-10} \big)$, a union estimate implies that $\mathbb{P} \big[ E_T \big] = \mathcal{O} \big( Y T^{-10} \big)$. Now, let $\mathbb{P}_{\textbf{S}}$ denote the probability measure with respect to the stochastic six-vertex model $\textbf{S}$, and let $\mathbb{P}_{\textbf{S}_T}$ denote the probability measure with respect to the model $\textbf{S}_T$. Let $X, Z \in \mathbb{R}_{> 0}$. 

Due to our coupling of $\textbf{S}$ and $\textbf{S}_T$, we have that 
\begin{flalign}
\label{nearstationarystochasticmodel}
\big| \mathbb{P}_{\textbf{S}} \big[ \mathfrak{H} (X, Y) \ge Z \big] - \mathbb{P}_{\textbf{S}_Y } \big[ \mathfrak{H} (X, Y) \ge Z \big] \big| \le \mathbb{P} \big[ E_Y \big] = \mathcal{O} \big( Y T^{-10} \big). 
\end{flalign}

Now, Theorem \ref{processfluctuations} follows from Corollary \ref{processfluctuationsnear} and \eqref{nearstationarystochasticmodel}, after setting $X = x (T + \varsigma c T^{2 / 3})$, $Y = y T$, and $Z = b_1 b_2 (\delta_2 - \delta_1) T - b_1 (1 - \delta_2) \varsigma c T^{2 / 3} - \mathcal{F} T^{1 / 3}$, and letting $T$ tend to $\infty$. 
\end{proof}

\noindent Now we turn to the proof of Theorem \ref{currentfluctuations}, whose proof will be similar to that of Theorem \ref{processfluctuations}. 

\begin{proof}[Proof of Theorem \ref{currentfluctuations}]

By rescaling time, we can assume that $\delta = R - L = 1$. 

Let $\{ b_T \}_{T \in \mathbb{R}_{> 0}} \in (0, 1)$ be a sequence that converges to $b$ such that the following holds. If we denote $\beta = b / (1 - b)$ and $\beta_T = b_T / (1 - b_T)$, then $\omega_T = \beta \beta_T^{-1}$ satisfies $\omega_T = 1 - T^{-10}$, if $T > 1$ (which we assume throughout). Observe that $b_T > b$ for all $T > 1$.

Let us consider two ASEPs, $\textbf{A}$ under $b$-stationary initial data, and $\textbf{A}_T$ under double-sided $(b_T, b)$-Bernoulli initial data. For each $i \in \mathbb{Z}$ and $t \in \mathbb{R}_{\ge 0}$, let $\eta_t^{(\textbf{A})} (i)$ denote the indicator that a particle is at position $i$ at time $t$ in $\textbf{A}$. Define $\eta_t^{(\textbf{A}_T)} (i)$ similarly. 

Couple $\eta_0^{(\textbf{A})}$ and $\eta_0^{(\textbf{A}_T)}$ in such a way that $\eta_0^{(\textbf{A})} (i) = \eta_0^{(\textbf{A}_T)} (i)$ for each $i \in \mathbb{Z}_{> 0}$; $\eta_0^{(\textbf{A}_T)} (i) \ge \eta_0^{(\textbf{A})} (i)$ for each $i \in \mathbb{Z}_{\le 0}$; and $\mathbb{P} \big[ \eta_0^{(\textbf{A})} (i) \ne \eta_0^{(\textbf{A}_T)} (i) \big] = b_T - b = \mathcal{O} \big( T^{-10} \big)$, for each $i \in \mathbb{Z}_{\le 0}$. 

We also couple dynamics of the processes $\textbf{A}$ and $\textbf{A}_T$ under the \emph{basic coupling} \cite{CEP}. For a more precise definition of, and some properties associated with, the basic coupling we refer to Section 1, Part 3 of \cite{IS}; Section 2 of \cite{OCVDA}; or Section 2 of \cite{CFA}. Let us list two of these properties (a more thorough explanation can be found in the above references). 

The first property is often referred to \emph{attractivity} (or \emph{monotonicity}), which states that $\eta_t^{\textbf{A}_T} (i) \ge \eta_t^{\textbf{A}} (i)$, for each $t > 0$ and $i \in \mathbb{Z}$. Thus, we can consider the \emph{defect process} $\textbf{D}_T = \{ \eta_t^{\textbf{D}_T} (i) \}_{i \in \mathbb{Z}, T \in \mathbb{R}_{\ge 0}}$, defined by setting $\eta_t^{\textbf{D}_T} (i) = \eta_t^{\textbf{A}_T} (i) - \eta_t^{\textbf{A}} (i)$. This process corresponds to the particles in $\textbf{A}_T$ but not in $\textbf{A}$, which we refer to as \emph{defect particles}; we refer to the particles of $\textbf{A} = \textbf{A}_T \setminus \textbf{D}_T$ as \emph{original particles}.  

The second property is that defect particles behave as \emph{second class particles}, meaning the following. If an original particle attempts to jump to a location occupied by a defect particle, then the two particles switch positions; for all other types of jumps, the exclusion property is upheld. 

Now, for any $X, Z \in \mathbb{R}$, let us estimate the difference $\big| \mathbb{P}_{\textbf{A}} \big[ J_T (X) > Z \big] - \mathbb{P}_{\textbf{A}_T} \big[ J_T (X) > Z \big] \big|$. Since the initial data for $\textbf{A}$ and $\textbf{A}_T$ coincide to the right of $0$, the current $J_T (X)$ of $\textbf{A}$ and the current $J_T (X)$ of $\textbf{A}_T$ are unequal only on the event $E_T (X)$ that there exists a defect particle to right of $X$ at time $T$. Thus, 
\begin{flalign}
\label{nearstationaryprocess}
\big| \mathbb{P}_{\textbf{A}} \big[ J_T (X) > Z \big] - \mathbb{P}_{\textbf{A}_T} \big[ J_T (X) > Z \big] \big| \le \mathbb{P} \big[ E_T (X) \big]. 
\end{flalign}

To estimate $\mathbb{P} \big[ E_T (X) \big]$, observe that $E_T (X) \subseteq E_{T; 1} (X) \cup E_{T; 2} (X)$, where $E_{T; 1} (X)$ is the event that there exists a defect particle initially in the interval $[- T^5, 0]$, and $E_{T; 2} (X)$ is the event that there exists a defect particle initially to the left of $-T^5$ that is to the right of $X$ by time $T$. 

Since $\mathbb{P} \big[ \eta_0^{(\textbf{A})} (i) \ne \eta_0^{(\textbf{A}_T)} (i) \big] = \mathcal{O} \big( T^{-10} \big)$, a union estimate yields $\mathbb{P} \big[ E_{T; 1} \big]  = \mathcal{O} \big( T^{-5} \big)$. 

To bound $\mathbb{P} \big[ E_{T; 2} \big]$, observe that the trajectory of any second-class particle is stochastically dominated by the trajectory of an asymmetric random walk that jumps one space to the right according to the ringing times of an exponential clock of rate $R + L$. This, and a large deviations estimate for such a random walk, quickly imply that  $\mathbb{P} \big[ E_{2; T} (X) \big] = \mathcal{O} \big( e^{-T} \big)$ for any $X \ge - T^2$. 

Hence, if $X \ge -T^2$, we deduce that $\mathbb{P} \big[ E_T (X) \big] \le \mathbb{P} \big[ E_{T, 1} (X) \big] + \mathbb{P} \big[ E_{T, 2} (X) \big] = \mathcal{O} \big( T^{-5} \big)$. 

Now, Theorem \ref{currentfluctuations} follows from Corollary \ref{currentfluctuationsnear} and \eqref{nearstationaryprocess}, after setting $X = (1 - 2b) T + 2 c \chi^{1 / 3} T^{2 / 3}$ and $Z = b^2 T - 2 b c \chi^{1 / 3} T^{2 / 3} - \chi^{2 / 3} s T^{1 / 3}$. 
\end{proof}

\section{Pre-processing the Determinant}

\label{DeterminantNew}

We would now like to establish Proposition \ref{convergencedeterminant} and Proposition \ref{convergencedeterminantmodel}, which provide asymptotics for the Fredholm determinants given in Theorem \ref{determinant} and Theorem \ref{determinantw}, respectively. 

Asymptotic analyses in similar settings have been implemented in several previous works \cite{PTAEPSSVM, MP, SSVM, CDERF}, but the following concern prevents one from directly proceeding with those methods. In Theorem \ref{determinantw} (for example), the contours $\mathcal{C}_V$ and $\Gamma_V$ must contain $q \kappa \beta_2$, but avoid $q \beta_1$. However, in the setting of Proposition \ref{convergencedeterminant}, $q \kappa \beta_1$ and $q \beta_2$ are very close to $q \beta$, meaning that $\mathcal{C}_V$ and $\Gamma_V$ must nearly touch at $q \beta$. Thus, the integrand on the right side of \eqref{kernelp} is singular near $q \beta$, which poses issues for asymptotic analysis. 

One way to circumvent this issue would be to deform $\mathcal{C}_V$ (and $\Gamma_V$) through the poles $q \beta_1$ and $q \kappa \beta_2$, and account for all residues accumulated in the process. Unfortunately, this would involve having to track all residues accumulated under the $\mathcal{C}_V$ deformation by each summand on the right side of \eqref{determinantsum}, which is a troublesome task. 

To avoid this, we follow Section 6.5 of \cite{HFSE}. Namely, our goal in this section is to establish Lemma \ref{lclexponential}, which (asymptotically) reformulates the Fredholm determinant $\det \big( \Id + V_{\zeta} \big)_{L^2 (\mathcal{C}_V)}$ as one over $L^2 (\mathbb{R}_{> 0})$; after this, addressing the poles $q \beta_1$ and $q \kappa \beta_2$ will be simpler, and a more direct asymptotic analysis will be possible in Section \ref{RightStationary}.

Unfortunately, as to be explained in Remark \ref{cc1ggamma1stationarybd} below, this reformulation is only possible if $\Re v < \Re w$ for all $v \in \Gamma_V$ and $w \in \mathcal{C}_V$. As can be seen from the definitions of these contours in Theorem \ref{determinantw} (and also from Figure \ref{contoursdeterminantw}), no choice of such contours is possible.  

Thus, unlike in \cite{HFSE}, we must first perform some ``pre-processing'' of the Fredholm determinant, which will proceed in three parts. First, in Section \ref{Contours}, we explain one choice of contours $\mathcal{C}_A$, $\Gamma_A$, $\mathcal{C}_V$, and $\Gamma_V$ that will be particularly amenable to asymptotic analysis. Although useful, these contours do not satisfy $\Re v < \Re w$ for all $v \in \Gamma$ and $w \in \mathcal{C}$. So, in Section \ref{DeterminantDifferentContour}, we explain how to ``truncate'' these contours so that the parts of these contours that remain do satisfy this property; this will result in some small error, which we will show can be ignored in the asymptotics. Next, in Section \ref{ckr} we rewrite this ``truncated'' Fredholm determinant as one over $L^2 (\mathbb{R}_{> 0})$; this will be similar to what was done in Lemma 6.11 of \cite{HFSE}.

\subsection{Contours}

\label{Contours}

The purpose of this section is to exhibit one choice of contours $\mathcal{C}_A$, $\mathcal{C}_V$, $\Gamma_A$, and $\Gamma_V$ that will later lead to proofs of the first parts of Proposition \ref{convergencedeterminant} and Proposition \ref{convergencedeterminantmodel}; we will do this in Section \ref{Contoursckgammak}. Our definitions are based on a certain ``exponential form'' for the kernels $A_{\zeta}$ and $V_{\zeta}$, to be explained in Section \ref{ExponentialKernelStationary}. 

\subsubsection{An Exponential Form for the Kernel} 

\label{ExponentialKernelStationary} 

The analysis of the stochastic six-vertex model will be similar to that of the ASEP, so we perform them in parallel. To that end, we introduce the notation $K \in \{ V, A \}$, which refers to the stochastic six-vertex model when $K = V$ and to the ASEP when $K = A$. 

Define the parameters $\mathcal{F}_K$, $m_K$, and $\eta_K$ through 
\begin{flalign}
\label{fetamk}
\begin{aligned}
\mathcal{F}_A & = \chi^{2 / 3}; \qquad \qquad \qquad \qquad \eta_A = 1 - 2b + \displaystyle\frac{2 c \chi^{1 / 3}}{T^{1 / 3}}; \quad  m_A = b^2 - \displaystyle\frac{2 b \chi^{1 / 3} c}{T^{1 / 3}}; \\
\mathcal{F}_V &= \Lambda^{-1 / 3} (\kappa - 1)^{1 / 3} \chi^{2 / 3}; \quad \eta_V = \theta + \displaystyle\frac{2 \rho \Lambda c}{T^{1 / 3}}; \qquad \qquad m_V = b^2 \big( 1 - \kappa^{-1} \big) - \displaystyle\frac{2 \rho b c}{T^{1 / 3}}, 
\end{aligned}
\end{flalign}

\noindent where we recall the notation of Theorem \ref{currentfluctuations}, Theorem \ref{processfluctuations}, Theorem \ref{determinant}, and Theorem \ref{determinantw}. 

Set $\zeta = - q^{s \mathcal{F}_K T^{1 / 3} - m_K T}$ and $x = x(T) = \eta_K T$ in Theorem \ref{determinant} (when $K = A$) and Theorem \ref{determinantw} (when $K = V$). The replacement of $x(T) = \lfloor \eta_K T \rfloor$ (which was what was originally stated Proposition \ref{convergencedeterminant} and Proposition \ref{convergencedeterminantmodel}) with $x(T) = \eta_K T$ is for notational convenience and will not affect the asymptotics. 

Using the notation \eqref{fetamk}, the kernels $K = K_{\zeta}$ from Proposition \ref{convergencedeterminant} and Proposition \ref{convergencedeterminantmodel} can be rewritten as
\begin{flalign}
\label{kernelk}
\begin{aligned}
K (w, w')  = \displaystyle\frac{1}{2 \textbf{i} \log q} \displaystyle\sum_{j = -\infty}^{\infty} & \displaystyle\int_{\Gamma_K} \left( \displaystyle\frac{v}{w} \right)^{\mathcal{F}_K s T^{1 / 3}} \displaystyle\frac{1}{\sin \big( \pi (\log q)^{-1} (\log v - \log w + 2 \pi \textbf{i}j) \big) v (w' - v)} \\
& \times \exp \Big( \big( G_K (w) - G_K (v) \big) T \Big) \displaystyle\frac{(\varkappa \beta_2 q w^{-1}; q)_{\infty} (\beta_1^{-1} q^{-1} v; q)_{\infty}}{(\varkappa \beta_2 q v^{-1}; q)_{\infty} (\beta_1^{-1} q^{-1} w; q)_{\infty}} dv. 
\end{aligned}
\end{flalign}

\noindent In \eqref{kernelk}, the constant $\varkappa = \varkappa_K$ and the function $G_K (z)$ depend on whether $K = A$ or $K = V$. Explicitly, $\varkappa = 1$ when $K = A$, and $\varkappa = \kappa$ when $K = V$. Furthermore, 
\begin{flalign}
\label{gastationary}
G_A (z) =  \Omega_A (z) + 2 c \chi^{1 / 3} T^{-1 / 3} P_A (z); \qquad G_V (z) = \Omega_V (z) + 2 c \rho T^{-1 / 3} P_V (z), 
\end{flalign}

\noindent where 
\begin{flalign}
\label{omegap}
\begin{aligned} 
 P_A (z) & = \log (z + q) -  b  \log z; \qquad \qquad \Omega_A (z) = \displaystyle\frac{q}{q + z} + \big( 1 - 2 b \big) \log (z + q) + b^2 \log z ; \\
P_V (z) & = \Lambda \log (\kappa^{-1} z + q) - b \log z; \qquad \Omega_V (z) = \theta \log (\kappa^{-1} z + q) - \log (z + q) + b^2 (1 - \kappa^{-1}) \log z.
\end{aligned}
\end{flalign}

To perform a saddle point analysis of the kernel $K$, we require the critical points of $G_K$. Inserting the definitions \eqref{fetamk} of $\eta_K$ and $m_K$ into \eqref{gastationary} and \eqref{omegap} yields 
\begin{flalign*}
G_A' (z) & = \displaystyle\frac{(1 - b)^2 (z - q \beta)^2}{z (z + q)^2} + \displaystyle\frac{2 (1 - b) \chi^{1 / 3} (z - q \beta) c T^{-1 / 3}}{z (z + q)}; \\
G_V' (z) & = \displaystyle\frac{(1 - b)^2 (\kappa - 1) (z - q \beta)^2}{z (z + q \kappa) (z + q)} + \displaystyle\frac{\kappa \rho (1 - b) (z - q \beta) c T^{-1 / 3}}{z (z + q \kappa)}. 
\end{flalign*}

\noindent Thus, for either $K \in \{ V, A \}$, we find that 
\begin{flalign}
\label{gkderivatives}
G_K' (q \beta ) = 0; \qquad (q \beta)^2 G_K'' (q \beta) = 2 \mathcal{F}_K^2 c T^{-1 / 3}; \qquad (q \beta)^3 G_K''' (q \beta) = 2 \mathcal{F}_K^3 + \mathcal{O} \big( T^{-1 / 3} \big). 
\end{flalign}

\noindent Therefore, if we define $R_K$ through the identity
\begin{flalign}
\label{rstationary}
R_K \left( \displaystyle\frac{\mathcal{F}_K (z - q \beta)}{q \beta} \right) = G_K (z) - G_K (q \beta) - \displaystyle\frac{1}{3} \left( \displaystyle\frac{\mathcal{F}_K (z - q \beta)}{q \beta} \right)^3 - \left( \displaystyle\frac{\mathcal{F}_K (z - q \beta)}{q \beta} \right)^2 c T^{-1 / 3}, 
\end{flalign}

\noindent then we deduce from \eqref{gkderivatives} and a Taylor expansion that 
\begin{flalign}
\label{gzpsistationary}
R_K \left( \displaystyle\frac{\mathcal{F}_K (z - q \beta)}{q \beta} \right) = \mathcal{O} \big( (z - q \beta)^4 \big) + \mathcal{O} \big( (z - q \beta)^3 T^{-1 / 3} \big). 
\end{flalign}

\subsubsection{Choice of Contours \texorpdfstring{$\mathcal{C}_K$}{} and \texorpdfstring{$\Gamma_K$}{}} 

\label{Contoursckgammak}

We would now like to select contours $\mathcal{C}_K$ and $\Gamma_K$ such that $\Re \big( G_K (w) - G_K (v) \big) < 0$, for all $w \in \mathcal{C}_K$ and $v \in \Gamma_K$ away from $q \beta$, subject to the restrictions stated in Theorem \ref{determinant} and Theorem \ref{determinantw}. If we are able to do this, then the integrand on the right side of \eqref{kernelk} will decay exponentially away from $q \beta$. This will allow us to localize that integral around the critical point $q \beta$ and use the estimate \eqref{gzpsistationary} to simplify the asymptotics. For the remainder of this paper, we omit the subscript $K$ to simplify notation. 

Our choice for $\mathcal{C}$ and $\Gamma$ will be similar to the choice presented in Section 5 of \cite{SSVM} and Section 6 of \cite{PTAEPSSVM}. In those works, the authors take $\mathcal{C}$ and $\Gamma$ to follow level lines of the equation $\Re \Omega (z) = \Omega (\psi)$ (recall the definition of $\Omega (z)$ from \eqref{omegap}), which are depicted in Figure \ref{l1l2l3} below in the case $K = V$. 

These level lines were studied at length in Section 5.1 of \cite{SSVM} and Proposition 6.7 of \cite{PTAEPSSVM}. The following proposition summarizes those results.\footnote{The papers \cite{SSVM} and \cite{PTAEPSSVM} actually studied level lines of generalizations of $\Omega (z)$; Proposition \ref{linesgv} is the result of specializing Proposition 6.1 of \cite{PTAEPSSVM} to the case $\eta = \theta$ and Proposition 6.7 of \cite{PTAEPSSVM} to the case $\eta = 1 - 2b$. }

\begin{prop}[{\cite[Section 5.1]{SSVM}, \cite[Proposition 6.1]{PTAEPSSVM}, \cite[Proposition 6.7]{PTAEPSSVM}}]

\label{linesgv}

There exist three simple, closed curves, $\mathcal{L}_1 = \mathcal{L}_{1; K} $, $\mathcal{L}_2 = \mathcal{L}_{2; K} $, and $\mathcal{L}_3 = \mathcal{L}_{3; K}$, that all pass through $q \beta$ and satisfy the following properties. 

\begin{enumerate}

\item{ \label{vzpsij} Any $z \in \mathbb{C}$ satisfying $\Re \Omega (z) = \Omega (q \beta)$ lies on $\mathcal{L}_j$ for some $j \in \{1, 2, 3 \}$. }

\item{ \label{vzpsiall} Any complex number $z \in \mathcal{L}_1 \cup \mathcal{L}_2 \cup \mathcal{L}_3$, except $z = -q$ if $K = A$, satisfies $\Re \Omega (z) = \Omega (q \beta)$. }

\item{ \label{anglev} The level lines $\mathcal{L}_1$, $\mathcal{L}_2$, and $\mathcal{L}_3$ are all star-shaped. }

\item{ \label{containmentv} We have that $\mathcal{L}_1 \cap \mathcal{L}_2 = \mathcal{L}_2 \cap \mathcal{L}_3 = \mathcal{L}_1 \cap \mathcal{L}_3 = \{ q \beta \}$. Furthermore, $\mathcal{L}_1 \setminus \{ q \beta \}$ is contained in the interior of $\mathcal{L}_2$, and $\mathcal{L}_2 \setminus \{ q \beta \}$ is contained in the interior of $\mathcal{L}_3$. }

\item{ \label{qkappaq0v} The interior of $\mathcal{L}_1$ contains $0$, but not $-q$; the interior of $\mathcal{L}_2$ contains $0$, but not $- q \kappa$; and the interior of $\mathcal{L}_3$ contains $0$, $-q$, and $-q \kappa$. Moreover, if $K = V$, then the interior of $\mathcal{L}_2$ contains $-q$; if $K = A$, then $-q$ lies on the curve $\mathcal{L}_2$.  }

\item{ \label{psianglesv} The level line $\mathcal{L}_1$ meets the positive real axis (at $q \beta$) at angles $5 \pi / 6$ and $- 5 \pi /6$; the level line $\mathcal{L}_2$ meets the positive real axis (at $q \beta$) at angles $\pi / 2$ and $- \pi / 2$; and the level line $\mathcal{L}_3$ meets the positive real axis (at $q \beta$) at angles $\pi / 6$ and $- \pi /6$. }

\item{ \label{positiverealv} For all $z$ in the interior of $\mathcal{L}_2$ but strictly outside of $\mathcal{L}_1$, we have that $\Re \big( \Omega (z) - \Omega (q \beta) \big) > 0$. }

\item{ \label{negativerealv} For all $z$ in the interior of $\mathcal{L}_3$ but strictly outside of $\mathcal{L}_2$, we have that $\Re \big( \Omega (z) - \Omega (q \beta) \big) < 0$. }

\end{enumerate} 

\end{prop}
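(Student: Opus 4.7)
The plan is to treat $\Omega = \Omega_K$ as a multi-valued analytic function with logarithmic branch points at $0$, $-q$, and (when $K=V$) $-q\kappa$, and to study the real-analytic curve $\{\Re\Omega(z) = \Omega(q\beta)\}$ by combining a local Taylor analysis at the triple critical point $q\beta$ with global arguments based on the harmonicity of $\Re\Omega$ away from its singularities.

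First I would verify the local picture at the saddle. From \eqref{gkderivatives}, combined with the decomposition $G_K = \Omega_K + 2c\rho T^{-1/3} P_K$ in \eqref{gastationary}, one reads off $\Omega'(q\beta) = \Omega''(q\beta) = 0$ while $(q\beta)^3 \Omega'''(q\beta) = 2\mathcal{F}^3 > 0$. Hence
\[
\Omega(z) - \Omega(q\beta) = \tfrac{1}{3}\Omega'''(q\beta)(z-q\beta)^3 + O\bigl((z-q\beta)^4\bigr),
\]
so near $q\beta$ the level set consists of six real-analytic arcs meeting at angles $\pm\pi/6,\pm\pi/2,\pm 5\pi/6$, with $\Re(\Omega-\Omega(q\beta))$ taking alternating signs on consecutive sectors. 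This immediately gives property \eqref{psianglesv}.

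Next I would extend these six local arcs globally. Since $\Re\Omega$ is harmonic on $\mathbb{C}\setminus\{0,-q,-q\kappa\}$ (after choosing a branch cut for the logarithms along the negative real axis), any connected component of the level set is either a closed analytic loop or accumulates at one of the singularities. Inspecting $\Re\Omega$ near each of $\{0,-q,-q\kappa,\infty\}$ shows $\Re\Omega\to\pm\infty$ there, so no such accumulation can occur, and each of the six local arcs at $q\beta$ must return to $q\beta$. They thus pair into three simple closed curves. The sign-alternation on the six sectors, combined with the maximum principle applied to each bounded component of the complement, forces the pairing: the rays at $\pm 5\pi/6$ bound the innermost loop $\mathcal{L}_1$, those at $\pm\pi/2$ bound the middle loop $\mathcal{L}_2$, and those at $\pm\pi/6$ bound the outermost loop $\mathcal{L}_3$. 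This yields properties \eqref{vzpsij}, \eqref{vzpsiall}, \eqref{containmentv}, \eqref{positiverealv}, and \eqref{negativerealv}. Property \eqref{qkappaq0v} then follows by locating each singularity: $0$ lies inside every loop because $\Re\Omega\to-\infty$ there, while $-q$ and $-q\kappa$ are placed by comparing the value of $\Re\Omega$ on a small circle around each with $\Omega(q\beta)$; in the $A$ case the balance is exact, so $\Omega_A(-q) = \Omega_A(q\beta)$ places $-q$ on $\mathcal{L}_2$.

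The main obstacle will be the star-shapedness property \eqref{anglev}. For this I would follow the approach of Section 5.1 of \cite{SSVM} and Proposition 6.7 of \cite{PTAEPSSVM}, computing $z\Omega'(z)$ explicitly as a rational function. In both $K=A$ and $K=V$ its numerator factors as $(z-q\beta)^2$ times a factor whose argument varies monotonically with $\arg z$ along each circle $|z|=r$ not passing through $q\beta$ or the singularities; by the argument principle this implies that each $\mathcal{L}_j$ meets every ray from the origin in exactly one point, which is star-shapedness. Handling both cases uniformly is the part most likely to require case-by-case bookkeeping, since the extra singularity at $-q\kappa$ in the $V$ case and the non-logarithmic term $q/(q+z)$ in the $A$ case alter the global geometry in different ways, and in the $A$ case one must also check separately that $\mathcal{L}_2$ actually passes through the singularity $-q$ by verifying $\Omega_A(-q) = \Omega_A(q\beta)$ directly from \eqref{omegap} with the principal branch on each side.
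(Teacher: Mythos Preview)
Your overall strategy—local Taylor expansion at the triple critical point, followed by global harmonic-function and maximum-principle arguments—is sound and is essentially how the cited references proceed. The paper itself does not prove this proposition; it quotes it from \cite{SSVM} and \cite{PTAEPSSVM}, so your outline is being compared against those sources.

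There is, however, a genuine gap in your treatment of the $K=A$ case at $z=-q$. You describe $-q$ as merely a logarithmic branch point and assert that $\Re\Omega_A\to\pm\infty$ there forbids level-curve accumulation. But from \eqref{omegap}, $\Omega_A$ also carries the simple-pole term $q/(q+z)$. Writing $z=-q+re^{i\theta}$, one has $\Re\Omega_A \approx (q/r)\cos\theta+(1-2b)\log r$, so for every small $r$ the equation $\Re\Omega_A=\Omega_A(q\beta)$ has solutions with $\cos\theta\approx 0$: the level curve approaches $-q$ along the vertical directions. This is precisely why part \eqref{vzpsiall} excludes $z=-q$ and why part \eqref{qkappaq0v} places $-q$ \emph{on} $\mathcal{L}_2$. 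Your proposed remedy, ``verifying $\Omega_A(-q)=\Omega_A(q\beta)$ directly,'' cannot work: $\Omega_A(-q)$ is undefined.

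Consequently your claim that each of the six local arcs at $q\beta$ ``must return to $q\beta$'' fails for the two arcs leaving at angles $\pm\pi/2$ when $K=A$: those arcs terminate at $-q$, and $\mathcal{L}_2$ is the closure of their union. Once you correct this by performing a second local analysis at the pole $-q$ (where two level arcs enter vertically), the pairing and sign-alternation argument goes through as you describe. The star-shapedness step you flag as the main obstacle is indeed handled in the references by the radial-derivative computation you outline; for $K=A$ one must additionally check that star-shapedness survives $\mathcal{L}_2$ passing through the singularity.
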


\begin{figure}

\begin{minipage}{0.45\linewidth}

\centering

\begin{tikzpicture}[
      >=stealth,
      auto,
      style={
        scale = 1
      }
			]

			\draw[<->, black	] (0, -3.5) -- (0, 3.5) node[black, above = 0] {$\Im z$};
			\draw[<->, black] (-4, 0) -- (2, 0) node[black, right = 0] {$\Re z$};
			
			\draw[->,black, thick] (1.18, 0) -- (1.35, .34641);
			\draw[-,black, thick] (1.18, 0) -- (1.35, -.34641);
			\draw[black, thick] (1.35, .34641) arc (15.4:180:1.34536) node [black, above = 42, right = 10 	] {$\mathcal{C}_V$}; 
			\draw[black, thick] (1.35, -.34641) arc (-15.4:-180:1.34536);
			
			\draw[->, black, thick] (1.05, 0) -- (.85, .34641); 
			\draw[-, black, thick] (1.05, 0) -- (.85, -.34641); 
			\draw[black, thick] (.85, .34641) arc (21.67:180:.9)  node [black, above = 12, right = 7 	] {$\Gamma_V$}; 
			\draw[black, thick] (.85, -.34641) arc (-21.67:-180:.9);

			\path[draw, dashed] (1.1, 0) -- (1.08, .02) -- (1.06, .03) -- (1.03, .04) -- (1, .05) -- (.98, .06) -- (.95, .08) -- (.93, .09) -- (.89, .1) -- (.86, .12) -- (.83, .13) -- (.8, .14) -- (.75, .16) -- (.71, .17) -- (.65, .19) -- (.61, .2) -- (.56, .21) -- (.5, .22) -- (.47, .23) -- (.43, .24) -- (.38, .24) -- (.35, .24) -- (.3, .24) -- (.27, .24) -- (.22, .24) -- (.18, .23) -- (.14, .22) -- (.08, .21) -- (.06, .19) -- (.04, .18) -- (0, .17) -- (-.02, .15) -- (-.04, .13) -- (-.06, .12) -- (-.07, .1) -- (-.08, .09) -- (-.08, .08) -- (-.09, .07) -- (-.1, .04) -- (-.11, .03) -- (-.11, 0);
			
			\path[draw, dashed] (1.1, 0) -- (1.08, -.02) -- (1.06, -.03) -- (1.03, -.04) -- (1, -.05) -- (.98, -.06) -- (.95, -.08) -- (.93, -.09) -- (.89, -.1) -- (.86, -.12) -- (.83, -.13) -- (.8, -.14) -- (.75, -.16) -- (.71, -.17) -- (.65, -.19) -- (.61, -.2) -- (.56, -.21) -- (.5, -.22) -- (.47, -.23) -- (.43, -.24) -- (.38, -.24) -- (.35, -.24) -- (.3, -.24) -- (.27, -.24) -- (.22, -.24) -- (.18, -.23) -- (.14, -.22) -- (.08, -.21) -- (.06, -.19) -- (.04, -.18) -- (0, -.17) -- (-.02, -.15) -- (-.04, -.13) -- (-.06, -.12) -- (-.07, -.1) -- (-.08, -.09) -- (-.08, -.08) -- (-.09, -.07) -- (-.1, -.04) -- (-.11, -.03) -- (-.11, 0);
			
			\draw[black, dashed] (0, 0) circle [radius=1.1];
			
			\path[draw, dashed] (1.1, 0) -- (1.23, .1) -- (1.4, .3) -- (1.57, .71) -- (1.6, 1) -- (1.57, 1.3) -- (1.4, 1.76) -- (1.23, 2.02) -- (1.07, 2.22) -- (.9, 2.37) -- (.74, 2.49) -- (.57, 2.58) -- (.4, 2.68) -- (.23, 2.75) -- (.07, 2.8) -- (-.1, 2.85) -- (-.27, 2.87) -- (-.43, 2.88) -- (-.6, 2.88) -- (-.77, 2.87) -- (-.94, 2.86) -- (-1.1, 2.83) -- (-1.27, 2.78) -- (-1.44, 2.74) -- (-1.6, 2.67) -- (-1.77, 2.59) -- (-1.94, 2.51) -- (-2.1, 2.39) -- (-2.27, 2.26) -- (-2.44, 2.12) -- (-2.6, 1.94) -- (-2.77, 1.7) -- (-2.94, 1.47) -- (-3.1, 1.13) -- (-3.27, .57) -- (-3.32, 0); 
			
			\path[draw, dashed] (1.1, 0) -- (1.23, -.1) -- (1.4, -.3) -- (1.57, -.71) -- (1.6, -1) -- (1.57, -1.3) -- (1.4, -1.76) -- (1.23, -2.02) -- (1.07, -2.22) -- (.9, -2.37) -- (.74, -2.49) -- (.57, -2.58) -- (.4, -2.68) -- (.23, -2.75) -- (.07, -2.8) -- (-.1, -2.85) -- (-.27, -2.87) -- (-.43, -2.88) -- (-.6, -2.88) -- (-.77, -2.87) -- (-.94, -2.86) -- (-1.1, -2.83) -- (-1.27, -2.78) -- (-1.44, -2.74) -- (-1.6, -2.67) -- (-1.77, -2.59) -- (-1.94, -2.51) -- (-2.1, -2.39) -- (-2.27, -2.26) -- (-2.44, -2.12) -- (-2.6, -1.94) -- (-2.77, -1.7) -- (-2.94, -1.47) -- (-3.1, -1.13) -- (-3.27, -.57) -- (-3.32, 0); 
			
			\filldraw[fill=black, draw=black] (-.5, 0) circle [radius=.03] node [black, below = 0] {$-q$};
			
			\filldraw[fill=black, draw=black] (-2, 0) circle [radius=.03] node [black, below = 0] {$-q \kappa$};

\end{tikzpicture}

\end{minipage}
\qquad
\begin{minipage}{0.45\linewidth}

\centering

\begin{tikzpicture}[
      >=stealth,
			scale=.4
			]
			
			\draw[<->] (-4, 5) -- (4, 5);
			
			\draw[->,black,very thick] (3, 1.535) -- (2, 3.268);
			\draw[-,black,very thick] (2, 3.268) -- (1, 5);
			\draw[-,black,very thick]  (2, 6.732) -- (3, 8.465) node [black, above = 0, scale = .8] {$\mathcal{C}_V^{(1)}$};
			\draw[->,black,very thick] (1, 5) -- (2, 6.732);
			
			\draw[->,black,very thick] (-3, 1.535) -- (-2, 3.268);
			\draw[-,black,very thick] (-1, 5) -- (-2, 3.268);
			\draw[-,black,very thick] (-2, 6.732) -- (-3, 8.465) node [black, above = 0, scale = .8] {$\Gamma_V^{(1)}$};
			\draw[->,black,very thick] (-1, 5) -- (-2, 6.732);
			
			\filldraw[fill=black, draw=black] (1, 5) circle [radius=.09] node [black, left = 2, below = 4, scale = .6] {$q \alpha$};
			
			\filldraw[fill=black, draw=black] (0, 5) circle [radius=.09] node [black, right = 0, above = 0, scale = .6] {$q \psi$};
			
			\filldraw[fill=black, draw=black] (-1, 5) circle [radius=.09] node [black, right = 2, below = 4, scale = .6] {$q \gamma$};
			
			\filldraw[fill=black, draw=black] (2, 5) circle [radius=.09] node [black, right = 0, above = 0, scale = .6] {$q \beta_1$};
			
			\filldraw[fill=black, draw=black] (-2, 5) circle [radius=.09] node [black, right = 0, above = 0, scale = .6] {$q \varkappa \beta_2$};
\end{tikzpicture}

\end{minipage}

\caption{\label{l1l2l3} To the left, the three level lines $\mathcal{L}_1$, $\mathcal{L}_2$, and $\mathcal{L}_3$ are depicted as dashed curves; the contours $\Gamma_V$ and $\mathcal{C}_V$ are depicted as solid curves and are labeled. To the right are two contours $\mathcal{C}^{(1)}$ and $\Gamma^{(1)}$, ``zoomed in'' around the critical point $q \beta$.  }
\end{figure}
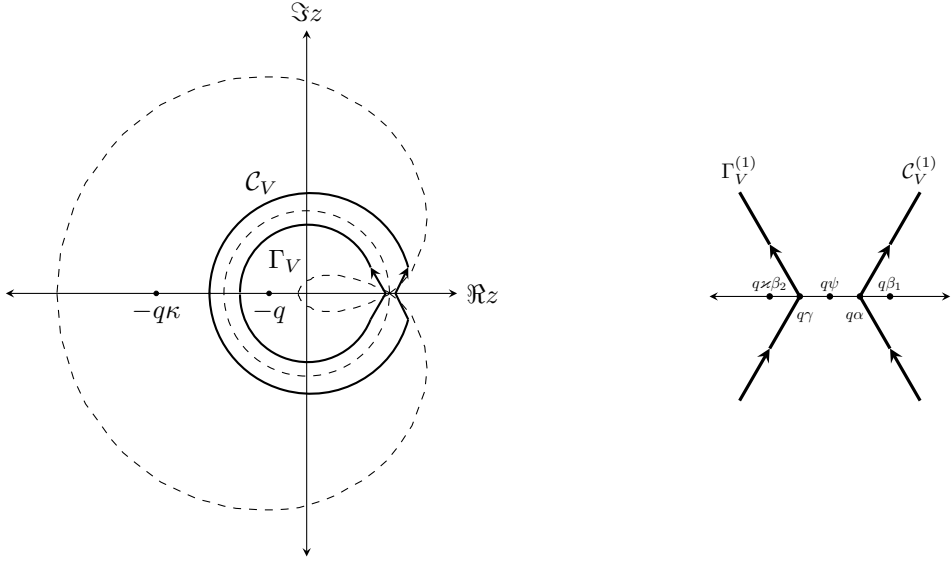

Now, let us explain how to select the contours $\mathcal{C}$ and $\Gamma$. They will each be the union of two contours, a ``small piecewise linear part'' near $q \beta$, and a ``large curved part'' that closely follows the level line $\mathcal{L}_2$. The contours given by the following definition will be the linear parts of $\mathcal{C}$ and $\Gamma$. Examples of these contours are given in Figure \ref{l1l2l3}.

\begin{definition}

\label{crgammar}

For a real number $r \in \mathbb{R}$ and a positive real number $\varepsilon > 0$ (possibly infinite), let $\mathfrak{W}_{r, \varepsilon}$ denote the piecewise linear curve in the complex plane that connects $r + \varepsilon e^{- \pi \textbf{i} / 3} $ to $r$ to $r + \varepsilon e^{\pi \textbf{i} / 3}$. Similarly, let $\mathfrak{V}_{r, \varepsilon}$ denote the piecewise linear curve in the complex plane that connects $r + \varepsilon e^{- 2 \pi \textbf{i} / 3}$ to $r$ to $r + \varepsilon e^{2 \pi \textbf{i} / 3}$. 

\end{definition} 

Now, let us make the following definitions; Definition \ref{linearvertexrightstationary} and Definition \ref{curvedvertexrightstationary} define the piecewise linear and curved parts of the contours $\mathcal{C}$ and $\Gamma$, respectively. Then, Definition \ref{vertexrightcontoursstationary} defines the contours $\mathcal{C}$ and $\Gamma$. 

In what follows, we denote
\begin{flalign}
\label{beta}
&  \alpha_K = \displaystyle\frac{3 \beta_1 + \varkappa \beta_2}{4}; \qquad \psi_K = \displaystyle\frac{\beta_1 + \varkappa \beta_2}{2}; \qquad \gamma_K = \displaystyle\frac{\beta_1 + 3 \varkappa \beta_2}{4}. 
\end{flalign} 

\noindent Observe that $|\beta - \alpha_K|, |\beta - \psi_K|, |\beta - \gamma_K| = \mathcal{O} \big( T^{-10} \big)$, due to the hypothesis $\beta = (\beta_1 + \varkappa \beta_2) / 2 + \mathcal{O} \big( T^{-10} \big)$ stated in Proposition \ref{convergencedeterminant} and Proposition \ref{convergencedeterminantmodel}. 

\begin{definition}

\label{linearvertexrightstationary} 

For each $K \in \{ V, A \}$, let $\mathcal{C}_K^{(1)} = \mathfrak{W}_{q \alpha_K, \varepsilon_K}$ and $\Gamma_K^{(1)} = \mathfrak{V}_{q \gamma_K, \varepsilon_K}$, where $\varepsilon_K$ is chosen to be sufficiently small (independent of $T$) so that the following four properties hold. 

	\begin{itemize}
\item{For sufficiently large $T$, the two conjugate endpoints of $\mathcal{C}^{(1)}$ lie between $\mathcal{L}_2$ and $\mathcal{L}_3$, so that their distances from $\mathcal{L}_2$ and $\mathcal{L}_3$ are bounded away from $0$, independent of $T$.}

\item{For all sufficiently large $T$, the two conjugate endpoints of $\Gamma^{(1)}$ lie between $\mathcal{L}_1$ and $\mathcal{L}_2$, so that their distance from $\mathcal{L}_1$ and $\mathcal{L}_2$ is bounded away from $0$, independent of $T$.}

\item{For all $z \in \mathbb{C}$ satisfying $|z - q \beta| < 2 \varepsilon_K$, we have that 
\begin{flalign}
\label{rksmall}
\left| R_K \left( \displaystyle\frac{\mathcal{F}_K (z - q \beta)}{q \beta} \right) \right|  < \mathcal{F}_K^3 \left| \displaystyle\frac{(z - q \beta)}{2} \right|^3,
\end{flalign}

\noindent where we recall the definition of $R$ from \eqref{rstationary}.}

\item{We have that $|v / w| \in (q^{1 / 2}, 1)$ for all $v, w \in \mathbb{C}$ satisfying $|v - q \beta|, |w - q \beta| < 2 \varepsilon_K$.}

\end{itemize}

\noindent Such a positive real number $\varepsilon$ is guaranteed to exist by part \ref{psianglesv} of Proposition \ref{linesgv} and the estimate \eqref{gzpsistationary}. 

\end{definition}

\begin{definition}

\label{curvedvertexrightstationary} 

Let $\mathcal{C}_K^{(2)}$ denote a positively oriented contour from the top endpoint $q \alpha_K + \varepsilon e^{\pi \textbf{i}/ 3}$ of $\mathcal{C}_K^{(1)}$ to the bottom endpoint $q \alpha_K + \varepsilon e^{- \pi \textbf{i}/ 3}$ of $\mathcal{C}^{(1)}$, and let $\Gamma_K^{(2)}$ denote a positively oriented contour from the top endpoint $q \gamma_K + \varepsilon e^{2 \pi \textbf{i}/ 3}$ of $\Gamma_K^{(1)}$ to the bottom endpoint $q \gamma_K + \varepsilon e^{-2 \pi \textbf{i}/ 3}$ of $\Gamma_K^{(1)}$, satisfying the following five properties. 

\begin{itemize} 

\item{The contour $\mathcal{C}^{(2)}$ remains between the level lines $\mathcal{L}_2$ and $\mathcal{L}_3$, so that the distance from $\mathcal{C}^{(2)}$ to $\mathcal{L}_2$ and $\mathcal{L}_3$ remains bounded away from $0$, independent of $T$.}

\item{The contour $\Gamma^{(2)}$ remains between the level lines $\mathcal{L}_1$ and $\mathcal{L}_2$, so that the distance from $\mathcal{C}^{(2)}$ to $\mathcal{L}_1$ and $\mathcal{L}_2$ remains bounded away from $0$, independent of $T$.}

\item{ The contour $\mathcal{C}_K^{(1)} \cup \mathcal{C}^{(2)}$ is star-shaped.}

\item{The contour $\Gamma_K^{(1)} \cup \Gamma^{(2)}$ is star-shaped and, if $K = V$, it does not contain $-q \kappa$.}

\item{The contours $\mathcal{C}_K^{(2)}$ and $\Gamma_K^{(2)}$ are both sufficiently close to $\mathcal{L}_{2; K}$ so that the interior of $\Gamma_K^{(1)} \cup \Gamma_K^{(2)}$ contains the image of $\mathcal{C}_K^{(1)} \cup \mathcal{C}_K^{(2)}$ under multiplication by $q^{1 / 2}$. }

\end{itemize}

Such contours $\mathcal{C}_K^{(2)}$ and $\Gamma_K^{(2)}$ are guaranteed to exist by part \ref{anglev} and part \ref{qkappaq0v} of Proposition \ref{linesgv}. 

\end{definition} 

\begin{definition}

\label{vertexrightcontoursstationary}

Set $\mathcal{C}_K = \mathcal{C}_K^{(1)} \cup \mathcal{C}_K^{(2)}$ and $\Gamma_K = \Gamma_K^{(1)} \cup \Gamma_K^{(2)}$. 

\end{definition}

The fact that these contours satisfy the properties specified in Theorem \ref{determinant} and Theorem \ref{determinantw} is a consequence of their definitions; hence, we can use them to establish Proposition \ref{convergencedeterminant} and Proposition \ref{convergencedeterminantmodel}. 

We furthermore have the following estimate, which is what we aimed for in the beginning of this section. 

\begin{lem}

\label{rightcv2gammav2exponential}

For sufficiently large $T$, there exists a positive real number $c_1 > 0$ (independent of $T$), such that 
\begin{flalign}
\label{gvwsmall}
\max \left\{ \sup_{\substack{w \in \mathcal{C} \\ v \in \Gamma^{(2)}}} \Re \big( G(w) - G(v) \big), \sup_{\substack{w \in \mathcal{C}^{(2)} \\ v \in \Gamma}} \Re \big( G(w) - G(v) \big) \right\} < - c_1.
\end{flalign}
\end{lem}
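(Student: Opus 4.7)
The plan is to split each supremum according to whether the running points lie on the linear parts $\mathcal{C}^{(1)}, \Gamma^{(1)}$ or on the curved parts $\mathcal{C}^{(2)}, \Gamma^{(2)}$, and to treat each of the four resulting cases by combining Proposition \ref{linesgv} with the Taylor-type expansion \eqref{rstationary}. Throughout, I will exploit that, by the hypotheses of Proposition \ref{convergencedeterminant} and Proposition \ref{convergencedeterminantmodel}, $\omega = \varkappa\beta_2/\beta_1 = 1 - T^{-10}$ forces $\alpha - \beta = O(T^{-10})$ and $\gamma - \beta = O(T^{-10})$, so the vertices of the linear contours are within $O(T^{-10})$ of the critical point $q\beta$. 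I will also use that $G(z) = \Omega(z) + O(T^{-1/3})$ uniformly on any fixed compact set (by \eqref{gastationary} and \eqref{omegap}), so all level-line statements about $\Omega$ transfer to $G$ with error $O(T^{-1/3})$.

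First I handle the curved parts. By parts \ref{positiverealv} and \ref{negativerealv} of Proposition \ref{linesgv} together with the distance hypotheses in Definition \ref{curvedvertexrightstationary}, the contours $\mathcal{C}^{(2)}$ and $\Gamma^{(2)}$ remain in compact regions where $\Re\Omega - \Omega(q\beta)$ is strictly negative (resp. positive) and bounded away from $0$ independently of $T$. Compactness of these contours produces a constant $c_2 > 0$ with $\Re \Omega(w) \le \Omega(q\beta) - 4c_2$ for $w \in \mathcal{C}^{(2)}$ and $\Re \Omega(v) \ge \Omega(q\beta) + 4c_2$ for $v \in \Gamma^{(2)}$, so $\Re(G(w) - G(v)) \le -8c_2 + O(T^{-1/3}) < -4c_2$ for large $T$ whenever $w \in \mathcal{C}^{(2)}$ and $v \in \Gamma^{(2)}$.

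Next I handle the linear parts. Parametrize $w = q\alpha + te^{\pm\pi\mathbf{i}/3}$ on $\mathcal{C}^{(1)}$ and $v = q\gamma + te^{\pm 2\pi\mathbf{i}/3}$ on $\Gamma^{(1)}$, with $t \in [0,\varepsilon]$. Using $\alpha - \beta, \gamma - \beta = O(T^{-10})$, plugging into \eqref{rstationary} gives
\begin{align*}
\Re\bigl(G(w) - G(q\beta)\bigr) &= -\tfrac{1}{3}\bigl(\mathcal{F} t/(q\beta)\bigr)^{3} - \tfrac{c}{2}\bigl(\mathcal{F} t/(q\beta)\bigr)^{2}T^{-1/3} + O(t^{4}) + O(T^{-10}), \\
\Re\bigl(G(v) - G(q\beta)\bigr) &= +\tfrac{1}{3}\bigl(\mathcal{F} t/(q\beta)\bigr)^{3} - \tfrac{c}{2}\bigl(\mathcal{F} t/(q\beta)\bigr)^{2}T^{-1/3} + O(t^{4}) + O(T^{-10}),
\end{align*}
where I used $\cos\pi = -1$ and $\cos 2\pi = +1$ for the cubic, and $\cos(\pm 2\pi/3) = \cos(\pm 4\pi/3) = -1/2$ for the quadratic. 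For the cubic-dominated regime, the error term $O(t^{4})$ can be absorbed into the cubic by choosing $\varepsilon$ small (as in Definition \ref{linearvertexrightstationary}, via the bound \eqref{rksmall}); the only source of positive contribution comes from the quadratic when $c$ has the wrong sign, and its maximum over $t \ge 0$ is $|c|^{3}T^{-1}/6$ (attained at $t = |c|T^{-1/3}q\beta/\mathcal{F}$). Hence $\Re G(w) \le G(q\beta) + C T^{-1}$ on $\mathcal{C}^{(1)}$ and $\Re G(v) \ge G(q\beta) - C T^{-1}$ on $\Gamma^{(1)}$, for some constant $C$ independent of $T$.

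Combining the two ingredients gives the lemma. In Case~1 ($w \in \mathcal{C}$, $v \in \Gamma^{(2)}$): if $w \in \mathcal{C}^{(2)}$ the bound follows from the curved-contour estimate; if $w \in \mathcal{C}^{(1)}$ then $\Re G(w) - \Re G(v) \le (G(q\beta) + CT^{-1}) - (\Omega(q\beta) + 4c_{2} - O(T^{-1/3})) \le -3c_{2}$ for large $T$. Case~2 ($w \in \mathcal{C}^{(2)}$, $v \in \Gamma$) is symmetric. Setting $c_{1} = c_{2}$ finishes the proof. The main obstacle is keeping the curved-versus-linear cases uniform in $T$; the key observation that makes this painless is that the stationary hypothesis $\omega = 1 - T^{-10}$ pins $\alpha, \gamma$ to within $O(T^{-10})$ of $\beta$, so the three contour centers effectively collapse to the critical point and the Taylor expansion at $q\beta$ controls the linear pieces directly without any geometric reshuffling.
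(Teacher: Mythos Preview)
Your proof is correct and takes essentially the same approach as the paper: both use Proposition~\ref{linesgv} to bound $\Re\Omega$ strictly away from $\Omega(q\beta)$ on the curved pieces and then pass to $G$ via $G-\Omega=O(T^{-1/3})$. The paper compresses everything into a single compactness argument on $\Re\Omega$ over the full contours $\mathcal{C},\Gamma$, whereas you explicitly split off the linear pieces $\mathcal{C}^{(1)},\Gamma^{(1)}$ and control them with the Taylor expansion~\eqref{rstationary}; this is more detailed (and yields the sharper but unneeded bound $\Re G(w)\le G(q\beta)+CT^{-1}$ on $\mathcal{C}^{(1)}$) but not materially different.
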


\begin{proof}

From part \ref{positiverealv} and part \ref{negativerealv} of Proposition \ref{linesgv}, we find that $\Re \big( \Omega (w) - \Omega (v) \big) < 0$ for all $w \in \mathcal{C}$ and $v \in \Gamma^{(2)}$, and that $\Re \big( \Omega (w) - \Omega (v) \big) < 0$ for all $w \in \mathcal{C}^{(2)}$ and $v \in \Gamma$ (uniformly in $T$). 

Thus, by compactness of $\mathcal{C}$ and $\Gamma$, there exists a constant $c_1' > 0$ (independent of $T$) such that  
\begin{flalign}
\label{gvwsmall2}
\max \left\{ \sup_{\substack{w \in \mathcal{C} \\ v \in \Gamma^{(2)}}} \Re \big( \Omega (w) - \Omega (v) \big), \sup_{\substack{w \in \mathcal{C}^{(2)} \\ v \in \Gamma}} \Re \big( \Omega (w) - \Omega (v) \big) \right\} < - c_1'.
\end{flalign}

\noindent Now the existence of $c_1$ as in \eqref{gvwsmall} follows from \eqref{gvwsmall2} and the fact that $\big| G (z) - \Omega (z) \big| = \mathcal{O} \big( T^{-1 / 3} \big)$ (recall \eqref{gastationary}) for all $z \in \mathcal{C} \cup \Gamma$. 
\end{proof}

\subsection{Contour Truncation} 

\label{DeterminantDifferentContour}

As explained previously, our next goal will be to ``truncate'' $\mathcal{C}$ to $\mathcal{C}^{(1)}$ and $\Gamma$ to $\Gamma^{(1)}$ without producing too much error in the Fredholm determinant $\det \big( \Id + K \big)_{L^2 (\mathcal{C})}$; once this is done, we will have $\Re v < \Re w$ for all $w \in \mathcal{C}^{(1)}$ and $v \in \Gamma^{(1)}$, which will allow us to rewrite $\det \big( \Id + K \big)_{\mathcal{C}^{(1)}}$ as a Fredholm determinant over $L^2 (\mathbb{R}_{> 0})$ in the next section.

Thus, define 
\begin{flalign}
\label{tildekzetastationary}
\begin{aligned}
\widetilde{K} (w, w')  = \displaystyle\frac{1}{2i \log q} \displaystyle\sum_{j = -\infty}^{\infty} & \displaystyle\int_{\Gamma^{(1)}} \left( \displaystyle\frac{v}{w} \right)^{\mathcal{F} s T^{1 / 3}} \displaystyle\frac{1}{\sin \big( \pi (\log q)^{-1} (\log v - \log w + 2 \pi \textbf{i}j) \big) v (w' - v)} \\
& \times \exp \Big( \big( G (w) - G (v) \big) T \Big) \displaystyle\frac{(\varkappa \beta_2 q w^{-1}; q)_{\infty} (\beta_1^{-1} q^{-1} v; q)_{\infty}}{(\varkappa \beta_2 q v^{-1}; q)_{\infty} (\beta_1^{-1} q^{-1} w; q)_{\infty}} dv, 
\end{aligned}
\end{flalign}

\noindent for all $w, w' \in \Gamma$ and each $K \in \{ V, A \}$. The difference between the integrals on the right sides of \eqref{tildekzetastationary} and \eqref{kernelk} is that the former is integrated along $\Gamma^{(1)}$ and the latter is integrated along $\Gamma$. 

The following lemma permits us to replace $\det \big( \Id + K \big)_{L^2 (\mathcal{C})}$ by $\det \big( \Id + \widetilde{K} \big)_{L^2 (\mathcal{C}^{(1)})}$, up to an error that decays exponentially in $T$.

\begin{lem}

\label{smallww1stationary}

There exist constants $c_2, C_1 > 0$, both independent of $T > 10$, such that 
\begin{flalign*}
\Big| \det \big( \Id + K \big)_{L^2 (\mathcal{C})}  - \det \big( \Id + \widetilde{K} \big)_{L^2 (\mathcal{C}^{(1)})}  \Big| < C_1 \exp \big( - c_2 T \big). 
\end{flalign*}
\end{lem}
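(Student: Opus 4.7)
The plan is to decompose the difference as a telescoping sum
\[
\det(\Id + K)_{L^2(\mathcal{C})} - \det(\Id + \widetilde{K})_{L^2(\mathcal{C}^{(1)})} = \mathcal{D}_1 + \mathcal{D}_2,
\]
where $\mathcal{D}_1 = \det(\Id + K)_{L^2(\mathcal{C})} - \det(\Id + \widetilde{K})_{L^2(\mathcal{C})}$ compares kernels on the same contour, and $\mathcal{D}_2 = \det(\Id + \widetilde{K})_{L^2(\mathcal{C})} - \det(\Id + \widetilde{K})_{L^2(\mathcal{C}^{(1)})}$ compares the effect of truncating $\mathcal{C}$ to $\mathcal{C}^{(1)}$. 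For each piece the idea is the same: the $v$- or $w$-contour passes through the ``large'' part $\Gamma^{(2)}$ or $\mathcal{C}^{(2)}$, and there Lemma~\ref{rightcv2gammav2exponential} gives $\Re\big(G(w)-G(v)\big)<-c_1$, so the factor $\exp\bigl((G(w)-G(v))T\bigr)$ in the integrand \eqref{kernelk} is $\mathcal{O}(e^{-c_1 T})$.

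For $\mathcal{D}_1$, observe that $K(w,w')-\widetilde K(w,w')$ is exactly the integral \eqref{kernelk} with $v$ restricted to the compact contour $\Gamma^{(2)}$, and for $w\in\mathcal{C}$, $v\in\Gamma^{(2)}$ the bound of Lemma~\ref{rightcv2gammav2exponential} applies. The remaining ingredients of the integrand are uniformly bounded: the $q$-Pochhammers are continuous and nonvanishing on the compact contours (by the construction of $\mathcal{C}$ and $\Gamma$ in Section~\ref{Contours}, which leaves the singularities $q\beta_1$ and $q\varkappa\beta_2$ off the contours for sufficiently large $T$); the factor $(v/w)^{\mathcal{F} s T^{1/3}}$ is bounded by $C^{T^{1/3}}$ for some $C$; and the sum over $j\in\mathbb{Z}$ converges absolutely because $|\sin(\pi(\log q)^{-1}(\log v-\log w+2\pi\mathrm{i}j))|$ grows exponentially in $|j|$ (the $j=0$ term also has no singularity because $\mathcal{C}$ and $\Gamma^{(1)}$ are disjoint). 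Thus $\sup_{w,w'\in\mathcal{C}}|K(w,w')-\widetilde K(w,w')|\le C_3 e^{-c_1 T/2}$ for large $T$.

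For $\mathcal{D}_2$, write the Fredholm expansion \eqref{determinantsum} and observe that the $k$-th term is a sum of $k$-fold integrals over $\mathcal{C}$; subtracting the corresponding term with all integrations restricted to $\mathcal{C}^{(1)}$ leaves a finite sum of integrals in which at least one variable $w_i$ ranges over $\mathcal{C}^{(2)}$. In each such integral the corresponding row of the determinant $\det[\widetilde K(w_i,w_j)]$ involves $\widetilde K(w_i,\cdot)$ with $w_i\in\mathcal{C}^{(2)}$, and since $\widetilde K$ integrates $v$ over $\Gamma^{(1)}\subset\Gamma$, the second bound of Lemma~\ref{rightcv2gammav2exponential} gives the exponential factor $e^{-c_1 T}$ on the whole row. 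Pulling this factor out, the remaining entries are uniformly bounded, so by Hadamard's inequality (applied exactly as in the proof of Lemma~\ref{coefficientb2}) the $k$-th summand is bounded by $k^{k/2}(C_4 \ell(\mathcal{C}))^k e^{-c_1 T}/k!$.

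Summing over $k\ge 1$ gives $|\mathcal{D}_1|,|\mathcal{D}_2|\le C_1 e^{-c_2 T}$ for suitable $c_2<c_1$ and large $T$, as desired. The main obstacle is purely bookkeeping: verifying that, after pulling out the exponential factor from the ``bad'' variables, the remaining integrand is bounded uniformly in $T$ — this requires checking that the $q$-Pochhammer denominators $(\varkappa\beta_2 q v^{-1};q)_\infty$ and $(\beta_1^{-1}q^{-1}w;q)_\infty$ stay bounded away from zero on $\Gamma$ and $\mathcal{C}$ respectively, which follows from the hypothesis $\beta_1-\varkappa\beta_2=\mathcal{O}(T^{-10})$ together with the geometric placement of $\alpha_K,\gamma_K$ on opposite sides of $\beta$ in \eqref{beta} (so that $q\beta_1$ is left strictly outside $\mathcal{C}$ and $q\varkappa\beta_2$ lies strictly inside $\Gamma$, uniformly in $T$).
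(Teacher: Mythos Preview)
Your decomposition $\mathcal{D}_1 + \mathcal{D}_2$ and your use of Lemma~\ref{rightcv2gammav2exponential} to extract the factor $e^{-c_1 T}$ from the bad row or column are exactly what the paper does.  The gap is in the sentence ``the remaining entries are uniformly bounded.''  They are not.

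The contours $\mathcal{C}^{(1)}$ and $\Gamma^{(1)}$ meet the real axis at $q\alpha$ and $q\gamma$, which by \eqref{beta} are only $\mathcal{O}(T^{-10})$ apart and $\mathcal{O}(T^{-10})$ from $q\beta$.  Hence for $w,w'\in\mathcal{C}^{(1)}$ near $q\alpha$ and $v\in\Gamma^{(1)}$ near $q\gamma$, the factors $\frac{1}{w'-v}$ and $\frac{1}{\sin\bigl(\pi(\log q)^{-1}(\log v-\log w)\bigr)}$ in \eqref{tildekzetastationary} are of order $T^{10}$, not $\mathcal{O}(1)$.  So $\widetilde K(w,w')$ is not uniformly bounded on $\mathcal{C}\times\mathcal{C}$; the correct pointwise bound is Lemma~\ref{uniformstationaryk},
\[
\big|\widetilde K(w,w')\big|\;\le\;\frac{C_3\log T}{|w-q\beta|+|w'-q\beta|},
\]
which blows up like $T^{10}\log T$ near the pinch.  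Your Hadamard estimate $k^{k/2}(C_4\ell(\mathcal{C}))^k e^{-c_1T}/k!$ therefore fails as stated.

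The fix the paper uses is to feed this non-uniform bound into the constant $C$ of Lemma~\ref{determinantclosekernels} (for $\mathcal{D}_1$) and Lemma~\ref{determinantsmallcontour} (for $\mathcal{D}_2$), integrate $\big(1+C_3^2|w-q\beta|^{-2}\big)^{1/2}$ over $\mathcal{C}$ to pick up only an $\mathcal{O}(\log T)$ factor per variable, and then observe that
\[
e^{-c_3T}\sum_{k\ge 1}\frac{2^k k^{k/2}(\log T)^{2k}}{(k-1)!}\;\le\;\exp\bigl(C_5(\log T)^{C_5}-c_3T\bigr)
\]
still decays exponentially.  Your last paragraph identifies the $q$-Pochhammer ratios as the only danger, but those are in fact bounded (numerator and denominator have nearby zeros); the real danger is the $1/(w'-v)$ and $1/\sin$ factors, which you have not controlled.
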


This lemma will follow from Lemma \ref{determinantclosekernels}, Lemma \ref{determinantsmallcontour}, and estimates on $\big| K (w, w') - \widetilde{K} (w, w') \big|$ and $ \big| \widetilde{K} (w, w') \big|$. To that end, we have the following two lemmas.

\begin{lem}

\label{ktildekstationarynear}

There exist constants $c_3, C_2 > 0$ such that, for all $w, w' \in \mathcal{C}$, we have that 
\begin{flalign}
\label{smallkwkwc}
\big| K (w, w') - \widetilde{K} (w, w') \big| \le C_2 e^{-c_3 T}; \qquad \displaystyle\sup_{w \in \mathcal{C}^{(2)}} \big| \widetilde{K} (w, w') \big| \le C_2 e^{-c_3 T}.  
\end{flalign}

\end{lem}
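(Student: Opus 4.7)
Both bounds reduce to showing the integrand appearing in the definition \eqref{kernelk} is uniformly exponentially small in $T$ in the relevant region: for the first bound the integrand is estimated over $w\in\mathcal{C}$ and $v\in\Gamma^{(2)}$ (since $K-\widetilde K$ is the same integral with $v$-contour restricted to $\Gamma^{(2)}=\Gamma\setminus\Gamma^{(1)}$), and for the second bound over $w\in\mathcal{C}^{(2)}$ and $v\in\Gamma^{(1)}$. In both regimes Lemma~\ref{rightcv2gammav2exponential} supplies the key estimate
\begin{flalign*}
\bigl|\exp\bigl(T(G(w)-G(v))\bigr)\bigr|\le e^{-c_1 T},
\end{flalign*}
so the plan is to verify that every other factor of the integrand is at most subexponentially large in $T$, uniformly over the relevant contour pairs, and that the sum over $j\in\mathbb{Z}$ converges.

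The non-exponential factors are handled as follows. Since $\mathcal{C}$ and $\Gamma$ are compact and stay a bounded distance from $0$, the ratio $|v/w|$ is bounded above and below by positive constants, and hence $|(v/w)^{\mathcal{F} s T^{1/3}}|\le e^{C|s|T^{1/3}}$, which is absorbed by $e^{-c_1 T}$ for $T$ large. The $q$-Pochhammer quotient is uniformly bounded because the contour restrictions in Theorem~\ref{determinantw} (in particular that $\Gamma$ leaves $q\beta_1$ outside and $\mathcal{C}$ contains $q\varkappa\beta_2$ with appropriate margin) keep the arguments $\varkappa\beta_2 q/w$, $\varkappa\beta_2 q/v$, $\beta_1^{-1}q^{-1}w$, $\beta_1^{-1}q^{-1}v$ uniformly away from the singularities $q^{-k}$, $k\ge 0$. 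The factor $1/|v(w'-v)|$ is bounded above because $|v|\ge c$ and, crucially, $\mathcal{C}$ and $\Gamma$ are disjoint contours---$\Gamma^{(1)}$ has real part $\le q\gamma$ while $\mathcal{C}^{(1)}$ has real part $\ge q\alpha>q\gamma$, and the star-shaped/level-line construction of $\Gamma^{(2)}$ and $\mathcal{C}^{(2)}$ separates them elsewhere---so $|w'-v|$ is bounded below by a positive constant.

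For the sine factor and the $j$-sum, one writes $|\sin(\pi(\log q)^{-1}(\log v-\log w+2\pi\mathbf{i}j))|$ and observes that for $|j|\ge 1$ this quantity grows like $\tfrac{1}{2}e^{2\pi^2|j|/|\log q|}$, giving absolute convergence of the series with a $T$-independent bound. For $j=0$ one must ensure $|\sin(\pi(\log q)^{-1}(\log v-\log w))|$ is bounded below, which amounts to keeping $v/w$ uniformly away from every integer power of $q$. This is exactly what the fourth condition of Definition~\ref{curvedvertexrightstationary} (the interior of $\Gamma$ contains $q^{1/2}\mathcal{C}$), together with the disjointness of $\mathcal{C}$ and $\Gamma$, is designed to guarantee: $v/w$ stays inside an annulus strictly between $|z|=1$ and $|z|=q$, so $v/w\neq q^k$ for any $k\in\mathbb{Z}$, with a quantitative separation that survives compactness.

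The main obstacle I anticipate is the last point: verifying that $v/w$ is bounded away from the entire discrete set $\{q^k:k\in\mathbb{Z}\}$ uniformly in $T$, despite the fact that $\alpha$, $\beta$, $\gamma$ all collapse to $\beta$ as $T\to\infty$ at rate $\mathcal{O}(T^{-10})$. The piecewise-linear pieces $\mathcal{C}^{(1)}$, $\Gamma^{(1)}$ are anchored at $q\alpha$ and $q\gamma$, which differ by $q(\alpha-\gamma)=\mathcal{O}(T^{-10})$, so one must check that the opening angles $\pm\pi/3$ and $\pm 2\pi/3$ of these wedges force $|\log v-\log w|$ to remain bounded below by a positive constant independent of $T$. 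This will follow because the wedges point in opposite horizontal directions, so the Euclidean distance $|v-w|\ge\varepsilon/2$ for any $v\in\Gamma^{(1)}$ and $w\in\mathcal{C}^{(1)}$ away from their tips, and the tips themselves are separated by the real interval $[q\gamma,q\alpha]$; combining with the bound $|v|,|w|\ge c$ yields the needed lower bound on $|\log(v/w)|$. Once this uniform separation is established, assembling the estimates above and bounding the length of the integration contour finishes both inequalities in \eqref{smallkwkwc} with a common $c_3<c_1$.
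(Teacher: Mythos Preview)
Your overall strategy matches the paper's: use Lemma~\ref{rightcv2gammav2exponential} for the exponential term and check that every remaining factor is at most subexponentially large. Most of your individual estimates are fine. However, there is a genuine gap in your treatment of the factors $1/(w'-v)$ and $1/\sin\bigl(\pi(\log q)^{-1}(\log v-\log w)\bigr)$.

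You assert that $|w'-v|$ is bounded below by a positive $T$-independent constant, and in your ``main obstacle'' paragraph you try to establish a uniform lower bound on $|\log(v/w)|$. Neither holds. The tips of $\mathcal{C}^{(1)}$ and $\Gamma^{(1)}$ lie at $q\alpha$ and $q\gamma$, and $\alpha-\gamma=(\beta_1-\varkappa\beta_2)/2=\beta_1(1-\omega)/2=\mathcal{O}(T^{-10})$. So when $w'\in\mathcal{C}$ and $v\in\Gamma$ are both near $q\beta$, the separation $|w'-v|$ collapses to order $T^{-10}$, and likewise $v/w$ approaches $1$. Your proposed resolution (``the tips themselves are separated by the real interval $[q\gamma,q\alpha]$'') does not help, since that interval has length $\mathcal{O}(T^{-10})$.

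The paper handles this correctly by accepting polynomial growth: it bounds $|1/(w'-v)|,|1/(w-v)|\le C_4 T^{10}$ (using exactly the $q(\alpha-\gamma)\ge q\beta_1 T^{-10}/2$ lower bound) and $|1/\sin(\pi(\log q)^{-1}(\log v-\log w))|\le C_4/|v-w|$, which together contribute a factor of order $T^{20}$. Combined with the $e^{C T^{1/3}}$ from $(v/w)^{\mathcal{F} s T^{1/3}}$, the non-exponential part is $\mathcal{O}(T^{20}e^{CT^{1/3}})$, still dominated by $e^{-c_1 T}$. Your argument is repaired by replacing the claimed uniform lower bounds with these polynomial-in-$T$ lower bounds and noting that your original plan---``subexponentially large'' rather than ``bounded''---is what is actually needed.
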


\begin{lem}

\label{uniformstationaryk}

There exists a constant $C_3 > 0$ such that, for all $w, w' \in \mathcal{C}$, we have that 
\begin{flalign}
\label{uniformstationarykww}
\big| K (w, w') \big|, \big| \widetilde{K} (w, w') \big| \le \displaystyle\frac{C_3 \log T}{|w - q \beta| + |w' - q \beta|}.
\end{flalign}
 
\end{lem}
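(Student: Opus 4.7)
The plan is to bound the integrand in the definition \eqref{kernelk} of $K(w,w')$ pointwise and then integrate over $v$, with the key technical step being a geometric inequality that converts the polar singularities at $v = w$ (from the sine denominator at $j = 0$) and at $v = w'$ (from $1/(w'-v)$) into the claimed factor of $1/(|w-q\beta| + |w'-q\beta|)$ after integration along $\Gamma^{(1)}$. The logarithm $\log T$ will emerge from the fact that $|w-q\beta|$ and $|w'-q\beta|$ can be as small as $cT^{-10}$ on $\mathcal{C}^{(1)}$: indeed, the tip $q\alpha$ of $\mathcal{C}^{(1)}$ lies within $O(T^{-10})$ of $q\beta$ under the hypothesis $\omega = 1 - T^{-10}$, so the ratio of these two quantities can range over a window of size $T^{10}$.

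First I would reduce to the case $w, w' \in \mathcal{C}^{(1)}$ with integration over $v \in \Gamma^{(1)}$. Lemma \ref{rightcv2gammav2exponential} yields exponential decay $\exp(-c_1 T)$ of $\exp((G(w)-G(v))T)$ whenever $w \in \mathcal{C}^{(2)}$ (for any $v \in \Gamma$) or $v \in \Gamma^{(2)}$ (for any $w \in \mathcal{C}$); combined with the uniform boundedness of all remaining factors on these pieces (since the Pochhammer and sine singularities are active only near $q\beta$), the corresponding contributions to $K(w, w')$ are $O(e^{-c_1 T})$ and are trivially dominated by the desired right-hand side (which is bounded below by a positive constant because $|w-q\beta| + |w'-q\beta|$ is bounded above). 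This disposes of $\mathcal{C}^{(2)}$ and $\Gamma^{(2)}$ for both $K$ and $\widetilde{K}$ (the latter already being integrated only on $\Gamma^{(1)}$).

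For $w, w' \in \mathcal{C}^{(1)}$ and $v \in \Gamma^{(1)}$ I would then control each factor of the integrand. Writing $r_w = |w - q\alpha|$ and $r_v = |v - q\gamma|$ and using the Taylor expansion \eqref{gzpsistationary}, together with the fact that $(z-q\beta)^3$ has negative real part along $\mathcal{C}^{(1)}$ (angles $\pm\pi/3$) and positive real part along $\Gamma^{(1)}$ (angles $\pm 2\pi/3$), one shows $|\exp((G(w)-G(v))T)| \le C\exp(-cT(r_w^3 + r_v^3))$. This dominates the potentially large factor $|(v/w)^{\mathcal{F} s T^{1/3}}| \le \exp(C|s|T^{1/3}|v-w|)$, since $T^{1/3}r \le Tr^3$ for $r \gtrsim T^{-1/3}$ and both are $O(1)$ for smaller $r$. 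The Pochhammer quotients are uniformly bounded: the potential pole at $w = q\beta_1$ of $1/(\beta_1^{-1}q^{-1}w;q)_\infty$ is nearly canceled by the zero at $w = q\varkappa\beta_2$ of $(\varkappa\beta_2 q w^{-1};q)_\infty$, since $q\beta_1$ and $q\varkappa\beta_2$ coincide up to $O(T^{-10})$, yielding an $O(1)$ ratio on $\mathcal{C}^{(1)}$; the analogous statement holds on $\Gamma^{(1)}$ for the $v$-dependent factors. Finally, in the sum over $j$, the terms with $j \ne 0$ satisfy $|\sin(\pi(\log q)^{-1}(\log v - \log w + 2\pi \textbf{i}j))| \gtrsim e^{c|j|}$ and therefore sum to an $O(1)$ total contribution, while the $j = 0$ term is bounded by $C/|v - w|$ via $\log v - \log w \approx (v-w)/(q\beta)$ and $|\sin z| \gtrsim |z|$ for $|z|$ bounded.

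The crux is then the geometric estimate. Parametrizing $v = q\gamma + r_v e^{\pm 2\pi \textbf{i}/3}$ and $w = q\alpha + r_w e^{\pm \pi \textbf{i}/3}$ with $r_v, r_w \in [0, \varepsilon]$, a direct computation gives $|r_v e^{\pm 2\pi \textbf{i}/3} - r_w e^{\pm \pi \textbf{i}/3}|^2 = r_v^2 + r_w^2 \pm r_v r_w \ge (r_v + r_w)^2/4$, from which, together with $|q\alpha - q\gamma| = O(T^{-10})$, one deduces $|v - w| \gtrsim r_v + r_w \gtrsim |v - q\beta| + |w - q\beta|$ whenever the $r$'s dominate $T^{-10}$ (a direct bound handles the complementary case). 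The same reasoning yields $|w' - v| \gtrsim |v - q\beta| + |w' - q\beta|$. Inserting these into the pointwise integrand bound reduces the estimate to
\[
\bigl|\widetilde{K}(w,w')\bigr| \le C \int_0^\varepsilon \frac{dr}{(r + a)(r + b)}, \qquad a = |w-q\beta|,\quad b = |w'-q\beta|,
\]
which equals $\log(b/a)/(b-a)$ up to an additive $O(1)$. Since $a, b \in [cT^{-10}, C]$ on $\mathcal{C}^{(1)}$, $|\log(b/a)| \le C\log T$, and elementary case analysis ($|b-a| \gtrsim (a+b)/4$ giving $1/(b-a) \lesssim 1/(a+b)$, versus $a \approx b$ where the integral reduces to $\approx 1/a \approx 2/(a+b)$) shows $\log(b/a)/(b-a) \le C'\log T/(a+b)$ in both regimes. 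This yields the claimed bound for $\widetilde{K}$, and the same argument applies to $K$ once the reductions of the second paragraph are in place. The principal difficulty lies in the simultaneous clustering of the Pochhammer poles $q\beta_1, q\varkappa\beta_2$, the saddle $q\beta$, and the contour tips $q\alpha, q\gamma$ inside a $T^{-10}$-window; without the $\omega = 1 - T^{-10}$ cancellation between the Pochhammer pole and zero, the ratio would contribute a factor of order $T^{10}$ that would completely spoil the $\log T$ growth.
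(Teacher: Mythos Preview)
Your proposal is correct and follows essentially the same approach as the paper: reduce to $w \in \mathcal{C}^{(1)}$ and $v \in \Gamma^{(1)}$ via Lemma~\ref{rightcv2gammav2exponential}, bound the non-exponential factors as in \eqref{vwinequalities1}, use the Taylor expansion \eqref{rstationary}--\eqref{gzpsistationary} to get the exponential decay \eqref{exponentialfst13}, and then integrate $\int_{\Gamma^{(1)}} |v-w|^{-1}|w'-v|^{-1}\,dv$. The paper simply asserts this last integral is $O(\log T)/(|w-q\beta|+|w'-q\beta|)$ in one line, whereas you supply the explicit geometric lower bound $|v-w| \gtrsim |v-q\beta|+|w-q\beta|$ and the partial-fractions computation of $\int dr/((r+a)(r+b))$; this is exactly the justification the paper omits.
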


\begin{proof}[Proof of Lemma \ref{ktildekstationarynear}]

As we will see, this lemma will essentially be a consequence of Lemma \ref{rightcv2gammav2exponential}; recall the definition of $c_1 > 0$ from that lemma.

We first address the second estimate in \eqref{smallkwkwc}. Applying Lemma \ref{rightcv2gammav2exponential} in the integral defining $K (w, w')$, we deduce that 
\begin{flalign}
\label{kc2}
\begin{aligned}
\big|  \textbf{1}_{w \in \mathcal{C}^{(2)}} \widetilde{K} (w, w') \big| & \le \displaystyle\frac{e^{-c_1 T}}{2 |\log q|} \displaystyle\int_{\Gamma} \bigg| \displaystyle\frac{1}{v (w' - v) } \left( \displaystyle\frac{v}{w} \right)^{\mathcal{F} s T^{1 / 3}}  \displaystyle\frac{(\varkappa \beta_2 q w^{-1}; q)_{\infty} (\beta_1^{-1} q^{-1} v; q)_{\infty}}{(\beta_1^{-1} q^{-1} w; q)_{\infty} (\varkappa \beta_2 q v^{-1}; q)_{\infty} } \bigg|  \\
& \qquad \qquad \qquad  \times  \displaystyle\sum_{j = -\infty}^{\infty} \left| \displaystyle\frac{1}{\sin \big( \pi (\log q)^{-1} (\log v - \log w + 2 \pi \textbf{i}j) \big)} \right| dv. 
\end{aligned}
\end{flalign}

\noindent Now, observe that there is some sufficiently large constant $C_4 > 0$ such that the seven inequalities  
\begin{flalign}
\label{vwinequalities1}
\begin{aligned}
& \left| \displaystyle\frac{v}{w} \right|^{\mathcal{F} s T^{1 / 3}} \le e^{C_4 T^{1 / 3}}; \quad \left| \displaystyle\frac{1}{v} \right| < C_4;  \quad \left| \displaystyle\frac{(q^{-1} \beta_1^{-1} v; q)_{\infty} }{(q \varkappa \beta_2 v^{-1}; q)_{\infty}} \right| < C_4; \quad \left| \displaystyle\frac{(q \varkappa \beta_2 w^{-1}; q)_{\infty} }{(q^{-1} \beta_1^{-1} w; q)_{\infty}} \right| < C_4;\\
& \qquad \quad \left| \displaystyle\frac{1}{w - v} \right|, \left| \displaystyle\frac{1}{w' - v} \right| \le C_4 T^{10}; \qquad \left| \displaystyle\frac{1}{\sin \big( \pi (\log q)^{-1} (\log v - \log w) \big)} \right| \le \displaystyle\frac{C_4}{|v - w|};   \\
& \qquad \qquad \qquad \qquad \qquad \displaystyle\sum_{j \ne 0} \left| \displaystyle\frac{1}{\sin \big( \pi (\log q)^{-1} (\log v - \log w + 2 \pi \textbf{i}j) \big)} \right| < C_4, 
\end{aligned}
\end{flalign}

\noindent all hold, for any $v \in \Gamma$ and $w, w' \in \mathcal{C}$. Indeed, the first and second inequalities hold since $\Gamma$ and $\mathcal{C}$ are compact do not pass through $0$. The third inequality holds since $\sup_{w \in \mathcal{C}} |w - q^m \varkappa \beta_2| |w - q^m \beta_1|^{-1}$ and $\sup_{w \in \mathcal{C}} |w - q^m \varkappa \beta_2|^{-1} |w - q^m \beta_1|$ are uniformly bounded (and converge to $1$ exponentially quickly in $m$), for all $m \in \mathbb{Z}_{> 0}$ and over all $w \in \mathcal{C}$. The fourth inequality holds similar to the third. The fifth inequality holds since $|w - v|, |w' - v| \ge \alpha - \gamma = q (\beta_1 - \varkappa \beta_2) / 2 = q \beta_1 (1 - \omega) / 2 = q \beta_1 T^{-10} / 2$, for all $w' \in \mathcal{C}$ and $v \in \Gamma$. 

The sixth inequality holds by a Taylor expansion and the fact that $v / w$ is bounded away from any integral power of $q$, except for $1$. The seventh inequality holds since $\big| \sin \big( \pi (\log q)^{-1} (\log v - \log w + 2 \pi \textbf{i}j) \big) \big|$ is also bounded away from $0$ (since $j \ne 0$) and increases exponentially in $|j|$. 

Inserting the seven estimates \eqref{vwinequalities1} into \eqref{kc2}, and also using the compactness of the contour $\Gamma$, yields the second estimate in \eqref{smallkwkwc} for sufficiently large $C_1$ and sufficiently small $c_2 > 0$. 

The first estimate (on $\big| K (w, w') - \widetilde{K} (w, w') \big|$) in \eqref{smallkwkwc} is entirely analogous. Indeed, using Lemma \ref{rightcv2gammav2exponential} (to bound $\exp \big( T (G(w) - G(v)) \big)$ when $w \in \mathcal{C}$ and $v \in \Gamma^{(2)}$); the seven estimates \eqref{vwinequalities1}; and the compactness of $\Gamma$, we deduce that 
\begin{flalign*}
\big| & K (w, w')  - \widetilde{K} (w, w') \big| \\
& \quad \le \displaystyle\frac{1}{2 |\log q|} \displaystyle\int_{\Gamma^{(2)}} \displaystyle\sum_{j = -\infty}^{\infty}  \bigg| \left( \displaystyle\frac{v}{w} \right)^{\mathcal{F} s T^{1 / 3}} \displaystyle\frac{1}{\sin \big( \pi (\log q)^{-1} (\log v - \log w + 2 \pi \textbf{i}j) \big) v (w' - v)} \\
& \quad \qquad \qquad \qquad  \times \exp \Big( \big( G(w) - G(v) \big) T \Big) \displaystyle\frac{(\varkappa \beta_2 q w^{-1}; q)_{\infty} (\beta_1^{-1} q^{-1} v; q)_{\infty}}{(\varkappa \beta_2 q v^{-1}; q)_{\infty} (\beta_1^{-1} q^{-1} w; q)_{\infty}} \bigg| dv \le C_6 e^{-c_4 T}
\end{flalign*}

\noindent for some constants $c_4, C_6 > 0$. Taking $c_2 < c_4$ and $C_1 > C_6$ yields the second estimate in \eqref{smallkwkwc}. 
\end{proof}

\begin{proof}[Proof of Lemma \ref{uniformstationaryk}]

Due to the first estimate in \eqref{smallkwkwc}, it suffices to establish \eqref{uniformstationarykww} only for $\big| \widetilde{K} (w, w') \big|$. Furthermore, due to the second estimate in \eqref{smallkwkwc}, we can assume $w \in \mathcal{C}^{(1)}$. 

Now, the seven inequalities \eqref{vwinequalities1} estimate all factors of the integrand \eqref{tildekzetastationary} defining the kernel $\widetilde{K}$, except for the exponential term. To address this term, we claim that there exists constants $c_4, C_4 > 0$ (independent of $T$) such that 
\begin{flalign}
\label{exponentialfst13}
\left| \left( \displaystyle\frac{v}{w} \right)^{\mathcal{F} s T^{1 / 3}} \exp \Big( T \big( G (w) - G(v) \big) \Big) \right|  < C_4 \exp \big( -c_4 T (|w - q \beta|^3 + |v - q \beta|^3) \big),
\end{flalign}

\noindent for all $w \in \mathcal{C}^{(1)}$ and $v \in \Gamma^{(1)}$. Indeed, from \eqref{rstationary}, we have that 
\begin{flalign}
\label{tgwv}
\begin{aligned}
T \big( G (w) - G (v) \big) & = \displaystyle\frac{T \mathcal{F}^3}{3 (q \beta)^3 } \big( (w - q \beta)^3 - (v - q \beta)^3 \big) + \displaystyle\frac{c T^{2 / 3} \mathcal{F}^2}{(q \beta)^2} \big( (w - q \beta)^2 - (v - q \beta)^2 \big) \\
& \qquad + T R \left( \displaystyle\frac{\mathcal{F}_K (w - q \beta)}{q \beta} \right) - T R \left( \displaystyle\frac{\mathcal{F}_K (v - q \beta)}{q \beta}  \right).
\end{aligned}
\end{flalign}

\noindent Now, the third requirement of Definition \ref{linearvertexrightstationary} implies that \eqref{rksmall} holds for all $z \in \mathcal{C}^{(1)} \cup \Gamma^{(1)}$. Furthermore, it quickly follows from the definition of the contours $\mathcal{C}^{(1)}$ and $\Gamma^{(1)}$ (see Definition \ref{linearvertexrightstationary}), and the fact that $| \alpha - \beta|, |\gamma - \beta| = \mathcal{O} \big( T^{-10} \big)$, that there exists $C_5 > 0$ such that 
\begin{flalign}
\label{wvsmall}
\Re (w - q \beta)^3 < C_5 - \displaystyle\frac{|w - q \beta|^3}{2} ; \qquad \Re (v - q \beta)^3 > \displaystyle\frac{|v - q \beta|^3}{2} - C_5,
\end{flalign}

\noindent  for all $w \in \mathcal{C}^{(1)}$ and $v \in \Gamma^{(1)}$.

Combining \eqref{rksmall}, \eqref{tgwv}, and \eqref{wvsmall}, we deduce the existence of a constant $c_5, C_6 > 0$ such that 
\begin{flalign}
\label{tgwv1}
\Re T \big( G (w) - G (v) \big) < C_6 - c_5 T \big( |w - q \beta|^3 + |v - q \beta|^3 \big) , 
\end{flalign}

\noindent for all $w \in \mathcal{C}^{(1)}$ and $v \in \Gamma^{(1)}$. In particular, \eqref{tgwv1} implies that there exist $c_4, C_4 > 0$ such that \eqref{exponentialfst13} holds. 

Now, inserting \eqref{exponentialfst13} and the last six inequalities from \eqref{vwinequalities1} into \eqref{tildekzetastationary}, we deduce the existence of a constant $C_7, C_8, C_9 > 0$ such that 
\begin{flalign}
\label{stationarykuniform1}
\begin{aligned}
\big| \widetilde{K} (w, w') \big| & \le \displaystyle\frac{1}{2 |\log q|} \displaystyle\int_{\Gamma^{(1)}} \displaystyle\sum_{j = -\infty}^{\infty}  \bigg| \displaystyle\frac{1}{\sin \big( \pi (\log q)^{-1} (\log v - \log w + 2 \pi \textbf{i}j) \big) v (w' - v)} \\
& \qquad \quad  \times \left( \displaystyle\frac{v}{w} \right)^{\mathcal{F} s T^{1 / 3}} \exp \Big( \big( G(w) - G(v) \big) T \Big) \displaystyle\frac{(\varkappa \beta_2 q w^{-1}; q)_{\infty} (\beta_1^{-1} q^{-1} v; q)_{\infty}}{(\varkappa \beta_2 q v^{-1}; q)_{\infty} (\beta_1^{-1} q^{-1} w; q)_{\infty}} \bigg| dv  \\
& < C_7 \displaystyle\oint_{\Gamma} \Bigg( 1 + \left| \displaystyle\frac{1}{(v - w) (w' - v)} \right| \Bigg) dv \le C_9 + \displaystyle\frac{C_8  \log T }{|w - q \beta| + |w' - q \beta|}. 
\end{aligned} 
\end{flalign}

\noindent The last estimate in \eqref{stationarykuniform1} is due to compactness of the contour $\Gamma$, and also due to the fact that $|v - w|$ and $|w' - v|$ are both uniformly bounded away from $0$, except when $v$ and $w$ (or $v$ and $w'$) are both in a neighborhood of $q \beta$, in which case it is bounded below by some multiple of $T^{-10}$. 

Now, \eqref{uniformstationarykww} follows from \eqref{stationarykuniform1} and the compactness of the contour $\mathcal{C}$. 
\end{proof}

\noindent Now, we can establish Lemma \ref{smallww1stationary}. 

\begin{proof}[Proof of Lemma \ref{smallww1stationary}]

It suffices to show that 
\begin{flalign}
\label{twosidedkernelc2small1}
\Big| \det \big( \Id + K \big)_{L^2 (\mathcal{C})} - \det \big( \Id + \widetilde{K} \big)_{L^2 (\mathcal{C})}  \Big| & < \displaystyle\frac{C_1}{2 e^{c_1 T}}; \\
\label{twosidedkernelc2small2}
 \Big| \det \big( \Id + \widetilde{K} \big)_{L^2 (\mathcal{C})} - \det \big( \Id + \widetilde{K} \big)_{L^2 (\mathcal{C}^{(1)})}  \Big| & < \displaystyle\frac{C_1}{2 e^{c_1 T}}, 
\end{flalign}

\noindent for some constants $c_1, C_1 > 0$. 

The first estimate \eqref{twosidedkernelc2small1} will be a consequence of Lemma \ref{determinantclosekernels}, the first estimate \eqref{smallkwkwc} on $|K - \widetilde{K}|$, and the uniform estimate \eqref{uniformstationarykww}. The second estimate \eqref{twosidedkernelc2small2} will be a consequence of Lemma \ref{determinantsmallcontour}, the second estimate in \eqref{smallkwkwc} on $\big| K(w, w') \big|$ when $w \in \mathcal{C}^{(2)}$, and the uniform estimate \eqref{uniformstationarykww} on $\widetilde{K}$. 

Given this, the derivations of both estimates \eqref{twosidedkernelc2small1} and \eqref{twosidedkernelc2small2} are entirely analogous, so we only go through \eqref{twosidedkernelc2small1}. To that end, we apply Lemma \ref{determinantclosekernels} to deduce that that 
\begin{flalign}
\label{twosidedkernelc2small3}
\Big| \det \big( \Id + K \big)_{L^2 (\mathcal{C})} - \det \big( \Id + \widetilde{K} \big)_{L^2 (\mathcal{C})}  \Big| < C, 
\end{flalign}

\noindent where 
\begin{flalign}
\label{c}
\begin{aligned}
C =  \displaystyle\sum_{k = 1}^{\infty} & \displaystyle\frac{2^k k^{k / 2}}{(k - 1)!} \displaystyle\oint_{\mathcal{C}} \cdots \displaystyle\oint_{\mathcal{C}}  \left| \displaystyle\frac{1}{k} \displaystyle\sum_{j = 1}^k \big| K(w_i, w_j) - \widetilde{K} (w_i, w_j)\big|^2 \right|^{1 / 2} d w_1  \\
& \times \displaystyle\prod_{i = 2}^k \left| \displaystyle\frac{1}{k} \displaystyle\sum_{j = 1}^k \Big( \big| K(w_i, w_j) \big|^2 + \big| K(w_i, w_j) - \widetilde{K} (w_i, w_j)  \big|^2 \Big) \right|^{1 / 2} d w_i. 
\end{aligned}
\end{flalign}

 Now, recall the definitions of $c_3$, $C_2$, and $C_3$ from Lemma \ref{ktildekstationarynear} and Lemma \ref{uniformstationaryk}, and also assume that $T$ is sufficiently large so that $C_2 e^{-c_2 T} < 1$ (this assumption amounts to enlarging the constant $C_1$ in \eqref{twosidedkernelc2small1}). 

Inserting the first estimate in \eqref{smallkwkwc}, the estimate on $K$ in \eqref{uniformstationarykww}, and the estimate $|w_i - q \beta|^2 + |w_j - q \beta|^2  \ge |w_i - q \beta|^2$ into the definition of $C$, we deduce that 
\begin{flalign}
\label{twosidedkernelc2small4}
\begin{aligned}
C & \le C_2 e^{-c_3 T}  \displaystyle\sum_{k = 1}^{\infty} \displaystyle\frac{2^k k^{k / 2}}{(k - 1)!} \displaystyle\oint_{\mathcal{C}} \cdots \displaystyle\oint_{\mathcal{C}} \displaystyle\prod_{i = 1}^k \left| 1 + \displaystyle\frac{C_3^2 (\log T)^2 }{|w_i - q \beta|^2}  \right|^{1 / 2} d w_i \\
& \le C_2 e^{-c_3 T}  \displaystyle\sum_{k = 1}^{\infty} \displaystyle\frac{2^k k^{k / 2} \big( \log T \big)^k }{(k - 1)!} \Bigg| \displaystyle\oint_{\mathcal{C}} \bigg| 1 + \displaystyle\frac{C_3^2 }{|w - q \beta|^2}  \bigg|^{1 / 2} d w \Bigg|^k  \\
& \le e^{-c_3 T} \displaystyle\sum_{k = 1}^{\infty} \displaystyle\frac{C_4^k k^{k / 2} (\log T)^{2k} }{(k - 1)!} \le \exp \big( C_5 (\log T)^{C_5} - c_3 T \big), 
\end{aligned}
\end{flalign}

\noindent for some constants $C_4, C_5 > 0$. The third estimate above holds since $\mathcal{C}$ is compact and since $|w - q \beta|$ is bounded below by some multiple of $T^{-10}$, for all $w \in \mathcal{C}$. Inserting \eqref{twosidedkernelc2small4} into \eqref{twosidedkernelc2small3}, we deduce the lemma. 
\end{proof}

\noindent From Lemma \ref{smallww1stationary}, we obtain the following corollary.

\begin{cor}
\label{limitgammac}

We have that 
\begin{flalign*}
\displaystyle\lim_{T \rightarrow \infty} \displaystyle\frac{1}{1 - \omega} \Big( \det \big( \Id + K \big)_{\mathcal{C}} - \det \big( \Id + \widetilde{K} \big)_{\mathcal{C}^{(1)}} \Big) = 0. 
\end{flalign*} 
\end{cor}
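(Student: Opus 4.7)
The plan is to derive this corollary as an immediate consequence of Lemma \ref{smallww1stationary}, together with the specific form of $\omega = \omega_T$ prescribed in Proposition \ref{convergencedeterminant} and Proposition \ref{convergencedeterminantmodel}. Recall that in both propositions the sequence of Bernoulli parameters is chosen so that $\omega_T = 1 - T^{-10}$ for all $T > 10$; in particular $(1 - \omega)^{-1} = T^{10}$ grows only polynomially in $T$.

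First I would simply invoke Lemma \ref{smallww1stationary} to obtain constants $c_2, C_1 > 0$, independent of $T$, such that
\begin{equation*}
\Big| \det \big( \Id + K \big)_{L^2(\mathcal{C})} - \det \big( \Id + \widetilde{K} \big)_{L^2(\mathcal{C}^{(1)})} \Big| \le C_1 \exp(-c_2 T).
\end{equation*}
Dividing through by $1 - \omega = T^{-10}$ and using $(1-\omega)^{-1} = T^{10}$ then yields
\begin{equation*}
\frac{1}{1 - \omega}\Big| \det \big( \Id + K \big)_{\mathcal{C}} - \det \big( \Id + \widetilde{K} \big)_{\mathcal{C}^{(1)}} \Big| \le C_1 T^{10} \exp(-c_2 T),
\end{equation*}
and the right-hand side clearly tends to $0$ as $T \to \infty$ (exponential decay dominates any polynomial factor).

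There is essentially no obstacle here, since all the hard work has already been done in Lemma \ref{smallww1stationary}: the exponential smallness of the difference of Fredholm determinants was obtained via the contour-truncation bounds in Lemma \ref{ktildekstationarynear} (which relied on Lemma \ref{rightcv2gammav2exponential}, i.e., the fact that $\Re(G(w) - G(v))$ is strictly negative away from the critical point $q\beta$) and the uniform kernel bound Lemma \ref{uniformstationaryk}. The only thing to verify is that the rate $T^{-10}$ at which $1 - \omega$ vanishes is much slower than the exponential rate at which the difference of determinants decays, which is manifest. Thus the corollary follows essentially immediately from the hypotheses on $\omega_T$ combined with Lemma \ref{smallww1stationary}.
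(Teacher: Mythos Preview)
Your proof is correct and follows essentially the same approach as the paper: invoke Lemma \ref{smallww1stationary} to get the exponential bound $C_1 e^{-c_2 T}$ on the difference of Fredholm determinants, then use $(1-\omega)^{-1} = T^{10}$ to conclude that $T^{10} e^{-c_2 T} \to 0$. The paper's proof is just a terser version of what you wrote.
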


\begin{proof}
From Lemma \ref{smallww1stationary}, we have that $\det \big( \Id + K \big)_{\mathcal{C}} - \det \big( \Id + \widetilde{K} \big)_{\mathcal{C}^{(1)}} = \mathcal{O} \big( e^{-c_2 T} \big)$, for some constant $c_2 > 0$. Thus, the corollary follows from the fact that $(1 - \omega)^{-1} = T^{10}$. 
\end{proof}

In view of Corollary \ref{limitgammac}, we can now direct our attention towards the ``truncated'' Fredholm determinant $(1 - \omega)^{-1} \det \big( \Id + \widetilde{K} \big)_{\mathcal{C}^{(1)}} $.

\subsection{Reformulating the Fredholm Determinant}

\label{ckr}

In this section, we rewrite the ``truncated'' Fredholm determinant $(1 - \omega)^{-1} \det \big( \Id + \widetilde{K} \big)_{\mathcal{C}^{(1)}} $ as a Fredholm determinant over $L^2 (\mathbb{R}_{> 0})$. There is a reasonably well-known way to do this that is based on a commutation identity for the Fredholm determinant; we will explain this in Section \ref{RealDeterminant}. 

Unfortunately, if one directly attempts to asymptotically analyze the kernel (over $L^2 (\mathbb{R}_{> 0})$) from Section \ref{RealDeterminant}, the result will be singular in the large time limit. So, in Section \ref{TruncationKernel}, we conjugate and truncate this kernel to ensure it remains regular as $T$ tends to $\infty$.

\subsubsection{Rewriting the Determinant Over \texorpdfstring{$L^2 (\mathbb{R}_{> 0})$}{}}

\label{RealDeterminant}

Having truncated the contours $\mathcal{C}$ and $\Gamma$ to $\mathcal{C}^{(1)}$ and $\Gamma^{(1)}$, respectively, we now rescale around $q \beta$. To that end, define $\sigma = \sigma_K = \mathcal{F}_K^{-1} T^{-1 / 3}$, and change variables  
\begin{flalign}
\label{stationaryvvhatwwhat}
v = q \psi + q \psi \sigma \widehat{v}; \quad w = q \psi + q \psi \sigma \widehat{w}; \quad w' = q \psi + q \psi \sigma \widehat{w'}; \quad \widehat{K} (\widehat{w}, \widehat{w'}) = q \psi \sigma \widetilde{K} (w, w'),
\end{flalign} 

\noindent where we recall the definition of $\psi = \psi_K$ from \eqref{beta}. Furthermore, for any contour $\mathcal{D} \subset \mathbb{C}$, define $\widehat{\mathcal{D}} = q^{-1} \psi^{-1} \sigma^{-1} \big( \mathcal{D} - q \psi \big)$, which is the set consisting of all numbers of the form $q^{-1} \psi^{-1} \sigma^{-1} (z - q \psi)$, for some $z \in \mathcal{D}$. 

The change of variables \eqref{stationaryvvhatwwhat} implies that 
\begin{flalign}
\label{stationarydeterminants1}
\det \big( \Id + \widetilde{K} \big)_{L^2 (\mathcal{C}^{(1)})} = \det \big( \Id + \widehat{K} \big)_{L^2 (\widehat{\mathcal{C}}^{(1)})}. 
\end{flalign}

\noindent Implementing the change of variables \eqref{stationaryvvhatwwhat} in the definition \eqref{tildekzetastationary} of $\widetilde{K}$, we deduce that $\widehat{K}$ can be explicitly written as 
\begin{flalign}
\label{kernelkhat}
\begin{aligned}
\widehat{K} (\widehat{w}, \widehat{w'}) & = \displaystyle\frac{\sigma}{2 \textbf{i} \log q} \displaystyle\sum_{j = -\infty}^{\infty} \displaystyle\int_{\widehat{\Gamma}^{(1)}} \left( \displaystyle\frac{1 + \sigma \widehat{v}}{1 + \sigma \widehat{w}} \right)^{\mathcal{F} s T^{1 / 3}}\displaystyle\frac{ \big( \varkappa \beta_2 (\psi + \psi \sigma \widehat{w})^{-1}; q \big)_{\infty} \big( \beta_1^{-1} (\psi + \psi \sigma \widehat{v}); q \big)_{\infty} }{ \big( \beta_1^{-1} (\psi + \psi \sigma \widehat{w}); q \big)_{\infty} \big( \varkappa \beta_2 \big( \psi + \psi \sigma \widehat{v})^{-1}; q \big)_{\infty}} \\
& \qquad  \times \displaystyle\frac{\exp \Big( \big( G(q \psi + q \psi \sigma \widehat{w}) - G( q \psi + q \psi \sigma \widehat{v}) \big) T \Big) d \widehat{v} }{\big( 1 + \sigma \widehat{v} \big) (\widehat{v} - \widehat{w'}) \sin \big( \pi (\log q)^{-1} (\log (1 + \sigma \widehat{v}) - \log (1 + \sigma \widehat{w}) + 2 \pi \textbf{i}j) \big)}. 
\end{aligned}
\end{flalign}

\noindent Furthermore, observe from Definition \ref{linearvertexrightstationary} that if we denote
\begin{flalign*}
\Delta = \Delta_K = \displaystyle\frac{\beta_1 - \varkappa \beta_2}{2 \psi} = \displaystyle\frac{\beta_1 - \psi}{\psi} = \displaystyle\frac{\psi - \varkappa \beta_2}{\psi} = \mathcal{O} \big( T^{-10} \big), 
\end{flalign*}

\noindent then $\widehat{\Gamma}^{(1)} = \mathfrak{V}_{- \Delta / 2 \sigma, \varepsilon / q \psi \sigma }$ and $\widehat{\mathcal{C}}^{(1)} = \mathfrak{V}_{\Delta / 2 \sigma, \varepsilon / q \psi \sigma }$. 

Using the identity \eqref{kernelkhat}, we will rewrite the Fredholm determinant $\det \big( \Id + \widehat{K} \big)_{\widehat{\mathcal{C}_K}^{(1)}}$ as Fredholm determinant over $L^2 (\mathbb{R}_{> 0})$. To that end, define (for each $K \in \{ V, A \}$) the kernel $L (y, y') = L_K (y, y')$ through the identity 
\begin{flalign}
\label{lkdefinition}
\begin{aligned}
L (y, y') = \displaystyle\frac{\sigma \pi}{ (2 \pi \textbf{i})^2 \log q} & \displaystyle\sum_{j = -\infty}^{\infty} \displaystyle\int_{\widehat{\mathcal{C}}^{(1)}} \displaystyle\int_{\widehat{\Gamma}^{(1)}} \displaystyle\frac{\exp \Big( \big( G(q \psi + q \psi \sigma w) - G( q \psi + q \psi \sigma v) \big) T + vy - w y' \Big)}{\big( 1 + \sigma v \big) \sin \big( \pi (\log q)^{-1} (\log (1 + \sigma w) - \log (1 + \sigma v) + 2 \pi \textbf{i}j) \big)} \\
& \times \left( \displaystyle\frac{1 + \sigma v}{1 + \sigma w} \right)^{\mathcal{F} s T^{1 / 3}} \displaystyle\frac{ \big( \varkappa \beta_2 (\psi + \psi \sigma w)^{-1}; q \big)_{\infty} \big( \beta_1^{-1} (\psi + \psi \sigma v); q \big)_{\infty} }{\big( \varkappa \beta_2 \big( \psi + \psi \sigma v)^{-1}; q \big)_{\infty} \big( \beta_1^{-1} (\psi + \psi \sigma w); q \big)_{\infty}} dv dw, 
\end{aligned}
\end{flalign}

\noindent for all positive real numbers $y, y' > 0$. 

\begin{rem}

\label{cc1ggamma1stationarybd}

Observe that $\Re v \le - \Delta / 2 \sigma < 0 < \Delta / 2 \sigma \le \Re w$ for all $w \in \widehat{\mathcal{C}}^{(1)}$ and $v \in \widehat{\Gamma}^{(1)}$. Thus, since $\mathcal{C}^{(1)}$ and $\Gamma^{(1)}$ are compact, we deduce from \eqref{lkdefinition} that there exists a constant $C$ (dependent on $T$) such that 
\begin{flalign}
\label{lexponential}
\big| L (y, y') \big| < C \exp \left( - \displaystyle\frac{\Delta}{2 \sigma} (y + y') \right).
\end{flalign}

\noindent In particular, by the first statement of Lemma \ref{determinantclosekernels}, it follows that the Fredholm determinant $\det \big( \Id - L_K \big)_{L^2 (\mathbb{R}_{> 0})}$ converges as a series of the form \eqref{determinantsum}. 

Important here was the fact that $\Re v < 0 < \Re w$ for all $w \in \widehat{\mathcal{C}}^{(1)}$ and $v \in \widehat{\Gamma}^{(1)}$. This is not true over the more general domain $\widehat{v} \in \widehat{\Gamma}$ and $\widehat{w} \in \widehat{\mathcal{C}}$. Hence, if we did not perform the truncation in Section \ref{DeterminantDifferentContour}, we would have not obtained an estimate of the form \eqref{lexponential} on $L$ and so the Fredholm determinant  $\det \big( \Id - L_K \big)_{L^2 (\mathbb{R}_{> 0})}$ would have not been guaranteed to converge. 

\end{rem}

In view of Remark \ref{cc1ggamma1stationarybd}, the Fredholm determinant $\det \big( \Id - L_K \big)_{L^2 (\mathbb{R}_{> 0})}$ exists. The following proposition shows why it is relevant. 

\begin{lem}
\label{determinantreals}

For each $K \in \{ V, A \}$ and fixed $T > 10$ we have that 
\begin{flalign}
\label{determinantckdeterminantreals}
\det \big( \Id + \widehat{K} \big)_{L^2 (\widehat{\mathcal{C}_K}^{(1)})} = \det \big( \Id - L_K \big)_{L^2 (\mathbb{R}_{> 0})}. 
\end{flalign}
\end{lem}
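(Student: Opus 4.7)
The plan is to factor the rescaled kernel $\widehat{K}$ as a composition of two operators with $L^2(\mathbb{R}_{>0})$ as the intermediate space, and then invoke the standard commutation identity $\det(\Id + \mathcal{A}\mathcal{B}) = \det(\Id + \mathcal{B}\mathcal{A})$. The crucial structural input is Remark \ref{cc1ggamma1stationarybd}, which guarantees $\Re \widehat{v} \le -\Delta/(2\sigma) < 0 < \Delta/(2\sigma) \le \Re \widehat{w}'$ for all $\widehat{v} \in \widehat{\Gamma}^{(1)}$ and $\widehat{w}' \in \widehat{\mathcal{C}}^{(1)}$. This strict separation justifies the integral representation
\begin{equation*}
\frac{1}{\widehat{v} - \widehat{w}'} = -\int_0^\infty e^{(\widehat{v} - \widehat{w}')y}\, dy,
\end{equation*}
which converges absolutely and uniformly for $(\widehat{v}, \widehat{w}') \in \widehat{\Gamma}^{(1)} \times \widehat{\mathcal{C}}^{(1)}$.

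First, I would substitute this representation into the expression \eqref{kernelkhat} for $\widehat{K}(\widehat{w}, \widehat{w}')$ and use Fubini's theorem to interchange the $y$-integration with the $\widehat{v}$-contour integration and the sum over $j$. This produces a factorization $\widehat{K}(\widehat{w}, \widehat{w}') = \int_0^\infty \mathcal{A}(\widehat{w}, y)\, \mathcal{B}(y, \widehat{w}')\, dy$, where $\mathcal{B}(y, \widehat{w}') = e^{-\widehat{w}'y}$ and $\mathcal{A}(\widehat{w}, y)$ collects all remaining factors from \eqref{kernelkhat} together with $e^{\widehat{v}y}$, integrated over $\widehat{v} \in \widehat{\Gamma}^{(1)}$ and summed over $j$. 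Viewing $\mathcal{A}: L^2(\mathbb{R}_{>0}) \to L^2(\widehat{\mathcal{C}}^{(1)})$ and $\mathcal{B}: L^2(\widehat{\mathcal{C}}^{(1)}) \to L^2(\mathbb{R}_{>0})$, this says $\widehat{K} = \mathcal{A}\mathcal{B}$ as operators on $L^2(\widehat{\mathcal{C}}^{(1)})$.

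Next, I would verify that $\mathcal{A}$ and $\mathcal{B}$ are Hilbert--Schmidt, which is needed to apply the commutation identity rigorously. For $\mathcal{B}$, the bound $|\mathcal{B}(y, \widehat{w}')| \le e^{-(\Delta/2\sigma)y}$ combined with compactness of $\widehat{\mathcal{C}}^{(1)}$ yields a finite Hilbert--Schmidt norm; for $\mathcal{A}$, the corresponding bound $|\mathcal{A}(\widehat{w}, y)| \le C e^{-(\Delta/2\sigma)y}$ follows from the same separation argument applied to the $\widehat{v}$-integral, once one controls the remaining bounded factors (mirroring the estimates \eqref{vwinequalities1}). Granting this, $\det(\Id + \widehat{K})_{L^2(\widehat{\mathcal{C}}^{(1)})} = \det(\Id + \mathcal{B}\mathcal{A})_{L^2(\mathbb{R}_{>0})}$. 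A direct computation of the composition $(\mathcal{B}\mathcal{A})(y, y') = \int_{\widehat{\mathcal{C}}^{(1)}} \mathcal{B}(y, \widehat{w}) \mathcal{A}(\widehat{w}, y')\, d\widehat{w}$—again with Fubini swapping the $\widehat{w}$ and $\widehat{v}$ integrations—produces the kernel $-L_K(y', y)$, matching the definition \eqref{lkdefinition} up to transposition of the two arguments. Since a Fredholm determinant equals that of its transposed kernel, this identifies the right-hand side and, combined with \eqref{stationarydeterminants1}, establishes \eqref{determinantckdeterminantreals}.

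The main obstacle will be justifying the various interchanges of integration and summation in the presence of both the doubly infinite $j$-sum arising from the sine denominator and the unbounded $y$-integral. This should follow by showing that the combined integrand is absolutely integrable over $\widehat{\Gamma}^{(1)} \times \widehat{\mathcal{C}}^{(1)} \times \mathbb{R}_{>0} \times \mathbb{Z}$: the $y$-integrability comes from the exponential gap $\Re(\widehat{v} - \widehat{w}') \le -\Delta/\sigma$; the contour integrals are over compact sets; and the $j$-sum is absolutely summable by the exponential decay of $|\sin(\pi(\log q)^{-1}(\log(1+\sigma\widehat{w}) - \log(1+\sigma\widehat{v}) + 2\pi \textbf{i} j))|^{-1}$ recorded in \eqref{vwinequalities1}. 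Once these uniform bounds are assembled, all of the interchanges and the commutation identity apply, and the lemma follows.
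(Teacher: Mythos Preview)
Your proposal is correct and is essentially the same argument the paper gives: factor $\widehat{K}$ through $L^2(\mathbb{R}_{>0})$ via the identity $\tfrac{1}{\widehat{v}-\widehat{w}'} = -\int_0^\infty e^{(\widehat{v}-\widehat{w}')y}\,dy$ (valid since $\Re\widehat{v}<0<\Re\widehat{w}'$), then apply $\det(\Id+\mathcal{A}\mathcal{B})=\det(\Id+\mathcal{B}\mathcal{A})$. The paper writes the two factors as $B(r,z)=e^{-rz}$ and $D(z,r)$ (your $\mathcal{B}$ and $-\mathcal{A}$) and asserts $\widehat{K}=-DB$, $L=BD$; you are also right to flag the transposition, which the paper suppresses but which is harmless for the determinant.
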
 

\begin{proof}

For any $r \in \mathbb{R}_{> 0}$ and $z \in \widehat{\mathcal{C}}^{(1)}$, define $B(r, z) = e^{-rz}$ and 
\begin{flalign}
\label{kerneld}
\begin{aligned}
D(z, r) = \displaystyle\frac{- \sigma}{2 \textbf{i} \log q} \displaystyle\sum_{j = -\infty}^{\infty} & \displaystyle\int_{\widehat{\Gamma}^{(1)}} \displaystyle\frac{\exp \Big( \big( G(q \psi + q \psi \sigma z) - G( q \psi + q \psi \sigma v) \big) T + v r \Big) }{\big( 1 + \sigma v \big) \sin \big( \pi (\log q)^{-1} (\log (1 + \sigma v) - \log (1 + \sigma z) + 2 \pi \textbf{i}j) \big)}   \\
& \times \left( \displaystyle\frac{1 + \sigma v}{1 + \sigma z} \right)^{\mathcal{F} s T^{1 / 3}} \displaystyle\frac{\big( \varkappa \beta_2 (\psi + \psi \sigma z)^{-1}; q \big)_{\infty} \big( \beta_1^{-1} (\psi + \psi \sigma v); q \big)_{\infty}}{\big( \varkappa \beta_2 \big( \psi + \psi \sigma v)^{-1}; q \big)_{\infty} \big( \beta_1^{-1} (\psi + \psi \sigma z); q \big)_{\infty}} dv. 
\end{aligned}
\end{flalign}

\noindent Then, we find that 
\begin{flalign*}
\widehat{K} (\widehat{w}, \widehat{w'}) = - \displaystyle\int_0^{\infty} D(\widehat{w}, r) B(r, \widehat{w'}) dr; \qquad L (y, y') = \displaystyle\frac{1}{2 \pi \textbf{i}} \displaystyle\int_{\mathcal{C}^{(1)}} B (w, y) D(y', w) dw, 
\end{flalign*} 

\noindent from which we deduce that $\widehat{K} = - D B$ and $L = BD$. 

Thus, the lemma follows from the fact that
\begin{flalign*}
\det \big( \Id + \widehat{K} \big)_{L^2 (\widehat{\mathcal{C}}^{(1)}} = \det \big( \Id - D B \big)_{L^2 (\widehat{\mathcal{C}}^{(1)})} = \det \big( \Id - B D \big)_{L^2 (\mathbb{R}_{> 0})} = \det \big( \Id - L \big)_{L^2 (\mathbb{R}_{> 0})}.
\end{flalign*}
\end{proof}

\subsubsection{Asymptotic Regularization}

\label{TruncationKernel}

As defined in \eqref{lkdefinition}, the kernel $L (y, y')$ will be singular in the infinite $T$ limit. To regularize this asymptotic singularity, we must conjugate $L$, by replacing it with $e^{c (y - y')} L (y, y')$. However, the kernel $e^{c (y - y')} L (y, y')$ happens to be singular for finite $T$ (see, for instance, Remark \ref{gsmallyy}). Thus, we will perform another truncation, by setting the kernel to zero if either $y$ or $y'$ is sufficiently large. 

Specifically, for any real numbers $a, \Upsilon_T \in \mathbb{R}$, we define 
\begin{flalign}
\label{barldefinition}
\overline{L}^{(a)} (y, y') = \overline{L}_K^{(a; \Upsilon_T)} (y, y') = e^{a (y - y')} \textbf{1}_{|y| < \Upsilon_T} \textbf{1}_{|y'| < \Upsilon_T} L (y, y').   
\end{flalign}

The following lemma shows that, if we take the family $\{ \Upsilon_T \}$ to increase sufficiently quickly with $T$, then we can replace $L$ with $\overline{L}^{(c)}$ in the large $T$ limit. 

\begin{lem}

\label{lclexponential}

Recall the definition of $c$ from Proposition \ref{convergencedeterminant} (in the case $K = A$) and Proposition \ref{convergencedeterminantmodel} (in the case $K = V$). There exists a sufficiently quickly growing family $\{ \Upsilon_T \} \subset \mathbb{R}_{> 0}$ of positive real numbers such that $\Upsilon_T > e^T$ for each $T > 1$ and
\begin{flalign}
\label{largedeterminantc}
\displaystyle\lim_{T \rightarrow \infty} \displaystyle\frac{1}{1 - \omega_T} \Big| \det \big( \Id - L \big)_{L^2 (\mathbb{R}_{> 0})}  - \det \big( \Id - \overline{L}^{(c; \Upsilon_T)} \big)_{L^2 (\mathbb{R}_{> 0})}  \Big| = 0. 
\end{flalign}

\end{lem}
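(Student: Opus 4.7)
The strategy is to split the difference via the truncated kernel $\textbf{1}_{[0,\Upsilon_T]^2}(y,y')L(y,y')$, writing
\begin{equation*}
\det(\Id - L) - \det(\Id - \overline{L}^{(c;\Upsilon_T)}) \;=\; \Delta_1 + \Delta_2,
\end{equation*}
where $\Delta_1 = \det(\Id - L) - \det(\Id - \textbf{1}_{[0,\Upsilon_T]^2}L)$ measures the pure truncation error and $\Delta_2 = \det(\Id - \textbf{1}_{[0,\Upsilon_T]^2}L) - \det(\Id - \overline{L}^{(c;\Upsilon_T)})$ measures the cost of the subsequent gauge conjugation $L \mapsto e^{c(y-y')} L$. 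I will show that $\Delta_2$ vanishes identically, and that $\Delta_1$ is super-exponentially small once $\Upsilon_T$ is chosen to grow fast enough; the advertised choice $\Upsilon_T = e^T$ will be much more than sufficient.

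For the identity $\Delta_2 = 0$, observe that for every $n$ and every $(y_1,\ldots,y_n)$ the matrix $\bigl[e^{c(y_i-y_j)} L(y_i,y_j)\bigr]_{i,j=1}^n$ equals $D \bigl[L(y_i,y_j)\bigr] D^{-1}$ with $D = \operatorname{diag}(e^{cy_1},\ldots,e^{cy_n})$, so the two matrices have equal determinants. Multiplying by $\prod_i \textbf{1}_{|y_i| < \Upsilon_T}$ and integrating over $(0,\infty)^n$, the Fredholm series of $\det(\Id - \textbf{1}_{[0,\Upsilon_T]^2}L)$ and of $\det(\Id - \overline{L}^{(c;\Upsilon_T)})$ coincide term by term; both series converge absolutely for fixed $T$, since the two kernels are compactly supported in $[0,\Upsilon_T]^2$ and uniformly bounded there. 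Hence $\Delta_2 = 0$.

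For the estimate on $\Delta_1$, the essential input is a quantitative form of the bound from Remark \ref{cc1ggamma1stationarybd}, namely
\begin{equation*}
|L(y,y')| \;\le\; C(T)\,\exp\!\Bigl(-\tfrac{\Delta}{2\sigma}(y + y')\Bigr), \qquad \tfrac{\Delta}{2\sigma} = \Theta(T^{-29/3}),
\end{equation*}
with $C(T) \le \exp\bigl(O(T^{1/3})\bigr)$. The $T$-dependence of $C(T)$ is read off from \eqref{lkdefinition} using the tools already assembled in Lemma \ref{ktildekstationarynear} and Lemma \ref{uniformstationaryk}: the rescaled contours $\widehat{\mathcal{C}}^{(1)},\widehat{\Gamma}^{(1)}$ have length $O(T^{1/3})$; the Pochhammer ratios and nonzero-$j$ $\sin$-denominators are uniformly bounded; the $j=0$ $\sin$-denominator contributes at worst $|w-v|^{-1} = O(T^{29/3})$, which is only polynomial; the cubic bound \eqref{tgwv1} (applied after rescaling) controls $\exp(T(G(w) - G(v)))$; and the sole genuinely $T$-dependent blow-up comes from $\bigl((1+\sigma v)/(1+\sigma w)\bigr)^{\mathcal{F} s T^{1/3}} = \exp(O(T^{1/3}))$. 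Feeding this bound into Lemma \ref{determinantclosekernels} applied to the pair $(L,\textbf{1}_{[0,\Upsilon_T]^2}L)$, together with Hadamard's inequality, gives
\begin{equation*}
|\Delta_1| \;\le\; C(T)^{O(1)}\,\exp\!\Bigl(-\tfrac{\Delta\Upsilon_T}{2\sigma}\Bigr),
\end{equation*}
with polynomial-in-$T$ factors absorbed into the prefactor. For $\Upsilon_T = e^T$ this yields $|\Delta_1| \le \exp\bigl(O(T^{1/3}) - \Theta(e^T/T^{29/3})\bigr)$, which decays faster than any polynomial; in particular, multiplying by $(1-\omega_T)^{-1} = T^{10}$ still sends the quantity to zero.

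The main technical obstacle is the quantitative bookkeeping of $C(T)$. All pointwise bounds needed for this are already implicit in Section \ref{DeterminantDifferentContour}, but they must be combined carefully because the $j=0$ term of the $\sin$-series and the ``tilted'' factor $((1+\sigma v)/(1+\sigma w))^{\mathcal{F} s T^{1/3}}$ each contribute $T$-dependent growth, while the cubic decay in \eqref{tgwv1} only dominates those portions of the contour bounded away from the critical point. Once it is confirmed that $\log C(T) = o(\Delta\Upsilon_T/\sigma)$, the super-exponential choice $\Upsilon_T = e^T$ renders the remaining estimate routine.
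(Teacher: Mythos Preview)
Your decomposition $\Delta_1 + \Delta_2$ and the conjugation identity $\Delta_2 = 0$ are exactly what the paper does; see \eqref{lcl0}. Where you diverge is in the treatment of $\Delta_1$.

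The paper handles $\Delta_1$ by a soft diagonal argument. For each \emph{fixed} $T$ it applies Lemma~\ref{determinantlimitkernels} (with $\Upsilon$ playing the role of the limiting parameter there) together with the dominating function $\textbf{K}(y) = C e^{-\Delta y/(2\sigma)}$ from Remark~\ref{cc1ggamma1stationarybd}, to conclude that $\det\bigl(\Id - \overline{L}^{(0;\Upsilon)}\bigr) \to \det(\Id - L)$ as $\Upsilon \to \infty$. No control on the $T$-dependence of the constant $C$ is needed for this. One then simply \emph{chooses} $\Upsilon_T > e^T$ large enough that the difference is below, say, $(1-\omega_T)^2$, which gives \eqref{largedeterminantc}. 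That is the entire argument; there is no quantitative bookkeeping on $C(T)$.

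Your quantitative route can be made to work, but the step ``feeding this bound into Lemma~\ref{determinantclosekernels} gives $|\Delta_1| \le C(T)^{O(1)} e^{-\Delta\Upsilon_T/(2\sigma)}$'' is not correct as written. Lemma~\ref{determinantclosekernels} outputs the series \eqref{cdeterminant2}, whose $k$th term scales like $\bigl(\int \textbf{K}\bigr)^{k}\, k^{k/2}/(k-1)!$ with $\int \textbf{K} = 2\sigma C(T)/\Delta$; summing over $k$ produces a prefactor of order $\exp\bigl(O((\sigma C(T)/\Delta)^2)\bigr)$, not a fixed power of $C(T)$. In fact $C(T)$ is only polynomial in $T$ once you invoke \eqref{exponentialfst13} to bound the tilted factor \emph{together} with the $G$-exponential rather than separately; with $\sigma/\Delta = \Theta(T^{29/3})$ the resulting prefactor is $\exp(\text{poly}(T))$, which is still annihilated by $e^{-\Delta e^T/(4\sigma)}$, so your final claim that $\Upsilon_T = e^T$ suffices does survive. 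But the intermediate bound you wrote and the stated sufficient condition ``$\log C(T) = o(\Delta\Upsilon_T/\sigma)$'' are both too optimistic; the correct condition is $(\sigma C(T)/\Delta)^2 = o(\Delta\Upsilon_T/\sigma)$. The paper's soft argument sidesteps all of this, at the cost of not naming an explicit $\Upsilon_T$.
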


\begin{proof}

First, observe from the definition \eqref{determinantsum} of a Fredholm determinant that 
\begin{flalign}
\label{lcl0}
\det \big( \Id - \overline{L}^{(c; \Upsilon_T)} \big)_{L^2 (\mathbb{R}_{> 0})} = \det \big( \Id - \overline{L}^{(a; \Upsilon_T)} \big)_{L^2 (\mathbb{R}_{> 0})}, 
\end{flalign}

\noindent for any $a, \Upsilon_T \in \mathbb{R}$. 

Thus, it suffices to establish the lemma in the case $c = 0$. To that end, we claim that 
\begin{flalign}
\label{largedeterminant}
\displaystyle\lim_{\Upsilon \rightarrow \infty} \Big( \det \big( \Id - \overline{L}^{(0; \Upsilon)} \big)_{L^2 (\mathbb{R}_{> 0})} - \det \big( \Id - \overline{L}^{(0; \infty)} \big)_{L^2 (\mathbb{R}_{> 0})} \Big) = 0,
\end{flalign}

\noindent for each fixed $T > 0$. 

To see this, apply Corollary \ref{determinantlimitkernels}\footnote{The parameter $T$ in that corollary is replaced by $\Upsilon$ in this context.} to the sequence of contours $\mathcal{C}_{\Upsilon} = \mathbb{R}_{> 0}$, and the sequence of kernels $\{ \overline{L}^{(0; \Upsilon)}_{\Upsilon > 0} \}$ converging to $L$. The estimate \eqref{lexponential} from Remark \ref{cc1ggamma1stationarybd} implies that the kernels $\overline{L}^{(0; \Upsilon)}$ are dominated by a function $\textbf{K}: \mathbb{C} \rightarrow \mathbb{R}_{\ge 0}$ of the form $\textbf{K} (y) = C e^{-\Delta y / 2 \sigma}$. The three conditions of Lemma \ref{determinantlimitkernels} are then quickly verified, from which we deduce \eqref{largedeterminant}. 

Now, \eqref{largedeterminant} implies the existence of the family $\{ \Upsilon_T \}$ satisfying $\Upsilon_T > e^T$ and 
\begin{flalign}
\label{largedeterminant0}
\displaystyle\lim_{T \rightarrow \infty} \displaystyle\frac{1}{1 - \omega_T} \Big| \det \big( \Id - L \big)_{L^2 (\mathbb{R}_{> 0})}  - \det \big( \Id - \overline{L}^{(0; \Upsilon_T)} \big)_{L^2 (\mathbb{R}_{> 0})}  \Big| = 0. 
\end{flalign}

\noindent Thus, \eqref{largedeterminantc} follows from \eqref{lcl0} (applied with $a = c$) and \eqref{largedeterminant0}.
\end{proof}

\noindent Due to Lemma \ref{lclexponential}, we can now focus on the Fredholm determinant $\det \big( \Id - \overline{L}^{(c)} \big)_{L^2 (\mathbb{R}_{> 0})}$ instead of $\det \big( \Id - L \big)_{L^2 (\mathbb{R}_{> 0})}$.

\section{Analysis of the Determinant}

\label{RightStationary}

In this section we establish Proposition \ref{convergencedeterminant} and Proposition \ref{convergencedeterminantmodel} through analysis of the Fredholm determinant $\det \big( \Id - \overline{L} \big)_{L^2 (\mathbb{R}_{> 0})}$, where we abbreviate $\overline{L} = \overline{L}^{(c)}$. The obstruction that prevents us from doing this immediately is that the contours $\widehat{\mathcal{C}}^{(1)}$ and $\widehat{\Gamma}^{(1)}$ still nearly touch at $0$, which poses an issue due to the pole at $v = w$ in the integral on the right side of \eqref{lkdefinition} defining the kernel $\overline{L}^{(c)}$.

To remedy this, we follow what was done in Section 7 of \cite{HFSE}. Specifically, we deform the contours $\widehat{\mathcal{C}}^{(1)}$ and $\widehat{\Gamma}^{(1)}$ to obtain contours $\widetilde{\mathcal{C}}$ and $\widetilde{\Gamma}$ that remain detached in the infinite $T$ limit. This deformation produces residues in the integral defining $\overline{L}^{(c)}$, which must be tracked. This will be done in Section \ref{DeformationResidues}. The resulting expression for $\overline{L}^{(c)}$ becomes a finite-rank perturbation for a more manageable kernel, which allows for the determinant $\det \big( \Id - \overline{L}^{(c)} \big)$ to be rewritten more explicitly. This will be done in Section \ref{DeterminantFiniteRankPerturbation}. Then, the resulting expression for this Fredholm determinant will then become amenable to a saddle-point asymptotic analysis, which will lead to the proofs of Proposition \ref{convergencedeterminant} and Proposition \ref{convergencedeterminantmodel} in Section \ref{TLarge}.

\subsection{Deforming the Contours  \texorpdfstring{$\widehat{\mathcal{C}}^{(1)}$}{} and \texorpdfstring{$\widehat{\Gamma}^{(1)}$}{}}

\label{DeformationResidues} 

As mentioned previously, our first goal is to deform the contours $\widehat{\mathcal{C}}^{(1)}$ and $\widehat{\Gamma}^{(1)}$ so that they remain detached in the infinite $T$ limit. To that end, define the following contours, which are depicted in Figure \ref{cgammahatcgammatilde}. 

\begin{definition}

\label{ctildegammatilde}

Recalling the definition of $c$ from Proposition \ref{convergencedeterminant} and Proposition \ref{convergencedeterminantmodel}, fix a positive real number $E > 2 |c| + 1$. Define the contours $\widetilde{\mathcal{C}} = \mathfrak{W}_{E; \varepsilon / q \psi \sigma}$ and $\widetilde{\Gamma} = \mathfrak{V}_{-E; \varepsilon / q \psi \sigma}$, where we recall that $\varepsilon$ was given in Definition \ref{linearvertexrightstationary}. Stated alternatively, $\widetilde{\mathcal{C}}$ is formed by translating $\widehat{\mathcal{C}}^{(1)}$ to the right by $E - \Delta / 2 \sigma$, and $\widetilde{\Gamma}$ is formed by translating $\widehat{\Gamma}^{(1)}$ to the left by $E - \Delta / 2 \sigma$. 

\end{definition}

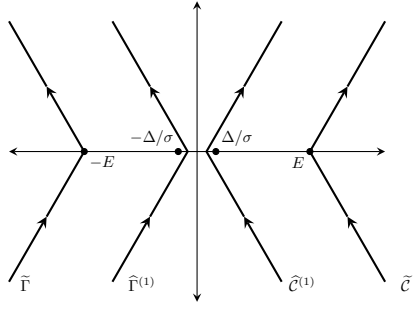
\begin{figure}

\begin{center}

\begin{tikzpicture}[
      >=stealth,
			scale = .5
			]
			
			\draw[<->] (-5, 0) -- (5, 0);
			\draw[<->] (0, -4) -- (0, 4);
			
			\draw[->,black, thick]  (5, -3.465) -- (4, -1.732) node[black, right = 22, below=20, scale = .6] {$\widetilde{\mathcal{C}}$};
			\draw[-,black, thick] (4, -1.732) -- (3, 0);
			\draw[-,black, thick] (5, 3.465) -- (4, 1.732);
			\draw[->,black, thick] (3, 0) -- (4, 1.732);
			
			\draw[->,black, thick] (-5, -3.465) -- (-4, -1.732) node[black, left = 8, below=20, scale = .6] {$\widetilde{\Gamma}$};
			\draw[-,black, thick] (-3, 0) -- (-4, -1.732);
			\draw[-,black, thick] (-5, 3.465) -- (-4, 1.732);
			\draw[->,black, thick] (-3, 0) -- (-4, 1.732);
			
			\draw[->,black, thick] (2.25, -3.465) -- (1.25, -1.732) node[black, right = 22, below=20, scale = .6] {$\widehat{\mathcal{C}}^{(1)}$};
			\draw[-,black, thick] (1.25, -1.732) -- (.25, 0);
			\draw[-,black, thick] (2.25, 3.465) -- (1.25, 1.732);
			\draw[->,black, thick] (.25, 0) -- (1.25, 1.732);
				
			\draw[->,black, thick] (-2.25, -3.465) -- (-1.25, -1.732) node[black, left = 3, below=20, scale = .6] {$\widehat{\Gamma}^{(1)}$};
			\draw[-,black, thick] (-.25, 0) -- (-1.25, -1.732);
			\draw[-,black, thick] (-2.25, 3.465) -- (-1.25, 1.732);
			\draw[->,black, thick] (-.25, 0) -- (-1.25, 1.732);

			\draw[black, fill] (.5, 0) circle [radius=.08] node [black, right = 8, above=0, scale = .6] {$\Delta / \sigma$};
			\draw[black, fill] (-.5, 0) circle [radius=.08] node [black, left = 10, above=0, scale = .6] {$ - \Delta / \sigma$};
			\draw[black, fill] (-3, 0) circle [radius=.08] node [black, below=4, right = 0, scale = .6] {$-E$};
			\draw[black, fill] (3, 0) circle [radius=.08] node [black, below=4, left = 0, scale = .6] {$E$};
\end{tikzpicture}

\end{center}

\noindent \caption{\label{cgammahatcgammatilde} Depicted above, ordered from left to right, are the contours $\widetilde{\Gamma}$, $\widehat{\Gamma}^{(1)}$, $\widehat{\mathcal{C}}^{(1)}$, and $\widetilde{\mathcal{C}}$. }	
\end{figure}

Let us examine how deforming $\widehat{\mathcal{C}}^{(1)}$ to $\widetilde{\mathcal{C}}$ and $\widehat{\Gamma}^{(1)}$ to $\widetilde{\Gamma}$ affects the kernel $\overline{L}$. To that end, observe that $\overline{L}$ can be rewritten as 
\begin{flalign}
\label{ldefinition}
\overline{L} (y, y') = \displaystyle\frac{1}{ (2 \pi \textbf{i})^2 } \displaystyle\int_{\widehat{\mathcal{C}}^{(1)}} \displaystyle\int_{\widehat{\Gamma}^{(1)}} I (y, y'; w, v) dw dv,
\end{flalign}

\noindent where
\begin{flalign}
\label{stationaryintegrandlc}
\begin{aligned}
I (y, y'; w, v) & = \textbf{1}_{|y| < \Upsilon_T} \textbf{1}_{|y'| < \Upsilon_T}  \exp \Big( \big( G(q \psi + q \psi \sigma w) - G( q \psi + q \psi \sigma v) \big) T + (v + c) y - (w + c) y' \Big) \\
& \quad \times \displaystyle\frac{1}{1 + \sigma v } \left( \displaystyle\frac{1 + \sigma v}{1 + \sigma w} \right)^{\mathcal{F} s T^{1 / 3}} \displaystyle\frac{ \big( \varkappa \beta_2 (\psi + \psi \sigma w)^{-1}; q \big)_{\infty} \big( \beta_1^{-1} (\psi + \psi \sigma v); q \big)_{\infty} }{\big( \varkappa \beta_2 \big( \psi + \psi \sigma v)^{-1}; q \big)_{\infty} \big( \beta_1^{-1} (\psi + \psi \sigma w); q \big)_{\infty}}  \\
& \quad \times \displaystyle\frac{\sigma \pi}{\log q} \displaystyle\sum_{j = -\infty}^{\infty} \displaystyle\frac{1}{\sin \big( \pi (\log q)^{-1} (\log (1 + \sigma w) - \log (1 + \sigma v) + 2 \pi \textbf{i}j) \big)}. 
\end{aligned}
\end{flalign}

Now, deforming $\widehat{\mathcal{C}}^{(1)}$ to $\widetilde{\mathcal{C}}$ crosses the pole at $w = (\beta_1 - \varkappa \beta) / \sigma \psi = \Delta / \sigma$ of the integrand on the right side of \eqref{stationaryintegrandlc}. Similarly, deforming $\widehat{\Gamma}^{(1)}$ to $\widetilde{\Gamma}$ crosses the pole at $v = (\varkappa \beta_2 - \beta) / \sigma \psi = - \Delta / \delta$ of this integrand. If $T$ is sufficiently large (which we will always assume to be the case) then these are the only poles of the integrand crossed by the deformation. 

Thus, deforming the contours in the identity \eqref{ldefinition} involves summing the residues corresponding to the poles at $w = \sigma^{-1} \Delta $ or $v = - \sigma^{-1} \Delta$ (or both). In particular, we find that 
\begin{flalign}
\label{lksum}
\overline{L} (y, y') = \widetilde{L} (y, y') + \mathscr{V} (y, y') + \mathscr{W} (y, y') + \mathscr{B} (y, y'), 
\end{flalign}

\noindent where 
\begin{flalign}
\label{ltildedefinition}
\widetilde{L} (y, y') & = \displaystyle\frac{1}{( 2 \pi \textbf{i})^2}  \displaystyle\int_{\widetilde{\mathcal{C}}} \displaystyle\int_{\widetilde{\Gamma}} I (y, y'; w, v) dw dv,
\end{flalign}

\noindent coincides with the definition of $\overline{L}$ in \eqref{ldefinition}, but where the integration is now over the contours $\widetilde{\mathcal{C}}$ and $\widetilde{\Gamma}$. Furthermore, in \eqref{lksum}, $\mathscr{V}$ is the residue corresponding to the pole $v = - \sigma^{-1} \Delta $; $\mathscr{W}$ is the negative of the residue corresponding to the pole $w = - \sigma^{-1} \Delta $; where $\mathscr{B}$ is the negative of the residue corresponding to both of these poles. Here, we have taken the negative of all residues corresponding to $w = \sigma^{-1} \Delta $ due to the orientation of the contour $\mathfrak{W}$. 	

To express $\mathscr{V}$, $\mathscr{W}$, and $\mathscr{B}$ explicitly, we introduce several preliminary functions. Define 
\begin{flalign}
\begin{aligned}
\label{stationaryintegrandq}
H (y'; w) & = \textbf{1}_{|y'| < \Upsilon_T} \exp \Big( \big( G(q \psi + q \psi \sigma w) - G( q \varkappa \beta_2 ) \big) T - (w + c) y' \Big)   \\
& \qquad \times \displaystyle\frac{\varkappa \beta_2 }{\beta} \left( \displaystyle\frac{\varkappa \beta_2 }{\psi (1 + \sigma w)} \right)^{\mathcal{F} s T^{1 / 3}} \displaystyle\frac{\big( \varkappa \beta_2 (\psi + \psi \sigma w)^{-1}; q \big)_{\infty}}{\big( \beta_1^{-1} (\psi + \psi \sigma w); q \big)_{\infty}}  \\
& \qquad \times \displaystyle\frac{\sigma \pi}{\log q} \displaystyle\sum_{j = -\infty}^{\infty} \displaystyle\frac{1}{\sin \big( \pi (\log q)^{-1} (\log (\psi + \psi \sigma w) - \log (\varkappa \beta_2) + 2 \pi \textbf{i}j) \big)},
\end{aligned}
\end{flalign}

\noindent and 
\begin{flalign}
\label{stationaryintegrandqbar}
\begin{aligned}
\overline{H} (y; v) & = \textbf{1}_{|y| < \Upsilon_T} \exp \Big( \big( G(\beta_1 q) - G( q \psi + q \psi \sigma v) \big) T + (v + c) y \Big)  \\
& \qquad \times \displaystyle\frac{\beta_1}{\psi \big( 1 + \sigma v \big)} \left( \displaystyle\frac{\psi (1 + \sigma v)}{\beta_1} \right)^{\mathcal{F} s T^{1 / 3}} \displaystyle\frac{ \big( \beta_1^{-1} (\psi + \psi \sigma v); q \big)_{\infty}}{\big( \varkappa \beta_2 \big( \psi + \psi \sigma v)^{-1}; q \big)_{\infty} }  \\
& \qquad \times \displaystyle\frac{\sigma \pi}{\log q} \displaystyle\sum_{j = -\infty}^{\infty} \displaystyle\frac{1}{\sin \big( \pi (\log q)^{-1} (\log (\beta_1) - \log (\psi + \psi \sigma v) + 2 \pi \textbf{i}j) \big)}. 
\end{aligned}
\end{flalign}

\noindent Set 
\begin{flalign}
\label{definitionqbarq}
Q(y') = \displaystyle\frac{1}{2 \pi \textbf{i}} \displaystyle\int_{\widetilde{\mathcal{C}}} H (y'; w) dw; \qquad Q (y) = \displaystyle\frac{1}{2 \pi \textbf{i}} \displaystyle\int_{\widetilde{\Gamma}} \overline{H} (y, v) dv. 
\end{flalign}

\noindent Now denote 
\begin{flalign*}
& f_1 (y') = \displaystyle\frac{\big( \omega; q \big)_{\infty} Q(y') }{ \sigma \big( q; q \big)_{\infty}} ; \qquad \qquad \qquad \qquad \qquad \qquad \qquad g_1 (y) = \exp \big( cy - \sigma^{-1} \Delta y \big) \textbf{1}_{|y| < \Upsilon_T} ; \\
& f_2 (y') = \displaystyle\frac{(\omega; q)_{\infty} \exp \big( - cy' - \sigma^{-1} \Delta y' \big) \textbf{1}_{|y'| < \Upsilon_T}}{\sigma (q; q)_{\infty}}; \qquad \qquad g_2 (y) = Q(y); 
\end{flalign*}

\noindent and 
\begin{flalign}
\label{g3}
\begin{aligned}
& f_3 (y') = \displaystyle\frac{(\omega; q)_{\infty} \exp \big( - cy' - \sigma^{-1} \Delta y' \big) \textbf{1}_{|y'| < \Upsilon_T}}{\sigma (q; q)_{\infty}};  \\
& g_3 (y) = \displaystyle\frac{ \varkappa \beta_1 \beta_2 \omega^{\mathcal{F} s T^{1 / 3}}}{\psi^2} \left( \displaystyle\frac{(\omega; q)_{\infty}}{(q; q)_{\infty}} \right) \exp \Big( \big( G (\beta_1 q) - G (\varkappa \beta_2 q) \big) T + cy - \sigma^{-1} \Delta y \Big)  \textbf{1}_{|y| < \Upsilon_T}   \\
& \qquad \qquad \times \displaystyle\frac{\pi}{\log q} \displaystyle\sum_{j = - \infty}^{\infty} \displaystyle\frac{1}{\sin \big( \pi (\log q))^{-1} (2 \pi \textbf{i}j - \log \omega) \big) }. 
\end{aligned}
\end{flalign}

\noindent Then, we have that 
\begin{flalign}
\label{vwb}
\mathscr{V} (y, y') = f_1 (y') g_1 (y); \qquad \mathscr{W} (y, y') = f_2 (y') g_2 (y) ; \qquad \mathscr{B} (y, y') = f_3 (y') g_3 (y). 
\end{flalign}

\noindent It can be quickly verified that $\mathscr{V}$ is equal to the residue of $I$ at $v = - \sigma^{-1} \Delta$; that $\mathscr{W}$ is equal to negative of the residue of $I$ at $w = \sigma^{-1} \Delta$; and that $\mathscr{B}$ is equal to the negative of the residue at both $v = - \sigma^{-1} \Delta$ and $w = \sigma^{-1} \Delta$. 

In view of \eqref{vwb}, the identity \eqref{lksum} can be rewritten as 
\begin{flalign}
\label{llfiniterankperturbation}
\overline{L} = \widetilde{L} + f_1 \otimes g_1 + f_2 \otimes g_2 + f_3 \otimes g_3. 
\end{flalign}

\noindent In particular, $\overline{L}$ is a finite-rank perturbation of $\widetilde{L}$, meaning that we can express $\det \big( \Id - \overline{L} \big)_{L^2 (\mathbb{R}_{> 0})}$ in terms of $\det \big( \Id - \widetilde{L} \big)_{L^2 (\mathbb{R}_{> 0})}$. We will do this in the next section. 

\begin{rem}

\label{gsmallyy}

Suppose that $c \ne 0$. If we did not have the truncation term $\textbf{1}_{|y| < \Upsilon_T} \textbf{1}_{|y'| < \Upsilon_T}$ in the definition \eqref{barldefinition} of $\overline{L}$, then either $f_3 (y')$ or $g_3 (y)$ would not be in $L^2 (\mathbb{R}_{> 0})$. As a result, \eqref{llfiniterankperturbation} would not be a valid identity of operators as operators on $L^2 (\mathbb{R}_{> 0})$. Our introduction of these truncation factors rectifies this issue. 	 
\end{rem}

\subsection{Evaluation of the Determinant}

\label{DeterminantFiniteRankPerturbation}

The purpose of this section is to analyze the Fredholm determinant $\det \big( \Id - \overline{L} \big)_{L^2 (\mathbb{R}_{> 0})}$ using the identity \eqref{llfiniterankperturbation}. To that end, the following two estimates will be useful. The first is a uniform bound on $\widetilde{L}$, $Q$, and $\overline{Q}$; the second states that the \emph{resolvent} $R = \big( \Id - \widetilde{K} \big)^{-1}$ is a bounded operator on $L^2 \big( \mathbb{R}_{> 0} \big)$. 

Both of these estimates will be established later, in Section \ref{InequalityAsymptoticsStationary}. Specifically, the first will be established directly after the proof of Lemma \ref{exponentialhhbarintegrand}, and the second will be established through Corollary \ref{limitrstationary} (see Remark \ref{resolventbounded}). 

\begin{lem}

\label{lkqkbounded}

There exists a constant $C > 0$ (independent of $T$) such that the three estimates
\begin{flalign}
\label{qlqestimate}
\big| \widetilde{L} (y, y') \big| \le C e^{ (-|c| - 1) (y + y')}; \qquad \big| Q (y') \big| < C e^{(-|c| - 1) y' }; \qquad \big| \overline{Q} (y) \big| < C e^{(-|c| - 1) y},
\end{flalign}

\noindent hold for all positive real numbers $y, y' \in \mathbb{R}_{> 0}$. 
\end{lem}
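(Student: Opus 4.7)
\medskip

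The plan is to establish all three estimates in a unified way, by isolating the exponentially decaying factor in $y$ and $y'$ and showing that the remaining contour integrals are uniformly bounded in $T$. Observe that, in the rescaled variable $w \in \widetilde{\mathcal{C}}$ we have $\Re w \ge E$, and for $v \in \widetilde{\Gamma}$ we have $\Re v \le -E$; since $E > 2|c|+1$ by Definition \ref{ctildegammatilde}, this yields $\Re(v+c) \le -|c|-1$ and $\Re(w+c) \ge |c|+1$. In the integrand defining $\widetilde{L}$, the $(y,y')$-dependence is entirely captured by the factor $\exp((v+c)y - (w+c)y')$ (see \eqref{stationaryintegrandlc}), which is therefore bounded in modulus by $e^{-(|c|+1)(y+y')}$. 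Similarly, the $y'$-dependence in $H(y';w)$ comes only through $\exp(-(w+c)y')$, giving $e^{-(|c|+1)y'}$, and the $y$-dependence in $\overline{H}(y;v)$ comes only through $\exp((v+c)y)$, giving $e^{-(|c|+1)y}$. Thus the problem reduces to showing that the remaining $w$- and $v$-integrals are bounded by a constant independent of $T$.

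Next, I would follow the argument of Lemma \ref{uniformstationaryk} to bound the remaining factors. The change of variables $z = q\psi(1+\sigma w)$ maps $\widetilde{\mathcal{C}}$ to a translate of $\mathcal{C}^{(1)}$ by $q\psi\sigma E - q(\beta_1 - \varkappa\beta_2)/4 = \mathcal{O}(T^{-1/3})$ in the original plane, and similarly for $\widetilde{\Gamma}$. Up to enlarging $\varepsilon$ slightly (or choosing $E$ small enough relative to $\varepsilon$), these translated contours remain in the region where the cubic estimate \eqref{rksmall} applies. Combining \eqref{rksmall} with the decomposition \eqref{tgwv} and the inequalities \eqref{wvsmall} yields, as in \eqref{tgwv1}, the bound
\begin{equation*}
\Re T\bigl(G(q\psi+q\psi\sigma w) - G(q\psi+q\psi\sigma v)\bigr) \le C_{6} - c_{5}\bigl(|w-E|^{3} + |v+E|^{3}\bigr),
\end{equation*}
where we have used $T\sigma^{3} = \mathcal{F}^{-3}$ to pass from the original variable to the rescaled variable. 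The remaining factors in the integrand, namely $(1+\sigma v)^{-1}$, the power $((1+\sigma v)/(1+\sigma w))^{\mathcal{F} s T^{1/3}}$, the $q$-Pochhammer ratios, and the sine sum, are each uniformly bounded in $T$ on the contours by the same seven inequalities \eqref{vwinequalities1}. For the sine sum in particular, the separation $|w-v| \ge 2E$ in the rescaled variable prevents the $j=0$ denominator from vanishing, and the prefactor $\sigma$ in $L$, $H$, $\overline H$ balances the $\mathcal{O}(\sigma^{-1})$ size of the $j=0$ term to leave an $\mathcal{O}(1)$ contribution. Cubic decay in $|w-E|$ and $|v+E|$ then makes the contour integrals over $\widetilde{\mathcal{C}}$ and $\widetilde{\Gamma}$ (whose arms have length $\varepsilon/(q\psi\sigma) = \mathcal{O}(T^{1/3})$) converge absolutely, uniformly in $T$, by comparison with $\int_{0}^{\infty} e^{-c_{5}t^{3}}\,dt$. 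The same estimate with $G(q\varkappa\beta_{2})$ (resp.\ $G(q\beta_{1})$) in place of $G(q\psi+q\psi\sigma v)$ (resp.\ $G(q\psi+q\psi\sigma w)$) controls the single-contour integrals $Q$ and $\overline{Q}$, noting that $|G(q\beta) - G(q\varkappa\beta_{2})|,|G(q\beta)-G(q\beta_{1})| = \mathcal{O}(T^{-10})$ so multiplication by $T$ produces $o(1)$.

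The main obstacle, as so often in this paper, will be verifying that the cubic estimate \eqref{rksmall} and the geometric estimates \eqref{wvsmall} still hold on the translated contours $q\psi(1+\sigma\widetilde{\mathcal{C}})$ and $q\psi(1+\sigma\widetilde{\Gamma})$, rather than only on $\mathcal{C}^{(1)}$ and $\Gamma^{(1)}$. This is a matter of verifying, after the fact, that the constants $\varepsilon$ in Definition \ref{linearvertexrightstationary} and $E$ in Definition \ref{ctildegammatilde} can be chosen compatibly so that the translated contours lie inside the disk $\{|z-q\beta| < 2\varepsilon\}$ on which the Taylor remainder estimate \eqref{rksmall} is valid. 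Once this compatibility is arranged (which amounts to requiring $q\psi\sigma E < \varepsilon/2$ for all sufficiently large $T$, automatic since $\sigma \to 0$), the bounds \eqref{qlqestimate} follow immediately from the exponential factor in $(y,y')$ together with the uniform bound on the remaining contour integrals.
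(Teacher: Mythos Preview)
Your proposal is correct and follows essentially the same route as the paper. The paper packages the argument slightly differently: it first proves a separate lemma (Lemma \ref{exponentialhhbarintegrand}) giving pointwise bounds $|I(y,y';w,v)| \le C_1 e^{-c_1(|w|^3+|v|^3)-(|c|+1)(y+y')}$ and analogous cubic-decay bounds on $H$, $\overline H$, using exactly the ingredients you list (the inequality $E>2|c|+1$ for the $(y,y')$ factor, the cubic Taylor bound \eqref{rksmall} for the exponential in $G$, and a fresh set of elementary inequalities \eqref{vwinequality2} in the rescaled variables playing the role of your reference to \eqref{vwinequalities1}); Lemma \ref{lkqkbounded} is then a one-line consequence by integrating those bounds over $\widetilde{\mathcal{C}}$ and $\widetilde{\Gamma}$. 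Your ``main obstacle'' about the translated contours is handled in the paper exactly as you suggest: since the translate is $\mathcal{O}(\sigma)=\mathcal{O}(T^{-1/3})$, for large $T$ the image lies inside $\{|z-q\beta|<2\varepsilon\}$ and \eqref{rksmall} applies directly.
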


\begin{lem}

\label{invertiblel}

For sufficiently large $T$, the resolvent $R = (\Id - \widetilde{L}_K)^{-1}$ is a bounded linear operator $L^2 (\mathbb{R}_{> 0 })$. 
\end{lem}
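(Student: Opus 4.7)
The strategy is to reduce boundedness of the resolvent to the non-vanishing of the Fredholm determinant $\det(\Id-\widetilde L_K)_{L^2(\mathbb{R}_{>0})}$, and then to establish this non-vanishing via a trace-norm convergence argument whose $T\to\infty$ limit is an Airy-kernel Fredholm determinant known to be strictly positive. Throughout, I fix $K\in\{A,V\}$ and suppress the subscript.

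\medskip

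First I would use the pointwise bound $|\widetilde L(y,y')|\le Ce^{-(|c|+1)(y+y')}$ from the first inequality of Lemma \ref{lkqkbounded}, which holds uniformly in $T$. This bound immediately exhibits $\widetilde L$ as dominated in absolute value by the kernel of a rank-one operator with exponentially decaying factors, which is trace class on $L^2(\mathbb{R}_{>0})$ (see Appendix \ref{Determinants1} or Lemma \ref{determinantclosekernels}); in fact it shows that the trace norms $\|\widetilde L_K\|_{\mathrm{tr}}$ are bounded uniformly in $T$. In particular, for each $T$ large, $\widetilde L_K$ is a trace-class operator, so $\Id-\widetilde L_K$ is a Fredholm operator of index zero, and by the classical Plemelj--Smithies identity, $(\Id-\widetilde L_K)^{-1}$ exists as a bounded operator on $L^2(\mathbb{R}_{>0})$ if and only if $\det(\Id-\widetilde L_K)_{L^2(\mathbb{R}_{>0})}\neq0$.

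\medskip

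Thus everything reduces to verifying that this Fredholm determinant is non-zero for all sufficiently large $T$. Here I would invoke Corollary \ref{limitrstationary}, which (as announced at the end of Section \ref{InequalityAsymptoticsStationary}) identifies the pointwise limit of $\widetilde L_K(y,y')$ as $T\to\infty$, after the appropriate change of variables, with a kernel whose induced Fredholm determinant on $L^2(\mathbb{R}_{>0})$ equals $\det(\Id-K_{\Ai,c^2+s})_{L^2(\mathbb{R}_{>0})}$. Combining this pointwise convergence with the uniform exponential domination supplied by Lemma \ref{lkqkbounded} (using dominated convergence applied inside each term of the Fredholm expansion \eqref{determinantsum}, exactly as in Lemma \ref{determinantlimitkernels}), one deduces
\begin{flalign*}
\lim_{T\to\infty}\det(\Id-\widetilde L_K)_{L^2(\mathbb{R}_{>0})}=\det(\Id-K_{\Ai,c^2+s})_{L^2(\mathbb{R}_{>0})}.
\end{flalign*}
The right-hand side is a strictly positive real number for every $c,s\in\mathbb{R}$: it appears (up to harmless rescaling) as a marginal of the GUE Tracy--Widom distribution, and its positivity can be read off, for instance, from the Airy-kernel Fredholm determinant representation of that distribution, which is everywhere in $(0,1)$.

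\medskip

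Given a strictly positive limit, there is $T_0$ such that $\det(\Id-\widetilde L_K)_{L^2(\mathbb{R}_{>0})}\neq 0$ for every $T\ge T_0$, and the reduction of the previous paragraph then yields the existence and boundedness of the resolvent $R=(\Id-\widetilde L_K)^{-1}$ on $L^2(\mathbb{R}_{>0})$. I expect the main technical obstacle of the argument to lie not in the non-vanishing of the Airy Fredholm determinant (which is standard) but in verifying that the convergence asserted by Corollary \ref{limitrstationary} is strong enough -- namely trace-norm, or at least such that one can pass to the limit inside the Fredholm expansion term by term with a uniformly integrable dominating function. The uniform exponential bound of Lemma \ref{lkqkbounded} is precisely tailored for this purpose, and it is what allows Hadamard's inequality to produce a summable bound on the Fredholm series that is independent of $T$. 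Once that domination is in place, the remainder of the argument is a routine invocation of the Plemelj--Smithies framework.
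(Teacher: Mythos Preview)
Your argument is correct and follows a genuinely different route from the paper. The paper never invokes the Plemelj--Smithies criterion or the non-vanishing of $\det(\Id-\widetilde L)$; instead it argues by Neumann-series perturbation. In Proposition \ref{limitkernelresolvent} it proves the pointwise limit $\widetilde L(y,y')\to K_{\Ai;c^2+s}(y,y')$, and then in Corollary \ref{limitrstationary} it uses this (together with the uniform domination from Lemma \ref{lkqkbounded}) to obtain $\|\widetilde L-K_{\Ai;c^2+s}\|\to 0$ in operator norm, from which a geometric-series bound of the form \eqref{resolventerror} gives both existence and norm convergence of $(\Id-\widetilde L)^{-1}$; Remark \ref{resolventbounded} simply records that Lemma \ref{invertiblel} falls out of this. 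Your route trades the perturbation estimate for the determinant criterion: you only need \emph{term-by-term} convergence of the Fredholm expansion (which is \eqref{tildeldeterminantlimit}) and the strict positivity of the limiting Airy determinant, and you get existence of $R$ directly. The paper's argument buys slightly more, namely quantitative convergence of the resolvent, which it needs later anyway; your argument is more self-contained in that it avoids upgrading pointwise-plus-domination to operator-norm convergence.

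One minor correction: the result you cite as Corollary \ref{limitrstationary} is not the pointwise kernel limit but the resolvent convergence statement itself, whose very formulation presupposes Lemma \ref{invertiblel}. What you actually need is Proposition \ref{limitkernelresolvent}, specifically \eqref{tildellimit} for the pointwise limit and \eqref{tildeldeterminantlimit} for the Fredholm determinant convergence. With that substitution your proof goes through; both approaches ultimately rest on the same external input, namely that the shifted Airy kernel on $L^2(\mathbb{R}_{>0})$ does not have $1$ as an eigenvalue (equivalently, that the GUE Tracy--Widom distribution function is strictly positive).
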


Throughout this section, we assume that $T$ is sufficiently large, so that \eqref{llfiniterankperturbation} and Lemma \ref{invertiblel} both hold. In view of \eqref{llfiniterankperturbation}, we have that 
\begin{flalign}
\label{determinantlkproduct3}
\det \big( \Id - L \big)_{L^2 (\mathbb{R}_{> 0})} = \det \big( \Id - \widetilde{L} \big)_{L^2 (\mathbb{R}_{> 0})} \det \Big( \Id - \big\langle R f_i, g_j \big\rangle_{1 \le i, j \le 3} \Big). 
\end{flalign}

\noindent Hence, we are interested in the determinant of the $3 \times 3$ matrix $\textbf{M}$,  defined by
\begin{flalign}
\label{matrixm} 
\textbf{M} = \left[ \begin{array}{ccc} 1 - \big\langle R f_1, g_1 \big\rangle & - \big\langle R f_2, g_1 \big\rangle & - \big\langle R f_3, g_1 \big\rangle\\ - \big\langle R f_2, g_1 \big\rangle & 1 - \big\langle R f_2, g_2 \big\rangle & - \big\langle R f_2, g_3 \big\rangle\\ - \big\langle R f_1, g_3 \big\rangle & - \big\langle R f_2, g_3 \big\rangle & 1 - \big\langle R f_3, g_3 \big\rangle \end{array}\right], 
\end{flalign}

\noindent where $\langle f, g \rangle = \displaystyle\int_0^{\infty} f (x) g (x) dx$ denotes the inner product on $L^2 (\mathbb{R}_{> 0})$. 

To that end, we will individually analyze the entries $\{ m_{i, j} \}_{1 \le i, j \le 3}$ of $\textbf{M}$. We first address the $m_{i, j}$ with $i, j \in \{ 1, 2 \}$. 

Let us begin with $m_{1, 1} = 1 - \big\langle R f_1, g_1 \big\rangle$. Since $f_1 (y') = (\omega; q) Q (y') / \sigma (q; q)_{\infty}$, and since $Q$ is bounded and decays exponentially by Lemma \ref{lkqkbounded}, we have that 
\begin{flalign}
\label{qf1}
f_1 (y') = \displaystyle\frac{(1 - \omega) Q (y')}{\sigma} + \mathcal{O} \left( \displaystyle\frac{(1 - \omega)^2}{\sigma e^{(|c| + 1) y'}} \right). 
\end{flalign}

\noindent Hence,  
\begin{flalign}
\label{m11perturbation}
\begin{aligned}
m_{1, 1} & = 1 - \big\langle R f_1, g_1 \big\rangle  \\
& = 1 + \displaystyle\frac{\omega - 1}{\sigma} \Big\langle R Q (y) , \exp \left( cy - \displaystyle\frac{\Delta y}{\sigma} \right) \textbf{1}_{|y| < \Upsilon_T} \Big\rangle + \Bigg\langle \mathcal{O} \left( \displaystyle\frac{(1 - \omega)^2}{\sigma e^{(|c| + 1) y}} \right), e^{(c - \sigma^{-1} \Delta) y}  \textbf{1}_{|y| < \Upsilon_T} \Bigg\rangle  \\
& = 1 + \displaystyle\frac{\omega - 1}{\sigma} \Big\langle R Q (y) , \exp \left( cy - \displaystyle\frac{\Delta y}{\sigma} \right) \textbf{1}_{|y| < \Upsilon_T} \Big\rangle + \mathcal{O} \big( T^{1 / 3} (1 - \omega)^2 \big), 
\end{aligned}
\end{flalign}

\noindent where we deduced the second estimate from \eqref{qlqestimate}, \eqref{qf1}, and the boundedness of $R$ (see Lemma \ref{invertiblel}). 

For a similar reason, we have that  
\begin{flalign}
\label{m12perturbation}
m_{1, 2} & = - \big\langle R f_1, g_2 \big\rangle = \displaystyle\frac{\omega - 1}{\sigma} \big\langle R Q (y), \overline{Q} (y) \big\rangle + \mathcal{O} \big( T^{1 / 3} (1 - \omega)^2 \big). 
\end{flalign}

\noindent Recalling from its definition that $f_2 (y') = \mathcal{O} \big( \sigma^{-1} (1 - \omega) e^{-cy'} \big)$, from Lemma \ref{lkqkbounded} that $g_2 (y) = \overline{Q} (y) = \mathcal{O} \big( e^{(- |c| - 1) y} \big)$, and from Lemma \ref{invertiblel} that $R$ is bounded, we deduce that 
\begin{flalign}
\label{m22}
m_{2, 2} & = 1 - \big\langle R f_2, g_2 \big\rangle = 1 + \mathcal{O} \big( \sigma^{-1} (1 - \omega) \big). 
\end{flalign}

\noindent Moreover, we have that 
\begin{flalign}
\label{m21}
\begin{aligned}
m_{2, 1} & = - \big\langle R f_2, g_1 \big\rangle = - \big\langle f_2, g_1 \rangle - \big\langle \widetilde{L} R f_2, g_1 \big\rangle \\
&= \displaystyle\frac{(\omega; q)_{\infty}}{\sigma (q; q)_{\infty}} \left( \Big\langle R \widetilde{L} (e^{(-c - \sigma^{-1} \Delta) y} \textbf{1}_{|y| < \Upsilon_T}),  ( e^{(c - \sigma^{-1} \Delta) y} \textbf{1}_{|y| < \Upsilon_T} )  \Big\rangle - \displaystyle\int_0^{\Upsilon_T} \exp \big( - 2 \sigma^{-1} \Delta y \big) dy \right) \\
& = \displaystyle\frac{\omega - 1}{2 \Delta} + \mathcal{O} \big( \sigma^{-1} (1 - \omega) \big) = - 1 - \mathcal{O} \big( \sigma^{-1} (1 - \omega) \big). 
\end{aligned}
\end{flalign}

The second identity in \eqref{m21} follows from the fact that $R = \big( \Id - \widetilde{L} \big)^{-1}$. The third identity follows from the explicit forms of $f_2$ and $g_1$ given in Section \ref{DeformationResidues}. The fourth approximation follows from evaluating the integral (and recalling that $\Upsilon_T^{-1} < e^{-T} = o (1 - \omega)$); the estimate $\widetilde{L} (y, y') = \mathcal{O} \big( e^{(-|c| - 1) (y + y')} \big)$ (from Lemma \ref{lkqkbounded}); and the boundedness of the resolvent $R$ (from Lemma \ref{invertiblel}). The fifth approximation follows from the fact that $2 \Delta = 1 - \omega + \mathcal{O} \big( (1 - \omega)^2 \big)$. 

Thus, \eqref{m11perturbation}, \eqref{m12perturbation}, \eqref{m22}, and \eqref{m21} estimate $m_{i, j}$ when $i, j \in \{ 1, 2 \}$. Now, in order to estimate $m_{i, j}$ when $i$ or $j$ is equal to $3$, we will use the following refined approximations of $f_3 (y')$ and $g_3 (y)$; the full precision of these estimates will only be used for $m_{3, 3}$, although lower order approximations will also be used for $m_{1, 3}$, $m_{2, 3}$, $m_{3, 1}$, and $m_{3, 2}$. 

\begin{lem}

\label{fg3approximation} 

For any positive real numbers $y$ and $y'$, we have that  
\begin{flalign}
\label{g3approximation}
\begin{aligned}
g_3 (y) & = \Bigg( 1 + (1 - \omega) \bigg( \displaystyle\sum_{j = 1}^{\infty} \displaystyle\frac{q^j}{1 - q^j} - \mathcal{F} s T^{1 / 3} + \displaystyle\frac{\pi}{\log q} \displaystyle\sum_{j \ne 0} \displaystyle\frac{1}{\sin \big( 2 \pi^2 (\log q)^{-1} \textbf{\emph{i}} j \big)} \bigg) + \mathcal{O} \big( T^{2 / 3} (1 - \omega)^2 \big) \Bigg) \\ 
& \qquad \times \exp \big( cy - \sigma^{-1} \Delta y \big) \textbf{\emph{1}}_{|y| < \Upsilon_T},
\end{aligned}
\end{flalign}

\noindent and 	
\begin{flalign}
\label{f3approximation}
f_3 (y') = \displaystyle\frac{(1 - \omega) }{\sigma} \left( 1 + (1 - \omega) \displaystyle\sum_{j = 1}^{\infty} \displaystyle\frac{q^j}{1 - q^j} + \mathcal{O} \big( (1 - \omega)^2 \big) \right) \exp \big( -cy' - \sigma^{-1} \Delta y') \textbf{\emph{1}}_{|y'| < \Upsilon_T} , 
\end{flalign}

\noindent where the implicit constants in the errors are independent of $y, y' > 0$. 

\end{lem}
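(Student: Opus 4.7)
The proof is a careful Taylor expansion of each factor in the definitions of $f_3$ and $g_3$ around the regime $1-\omega = T^{-10} \to 0$. The key observation is that the $y$- and $y'$-dependence sits entirely in the exponentials $e^{\pm cy}e^{\mp\sigma^{-1}\Delta y}$ and the indicators $\mathbf{1}_{|y|<\Upsilon_T}$, which factor out cleanly and require no expansion; the entire task is thus to expand the scalar prefactors in $(1-\omega)$ uniformly in $y$, $y'$.

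For $f_3$, only the prefactor $(\omega;q)_\infty/(q;q)_\infty$ depends on $\omega$. Writing $(\omega;q)_\infty = (1-\omega)\prod_{k\ge 1}(1-\omega q^k)$ gives
\[
\frac{(\omega;q)_\infty}{(q;q)_\infty} = (1-\omega)\prod_{k\ge 1}\left(1 + \frac{(1-\omega)q^k}{1-q^k}\right),
\]
and taking logarithms followed by Taylor expansion yields the parenthesized factor in \eqref{f3approximation}, with the stated $O((1-\omega)^2)$ remainder absolutely convergent since $|q|<1$.

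For $g_3$, one expands five separate scalar factors and multiplies them. (i) Using the stationary identity $\varkappa\beta_2 = \omega\beta_1$ and $\psi = \beta_1(1+\omega)/2$, the ratio $\varkappa\beta_1\beta_2/\psi^2$ simplifies to $4\omega/(1+\omega)^2$; its first derivative at $\omega = 1$ vanishes, so this contributes $1 + O((1-\omega)^2)$. (ii) Writing $\omega^{\mathcal F s T^{1/3}} = \exp(\mathcal F s T^{1/3}\log\omega)$ with $\log\omega = -(1-\omega) + O((1-\omega)^2)$ gives $1 - \mathcal F s T^{1/3}(1-\omega) + O(T^{2/3}(1-\omega)^2)$; this step produces the $-\mathcal F s T^{1/3}$ term and is the source of the largest remainder. (iii) Taylor expanding $G$ around $q\beta$ and using $G'(q\beta) = 0$ together with \eqref{gkderivatives} and the hypothesis $\beta - \beta_1 = -(\beta - \varkappa\beta_2) + O(T^{-10})$ shows that the quadratic contributions to $G(\beta_1 q) - G(\varkappa\beta_2 q)$ cancel to leading order, leaving $\exp((G(\beta_1 q)-G(\varkappa\beta_2 q))T) = 1 + o((1-\omega)^2)$. (iv) The $q$-Pochhammer ratio is expanded exactly as in $f_3$. (v) The sine sum is split into $j=0$ and $j\ne 0$: for $j\ne 0$, each term is analytic at $\omega = 1$ and contributes $[\sin(2\pi^2(\log q)^{-1}\mathbf i j)]^{-1} + O(1-\omega)$; for $j=0$, Laurent expanding $1/\sin(x)$ around the small argument $x = -\pi(\log q)^{-1}\log\omega$ produces a pole $\sim(1-\omega)^{-1}$ together with analytic corrections.

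The only genuine bookkeeping challenge is step (v) combined with (iv): the singular $(1-\omega)^{-1}$ contribution from the $j=0$ sine term multiplies the simple zero of $(\omega;q)_\infty/(q;q)_\infty$ at $\omega = 1$, and one must keep enough terms in both Laurent expansions to identify the constant term and the first-order-in-$(1-\omega)$ correction correctly. Once these are collected and combined with (i)--(iii), the bracket in \eqref{g3approximation} emerges, with the $O(T^{2/3}(1-\omega)^2)$ remainder dominated by the $\omega^{\mathcal F s T^{1/3}}$ expansion. Uniformity in $y, y' > 0$ is automatic, since all estimates are on $y$-independent prefactors.
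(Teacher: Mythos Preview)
Your proposal is correct and follows essentially the same approach as the paper: both isolate the $y,y'$-dependence in the exponentials and indicators, then Taylor expand the same five scalar factors of $g_3$ (the ratio $\varkappa\beta_1\beta_2/\psi^2$, the power $\omega^{\mathcal F sT^{1/3}}$, the exponential of $T(G(q\beta_1)-G(q\varkappa\beta_2))$, the $q$-Pochhammer ratio, and the sine sum) and the single Pochhammer factor of $f_3$. Your treatment of (iii) via the symmetry $\beta_1-\psi = -(\varkappa\beta_2-\psi)$ is slightly sharper than the paper's (which relies on $G'(q\beta)=0$ together with $G''(q\beta)=\mathcal O(T^{-1/3})$), and your flagging of the $j=0$ sine term as the delicate step is apt---indeed, carrying out the Laurent expansion to the precision you indicate picks up a $-(1-\omega)/2$ correction that the paper's fifth estimate in \eqref{omegainequality1} appears to drop, though this has no effect downstream since the resulting term contributes only at order $\sigma(1-\omega)\to 0$ in Proposition~\ref{determinantlimitbeta}.
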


\begin{proof}

This lemma will follow from the definitions of $g_3 (y)$ and $f_3 (y')$ given in \eqref{g3} and the five estimates 
\begin{flalign}
\label{omegainequality1}
\begin{aligned}
& \displaystyle\frac{\varkappa \beta_2 \beta_1}{\psi^2} = 1 + \mathcal{O} \big( (1 - \omega)^2 \big); \qquad  \displaystyle\frac{(\omega; q)_{\infty}}{(q; q)_{\infty}} = 1 - \omega + (1 - \omega)^2 \displaystyle\sum_{j = 1}^{\infty} \displaystyle\frac{q^j}{1 - q^j} + \mathcal{O} \big( (1 - \omega)^3 \big) ; \\
& G (q \beta_1) - G (q \varkappa \beta_2) = \mathcal{O} \big( (1 - \omega)^2 \big); \quad \omega^{\mathcal{F} s T^{1 / 3}} = 1 + \mathcal{F} s T^{1 / 3} (1 - \omega) + \mathcal{O} \big( T^{2 / 3} (1 - \omega)^2 \big); \\
& \left( \displaystyle\frac{\pi}{\log q} \right) \displaystyle\frac{1 - \omega}{\sin \big( \pi (\log q)^{-1} (2 \pi \textbf{i}j - \log \omega )\big)} = \textbf{1}_{j = 0} + \textbf{1}_{j \ne 0} \left( \displaystyle\frac{\pi}{\log q} \right) \displaystyle\frac{1 - \omega + \mathcal{O} \big( (1 - \omega)^2 \big)}{\sin \big( 2 \pi^2 (\log q)^{-1} \textbf{i} j \big)}. 
\end{aligned}
\end{flalign}

The first estimate above follows from the facts that $\beta_1 - \varkappa \beta_2 = \mathcal{O} (1 - \omega)$ and that $\beta_1 + \varkappa \beta_2 = 2 \psi$. The second estimate follows from expanding $(q \omega; q)_{\infty}$ as a power series in $1 - \omega$. The third estimate follows from a Taylor expansion, and the fact that $G' (q \beta) = 0$. The fourth estimate follows from a Taylor expansion in $1 - \omega$, and the fact that $1 - \omega = o \big(T^{-1 / 3} \big)$. The fifth estimate follows from Taylor expanding the denominator in $1 - \omega$.

Now, \eqref{g3approximation} follows from inserting the five estimates \eqref{omegainequality1} into the definition \eqref{g3} of $g_3$, and using the fact that $\big| \sin \big( 2 \pi^2 (\log q)^{-1} \textbf{i} j \big) \big|$ increases exponentially in $|j|$. Similarly, \eqref{f3approximation} follows from inserting the second estimate in \eqref{omegainequality1} into the definition \eqref{g3} of $f_3$. 
\end{proof}

\noindent Now, let us estimate the remaining entries $m_{1, 3}$, $m_{2, 3}$, $m_{3, 1}$, $m_{3, 2}$, and $m_{3, 3}$. Applying \eqref{qf1} and \eqref{g3approximation}, we deduce (similar to \eqref{m11perturbation} and \eqref{m12perturbation}) that 
\begin{flalign}
\label{m13perturbation}
m_{1, 3} & = - \big\langle R f_1, g_3 \big\rangle = \displaystyle\frac{\omega - 1}{\sigma} \Big\langle R Q (y) , \exp \big( cy - \sigma^{-1} \Delta y \big) \textbf{1}_{|y| < \Upsilon_T} \Big\rangle + \mathcal{O} \big( \sigma^{-3} (1 - \omega)^2 \big), 
\end{flalign}

\noindent and 
\begin{flalign}
\label{m32}
m_{3, 2} & = - \big\langle R f_3, g_2 \big\rangle = \displaystyle\frac{\omega - 1}{\sigma} \bigg\langle R \Big( \textbf{1}_{|y| < \Upsilon_T} \exp \big( -cy - \sigma^{-1} \Delta y \big) \Big) , \overline{Q} (y) \Bigg\rangle + \mathcal{O} \big( \sigma^{-1} (1 - \omega)^2 \big). 
\end{flalign} 

\noindent Analogous to \eqref{m21}, we deduce from the definition of $f_2$, the estimate \eqref{g3approximation} for $g_3$, the estimate $\widetilde{L} (y, y') = \mathcal{O} \big( e^{(-|c| - 1) (y + y')} \big)$ (see Lemma \ref{lkqkbounded}), and the boundedness of $R$ (see Lemma \ref{invertiblel}) that 
\begin{flalign}
\label{m23}
m_{2, 3} & = - \big\langle R f_2, g_3 \big\rangle = - 1 - \mathcal{O} \big( T (1 - \omega) \big); \qquad m_{3, 1} = - \big\langle R f_3, g_1 \big\rangle = - 1 - \mathcal{O} \big( T (1 - \omega) \big).  
\end{flalign} 

\noindent Now we approximate $m_{3, 3}$. From \eqref{g3approximation} and \eqref{f3approximation}, we find that
\begin{flalign}
\label{m33}
\begin{aligned}
m_{3, 3} &= 1 - \big\langle R f_3, g_3 \big\rangle = 1 - \big\langle f_3, g_3 \rangle - \big\langle \widetilde{L} R f_3, g_3 \big\rangle \\
&= 1 - \displaystyle\frac{(1 - \omega)}{\sigma} \left( 1 + (1 - \omega) \displaystyle\sum_{k = 1}^{\infty} \displaystyle\frac{q^k}{1 - q^k} + \mathcal{O} \big( (1 - \omega)^2 \big) \right)  \\
& \quad \times \Bigg( 1 + (1 - \omega) \bigg( \displaystyle\sum_{k = 1}^{\infty} \displaystyle\frac{q^k}{1 - q^k} - \mathcal{F} s T^{1 / 3} + \displaystyle\frac{\pi }{\log q} \displaystyle\sum_{j \ne 0} \displaystyle\frac{1}{\sin \big( 2 \pi^2 (\log q)^{-1} i j \big)} \bigg) + \mathcal{O} \left( \displaystyle\frac{(1 - \omega)^2}{\sigma^2}  \right) \Bigg)   \\ 
& \quad \times \displaystyle\int_0^{\Upsilon_T} \exp (-2 \sigma^{-1} \Delta y) dy - \big\langle R \widetilde{L} f_3, g_3 \big\rangle  \\
&= (\omega - 1) \left( 2 \displaystyle\sum_{j = 1}^{\infty} \displaystyle\frac{q^j}{1 - q^j} - \displaystyle\frac{s}{\sigma} + \displaystyle\frac{\pi}{\log q} \displaystyle\sum_{j \ne 0} \displaystyle\frac{1}{\sin \big( 2 \pi^2 (\log q)^{-1} i j \big)} \right)  \\ 
& \quad + \displaystyle\frac{\omega - 1	}{\sigma} \Big\langle \widetilde{L} R \exp \big( - cy - \sigma^{-1} \Delta y \big) \textbf{1}_{|y| < \Upsilon_T}, \exp \big( cy - \sigma^{-1} \Delta y \big) \textbf{1}_{|y| < \Upsilon_T} \Big\rangle + \mathcal{O} \big( \sigma^{-2} (1 - \omega)^2 \big), 
\end{aligned}
\end{flalign} 

\noindent where in the fourth approximation, we used the facts that $\Upsilon_T > e^T$ and $\sigma = \mathcal{F}^{-1} T^{-1 / 3}$. We also applied Lemma \ref{fg3approximation} and, as in \eqref{m21} and \eqref{m23}, the facts that $\widetilde{L} (y, y) = \mathcal{O} \big( e^{(-|c| - 1) (y + y')} \big)$ and that $R$ is bounded to approximate $\big\langle R \widetilde{L} f_3, g_3 \big\rangle $. 

Combining our approximations for the entries of $\textbf{M}$, we deduce the following proposition. 

\begin{prop}
\label{determinantlimitbeta} 

We have that 
\begin{flalign}
\label{determinantlimitbetaequation}
\begin{aligned}
\det \big( \Id - \overline{L} \big)_{L^2 (\mathbb{R}_{> 0 }) } & = \displaystyle\frac{1 - \omega}{\sigma} \det \big( \Id - \widetilde{L} \big)_{L^2 (\mathbb{R}_{> 0})} \Bigg( s - \Big\langle R Q (y) , \exp (cy -\sigma^{-1} \Delta y ) \textbf{\emph{1}}_{|y| < \Upsilon_T} \Big\rangle \\ 
& \quad - \Big\langle R \big( \exp ( -cy - \sigma^{-1} \Delta y ) \textbf{\emph{1}}_{|y| < \Upsilon_T} \big) , \overline{Q} (y) \Big\rangle  - \big\langle R Q (y), \overline{Q} (y) \big\rangle \\ 
& \quad - \Big\langle \widetilde{L} R \big( \exp ( -cy - \sigma^{-1} \Delta y ) \textbf{\emph{1}}_{|y| < \Upsilon_T} \big) , \exp ( cy - \sigma^{-1} \Delta y ) \textbf{\emph{1}}_{|y| < \Upsilon_T} \Big\rangle    \\ 
& \quad - \displaystyle\frac{\pi \sigma}{\log q} \displaystyle\sum_{j \ne 0} \displaystyle\frac{1}{\sin \big( 2 \pi^2 (\log q)^{-1} \emph{i} j \big)} - 2 \sigma \displaystyle\sum_{k = 1}^{\infty} \displaystyle\frac{q^k}{1 - q^k} + \mathcal{O} \big( \sigma^{-6} (1 - \omega) \big)  \Bigg) . 
\end{aligned}
\end{flalign}
\end{prop}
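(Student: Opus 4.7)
The proof proposal is to start from the factorization
\begin{flalign*}
\det\bigl(\Id - \overline{L}\bigr)_{L^2(\mathbb{R}_{>0})} = \det\bigl(\Id - \widetilde{L}\bigr)_{L^2(\mathbb{R}_{>0})} \, \det \textbf{M}
\end{flalign*}
provided by \eqref{determinantlkproduct3}, and then to compute $\det \textbf{M}$ directly via cofactor expansion using the approximations \eqref{m11perturbation}, \eqref{m12perturbation}, \eqref{m22}, \eqref{m21}, \eqref{m13perturbation}, \eqref{m32}, \eqref{m23}, and \eqref{m33} for the nine entries of $\textbf{M}$. The bookkeeping is purely linear algebra once these approximations are in hand.

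The first step is to classify the entries by order of magnitude. The estimates in the excerpt show that $m_{1,1} = 1 + \mathcal{O}(T^{1/3}(1-\omega))$, $m_{2,2} = 1 + \mathcal{O}(\sigma^{-1}(1-\omega))$, $m_{2,1} = -1 + \mathcal{O}(\sigma^{-1}(1-\omega))$, $m_{2,3} = -1 + \mathcal{O}(T(1-\omega))$, and $m_{3,1} = -1 + \mathcal{O}(T(1-\omega))$ are all $\mathcal{O}(1)$, while $m_{1,2}, m_{1,3}, m_{3,2}, m_{3,3}$ are all of order $(1-\omega)/\sigma$. Expanding
\begin{flalign*}
\det \textbf{M} = m_{1,1}\bigl(m_{2,2}m_{3,3} - m_{2,3}m_{3,2}\bigr) - m_{1,2}\bigl(m_{2,1}m_{3,3} - m_{2,3}m_{3,1}\bigr) + m_{1,3}\bigl(m_{2,1}m_{3,2} - m_{2,2}m_{3,1}\bigr),
\end{flalign*}
and using the identities $m_{2,1}m_{3,1} = 1 + \mathcal{O}(T(1-\omega))$ and $m_{2,3}m_{3,1} = 1 + \mathcal{O}(T(1-\omega))$, we observe that every product of two ``small'' entries contributes to the error, since such a product is $\mathcal{O}((1-\omega)^2/\sigma^2) = \mathcal{O}(T^{2/3}(1-\omega)^2)$, which is much smaller than the target error $\mathcal{O}(\sigma^{-6}(1-\omega))$. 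After cancellations stemming from $m_{2,1} m_{3,1} \approx 1 \approx m_{2,3}m_{3,1}$, this reduces the leading behavior to
\begin{flalign*}
\det \textbf{M} = m_{3,3} + m_{3,2} + m_{1,2} + m_{1,3} + \mathcal{O}\bigl(\sigma^{-2}(1-\omega)^2\bigr).
\end{flalign*}

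The second step is to substitute the explicit formulas from \eqref{m11perturbation}--\eqref{m33} for the four surviving entries and factor out $(1-\omega)/\sigma$. The inner-product pieces of $m_{1,2}, m_{1,3}, m_{3,2}$, together with the bilinear form $\langle \widetilde{L}R(\cdot),\cdot\rangle$ appearing in \eqref{m33}, combine to produce exactly the four inner products on the right side of \eqref{determinantlimitbetaequation}. The scalar part of \eqref{m33} contributes the terms $s$, $-2\sigma\sum_{k\ge 1}q^k/(1-q^k)$, and $-\pi\sigma(\log q)^{-1}\sum_{j\ne 0}\sin^{-1}(2\pi^2(\log q)^{-1}ij)$, after multiplying through by $-\sigma/(1-\omega)$. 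Matching signs (converting each $(\omega-1)$ factor to $-(1-\omega)$) yields exactly the bracketed expression in \eqref{determinantlimitbetaequation}.

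The final step is error accounting. Each omitted product of two small entries is $\mathcal{O}(T^{2/3}(1-\omega)^2)$; each ``small $\times$ $\mathcal{O}(T(1-\omega))$'' correction coming from $m_{2,3}, m_{3,1}$ is $\mathcal{O}(T^{4/3}(1-\omega)^2)$; and the explicit remainder $\mathcal{O}(\sigma^{-2}(1-\omega)^2)$ from \eqref{m33} dominates these, so after dividing by $(1-\omega)/\sigma$ the total error in the bracket is $\mathcal{O}(\sigma^{-1}(1-\omega)) = \mathcal{O}(T^{1/3}(1-\omega))$, which is comfortably absorbed in $\mathcal{O}(\sigma^{-6}(1-\omega))$. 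The only nontrivial input from the unreferenced material is the boundedness of $R$ asserted in Lemma \ref{invertiblel} and the exponential bounds of Lemma \ref{lkqkbounded}, which are invoked repeatedly to justify that products of small entries really are small in operator norm. The main obstacle, more conceptual than technical, is simply keeping track of which pairings of entries are actually negligible versus which conspire to produce the $s$ and the zeta-like lattice sums in the final formula; the rest is rote expansion.
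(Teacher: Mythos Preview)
Your approach is essentially the same as the paper's: both start from the factorization \eqref{determinantlkproduct3}, feed in the nine entry estimates \eqref{m11perturbation}--\eqref{m33}, and reduce $\det\textbf{M}$ to $m_{1,2}+m_{1,3}+m_{3,2}+m_{3,3}$ plus a quadratic error. The only cosmetic difference is that the paper packages the linear algebra in a single ``general fact'' about the determinant of a $3\times 3$ matrix of the form
\[
\begin{pmatrix} 1+z_{1,1} & z_{1,2} & z_{1,3} \\ -1+z_{2,1} & 1+z_{2,2} & -1+z_{2,3} \\ -1+z_{3,1} & z_{3,2} & z_{3,3} \end{pmatrix}
\]
with all $z_{i,j}=\mathcal{O}(\sigma^{-3}(1-\omega))$, whereas you carry out the cofactor expansion by hand. (A minor slip: you write ``$m_{2,1}m_{3,1}\approx 1$,'' but in the cofactor expansion the relevant products are $m_{2,3}m_{3,1}\approx 1$ and $m_{2,2}m_{3,1}\approx -1$; this does not affect your conclusion.)
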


\begin{proof}
In view of \eqref{determinantlkproduct3}, we have that $\det \big( \Id - \overline{L} \big)_{L^2 (\mathbb{R}_{> 0 }) } = \det \big( \Id - \widetilde{L} \big)_{L^2 (\mathbb{R}_{> 0})} \det \textbf{M}$, where $\textbf{M}$ is defined by \eqref{matrixm}. Now, the proposition follows from \eqref{m11perturbation}, \eqref{m12perturbation}, \eqref{m22}, \eqref{m21}, \eqref{m13perturbation}, \eqref{m32}, \eqref{m23}, \eqref{m33}, and the general fact that 
\begin{flalign*}
\det \left[ \begin{array}{ccc} z_{1, 1} + 1 & z_{1, 2} & z_{1, 3} \\ z_{2, 1} - 1 & z_{2, 2} + 1 & z_{2, 3} \\ z_{3, 1} - 1 & z_{3, 2} & z_{3, 3} \end{array} \right] = z_{1, 2} + z_{1, 3} + z_{3, 2} + z_{3, 3} + \mathcal{O} \big( \sigma^{-6} (1 - \omega)^2 \big),
\end{flalign*}

\noindent whenever $z_{i, j}$ are complex numbers satisfying $z_{i, j} = \mathcal{O} \big( \sigma^{-3} (1 - \omega) \big) $. 
\end{proof}

\subsection{Asymptotic Analysis}

\label{TLarge}

The right side of \eqref{determinantlimitbetaequation} is amenable to a direct saddle-point asymptotic analysis; this will be done in Section \ref{InequalityAsymptoticsStationary}. This result will then be reconciled with the statements of Proposition \ref{convergencedeterminant} and Proposition \ref{convergencedeterminantmodel} in Section \ref{ProofAsymptoticsStationary}.

\subsubsection{Estimates and Asymptotics}

\label{InequalityAsymptoticsStationary}

The goal of this section is to provide estimates on and asymptotics of the kernel $\widetilde{L}$, the resolvent $R$, and the functions $Q$ and $\overline{Q}$. Recall that these are all in terms of the functions $I$, $H$, and $\overline{H}$, defined in \eqref{stationaryintegrandlc}, \eqref{stationaryintegrandq}, and \eqref{stationaryintegrandqbar}, respectively. Therefore, we will first estimate and deduce asympotitcs for these three functions. In proofs below, we will freely interchange $\psi$ with $\beta$ without mention; in all cases, this can be quickly justified through a Taylor expansion and the estimate $\beta - \psi = \mathcal{O} \big( T^{-10} \big)$

The following lemma provides an exponential bound on $I$, $H$, and $\overline{H}$. 

\begin{lem}

\label{exponentialhhbarintegrand}

There exist constants $c_1, C_1 > 0$ such that the three estimates 
\begin{flalign}
\label{exponentialstationaryintegrandh}
 \big| H (y'; w) \big| \le C_1 \exp \big( - c_1 |w|^3 - (|c| + 1) y' \big); \quad \big| \overline{H} (y; v) \big| \le C_1 \exp \big( - c_1 |v|^3 - (|c| + 1) y  \big); 
\end{flalign}

\noindent and 
\begin{flalign}
\label{exponentialstationaryintegrand1}
\big| I (y, y'; w, v) \big| \le C_1 \exp \big(  - c_1 (|w|^3 + |v|^3) - (|c| + 1) (y + y') \big); 
\end{flalign}

\noindent hold for all $y, y' \in \mathbb{R}_{> 0}$, $w \in \widetilde{\mathcal{C}}$, and $v \in \widetilde{\Gamma}$. 
\end{lem}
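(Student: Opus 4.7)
The proof will be a saddle-point-style estimate that extracts cubic decay in $|w|,|v|$ from the Taylor expansion of $G$ around its critical point $q\beta$ and linear decay in $y,y'$ from the geometry of the contours $\widetilde{\mathcal{C}}$ and $\widetilde{\Gamma}$. The starting point is the identity \eqref{rstationary} together with the derivative information \eqref{gkderivatives}, which allows one to write
\begin{flalign*}
T\big(G(q\psi+q\psi\sigma w)-G(q\beta)\big)=\tfrac{1}{3}w^3+cw^2+TR_K\!\left(\tfrac{\mathcal{F}(q\psi+q\psi\sigma w-q\beta)}{q\beta}\right)+o(1),
\end{flalign*}
where the discrepancy between $\psi$ and $\beta$ produces only an $\mathcal{O}(T^{-8})$ shift (since $\psi-\beta=\mathcal{O}(T^{-10})$). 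By the third bullet of Definition \ref{linearvertexrightstationary}, which was inherited from \eqref{gzpsistationary}, the remainder satisfies $|TR_K(\cdots)|\le (q\beta)^3|w|^3/8$ uniformly for $w\in\widetilde{\mathcal{C}}$ (and similarly for $v\in\widetilde{\Gamma}$), because for these contours one has $|q\psi\sigma w|\le q\psi\sigma E+\varepsilon<2\varepsilon$ when $T$ is large.

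The next step is the geometric input. On $\widetilde{\mathcal{C}}=\mathfrak{W}_{E,\varepsilon/q\psi\sigma}$, writing $w=E+re^{\pm\pi i/3}$ yields $\Re(w^3/3)=E^3/3+E^2r/2-Er^2/2-r^3/3$ and $|w|^3=(E^2+Er+r^2)^{3/2}$, so $\Re(w^3/3)\le -|w|^3/3+C E|w|^2$. Combined with the remainder bound and the fact that $(q\beta)^3/8<1/3$, this gives $\Re\big(T(G(q\psi+q\psi\sigma w)-G(q\beta))\big)\le -c_1|w|^3+\mathcal{O}(|w|^2)$, and the quadratic error is absorbed into the cubic for $|w|$ large and by compactness for $|w|$ small. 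An analogous computation on $\widetilde{\Gamma}=\mathfrak{V}_{-E,\varepsilon/q\psi\sigma}$, where $v=-E+re^{\pm2\pi i/3}$, gives $\Re(v^3/3)\ge |v|^3/3-CE|v|^2$, and hence $-\Re\big(T(G(q\psi+q\psi\sigma v)-G(q\beta))\big)\le -c_1|v|^3+\mathcal{O}(|v|^2)$. Adding the two halves produces the cubic estimate on the exponential factor in $I$ (for $H$ and $\overline{H}$ only one of the two cubic decays is present, as the other variable is frozen at the residue point $q\varkappa\beta_2$ or $q\beta_1$). The exponential factor $e^{(v+c)y-(w+c)y'}$ contributes $\exp(\Re(v+c)y-\Re(w+c)y')\le\exp(-(|c|+1)(y+y'))$ thanks to the choice $E>2|c|+1$, which forces $\Re(v)\le -E$ and $\Re(w)\ge E$ on the two contours.

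What remains is to check that the auxiliary factors contribute no more than a uniformly bounded multiplicative constant, or at worst a subexponential perturbation swallowed by the cubic decay. The $q$-Pochhammer ratios $(\varkappa\beta_2(\psi+\psi\sigma w)^{-1};q)_\infty/(\beta_1^{-1}(\psi+\psi\sigma w);q)_\infty$ and its $v$-counterpart are bounded uniformly in $w\in\widetilde{\mathcal{C}}$, $v\in\widetilde{\Gamma}$ and in $T$, since the zeros of the denominators lie at $w=q^{-j}\beta_1\psi^{-1}/\sigma$ for $j\ge 0$, which for large $T$ are separated from the contours. The factor $(1+\sigma v)^{-1}$ is bounded because $1+\sigma v$ avoids a neighborhood of $0$ on $\widetilde{\Gamma}$ (the contour is at angle $\pm 2\pi/3$ from $-E$, so $\Im(\sigma v)\ne 0$ once $r>0$, and $1+\sigma(-E)=1-\sigma E$ stays near $1$). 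The sine-sum $\sum_j\sin^{-1}\big(\pi(\log q)^{-1}(\log(1+\sigma w)-\log(1+\sigma v)+2\pi ij)\big)$ has its $j=0$ term equal, to leading order as $\sigma\to 0$, to $(\log q)/(\pi\sigma(w-v))$, so that multiplication by $\sigma$ (which appears in $I$) produces a factor $\mathcal{O}(1/|w-v|)$, uniformly bounded on the two separated contours; the $j\ne 0$ terms decay exponentially in $|j|$ and sum to $\mathcal{O}(1)$. The most delicate factor is $((1+\sigma v)/(1+\sigma w))^{\mathcal{F}sT^{1/3}}$: on the compact regime $|\sigma v|,|\sigma w|\le 1/2$ it is bounded by $\exp(C|s|(|v|+|w|))$, linear in $|v|,|w|$ and thus absorbed into the cubic decay; on the far regime $|\sigma v|$ or $|\sigma w|$ of order one, one has $|v|$ or $|w|\gtrsim T^{1/3}$, making the cubic term $\exp(-c_1|w|^3)$ of order $\exp(-\tilde c T)$, which dominates any factor of the form $\exp(\mathcal{O}(T^{1/3}))$.

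The main obstacle is the last item: reconciling the dangerous $((1+\sigma v)/(1+\sigma w))^{\mathcal{F}sT^{1/3}}$ factor, which can grow like $\exp(\mathcal{O}(T^{1/3}))$ for $|\sigma v|$ or $|\sigma w|$ of order one, against the cubic Gaussian-type decay. One resolves it by splitting the contours into a ``near'' part where $|\sigma w|,|\sigma v|<\tfrac{1}{2}$ (on which the factor is bounded by $\exp(C(|v|+|w|))$) and a ``far'' part (on which the cubic decay is of order $\exp(-\tilde c T)$ and trivially dominates). Assembling all estimates produces the three bounds in \eqref{exponentialstationaryintegrandh} and \eqref{exponentialstationaryintegrand1}, with constants $c_1,C_1$ depending only on $q,\beta,E$ and not on $T$.
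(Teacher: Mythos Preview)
Your approach is correct and mirrors the paper's: both derive the cubic decay from \eqref{rstationary} together with the remainder control of Definition \ref{linearvertexrightstationary}, obtain the linear decay in $y,y'$ from $\Re w\ge E>2|c|+1$ and $\Re v\le -E$, and dispose of the auxiliary factors by direct estimation (the paper records these as a list of eight inequalities in the style of \eqref{vwinequalities1}).

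Two small remarks. First, your asserted ``fact'' $(q\beta)^3/8<1/3$ is false in general (take $b$ close to $1$); the fix is simply to note that the constant in the remainder bound \eqref{rksmall} can be made arbitrarily small by shrinking $\varepsilon_K$, since \eqref{gzpsistationary} gives $R_K(u)=\mathcal{O}(u^4)+\mathcal{O}(u^3T^{-1/3})$. Second, your near/far splitting for $((1+\sigma v)/(1+\sigma w))^{\mathcal{F} s T^{1/3}}$ is more work than necessary: the paper observes directly that this factor is bounded by $\exp(C(|v|+|w|))$ uniformly, because when $|\sigma v|<1/2$ one has $|\log(1+\sigma v)|\le C\sigma|v|$, while when $|\sigma v|\ge 1/2$ one has $|v|\gtrsim T^{1/3}$, which already absorbs the $\mathcal{O}(T^{1/3})$ exponent.
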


\begin{proof}

Let us begin with the estimate on $I$. Observe that there exists a constant $C_1 > 0$ (independent of $T$) such that the eight inequalities 
\begin{flalign}
\label{vwinequality2} 
\begin{aligned}
& \qquad \left| \displaystyle\frac{1}{1 + \sigma v} \right| < C_1; \qquad \Big| \exp \big( - (w + c) y' \big) \big| < e^{-y' (|c| + 1)}; \qquad \Big| \exp \big( (v + c) y \big) \big| < e^{-y (|c| + 1)}; \\
& \qquad \qquad \qquad \qquad \displaystyle\frac{\big( \varkappa \beta_2 (\psi + \psi \sigma w)^{-1}; q \big)_{\infty} }{\big( \beta_1^{-1} (\psi + \psi \sigma w); q \big)_{\infty} } < C_1; \qquad \displaystyle\frac{\big( \beta_1^{-1} (\psi + \psi \sigma v); q \big)_{\infty}}{\big( \varkappa \beta_2 \big( \psi + \psi \sigma v)^{-1}; q \big)_{\infty} } < C_1;  \\
& \left| \displaystyle\frac{1 + \sigma v}{1 + \sigma w} \right|^{\mathcal{F} s T^{1 / 3}} < \exp \Big( C_1 \big(  |v| + |w| \big) \Big);  \qquad \left| \displaystyle\frac{\sigma}{\sin \big( \pi (\log q)^{-1} \big( \log (1 + \sigma w) - \log (1 + \sigma v)) \big)} \right| < C_1  \\
& \qquad \qquad \textbf{1}_{j \ne 0} \left| \displaystyle\frac{\sigma}{\sin \big( \pi (\log q)^{-1} \big( \log (1 + \sigma w) - \log (1 + \sigma v) + 2 \pi \textbf{i}j ) \big)} \right| < C_1 T^{-1 / 3} e^{-|j| / C_1}, 
\end{aligned}
\end{flalign}

\noindent hold, for each $y, y' \in \mathbb{R}_{> 0}$, $w \in \widetilde{\mathcal{C}}$, and $v \in \widetilde{\Gamma}$. The second inequality holds since $\Re w \ge E \ge 2 |c| + 1$ for all $w \in \widetilde{\mathcal{C}}$, and the third inequality holds since $\Re v \le - E \le - 2 |c| - 1$ for all $v \in \widetilde{\Gamma}$ (recall Definition \ref{ctildegammatilde}). The remaining can be derived in a very similar way to those already presented in \eqref{vwinequalities1}, so we omit their proofs. 

The eight inequalities \eqref{vwinequality2} address all factors in the definition of $I$ except for the exponential $\exp \big( (G(q \psi + q \psi \sigma w) - G(q \psi + q \psi \sigma v)) T \big)$, so let us estimate this term. 

From \eqref{rstationary}, we have that 
\begin{flalign*}
T \big( G(q \psi + q \psi \sigma w) - G ( q \beta ) \big) = \displaystyle\frac{w^3 }{3} + c w^2 + T R \big( T^{- 1 / 3} w \big) + \mathcal{O} \big( T^{-10} \big).  
\end{flalign*}

\noindent Now, the third part of Definition \ref{linearvertexrightstationary} implies that $\big| R(z) \big| < z^3 / 8$ for any $|z| < 2 \varepsilon / q \psi \sigma$. Thus, 
\begin{flalign*}
\Big| T \big( G(q \psi + q \psi \sigma w) - G ( q \beta ) \big) - \displaystyle\frac{w^3 }{3} - c w^2 \Big| \le \displaystyle\frac{|w|^3}{8}, 
\end{flalign*}

\noindent for sufficiently large $T$. In particular, since $w^3$ decreases as $-|w|^3$ (to leading order), as $w \in \widetilde{\mathcal{C}}$ tends to $\infty$, we deduce the existence of a constant $C_3 > 0$ (independent of $T$) such that 
\begin{flalign}
\label{smallgwstationary}
\big( G(q \psi + q \psi \sigma w) - G( q \beta) \big) T < C_3 -  \displaystyle\frac{|w|^3}{5}, 
\end{flalign}

\noindent for all $w \in \widetilde{\mathcal{C}}$. Similarly, after increasing $C_3$ if necessary, we obtain that
\begin{flalign}
\label{smallgvstationary}
\big( G(q \beta) - G( q \psi + q \psi \sigma v) \big) T < C_3 -  \displaystyle\frac{|v|^3}{5}, 
\end{flalign}

\noindent for all $v \in \widetilde{\Gamma}$. From \eqref{smallgwstationary} and \eqref{smallgvstationary}, we deduce the existence of a constant $C_4 > 0$ (independent of $T$) such that 
\begin{flalign}
\label{smallgwvstationary}
\exp \Big( \big( G(q \psi + q \psi \sigma w) - G( q \psi + q \psi \sigma v) \big) T \Big) \le C_4 \exp \left( - \displaystyle\frac{1}{5} \big( |w|^3 + |v|^3 \big) \right), 
\end{flalign}

\noindent for all $w \in \widetilde{\mathcal{C}}$ and $v \in \widetilde{\Gamma}$. Thus, the estimate \eqref{exponentialstationaryintegrand1} follows from inserting the eight inequalities \eqref{vwinequality2} and the exponential estimate \eqref{smallgwvstationary} into the definition \eqref{stationaryintegrandlc} of $I (y, y'; w, v)$.  

The estimates on $H$ and $\overline{H}$ are entirely analogous and thus omitted.
\end{proof}

\noindent Using Lemma \ref{exponentialhhbarintegrand}, we can establish the estimate given by Lemma \ref{lkqkbounded} in Section \ref{DeterminantFiniteRankPerturbation}. 

\begin{proof}[Proof of Lemma \ref{lkqkbounded}]
The estimate on $\widetilde{L}$ follows from the definition \eqref{ltildedefinition} of $\widetilde{L}$ in terms of $I$, and from integrating the estimate \eqref{exponentialstationaryintegrand1} over $w \in \widetilde{\mathcal{C}}$ and $v \in \widetilde{\Gamma}$. Similarly, the estimates on $Q$ and $\overline{Q}$ follow from the definitions \eqref{definitionqbarq} of $Q$ and $\overline{Q}$ in terms of $H$ and $\overline{H}$, respectively, and from integrating the estimate \eqref{exponentialstationaryintegrandh}. 
\end{proof}

\noindent The following lemma evaluates the pointwise large $T$ limits of the functions $I$, $H$, and $\overline{H}$.

\begin{lem}

\label{integrandstationarylimitshbarh}

For any fixed positive real numbers $y$ and $y'$, and any fixed complex numbers $w \in \mathfrak{W}_{E, \infty}$ and $v \in \mathfrak{V}_{-E, \infty}$, we have that 
\begin{flalign}
\label{integrandstationarylimithbarh}
\begin{aligned}
& \displaystyle\lim_{T \rightarrow \infty} H (y'; w) = - \displaystyle\frac{1}{w} \exp \left( \displaystyle\frac{w^3}{3} + c w^2 - s w - y' (w + c) \right);  \\
&  \displaystyle\lim_{T \rightarrow \infty} \overline{H} (y; v) = \displaystyle\frac{1}{v} \exp \left( sv - \displaystyle\frac{v^3}{3} - c v^2 + y (v + c) \right); 
\end{aligned}
\end{flalign}

\noindent and 
\begin{flalign}
\label{integrandstationarylimit2}
\displaystyle\lim_{T \rightarrow \infty} I (y, y'; w, v) = \left( \displaystyle\frac{1}{w - v} \right) \displaystyle\frac{ \exp \big( w^3 / 3 + cw^2 - sw - y' (w + c) \big)}{\exp \big( v^3 / 3 + cv^2 - sv - y (v + c) \big)} .
\end{flalign}

\end{lem}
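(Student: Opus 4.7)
The plan is to carry out a direct pointwise asymptotic analysis of the three integrands, handling each of the four factors (exponential, power, $q$-Pochhammer ratio, sine sum) separately and then combining. Throughout, I can freely replace $\psi$ by $\beta$ at the cost of an $O(T^{-10})$ error, since $\beta - \psi = O(T^{-10})$. For the exponential term in $I$, I will use the defining identity \eqref{rstationary} together with $\sigma = \mathcal{F}^{-1} T^{-1/3}$ to compute
\begin{equation*}
T\bigl(G(q\psi + q\psi\sigma w) - G(q\beta)\bigr) = \tfrac{w^3}{3} + cw^2 + T\,R(T^{-1/3}w) + o(1),
\end{equation*}
where \eqref{rksmall} forces $T\,R(T^{-1/3}w) \to 0$ for fixed $w$, and similarly for $v$. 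For $H$ and $\overline{H}$ the same expansion applies after noting $G(q\varkappa\beta_2)$ and $G(q\beta_1)$ both differ from $G(q\beta)$ by $O(T^{-20})$ (since $G'(q\beta) = 0$), hence negligibly after multiplication by $T$.

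For the power term I will Taylor-expand: $\mathcal{F}sT^{1/3}\log\bigl((1+\sigma v)/(1+\sigma w)\bigr) \to s(v - w)$, giving a factor $e^{sv - sw}$ for $I$; in $H$ one gets an extra factor $\varkappa\beta_2/\psi = 1 + O(T^{-10})$ inside the power, contributing only $o(1)$ to the exponent, yielding $e^{-sw}$. For the $q$-Pochhammer ratio I will pull out the first factor via $(x;q)_\infty = (1-x)(qx;q)_\infty$ so that the tail ratios tend to $1$, reducing the analysis to
\begin{equation*}
1 - \frac{\varkappa\beta_2}{\psi(1+\sigma w)} = \frac{\Delta + \sigma w}{1 + \sigma w}, \qquad 1 - \frac{\beta_1^{-1}(\psi + \psi\sigma w)}{1} = \frac{\Delta - \sigma w}{1 + \Delta},
\end{equation*}
and likewise for $v$. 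Since $\Delta = O(T^{-10})$ is dominated by $\sigma = O(T^{-1/3})$, each such fraction collapses to $\pm w$ (or $\pm v$) up to the vanishing $\Delta$; the product structure in $I$ gives $(w/v)(v/w) = 1$, while in $H$ the single quotient gives the crucial factor $-1$, and similarly $-1$ for $\overline{H}$.

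For the sine sum I will isolate the $j = 0$ term. Using $\log(1+\sigma w) - \log(1+\sigma v) = \sigma(w - v) + O(\sigma^2)$ together with the fact that $\Delta$ is swamped by $\sigma$, the $j=0$ denominator is $\sin\bigl(\pi\sigma(w-v)/\log q + o(\sigma)\bigr) \sim \pi\sigma(w-v)/\log q$, so the $j=0$ contribution becomes exactly $1/(w-v)$ for $I$, and analogously $1/w$ for $H$ and $-1/v$ for $\overline{H}$. The $j \neq 0$ terms are bounded by the last inequality in \eqref{vwinequality2}, namely $C_1 T^{-1/3} e^{-|j|/C_1}$, which sum to $O(T^{-1/3}) \to 0$.

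The main obstacle is the delicate sign accounting in the $q$-Pochhammer ratios in $H$ and $\overline{H}$: each first-order factor $(\Delta \pm \sigma w)/(\Delta \mp \sigma w)$ individually tends to $\pm 1$, but the overall sign must conspire with the $1/w$ (or $-1/v$) from the $j=0$ sine term and the $-1$ from the Pochhammer quotient to produce the claimed minus sign in \eqref{integrandstationarylimithbarh}. Once these signs are correctly tracked and the four factors multiplied together (absorbing the indicator $\textbf{1}_{|y|<\Upsilon_T}\textbf{1}_{|y'|<\Upsilon_T}$, which equals $1$ eventually since $\Upsilon_T > e^T$), the three stated limits follow immediately.
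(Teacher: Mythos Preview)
Your proposal is correct and follows essentially the same factor-by-factor approach as the paper's proof: both handle the indicator, the exponential via \eqref{rstationary}, the power term via Taylor expansion, the $q$-Pochhammer ratios by extracting the first factor, and the sine sum by isolating $j=0$. One small notational slip: in $I$ the product of the two Pochhammer ratios is $(-1)(-1)=1$, not ``$(w/v)(v/w)=1$'' --- each ratio separately tends to $-1$ (as you correctly state for $H$ and $\overline{H}$), and the $\sigma$'s cancel within each ratio, not across the $w$- and $v$-ratios; but this does not affect your argument.
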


\begin{proof}

Let us begin with the limit \eqref{integrandstationarylimit2} of $I$. To that end, observe that 
\begin{flalign}
\label{integrandllimitstationary}
\begin{aligned}
& \displaystyle\lim_{T \rightarrow \infty} \textbf{1}_{|y| < \Upsilon_T} \textbf{1}_{|y'| < \Upsilon_T} = 1; \quad \displaystyle\lim_{T \rightarrow \infty} \displaystyle\frac{1}{1 + \sigma v} = 1; \quad \displaystyle\lim_{T \rightarrow \infty} \left( \displaystyle\frac{1 + \sigma v}{1 + \sigma w} \right)^{\mathcal{F} s T^{1 / 3}} = \exp \big( s (v - w) \big);  \\
& \qquad \qquad \displaystyle\lim_{T \rightarrow \infty} \displaystyle\frac{ \big( \varkappa \beta_2 (\psi + \psi \sigma w)^{-1}; q \big)_{\infty} }{\big( \beta_1^{-1} (\psi + \psi \sigma w); q \big)_{\infty}} = - 1; \qquad \displaystyle\lim_{T \rightarrow \infty} \displaystyle\frac{ \big( \beta_1^{-1} (\psi + \psi \sigma v); q \big)_{\infty} }{\big( \varkappa \beta_2 \big( \psi + \psi \sigma v)^{-1}; q \big)_{\infty} } = - 1;   \\
& \qquad \qquad \displaystyle\lim_{T \rightarrow \infty} \displaystyle\frac{\pi \sigma}{\log q} \displaystyle\sum_{j \ne 0} \displaystyle\frac{1}{\big( 1 + \sigma v \big) \sin \big( \pi (\log q)^{-1} (\log (1 + \sigma w) - \log (1 + \sigma v) + 2 \pi \textbf{i}j) \big)} = 0;  \\
& \qquad \qquad \displaystyle\lim_{T \rightarrow \infty} \left( \displaystyle\frac{\pi \sigma}{\log q} \right) \displaystyle\frac{1}{\big( 1 + \sigma v \big) \sin \big( \pi (\log q)^{-1} (\log (1 + \sigma w) - \log (1 + \sigma v) ) \big)}= \displaystyle\frac{1}{w - v}, 
\end{aligned}
\end{flalign}

\noindent for any $w \in \mathfrak{W}_{E, \infty}$ and $v \in \mathfrak{V}_{-E, \infty}$. 

The first identity holds since $\Upsilon_T > e^T$ tends to $\infty$ as $T$ tends to $\infty$. The second follows from the fact that $\sigma$ tends to $0$ as $T$ tends to $\infty$. The third is due to the fact that $\mathcal{F} T^{1 / 3} = \sigma^{-1}$. For the fourth, observe that since $\big| \beta_1 - \varkappa \beta_2 \big| = \mathcal{O} \big( T^{-10} \big)$ and $\sigma^{-1} = \mathcal{O} \big( T^{1 / 3} \big)$, we have that 
\begin{flalign*}
\displaystyle\lim_{T \rightarrow \infty} \displaystyle\frac{ \big( \varkappa \beta_2 (\psi + \psi \sigma w)^{-1}; q \big)_{\infty} }{\big( \beta_1^{-1} (\psi + \psi \sigma w); q \big)_{\infty}} &= \displaystyle\lim_{T \rightarrow \infty} \displaystyle\frac{1 - \varkappa \beta_2 (\psi + \psi \sigma w)^{-1}}{1 - \beta_1^{-1} (\psi + \psi \sigma w)} \displaystyle\lim_{T \rightarrow \infty} \displaystyle\frac{ \big( q \varkappa \beta_2 (\psi + \psi \sigma w)^{-1}; q \big)_{\infty} }{\big( q \beta_1^{-1} (\psi + \psi \sigma w); q \big)_{\infty}} \\
&= \displaystyle\lim_{T \rightarrow \infty} \displaystyle\frac{1 - \varkappa \beta_2 \beta^{-1} (1 + \sigma w)^{-1}}{1 - \psi \beta_1^{-1} (1 + \sigma w)}  = -1. 
\end{flalign*}

\noindent This establishes the fourth identity in \eqref{integrandllimitstationary}. The fifth follows similarly to the fourth. The sixth follows from summing the eighth estimate in \eqref{vwinequality2} over $j \in \mathbb{Z} \setminus \{ 0 \}$. The seventh follows from a Taylor expansion in $\sigma$. 

The seven limits \eqref{integrandllimitstationary} address all terms appearing in the definition \eqref{stationaryintegrandlc} of $I$, except for the term $\exp \big( ( G(q \psi + q \psi \sigma w) - G( q \psi + q \psi \sigma v) ) T \big)$. To estimate this term, we recall from \eqref{rstationary} that 
\begin{flalign}
\label{gwgvstationary}
\begin{aligned}
\displaystyle\lim_{T \rightarrow \infty} \big( G(q \psi + q \psi \sigma w) - G( q \beta) \big) T = \displaystyle\lim_{T \rightarrow \infty} \left( \displaystyle\frac{w^3}{3} + c w^2 + T R \big( T^{-1 / 3} w \big) \right) =  \displaystyle\frac{w^3}{3} + c w^2; \\
\displaystyle\lim_{T \rightarrow \infty} \big( G(q \beta) - G( q \psi + q \psi \sigma v) \big) T = - \displaystyle\lim_{T \rightarrow \infty} \left( \displaystyle\frac{v^3}{3} + c v^2 + T R \big( T^{-1 / 3} v \big) \right) = - \displaystyle\frac{v^3}{3} - c v^2, 
\end{aligned}
\end{flalign}

\noindent for each fixed $v, w \in \mathbb{C}$; here we have used the fact that $R (z) = \mathcal{O} (z^4)$ (which follows from \eqref{gzpsistationary}). 

Subtracting the two identities in \eqref{gwgvstationary}, exponentiating the result, and applying the seven identities \eqref{integrandllimitstationary} into \eqref{stationaryintegrandlc}, we deduce the limit statement \eqref{integrandstationarylimit2} for $I$. 

The limiting statements for $H$ and $\overline{H}$ are entirely analogous and thus omitted. 
\end{proof}

\noindent Using the limit \eqref{exponentialstationaryintegrand1} of $I$, the estimate \eqref{integrandstationarylimit2} on $I$, and the identity \eqref{ltildedefinition} that expresses $\widetilde{L}$ in terms of $I$, we deduce the following asymptotic results on $\widetilde{K}$.

\begin{prop}
\label{limitkernelresolvent}

Recalling the shifted Airy kernel $K_{\Ai; a}$ from Definition \ref{stationarykernel}, we have that
\begin{flalign}
\label{tildellimit}
& \displaystyle\lim_{T \rightarrow \infty} \widetilde{L} (y, y') = K_{\Ai; c^2 + s} (y, y').
\end{flalign}

\noindent Furthermore, 
\begin{flalign}
\label{tildeldeterminantlimit}
\displaystyle\lim_{T \rightarrow \infty} \det \big( \Id - \widetilde{L} \big)_{L^2 (\mathbb{R}_{> 0})} = \det \big( \Id - K_{\Ai; c^2 + s} \big)_{L^2 (\mathbb{R}_{> 0})}. 
\end{flalign}

\end{prop}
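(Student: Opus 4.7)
The plan is to establish both assertions through dominated convergence arguments, with the bulk of the technical machinery already in place. For the pointwise limit \eqref{tildellimit}, I would start from the representation
\[
\widetilde{L}(y, y') = \frac{1}{(2 \pi \textbf{i})^2} \int_{\widetilde{\mathcal{C}}} \int_{\widetilde{\Gamma}} I(y, y'; w, v) \, dv \, dw
\]
from \eqref{ltildedefinition}. For any fixed $y, y' > 0$, Lemma \ref{exponentialhhbarintegrand} furnishes the uniform-in-$T$ bound $|I(y, y'; w, v)| \le C_1 \exp(-c_1(|w|^3 + |v|^3) - (|c| + 1)(y + y'))$, and this dominator is absolutely integrable over the full infinite contours $\mathfrak{W}_{E, \infty} \times \mathfrak{V}_{-E, \infty}$. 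Since the truncation lengths $\varepsilon / q \psi \sigma$ of $\widetilde{\mathcal{C}}$ and $\widetilde{\Gamma}$ tend to $\infty$ with $T$, the dominated convergence theorem permits passing the limit inside the integral. Combined with the pointwise limit of $I$ supplied by Lemma \ref{integrandstationarylimitshbarh}, this yields an explicit integral on $\mathfrak{W}_{E, \infty} \times \mathfrak{V}_{-E, \infty}$ whose integrand is $(w - v)^{-1}$ multiplied by a cubic-in-$w, v$ exponential.

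To identify this limiting integral with $K_{\Ai; c^2 + s}(y, y')$, I would perform the translations $w \mapsto w + c$ and $v \mapsto v + c$. The shifted contours $\mathfrak{W}_{E - c, \infty}$ and $\mathfrak{V}_{-E - c, \infty}$ may be deformed back to the canonical Airy contours passing through $0$ at angles $\pm \pi / 3$ and $\pm 2 \pi / 3$, respectively; this deformation crosses no singularities of the integrand and is justified by the super-cubic decay of the exponential factor. A direct algebraic simplification reduces the exponent to $w^3/3 - v^3/3$ plus linear terms whose coefficients depend on $y + c^2 + s$ and $y' + c^2 + s$, and matching to Definition \ref{stationarykernel1} identifies the result as $K_{\Ai}(y + c^2 + s, y' + c^2 + s) = K_{\Ai; c^2 + s}(y, y')$.

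For the Fredholm determinant convergence \eqref{tildeldeterminantlimit}, I would invoke a dominated convergence theorem for Fredholm determinants (such as Corollary \ref{determinantlimitkernels} from the appendix). Its hypotheses require the pointwise convergence of the kernels, already established, together with a uniform $L^2$-integrable domination. The latter is furnished by the first estimate of Lemma \ref{lkqkbounded}, namely $|\widetilde{L}(y, y')| \le C e^{-(|c| + 1)(y + y')}$, which admits the factorization $|\widetilde{L}(y, y')| \le \mathbf{K}(y) \mathbf{K}(y')$ with $\mathbf{K}(y) = C^{1/2} e^{-(|c| + 1) y} \in L^2(\mathbb{R}_{> 0})$. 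The analogous bound on $K_{\Ai; c^2 + s}$ follows from standard decay properties of the Airy function. Expanding both Fredholm determinants as the sums \eqref{determinantsum}, interchanging sum and limit is then justified by dominated convergence (with Hadamard's inequality used to control each $k \times k$ determinant term), concluding the proof.

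The main obstacle is not conceptual but is the need to establish the uniform cubic-in-$|w|, |v|$ decay of the integrand across the $T$-dependent contours $\widetilde{\mathcal{C}}$ and $\widetilde{\Gamma}$; had the decay been only linear or quadratic, passage to the limiting infinite contours would require much more delicate bookkeeping. Fortunately, Lemma \ref{exponentialhhbarintegrand} already provides precisely the right cubic bound, so all that remains is to assemble the dominated convergence arguments cleanly and carry out the change-of-variables computation matching the shifted Airy kernel representation.
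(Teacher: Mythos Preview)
Your proposal is correct and follows essentially the same approach as the paper's own proof: dominated convergence for the pointwise kernel limit using the cubic bound from Lemma \ref{exponentialhhbarintegrand} and the pointwise limit from Lemma \ref{integrandstationarylimitshbarh}, followed by the translation $w, v \mapsto w + c, v + c$ to match the Airy kernel, and then Lemma \ref{determinantlimitkernels} together with the exponential bound from Lemma \ref{lkqkbounded} for the Fredholm determinant convergence. The only cosmetic difference is that the paper makes the identification with $K_{\Ai; c^2 + s}$ by writing $(w' - v')^{-1} = \int_0^\infty e^{-\lambda(w'-v')}\, d\lambda$ and separating into a product of Airy functions, whereas you match directly to the double contour integral form in Definition \ref{stationarykernel1}; both are equivalent.
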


\begin{proof}

Let us begin with the first identity \eqref{tildellimit}. By the definition \eqref{ltildedefinition} of $\widetilde{L}$ in terms of $I$, the limiting statement \eqref{integrandstationarylimit2}, the uniform estimate \eqref{exponentialstationaryintegrand1}, and the dominated convergence theorem, we deduce that  
\begin{flalign}
\label{tildellimit1}
\displaystyle\lim_{T \rightarrow \infty} \widetilde{L} (y, y') = \displaystyle\lim_{T \rightarrow \infty} \displaystyle\frac{1}{(2 \pi \textbf{i})^2} \displaystyle\int_{\widetilde{\mathcal{C}}} \displaystyle\int_{\widetilde{\Gamma}} \displaystyle\frac{ \exp \big( w^3 / 3 + cw^2 - sw - y' (w + c) \big)}{\exp \big( v^3 / 3 + cv^2 - sv - y (v + c) \big)} \left( \displaystyle\frac{1}{w - v} \right)  dv dw .
\end{flalign}

\noindent Now, since the integrand on the right side of \eqref{tildellimit1} decays exponentially in $|w|$ and $|v|$, since $\widetilde{\mathcal{C}} = \mathfrak{W}_{E, \varepsilon / q \psi \sigma}$, since $\widetilde{\Gamma} = \mathfrak{V}_{-E, \varepsilon / q \psi \sigma}$, and since $\sigma^{-1}$ tends to $\infty$ at $T$ tends to $\infty$, we deduce from \eqref{tildellimit1} that 
\begin{flalign*}
\displaystyle\lim_{T \rightarrow \infty} \widetilde{L} (y, y') = \displaystyle\frac{1}{(2 \pi \textbf{i})^2} \displaystyle\int_{\mathfrak{W}_{E, \infty}} \displaystyle\int_{\mathfrak{V}_{-E, \infty}} \displaystyle\frac{ \exp \big( w^3 / 3 + cw^2 - sw - y' (w + c) \big)}{\exp \big( v^3 / 3 + cv^2 - sv - y (v + c) \big)} \left( \displaystyle\frac{1}{w - v} \right)  dv dw .
\end{flalign*}

\noindent Changing variables $w' = w + c$ and $v' = v + c$, we find that 
\begin{flalign}
\label{tildellimit2}
\begin{aligned}
\displaystyle\lim_{T \rightarrow \infty} \widetilde{L} (y, y') & = \displaystyle\frac{1}{(2 \pi \textbf{i})^2} \displaystyle\int_{\mathfrak{W}_{E + c, \infty}} \displaystyle\int_{\mathfrak{V}_{c - E, \infty}} \displaystyle\frac{ \exp \big( w'^3 / 3 - (s + c^2 + y') w' \big)}{\exp \big( v'^3 / 3 + (s + c^2 + y) v' \big)} \left( \displaystyle\frac{1}{w' - v'} \right)  dv' dw' \\
& = \displaystyle\frac{1}{(2 \pi \textbf{i})^2} \displaystyle\int_{\mathfrak{W}_{E + c, \infty}} \displaystyle\int_{\mathfrak{V}_{c - E, \infty}} \displaystyle\int_0^{\infty} \displaystyle\frac{ \exp \big( w'^3 / 3 - (\lambda + s + c^2 + y') w' \big)}{\exp \big( v'^3 / 3 + (\lambda + s + c^2 + y) v' \big)} d \lambda d v' dw' \\
& = \displaystyle\int_0^{\infty} \Ai (\lambda + s + c^2 + y') \Ai (\lambda + s + c^2 + y) d \lambda = K_{\Ai; c^2 + s} (y, y').
\end{aligned}
\end{flalign}

\noindent Now \eqref{tildellimit} follows from \eqref{tildellimit2}. 

To establish \eqref{tildeldeterminantlimit}, we apply Lemma \ref{determinantlimitkernels} to the kernels $\{ \widetilde{L} (y, y') \}$, the contour $\mathbb{R}_{> 0}$, and the kernel $K_{\Ai; c^2} (y + s, y' + s)$. Let us verify the conditions of that proposition. First, we require a dominating function $\textbf{K} (y)$ such that $\sup_{y' \in \mathbb{R}_{> 0}} \big| \widetilde{L} (y, y') \big| < \textbf{K} (y)$, for all $y \in \mathbb{R}_{> 0}$, and such that second and third properties listed in the statement of Lemma \ref{determinantlimitkernels} both hold. This is guaranteed by Lemma \ref{lkqkbounded}, which implies that we may in fact take $\textbf{K} (y) = C e^{-y / C}$, for some sufficiently large constant $C$. Second, we require that $\lim_{T \rightarrow \infty} \widetilde{L} (y, y') = K_{\Ai; c^2 + s} (y, y')$; this is guaranteed by \eqref{tildellimit}. 

Thus, Lemma \ref{determinantlimitkernels} applies, from which we deduce \eqref{tildeldeterminantlimit}. 
\end{proof}

\begin{cor}

\label{limitrstationary}

We have that 
\begin{flalign*}
\displaystyle\lim_{T \rightarrow \infty} \big\| \big( \Id - \widetilde{L} \big)^{-1} - \big( \Id - K_{\Ai; c^2 + s } \big)^{-1} \big\| = 0.
\end{flalign*}

\end{cor}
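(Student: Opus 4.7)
The plan is to upgrade the pointwise convergence of $\widetilde{L}$ to $K_{\Ai; c^2+s}$ given in Proposition \ref{limitkernelresolvent} to convergence in operator norm (in fact in Hilbert--Schmidt norm), and then apply the standard continuity of inversion on the open set of invertible bounded operators.

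First, I would establish that $\widetilde{L}$ converges to $K_{\Ai; c^2+s}$ in the Hilbert--Schmidt norm on $L^2(\mathbb{R}_{>0})$. The pointwise convergence $\lim_{T\to\infty} \widetilde{L}(y,y') = K_{\Ai; c^2+s}(y,y')$ is the content of \eqref{tildellimit}. To promote this to an $L^2(\mathbb{R}_{>0}^2)$ statement I would use dominated convergence: Lemma \ref{lkqkbounded} provides the uniform bound $|\widetilde{L}(y,y')| \le C e^{-(|c|+1)(y+y')}$, independent of $T$, and a standard contour-integral estimate (of the same type used to prove Lemma \ref{lkqkbounded}, but applied to the $T=\infty$ integrand appearing on the right side of \eqref{tildellimit2}) gives $|K_{\Ai;c^2+s}(y,y')| \le C' e^{-(|c|+1)(y+y')}$ as well. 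Hence $|\widetilde{L}(y,y') - K_{\Ai;c^2+s}(y,y')|^2$ is dominated by the integrable majorant $4C''^2 e^{-2(|c|+1)(y+y')}$, and dominated convergence yields
\begin{equation*}
\|\widetilde{L} - K_{\Ai;c^2+s}\|_{\mathrm{HS}}^2 = \iint_{\mathbb{R}_{>0}^2} \bigl|\widetilde{L}(y,y') - K_{\Ai;c^2+s}(y,y')\bigr|^2\,dy\,dy' \longrightarrow 0.
\end{equation*}
Since the operator norm is dominated by the Hilbert--Schmidt norm, this also gives $\|\widetilde{L} - K_{\Ai;c^2+s}\|_{\mathrm{op}} \to 0$.

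Next, I would observe that $\Id - K_{\Ai;c^2+s}$ is invertible as a bounded operator on $L^2(\mathbb{R}_{>0})$. One way to see this is to note that $\det(\Id - K_{\Ai;c^2+s})_{L^2(\mathbb{R}_{>0})} = F_2(-c^2-s) > 0$, where $F_2$ is the Tracy--Widom GUE distribution; since the Fredholm determinant of a trace-class operator vanishes if and only if $\Id$ minus that operator is non-invertible, this forces invertibility. (The trace-class property of $K_{\Ai;c^2+s}$ is standard via its representation as a product of two Hilbert--Schmidt operators using the integral representation of the Airy kernel.)

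Finally, I would invoke the general fact that inversion is continuous on the open set of invertible bounded operators: if $A$ is invertible and $\|B - A\|_{\mathrm{op}}$ is sufficiently small, then $B$ is invertible and
\begin{equation*}
\|B^{-1} - A^{-1}\|_{\mathrm{op}} \le \frac{\|A^{-1}\|_{\mathrm{op}}^2 \,\|B - A\|_{\mathrm{op}}}{1 - \|A^{-1}\|_{\mathrm{op}} \|B - A\|_{\mathrm{op}}}.
\end{equation*}
Applying this with $A = \Id - K_{\Ai;c^2+s}$ and $B = \Id - \widetilde{L}$ (which is legitimate for all sufficiently large $T$ by the operator-norm convergence established above) yields the desired conclusion, and as a byproduct verifies Lemma \ref{invertiblel}, which was used in Section \ref{DeterminantFiniteRankPerturbation}. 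The only potentially subtle point is the uniform exponential decay of the limiting Airy-type kernel $K_{\Ai;c^2+s}$, but this follows from well-known estimates on the Airy function, so I expect no serious obstacle.
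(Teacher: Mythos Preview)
Your proof is correct and follows essentially the same route as the paper: upgrade the pointwise convergence to operator-norm convergence via the uniform exponential bound of Lemma \ref{lkqkbounded} and dominated convergence, then invoke continuity of inversion (the paper writes the Neumann-series form of your resolvent estimate). One minor slip: the Fredholm determinant equals $F_2(c^2+s)$, not $F_2(-c^2-s)$, but either way it is strictly positive, so your invertibility argument is unaffected.
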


\begin{proof}

The proof of this corollary is similar to that of Lemma 7.6 in \cite{HFSE}. 

By \eqref{tildellimit}, we have that $\lim_{T \rightarrow \infty} \widetilde{L} (y, y') = K_{\Ai; c^2 + s} (y, y')$. Thus, for sufficiently large $T$, we have that $\big\| \widetilde{L} - K_{\Ai; c^2 + s} \big\| < \big\| ( \Id - K_{\Ai; c^2 + s} )^{-1} \big\|$. For such $T$, we have that  
\begin{flalign}
\label{resolventerror}
\big\| \big( \Id - \widetilde{L} \big)^{-1} - \big( \Id - K_{\Ai, c^2 + s} \big)^{-1} \big\| \le \displaystyle\sum_{j = 1}^{\infty} \| \widetilde{L} - K_{\Ai; c^2 + s} \big\|^j \Big\| \big( \Id - K_{\Ai; c^2 + s} \big)^{-1} \Big\|^{j + 1}.
\end{flalign}

\noindent By \eqref{tildellimit} and the dominated convergence theorem, the right side of \eqref{resolventerror} tends to $0$ as $T$ tends to $\infty$. This implies the lemma. 
\end{proof}

\begin{rem}

\label{resolventbounded}

Lemma \ref{invertiblel} immediately follows from Proposition \ref{limitrstationary}. 

\end{rem}

\begin{prop}
\label{limitkernel}

We have that 
\begin{flalign}
\label{qlimit}
\displaystyle\lim_{T \rightarrow \infty} Q(y') & = - \exp \left( \displaystyle\frac{2c^3}{3} + c s \right) \displaystyle\int_0^{\infty} \Ai (c^2 + s + y' + \lambda) e^{c \lambda} d \lambda; \\
\label{qbarlimit}
 \displaystyle\lim_{T \rightarrow \infty} \overline{Q} (y) & = - \exp \left( \displaystyle\frac{- 2c^3}{3} - c s \right) \displaystyle\int_0^{\infty} \Ai (c^2 + s + y + \lambda) e^{- c \lambda} d \lambda. 
\end{flalign}
\end{prop}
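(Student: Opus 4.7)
The plan is to prove both limits by a dominated-convergence argument that passes the $T\to\infty$ limit inside the contour integrals defining $Q$ and $\overline{Q}$, and then to recognize the resulting integral as the Laplace transform of a shifted Airy function. I only write out the argument for $Q(y')$; the limit for $\overline{Q}(y)$ will follow by the same procedure with obvious sign changes.

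First, I would recall that $Q(y') = \tfrac{1}{2\pi\mathbf{i}}\int_{\widetilde{\mathcal{C}}} H(y';w)\, dw$ with $\widetilde{\mathcal{C}} = \mathfrak{W}_{E,\varepsilon/q\psi\sigma}$. By Lemma \ref{exponentialhhbarintegrand}, $|H(y';w)| \le C_1\exp(-c_1|w|^3 - (|c|+1)y')$ uniformly in $T$, and by Lemma \ref{integrandstationarylimitshbarh} the integrand converges pointwise to $-w^{-1}\exp(w^3/3 + cw^2 - sw - y'(w+c))$ for $w\in\mathfrak{W}_{E,\infty}$. Since $\sigma\to 0$, the contour $\widetilde{\mathcal{C}}$ exhausts $\mathfrak{W}_{E,\infty}$; combined with the cubic exponential domination, dominated convergence gives
\begin{equation*}
\lim_{T\to\infty} Q(y') = -\frac{1}{2\pi\mathbf{i}} \int_{\mathfrak{W}_{E,\infty}} \frac{1}{w}\exp\!\left(\frac{w^3}{3} + cw^2 - sw - y'(w+c)\right) dw.
\end{equation*}

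Next, I would change variables $w \mapsto w - c$, which translates the contour to $\mathfrak{W}_{E+c,\infty}$. Expanding $(w-c)^3/3 + c(w-c)^2 - s(w-c) - y'w$ and collecting terms gives $w^3/3 - (c^2+s+y')w + \tfrac{2c^3}{3} + cs$, so
\begin{equation*}
\lim_{T\to\infty} Q(y') = -e^{2c^3/3 + cs}\cdot\frac{1}{2\pi\mathbf{i}} \int_{\mathfrak{W}_{E+c,\infty}} \frac{1}{w-c}\exp\!\left(\frac{w^3}{3} - (c^2+s+y')w\right) dw.
\end{equation*}

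Finally, I would identify the integral with $\int_0^\infty \Ai(c^2+s+y'+\lambda)e^{c\lambda}\, d\lambda$. Using the Airy integral representation $\Ai(x) = \tfrac{1}{2\pi\mathbf{i}}\int_{\mathfrak{W}}\exp(w^3/3 - xw)\, dw$ (valid on contours going to $\infty$ at angles $\pm\pi/3$) and Fubini,
\begin{equation*}
\int_0^\infty \Ai(c^2+s+y'+\lambda)e^{c\lambda}\, d\lambda = \frac{1}{2\pi\mathbf{i}}\int_{\mathfrak{W}_{E+c,\infty}} \exp\!\left(\frac{w^3}{3} - (c^2+s+y')w\right)\int_0^\infty e^{(c-w)\lambda} d\lambda\, dw,
\end{equation*}
where the $\lambda$-integral equals $1/(w-c)$ because $\Re(w-c) \ge E > 0$ on $\mathfrak{W}_{E+c,\infty}$. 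Substituting produces exactly the formula above, establishing \eqref{qlimit}. For \eqref{qbarlimit}, the analogous substitution $v\mapsto v-c$ and the symmetric Airy representation on the left-opening contour $\mathfrak{V}_{-E,\infty}$ yield the claim.

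The only genuine issue is the justification of interchanging the $\lambda$- and $w$-integrals: the integrand $|w-c|^{-1}|e^{w^3/3 - (c^2+s+y')w}|$ is integrable on $\mathfrak{W}_{E+c,\infty}$ because of cubic decay, and the $\lambda$-integral converges absolutely once $\Re w > c$, which holds uniformly on the contour; both conditions permit Fubini. Everything else is routine verification of contour deformations and Taylor expansions that have already been controlled in Lemma \ref{exponentialhhbarintegrand} and Lemma \ref{integrandstationarylimitshbarh}.
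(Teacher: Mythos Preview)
Your proposal is correct and follows essentially the same approach the paper indicates: the paper omits the proof, stating only that it is ``very similar to that of Proposition \ref{limitkernelresolvent},'' and your argument (dominated convergence via Lemma \ref{exponentialhhbarintegrand}, pointwise limit via Lemma \ref{integrandstationarylimitshbarh}, then the shift $w\mapsto w+c$ and identification with the Airy integral using $\tfrac{1}{w-c}=\int_0^\infty e^{-(w-c)\lambda}\,d\lambda$) is precisely the analog of that proof. Your justification of the Fubini step is slightly more explicit than the paper's, but otherwise the arguments coincide.
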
 

The proof of this proposition is very similar to that of Proposition \ref{limitkernelresolvent} and is thus omitted.

\subsubsection{Proof of Proposition \ref{convergencedeterminant} and Proposition \ref{convergencedeterminantmodel}} 

\label{ProofAsymptoticsStationary}

In this section, we use Proposition \ref{determinantlimitbeta} and the asymptotics given in Section \ref{InequalityAsymptoticsStationary} to establish Proposition \ref{convergencedeterminant} and Proposition \ref{convergencedeterminantmodel}. This will be similar to what was done in Section 8 of \cite{HFSE}. 

Following the notation in that work, we define the operator $B_s (x, y) = \Ai (x + y + s)$ on $L^2 (\mathbb{R}_{> 0})$; specifically, for any function $f \in L^2 (\mathbb{R}_{> 0})$, we have that $(B_s f) (x) = \displaystyle\int_0^{\infty} \Ai (x + y + s) f(y) dy$. Furthermore, for any $a \in \mathbb{R}$, we define the function $e_a (x) = e^{ax}$. 

Although $e_a (x)$ is not always in $L^2 (\mathbb{R}_{> 0})$, the operator $B_s$ acts on all functions of the form $e_a (x)$ due to the fact that the Airy function $\Ai (x)$ has decay of type $e^{- x^{3 / 2} / C}$, for some constant $C > 0$, as $x$ tends to $\infty$. 

In view of the above definitions, we have by \eqref{qlimit} and \eqref{qbarlimit} that 
\begin{flalign}
\label{qlimit2}
\displaystyle\lim_{T \rightarrow \infty} Q (y') = - \exp \left( cs - \displaystyle\frac{c^3}{3} \right) (B_s e_c) (y' + c^2); \quad \displaystyle\lim_{T \rightarrow \infty} \overline{Q} (y) = - \exp \left( \displaystyle\frac{c^3}{3} - cs \right) (B_s e_{-c}) (y + c^2), 
\end{flalign}

\noindent for all positive real numbers $y, y' > 0$. Moreover, \eqref{tildellimit} and Corollary \ref{limitrstationary} imply that 
\begin{flalign}
\label{limitkernel1}
\displaystyle\lim_{T \rightarrow \infty} \widetilde{L} (y, y') = B_s^2 (y + c^2, y' + c^2); \quad \displaystyle\lim_{T \rightarrow \infty} \big( \Id - \widetilde{L} \big)^{-1} (y, y') = \big( \Id - B_s^2 \big)^{-1} (y + c^2, y' + c^2). 
\end{flalign}

\noindent Now, combining Corollary \ref{limitgammac}, \eqref{stationarydeterminants1}, Lemma \ref{determinantreals}, and Lemma \ref{lclexponential}, we obtain that 
\begin{flalign*}
\displaystyle\lim_{T \rightarrow \infty} \displaystyle\frac{\sigma}{1 - \omega} \det \big( \Id + K \big)_{L^2 (\mathcal{C})} = \displaystyle\lim_{T \rightarrow \infty} \displaystyle\frac{\sigma}{1 - \omega} \det \big( \Id - \overline{L} \big)_{L^2 (\mathbb{R}_{> 0})}. 
\end{flalign*}

\noindent Thus, from the facts that $\sigma = \mathcal{F}^{-1} T^{-1 / 3}$ and $\Delta = \mathcal{O} (T^{-10})$, from \eqref{tildeldeterminantlimit}, and from Proposition \ref{determinantlimitbeta}, we deduce that 
\begin{flalign}
\label{limitdeterminantomega1sigma}
\begin{aligned}
\displaystyle\lim_{T \rightarrow \infty} & \displaystyle\frac{\det \big( \Id + K \big)_{L^2 (\mathcal{C})}}{\mathcal{F} T^{1 / 3} (1 - \omega)} = \mathcal{G} \det \big( \Id - K_{\Ai; c^2 + s} \big)_{L^2 (\mathbb{R}_{> 0})},
\end{aligned}
\end{flalign}

\noindent where  
\begin{flalign}
\label{g}
\begin{aligned}
\mathcal{G} & = \displaystyle\lim_{T \rightarrow \infty} \Bigg( s - \Big\langle R Q (y) , \exp (cy -\sigma^{-1} \Delta y ) \textbf{1}_{|y| < \Upsilon_T} \Big\rangle  \\ 
& \qquad - \Big\langle R \big( \exp ( -cy - \sigma^{-1} \Delta y ) \textbf{1}_{|y| < \Upsilon_T} \big) , \overline{Q} (y) \Big\rangle  - \big\langle R Q (y), \overline{Q} (y) \big\rangle  \\ 
& \qquad - \Big\langle \widetilde{L} R \big( \exp ( -cy - \sigma^{-1} \Delta y ) \textbf{1}_{|y| < \Upsilon_T} \big) , \exp ( cy - \sigma^{-1} \Delta y ) \textbf{1}_{|y| < \Upsilon_T} \Big\rangle \Bigg) .
\end{aligned}
\end{flalign}

\noindent Now we use the asymptotic results \eqref{qlimit2}, and \eqref{limitkernel1} to evaluate the limits of the scalar products appearing on the right side of \eqref{limitdeterminantomega1sigma}. Using these identities, the uniform estimates given by Lemma \ref{lkqkbounded}, and the fact that $\lim_{T \rightarrow \infty} \sigma^{-1} \Delta = 0$, we deduce from the dominated convergence theorem that 
\begin{flalign}
\label{rqe}
\begin{aligned}
& \displaystyle\lim_{T \rightarrow \infty} \Big\langle R Q (y) , \exp (cy -\sigma^{-1} \Delta y ) \textbf{1}_{|y| < \Upsilon_T} \Big\rangle = \displaystyle\lim_{T \rightarrow \infty} \Big\langle R Q (y) , e_c (y) \Big\rangle  \\
& \quad = - \exp \left( cs - \displaystyle\frac{c^3}{3} \right) \displaystyle\int_0^{\infty} \displaystyle\int_0^{\infty} ( \Id - B_s^2 )^{-1} \big(x + c^2, y + c^2 \big) ( B_s e_c ) \big( y + c^2 \big) e_c \big( x + c^2 \big) dx dy; 
\end{aligned}
\end{flalign}

\begin{flalign}
\label{req}
\begin{aligned}
& \displaystyle\lim_{T \rightarrow \infty}  \Big\langle R \big( \exp ( -cy - \sigma^{-1} \Delta y ) \textbf{1}_{|y| < \Upsilon_T} \big) , \overline{Q} (y) \Big\rangle = \displaystyle\lim_{T \rightarrow \infty} \Big\langle R \big( e_{-c} (y) \big) , \overline{Q} (y) \Big\rangle  \\
& \quad = - \exp \left( \displaystyle\frac{c^3}{3} - cs \right) \displaystyle\int_0^{\infty} \displaystyle\int_0^{\infty} ( \Id - B_s^2 )^{-1} \big( x + c^2, y + c^2 \big) e_{-c} \big( y + c^2 \big) ( B_s e_{-c} ) \big( x + c^2 \big)  dx dy; 
\end{aligned}
\end{flalign}

\begin{flalign}
\label{qrq}
\displaystyle\lim_{T \rightarrow \infty} & \big\langle R Q (y), \overline{Q} (y) \big\rangle = \displaystyle\int_0^{\infty} \displaystyle\int_0^{\infty} ( \Id - B_s^2 )^{-1} \big( x + c^2, y + c^2 \big) ( B_s e_c ) \big(y + c^2 \big) ( B_s e_{-c} ) \big( x + c^2 \big) dx dy; 
\end{flalign}

\begin{flalign}
\label{elre}
\begin{aligned}
\displaystyle\lim_{T \rightarrow \infty} & \Big\langle \widetilde{L} R \big( \exp ( -cy - \sigma^{-1} \Delta y ) \textbf{1}_{|y| < \Upsilon_T} \big) , \exp ( cy - \sigma^{-1} \Delta y ) \textbf{1}_{|y| < \Upsilon_T} \Big\rangle = \displaystyle\lim_{T \rightarrow \infty} \Big\langle R \widetilde{L} \big( e_{-c} (y) \big) , e_c (y) \Big\rangle  \\
& = \displaystyle\int_0^{\infty} \displaystyle\int_0^{\infty} \displaystyle\int_0^{\infty} ( \Id - B_s^2 )^{-1} \big( x + c^2, y + c^2\big)  B_s^2 \big( y + c^2, z + c^2 \big) e_{-c} \big( z \big) e_c \big( x \big)  dx dy dz. 
\end{aligned}
\end{flalign}

To establish Proposition \ref{convergencedeterminant} and Proposition \ref{convergencedeterminantmodel}, it suffices to insert \eqref{rqe}, \eqref{req}, \eqref{qrq}, and \eqref{elre} into \eqref{limitdeterminantomega1sigma}, and verify that the result is equal to $g(c, s) \det \big( \Id - K_{\Ai; c^2 + s} \big)_{L^2 (\mathbb{R}_{> 0})}$, given by Definition \ref{stationarydistribution}. Recall that the function $g(c, s)$ is given in terms of the functions $\mathcal{R}$, $\Phi$, and $\Psi$, given by Definition \ref{stationarykernel}. 

As observed in Section 8 of \cite{HFSE}, these functions can also be written in terms of the operator $B_s$ and the function $e_c$ and $e_{-c}$ through the identities 
\begin{flalign}
\label{functionsg}
\mathcal{R} & = f + \exp \left( \displaystyle\frac{c^3}{3} - cs \right) \displaystyle\int_0^{\infty} (B_s e_{-c}) \big( y + c^2 \big) e_{-c} \big( y + c^2 \big) dy; \\
\Phi (x + s) & = \exp \left( \displaystyle\frac{c^3}{3} - cs \right) \displaystyle\int_0^{\infty} B_s^2 (x + c^2, y + c^2) e_{-c} (y + c^2) dy - (B_s e_c) \big( x + c^2 \big) ; \\
\Psi (y + s) & = \exp \left( cs - \displaystyle\frac{c^3}{3}  \right) e_c (y + c^2)	 - (B_s e_{-c}) (y + c^2), 
\end{flalign}

\noindent for any positive real numbers $x$ and $y$. 

Expressing the scalar products appearing in \eqref{gstationary} as integrals, one quickly verifies from \eqref{g}, \eqref{rqe}, \eqref{req}, \eqref{qrq}, \eqref{elre}, and \eqref{functionsg} 	
\begin{flalign}
\label{scalar4}
\begin{aligned}
\displaystyle\lim_{T \rightarrow \infty} \mathcal{G} = \mathcal{R} - \big\langle (\Id - K_{\Ai; c^2 + s})^{-1} \textbf{P}_s \Phi, \textbf{P}_s \Psi \big\rangle = g (c, s). 
\end{aligned}
\end{flalign}

\noindent Combining \eqref{limitdeterminantomega1sigma} with \eqref{scalar4} yields  
\begin{flalign*}
\displaystyle\frac{\det \big( \Id + K \big)_{L^2 (\mathcal{C})}}{\mathcal{F} T^{1 / 3} (1 - \omega)} = g(c, s) \det \big( \Id - K_{\Ai; c^2 + s} \big) ,   
\end{flalign*}

\noindent from which we deduce Proposition \ref{convergencedeterminant} and Proposition \ref{convergencedeterminantmodel}.

\appendix

\section{Mapping to the Six-Vertex Model}

\label{SixVertex}

As mentioned in Section \ref{Introduction}, there is a mapping from the stochastic six-vertex model to Gibbs measures for the more well-studied symmetric six-vertex model \cite{ESMSM, RESI, SVMFBC, TEIOCSRAA, ISVM, TT}. Our goal in this section is to explain this mapping in more detail. Specifically, in Section \ref{PropertyMapping} we define Gibbs measures for the six-vertex model and explain how Gibbs measures for any ferroelectric six-vertex model arise from a stochastic six-vertex model. In Section \ref{Translation}, we explain why the measures $\mathcal{P} (\delta_1, \delta_2; b_1, b_2)$ (introduced in Section \ref{StationaryModel}) are translation-invariant.

\subsection{Gibbs Measures for the Six-Vertex Model}

\label{PropertyMapping}

A \emph{full-plane six-vertex configuration} is a family of up-right directed paths on the infinite lattice $\mathbb{Z}^2$, such that each path is infinite in both (down-left and up-right) directions, and such that no two paths share an edge. These are different from the six-vertex directed-path ensembles defined in Section \ref{StochasticVertex}, in that paths are no longer restricted to the positive quadrant. Let $\Omega$ denote the set of all full-plane six-vertex configurations. 

As in Section \ref{StochasticVertex}, each vertex in $\mathbb{Z}^2$ has one of six possible arrow configurations, listed in the top row of Figure \ref{sixvertexfigure}. Recall the notation of Section \ref{PathEnsembles} that associates an arrow configuration with a quadruple of non-negative integers $(i_1, j_1; i_2, j_2)$; here, $i_1$, $j_1$, $i_2$, and $j_2$ are the numbers of incoming and outgoing vertical and horizontal arrows through the vertex. 

To define Gibbs measures, we require some additional notation. For any subset $\Lambda \subset \mathbb{Z}$, let $\partial \Lambda$ denote the \emph{boundary} of $\Lambda$, which consists of all vertices in $\mathbb{Z}^2 \setminus \Lambda$ that are adjacent to some vertex in $\Lambda$. Furthermore, for any $\omega \in \Omega$, let $\omega |_{ \Lambda}$ denote the restriction of $\omega$ to $ \Lambda$. If $\Lambda$ is finite, let $N_{\omega; \Lambda} (i_1, j_1; i_2, j_2)$ denote the number of vertices $(x, y) \in \Lambda$ with arrow configuration $(i_1, j_1; i_2, j_2)$ for each $i_1, j_1, i_2, j_2$. We abbreviate $N_1 = N_{\omega; \Lambda} (0, 0; 0, 0)$; $N_2 = N_{\omega; \Lambda} (1, 1; 1, 1)$; $N_3 = N_{\omega; \Lambda} (1, 0; 1, 0)$; $N_4 = N_{\omega; \Lambda} (0, 1; 0, 1)$; $N_5 = N_{\omega; \Lambda} (1, 0; 0, 1)$, and $N_6 = N_{\omega; \Lambda} (0, 1, 1, 0)$. 

The following defines Gibbs measures for the six-vertex model. 

\begin{definition}

\label{sixvertexmeasure}

Fix $a_1, a_2, b_1, b_2, c_1, c_2 > 0$. A probability measure $\mu$ on $\Omega$ is said to have the \emph{Gibbs property} (for the six-vertex model with weights $(a_1, a_2, b_1, b_2, c_1, c_2)$) if the following holds. For any finite subset $\Lambda \subset \mathbb{Z}^2$, the probability $\mu_{\Lambda} (\omega)$ of selecting $\omega \in \Omega$, conditioned on $\omega |_{\mathbb{Z}^2 \setminus \Lambda}$, is proportional to $a_1^{N_1} a_2^{N_2} b_1^{N_3} b_2^{N_4} c_1^{N_5} c_2^{N_6}$. 

\end{definition}

Observe in particular that the stochastic six-vertex model $\mathcal{P} (\delta_1, \delta_2)$ defined in Section \ref{StochasticVertex} satisfies the Gibbs property, with $(a_1, a_2, b_1, b_2, c_1, c_2) = (1, 1, \delta_1, \delta_2, 1 - \delta_1, 1 - \delta_2)$. The fact that $\delta_1 \ne \delta_2$ makes this a Gibbs measure for an \emph{asymmetric} six-vertex model. 

The more commonly studied case, however, is the \emph{symmetric} six-vertex model, in which $a_1 = a = a_2$, $b_1 = b = b_2$, and $c_1 = c = c_2$. Many properties of this model (see, for instance, Chapters 8 and 9 of \cite{ESMSM}) can be derived from the \textit{anisotropy parameter} $\Delta = (a^2 + b^2 - c^2) / 2ab$. In particular, the six-vertex model exhibits \emph{phases} depending on whether $\Delta > 1$ (\emph{ferroelectric}), $-1 < \Delta < 1$ (\emph{disordered}), and $\Delta < -1$ (\emph{antiferroelectric}). Here, we will explain a connection between the stochastic six-vertex model and the symmetric six-vertex model in the ferroelectric phase. 

Specifically, let us fix an arbitrary triple $(a, b, c) \in \mathbb{R}_{> 0}$ satisfying $\Delta > 1$ and explain how the stochastic six-vertex model $\mathcal{P} (\delta_1, \delta_2)$ (with suitably selected $\delta_1$ and $\delta_2)$ satisfies the Gibbs property for the ferroelectric, symmetric six-vertex model with parameters $(a, b, c)$. Observe that, by rotating the model clockwise 90 degrees and by interchanging empty horizontal bonds with filled horizontal bonds, we switch the pairs of configurations $\big( (0, 0; 0, 0), (0, 1; 0, 1) \big)$ and $\big( (1, 1; 1, 1), (1, 0; 1, 0) \big)$. This allows us to switch the parameters $a$ and $b$ if necessary to assume that $a \ge b$. In fact, since we will only consider the case $\Delta > 1$ (which implies $a \ne b$) throughout the remainder of this appendix, we assume further that $a > b$. 

Now, under the assumptions $a > b$ and $\Delta = (a^2 + b^2 - c^2) / 2ab > 1$, we will show that we can \emph{conjugate} the hextuple $(a, a, b, b, c, c)$ into the form $(1, 1, \delta_1, \delta_2, 1 - \delta_1, 1 - \delta_2)$ without changing the Gibbs property. This will rely on certain \emph{conservation laws} satisfied by the quantities $N_{\omega; \Lambda} (i_1, j_1; i_2, j_2)$. The first four laws listed below are quickly verified (and are also explained, in the similar case of the six-vertex model on a torus, in Section 3 of \cite{ILMCSVM}), so their derivations are omitted. The fifth law is a consequence of the second, third, and fourth laws. 

\begin{enumerate}

\item{ \label{1} The quantity $N_1 + N_2 + N_3 + N_4 + N_5 + N_6 = |\Lambda|$ is constant.}

\item{ \label{2} Conditioned on $\omega|_{\partial \Lambda}$, the quantity $N_2 + N_4 + N_5$ is constant. }

\item{ \label{3} Conditioned on $\omega|_{\partial \Lambda}$, the quantity $N_2 + N_3 + N_6$ is constant. }

\item{ \label{4} Conditioned on $\omega|_{\partial \Lambda}$, the quantity $N_5 - N_6$ is constant. }

\item{ \label{5} Conditioned on $\omega|_{\partial \Lambda}$, the quantity $N_3 - N_4$ is constant. }

\end{enumerate}

In particular, in view of conservation laws \ref{1}, \ref{4}, and \ref{5}, the quantity $a_1^{N_1} a_2^{N_2} b_1^{N_3} b_2^{N_4} c_1^{N_5} c_2^{N_6}$ changes only by a proportionality constant if we replace the weights $(a_1, a_2, b_1, b_2, c_1, c_2)$ with $(w a_1, w a_2, w t^{-1} b_1, w t b_2, w \xi c_1, w \xi^{-1} c_2)$, for any $w, t, \xi \in \mathbb{R}$. This implies that the Gibbs property for the six-vertex model is not altered under this conjugation of weights. 

Now, let us set $(a_1, a_2, b_1, b_2, c_1, c_2) = (a, a, b, b, c, c)$ with $\Delta > 1$ and $a > b$, and also set 
\begin{flalign}
\label{transformparameters}
\begin{aligned}
& \delta_1 = \displaystyle\frac{b}{a} \big( \Delta - \sqrt{\Delta^2 - 1} \big); \quad \delta_2 = \displaystyle\frac{b}{a} \big( \Delta + \sqrt{\Delta^2 - 1} \big); \\
& w = a^{-1}; \quad t= \Delta + \sqrt{\Delta^2 - 1}; \quad \xi = \displaystyle\frac{a (1 - \delta_1)}{c}. 
\end{aligned} 
\end{flalign}

It is quickly verified under the assumptions $a > b$ and $\Delta > 1$ that $\delta_1, \delta_1 \in (0, 1)$. Furthermore, we find that $(w a, w a, w t^{-1} b, w t b, w \xi c, w \xi^{-1} c) = (1, 1, \delta_1, \delta_2, 1 - \delta_1, 1 - \delta_2)$. This implies that the stochastic six-vertex model $\mathcal{P} (\delta_1, \delta_2)$ (with $\delta_1$ and $\delta_2$ defined as in \eqref{transformparameters}) satisfies the Gibbs property for the ferroelectric, symmetric six-vertex model with parameters $(a, a, b, b, c, c)$.

\subsection{Translation Invariance} 

\label{Translation}

A widely studied subclass of Gibbs measures for the six-vertex model consist of those that are \emph{translation-invariant}. Although we do not know of a complete classification of translation-invariant Gibbs measures for the six-vertex model, it is generally believed\footnote{Such a statement has been established for dimer and square ice models \cite{RS}, but we are not certain if those proofs apply to the general six-vertex model.} \cite{SVMFBC, ISVM} that they form a two-parameter family parametrized by pairs $(h, v) \in \mathbb{R}_{> 0}^2$, called \emph{slopes}. The \emph{slope} $(h, v)$ of a translation-invariant Gibbs measure corresponds to the rates of growth of the height function of the six-vertex model in the horizontal and vertical directions. Explicitly, 
\begin{flalign}
\label{slopeheight}
h = \displaystyle\lim_{Z \rightarrow \infty} \displaystyle\frac{\mathfrak{H} (X + Z, Y) - \mathfrak{H} (X, Y)}{Z}; \qquad v = \displaystyle\lim_{Z \rightarrow \infty} \displaystyle\frac{\mathfrak{H} (X, Y) - \mathfrak{H} (X, Y + Z)}{Z},
\end{flalign}

\noindent where $\mathfrak{H}$ denotes the height function (also called the current) of the six-vertex model, which was at first in Section \ref{StochasticVertex} only defined on the positive quadrant but can be quickly extended to all of $\mathbb{Z}^2$.

Let us explain how the stochastic six-vertex model measures $\mathcal{P} (\delta_1, \delta_2; b_1, b_2)$ (defined in Section \ref{StationaryModel}) form a one-parameter subfamily of translation-invariant Gibbs measures for the ferroelectric, symmetric six-vertex model with weights $(a, a, b, b, c, c)$; here, the parameters $a$, $b$, and $c$ are related to the parameters $\delta_1$ and $\delta_2$ through the identity \eqref{transformparameters}. The fact that these measures satisfy the Gibbs property for these six-vertex models was discussed in the previous Section \ref{PropertyMapping}; thus, it suffices to verify the translation-invariance.

To that end, we have the lemma below. In what follows, for any $(x, y) \in \mathbb{Z}^2$, $\varphi^{(v)} (x, y)$ denotes the indicator for the event that an arrow vertically enters through $(x, y)$; that is, $\varphi^{(v)} (x, y)$ denotes the indicator for the event that an arrow points from $(x, y - 1)$ to $(x, y)$. Similarly, $\varphi^{(h)} (x, y)$ denotes the event that an arrow horizontally enters through $(x, y)$. 

\begin{lem}

\label{stationarystochastic}

Fix $\delta_1, \delta_2, b_1, b_2 \in (0, 1)$, let $\kappa = (1 - \delta_1) / (1 - \delta_2)$, and let $\beta_i = b_i / (1 - b_i)$ for each $i \in \{1 , 2 \}$. Assume that $\beta_1 = \kappa \beta_2$, and consider the stochastic six-vertex model $\mathcal{P} (\delta_1, \delta_2; b_1, b_2)$. Let $(x, y) \in \mathbb{Z}_{> 0}^2$. 

Then, $\big\{ \varphi^{(h)} (x, y), \varphi^{(h)} (x, y + 1), \ldots \big\} \cup \big\{ \varphi^{(v)} (x, y), \varphi^{(v)} (x + 1, y), \ldots \big\}$ are mutually independent. Furthermore, $\big\{ \varphi^{(h)} (x, y), \varphi^{(h)} (x, y + 1), \ldots \big\}$ are $0-1$ Bernoulli random variables with means $b_1$, and $\big\{ \varphi^{(v)} (x, y), \varphi^{(v)} (x + 1, y), \ldots \big\}$ are $0-1$ Bernoulli random variables with means $b_2$. 

\end{lem}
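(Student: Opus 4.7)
The natural approach is induction on $x+y$. The base case $(x,y)=(1,1)$ is immediate from the definition of double-sided $(b_1,b_2)$-Bernoulli initial data, which prescribes exactly that $\{\varphi^{(h)}(1,k)\}_{k\geq 1}$ are iid $\mathrm{Ber}(b_1)$, $\{\varphi^{(v)}(k,1)\}_{k\geq 1}$ are iid $\mathrm{Ber}(b_2)$, and these two collections are mutually independent. For the inductive step, by the symmetry between the horizontal and vertical directions it suffices to show that if the desired product structure holds for the L-shape $L(x,y) = \{\varphi^{(h)}(x,y+k):k\geq 0\}\cup\{\varphi^{(v)}(x+k,y):k\geq 0\}$, then it also holds for $L(x+1,y)$.

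The main ingredient is a single-vertex product stationarity identity: if the incoming arrow pair $(i_1,j_1)$ at a vertex is distributed as $\mathrm{Ber}(b_2)\otimes\mathrm{Ber}(b_1)$ (independent), and the outgoing pair $(i_2,j_2)$ is sampled from $(i_1,j_1)$ using the transition probabilities of Figure \ref{sixvertexfigure}, then the hypothesis $\beta_1=\kappa\beta_2$ is equivalent to $(i_2,j_2)$ again being distributed as $\mathrm{Ber}(b_2)\otimes\mathrm{Ber}(b_1)$ with independent coordinates. Indeed, the configurations $(0,0;0,0)$ and $(1,1;1,1)$ already contribute the correct probabilities $(1-b_1)(1-b_2)$ and $b_1b_2$ for $(i_2,j_2)=(0,0)$ and $(1,1)$, while the two remaining output events $(i_2,j_2)\in\{(0,1),(1,0)\}$ arise from mixing inputs $(1,0)$ and $(0,1)$ via the probabilities $\delta_1,\,1-\delta_1,\,\delta_2,\,1-\delta_2$; demanding that the total probability of $(i_2,j_2)=(1,0)$ equal $b_2(1-b_1)$ reduces after elementary manipulation to $\beta_1(1-\delta_2)=\beta_2(1-\delta_1)$, which is exactly $\beta_1=\kappa\beta_2$.

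To propagate from $(x,y)$ to $(x+1,y)$, I would peel off the column $\{(x,y+k):k\geq 0\}$ by iterating this stationarity identity. Writing $v_k = \varphi^{(v)}(x,y+k)$, $h_k = \varphi^{(h)}(x,y+k)$, and letting $U_k$ be the independent randomness governing the update at $(x,y+k)$, the outputs are $v_{k+1}$ and $h'_k = \varphi^{(h)}(x+1,y+k)$. By the inductive hypothesis, $v_0 \sim \mathrm{Ber}(b_2)$, the $\{h_k\}_{k\geq 0}$ are iid $\mathrm{Ber}(b_1)$, and the $\{\varphi^{(v)}(x+k,y)\}_{k\geq 1}$ are iid $\mathrm{Ber}(b_2)$, all mutually independent. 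A secondary induction on $k$ --- asserting that $(h'_0,\ldots,h'_{k-1},v_k)$ is distributed as iid $\mathrm{Ber}(b_1)^k \otimes \mathrm{Ber}(b_2)$ and independent of $\{h_j,U_j\}_{j\geq k}$ and of $\{\varphi^{(v)}(x+j,y)\}_{j\geq 1}$ --- advances using the single-vertex stationarity at $(x,y+k)$ together with the independence of the fresh pair $(h_{k+1},U_{k+1})$. Since $L(x+1,y) = \{h'_k:k\geq 0\} \cup \{\varphi^{(v)}(x+k,y):k\geq 1\}$ and finite-dimensional distributions determine the joint law, taking $k\to\infty$ completes the inductive step. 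The main obstacle is not any individual computation but rather arranging the bookkeeping so that the dependence of each $h'_k$ on the past inputs is mediated entirely through the current column state $v_k$, which then decouples from the freshly arriving input $h_{k+1}$; one must also mind the convention that $i_\bullet$ denotes the vertical and $j_\bullet$ the horizontal coordinate in $(i_1,j_1;i_2,j_2)$, which is responsible for the direction of the identity $\beta_1=\kappa\beta_2$ rather than its reciprocal.
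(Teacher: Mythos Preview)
Your proposal is correct and follows essentially the same strategy as the paper: induction on $x+y$ with base case from the initial data, a single-vertex product-stationarity computation (the paper checks the marginals and covariance of the two outputs, you equivalently check one joint probability, both reducing to $\beta_1(1-\delta_2)=\beta_2(1-\delta_1)$), and then a secondary sweep along a line of vertices to propagate the product structure. The only cosmetic difference is that the paper peels a \emph{row} to pass from $(X,Y)$ to $(X,Y+1)$ while you peel a \emph{column} to pass from $(x,y)$ to $(x+1,y)$; these are symmetric, and your bookkeeping of the secondary induction (tracking joint independence of the already-produced outputs, the current column state $v_k$, and the untouched boundary data) is in fact spelled out more carefully than in the paper.
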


\begin{figure}[t]

\begin{center}

\begin{tikzpicture}[
      >=stealth,
			scale = .8
			]

			\draw[-, black] (-7.5, -.8) -- (7.5, -.8);
			\draw[-, black] (-7.5, 0) -- (7.5, 0);
			\draw[-, black] (-7.5, 2) -- (7.5, 2);
			\draw[-, black] (-7.5, -.8) -- (-7.5, 2);
			\draw[-, black] (7.5, -.8) -- (7.5, 2);
			\draw[-, black] (-5, -.8) -- (-5, 2);
			\draw[-, black] (5, -.8) -- (5, 2);
			\draw[-, black] (-2.5, -.8) -- (-2.5, 2);
			\draw[-, black] (2.5, -.8) -- (2.5, 2);
			\draw[-, black] (0, -.8) -- (0, 2);

			\draw[->, black,  thick] (3.85, 1) -- (4.65, 1);
			\draw[->, black,  thick] (3.75, .1) -- (3.75, .9);

			\draw[->, black,  thick] (-1.25, .1) -- (-1.25, .9);
			\draw[->, black,  thick] (-1.25, 1.1) -- (-1.25, 1.9);

			\draw[->, black,  thick] (1.35, 1) -- (2.15, 1);
			\draw[->, black,  thick] (.35, 1) -- (1.15, 1);
			
			\draw[->, black,  thick] (6.25, 1.1) -- (6.25, 1.9);
			\draw[->, black,  thick] (5.35, 1) -- (6.15, 1);
			
			\draw[->, black,  thick] (-3.75, 1.1) -- (-3.75, 1.9);
			\draw[->, black,  thick] (-3.75, .1) -- (-3.75, .9);
			\draw[->, black,  thick] (-3.65, 1) -- (-2.85, 1);
			\draw[->, black,  thick] (-4.65, 1) -- (-3.85, 1);
				
			\filldraw[fill=gray!50!white, draw=black] (-6.25, 1) circle [radius=.1] node [black,below=26, scale = .7] {$(1 - b_1)(1 - b_2)$};
			\filldraw[fill=gray!50!white, draw=black] (-3.75, 1) circle [radius=.1] node [black,below=26, scale = .7] {$b_1 b_2 $};
			\filldraw[fill=gray!50!white, draw=black] (-1.25, 1) circle [radius=.1] node [black,below=26, scale = .7] {$(1 - b_1) b_2 \delta_1$};
			\filldraw[fill=gray!50!white, draw=black] (1.25, 1) circle [radius=.1] node [black,below=26, scale = .7] {$b_1 (1 - b_2) \delta_2$};
			\filldraw[fill=gray!50!white, draw=black] (3.75, 1) circle [radius=.1] node [black,below=26, scale = .7] {$(1 - b_1) b_2 (1 - \delta_1)$};
			\filldraw[fill=gray!50!white, draw=black] (6.25, 1) circle [radius=.1] node [black,below=26, scale = .7] {$b_1 (1 - b_2) (1 - \delta_2)$};

\end{tikzpicture}

\end{center}

\caption{\label{sixvertexfigurecritical} The top row in the chart depicts the six possible arrow configurations for the six-vertex model; the bottom row shows the corresponding probabilities given by the Gibbs measure $\mathcal{P} (\delta_1, \delta_2; b_1, b_2)$. }
\end{figure}

\begin{proof}

We establish this lemma by induction on $x + y$. It holds in the base case $(x, y) = (1, 1)$ due to the definition of double-sided $(b_1, b_2)$-Bernoulli initial data. 

Thus, assume that it holds for $(x, y) = (X, Y) \in \mathbb{Z}_{> 0}^2$. It suffices to show that the lemma also holds when $(x, y) = (X, Y + 1)$ and $(x, y) = (X + 1, Y)$. The two cases are entirely analogous, so we only address the first. By shifting the lattice to the left $X - 1$ spaces and down $Y - 1$ spaces, we may assume that $(X, Y) = (1, 1)$; we would like to establish the lemma for $(x, y) = (1, 2)$. 

To that end, recall that there are six possible arrow configurations at the vertex $(1, 1)$; they are listed in the top row of Figure \ref{sixvertexfigurecritical}. Their corresponding probabilities are listed in the second row of that Figure; the factors of $b_1$, $b_2$, $1 - b_1$, and $1 - b_2$ are due to the double-sided Bernoulli boundary data prescribing the entrance of arrows through $(1, 1)$. 

Observe that, under the assumption $\beta_1 = \kappa \beta_2$, the sum of the second, third, and sixth probabilities in Figure \ref{sixvertexfigurecritical} is $b_1 b_2 + (1 - b_1) b_2 \delta_1 + b_1 (1 - b_2) (1 - \delta_2) = b_2$. This implies that $\varphi^{(v)} (1, 2)$ is a $0-1$ Bernoulli random variable with mean $b_2$. Similarly, the sum of the second, fourth, and fifth probabilities in Figure \ref{sixvertexfigurecritical} is equal to $b_1$, which implies that $\varphi^{(h)} (2, 1)$ is a $0-1$ Bernoulli random variable with mean $b_1$. Furthermore, the covariance of these two random variables is $0$, since the the probability that both a vertical arrow enters through $(1, 2)$ and a horizontal arrow enters through $(2, 1)$ is $b_1 b_2$ (the second probability in Figure \ref{sixvertexfigurecritical}). Thus, $\varphi^{(v)} (1, 2)$ and $\varphi^{(h)} (2, 1)$ are independent, since they are $0-1$ Bernoulli random variables with zero covariance. 

Now we can repeat this procedure to deduce that $\varphi^{(v)} (2, 2)$ and $\varphi^{(h)} (3, 1)$ are independent $0-1$ Bernoulli random variables with means $b_2$ and $b_1$, respectively. Furthermore, both of these random variables only depend on $\varphi^{(h)} (2, 1)$ and $\varphi^{(v)} (2, 1)$, which are both independent of $\varphi^{(v)} (1, 2)$. This implies that $\varphi^{(v)} (2, 2)$ and $\varphi^{(h)} (3, 1)$ are independent from $\varphi^{(v)} (1, 2)$. Repeating, we find that $\big\{ \varphi^{(v)} (1, 2), \varphi^{(v)} (2, 2), \ldots \big\}$ are mutually independent $0-1$ Bernoulli random variables with means $b_2$. Furthermore, these random variables are all also independent from $\big\{ \varphi^{(h)} (1, 2), \varphi^{(h)} (1, 3), \ldots \big\}$, which are mutually independent $0-1$ Bernoulli random variables with means $b_1$, due to the double-sided Bernoulli initial data. 

This establishes the lemma for $(x, y) = (1, 2)$, thereby completing the proof. 
\end{proof}

In view of Lemma \ref{stationarystochastic}, the stochastic six-vertex measure $\mathcal{P} (\delta_1, \delta_2; b_1, b_2)$ can be extended to all of $\mathbb{Z}^2$ as follows. For each non-negative integer $N$, define $\mathcal{P} (\delta_1, \delta_2; b_1, b_2; N)$ to be the measure on $\mathbb{Z}_{> -N}^2$ formed by shifting $\mathcal{P} (\delta_1, \delta_2; b_1, b_2)$ $N$ spaces down and $N$ spaces to the left. By Lemma \ref{stationarystochastic}, the measures $\big\{ \mathcal{P} (\delta_1, \delta_2; b_1, b_2; N) \big\}_{N \in \mathbb{Z}_{\ge 0}}$ are compatible in the sense that $\mathcal{P} (\delta_1, \delta_2; b_1, b_2; M)$ is the restriction of $\mathcal{P} (\delta_1, \delta_2; b_1, b_2; N)$ to $\mathbb{Z}_{> -M}^2$, for any $N \ge M$. 

Therefore, we can define the limit $\mathcal{P} (\delta_1, \delta_2; b_1, b_2; \infty) = \lim_{N \rightarrow \infty} \mathcal{P} (\delta_1, \delta_2; b_1, b_2; N)$. This limit is quickly seen to be a translation-invariant measure satisfying the second statement of Lemma \ref{stationarystochastic} for any $(x, y) \in \mathbb{Z}^2$; Lemma \ref{stationarystochastic} implies that the slope of this measure is $(b_1, b_2)$. Combining this statement with the discussion of Section \ref{PropertyMapping}, we deduce that the stochastic six-vertex measures $\mathcal{P} (\delta_1, \delta_2, b_1, b_2; \infty)$ form a class of translation-invariant Gibbs measures for the ferroelectric, symmetric six-vertex model (with corresponding weights $a, b, c$) that characterize a one-parameter family of slopes. 

We will next evaluate the free energy of these Gibbs measures, which we recall is defined as follows. For any positive integer $M$, let $\Lambda = \Lambda_M \subset \mathbb{Z}^2$ denote the $M \times M$ grid consisting of vertices of the form $(x, y) \in \mathbb{Z}^2$, with $0 \le x < M$ and $0 \le y < M$. For any full plane six-vertex configuration $\omega \in \Omega$, let $E_{\Lambda} (\omega) = a^{N_1 + N_2} b^{N_3 + N_4} c^{N_5 + N_6}$ denote the weight of this configuration under the six-vertex model, restricted to $\Lambda$; here, $N_1$, $N_2$, $N_3$, $N_4$, $N_5$, and $N_6$ were defined directly above Definition \ref{sixvertexmeasure}. For any six-vertex configuration $\widetilde{\omega}$ on $\mathbb{Z}^2 \backslash \Lambda$, define the \textit{partition function on $\Lambda$ with $\widetilde{\omega}$ boundary conditions} to be $Z_{\Lambda} (\widetilde{\omega}) = \sum_{\omega} E_{\Lambda} (\omega)$, where $\omega \in \Omega$ is summed over all six-vertex states on $\mathbb{Z}^2$ such that $\omega |_{\mathbb{Z}^2 \backslash \Lambda} = \widetilde{\omega}$. Then for any six-vertex Gibbs measure $\mu$, define $Z_{\Lambda} (\mu) = \mathbb{E} \big[ Z_{\Lambda} (\widetilde{\omega}) \big]$, where $\widetilde{\omega}$ is averaged with respect to $\mu$. The \textit{free energy per site} of this Gibbs measure then be $\lim_{M \rightarrow \infty} M^{-2} \log Z_{\Lambda} (\mu)$, if it exists.

Proposition \ref{energymeasure} explicitly provides the free energy per site of the measures $\mathcal{P} (\delta_1, \delta_2;  b_1, b_2; \infty)$; as expected, the result below matches with the result of Bukman-Shore (stated directly below equation (3.38) of \cite{TCPFSVM}) that evaluates the free energy of the ferroelectric six-vertex model at the conical singularity. The fact that $\mathcal{P} (\delta_1, \delta_2; b_1, b_2; \infty)$ is a product measure on horizontal and vertical lines makes this partition function significantly simpler to evaluate than in the case of dimer models \cite{VPDT} and the six-vertex model with domain-wall boundary data \cite{RMSSVM}. 

\begin{prop}

\label{energymeasure}

Fix $a, b, c \in \mathbb{R}_{> 0}$ satisfying $\Delta = (a^2 + b^2 - c^2) / 2ab > 1$ and $a > b$. Define $\delta_1$ and $\delta_2$ as in \eqref{transformparameters}, and denote $\kappa = (1 - \delta_1) / (1 - \delta_2)$. Let $b_1, b_2 \in (0, 1)$ satisfy $\beta_1 = \kappa \beta_2$, where $\beta_i = b_i / (1 - b_i)$ for each $i \in \{1 , 2 \}$. 

Then, the translation-invariant measure $\mathcal{P} (\delta_1, \delta_2; b_1, b_2; \infty)$ prescribes a translation-invariant Gibbs measure for the ferroelectric, symmetric six-vertex model with weights $(a, a, b, b, c, c)$, with slope equal to $(h, v) = (b_2, b_1)$. The free energy per site is equal to
\begin{flalign}
\label{energy}
F (a, b, c) = (h - v) \log \big( \Delta - \sqrt{\Delta^2 - 1} \big) - \log a. 
\end{flalign}

\end{prop}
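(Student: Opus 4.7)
The proof proceeds in three stages. The Gibbs property and translation-invariance of $\mu = \mathcal{P}(\delta_1, \delta_2; b_1, b_2; \infty)$ were already established earlier in the appendix---the former via the conjugation from stochastic to symmetric weights in Section \ref{PropertyMapping}, and the latter via the direct-limit construction preceding the proposition. Identifying the slope as $(h, v) = (b_2, b_1)$ is then immediate from Lemma \ref{stationarystochastic}, which asserts that every horizontal edge carries an arrow with probability $b_1$ and every vertical edge carries an arrow with probability $b_2$; by the law of large numbers applied to the height function definition \eqref{slopeheight}, one obtains $h = b_2$ (from vertical arrow density) and $v = b_1$ (from horizontal arrow density).

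For the free energy, the plan is to exploit the conjugation identity from Section \ref{PropertyMapping}, which gives
\begin{flalign*}
E_\Lambda(\omega) = a^{|\Lambda|} \, t^{\, N_3(\omega) - N_4(\omega)} \, \xi^{\, N_6(\omega) - N_5(\omega)} \, \widetilde E_\Lambda(\omega),
\end{flalign*}
where $t = \Delta + \sqrt{\Delta^2 - 1}$, $\xi = a(1 - \delta_1)/c$, and $\widetilde E_\Lambda$ is the product of the stochastic weights $(1, 1, \delta_1, \delta_2, 1-\delta_1, 1-\delta_2)$. Because the conservation laws listed in Section \ref{PropertyMapping} force $N_3 - N_4$ and $N_6 - N_5$ to depend only on $\omega|_{\partial \Lambda}$, the identity above factors through the partition function as $Z_\Lambda(\widetilde\omega) = a^{|\Lambda|} t^{N_3 - N_4} \xi^{N_6 - N_5} \widetilde Z_\Lambda(\widetilde\omega)$, where $\widetilde Z_\Lambda$ is the stochastic partition function; taking $\mu$-expectation yields $Z_\Lambda(\mu) = a^{|\Lambda|} \, \mathbb{E}_\mu[\, t^{N_3 - N_4} \xi^{N_6 - N_5} \widetilde Z_\Lambda(\widetilde\omega)\,]$. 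Since the stochastic weights at each vertex are normalized conditional probabilities, $\widetilde Z_\Lambda(\widetilde\omega)$ is just the conditional probability (under the stochastic dynamics generating $\mu$) that the outgoing boundary arrows of $\widetilde\omega$ match the incoming ones, so $\widetilde Z_\Lambda \in [0,1]$ with $|\log \widetilde Z_\Lambda| = O(M)$ for $\mu$-typical $\widetilde\omega$; this contribution is negligible at the scale $|\Lambda| = M^2$.

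The main technical step is then the evaluation of $\log \mathbb{E}_\mu[\, t^{N_3 - N_4} \xi^{N_6 - N_5}\,]$ to leading order in $M^2$. A direct computation from the vertex-type probabilities displayed in Figure \ref{sixvertexfigurecritical}, combined with the constraint $\beta_1 = \kappa \beta_2$ (which forces $p_5 = p_6$), gives $|\Lambda|^{-1} \mathbb{E}_\mu[N_3 - N_4] = p_3 - p_4 = b_2 - b_1 = h - v$ and $|\Lambda|^{-1}\mathbb{E}_\mu[N_6 - N_5] = 0$. The hard part of the argument will be establishing concentration of $N_3 - N_4$ about its mean on the scale $o(|\Lambda|)$, so that $\log \mathbb{E}_\mu[t^{N_3 - N_4}] = (h - v)|\Lambda| \log t + o(M^2)$. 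This concentration should come from the translation-invariant and locally Markovian structure of $\mu$ and the fact that $N_3 - N_4$ is a sum of indicators of local vertex types: the relevant covariance structure yields variance of order $|\Lambda|$ via a decay-of-correlations estimate, from which exponential-moment control of the centered variable follows by a standard cumulant or subadditivity argument. Combining the three asymptotic contributions, dividing by $|\Lambda|$, and using $\log t = -\log(\Delta - \sqrt{\Delta^2 - 1})$ produces the formula \eqref{energy} for the free energy per site.
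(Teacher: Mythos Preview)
Your overall strategy matches the paper's: both pass through the conjugation identity $E_\Lambda = a^{|\Lambda|}\, t^{N_3-N_4}\, \xi^{N_6-N_5}\, \widetilde E_\Lambda$ and use stochasticity of $\widetilde E$ to extract the free energy. The difference lies in how the random exponents are controlled. You propose to compute $\mathbb{E}[N_3-N_4]$ and $\mathbb{E}[N_6-N_5]$ from the single-vertex probabilities of Figure~\ref{sixvertexfigurecritical} and then prove concentration via a decay-of-correlations or cumulant bound; but no such correlation decay is established anywhere in the paper, so this route would require substantial additional input. The paper instead uses two more elementary observations that you have overlooked. First, $|N_5-N_6|<2M$ holds \emph{deterministically} on an $M\times M$ box, so the $\xi^{N_6-N_5}$ factor already contributes only $O(M)$ to $\log Z_\Lambda$ with no probabilistic argument at all. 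Second, $(N_3-N_4)+(N_6-N_5)=(N_2+N_3+N_6)-(N_2+N_4+N_5)$ equals the total number of vertical arrows minus the total number of horizontal arrows in $\Lambda$; since Lemma~\ref{stationarystochastic} makes the arrow indicators along each individual row (respectively column) genuinely i.i.d.\ Bernoulli$(b_2)$ (respectively Bernoulli$(b_1)$), concentration of this difference around $M^2(b_2-b_1)$ is an immediate law-of-large-numbers statement applied row by row, requiring no correlation estimates. Your handling of $\widetilde Z_\Lambda$ as a conditional probability in $[0,1]$ with $-\log \widetilde Z_\Lambda = O(M)$ typically is, on the other hand, more careful than the paper's heuristic assertion that the stochastic sum simply equals $1$.
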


\begin{proof}[Proof (Heuristic)]

The first statement of the proposition was established above, so it suffices to verify the free energy identity \eqref{energy}. We only provide a heuristic for this derivation; it can be quickly turned into a complete proof through a few additional estimates. 

Let $M$ be some large positive integer, and define $\Lambda = \Lambda_M \subset \mathbb{Z}^2$ as in the definition of free energy above. Denoting $t$ and $\xi$ as in \eqref{transformparameters}, we find that
\begin{flalign}
\label{energydelta1delta2}
E_{\Lambda} (\omega) = a^{N_1 + N_2 + N_3 + N_4 + N_5 + N_6} t^{N_3 - N_4} \xi^{N_6 - N_5} \delta_1^{N_3} \delta_2^{N_4} (1 - \delta_1)^{N_5} (1 - \delta_2)^{N_6}.
\end{flalign}

Now, $N_1 + N_2 + N_3 + N_4 + N_5 + N_6 = |\Lambda| = M^2$, deterministically. Furthermore, $N_2 + N_3 + N_6$ is equal to the number of vertical arrows in $\omega|_{\Lambda}$, and $N_2 + N_4 + N_5$ is equal to the number of horizontal arrows. Thus, $N_3 - N_4 + N_6 - N_5$ is equal to the number of horizontal arrows in $\omega|_{\Lambda}$ subtracted from the number vertical arrows in $\omega |_{\Lambda}$; although this quantity is random (dependent on the random boundary data $\omega |_{\partial \Lambda}$), Lemma \ref{stationarystochastic} implies that it should equal $M^2 \big( b_2 - b_1 + o(1) \big) = M^2 \big( h - v + o(1) \big)$, with high probability $1 - o(1)$. Moreover, since it is quickly verified that $\big| N_6 - N_5 \big| < 2M$ deterministically, \eqref{energydelta1delta2} implies that
\begin{flalign}
\label{energy1delta1delta2}
E_{\Lambda} (\omega) = a^{M^2} t^{M^2 (h - v + o(1))} \widetilde{E} (\omega),
\end{flalign} 

\noindent with high probability, where $\widetilde{E} (\omega) = \delta_1^{N_3} \delta_2^{N_4} (1 - \delta_1)^{N_5} (1 - \delta_2)^{N_6}$ is the weight of $\omega$ (restricted to $\Lambda$) under the stochastic six-vertex model. The stochasticity of this model implies that the sum of $\widetilde{E}_{\omega} (\omega)$ is equal to $1$ so, by \eqref{energy1delta1delta2}, the free energy per site of our Gibbs measure $\mathcal{P} (\delta_1, \delta_2; b_1, b_2; \infty)$ for the symmetric six-vertex model is equal to
\begin{flalign*}
- \displaystyle\lim_{M \rightarrow \infty} \displaystyle\frac{1}{M^2} \log \big( a^{M^2} t^{M^2 (h - v + o(1))} \big) = F(a, b, c), 
\end{flalign*}

\noindent where $F(a, b, c)$ is defined in \eqref{energy}. 
\end{proof}

\section{Fredholm Determinants}

\label{Determinants1} 

In this section, we provide the definition and some properties of Fredholm determinants in the form that is convenient for us. 

\begin{definition}

\label{definitiondeterminant}

Fix a contour $\mathcal{C} \subset \mathbb{C}$ in the complex plane, and let $K: \mathcal{C} \times \mathcal{C} \rightarrow \mathbb{C}$ be a meromorphic function with no poles on $\mathcal{C} \times \mathcal{C}$. We define the \emph{Fredholm determinant}
\begin{flalign}
\label{determinantsum}
\det \big( \Id + K \big)_{L^2 (\mathcal{C})} = 1 + \displaystyle\sum_{k = 1}^{\infty} \displaystyle\frac{1}{ (2 \pi \textbf{i} )^k k!} \displaystyle\int_{\mathcal{C}} \cdots \displaystyle\int_{\mathcal{C}} \det \big[ K(x_i, x_j) \big]_{i, j = 1}^k \displaystyle\prod_{j = 1}^k d x_j. 
\end{flalign}

\end{definition}

\begin{rem}

\label{sumconvergesdeterminant}

Generally, the definition of the Fredholm determinant requires that $K$ give rise to a trace-class operator on $L^2 (\mathcal{C})$. We will not use that convention here; instead, we only require that the sum on the right side \eqref{determinantsum} converges absolutely. 

\end{rem}

\begin{rem}

\label{determinantcontourdeform}

Suppose that the contour $\mathcal{C}$ can be continuously deformed to another contour $\mathcal{C}' \subset \mathbb{C}$ without crossing any poles of the kernel $K$. Then, $\det \big( \Id + K \big)_{L^2 (\mathcal{C})} = \det \big( \Id + K \big)_{L^2 (\mathcal{C}')}$, assuming that the right side of \eqref{determinantsum} remains uniformly convergent throughout the deformation process. Indeed, this can be seen directly from the definition \eqref{determinantsum}, since each summand on the right side of this identity remains the same after deforming $\mathcal{C}$ to $\mathcal{C}'$. Alternatively, see Proposition 1 of \cite{AASIC}. 

\end{rem}

The Fredholm determinant satisfies several stability properties that are useful for asymptotic analysis. We record three of these properties here; the first and third appeared as Lemma A.4 and Corollary A.5 of \cite{PTAEPSSVM}, respectively.

\begin{lem}[{\cite[Lemma A.4]{PTAEPSSVM}}]

\label{determinantclosekernels}

Adopt the notation of Definition \ref{definitiondeterminant}, and let $K_1 (z, z'): \mathcal{C} \times \mathcal{C} \rightarrow \mathbb{C}$ be another meromorphic function with no poles on $\mathcal{C} \times \mathcal{C}$. Suppose that there exists a function $\textbf{\emph{K}} : \mathcal{C} \rightarrow \mathbb{R}_{\ge 0}$ such that $\sup_{z' \in \mathcal{C}} \big| K (z, z') \big| \le \textbf{\emph{K}} (z)$ and $\sup_{z' \in \mathcal{C}} \big| K_1 (z, z') \big| \le \textbf{\emph{K}} (z)$ for all $z \in \mathcal{C}$, and also such that $\displaystyle\int_{\mathcal{C}} \big| \textbf{\emph{K}} (z) \big| dz < \infty$. Then, the Fredholm determinants $ \det \big( \Id + K \big)_{L^2 (\mathcal{C})} $ and $ \det \big( \Id + K_1 \big)_{L^2 (\mathcal{C})}$ converge absolutely (as series given by the right side of \eqref{determinantsum}). 

Furthermore, denote $K' (z, z') = K_1 (z, z') - K (z, z')$, and define the constant 
\begin{flalign}
\begin{aligned}
\label{cdeterminant2}
C = \displaystyle\sum_{k = 1}^{\infty} \displaystyle\frac{2^k k^{k / 2}}{(k - 1)!} \displaystyle\int_{\mathcal{C}} \cdots & \displaystyle\int_{\mathcal{C}} \displaystyle\prod_{i = 2}^k  \left| \displaystyle\frac{1}{k} \displaystyle\sum_{j = 1}^k \Big( \big| K (x_i, x_j) \big|^2 + \big| K' (x_i, x_j) \big|^2  \Big)  \right|^{1 / 2}  \\
& \qquad \times \left| \displaystyle\frac{1}{k} \displaystyle\sum_{j = 1}^k \big| K' (x_1, x_j) \big|^2 \right|^{1 / 2}  \displaystyle\prod_{i = 1}^k dx_i.
\end{aligned}
\end{flalign}

\noindent Then, $\Big| \det \big( \Id + K_1 \big)_{L^2 (\mathcal{C})} - \det \big( \Id + K \big)_{L^2 (\mathcal{C})}  \Big| < C$. 

\end{lem}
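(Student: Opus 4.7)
The plan is to deduce both claims from Hadamard's inequality, which asserts that any $k \times k$ matrix $M$ satisfies $|\det M| \le \prod_{i=1}^k \bigl( \sum_{j=1}^k |M_{ij}|^2 \bigr)^{1/2}$. For absolute convergence, combining Hadamard's inequality with the hypothesis $\sup_{z' \in \mathcal{C}} |K(z, z')| \le \textbf{K}(z)$ gives $|\det[K(x_i,x_j)]_{i,j=1}^k| \le k^{k/2} \prod_i \textbf{K}(x_i)$, so the absolute value of the $k$-th summand on the right side of \eqref{determinantsum} is bounded by $(2\pi)^{-k} k^{k/2} (k!)^{-1} \| \textbf{K} \|_{L^1(\mathcal{C})}^k$. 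Stirling's approximation makes this summable in $k$, establishing absolute convergence of both $\det(\Id+K)_{L^2(\mathcal{C})}$ and $\det(\Id+K_1)_{L^2(\mathcal{C})}$.

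For the inequality, I would write $K_1 = K + K'$ and use multilinearity of the determinant in its rows to expand $\det[K_1(x_i,x_j)]$ as a sum $\sum_{S \subseteq \{1,\dots,k\}} \det M^{(S)}$, where $M^{(S)}$ denotes the matrix whose $i$-th row equals that of $[K(x_i,x_j)]$ when $i \notin S$ and that of $[K'(x_i,x_j)]$ when $i \in S$. The $S = \emptyset$ term equals $\det[K(x_i,x_j)]$ and cancels, so the difference $\det[K_1] - \det[K]$ equals the sum of $\det M^{(S)}$ over all non-empty $S$.

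For any non-empty $S$, I would apply Hadamard's inequality to $M^{(S)}$, then upper bound the $\ell^2$-norm of each row by the symmetric quantity $\bigl( \sum_j (|K(x_i,x_j)|^2 + |K'(x_i,x_j)|^2) \bigr)^{1/2}$, except for a single distinguished row $i_0 \in S$ whose bound is retained as $\bigl( \sum_j |K'(x_{i_0},x_j)|^2 \bigr)^{1/2}$. By the symmetry of the resulting integrand under relabeling of the integration variables $x_1,\dots,x_k$ (swapping $x_{i_0}$ with $x_1$ induces a simultaneous row-and-column permutation under which each inner sum $\sum_j |K(x_i,x_j)|^2$ is preserved), after change of variables we may take $i_0 = 1$, and each resulting integral $\int_{\mathcal{C}^k} |\det M^{(S)}|\, |dx|$ is dominated by the common quantity appearing in the integrand of \eqref{cdeterminant2}. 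Summing over the at most $2^k$ non-empty subsets $S$, inserting the rescaling $(\sum_j |\cdot|^2)^{1/2} = k^{1/2} (\tfrac{1}{k}\sum_j |\cdot|^2)^{1/2}$ to extract the $k^{k/2}$ factor, and estimating $\frac{2^k}{(2\pi)^k k!} \le \frac{1}{(k-1)!}$ for $k \ge 1$ yields exactly $C$.

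The main bookkeeping hurdle is verifying the symmetrization: one must check that under simultaneous permutation of row and column indices, Hadamard's estimate (in which each row is bounded by an $\ell^2$-norm of its entries) is mapped to an estimate of the same form with the distinguished $K'$-row relocated to position $i = 1$. The remaining steps (bounding subset counts by $2^k$, rescaling the inner sums, and the Stirling-type estimate) are routine.
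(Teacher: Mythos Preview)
The paper does not supply its own proof of this lemma; it is quoted from \cite[Lemma A.4]{PTAEPSSVM}. Your argument is correct and is the standard one: Hadamard's inequality for absolute convergence, then multilinearity of the determinant in its rows to write $\det[K_1]-\det[K]$ as a sum over non-empty $S\subseteq\{1,\dots,k\}$, followed by Hadamard on each summand and a symmetrization placing the distinguished $K'$-row at position $1$. The symmetrization step you flagged is fine: swapping $x_1\leftrightarrow x_{i_0}$ in the integration permutes the summation index $j$ inside each row-norm as well, so each factor $\bigl(\sum_j|K(x_i,x_j)|^2\bigr)^{1/2}$ is mapped to one of the same form; hence all choices of $i_0$ give the same integral. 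The final constant comparison $\frac{2^k}{(2\pi)^k k!}\le\frac{1}{(k-1)!}$ (equivalently $\pi^{-k}\le k$) is strict for every $k\ge1$, which yields the strict inequality asserted in the lemma. This is almost certainly the same argument as in the cited source.
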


\begin{lem}

\label{determinantsmallcontour}

Adopt the notation of Definition \ref{definitiondeterminant}, and fix some $\varepsilon > 0$. Assume that $\mathcal{C}$ is compact and can be decomposed into the union of two contours $\mathcal{C} = \mathcal{C}^{(1)} \cup \mathcal{C}^{(2)}$, whose interiors are disjoint, such that $\sup_{z \in \mathcal{C}^{(2)}, z' \in \mathcal{C}} \big| K (z, z') \big| < \varepsilon$. 

Then, we have that 
\begin{flalign*}
\Big| \det \big( \Id + K \big)_{L^2 (\mathcal{C})} - \det \big( \Id + K \big)_{L^2 (\mathcal{C}^{(1)})}  \Big| < C \varepsilon, 
\end{flalign*}

\noindent where 
\begin{flalign*} 
C = \displaystyle\sum_{k = 1}^{\infty} \displaystyle\frac{k^{k / 2}}{\pi^k k!} \displaystyle\int_{\mathcal{C}} \cdots \displaystyle\int_{\mathcal{C}} \displaystyle\prod_{i = 2}^k \left( \displaystyle\frac{1}{k} \displaystyle\sum_{j = 1}^k \big| K (x_i, x_j) \big|^2 \right)^{1 / 2} \displaystyle\prod_{i = 1}^k d x_i. 
\end{flalign*}

\end{lem}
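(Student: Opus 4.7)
The plan is to expand both Fredholm determinants using the defining series \eqref{determinantsum} and compare $k$-th terms. Since $\mathcal{C} = \mathcal{C}^{(1)} \cup \mathcal{C}^{(2)}$ with disjoint interiors, we have $\int_{\mathcal{C}^k} = \int_{(\mathcal{C}^{(1)})^k} + \text{(``rest'')}$, where ``rest'' is the contribution from $k$-tuples $(x_1, \ldots, x_k) \in \mathcal{C}^k$ for which at least one coordinate lies in $\mathcal{C}^{(2)}$. The first crucial observation is that the integrand $\det[K(x_i, x_j)]_{i,j=1}^k$ is invariant under simultaneous permutation of its $k$ arguments, since permuting both the rows and the columns of a matrix by the same permutation preserves its determinant. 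Combined with the union bound $\mathbf{1}_{\exists\, i :\, x_i \in \mathcal{C}^{(2)}} \le \sum_{i=1}^k \mathbf{1}_{x_i \in \mathcal{C}^{(2)}}$, this symmetry lets us bound ``rest'' in absolute value by $k$ times the integral of $|\det[K(x_i,x_j)]|$ over $\mathcal{C}^{(2)} \times \mathcal{C}^{k-1}$ (with $x_1$ constrained to $\mathcal{C}^{(2)}$).

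Next, I would apply Hadamard's inequality to the integrand, which gives
\[
\big|\det[K(x_i,x_j)]_{i,j=1}^k\big| \le \prod_{i=1}^k \left(\sum_{j=1}^k |K(x_i,x_j)|^2\right)^{1/2} = k^{k/2} \prod_{i=1}^k \left(\frac{1}{k}\sum_{j=1}^k |K(x_i,x_j)|^2\right)^{1/2}.
\]
The hypothesis $\sup_{z \in \mathcal{C}^{(2)}, z' \in \mathcal{C}} |K(z,z')| < \varepsilon$ then forces the $i=1$ factor above to be strictly less than $\varepsilon$ whenever $x_1 \in \mathcal{C}^{(2)}$, producing a single factor of $\varepsilon$. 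After extracting this factor, the product over $i = 2, \ldots, k$ can be integrated over $\mathcal{C}^k$ (enlarging $\mathcal{C}^{(2)} \subset \mathcal{C}$ for the $x_1$ variable), since that integrand is non-negative.

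Assembling these ingredients with the prefactor $(2\pi\mathbf{i})^{-k}(k!)^{-1}$ from \eqref{determinantsum} and taking absolute values bounds the $k$-th term of the difference by
\[
\frac{k \cdot k^{k/2} \, \varepsilon}{(2\pi)^k\, k!} \int_{\mathcal{C}} \cdots \int_{\mathcal{C}} \prod_{i=2}^k \left(\frac{1}{k}\sum_{j=1}^k |K(x_i,x_j)|^2\right)^{1/2} \prod_{i=1}^k dx_i.
\]
Using the elementary inequality $k \le 2^k$ to absorb the excess factor $k/2^k \le 1$ converts the prefactor $k/(2\pi)^k k!$ into the stated $1/(\pi^k k!)$, and summation over $k \ge 1$ produces the quoted constant $C$, yielding the bound $C\varepsilon$.

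I do not expect a significant obstacle, as the argument is essentially routine. The only mild subtlety is recognizing that the joint-permutation symmetry of the integrand (rather than a cruder inclusion--exclusion over the $2^k$ subset decomposition of $\mathcal{C}^k$) is what enables the clean reduction to a single restriction $x_1 \in \mathcal{C}^{(2)}$ and hence produces the factor of $k$ that then gets absorbed into the Stirling-type constant.
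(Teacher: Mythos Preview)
Your proposal is correct and follows essentially the same argument as the paper's proof: both expand the difference via the series \eqref{determinantsum}, isolate the contribution where at least one variable lies on $\mathcal{C}^{(2)}$, invoke the permutation symmetry of $\det[K(x_i,x_j)]$ to reduce to the case $x_1\in\mathcal{C}^{(2)}$, apply Hadamard's inequality row-wise, and extract a single factor of $\varepsilon$ from the $i=1$ row. The only cosmetic difference is that the paper decomposes $\mathcal{C}^k$ into its $2^k$ subproducts and bounds the $2^k-1$ ``bad'' ones individually (yielding the factor $2^k$ that combines with $(2\pi)^{-k}$ to give $\pi^{-k}$), whereas you use the union bound to get a factor of $k$ and then weaken it via $k\le 2^k$; your route is in fact marginally sharper before this final weakening.
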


\begin{proof}

Since $\mathcal{C}$ is compact and $K$ is meromorphic with no poles on $\mathcal{C} \times \mathcal{C}$, $\big| K (w, w') \big|$ is uniformly bounded over all $w, w' \in \mathcal{C}$. Thus, from the first part of Lemma \ref{determinantclosekernels}, the series \eqref{determinantsum} for the Fredholm determinants $\det \big( \Id + K \big)_{L^2 (\mathcal{C})}$ and $\det \big( \Id + K \big)_{L^2 (\mathcal{C}^{(1)})}$ are both absolutely convergent. 

Subtracting these two series yields 
\begin{flalign}
\label{determinantvertexrightc2}
\begin{aligned}
\Big| \det \big( \Id + K \big)_{L^2 (\mathcal{C})} & - \det \big( \Id + K \big)_{L^2 (\mathcal{C}^{(1)})} \Big|  \\ 
& \le  \displaystyle\sum_{k = 1}^{\infty} \displaystyle\sum_{\textbf{n} } \displaystyle\frac{1}{(2 \pi)^k k!} \displaystyle\int_{\mathcal{C}^{(n_1)}} \cdots \displaystyle\int_{\mathcal{C}^{(n_k)}} \left| \det \big[ K (x_i, x_j) \big]_{i, j = 1}^k \right| \displaystyle\prod_{j = 1}^k d x_j, 
\end{aligned}
\end{flalign}

\noindent where $\textbf{n} = (n_1, n_2, \ldots , n_k) \in \{ 1, 2 \}^k$ is summed over all ordered $k$-tuples of elements in $\{ 1, 2 \}$, except for $\{ 1, 1, \ldots , 1 \}$. Equivalently, each $x_j$ is either integrated along $\mathcal{C}^{(1)}$ or $\mathcal{C}^{(2)}$, but not all are integrated along $\mathcal{C}^{(1)}$. 

Now, let us estimate an integral on the right side of \eqref{determinantvertexrightc2} in which some $x_r$ is integrated along $\mathcal{C}^{(2)}$; by symmetry, we can assume $r = 1$. Applying Hadamard's inequality and the fact that $|K (x_1, x_j)| < \varepsilon$ for each $j$, we find that this integral is bounded by 
\begin{flalign}
\label{ksmallc1}
\begin{aligned}
\displaystyle\int_{\mathcal{C}^{(1)}} \displaystyle\int_{\mathcal{C}} \cdots & \displaystyle\int_{\mathcal{C}} \left| \det \big[ K (x_i, x_j) \big]_{i, j = 1}^k \right| \displaystyle\prod_{j = 1}^k d x_j \\
& \le \displaystyle\int_{\mathcal{C}^{(1)}} \displaystyle\int_{\mathcal{C}} \cdots \displaystyle\int_{\mathcal{C}} k^{k / 2} \displaystyle\prod_{i = 1}^k \left( \displaystyle\frac{1}{k} \displaystyle\sum_{j = 1}^k \big| K (x_i, x_j) \big|^2 \right)^{1 / 2} d x_i \\
& \le \varepsilon k^{k / 2} \displaystyle\int_{\mathcal{C}^{(1)}} \displaystyle\int_{\mathcal{C}} \cdots \displaystyle\oint_{\mathcal{C}} \displaystyle\prod_{i = 2}^k \left( \displaystyle\frac{1}{k} \displaystyle\sum_{j = 1}^k \big| K (x_i, x_j) \big|^2 \right)^{1 / 2} d x_1 \displaystyle\prod_{i = 2}^k d x_i. 
\end{aligned}
\end{flalign}

\noindent Since there are $2^k - 1$ choices for $\textbf{n}$, the lemma follows from inserting \eqref{ksmallc1} into \eqref{determinantvertexrightc2}. 
\end{proof}

\begin{lem}[{\cite[Corollary A.5]{PTAEPSSVM}}]

\label{determinantlimitkernels}

Adopt the notation of Definition \ref{definitiondeterminant}. Let $\{ \mathcal{C}_T^{(2)} \}_{T \in \mathbb{R}_{> 0}}$ be a family of contours in the complex plane, and denote $\mathcal{C}_T = \mathcal{C} \cup \mathcal{C}_T^{(2)}$ for each positive real number $T$. For each $T > 0$, let $K_T: \mathcal{C}_T \times \mathcal{C}_T \rightarrow \mathbb{C}$ be a meromorphic function with no poles on $\mathcal{C}_T \times \mathcal{C}_T$.  

Assume that there exists a function $\textbf{\emph{K}} : \mathbb{C} \rightarrow \mathbb{R}_{\ge 0}$ such that the following three properties hold. 

\begin{itemize}

\item{For each $T > 0$ and $z \in \mathcal{C}_T$, we have that $\sup_{z' \in \mathcal{C}_T} \big| K_T (z, z') \big| < \textbf{\emph{K}} (z)$.}

\item{ There exists a constant $B > 0$ such that $\displaystyle\int_{\mathcal{C}_T} \big| \textbf{\emph{K}} (z) \big| dz < B$, for each $T > 0$.}

\item{ For each positive real number $\varepsilon > 0$, there exists some real number $M = M_{\varepsilon}$ such that $\displaystyle\int_{\mathcal{C}_T^{(2)}} \big| \textbf{\emph{K}} (z) \big| dz < \varepsilon$, for all $T > M$. }

\end{itemize} 

\noindent If $\lim_{T \rightarrow \infty} \textbf{\emph{1}}_{z, z' \in \mathcal{C}_T} K_T (z, z') = \textbf{\emph{1}}_{z, z' \in \mathcal{C}} K (z, z')$ for each fixed $z, z' \in \mathbb{C}$, then 
\begin{flalign*}
\displaystyle\lim_{T \rightarrow \infty} \Big( \det \big( \Id + K \big)_{L^2 (\mathcal{C})} - \det \big( \Id + K_T \big)_{L^2 (\mathcal{C}_T)} \Big) = 0.
\end{flalign*} 
\end{lem}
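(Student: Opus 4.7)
The plan is to prove the lemma via the series definition \eqref{determinantsum} of the Fredholm determinant, using a truncation argument combined with dominated convergence. First, I will use the three hypotheses on the dominating function $\textbf{K}$ and Hadamard's inequality to establish a uniform tail bound for the Fredholm series: for each $k \geq 1$, I claim that
\begin{flalign*}
\Bigl| \tfrac{1}{(2\pi \textbf{i})^k k!} \int_{\mathcal{C}_T}\!\!\cdots\!\!\int_{\mathcal{C}_T} \det[K_T(x_i,x_j)]_{i,j=1}^k \prod_j dx_j \Bigr| \le \tfrac{k^{k/2} B^k}{(2\pi)^k k!},
\end{flalign*}
and likewise for the $K$-determinant on $\mathcal{C}^k$. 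Indeed, Hadamard's inequality gives $|\det[K_T(x_i,x_j)]| \leq k^{k/2}\prod_i \textbf{K}(x_i)$, and then the second hypothesis ($\int_{\mathcal{C}_T} \textbf{K} \leq B$) bounds the iterated integral by $B^k$. By Stirling's formula, $k^{k/2}B^k/(2\pi)^k k! \to 0$ super-exponentially, so given any $\varepsilon>0$ I can choose $N=N(\varepsilon)$ such that the sum over $k > N$ of both series is bounded by $\varepsilon$, uniformly in $T$.

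Next, for each fixed $k \leq N$, I will decompose
\begin{flalign*}
\int_{\mathcal{C}_T}\!\!\cdots\!\!\int_{\mathcal{C}_T} = \sum_{\textbf{n} \in \{1,2\}^k} \int_{\mathcal{C}_T^{(n_1)}}\!\!\cdots\!\!\int_{\mathcal{C}_T^{(n_k)}},
\end{flalign*}
where $\mathcal{C}_T^{(1)} = \mathcal{C}$ and $\mathcal{C}_T^{(2)}$ is as in the hypothesis. The ``pure'' term $\textbf{n} = (1,1,\ldots,1)$ is an integral over $\mathcal{C}^k$ of $\det[K_T(x_i,x_j)]$; by the pointwise convergence $\textbf{1}_{z,z' \in \mathcal{C}_T}K_T(z,z') \to \textbf{1}_{z,z' \in \mathcal{C}}K(z,z')$, Hadamard's inequality, and the dominating function $k^{k/2}\prod_i \textbf{K}(x_i)$, the dominated convergence theorem gives convergence of this term to the corresponding $k$-th term of $\det(\Id + K)_{L^2(\mathcal{C})}$. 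For the ``mixed'' terms, where at least one coordinate is integrated along $\mathcal{C}_T^{(2)}$, I apply Hadamard again and factor out one copy of $\int_{\mathcal{C}_T^{(2)}} \textbf{K}(z)\,dz$, which is bounded by $\varepsilon$ for $T$ sufficiently large by the third hypothesis; this gives a bound of order $\varepsilon \cdot k^{k/2}B^{k-1}/(2\pi)^k k!$ for each such term, and there are at most $2^k$ of them.

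Combining these estimates: for any $\varepsilon > 0$, choose $N$ to make both tails $\sum_{k > N}$ small, then choose $T$ large to make the mixed terms and the pointwise convergence error for $k \leq N$ small. The upshot is that the difference $\det(\Id + K_T)_{L^2(\mathcal{C}_T)} - \det(\Id + K)_{L^2(\mathcal{C})}$ can be made arbitrarily small, proving the lemma.

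The proof is essentially bookkeeping; there is no serious analytic obstacle, since the Hadamard/Stirling bound makes the Fredholm series uniformly (in $T$) absolutely convergent, reducing the problem to finitely many $k$-fold integrals which are handled by standard dominated convergence. The only point requiring modest care is ensuring that the integrand on $\mathcal{C}_T$ is correctly extended by zero off of $\mathcal{C}_T$ so that the pointwise limit statement applies cleanly — this is precisely the role of the indicator factors $\textbf{1}_{z,z' \in \mathcal{C}_T}$ in the hypothesis.
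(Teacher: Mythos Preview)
Your proof is correct and follows the natural approach: uniform Hadamard/Stirling tail bounds to truncate the Fredholm series, dominated convergence on the pure $\mathcal{C}^k$ piece, and the third hypothesis to kill the mixed terms. The paper itself does not prove this lemma; it is quoted without proof as Corollary A.5 of \cite{PTAEPSSVM}, so there is no in-paper argument to compare against. Your write-up matches the standard proof one would expect for such a statement.
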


\section{Determinantal Generating Series} 

\label{SeriesDeterminant}

The goal of this section is to explain how to produce Fredholm determinant identities from $q$-moment identities of the type given in Proposition \ref{qms}; this will mainly follow Section 3 and Section 5 of \cite{DDQA}. Throughout this section, we fix $q \in (0, 1)$. 

An immediate inconvenience to right side of the equation \eqref{qmsequation} is that the contours for the $y_i$ are all different. The following lemma remedies this issue. Its proof is omitted, since it is very similar to that of Proposition 5.2 of \cite{DDQA} (see also Proposition 3.2.1 of \cite{MP} or Proposition 7.2 of \cite{DRPCI}).

\begin{lem}

\label{deformcontoursmoments}

Fix $k, n \in \mathbb{Z}_{> 0}$; $v \in \mathbb{C} \setminus \{ 0 \}$; $U = (u_1, u_2, \ldots , u_n) \subset \mathbb{C}$; $S = (s_1, s_2, \ldots ) \subset \mathbb{C}$; and $\Xi = (\xi_1, \xi_2, \ldots ) \subset \mathbb{C}$. Assume that $(v^{-1}, U; \Xi, S)$ is suitably spaced in the sense of Definition \ref{spacedparameters}. 

Let $f$ be a meromorphic function, whose only poles are possibly at $v^{-1}$, $\{ u_j^{-1} \}_{j \ge 1}$ and $\{ s_j \xi_j \}_{j \ge 1}$. Also let $\mathscr{C} \subset \mathbb{C}$ be a closed contour satisfying the following two properties. First, $\mathscr{C}$ contains its image under multiplication by $q$ (in particular, it contains $0$). Second, $\mathscr{C}$ contains $\bigsqcup_{j = 1}^k \{ q^{1 - j} v^{-1} \} $ and each $u_j^{-1}$, but leaves outside each $q^{-1} u_j^{-1}$ and each $s_j \xi_j$. 

Then, 
\begin{flalign}
\label{contoursumdeform} 
\begin{aligned}
& \displaystyle\frac{q^{\binom{k}{2}}}{(2 \pi \textbf{\emph{i}})^k} \displaystyle\oint \cdots \displaystyle\oint \displaystyle\prod_{1 \le i < j \le k} \displaystyle\frac{y_i - y_j}{y_i - q y_j} \displaystyle\prod_{i = 1}^k f(y_i) y_i^{-1} d y_i \\
& \quad = (q; q)_k \displaystyle\sum_{|\lambda| = k} \displaystyle\frac{1}{(2 \pi \textbf{\emph{i}})^{\ell (\lambda)} \prod_{j = 1}^{\infty} m_j!} \displaystyle\oint \cdots \displaystyle\oint \det \left[ \displaystyle\frac{1}{w_i - q^{\lambda_j} w_j} \right]_{i, j = 1}^{\ell (\lambda)} \displaystyle\prod_{i = 1}^{\ell (\lambda)} d w_i \displaystyle\prod_{j = 0}^{\lambda_i - 1} f(q^j w_i). 
\end{aligned}
\end{flalign}

\noindent On the left side of \eqref{contoursumdeform}, each $y_i$ is integrated along the contour $\gamma_i (v^{-1}, U; \Xi, S)$ from Definition \ref{firstcontours}. On the right side of \eqref{contoursumdeform}, each $w_i$ is integrated along $\mathscr{C}$. 

\end{lem}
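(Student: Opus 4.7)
The plan is to adapt the standard ``nested-to-coinciding contour'' argument that appears as Proposition 3.2.1 of \cite{MP}, Proposition 7.2 of \cite{DRPCI}, and Proposition 5.2 of \cite{DDQA}, specialized to the particular contours $\gamma_i(v^{-1},U;\Xi,S)$ of Definition \ref{firstcontours}. Recall that each $\gamma_i$ is a disjoint union $c^{(1)} \sqcup c_i^{(2)}$, where $c^{(1)}$ is a small circle around $\{u_j^{-1}\}$ and the $c_i^{(2)}$ are $q$-nested circles around $0$ containing $q^{1-i}v^{-1}$. Because the contours $c_i^{(2)}$ are strictly $q$-nested, none of the cross-factors $(y_i-qy_j)^{-1}$ in the integrand encounter a pole while $y_i\in c_i^{(2)}$ and $y_j\in c_j^{(2)}$ for $i<j$. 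Thus the integral is a legitimate iterated integral, and the hypotheses on $f$ together with the suitable-spacing assumption guarantee that no pole of $f$ is crossed in the deformations performed below.

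First I would deform, one at a time, each $c_i^{(2)}$ outward to a single common contour $\mathscr C$ satisfying the hypotheses of the lemma. When $c_i^{(2)}$ is moved past $c_j^{(2)}$ (for some $j>i$), the only new poles the integrand can meet are the simple poles at $y_i=qy_j$ coming from $\prod_{a<b}(y_a-qy_b)^{-1}$, since $f$ has no poles in the region swept out. Collecting the resulting residues systematically produces ``strings'' of variables of the form $(w,qw,q^2w,\ldots,q^{m-1}w)$: whenever a residue at $y_i=qy_j$ is taken, the two variables merge into a cluster of length $2$; repeating the process across all pairs builds up clusters of arbitrary length $m$. After all contours have been brought to $\mathscr C$, the iterated integral becomes a sum over unordered set-partitions of $\{1,\dots,k\}$, in which each block of size $m$ corresponds to a cluster, and the base points $w_1,\dots,w_\ell$ of the clusters are integrated independently along $\mathscr C$.

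Next I would reindex this partition sum by the integer partition $\lambda=(\lambda_1,\ldots,\lambda_\ell)$ recording the block sizes. The redundancy coming from permutations of equal parts contributes the combinatorial denominator $\prod_j m_j!$, while enumerating ordered sequences of block sizes summing to $k$ produces the prefactor $(q;q)_k$ together with the $1/\ell!$ that is absorbed into $\prod m_j!$ by the usual partition counting identity. The function $f$ contributes $\prod_{j=0}^{\lambda_i-1}f(q^jw_i)$ for each cluster, matching the right-hand side of \eqref{contoursumdeform}. The remaining algebraic task is to verify that the symmetrized residue kernel for the cluster interactions equals $\det\bigl[(w_i-q^{\lambda_j}w_j)^{-1}\bigr]_{i,j=1}^{\ell(\lambda)}$. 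This is the familiar Cauchy-type determinantal identity
\begin{flalign*}
\sum_{\sigma\in S_\ell}\prod_{1\le i<j\le\ell}\frac{w_{\sigma(i)}-q^{-\lambda_{\sigma(j)}}w_{\sigma(j)}}{w_{\sigma(i)}-w_{\sigma(j)}}\,\prod_{i=1}^\ell\frac{1}{w_i}\cdot(\text{string factors})=\text{const}\cdot\det\!\left[\frac{1}{w_i-q^{\lambda_j}w_j}\right],
\end{flalign*}
which is proved by induction on $\ell$ using standard partial-fraction manipulation.

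The main obstacle will be bookkeeping, not conceptual: I must check carefully that in the sequential outward deformation of $c_i^{(2)}$ to $\mathscr C$, the only poles actually crossed are those of the cross-factors $(y_a-qy_b)^{-1}$, and that no pole of $f$ (located at $v^{-1}$, $u_j^{-1}$, or $s_j\xi_j$) is hit. This is exactly what the hypotheses on $\mathscr C$ encode: $\mathscr C$ contains $0$ and $q^{1-j}v^{-1}$ for $1\le j\le k$ along with each $u_j^{-1}$, but excludes $q^{-1}u_j^{-1}$ and $s_j\xi_j$, which matches the ``$q$-containment'' requirement needed so that the string $(w,qw,\ldots,q^{\lambda-1}w)$ can be generated without obstruction. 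Additionally, one must verify that the combinatorial factor and the determinantal symmetrization are tracked correctly across the induction; given the close parallel with \cite[Prop.~3.2.1]{MP} and \cite[Prop.~5.2]{DDQA}, I would simply refer to those proofs for the routine details and only emphasize the points where our choice of contour $\gamma_i(v^{-1},U;\Xi,S)$ and the extra pole at $v^{-1}$ of $f$ require verification.
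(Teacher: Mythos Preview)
Your proposal is correct and matches the paper's approach exactly: the paper omits the proof and simply states it is very similar to that of Proposition 5.2 of \cite{DDQA} (see also Proposition 3.2.1 of \cite{MP} or Proposition 7.2 of \cite{DRPCI}), which are precisely the references you invoke and whose argument you sketch.
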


Next, to produce Fredholm determinants, we require the following types of contours; they appeared previously in \cite{DDQA} as Definition 3.5 (in the case $\delta = 1 / 2$). 

\begin{definition}

\label{contourslong}

Let $R, d, \delta > 0$ be positive real numbers with $d, \delta < 1$ and $R > 1$. Let $D_{R, d, \delta} \subset \mathbb{C}$ denote the contour in the complex plane, with nondecreasing imaginary part, formed by taking the union of intervals 
\begin{flalign*}
(R - \textbf{i} \infty, R - \textbf{i} d ) \cup [R - \textbf{i} d , \delta - \textbf{i} d ] \cup [\delta - \textbf{i} d, \delta + \textbf{i} d] \cup [\delta + \textbf{i} d, R + \textbf{i} d ] \cup (R + \textbf{i} d, R + \textbf{i} \infty). 
\end{flalign*}

\noindent Furthermore, let $k > R$ be a positive integer. Let $z \in \mathbb{C}$ be the complex number satisfying $\Re z = R$, $\Im z > 0$, and $|z - \delta| = k$. Let $I$ denote the minor arc of the circle in the complex half-plane, centered at $\delta$, with radius $k$, connecting $z$ to its conjugate $\overline{z}$. Then let $D_{R, d, \delta; k} \subset \mathbb{C}$ denote the negatively oriented contour in the complex plane, formed by the union 
\begin{flalign*}
(\overline{z}, R - \textbf{i} d ) \cup [R - \textbf{i} d , \delta - \textbf{i} d ] \cup [\delta - \textbf{i} d, \delta + \textbf{i} d] \cup [\delta + \textbf{i} d, R + \textbf{i} d] \cup (R + \textbf{i} d, z) \cup I. 
\end{flalign*}
\end{definition}

Observe that $D_{R, d, \delta; k}$ approximates $D_{R, d, \delta}$ as $k$ tends to $\infty$. Examples of these contours are given in Figure \ref{contourslongfigure}. 

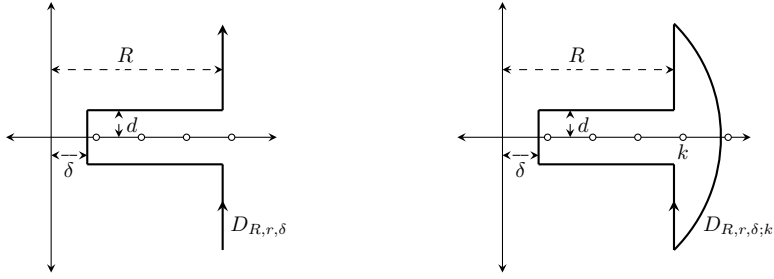
\begin{figure}

\begin{center}

\begin{tikzpicture}[
      >=stealth,
			scale = .6
			]
			
			\draw[<->, black] (-7, 0) -- (-1, 0); 
			\draw[<->, black] (-6, -3) -- (-6, 3); 
			\draw[<->, black] (9.5, 0) -- (3, 0); 
			\draw[<->, black] (4, -3) -- (4, 3); 
			
			\draw[->, black, thick] (-2.2, -2.5) -- (-2.2, -1.4) node[black, below = 10, right = 0, scale = .8] {$D_{R, r, \delta}$};
			\draw[-, black, thick] (-2.2, -1.45) -- (-2.2, -.6);
			\draw[-, black,  thick] (-5.2, -.6) -- (-2.2, -.6);
			\draw[-, black,  thick] (-5.2, -.6) -- (-5.2, .6);
			\draw [<->, black, dashed] (-4.5, 0) -- (-4.5, .6) node[black, right=5, below = 0, scale = .8] {$d$}; 
			\draw [->, black, dashed] (-5.6, -.4) -- (-6, -.4); 
			\draw [->, black, dashed] (-5.6, -.4) -- (-5.2, -.4); 
			\draw [-, white] (-5.595, -.44) -- (-5.605, -.4) node[black, below = 0, scale = .8] {$\delta$}; 
			\draw[-, black,  thick] (-5.2, .6) -- (-2.2, .6);
			\draw[->, black,  thick] (-2.2, .6) -- (-2.2, 2.5);
			\draw[->, black,  dashed] (-4.4, 1.5) -- (-6, 1.5);
			\draw[->, black,  dashed] (-4.4, 1.5) -- (-2.2, 1.5); 
			\draw[->, white] (-4.37, 1.5) -- (-4.36, 1.5) node[black, above = 0, scale = .8] {$R$};

			\draw[->, black,  thick] (7.8, -2.5) -- (7.8, -1.4)  node[black, below = 10, right = 8, scale = .8] {$D_{R, r, \delta; k}$};
			\draw[-, black,  thick] (7.8, -1.45) -- (7.8, -.6);
			\draw[-, black,  thick] (4.8, -.6) -- (7.8, -.6);
			\draw[->, black,  dashed] (5.6, 1.5) -- (4, 1.5);
			\draw[->, black,  dashed] (5.6, 1.5) -- (7.8, 1.5);
			\draw[->, white] (5.63, 1.5) -- (5.64, 1.5) node[black, above = 0, scale = .8] {$R$};
			\draw[-, black,  thick] (4.8, -.6) -- (4.8, .6);
			\draw[-, black,  thick] (4.8, .6) -- (7.8, .6);
			\draw[-, black,  thick] (7.8, 2.5) -- (7.8, .6);
			\draw [<->, black, dashed] (5.5, 0) -- (5.5, .6) node[black, right=5, below = 0, scale = .8] {$d$}; 
			\draw [->, black, dashed] (4.4, -.4) -- (4, -.4); 
			\draw [->, black, dashed] (4.4, -.4) -- (4.8, -.4); 
			\draw [-, white] (4.395, -.4) -- (4.405, -.4) node[black, below = 0, scale = .8] {$\delta$};

			\draw[black, thick] (7.8, -2.5) arc (-45:45:3.55);
			
			\filldraw[fill=white, draw=black] (-5, 0) circle [radius=.07];
			\filldraw[fill=white, draw=black] (-4, 0) circle [radius=.07];
			\filldraw[fill=white, draw=black] (-3, 0) circle [radius=.07];
			\filldraw[fill=white, draw=black] (-2, 0) circle [radius=.07];

			\filldraw[fill=white, draw=black] (5, 0) circle [radius=.07];
			\filldraw[fill=white, draw=black] (6, 0) circle [radius=.07];
			\filldraw[fill=white, draw=black] (7, 0) circle [radius=.07];
			\filldraw[fill=white, draw=black] (8, 0) circle [radius=.07] node[black, below = 0, scale = .8] {$k$};
			\filldraw[fill=white, draw=black] (9, 0) circle [radius=.07];

\end{tikzpicture}

\end{center}

\caption{\label{contourslongfigure} An example of $D_{R, r, \delta}$ is shown to the left, and an example of $D_{R, r, \delta; k}$ is shown to the right; the circles depict the locations of positive integers. }

\end{figure}

The following proposition, which is similar to Proposition 3.6 of \cite{DDQA}, states that one has a Fredholm determinant identity for a certain generating series of a sequence given by terms on the right side of \eqref{contoursumdeform}. Its proof is again omitted, since it is very similar to that of Proposition 3.6 of \cite{DDQA} (see also Proposition 3.3 and Lemma 3.4 of \cite{DDQA}, as well as Proposition 3.2.8 and Theorem 3.2.11 of \cite{MP}). 

\begin{lem}

\label{momentdeterminant}

Let $g$ be a meromorphic function, and denote $f(z) = g(z) / g (qz)$. Let $P \subset \mathbb{C}$ denote a finite set of poles of $f$, and let $\mathscr{C} \subset \mathbb{C}$ denote a closed contour in the complex plane containing $q \mathscr{C}$, $0$, and $P$, but no other poles of $f(z) / z$. Denote the right side of \eqref{contoursumdeform} (in which all $w_i$ are integrated along $\mathscr{C}$) by $\textbf{\emph{m}}_k$, for each $k \ge 1$, and denote $\textbf{\emph{m}}_0 = 1$. 

Assume the following three conditions. 

\begin{itemize}

\item{There exists some constant $B > 0$ such that $f(q^n w) < B$ and $|q^n w - w'| > B^{-1}$ for all $w, w' \in C$ and integers $n \ge 1$.}

\item{There exist positive real numbers $d, \delta \in (0, 1)$ and $R > 1$ such that 
\begin{flalign}
\label{convergencedeterminant2} 	
\displaystyle\inf_{\substack{w, w' \in C \\ k \in (2R, \infty) \cap \mathbb{Z} \\ s \in D_{R, d, \delta; k}}} |q^s w - w'| > 0; \qquad \displaystyle\sup_{\substack{ w \in C \\ k \in (2R, \infty) \cap \mathbb{Z} \\ s \in D_{R, d, \delta; k}}} \displaystyle\frac{g(w)}{g(q^s w)} < \infty. 
\end{flalign}}

\item{There does not exist any $w \in \mathscr{C}$ and $s \in \mathbb{C}$ to the right of the contour $D_{R, d, \delta}$ such that $q^s w$ is a pole of $g$.}

\end{itemize}

\noindent Then, for any complex number $\zeta \in \mathbb{C} \setminus\mathbb{R}_{\ge 0}$ satisfying $|\zeta| < B^{-1} (1 - q)^{-1}$, we have that 
\begin{flalign}
\label{generaldeterminant} 
\displaystyle\sum_{k = 0}^{\infty} \displaystyle\frac{\textbf{\emph{m}}_k \zeta^k}{(q; q)_k} = \det \big( \Id + K_{\zeta} \big)_{L^2 (C)}, 
\end{flalign}

\noindent where the kernel $K$ is defined through 
\begin{flalign*}
K_{\zeta} (w, w') = \displaystyle\frac{1}{2 \textbf{\emph{i}}} \displaystyle\int_{D_{R, d, \delta}} \displaystyle\frac{g(w)}{g(q^r w)} \displaystyle\frac{(-\zeta)^r dr}{\sin (\pi r) \big( q^r w - w' \big)}. 
\end{flalign*}

\noindent Furthermore, the right side of \eqref{generaldeterminant} is analytic in $\zeta \in \mathbb{C} \setminus\mathbb{R}_{\ge 0}$. 
\end{lem}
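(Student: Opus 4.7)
The plan is to mirror the Mellin--Barnes strategy from Section~3 of \cite{DDQA} (see also Section~3.2 of \cite{MP}), which has two conceptual steps: first rewrite each moment $\textbf{m}_k$ as a single contour integral over a $k$-fold product of copies of $\mathscr{C}$, and then sum the resulting series in $\zeta$ using a contour-integral representation for $1/(q;q)_k$ to produce the Fredholm determinant expansion \eqref{determinantsum}.

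First I would expand $\textbf{m}_k$ according to the right side of \eqref{contoursumdeform} and rewrite the sum over partitions $|\lambda|=k$ as a single contour integral of the form
\begin{flalign*}
\textbf{m}_k = \displaystyle\frac{(q;q)_k}{(2\pi \textbf{i})^k} \displaystyle\oint_{\mathscr{C}} \cdots \displaystyle\oint_{\mathscr{C}} \det \bigl[ \mathcal{K}_r(w_i,w_j) \bigr]_{i,j=1}^k \displaystyle\prod_{i=1}^k dw_i,
\end{flalign*}
where the kernel $\mathcal{K}_r$ contains a contour integral in a new complex variable $r$ over $D_{R,d,\delta;k}$, and the identity is derived by viewing each part $\lambda_i$ as an integer collected from a residue of $(q^r w - w')^{-1}$ at $r=\lambda_i$; this is exactly the mechanism used in Proposition~3.2.8 of \cite{MP}. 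The first bullet in the hypotheses (uniform bound on $f$ and $|q^n w - w'|$) is what makes the geometric series in the partition parts absolutely convergent in $k$ uniformly on $\mathscr{C}$.

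Second, I would plug the above expression for $\textbf{m}_k$ into $\sum_{k \ge 0} \textbf{m}_k \zeta^k / (q;q)_k$, interchange the sum and integral (justified by the bound $|\zeta|<B^{-1}(1-q)^{-1}$), and recognize the result as the Fredholm expansion of $\det(\Id+K_\zeta)_{L^2(\mathscr{C})}$ provided one can take the limit $k\to\infty$ in $D_{R,d,\delta;k}\to D_{R,d,\delta}$ inside the kernel $\mathcal{K}_r$. Here the factor $(-\zeta)^r/\sin(\pi r)$ arises from the standard identity
\begin{flalign*}
\displaystyle\frac{1}{(q;q)_\infty / (q;q)_k}\cdot\zeta^k = -\displaystyle\frac{1}{2\textbf{i}} \displaystyle\int_{D} \displaystyle\frac{(-\zeta)^r}{\sin(\pi r)}\,dr
\end{flalign*}
(interpreted via residues at $r \in \mathbb{Z}_{\ge 0}$), which is the mechanism that converts a sum over non-negative integers into an integral along a contour passing to the right of $\mathbb{Z}_{\ge 0}$.

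The main obstacle is justifying convergence and the deformation from the truncated circular arcs in $D_{R,d,\delta;k}$ to the vertical lines of $D_{R,d,\delta}$ in the $k\to\infty$ limit. This is exactly where the second bullet of the hypotheses --- the two uniform bounds in \eqref{convergencedeterminant2} --- is used: the first bound keeps $(q^s w - w')^{-1}$ bounded along the large arc, and the second controls the meromorphic factor $g(w)/g(q^s w)$, so that the arc contribution decays (via the exponential decay of $|\sin(\pi r)|^{-1}$ for $|\Im r|\to\infty$) as $k\to\infty$. The third hypothesis rules out stray poles of $g(q^s w)$ being swept across during the deformation, so no unwanted residues appear. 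Once these contour manipulations are justified and convergence of the Fredholm expansion on $\mathscr{C}$ is established from the uniform bound on $f$, the analyticity of the right side of \eqref{generaldeterminant} in $\zeta \in \mathbb{C}\setminus \mathbb{R}_{\ge 0}$ follows from the fact that $(-\zeta)^r$ is analytic off $\mathbb{R}_{\ge 0}$ and the Fredholm series converges uniformly on compact subsets of this domain, yielding the final assertion of the lemma.
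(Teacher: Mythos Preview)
Your overall strategy is the right one and coincides with the approach the paper points to (the paper omits the proof, citing Proposition~3.6 of \cite{DDQA} and Proposition~3.2.8, Theorem~3.2.11 of \cite{MP}). However, two concrete details are wrong in your outline, and the first is structural.

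The claim that $\textbf{m}_k$ can be written as a single $k$-fold integral over $\mathscr{C}$ of a $k\times k$ determinant is incorrect: each summand in the partition expansion of $\textbf{m}_k$ is an $\ell(\lambda)$-fold integral, and $\ell(\lambda)$ ranges from $1$ to $k$. The correct matching runs in the opposite direction. One starts from the Fredholm expansion: its $n$-th term is an $n$-fold integral over $\mathscr{C}$ of $\det[K_\zeta(w_i,w_j)]_{i,j=1}^n$, and each entry carries its own Mellin--Barnes integral in a variable $r_j$. Pulling the factors $g(w_j)/g(q^{r_j}w_j)$ into the columns via multilinearity and then closing each $r_j$-contour picks up residues from the poles of $1/\sin(\pi r_j)$ at $r_j=\lambda_j\in\mathbb{Z}_{\ge1}$ (not of $(q^rw-w')^{-1}$, which has no poles at integer $r$). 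This yields, for each $n$, a sum over ordered tuples $(\lambda_1,\dots,\lambda_n)\in\mathbb{Z}_{\ge1}^n$; symmetrizing and reindexing by $k=\sum_j\lambda_j$ reproduces exactly $\sum_k \textbf{m}_k\zeta^k/(q;q)_k$. Thus the Fredholm index $n$ corresponds to $\ell(\lambda)$, not to $k$, and the $(q;q)_k$ in the denominator simply cancels the $(q;q)_k$ prefactor already present in the definition of $\textbf{m}_k$; there is no separate Mellin--Barnes identity for $1/(q;q)_k$ of the type you display (indeed your displayed formula has a left side depending on $k$ and a right side that does not).

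Your account of how the three hypotheses enter is accurate: the first bullet gives absolute convergence of the residue sums and the Fredholm series for $|\zeta|<B^{-1}(1-q)^{-1}$; the bounds \eqref{convergencedeterminant2} make the large circular arcs in $D_{R,d,\delta;k}$ negligible as $k\to\infty$ via the exponential decay of $|\sin(\pi r)|^{-1}$; and the third bullet prevents stray poles of $g(q^sw)^{-1}$ from being crossed during the deformation. Analyticity in $\zeta\in\mathbb{C}\setminus\mathbb{R}_{\ge0}$ then follows as you describe.
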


\end{document}